\theoremstyle{theorem}
\newtheorem{thmx}{Theorem}
\newtheorem{theorem}{Theorem}
\newtheorem{proposition}[theorem]{Proposition}
\newtheorem{corollary}[theorem]{Corollary}
\newtheorem{question}[theorem]{Question}
\newtheorem{lemma}[theorem]{Lemma}
\theoremstyle{definition}
\newtheorem{definition}[theorem]{Definition}
\theoremstyle{remark}
\newtheorem{remark}[theorem]{Remark}
\newtheorem{example}[theorem]{Example}
\numberwithin{equation}{subsection}
\numberwithin{theorem}{subsection}
\let\li\overline
\def\hat{\widehat}
\newcommand{\ab}{\textup{ab}}
\newcommand{\ad}{\textup{ad}}
\newcommand{\Aut}{\textup{Aut}}
\newcommand{\A}{\mathbb A}
\newcommand{\bs}{\backslash}
\newcommand{\cA}{{\mathcal A}}
\newcommand{\cC}{{\mathcal C}}
\newcommand{\cE}{\mathcal E}
\newcommand{\cG}{{\mathcal G}}
\newcommand{\cH}{\mathcal H}
\newcommand{\cI}{{\mathcal I}}
\newcommand{\cL}{\mathcal L}
\newcommand{\cmpt}{\tu{cmpt}}
\newcommand{\cO}{\mathcal O}
\newcommand{\cP}{{\mathcal P}}
\newcommand{\cS}{\mathcal S}
\newcommand{\cusp}{\textup{cusp}}
\newcommand{\C}{\mathbb C}
\newcommand{\der}{\textup{der}}
\newcommand{\disc}{\textup{disc}}
\newcommand{\D}{{\mathbb D}}
\newcommand{\el}{\mathrm{ell}}
\newcommand{\eps}{\varepsilon}
\newcommand{\Exp}{\mathrm{Exp}}
\newcommand{\fka}{{\mathfrak a}}
\newcommand{\fke}{{\mathfrak e}}
\newcommand{\fkp}{\mathfrak{p}}
\newcommand{\fkX}{\mathfrak{X}}
\newcommand{\Fpbar}{\ol{\F}_p}
\newcommand{\Frac}{\tu{Frac}}
\newcommand{\Frob}{\textup{Frob}}
\newcommand{\F}{{\mathbb F}}
\newcommand{\Gal}{{\textup{Gal}}}
\newcommand{\GL}{{\textup {GL}}}
\newcommand{\Groth}{{\textup{Groth}}}
\newcommand{\GSp}{\textup{GSp}}
\newcommand{\G}{\mathbb G}
\newcommand{\Hom}{\textup{Hom}}
\newcommand{\hra}{\hookrightarrow}
\newcommand{\ia}{{\mathfrak a}}
\newcommand{\ie}{\textit{i.e.,} }
\newcommand{\IG}{{\mathfrak{Ig}}}
\newcommand{\Ig}{{\textup{Ig}}}
\newcommand{\Int}{{\textup{Int}}}
\newcommand{\inv}{^{-1}}
\newcommand{\Irr}{\mathrm{Irr}}
\newcommand{\isom}{\stackrel{\sim}{\ra}}
\newcommand{\lhk}{\left(}
\newcommand{\Lie}{\textup{Lie}}
\newcommand{\lisom}{\stackrel{\sim}{\leftarrow}}
\newcommand{\lql}{\overline{\Q}_\ell}
\newcommand{\lqp}{\overline \Q_p}
\newcommand{\mA}{{\mathscr A}}
\newcommand{\mS}{\mathscr{S}}
\newcommand{\nind}{\tu{n-ind}}
\newcommand{\ol}{\overline}
\newcommand{\perf}{\textup{perf}}
\newcommand{\pr}{\textup{pr}}
\newcommand{\qp}{{\mathbb{Q}_p}}
\newcommand{\Qp}{{\mathbb{Q}_p}}
\newcommand{\Qq}{{\mathbb Q}_q}
\newcommand{\Q}{\mathbb Q}
\newcommand{\ra}{\rightarrow}
\newcommand{\reg}{\tu{reg}}
\newcommand{\Rep}{{\tu{Rep}}}
\newcommand{\Res}{\textup{Res}}
\newcommand{\rg}{\rangle}
\newcommand{\rhk}{\right)}
\newcommand{\R}{\mathbb R}
\newcommand{\Sh}{\textup{Sh}}
\newcommand{\SL}{\textup{SL}}
\newcommand{\SO}{{\textup{SO}}}
\newcommand{\Spec}{\tu{Spec}\,}
\newcommand{\ST}{\textup{ST}}
\newcommand{\supp}{{\textup{supp}}}
\newcommand{\Tr}{\textup{Tr}}
\newcommand{\tu}[1]{\textup{#1}}
\newcommand{\uAut}{\underline{\Aut}}
\newcommand{\val}{\textup{val}}
\newcommand{\vierkant}[4]{{\lhk \begin{smallmatrix} #1 & #2 \cr #3 & #4 \end{smallmatrix} \rhk } }
\newcommand{\vol}{\textup{vol}}
\newcommand{\Z}{\mathbb Z}
\renewcommand{\Lie}{\tu{Lie}\,}
\renewcommand{\to}{\rightarrow}
\renewcommand{\Tr}{\tu{Tr}\,}
\begin{document}

\title[$H^0$ of Igusa varieties]{$H^0$ of Igusa varieties via automorphic forms}
\author{Arno Kret, Sug Woo Shin}
\date{\today}

\begin{abstract}
Our main theorem describes the degree 0 cohomology of non-basic Igusa varieties in terms of one-dimensional automorphic representations in the geometry of mod $p$ Hodge-type Shimura varieties with hyperspecial level at $p$. As an application we obtain a completely new approach to two geometric questions. Firstly, we deduce irreducibility of Igusa towers and its generalization to non-basic Igusa varieties in the same generality, extending previous results by Igusa, Ribet, Faltings--Chai, Hida, and others. Secondly, we verify the discrete part of the Hecke orbit conjecture, which amounts to the assertion that the irreducible components of a non-basic central leaf belong to a single prime-to-$p$ Hecke orbit, generalizing preceding works by Chai, Oort, Yu, et al. We also show purely local criteria for irreducibility of central leaves. Our proof is based on a Langlands--Kottwitz type formula for Igusa varieties due to Mack-Crane, an asymptotic study of the trace formula, and an estimate for unitary representations and their Jacquet modules in representation theory of $p$-adic groups due to Howe--Moore and Casselman.
\end{abstract}

\maketitle
\tableofcontents

\section{Introduction}
Igusa varieties were studied by Igusa \cite{Igusa} and Katz--Mazur \cite{KatzMazur} in the case of modular curves. Harris--Taylor and Mantovan \cite{HarrisTaylor,Man05} have generalized the construction to PEL-type Shimura varieties. Recently Caraiani--Scholze \cite{CS17} gave a slightly different definition in the PEL case which gives the same cohomology. Hamacher, Zhang, and Hamacher--Kim went further to define Igusa varieties for Hodge-type Shimura varieties \cite{Ham-almostproduct,ZhangC,HamacherKim}. In the \mbox{($\mu$-)ordinary} setting, Igusa varieties are also referred to as Igusa towers. (Often the definitions differ in a minor way.) There are versions of Igusa varieties as $p$-adic formal schemes or adic spaces over $p$-adic fields, but we concentrate on the characteristic $p$ varieties in this paper. We mention that function-field analogues of Igusa varieties are studied in a forthcoming paper by Sempliner; see also \cite[Ex.~4.7.15]{ZhuCoherent}.

The $\ell$-adic cohomology of Igusa varieties (with $\ell\neq p$) has several arithmetic applications. In \cite{HarrisTaylor,Man05,HamacherKim}, the authors prove a formula computing the cohomology of Shimura varieties in terms of that of Igusa varieties and Rapoport--Zink spaces. This means that, if we understand the cohomology of Igusa varieties well enough, then our knowledge of cohomology can be propagated from Rapoport--Zink spaces to Shimura varieties or the other way around. This is the basic principle underlying \cite{ShinGalois,ShinRZ} on the global Langlands correspondence and the Kottwitz conjecture. For another application, a description of $\ell$-adic cohomology of Igusa varieties was one of the main ingredients in \cite{CS17,CS19} to prove vanishing of cohomology of certain Shimura varieties with $\ell$-torsion coefficients, which in turn supplied a critical input for a recent breakthrough on the Ramanujan and Sato--Tate conjecture for cuspidal automorphic representations of $\GL_2$ of ``weight 2'' over CM fields \cite{10authors}.

Thus an important long-term goal is to compute the $\ell$-adic cohomology of Igusa varieties with a natural group action. A major first step is a Langlands--Kottwitz style trace formula for Igusa varieties, which has been obtained for Shimura varieties of Hodge type at hyperspecial level in \cite{ShinIgusa,ShinIgusaStab,MackCrane} building upon \cite[Ch.~5]{HarrisTaylor} in analogy with \cite{LanglandsRapoport,KottwitzPoints,KSZ}. One wishes to turn that into an expression of the cohomology via automorphic forms, but this requires a solution of various complicated problems; some are tractable but others are out of reach in general, most notably an endoscopic classification and Arthur's multiplicity formula for the relevant groups.

The main objective of this paper is twofold. Firstly, we describe $H^0$ of Igusa varieties via one-dimensional automorphic representations over non-basic Newton strata of Hodge-type Shimura varieties at hyperspecial level.\footnote{In the basic case, Igusa varieties are 0-dimensional, and it can be deduced from \cite{MackCrane} that their $H^0$ is expressed as the space of algebraic automorphic forms on an inner form of $G$. Such a description goes back to Serre \cite{SerreTwoLetters} for modular curves, and Fargues \cite[Ch.~5]{FarguesAst291} in the PEL case.} This mirrors the well-known fact that $H^0$ of complex Shimura varieties is governed by one-dimensional automorphic representations. Secondly, to achieve this, we develop a method and obtain various technical results with a view towards the entire cohomology of Igusa varieties (as an alternating sum over all degrees). Our method, partly inspired by Laumon \cite{Laumon-GSp4} and also by Flicker--Kazhdan \cite{FlickerKazhdan}, should prove useful for studying $\ell$-adic cohomology of Shimura varieties as well.

Our result on $H^0$ not only sets a milestone in its own right, but also reveals deep geometric information. Namely, our theorem readily implies the discrete Hecke orbit conjecture for Shimura varieties and the irreducibility of Igusa varieties in the same generality as above. (The irreducibility means that Igusa varieties are no more reducible than the underlying Shimura varieties in some precise sense.) Our work provides a completely new approach and perspective to these two problems by means of automorphic forms and representation theory.
 
One of our main novelties consists in a careful asymptotic argument via the trace formula to single out $H^0$ (or compactly supported cohomology in the top degree) without reliance on any classification. This is essential for obtaining an unconditional result. Since the ``variable'' for asymptotics is encoded in the test function at $p$, a good amount of local harmonic analysis naturally enters the picture. Another feature of our approach is to allow induction on the semisimple rank of the group; this would make little sense in a purely geometric argument as endoscopy is hard to realize in the geometry of Shimura varieties.

Roughly speaking, cohomology of Igusa varieties is closely related to that of Shimura varieties via the Jacquet module operation at $p$, relative to a \emph{proper} parabolic subgroup in the \emph{non-basic} case. To show that only one-dimensional automorphic representations contribute to $H^0$ of Igusa varieties, the key representation-theoretic input is an estimate for the central action on Jacquet modules due to Casselman and Howe--Moore. Though there is no direct link, it would be interesting to note that a similar situation occurs in the context of beyond endoscopy (e.g.,~\cite[\S5]{FLN}), where the leading term in asymptotics is accounted for by the ``most non-tempered'' (namely one-dimensional) representations. This is also analogous to the spectral gap, which plays a crucial role in the Hecke equidistribution theorems in characteristic zero (cf.~\S\ref{sub:intro-disc-HO} below).
 
\subsection{The main theorem}\label{sub:main-thm}
 
Let $(G,X)$ be a Shimura datum of Hodge type with reflex field $E\subset \C$. Assume that the reductive group $G$ over $\Q$ admits a reductive model over $\Z_p$, and take $K_p:=G(\Z_p)$. (Namely $G$ is unramified at $p$, and $K_p$ is a hyperspecial subgroup.) For simplicity, the adjoint group of $G$ is assumed to be simple over $\Q$ throughout the introduction. (Otherwise the notion of basic elements needs to be modified. See Definition \ref{def:Q-non-basic} below.) We do not assume $p>2$ as the case $p=2$ is covered in \cite{KSZ,MackCrane}.

Fix field maps $\ol \Q\hra \lqp$, $\lqp\simeq \C$, and $\lql \simeq \C$ (which will be mostly implicit). The resulting embedding $E\hra \lqp$ induces a place $\fkp$ of $E$ above $p$. Let $k(\fkp)$ denote the residue field of $E$ at $\fkp$, which embeds into the residue field $\Fpbar$ of $\lqp$. Thereby we identify $\ol{k(\fkp)}\simeq \Fpbar$. Let $\mS_{K_p}$ denote the integral canonical model over $\cO_{E_{\fkp}}$ with a $G(\A^{\infty,p})$-action. In the main text, it is sometimes important to pass to finite level away from $p$ in order to apply a fixed-point formula. However, we will ignore this point and pretend that we are always at infinite level away from $p$ to simplify exposition.

An embedding of $(G,X)$ into a Siegel Shimura datum determines a $G(\A^{\infty,p})$-equivariant map from $\mS_{K_p}$ to a suitable Siegel moduli scheme over $\cO_{E_{\fkp}}$. Via pullback, we obtain a universal abelian scheme $\mA$ over $\mS_{K_p}$, which can be equipped with a family of \'etale and crystalline tensors over geometric points $\ol x\ra  \mS_{K_p,k(\fkp)}$. This assigns to $\ol x$ the $p$-divisible group $\mA_{\ol x}[p^\infty]$ (with $G$-structure, encoded by the family of crystalline tensors).

Let $\mu_p:\G_m\ra G_{\lqp}$ denote the ``Hodge'' cocharacter arising from $(G,X)$ (via $\lqp\simeq \C$). This cuts out a finite subset $B(G_{\Q_p},\mu_p^{-1})$ in the Kottwitz set $B(G_{\Q_p})$ of $G$-isocrystals. Fix $b\in G(\breve \Q_p)$ whose image $[b]$ lies in $B(G_{\Q_p},\mu_p^{-1})$. (The latter condition is equivalent to nonemptiness of the Newton stratum $N_b$ below.) Then $b$ gives rise to a $p$-divisible group $\Sigma_b$ with $G$-structure over $\Fpbar$ via Dieudonn\'e theory and the embedding of Shimura data above. We also obtain a Newton cocharacter $\nu_b$ from $b$. We may arrange that $\nu_b$ is dominant with respect to a suitable Borel subgroup of $G_{\Q_p}$ defined over $\Q_p$. For simplicity, assume that $\Sigma_b$ is defined over $k(\fkp)$ and that $[k(\fkp):\F_p]\nu_b$ is a cocharacter, not just a fractional cocharacter. (In practice, these assumptions are unnecessary since it is sufficient to have a finite extension of $k(\fkp)$ in the last sentence.)

Write $\rho$ for the half sum of all $B$-positive roots. Write $J_b$ for the $\Q_p$-group of self-quasi-isogenies of $\Sigma_b$ (preserving $G$-structure) over $\Fpbar$, and $J_b^{\tu{int}}$ for the subgroup of $J_b(\Q_p)$ consisting of automorphisms. Then $J_b^{\tu{int}}$ is an open compact subgroup of $J_b(\Qp)$. As a general fact, $J_b$ is an inner form of a $\Q_p$-rational Levi subgroup $M_b$ of $G_{\Q_p}$. We say that $b$ is \textbf{basic} if $\nu_b$ is a central in $G_{\Q_p}$, or equivalently if $M_b=G_{\Q_p}$ (namely if $J_b$ is an inner form of $G_{\Q_p}$).

The central leaf $C_b$ (resp. Newton stratum $N_b$) is the locus of $x\in \mS_{K_p}$ on which the geometric fibers of $\mA_{\ol x}[p^\infty]$ are isomorphic (resp.~isogenous) to $\Sigma_b$. By construction, $C_b$ and $N_b$ are stable under the $G(\A^{\infty,p})$-action on $\mS_{K_p}$. We also define the Igusa variety $\mathfrak{Ig}_b$ over $\mS_{K_p,k(\fkp)}$ to be the parameter space of isomorphisms between $\Sigma_b$ and $\mA[p^\infty]$. The obvious action of $J_b^{\tu{int}}$ on $\mathfrak{Ig}_b$ naturally extends to a $J_b(\Q_p)$-action. Below are some basic facts (\S\ref{sub:central-leaves}, \S\ref{sub:inf-level-Igusa}, and \S\ref{sub:completely-slope-divisible}). Put $q:=\#k(\fkp)$.
\begin{itemize}
\item[Fact 1.] $C_b$ is (formally) smooth over $k(\fkp)$ and closed in $N_b$,
\item[Fact 2.] $C_b$ is equidimensional of dimension $\langle 2\rho,\nu_b\rangle$,
\item[Fact 3.] $\mathfrak{Ig}_b$ is a pro-\'etale $J_b^{\tu{int}}$-torsor over the perfection of $C_b$,
\item[Fact 4.] In the completely slope divisible case, the $q^r$-th power Frobenius on $\mathfrak{Ig}_b$ coincides with the action of $\nu_b(q^r)\in Z_{J_b}(\Q_p)$ for sufficiently divisible $r$.
\end{itemize}
In particular $\dim \mathfrak{Ig}_b=\langle 2\rho,\nu_b\rangle$, and every connected component of $\mathfrak{Ig}_{b,\Fpbar}$ (resp.~$C_{b,\Fpbar}$) is irreducible. 

Our main theorem describes the connected components (= irreducible components) of Igusa varieties over $\Fpbar$ with a natural group action. Let us introduce some notation. Write $G(\Q_p)^{\tu{ab}}$ for the abelianization of $G(\Q_p)$ as a topological group. There is a canonical map $\zeta_b:J_b(\Q_p)\twoheadrightarrow G(\Q_p)^{\tu{ab}}$ coming from the fact that $J_b$ is an inner form of a Levi subgroup of $G_{\Q_p}$, cf.~\S\ref{sub:inf-level-Igusa} below. Each one-dimensional smooth representation $\pi_p$ of $G(\Q_p)$ factors through $G(\Q_p)^{\tu{ab}}$, giving rise to a one-dimensional representation $\pi_p\circ \zeta_b$ of $J_b(\Q_p)$.

\begin{thmx}[Theorem \ref{thm:H0(Ig)}]\label{thmx:A}
Assume that $b$ is non-basic with $[b]\in B(G_{\Q_p},\mu_p^{-1})$. Then there is a $G(\A^{\infty,p})\times J_b(\Q_p)$-module isomorphism 
$$ 
H^0(\IG_{b},\lql)\simeq \bigoplus_\pi \pi^{\infty,p}\otimes (\pi_p\circ \zeta_b) , 
$$
where the sum runs over one-dimensional automorphic representations $\pi=\pi^{\infty,p}\otimes \pi_p \otimes \pi_\infty$ of $G(\A)$ such that $\pi_\infty$ is trivial on the preimage of the neutral component $G^{\tu{ad}}(\R)^0$ in $G(\R)$.
\end{thmx}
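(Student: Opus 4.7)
The plan is to combine the Mack--Crane Langlands--Kottwitz trace formula for $\IG_b$ with an asymptotic argument along the central cocharacter $\nu_b$, using Fact~4 (a power of Frobenius on $\IG_b$ coincides with the action of $\nu_b(q^r)$) to filter out all cohomological degrees above $0$. Concretely, I would convolve the test function $f_p$ on $J_b(\Qp)$ with $\delta_{z_N}$, where $z_N := \nu_b(q^r)^N$, and, after normalizing by $q^{-rNd}$ with $d=\langle 2\rho,\nu_b\rangle=\dim C_b$, take $N\to\infty$. On the geometric side, a cohomology class in $H^i_c(\IG_b,\lql)$ of Frobenius weight $w$ is scaled by $q^{rN(w-2d)/2}$; purity of $H^*_c$ over the smooth equidimensional central leaf $C_b$ forces $w\leq 2d$, so in the limit only the pure-of-weight-$2d$ piece $H^{2d}_c(\IG_b,\lql)$ survives, which by Poincar\'e duality computes $H^0(\IG_b,\lql)$ as a $G(\A^{\infty,p})\times J_b(\Qp)$-module.

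\textbf{Spectral side.} By \cite{MackCrane}, the alternating trace of $f^{\infty,p}\otimes f_p$ on $H^*_c(\IG_b,\lql)$ expands as a sum over discrete automorphic $\pi$ of $G(\A)$, with $\pi_p$ entering through the (normalized) Jacquet module $r_{P_b}(\pi_p)$, where $P_b\subset G_{\Qp}$ is the \emph{proper} parabolic with Levi $M_b$ and the identification of $J_b$ with an inner form of $M_b$ transfers the $f_p$-trace. Convolution of $f_p$ with $\delta_{z_N}$ multiplies the $\pi$-summand by a central exponent of $r_{P_b}(\pi_p)$ evaluated at $z_N$. The essential input, Casselman's canonical pairing together with the Howe--Moore decay for matrix coefficients of infinite-dimensional admissible unitary representations, gives the sharp bound: for any admissible unitary $\pi_p$ \emph{without} a one-dimensional subquotient, these central exponents along the strictly $P_b$-dominant $\nu_b$ grow strictly slower than $q^{rNd}$, so the normalized contribution vanishes in the limit. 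Only one-dimensional automorphic $\pi$ persist. For such $\pi$, $r_{P_b}(\pi_p)=\pi_p|_{M_b}$, and via the inner twist and the canonical $\zeta_b:J_b(\Qp)\twoheadrightarrow G(\Qp)^{\ab}$ it pulls back to $\pi_p\circ\zeta_b$; the triviality of $\pi_\infty$ on the preimage of $G^{\ad}(\R)^0$ comes from compatibility with the connected-components analysis of the Shimura variety (equivalently, with the zero-dimensional Shimura variety for $G^{\ab}$).

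\textbf{Main obstacle.} The hardest step is rigorously executing the asymptotic limit. Casselman--Howe--Moore gives sharp decay only for admissible \emph{unitary} representations, whereas $\pi_p$ coming from a cohomological automorphic representation may a priori be nontempered; I expect to compensate using a Ramanujan-type bound supplied by purity inside intersection cohomology of a toroidal compactification of $\mS_{K_p}$, together with an induction on the semisimple rank of $G$ --- meaningful precisely in the non-basic case since $M_b$ is a proper Levi, so that required estimates for intermediate Jacquet modules reduce to groups of strictly smaller rank. A secondary delicacy is controlling the endoscopic and non-elliptic terms in Mack-Crane's formula so that they do not pollute the $N\to\infty$ limit; the same $\nu_b$-filter together with the rank induction should suppress these without invoking any endoscopic classification.
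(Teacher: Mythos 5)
Your overall strategy coincides with the paper's: translate the test function at $p$ by powers of the central element $\nu_b(q^r)$ (equivalently, powers of Frobenius), normalize by $q^{-rNd}$, identify the surviving term with $H^{2d}_c\simeq H^0$ by Poincar\'e duality, and use the Casselman--Howe--Moore estimate on exponents of Jacquet modules to isolate the one-dimensional $\pi$. Two points, however, need correction.

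First, your ``main obstacle'' is misdiagnosed, and the remedy you propose would not work. The Casselman--Howe--Moore bound (Propositions \ref{prop:Howe-Moore} and \ref{prop:BoundExp}) applies to irreducible \emph{unitary} representations, not only tempered ones, and the local component $\pi_p$ of a discrete automorphic representation is automatically unitary after twisting by the explicit real character $\lambda_{\A}$ determined by the central character on $A_{G,\infty}$ (this is exactly how Proposition \ref{prop:spectral-estimate} proceeds). No Ramanujan-type input is needed, which is fortunate: a Ramanujan bound via purity of intersection cohomology of a compactification is not available for general Hodge-type groups (it presupposes an endoscopic/Arthur classification), and the whole point of the argument is to avoid such inputs. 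You should delete that step and simply invoke unitarity of $\pi_p\otimes\lambda_p^{-1}$.

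Second, you substantially underestimate what it takes to suppress the endoscopic and proper-Levi terms; this is the technical heart of the argument, not a ``secondary delicacy.'' The rank induction does not close up by itself: in Arthur's formula the proper-Levi contributions carry the archimedean factors $\Phi_M(\gamma,\xi)$, which are stable discrete series characters evaluated on non-elliptic maximal tori and are given by \emph{different} finite-dimensional character formulas on different Weyl chambers, so the inductive hypothesis cannot be applied directly. The paper resolves this by imposing a $C$-regularity condition on the test function at an auxiliary split prime $q$ (\S\ref{sub:C-regular}, adapting Laumon), which forces every contributing conjugacy class into a single chamber (Lemma \ref{lem:TF-technical-lemma}); one also needs to check that acceptability, the $\nu$-ascent structure, and $C$-regularity are all inherited under constant terms and endoscopic transfer (\S\ref{sec:Jacquet-endoscopy}), and that the growth exponent strictly drops, via the root-theoretic inequality $\langle\rho^{\fke},\nu_\omega\rangle<\langle\rho,\nu\rangle$. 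Finally, the auxiliary prime is needed a second time at the very end: the trace identity only holds for $C$-regular $f_q$, so one obtains a priori only an identity of Jacquet modules at $q$ (Lemma \ref{lem:C-regular-spectral-implication}), and the module isomorphism is recovered by comparing the outputs for two distinct auxiliary primes $q,q'$. Your sketch would need all of these ingredients to become a proof.
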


Before we sketch the idea of proof, let us discuss two geometric applications.

\subsection{Application to irreducibility of Igusa towers and a generalization}\label{sub:intro-irreducibility}

In Hida theory of $p$-adic automorphic forms, an important role is played by Igusa varieties over the \emph{ordinary} Newton stratum, namely when the underlying $p$-divisible group is ordinary. In this case, Igusa varieties (and their natural extension to $p$-adic formal schemes) are usually referred to as Igusa towers. Recently Eischen and Mantovan \cite{EischenMantovan} developed Hida theory in the more general $\mu$-ordinary PEL-type situation, where Howe \cite{HoweCircle} (and its sequel) also shed new light on the role of Igusa varieties (\`a la Caraiani--Scholze). Igusa towers are also featured in Andreatta--Iovita--Pilloni's work \cite{AIPHilbert,AIPHalo} on overconvergent automorphic forms.

A key property of Igusa towers is irreducibility. This property has an application to the $q$-expansion principle for $p$-adic automorphic forms, which is a basic ingredient for the construction of $p$-adic $L$-functions. See p.96 in \cite{HidaPAF} for the relevant remark, and also refer to Thm.~3.3 (Igusa), 4.21 (Ribet), 6.4.3 (Faltings--Chai), and Cor.~8.17 (Hida) therein for the known cases (elliptic modular, Hilbert, Siegel, and PEL type A/C cases, respectively, all over the ordinary stratum) and further references. Irreducibility in the $\mu$-ordinary case of PEL type A was proven in \cite{EischenMantovan}. Such a result was obtained for Igusa varieties of a specific PEL type A by Boyer \cite{BoyerIgusa} without assuming $\mu$-ordinariness.

There are various methods to show the irreducibility as explained in \cite{ChaiMethods} and the introduction of \cite{HidaIrreducibility2}, e.g., by using the automorphism group of the function fields of Shimura varieties in characteristic 0 or by showing that the family of abelian varieties has large monodromy. As an application of Theorem \ref{thmx:A}, we obtain an entirely different representation-theoretic proof and also a natural generalization from the $\mu$-ordinary case to the general non-basic case (and from the PEL case to the case of Hodge type). In the non-$\mu$-ordinary case, Igusa varieties lie over a central leaf rather than an entire Newton stratum, but our method is insensitive to such a distinction.

Write $J_b(\Q_p)':=\ker(\zeta_b:J_b(\Q_p)\ra G(\Q_p)^{\tu{ab}})$. Our result is as follows.

\begin{thmx}\label{thmx:irreducibility} Assume that $b$ is non-basic. The stabilizer subgroup in $J_b(\Q_p)$ of each connected component of $\mathfrak{Ig}_b$ is equal to $J_b(\Q_p)'$. The preimage of each connected component of $C_b$ along $\mathfrak{Ig}_b\ra C_b$ is the disjoint union of $J_b^{\tu{int}}\cap J_b(\Q_p)'$-torsors.
\end{thmx}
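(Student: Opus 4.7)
The plan is to deduce both statements from Theorem~\ref{thmx:A} together with the pro-\'etale torsor structure of Fact~3. For a connected component $Y \subseteq \mathfrak{Ig}_b$ with $J_b(\Q_p)$-stabilizer $H_Y$, the inclusion $J_b(\Q_p)' \subseteq H_Y$ is immediate: Theorem~\ref{thmx:A} writes $H^0(\mathfrak{Ig}_b, \lql)$ as a direct sum of $J_b(\Q_p)$-characters of the form $\pi_p \circ \zeta_b$, each trivial on $\ker \zeta_b = J_b(\Q_p)'$, so $J_b(\Q_p)'$ acts trivially on $H^0$ and in particular fixes $\mathbf{1}_Y$.

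For the reverse inclusion $H_Y \subseteq J_b(\Q_p)'$, I decompose $\mathbf{1}_Y = \sum_\pi v_\pi$ along Theorem~\ref{thmx:A}. Any $j \in H_Y$ acts as the scalar $(\pi_p \circ \zeta_b)(j)$ on each $\pi$-summand, forcing $(\pi_p \circ \zeta_b)(j) = 1$ whenever $v_\pi \neq 0$. To upgrade this to $\zeta_b(j) = 1$, the plan is to use the full $G(\A^{\infty,p}) \times J_b(\Q_p)$-equivariance of Theorem~\ref{thmx:A}: applying Frobenius reciprocity to the induced representation $\mathrm{Ind}_{\mathrm{Stab}(Y)}^{G(\A^{\infty,p}) \times J_b(\Q_p)}(\mathbf{1})$ (realized inside $H^0$ as the functions supported on the $G(\A^{\infty,p}) \times J_b(\Q_p)$-orbit of $Y$), I identify the characters $\pi^{\infty,p} \otimes (\pi_p \circ \zeta_b)$ occurring in that orbit submodule with the one-dimensional characters trivial on $\mathrm{Stab}(Y)$. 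A class-field-theoretic input then produces enough one-dimensional automorphic characters of $G(\A)$, subject to the archimedean condition of Theorem~\ref{thmx:A}, to separate $\zeta_b(J_b(\Q_p))$ from $1$, forcing $j \in J_b(\Q_p)'$.

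Given the first statement, the second follows formally. For a component $C^\circ \subseteq C_b$, the restriction $\mathfrak{Ig}_b|_{(C^\circ)^{\perf}}$ is a pro-\'etale $J_b^{\tu{int}}$-torsor whose connected components are permuted by $J_b^{\tu{int}}$; by the first statement each component has $J_b^{\tu{int}}$-stabilizer $H_Y \cap J_b^{\tu{int}} = J_b^{\tu{int}} \cap J_b(\Q_p)'$ and is therefore a torsor under this subgroup over $(C^\circ)^{\perf}$, whence the preimage of $C^\circ$ is their disjoint union. I expect the main obstacle to be the character-separation step in the hard direction: arguing that one-dimensional automorphic characters of $G(\A)$ satisfying the archimedean condition of Theorem~\ref{thmx:A} suffice, via $\pi_p \circ \zeta_b$, to detect every non-trivial element of $\zeta_b(J_b(\Q_p))$. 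This is essentially a global question about the abelianization of $G$ and the image of $\zeta_b$ in $G(\Q_p)^{\ab}$, and is where the otherwise local-looking statement requires genuinely global input.
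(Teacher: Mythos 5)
Your proposal follows the paper's argument in essence: the easy inclusion is read off from Theorem~\ref{thmx:A}, the hard inclusion uses the same two ingredients (the decomposition of $H^0(\mathfrak{Ig}_b,\lql)$ and the existence of separating one-dimensional automorphic characters, which is Lemma~\ref{lem:existence-1-dim}, proved by reducing to tori and Pontryagin duality rather than class field theory per se), and the second statement is derived formally from the first plus the torsor structure, exactly as in Corollary~\ref{cor:irreducibility}. The one step worth making explicit, which both your plan and the paper's terse proof leave implicit, is the passage from ``$(\pi_p\circ\zeta_b)(j)=1$ whenever $v_\pi\neq 0$'' to ``$\zeta_b(j)=1$'': this requires that every $\pi\in\cA_{\mathbf 1}(G)$ occurs with $v_\pi\neq 0$, equivalently that the orbit submodule through $\mathbf 1_Y$ is all of $H^0$, i.e.\ $G(\A^{\infty,p})\times J_b(\Q_p)$ acts transitively on $\pi_0(\mathfrak{Ig}_b)$; so the genuine obstacle is not the character-separation input (Lemma~\ref{lem:existence-1-dim} is routine) but this transitivity. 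Fortunately the transitivity is itself forced by Theorem~\ref{thmx:A}: the decomposition is multiplicity-free (distinct $\pi\in\cA_{\mathbf 1}(G)$ give distinct characters of $G(\A^{\infty,p})\times J_b(\Q_p)$ by weak approximation, and one-dimensional automorphic representations have automorphic multiplicity one), so the image of $G(\A^{\infty,p})\times J_b(\Q_p)$ in $\mathrm{Aut}(\pi_0(\mathfrak{Ig}_b))$ is abelian and the trivial character occurs exactly once, which means there is a single orbit; with that supplied, your Frobenius-reciprocity identification together with Lemma~\ref{lem:existence-1-dim} closes the argument as in the proof of Theorem~\ref{thm:irreducibility-Igusa-towers}.
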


Roughly speaking, the stabilizer subgroup cannot be larger than $ J_b(\Q_p)'$, and this should be thought of as saying that Igusa varieties are at least as reducible as Shimura varieties. The point of the theorem is that, conversely, the stabilizer is as large as possible under the given constraint; so Igusa varieties are ``irreducible'' in the sense that they are no more reducible than Shimura varieties, cf.~Corollary \ref{cor:irreducibility} below. The proof is almost immediate from the $J_b(\Q_p)$-action on $H^0$ described in Theorem \ref{thmx:A}. See  \S\ref{sub:main-to-irreducibility} below for further details.

\subsection{Application to the discrete Hecke orbit conjecture}\label{sub:intro-disc-HO}

In 1995 \cite[\S15]{OortNotes} (also see \cite[Problem 18]{EMO01}), Oort proposed the Hecke Orbit (HO) conjecture that the prime-to-$p$ Hecke orbit of a point should be Zariski dense in the central leaf containing it, if the point lies outside the basic Newton stratum. The reader is referred to \cite{ChaiOortHO} for an excellent survey of the HO conjecture with updates. Oort drew analogy with the Andr\'e--Oort conjecture for a Shimura variety in characteristic zero, which asserts that the irreducible components of the Zariski closure of a set of special points are special subvarieties. (See \cite{TsimermanAOconj,PilaShankarTsimerman} and references therein for recent results on the Andr\'e--Oort conjecture.) A common feature is that a set of points with an extraordinary structure (being a prime-to-$p$ Hecke orbit or special points) is Zariski dense in a distinguished class of subvarieties. We can also compare the HO conjecture with the Hecke equidistribution theorems for locally symmetric spaces in characteristic zero \cite{ClozelOhUllmo,EskinOh}, stating roughly that the Hecke orbit of an arbitrary point is equidistributed in the locally symmetric space in a suitable sense. (In particular the Hecke orbit is dense in the entire space even for the analytic topology, to be contrasted with the phenomenon in characteristic $p$.) It is also worth noting works to investigate Hecke orbits for the $p$-adic topology  \cite{GorenKassaei,HerreroMenaresRiveraLetelier}.

Chai and Oort verified the HO conjecture for Siegel modular varieties \cite[Thm.~3.4]{ChaiICM} (details to appear in a monograph), in particular the irreducibility of leaves  \cite{Chai-HO,CO11}. The conjecture is also known for Hilbert modular varieties \cite[Thm.~3.5]{ChaiICM} due to Chai and Yu. (Also see \cite{YuChaiOort}.) The HO conjecture has seen several new results in recent years. Shankar proved the conjecture for Deligne's ``strange models'' (in the sense of \cite[\S6]{Del-Travaux}) in an unpublished preprint. Zhou \cite{ZhouMotivic} settled the HO conjecture in the ordinary locus of some quaternionic Shimura varieties along the way to realize a geometric level raising between Hilbert modular forms. Maulik--Shankar--Tang \cite{MaulikShankarTang} proved the HO conjecture in the ordinary locus of GSpin Shimura varieties. Xiao \cite{XiaoHecke} proved partial results on the HO conjecture in the case of PEL type A and C.

Chai \cite{Chai-HO,ChaiICM} proposed the strategy to divide the HO conjecture into two parts, that is, the discrete part (HO$_{\disc}$) and the continuous part (HO$_{\tu{cont}}$), corresponding to global and local geometry, respectively. In a nutshell, (HO$_{\disc}$) asserts that the prime-to-$p$ Hecke action is transitive on the set of irreducible components of each central leaf. Then (HO$_{\tu{cont}}$) is designed to tell us that the closure of each prime-to-$p$ Hecke orbit has the same dimension as the ambient central leaf, so that (HO$_{\disc}$) and (HO$_{\tu{cont}}$) together imply the HO conjecture. 

We deduce the following result on  \textup{(HO$_{\disc}$)} from Theorem \ref{thmx:A}.
 (See \S\ref{sub:main-to-discHO} for details.)

\begin{thmx}\label{thmx:B}
For Hodge-type Shimura varieties with hyperspecial level at~$p$, \textup{(HO$_{\disc}$)} is true for every central leaf outside the basic Newton stratum.
\end{thmx}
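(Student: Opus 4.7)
The plan is to read (HO$_{\disc}$) off the $G(\A^{\infty,p})$-module decomposition of $H^0(C_b,\overline{\Q}_\ell)$ supplied by Theorem~\ref{thmx:A}. By Fact~3, $\IG_b \to C_b^{\perf}$ is a pro-\'etale $J_b^{\tu{int}}$-torsor, giving a $G(\A^{\infty,p})$-equivariant bijection $\pi_0(C_b) \simeq J_b^{\tu{int}} \backslash \pi_0(\IG_b)$ together with an isomorphism $H^0(C_b, \overline{\Q}_\ell) \simeq H^0(\IG_b, \overline{\Q}_\ell)^{J_b^{\tu{int}}}$. Since each $\pi_p \circ \zeta_b$ in Theorem~\ref{thmx:A} is one-dimensional, taking $J_b^{\tu{int}}$-invariants yields
$$H^0(C_b, \overline{\Q}_\ell) \simeq \bigoplus_{\pi \in \mathcal{A}} \pi^{\infty,p},$$
where $\mathcal{A}$ is the set of one-dimensional automorphic characters of $G(\A)$ satisfying both the infinity-triviality condition of Theorem~\ref{thmx:A} and $(\pi_p \circ \zeta_b)|_{J_b^{\tu{int}}} = 1$.

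The key observation is that $\mathcal{A}$ is an abelian group under tensor product, so its image $\mathcal{A}^p := \{\pi^{\infty,p} : \pi \in \mathcal{A}\}$ is a subgroup of the character group of $G(\A^{\infty,p})$, and the $G(\A^{\infty,p})$-action on $\pi_0(C_b)$ factors through an abelian quotient. Working at a finite level $K^p$, let $\Gamma$ denote the finite abelian quotient of $G(\A^{\infty,p})$ through which the action on $\pi_0(C_{b, K_p K^p})$ factors. From the displayed decomposition, $|\pi_0(C_{b, K_pK^p})| = |\mathcal{A}_{K^p}|$, where $\mathcal{A}_{K^p} \subset \mathcal{A}$ consists of those $\pi$ with $\pi^{\infty,p}|_{K^p} = 1$. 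Granting the multiplicity-one claim that the restriction $\mathcal{A}_{K^p} \hookrightarrow \widehat{\Gamma}$ is injective, the characters of $\Gamma$ appearing in $\C[\pi_0(C_{b, K_p K^p})]$ form a subgroup of $\widehat{\Gamma}$ of cardinality $|\pi_0(C_{b, K_p K^p})|$, each with multiplicity one; a standard Fourier-theoretic computation then forces $\Gamma$ to act simply transitively on $\pi_0(C_{b, K_p K^p})$, establishing (HO$_{\disc}$).

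The main obstacle is this multiplicity-one claim, equivalently that no nontrivial $\pi \in \mathcal{A}$ restricts trivially on $G(\A^{\infty,p}) \cdot K^p$. I would handle this via strong approximation for the simply connected $G^{\der}$ (available in the Hodge-type setting) together with an explicit analysis on $G^{\ab}(\Q_p) \times G^{\ab}(\R)$ exploiting both conditions defining $\mathcal{A}$. Alternatively, and perhaps more cleanly, one may compare with the classical description of $\pi_0(\mS_{K_p K^p, k(\fkp)})$ in the style of Deligne: exhibiting $\pi_0(C_b) \to \pi_0(\mS_{k(\fkp)})$ as a $G(\A^{\infty,p})$-equivariant bijection, using non-basicness together with the closure relations among Newton strata to ensure that $C_b$ meets every connected component of $\mS$; this reduces (HO$_{\disc}$) to the known transitivity of $G(\A^{\infty,p})$ on $\pi_0(\mS)$ for Hodge-type Shimura varieties.
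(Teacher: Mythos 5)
Your first approach is, modulo repackaging, the paper's argument. After taking $J_b^{\tu{int}}$-invariants of Theorem~\ref{thmx:A}, (HO$_{\disc}$) reduces to showing the trivial $G(\A^{\infty,p})$-representation has multiplicity one in $H^0(C_b,\lql)$; your Fourier-theoretic discussion of $\widehat{\Gamma}$ at finite level is an equivalent reformulation, and your ``multiplicity-one claim'' (no nontrivial $\pi\in\mathcal A$ has $\pi^{\infty,p}=\mathbf 1$) is exactly the statement the paper closes. Two remarks on how you propose to close it. First, the condition cut out on $\pi_p$ after taking $J_b^{\tu{int}}$-invariants is triviality on $\zeta_b(J_b^{\tu{int}})$, which by Lemma~\ref{lem:image-of-Jb-int} is an open subgroup of $G(\Q_p)^{\ab}$; you should invoke this openness explicitly, since it is what makes the approximation argument bite. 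Second, the paper does not route through strong approximation for $G^{\der}$ plus a cocenter computation; it uses \emph{weak approximation for $G$ itself}, namely that $G(\Q)$ is dense in $G(\Q_p)\times G(\R)$ (cf.~the proof of Lemma~\ref{lem:transitive-on-Sh}, which requires $G$ unramified at $p$). Given $\pi^{\infty,p}=\mathbf 1$, $\pi_p$ trivial on an open subgroup of $G(\Q_p)$, and $\pi_\infty$ trivial on $G(\R)_+$, the character $\pi$ is trivial on $G(\Q)\cdot G(\A^{\infty,p})\cdot (\text{open})\cdot G(\R)_+ = G(\A)$, hence trivial. Your proposed route would need some care to account for the $G(\R)_+$-condition cleanly; the weak-approximation argument handles $p$ and $\infty$ simultaneously.

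Your second, ``cleaner'' alternative does not work, and in fact it attempts to prove the stronger statement (HO$^+_{\disc}$), which the paper explicitly shows is \emph{false} in general (see Example~\ref{ex:HO+false}, and the retraction noted in \S\ref{sub:intro-disc-HO}). Showing $C_b$ meets every connected component of $\mS$ only gives that $\pi_0(C_b)\to\pi_0(\mS_{K_p,\ol{k(\fkp)}})$ is surjective; transitivity of $G(\A^{\infty,p})$ on $\pi_0(\mS)$ does not descend to transitivity on $\pi_0(C_b)$ unless the map is also injective, and injectivity is precisely (HO$^+_{\disc}$). The correct criterion for when $\pi_0(C_b)\to\pi_0(\mS)$ is a bijection is local at $p$ and given in Theorem~\ref{thm:HO-criterion}; it can fail outside the $\mu$-ordinary locus.
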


To our knowledge, this is the first general theorem on \textup{(HO$_{\disc}$)}. Let us remark on the proof. Since \textup{(HO$_{\disc}$)} means transitivity of the $G(\A^{\infty,p})$-action on $\pi_0(C_{b})$, it is equivalent to the multiplicity one property of the trivial $G(\A^{\infty,p})$-representation in $H^0(C_{b},\lql)=H^0(\mathfrak{Ig},\lql)^{J_b^{\tu{int}}}$. 
To prove Theorem~\ref{thmx:B}, it is thus enough to observe that if $\pi^{\infty,p}$ is trivial then $\pi_p$ and $\pi_\infty$ must be trivial as well in the formula of Theorem \ref{thmx:A}. This is an easy consequence of the weak approximation that $G(\Q)$ is dense in $G(\Q_p)\times G(\R)$. (The same approximation holds more generally, at least if $G$ splits over an unramified extension.)

We also consider the following strengthening of \textup{(HO$_{\disc}$)}:

\vspace{.1in}

\noindent \textup{(HO$^+_{\disc}$)} The map $\pi_0(C_b)\ra \pi_0(\Sh_{K_p})$ induced by the immersion $C_b\ra \Sh_{K_p}$ is a bijection.

\vspace{.1in}

\noindent This is known as ``irreducibility of central leaves'', as it means that $C_b$ is irreducible in every component of $\Sh_{K_p}$.
Since $G(\A^{\infty,p})$ is known to act transitively on $\pi_0(\Sh_{K_p})$, e.g., by weak approximation, and since $\pi_0(C_b)\ra \pi_0(\Sh_{K_p})$ is $G(\A^{\infty,p})$-equivariant, it is clear that \textup{(HO$^+_{\disc}$)} implies \textup{(HO$_{\disc}$)}. 

We prove purely local criteria for \textup{(HO$^+_{\disc}$)}, either in terms of groups at $p$ or in terms of the stabilizers of points on some affine Deligne--Lusztig varieties (Theorem \ref{thm:HO-criterion}). Using these criteria, we deduce \textup{(HO$^+_{\disc}$)} in the $\mu$-ordinary case, but obtain a counterexample in general with the help of Rong Zhou.See \S\ref{sub:main-to-discHO} below for further details.
(In the earlier version of this paper \texttt{arXiv:2102.10690v1}, we incorrectly asserted that \textup{(HO$^+_{\disc}$)} was true in general. The mistake occurred during the initial reduction in the proof of Lemma 8.1.1, where changing $b$ to a $\sigma$-conjugate element cannot be justified; we thank van Hoften for pointing it out to us.)

\subsection{Some details on the proof of Theorem \ref{thmx:A}}

Changing $\Sigma_b$ by a quasi-isogeny, as this does not affect $\IG_b$ up to isomorphism, we may assume that $\Sigma_b$ is completely slope divisible and defined over a finite field. Then $\IG_b$ can be written, up to perfection, as the projective limit of smooth varieties of finite type defined over $\F_{p^r}$ for a sufficiently divisible $r\in\Z_{>0}$. (In the main text, we use $\Ig_b$ to denote the version without perfection.) This allows us to apply a Lefschetz trace formula technique to compute the cohomology of $\IG_b$ at a finite level. Via Poincar\'e duality, Theorem \ref{thmx:A} may be rephrased in terms of the top degree compact-support cohomology $H_c^{\langle 4\rho,\nu_b\rangle}(\IG_b,\lql)$, which we may access by the Lang--Weil estimate.

Adapting the Langlands--Kottwitz method to Igusa varieties, as worked out in \cite{ShinIgusa} and Mack-Crane's thesis \cite{MackCrane}, one obtains a formula of the form
$$
\Tr(\phi^{\infty,p} \phi_p\times \Frob_{p^r}^j | H_c (\IG_b,\lql))=(\tu{geometric expansion}),\qquad j\in \Z_{\gg 1},
$$
where $\phi^{\infty,p}\phi_p\in \cH(G(\A^{\infty,p})\times J_b(\Q_p))$ and $j\in \Z_{\gg 1}$. In fact, one can show that the $\Frob_{p^r}$-action on $\IG_b$ is represented by the action of a central element of $J_b(\Q_p)$. Thereby $\phi_p\times \Frob_{p^r}^j$ in \eqref{eq:tr(Ig)-intro} may be replaced with a translate $\phi_p^{(j)}\in \cH(J_b(\Q_p))$ of $\phi_p$ by a central element. The geometric expansion is a linear combination of orbital integrals of $\phi^{\infty,p} \phi^{(j)}_p$ on $G(\A^{\infty,p})\times J_b(\Q_p)$ over a certain set of conjugacy classes. The stabilized formula takes the form
\begin{equation}\label{eq:tr(Ig)-intro}
\Tr(\phi^{\infty,p} \phi_p^{(j)} | H_c (\IG_b,\lql))=\sum_{\fke} (\tu{constant})\cdot ST^{\fke}_{\tu{ell}}(f^{\fke,p} f_p^{\fke,(j)}),\qquad j\in \Z_{\gg 1},
\end{equation}
where the sum runs over endoscopic data $\fke$ for $G$ (\S\ref{sub:prelim-endoscopy}), and $f^{\fke,p} f_p^{\fke,(j)}$ is a suitable function on the corresponding endoscopic group $G^\fke$. By $ST^{\fke}_{\tu{ell}}$ we mean the elliptic part of the stable trace formula for $G^\fke$. The most nontrivial point in the stabilization is the ``transfer'' at $p$. Indeed, as $G^\fke$ is not an endoscopic group of $J_b$, this requires a special construction as detailed in \S\ref{sec:Jacquet-endoscopy}.

Ideally we would turn the right hand side of \eqref{eq:tr(Ig)-intro} into a spectral expansion and determine not only $H_c^{\langle 4\rho,\nu_b\rangle}(\IG_b,\lql)$ but $H_c (\IG_b,\lql)$ in the Grothendieck group of $G(\A^{\infty,p})\times J_b(\Q_p)$-representations. This is the long-term goal stated earlier. On the analogous problem for Shimura varieties, a road map has been laid out in \cite{KottwitzAnnArbor}, which can be mimicked for Igusa varieties to some extent. However there are serious obstacles: (1) An endoscopic classification for most reductive groups is out of reach; exactly the same issue occurs for Shimura varieties as well. (2) The geometric side (stable elliptic terms) is very difficult to compare with the spectral side. One could imagine making the comparison more tractable by passing from $H_c$ to intersection cohomology, following the strategy for Shimura varieties to ``fill in'' the stable non-elliptic terms, but no theory of compactification is available for Igusa varieties to allow it. (Franke's formula for $H_c$ of locally symmetric spaces \cite{Franke} suggests that one should expect a similarly complicated answer for $H_c$ of Igusa varieties.)

Our goal is to extract spectral information on $H_c^{\langle 4\rho,\nu_b\rangle}(\IG_b,\lql)$ from the leading terms in \eqref{eq:tr(Ig)-intro} in the variable $j$ via the Lang--Weil estimate. Thus we can get away with less by proving equalities up to error terms of lower order. To bypass (1) and (2), a key is to show that (stable) non-elliptic terms as well as endoscopic (a.k.a.~unstable) terms have slower growth in $j$ than the (stable) elliptic terms. This is the technical heart of our paper taking up \S\ref{sec:asymptotic-analysis}. Let us provide more details.

The basic strategy is an induction on the semisimple rank, based on our observation that some key property of the function $f_p^{\fke,(j)}$ is replicated after taking an endoscopic transfer or a constant term. (For instance, we need to pass along the Newton cocharacter through the inductive steps.) So we want to prove a bound on the trace formula for a quasi-split group over $\Q$, with a test function $f^{p} f_p^{(j)}$ satisfying such a property. The desired bound partly comes from a root-theoretic computation, involving a curious interaction between $p$ and $\infty$ such as ``evaluating'' the Newton cocharacter (coming from $p$) at the infinite place (Lemma \ref{lem:lambda(nu)}). The most interesting component in this part of the argument is

\smallskip

 \centerline{(*)~~a spectral expansion of $T_{\tu{ell}}$, the elliptic part of the trace formula.}

\smallskip

 \noindent
 The problem is actually about $ ST^{\fke}_{\tu{ell}}$ in \eqref{eq:tr(Ig)-intro}, but we can replace $ ST^{\fke}_{\tu{ell}}$ with $T_{\tu{ell}}$ for $G^{\fke}$ once the difference is shown to have lower order of growth. 
 The archimedean test function is stable cuspidal in our setting, so we have Arthur's simple trace formula \cite{ArthurL2} of the following shape:
\begin{equation}\label{eq:IntroExplainStrategy}
T_{\tu{disc}}(f^{\fke,p} f_p^{\fke,(j)}) = T_{\tu{ell}}(f^{\fke,p} f_p^{\fke,(j)}) + (\tu{geometric terms on proper Levi subgroups}).
\end{equation}
The proper Levi terms at finite places look similar to the elliptic part of the trace formula for proper Levi subgroups, but a complicated behavior is seen at the infinite place due to stable discrete series characters along non-elliptic maximal tori of the ambient group. On different open Weyl chambers, we have different character formulas in terms of finite dimensional characters of the Levi subgroup, so this quickly spirals out of control in the induction. Adapting an idea of Laumon \cite{Lau97} from the  non-invariant trace formula, we overcome the difficulty by imposing a regularity condition on the test function at an auxiliary prime $q$ ($\neq p$) and show that the $\Q$-conjugacy classes with nonzero contributions land in a single Weyl chamber. Then a finite dimensional character of a Levi subgroup is itself a stable discrete series character of the same Levi subgroup along elliptic maximal tori of the Levi, so that the inductive argument can continue. (No information is lost by the auxiliary hypothesis at $q$, cf.~\S\ref{sub:completion-of-proof} below.) This technique should prove useful for investigation of compactly supported cohomology of Igusa varieties and Shimura varieties alike.

Returning to our problem, the above argument turns \eqref{eq:tr(Ig)-intro} into
$$ 
\Tr(\phi^{\infty,p} \phi^{(j)}_p | H_c (\IG_b,\lql))=\sum_{\pi^*}m(\pi^*)\Tr(f^{*,p} f_p^{*,(j)}) + (\tu{error terms}),
$$
where $f^{*,p} f_p^{*,(j)}$ is the test function on the quasi-split inner form $G^*$ of $G$ (\ie when $G^\fke=G^*$), and the sum runs over discrete automorphic representations of $G^*(\A)$. At this point, we apply a trace identity. Let $\phi^{*,(j)}_p$ denote a transfer of $\phi^{(j)}_p$ from $J_b$ to its quasi-split inner form $M_b$. For each irreducible smooth representation $\pi^*$ of $G^*(\Q_p)$, we have (Lemma \ref{lem:O-tr-nu-ascent})
$$
\Tr \pi_p^*(f_p^{*,(j)}) = \Tr J(\pi_p^*)(\phi^{*,(j)}_p),
$$
where $J$ is the normalized Jacquet module relative to the parabolic subgroup determined by $\nu_b$ whose Levi component is $M_b$. Since $b$ is non-basic, $M_b$ is a proper Levi subgroup. Moreover the translation $(j)$ is given by a central element satisfying a positivity condition with respect to $\nu_b$. In these circumstances, we make a crucial use of an estimate due to Casselman and Howe--Moore (\S\ref{sub:Jacquet-estimate}), showing that $J(\pi_p^*)(\phi^{*,(j)}_p)$ has the highest growth if and only if $\dim \pi^*_p=1$. A strong approximation argument (\S\ref{sub:one-dim}) promotes this to the condition that $\dim \pi^*=1$, under a group-theoretic condition guaranteed in our setting. Moreover, it is not hard to transfer one-dimensional representations from $M_b(\Q_p)$ to $J_b(\Q_p)$ compatibly with the transfer of functions (\S\ref{sub:transfer-one-dim}). We complete the proof of Theorem \ref{thmx:A} by putting this final piece of the puzzle.

\subsection{A remark on the non-hyperspecial case}

This paper focuses on the case of hyperspecial level at $p$ mainly because the trace formula for Igusa varieties \cite{MackCrane} is available only in that case. Once the trace formula becomes available for Shimura varieties with parahoric level at $p$ (cf.~\S\ref{sub:vHX} below), the methods and results of this paper should extend to that case. To avoid group-theoretic subtleties (e.g., Remark \ref{rem:1-dim_reps} below),  assume that $G$ is quasi-split over $\Q_p$.
Then Theorems \ref{thmx:A} and \ref{thmx:irreducibility} are expected to remain true (with a modified definition of $J_b^{\tu{int}}$). As for Theorem~\ref{thmx:B}, a crucial group-theoretic ingredient is that the diagonal embedding $G(\Q)\ra G(\Q_p)\times G(\R)$ has dense image (weak approximation). If $G$ does not split over an unramified extension of $\Q_p$, then the weak approximation can be false, in which case our argument does not apply. In fact, Oki's example \cite{Oki} suggests that the analogue of Theorem \ref{thmx:B} is false in general, since the prime-to-$p$ Hecke action is not even transitive on the set of connected components of the underlying Shimura variety.
 
\subsection{Work of van Hoften and Xiao}\label{sub:vHX}

Pol van Hoften and Luciena Xiao Xiao \cite{vanHoftenLR,vanHoftenXiao} prove the irreducibility of Igusa varieties (but not Theorems \ref{thmx:A} and \ref{thmx:B} of our paper) and give a counterexample to \textup{(HO$^+_{\disc}$)}.\footnote{The counterexample in \cite[\S6.3]{vanHoftenXiao} is about \textup{(HO$^+_{\disc}$)}, but not \textup{(HO$_{\disc}$)}, cf.~\S\ref{sub:main-to-discHO} of this paper. Note that the maps in Thm.~6.2.1 and Cor.~6.2.2 therein are not asserted to be equivariant for the prime-to-$p$ Hecke actions. In fact, our Theorem \ref{thmx:B} suggesets that those maps should not be equivariant in general.}
Their method is more geometric and totally different from ours in that no use is made of automorphic forms. Further goals in their work and ours are disparate. For instance, \cite{vanHoftenLR} proves new results on the stratification of Shimura varieties and the Langlands--Rapoport conjecture in the parahoric case, whereas our work is a stepping stone for understanding the cohomology of Igusa varieties in all degrees. The two threads could have a future intersection though, as the Langlands--Rapoport conjecture in the parahoric case ought to be an important ingredient for deriving the analogue for Igusa varieties in that case, extending \cite{MackCrane} from the hyperspecial case. 

\subsection{A guide for the reader} The bare-bones structure of our argument is as follows.

{
\footnotesize
$$\xymatrix{
\boxed{
\begin{array}{c}
\mbox{Jacquet module}\\
\mbox{estimate (\S\ref{sub:Jacquet-estimate},\S\ref{sub:one-dim})}\\
+ \\
\mbox{trace formula estimate (\S\ref{sec:asymptotic-analysis})}\\
+\\
\mbox{stable trace formula}\\
\mbox{for~~$H_c(\IG_b,\lql)$~~(\S\ref{sub:Igusa-STF})}
\end{array}
}
\ar[rr]^-{\mbox{Cor.~\ref{cor:red-to-completely-slope-div}}}_-{\mbox{Thm.~\ref{thm:Hc(Ig)}}} &&
\boxed{\begin{array}{c}
\mbox{Thm.~\ref{thm:H0(Ig)}}\vspace{.05in}\\
\mbox{on}~H^0(\IG_b,\lql)\\
\mbox{via auto.~forms}\\
\mbox{(main theorem)}
\end{array}
}
\ar@<-1ex>[r]_-{\mbox{\S\ref{sub:main-to-discHO}}} \ar@<1ex>[r]^-{\mbox{\S\ref{sub:main-to-irreducibility}}}
&
{
\begin{array}{c}
\boxed{\begin{array}{c}
\mbox{irreducibility}\\
\mbox{of}~\IG_b
\end{array}}
\\
+\\
\boxed{\begin{array}{c}
\mbox{discrete}\\
\mbox{HO conjecture}
\end{array}}
\end{array}
}
}
$$

}

\vspace{.05in}

On a first reading, we suggest that all complexities arising from central characters and $z$-extensions should be skipped, e.g., by assuming that all central character data are trivial. In fact this should be the case in many examples. The central character datum is always trivial on the level of $G$ appearing in the Hodge-type datum, but we allow it to be nontrivial mainly because we do not know whether $\cH$ in the endoscopic datum (\S\ref{sub:prelim-endoscopy}) can always be chosen to be an $L$-group. Another good idea is to start reading in \S\ref{sec:Shimura}, especially if one's main interests lie in geometry, referring to the earlier sections only as needed and taking the results there for granted.

Sections \ref{sec:preliminaries} and \ref{sec:Jacquet-endoscopy} consist of mostly background materials in local harmonic analysis and representation theory. Though we claim little originality, there may be some novelty in the way we organize and present them. Some statements would be of independent interest. Section \ref{sec:asymptotic-analysis} is perhaps the most technical as this is where the main trace formula estimates are obtained. As such, most readers may want to take the results in \S\ref{sub:main-estiamte} on faith and proceed, returning to them as needed.

Sections \ref{sec:Shimura} and \ref{sec:Igusa} introduce the main geometric players, namely Shimura varieties, central leaves, and Igusa varieties. Except for \S\ref{sub:components-char0}, we are always in the Hodge-type case with hyperspecial level at $p$. Our main theorem on Igusa varieties is stated in \S\ref{sub:inf-level-Igusa}. After reduction steps in \S\ref{sub:completely-slope-divisible}--\S\ref{sub:Hc} and some recollection of the trace formula setup up to \S\ref{sub:Igusa-STF}, the proof of the theorem is completed in \S\ref{sub:completion-of-proof}. Lastly Section \ref{sec:apps-to-geometry} is devoted to the main geometric applications on irreducibility of Igusa varieties and a local criterion for the discrete Hecke orbit conjecture.

\subsection{Notation}\label{sub:Notation}

\begin{itemize}
\item The trivial character (of the group that is clear from the context) is denoted by $\mathbf{1}$. 
\item If $T$ is a torus over a field $k$ with algebraic closure $\ol k$, $X_*(T):=\Hom_{\ol k}(T,\G_m)$ and  $X^*(T):=\Hom_{\ol k}(\G_m,T)$. When $R$ is a $\Z$-algebra, we write $X_*(T)_R:=X_*(T)\otimes_{\Z} R$ and $X^*(T)_R:=X^*(T)\otimes_{\Z} R$. 
\item $\D:=\varprojlim \G_m$ is the protorus (over an arbitrary base), where the transition maps are the $n$-th power maps. 
\item $\breve \Z_p:=W(\Fpbar)$, $\breve \Q_p:=\Frac\, \breve \Z_p$, and $\sigma\in \Aut(\breve \Q_p)$ is the arithmetic Frobenius. By $\Z_p^{\tu{ur}}$ (resp.~$\Q_p^{\tu{ur}}$) we mean the subring of elements in $\breve \Z_p$ (resp.~$\breve \Q_p$) which are algebraic over $\Q_p$.
\item $\mathscr P(S)$ is the power set of a set $S$.
\item If $H$ is an algebraic group over a field $k$, we write $H^0 \subset H$
for its neutral component.
\end{itemize}

Let $G$ be a connected reductive group over a field $k$ of characteristic 0.
\begin{itemize}
\item If $k$ is a finite extension of $k_0$, then $\Res_{k/k_0}G$ denotes the restriction of scalars group.
\item If $k'$ is an extension field of $k$ then $G_{k'}:=G\times_{\Spec k} \Spec k'$.
\item  $G_{\der}$ is the derived subgroup, $\varrho:G_{\tu{sc}}\ra G_{\der}\subset G$ the simply connected cover, $Z_{G}$ the center (we also write $Z(G)$), $G^{\ad}:=G/Z_{G}$   the adjoint group, and $G^{\ab}:=G/G_{\der}$ the maximal commutative quotient. Write $A_G\subset Z_{G}$ for the maximal split subtorus over $k$.
\item $G(k)_{?}$ is the set of semisimple (resp. regular semisimple, resp. strongly regular) elements in $G(k)$ for $?=\tu{ss}$ (resp. $\tu{reg}$, resp. $\tu{sr}$). We put $T(k)_{?}:=T(k)\cap G(k)_{?}$ for $?\in \{\tu{reg},\tu{sr}\}$.
\item If $k$ is a local field and $G$ a reductive group over $k$, write $\cI(G(k))$ and $\cS(G(k))$ for the spaces of invariant and stable distributions on $G(k)$. (For more details, see \S~\ref{sub:local-Hecke}). By $\Irr(G(k))$ we mean the set of isomorphism classes of irreducible admissible representations of $G(k)$.
\item When $k=\Q_p$, two elements $\delta, \delta' \in G(\breve \Q_p)$ are \emph{$(G(\breve\Q_p),\sigma)$-conjugate} (resp. $(G(\breve\Z_p),\sigma)$\textit{-conjugate}) if there exists a $g \in G(\breve \Q_p)$ (resp. $g \in G(\breve \Z_p)$) such that $\delta' = \sigma(g) \delta g\inv$.
\end{itemize}

Let $T$ (resp. $S$) be a maximal torus (resp. maximal split torus) of $G$ over $k$ with $T \supset S$. Let $M_0$ be a minimal $k$-rational Levi subgroup containing $T$.
 
\begin{itemize}
\item $\Phi(T,G)$ is the set of absolute roots, $\Phi(S,G)=\Phi_k(S,G)$ the set of $k$-rational roots.
\item $\overline \Omega^G = \Omega(T,G)$ for the Weyl group over $\ol k$, and $\Omega^G_k = \Omega(S,G)$ for the $k$-rational Weyl group. We often omit $k$ from $\Phi_k(S,G)$ and $\Omega^G_k$ when it is clear from the context.
\item $\cL(G)$ or $\cL_k(G)$ is the set of all $k$-rational Levi subgroups of $G$ containing $M_0$. Write $\cL^<(G):=\cL(G)\backslash \{G\}$.
\end{itemize}

\begin{lemma}\label{lem:Levi-of-simply-connected}
If $G_{\der}$ is simply connected then every $k$-rational Levi subgroup of $G$ has simply connected derived subgroup.
\end{lemma}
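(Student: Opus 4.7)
The plan is to reduce the statement to a lattice computation with coroots. Since being simply connected is a geometric property, I first pass to the algebraic closure $\bar k$. So I may assume $k$ is algebraically closed, in which case $M$ contains a maximal torus $T$ of $G$, and there is a Borel $B \supset T$ such that $B \cap M$ is a Borel of $M$. Writing $\Delta \subset \Phi:=\Phi(T,G)$ for the set of simple roots determined by $B$, the root system $\Phi_M:=\Phi(T,M_{\der})$ is generated (as a subsystem) by some subset $I \subset \Delta$; this is the standard description of Levi subgroups containing a chosen Borel.

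Next, I reformulate simple-connectedness of derived subgroups in terms of lattices. For any reductive group $H$ with maximal torus $T_H$, the derived group $H_{\der}$ is simply connected if and only if $X_*(T_H \cap H_{\der}) = \mathbb{Z}\Phi(T_H,H)^\vee$, the coroot lattice. Since the simple coroots $\{\alpha^\vee : \alpha \in \Delta\}$ form a $\mathbb{Z}$-basis of $\mathbb{Z}\Phi^\vee$, the hypothesis on $G_{\der}$ gives $X_*(T\cap G_{\der}) = \bigoplus_{\alpha\in\Delta} \mathbb{Z}\alpha^\vee$. On the other hand, since $T\cap M_{\der}$ is a subtorus of $T$ with $X_*$-lattice $\mathbb{Q}$-spanned by $\Phi_M^\vee$, one has
\[
X_*(T\cap M_{\der}) \;=\; X_*(T) \cap \bigl(\mathbb{Q}\cdot \Phi_M^\vee\bigr) \;=\; X_*(T) \cap \bigl(\mathbb{Q}\cdot I^\vee\bigr).
\]

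Now the verification is immediate: take $v \in X_*(T\cap M_{\der})$. Then $v \in X_*(T) \cap (\mathbb{Q}\cdot \Phi^\vee) = X_*(T\cap G_{\der}) = \mathbb{Z}\Phi^\vee$, so write $v = \sum_{\alpha\in \Delta} c_\alpha \alpha^\vee$ with $c_\alpha \in \mathbb{Z}$. Linear independence of the simple coroots, combined with $v \in \mathbb{Q}\cdot I^\vee$, forces $c_\alpha = 0$ for all $\alpha \notin I$, whence $v \in \mathbb{Z}I^\vee = \mathbb{Z}\Phi_M^\vee$. This proves $X_*(T\cap M_{\der}) = \mathbb{Z}\Phi_M^\vee$, which is precisely the assertion that $M_{\der}$ is simply connected.

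There is no genuine obstacle here; the only point requiring mild care is the passage from $k$ to $\bar k$ (handled by the general fact that a $k$-group is simply connected iff its base change to $\bar k$ is), and the standard identification of $k$-rational Levi subgroups over $\bar k$ with standard Levi subgroups after conjugation, which does not affect the isomorphism class of $M_{\der}$.
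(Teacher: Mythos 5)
Your proof is correct and follows essentially the same route as the paper's: reduce to $\bar k$, characterize simple-connectedness of the derived group via the relation between $X_*(T)$ and the coroot lattice, and exploit the fact that the simple coroots of a standard Levi form a subset of the simple coroots of $G$, which are a $\mathbb{Z}$-basis of $\mathbb{Z}\Phi^\vee$. The paper phrases the criterion as torsion-freeness of $X_*(T)/\mathbb{Z}\Phi^\vee$ and concludes by an abstract lattice argument, whereas you phrase it as $X_*(T\cap M_{\der}) = \mathbb{Z}\Phi_M^\vee$ and verify by an explicit coefficient computation; these are equivalent formulations of the same argument.
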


\begin{proof}
This can be checked after base change to $\ol k$, so assume $k=\ol k$. For every maximal torus $T \subset G$, the cocharacter lattice $X_*(T)$ modulo the coroot lattice is torsion free by hypothesis. Thus $X_*(T)$ modulo the lattice generated by an arbitrary subset of simple coroots is torsion free, implying that every Levi subgroup of $G$ has simply connected derived subgroup.  
\end{proof}

\subsection*{Acknowledgments}

AK is partially supported by a NWO VENI grant and a VIDI grant. SWS is partially supported by NSF grant DMS-1802039, NSF RTG grant DMS-1646385, and a Miller Professorship. AK and SWS are grateful to Erez Lapid, Gordan Savin, and Maarten Solleveld for pointing them in the right direction regarding \S\ref{sub:Jacquet-estimate}. We thank Xuhua He, Pol van Hoften, and Rong Zhou for discussions about \S\ref{sub:main-to-discHO}, and especially Zhou for providing us with Example \ref{ex:HO+false} below.

\section{Preliminaries in representation theory and endoscopy}\label{sec:preliminaries}

\subsection{Estimates for Jacquet modules of unitary representations}\label{sub:Jacquet-estimate}

Here we recall some facts from work of Howe--Moore \cite{HoweMoore} and Casselman \cite{CasselmanNotes} in order to bound the absolute value of central characters in the Jacquet modules of unitary representations of $p$-adic reductive groups.

We consider the following setup and notation.
\begin{itemize}
\item Let $F$ be a non-archimedean local field of characteristic $0$. We write $\val_F$, $\cO_F$, $k$, $q$, $\varpi_F$ respectively for the normalized valuation of $F$, the ring of integers of $F$, the residue field of $F$, the cardinality of $k$, and an uniformizer of $F$ so that $\val_F(\varpi_F) = 1$,
\item $G$ is a connected reductive group over $F$ with center $Z=Z_G$,
\item $\Rep(G)$ is the category of smooth representations of $G(F)$,
\item $P = MN$ is a Levi decomposition of an $F$-rational proper parabolic subgroup of $G$,
\item $A_M$ is the maximal $F$-split torus in the center of $M$,
\item $\Delta$ is the set of roots of $A_M$ in $N$,
\item $A_P^-:=\{ x\in A_M(F): |\alpha(x)|\le 1 ,~\forall \alpha\in \Delta\}$,
\item $A_P^{--}:=\{ x\in A_M(F): |\alpha(x)|< 1 ,~\forall \alpha\in \Delta \}$,
\item $\delta_P: M(F)\ra \R^\times_{>0}$ is the modulus character given by $\delta_P(m):=|\det(\textup{Ad}(m)|\Lie N(F))|$.
\item $J_P:\Rep(G)\ra \Rep(M)$ is the normalized Jacquet module functor, so $J_P(\pi) = \pi_N \otimes \delta_P^{-1/2}$ with $\pi_N$ denoting the $N(F)$-coinvariants of $\pi$,
\item $I_P^G:\Rep(M)\ra \Rep(G)$ is the normalized parabolic induction functor, sending $\pi_M$ to the smooth induction of $\pi_M\otimes \delta_P^{1/2}$ from $P(F)$ to $G(F)$.
\item When $R\in \Rep(M)$ has finite length, write $\Exp(R)$ for the set of $A_M(F)$-characters appearing as central characters of irreducible subquotients of $R$.
\end{itemize}

\begin{lemma}\label{lem:normal-in-sc-groups}
If $G$ is simply connected, $F$-simple, and $F$-isotropic, then every normal subgroup of $G(F)$ is either $G(F)$ itself or contained in $Z(F)$.
\end{lemma}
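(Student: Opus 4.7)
The plan is to invoke two classical results on normal subgroup structure of isotropic simple algebraic groups, both known in the setting of a non-archimedean local field of characteristic $0$.

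First, I would appeal to the Borel--Tits theorem on abstract homomorphisms (see \cite{BorelTitsHomomorphismes}, or the more modern presentation in Margulis' book): if $G$ is an absolutely almost simple, simply connected, isotropic algebraic group over an infinite field $F$, and $N \trianglelefteq G(F)$ is a normal subgroup, then either $N \subset Z(F)$ or $N \supset G(F)^{+}$, where $G(F)^{+}$ denotes the (normal) subgroup of $G(F)$ generated by the $F$-points of the unipotent radicals of all $F$-rational parabolic subgroups of $G$. Here the hypothesis that $G$ is $F$-isotropic ensures that $G(F)^{+}$ is nontrivial. Strictly speaking, $F$-simple is slightly weaker than absolutely almost simple, but if $G$ is $F$-simple and simply connected then $G = \Res_{F'/F} G'$ for a finite separable extension $F'/F$ and an absolutely almost simple, simply connected $G'$ over $F'$; since $F'$ is then also a non-archimedean local field of characteristic $0$, $G'$ is $F'$-isotropic iff $G$ is $F$-isotropic, and $G(F) = G'(F')$, so the statement reduces to the absolutely almost simple case.

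Second, I would apply the Kneser--Tits conjecture in the local setting, which is a theorem: for a simply connected, absolutely almost simple, isotropic group $G'$ over a non-archimedean local field $F'$ of characteristic $0$, one has $G'(F') = G'(F')^{+}$. This is due to Platonov (with contributions by Prasad--Raghunathan) and can be found surveyed in \cite{PlatonovRapinchuk} or \cite[\S7.2]{GilleKneserTits}. Combined with the previous paragraph, any normal subgroup $N \trianglelefteq G(F)$ not contained in $Z(F)$ must satisfy $N \supset G(F)^{+} = G(F)$, hence $N = G(F)$.

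The argument is thus essentially a citation of Borel--Tits plus Kneser--Tits for local fields; the only subtlety is the reduction from $F$-simple to absolutely almost simple via restriction of scalars, which I would spell out explicitly to make sure the hypotheses of the quoted theorems are met. There is no real obstacle since characteristic $0$ and non-archimedean local avoid all the subtle open cases of Kneser--Tits (e.g., certain $E_8$ forms over global fields).
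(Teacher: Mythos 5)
Your proposal is correct, but it follows a different path from the paper's. The paper argues: a normal subgroup $N\not\subset Z(F)$ is open of finite index by \cite[Prop.~3.17]{PlatonovRapinchuk}; since $G(F)=G(F)^+$ (Kneser--Tits for local fields), Tits' theorem as proved in \cite{PrasadTits} says every proper open subgroup of $G(F)$ is compact; and $N$ is non-compact (adjoint action of a maximal $F$-split torus on a root subgroup), so $N=G(F)$. You instead combine Tits' simplicity theorem (any subgroup of $G(F)$ normalized by $G(F)^+$ is central or contains $G(F)^+$) with the same Kneser--Tits input $G(F)=G(F)^+$. Both routes hinge on the Kneser--Tits property, so the real difference is in the second ingredient: you use the normal-subgroup/simplicity theorem of Tits (1964), while the paper trades that for a topological argument (openness of non-central normal subgroups, compactness of proper open subgroups, non-compactness of $N$). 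Your version is arguably more direct and works over any infinite field once Kneser--Tits is known; the paper's version stays closer to the references it already carries. Your reduction from $F$-simple to absolutely almost simple via $G=\Res_{F'/F}G'$ with $G(F)=G'(F')$ is correct and worth spelling out, as you say; one minor point is that the normal-subgroup statement you want is Tits' simplicity theorem rather than the Borel--Tits theorem on abstract homomorphisms proper, though the two are commonly packaged together and the former is what Margulis' exposition gives.
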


\begin{proof}
A normal subgroup $N$ of $G(F)$ not contained in $Z(F)$ is open of finite index in $G(F)$ by \cite[Prop.~3.17]{PlatonovRapinchuk} since $G$ is $F$-simple. Since $G(F)$ is $F$-isotropic and simply connected, $G(F)$ is generated by the $F$-points of the unipotent radicals of $F$-rational parabolic subgroups. Thus, by Tit's theorem proven in \cite{PrasadTits}, every open proper subgroup of $G(F)$ is compact. On the other hand, $N$ is easily seen to be non-compact by considering the adjoint action of a maximal $F$-split torus on a root subgroup.\footnote{For instance, see the proof of Proposition 3.9 in \url{http://virtualmath1.stanford.edu/~conrad/JLseminar/Notes/L2.pdf} for details.} Therefore $N=G(F)$.
\end{proof}

\begin{proposition}[Howe--Moore]\label{prop:Howe-Moore}
Assume that $G_{\tu{sc}}$ is $F$-simple. Let $\pi$ be an infinite dimensional irreducible unitary representation of $G(F)$. Then there exists an integer $2\le k<\infty$ such that every matrix coefficient of $\pi$ belongs to $L^k(G(F)/Z(F))$.
\end{proposition}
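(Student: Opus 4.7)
My plan is to reduce the proposition to the classical Howe--Moore vanishing theorem applied to the simply connected cover, and then upgrade pointwise decay to $L^k$-integrability via a Cowling-type majorization by a power of the Harish-Chandra spherical function.

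First I would argue that $G_{\tu{sc}}$ must be $F$-isotropic: otherwise $G_{\tu{sc}}(F)$ is compact, hence so is $G^{\tu{ad}}(F) = G_{\tu{sc}}(F)/Z(G_{\tu{sc}})(F)$, and the inclusion $G(F)/Z(F) \hookrightarrow G^{\tu{ad}}(F)$ realizes $G(F)/Z(F)$ as a relatively compact group, forcing every irreducible unitary representation to be finite-dimensional by Peter--Weyl. Next I would pull $\pi$ back via $\varrho$ to a unitary representation $\pi' := \pi \circ \varrho$ of $G_{\tu{sc}}(F)$. Since $\varrho(G_{\tu{sc}}(F))$ is normal in $G(F)$, the subspace of $G_{\tu{sc}}(F)$-invariants in $\pi'$ is $G(F)$-stable, so by irreducibility of $\pi$ it is either $0$ or all of $\pi$. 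The second case would factor $\pi$ through the abelian quotient $G(F)/G_{\tu{der}}(F)$ and force $\dim \pi = 1$, contradicting the hypothesis. Therefore $\pi'$ has no nonzero $G_{\tu{sc}}(F)$-invariant vectors.

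Applying the $p$-adic Howe--Moore theorem to the $F$-simple $F$-isotropic group $G_{\tu{sc}}$ then shows that matrix coefficients of $\pi'$ vanish at infinity modulo $Z(G_{\tu{sc}})(F)$. To upgrade this to $L^k$-integrability I would invoke the quantitative estimate of Cowling (building on Howe--Moore, as expounded in Casselman's notes): every $K$-finite matrix coefficient of $\pi'$ is bounded by a constant multiple of $\Xi^\theta$ for some exponent $\theta = \theta(\pi') \in (0,1]$, where $\Xi$ is the Harish-Chandra bi-$K$-invariant spherical function on $G_{\tu{sc}}(F)$. Since $\Xi$ lies in $L^{2+\eps}$ modulo center for every $\eps > 0$, the power $\Xi^\theta$ lies in $L^k$ whenever $k \geq (2+\eps)/\theta$, so matrix coefficients of $\pi'$ belong to $L^k(G_{\tu{sc}}(F)/Z(G_{\tu{sc}})(F))$ for some finite $k \geq 2$. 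Finally, $Z(F) \cdot \varrho(G_{\tu{sc}}(F))$ is cocompact in $G(F)$ because $Z \to G^{\tu{ab}}$ is an isogeny of $F$-tori with image of finite index on $F$-points (by local Galois cohomology), so a Fubini argument transfers $L^k$-integrability down to $G(F)/Z(F)$.

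The hard part will be the quantitative majorization of matrix coefficients by $\Xi^\theta$: vanishing at infinity from Howe--Moore is formally weaker than $L^k$-integrability for any finite $k$, and the $\Xi^\theta$-bound rests on a nontrivial structural analysis of the unitary dual of $G_{\tu{sc}}(F)$ (complementary series classification in the rank one case, Kazhdan's property (T) in higher rank). A secondary technical point is ensuring the exponent $\theta$ is uniform across the direct-integral decomposition of $\pi'$, which should follow because all irreducible constituents of $\pi'$ are $G(F)$-conjugate to one another and hence share the same $\theta$.
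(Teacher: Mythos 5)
Your route is viable but genuinely different from the paper's, and it makes the key step look harder than it is. The paper does not pass through the qualitative vanishing theorem at all: it cites the discussion on pp.~74--75 of Howe--Moore (below their Theorem 6.1), which already contains the $L^k$-integrability statement modulo the \emph{projective kernel} $Z'=\{g\in G(F): \pi(g)\ \textup{is scalar}\}$; the entire content of the paper's proof is then the group-theoretic verification that $Z'=Z(F)$, done by reducing (via a $z$-extension and a Clifford-theoretic passage to constituents as in \cite[Lem.~6.2]{Xu-lifting}) to the simply connected $F$-simple case, where $Z'$ is a proper normal subgroup because $\dim\pi=\infty$ and hence central by Lemma~\ref{lem:normal-in-sc-groups}. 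That verification is genuinely needed -- if $Z'/Z(F)$ were noncompact no coefficient could lie in any $L^k(G(F)/Z(F))$ -- but your descent through $G_{\tu{sc}}(F)$ and the finite-index subgroup $Z(F)\varrho(G_{\tu{sc}}(F))$ handles it implicitly, so there is no gap there. Where you overshoot is the step you flag as hard: for a \emph{fixed} irreducible unitary $\pi'$ of a $p$-adic group one needs neither property (T) nor a classification of complementary series (those only enter if one wants $\theta$ uniform over the unitary dual). Since $\pi'$ is admissible, its matrix coefficients are governed by the finitely many exponents of its Jacquet modules; qualitative Howe--Moore forces each exponent strictly inside the relevant cone, finiteness of the exponent set gives a uniform decay rate, and \cite[Cor.~4.4.5]{CasselmanNotes} converts this to $L^k$ -- the same mechanism the paper runs in the other direction in Proposition~\ref{prop:BoundExp}. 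Two minor repairs: $G_{\tu{sc}}(F)/Z_{G_{\tu{sc}}}(F)\to G^{\tu{ad}}(F)$ is only injective with finite cokernel, not an equality (harmless for your compactness argument), and since the proposition concerns \emph{all} matrix coefficients rather than $K$-finite ones, your majorization must be propagated from a dense subspace to the whole Hilbert space by the standard Cowling--Haagerup--Howe argument.
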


\begin{proof}
This follows from the explanation on pp.74--75 of \cite{HoweMoore} below Theorem 6.1, once we verify the following claim: if $\pi(g)$ is a scalar operator for $g\in G(F)$ then $g\in Z(F)$. Taking a $z$-extension of $G$, we reduce to the case when $G_{\tu{der}}$ is simply connected. Pulling back $\pi$ via the multiplication map $Z(F)\times G_{\tu{der}}(F)\ra G(F)$ and passing to one of the finitely many constituents (cf.~\cite[Lem.~6.2]{Xu-lifting}) which is infinite-dimensional, we may assume that $G$ is itself $F$-simple and simply connected. Now $Z'$ be the group of $g\in G(F)$ such that $\pi(g)$ is a scalar. Then $Z'$ is a normal subgroup of $G(F)$, and $Z'\neq G(F)$ since $\dim \pi=\infty$. Therefore $Z'\subset Z(F)$ by Lemma \ref{lem:normal-in-sc-groups}, proving the claim.
\end{proof}

\begin{proposition}[Casselman]\label{prop:BoundExp}
Let $\pi$ be an irreducible unitary representation of $G(F)$. For every $\omega \in \textup{Exp}(\pi_N)$ and every $a\in  A_P^{-}$, we have the inequality
\begin{equation}\label{eq:CasselmanBound}
|\omega(a)| \leq 1.
\end{equation}
If $G_{\tu{sc}}$ is $F$-simple and $a\in A_P^{--}$, then the equality holds if and only if $\dim \pi<\infty$.
\end{proposition}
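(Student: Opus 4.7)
The plan is to derive both statements from Casselman's exact formula for matrix coefficients of $\pi$ in terms of the Jacquet module, combined with unitarity (for the inequality) and Proposition~\ref{prop:Howe-Moore} (for the strict inequality when $\dim\pi=\infty$). The starting point: for every $v \in \pi$ and $\tilde v \in \tilde\pi$, there exists $a_0 \in A_P^{--}$ such that, for all $a \in a_0 \cdot A_P^-$,
\[
\langle \pi(a)v, \tilde v\rangle = \langle \pi_N(a) v_N, \tilde v_{\bar N}\rangle_{\mathrm{can}},
\]
where $v_N$, $\tilde v_{\bar N}$ are the images in the Jacquet modules with respect to $P$ and its opposite $\bar P$, and $\langle\cdot,\cdot\rangle_{\mathrm{can}}$ is Casselman's canonical pairing. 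Since $\pi_N$ has finite length, the $A_M(F)$-action decomposes it into generalized eigenspaces, one for each $\omega \in \Exp(\pi_N)$. For a prescribed $\omega$, I pick $v \in \pi$ with $v_N$ in the generalized $\omega$-eigenspace and $\tilde v$ with $\tilde v_{\bar N}$ so that $\langle v_N, \tilde v_{\bar N}\rangle_{\mathrm{can}} \neq 0$ (possible by non-degeneracy of the canonical pairing), yielding
\[
\langle \pi(a)v, \tilde v\rangle = q(\log a)\, \omega(a), \qquad a \in a_0 \cdot A_P^-,
\]
with $q$ a nonzero polynomial.

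For the inequality, fix $a_1 \in A_P^-$. If $|\omega(a_1)| > 1$, then along $a = a_0 a_1^n$ as $n \to \infty$, the expression $|q(n\log a_1 + \log a_0)|\,|\omega(a_1)|^n$ is unbounded, contradicting the unitarity estimate $|\langle \pi(g)v, \tilde v\rangle| \leq \|v\|\|\tilde v\|$. Hence $|\omega(a_1)|\leq 1$ for every $a_1 \in A_P^-$.

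For the equivalence on $A_P^{--}$: the ``if'' direction uses Lemma~\ref{lem:normal-in-sc-groups}. Since $P$ is proper, $G$ (and hence $G_{\tu{sc}}$) is $F$-isotropic. A finite-dimensional smooth $\pi$ pulled back to $G_{\tu{sc}}(F)$ has normal open kernel, which by the lemma must be all of $G_{\tu{sc}}(F)$: a kernel contained in the finite center $Z(G_{\tu{sc}})(F)$ cannot be open in the non-compact $G_{\tu{sc}}(F)$. So $\pi$ descends to a unitary character of $G(F)$, giving $\pi_N = \pi|_M$ and $\omega = \pi|_{A_M}$ unitary, so $|\omega(a)|=1$ on all of $A_M(F)$, in particular on $A_P^{--}$. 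For the ``only if'' direction, assume $\dim\pi = \infty$; by Proposition~\ref{prop:Howe-Moore}, every matrix coefficient of $\pi$ lies in $L^k(G(F)/Z(F))$ for some finite $k$ and, being smooth (right-invariant under a compact open subgroup of positive measure), vanishes at infinity modulo $Z(F)$. For $a \in A_P^{--}$, we have $a \notin Z(F)$ and $a^n$ leaves every compact set in $G(F)/Z(F)$ (because $|\alpha(a^n)| = |\alpha(a)|^n \to 0$ for some $\alpha \in \Delta$), so $\langle \pi(a^n)v,\tilde v\rangle \to 0$. With $v, \tilde v$ as above, this coefficient equals $q(n\log a)\,\omega(a)^n$ with $q$ a nonzero polynomial and $|\omega(a)|=1$, forcing $|q(n\log a)|\to 0$, which is impossible. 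Hence $\dim\pi<\infty$.

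The only delicate point in this plan is isolating a single $\omega$-contribution in the matrix coefficient expansion, which is handled cleanly by choosing $v_N$ in a single generalized $A_M(F)$-eigenspace and exploiting the non-degeneracy of Casselman's canonical pairing; beyond that, everything reduces to a routine polynomial-versus-exponential comparison once Casselman's formula and Howe--Moore's vanishing at infinity are in hand.
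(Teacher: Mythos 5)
Your argument is correct in outline and, for the inequality $|\omega(a)|\le 1$, it is essentially a from-scratch reconstruction of the Casselman results that the paper simply cites (\S4.4 of his notes, extended from $L^p$-integrable to bounded matrix coefficients): both arguments amount to feeding the asymptotic expansion of matrix coefficients via the canonical pairing into the unitarity bound. Where you genuinely diverge is the infinite-dimensional direction. The paper invokes Casselman's Cor.~4.4.5, which converts the $L^k$-integrability supplied by Howe--Moore (Proposition~\ref{prop:Howe-Moore}) directly into the quantitative bound $|\omega(a)\delta_P^{-1/k}(a)|<1$; you instead observe that a smooth matrix coefficient in $L^k(G(F)/Z(F))$ must vanish at infinity modulo the center, and play that off against the polynomial-times-$\omega(a)^n$ asymptotics along $a^n$. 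Your route is softer and avoids the integration over the cone hidden in Cor.~4.4.5, at the cost of not producing the explicit $\delta_P^{-1/k}$ saving (which the paper does not need either). The finite-dimensional direction is the same in substance: you pass through Lemma~\ref{lem:normal-in-sc-groups} to see that $\pi$ is a unitary character, while the paper argues slightly more economically that $N(F)\cap\ker\pi$ open forces $N(F)\subset\ker\pi$ by $A_M(F)$-conjugation, so that $\pi_N=\pi|_M$ has a single unitary exponent.

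One step needs tightening. In the boundedness argument you conclude that $|q(n\log a_1+\log a_0)|\,|\omega(a_1)|^n$ is unbounded because $q$ is a nonzero polynomial; but $q$ is a polynomial in several variables (the coordinates on $A_M(F)$ modulo its maximal compact subgroup), and knowing $q(0)=\langle v_N,\tilde v_{\bar N}\rangle_{\mathrm{can}}\neq 0$ does not prevent $q$ from vanishing identically on the shifted ray $\{n\log a_1+\log a_0\}_{n\ge 0}$ — the shift by $a_0$ is unavoidable here since $a_1^n$ alone need not enter the domain of validity of Casselman's formula when $a_1$ lies on a wall of $A_P^-$. If $q$ does vanish on that ray, no contradiction arises. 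The standard fix is to take $v_N$ to be an honest $A_M(F)$-eigenvector inside the generalized $\omega$-eigenspace (such a vector always exists), so that $\langle \pi_N(a)v_N,\tilde v_{\bar N}\rangle_{\mathrm{can}}=\omega(a)\langle v_N,\tilde v_{\bar N}\rangle_{\mathrm{can}}$ with a nonzero constant, and unitarity gives $|\omega(a_0)|\,|\omega(a_1)|^n\le \|v\|\,\|\tilde v\|/|\langle v_N,\tilde v_{\bar N}\rangle_{\mathrm{can}}|$ directly. Your ``only if'' step does not suffer from this defect, since there the relevant one-variable polynomial $n\mapsto q(n\log a)$ takes the nonzero value $q(0)$ at $n=0$ and is therefore not identically zero.
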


\begin{proof}
The inequality \eqref{eq:CasselmanBound} follows from the obvious extension of \cite[\S4.4]{CasselmanNotes} (where $p<\infty$ is assumed) to cover the case $p=\infty$. (For instance, \cite[Lem.~4.4.3, Prop.~4.4.4]{CasselmanNotes} have the analogues for $p=\infty$, with ``bounded'' in place of ``summable'' and ``$|\chi(x)|\le 1$'' in place of ``$|\chi(x)|<1$''.)

As for the last assertion, suppose that $\dim \pi=\infty$. In the notation of \cite[\S2.5]{CasselmanNotes}, Proposition \ref{prop:Howe-Moore} tells us that the matrix coefficient is $L^k$, \ie $|c_{v,\tilde v}|^k$ is integrable modulo center for some $2\le k<\infty$. Applying \cite[Cor.~4.4.5]{CasselmanNotes} to $p=k$, $F=c_{v,\tilde v}$ and $a\in A_P^{--}$, we obtain that $|\omega(a)\delta_P^{-1/k}(a)|<1$. Therefore $|\omega(a)|<1$. For the converse, suppose that $\dim\pi<\infty$. Then $\ker \pi$ is an open subgroup of $G(F)$. As the open subgroup $N(F)\cap \ker \pi$ of the unipotent subgroup $N(F)$ acts trivially on $\pi$, we see that $N(F)$ itself acts trivially on $\pi$. (Use conjugation by $A_M(F)$.) Therefore $\textup{Exp}(\pi_N)$ consists of the central character $\omega$ of $\pi$ (restricted to $M(F)$) only, which is unitary. In particular $|\omega(a)|=1$ for all $a\in A_P^{--}$
\end{proof}

\begin{remark}
Proposition~\ref{prop:BoundExp} is sharp in general. (Some strengthening is possible under a hypothesis, cf.~\cite{OhDuke}.) For example, consider $G = \GL_2(F)$ with $P$ (resp.~$N$) consisting of upper triangular (resp.~upper triangular unipotent) matrices. The complementary series representations $\pi_\epsilon = I_P^G(|\cdot|^{\epsilon}, |\cdot|^{- \epsilon})$ with $\epsilon \in \R$ with $0 < \epsilon < 1/2$ are irreducible and unitary. We have
\begin{align*}
(\pi_\epsilon)_N = J_N(\pi_\epsilon)\otimes\delta_P^{1/2}  &=  \delta_P^{1/2} \otimes \lhk ( |\cdot|^{\epsilon}, |\cdot|^{-\epsilon} ) \oplus  ( |\cdot|^{-\epsilon}, |\cdot|^{\epsilon} ) \rhk  \cr
&=  (|\cdot|^{\epsilon+\frac{1}{2}}, |\cdot|^{-\epsilon-\frac{1}{2}}) \oplus (|\cdot|^{-\epsilon+\frac{1}{2}}, |\cdot|^{\epsilon-\frac{1}{2}} ).
\end{align*}
So in this case, $\Exp((\pi_\epsilon)_N)$ contains the character $\omega=(|\cdot|^{-\epsilon+\frac{1}{2}}, |\cdot|^{\epsilon-\frac{1}{2}})$ of $\Q_p^\times\times \Q_p^\times$. Then $a=\vierkant p001\in A_P^{--}$. We get $\omega(a) = p^{\epsilon-\frac{1}{2}}$ which gets arbitrarily close to $1$ as $\epsilon$ tends to $1/2$. 
\end{remark}

\begin{lemma}\label{lem:fin-implies-one}
Assume that $G^{\textup{ad}}$ has no $F$-anisotropic factor. Then every irreducible smooth representation of $G(F)$ is either one-dimensional or infinite-dimensional.
\end{lemma}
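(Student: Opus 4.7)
The plan is to first show that any finite-dimensional irreducible smooth representation $\pi$ of $G(F)$ is trivial on $\varrho(G_{\tu{sc}}(F))$, and then to verify that the quotient $G(F)/\varrho(G_{\tu{sc}}(F))$ is abelian; together these force $\dim \pi = 1$.

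For the first step, I would observe that $\ker(\pi)$ is both closed (as preimage of the identity in $\GL(V)$) and open (smoothness of $\pi$ gives open stabilizers for a basis of $V$, whose intersection lies in the kernel), hence an open normal subgroup of $G(F)$. Writing $G_{\tu{sc}} = \prod_i H_i$ with each $H_i$ an $F$-simple simply connected group, the hypothesis on $G^{\ad}$ forces each $H_i$ to be $F$-isotropic. Setting $N_i := H_i(F) \cap \varrho^{-1}(\ker \pi)$, this is an open normal subgroup of $H_i(F)$; by Lemma~\ref{lem:normal-in-sc-groups}, either $N_i = H_i(F)$ or $N_i \subset Z(H_i)(F)$. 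Since $Z(H_i)$ is finite while any open subgroup of the non-compact group $H_i(F)$ is infinite, we must have $N_i = H_i(F)$. Therefore $\varrho(G_{\tu{sc}}(F)) \subset \ker \pi$, and $\pi$ factors through $G(F)/\varrho(G_{\tu{sc}}(F))$.

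For the second step, Kneser's theorem $H^1(F, G_{\tu{sc}}) = 0$ combined with the long exact cohomology sequence attached to $1 \to Z_{G_{\tu{sc}}} \to G_{\tu{sc}} \to G_{\der} \to 1$ yields an isomorphism $G_{\der}(F)/\varrho(G_{\tu{sc}}(F)) \simeq H^1(F, Z_{G_{\tu{sc}}})$, a finite abelian group. The quotient $G(F)/G_{\der}(F)$ embeds into the abelian group $G^{\ab}(F)$. It remains to check that the resulting extension
$$
1 \to H^1(F, Z_{G_{\tu{sc}}}) \to G(F)/\varrho(G_{\tu{sc}}(F)) \to G(F)/G_{\der}(F) \to 1
$$
is itself abelian, i.e.~that conjugation by $G(F)$ on $G_{\der}(F)/\varrho(G_{\tu{sc}}(F))$ is trivial. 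Under the identification above, this corresponds to the action of $G(F)$ on $H^1(F, Z_{G_{\tu{sc}}})$ induced by lifting inner automorphisms of $G$ to $G_{\tu{sc}}$ (this lift exists and is canonical because the isogeny is central). Since $Z_{G_{\tu{sc}}}$ is the center of $G_{\tu{sc}}$, inner automorphisms of $G_{\tu{sc}}$ act trivially on it, hence also on $H^1(F, Z_{G_{\tu{sc}}})$.

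Once the quotient is abelian, any finite-dimensional irreducible $\C$-representation of it is one-dimensional by Schur's lemma, completing the argument. The main obstacle I anticipate is the second step, specifically the verification that the conjugation action of $G(F)$ on $H^1(F, Z_{G_{\tu{sc}}})$ is trivial; the remaining content is essentially a direct application of Lemma~\ref{lem:normal-in-sc-groups} together with bookkeeping around the decomposition of $G_{\tu{sc}}$ into its $F$-simple factors.
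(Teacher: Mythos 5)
Your Step 1 is sound and runs along the same lines as the paper: after noting $\ker\pi$ is open normal, you reduce to the $F$-simple factors of $G_{\tu{sc}}$ and invoke Lemma~\ref{lem:normal-in-sc-groups} together with non-compactness of isotropic $H_i(F)$ to get $\varrho(G_{\tu{sc}}(F)) \subset \ker\pi$. That part is fine.

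Step 2, however, has a genuine gap. You reduce ``the extension
\[
1 \to H^1(F, Z_{G_{\tu{sc}}}) \to G(F)/\varrho(G_{\tu{sc}}(F)) \to G(F)/G_{\der}(F) \to 1
\]
is abelian'' to ``conjugation by $G(F)$ on $H^1(F, Z_{G_{\tu{sc}}})$ is trivial.'' These are not equivalent. Triviality of the conjugation action shows only that $A = H^1(F, Z_{G_{\tu{sc}}})$ is \emph{central} in $E = G(F)/\varrho(G_{\tu{sc}}(F))$. A central extension of an abelian group by an abelian group need not be abelian: the commutator map $[\cdot,\cdot]\colon E\times E\to A$ descends to an alternating bilinear pairing $B\times B\to A$ on $B=G(F)/G_{\der}(F)$, and $E$ is abelian only if this pairing vanishes (the Heisenberg group over $\F_p$ is the standard counterexample where it does not). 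Your argument does not address this pairing at all.

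The paper avoids this issue altogether by first reducing to the case $G_{\der} = G_{\tu{sc}}$ via a $z$-extension (legitimate because $Z_1$ is induced, so $G_1(F)\twoheadrightarrow G(F)$, and $G_1^{\ad}=G^{\ad}$, so the hypothesis is preserved and irreducibility and finite-dimensionality pull back); after that, $\varrho(G_{\tu{sc}}(F)) = G_{\der}(F)$ and the quotient $G(F)/G_{\der}(F)\hookrightarrow G^{\ab}(F)$ is patently abelian, so the Galois-cohomology manipulations become unnecessary. If you want to avoid the $z$-extension reduction, the right substitute for your Step 2 is the fact that the abstract commutator subgroup $G(F)_{\der}$ is already contained in $\varrho(G_{\tu{sc}}(F))$ (Deligne, Corvallis 2.0.2, quoted in~\S\ref{sub:transfer-one-dim} of this paper around~\eqref{eq:G-flat-ab}): this is exactly the statement that $G(F)/\varrho(G_{\tu{sc}}(F))$ is abelian, proved directly rather than through the exact sequence.
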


\begin{proof}
We may assume that $G_{\tu{der}}$ is simply connected via $z$-extensions. Suppose that $\pi$ is a finite-dimensional irreducible smooth representation of $G(F)$. Then the normal subgroup $\ker\pi \cap G_{\tu{der}}(F)$ of $G_{\tu{der}}(F)$ is open. Lemma \ref{lem:normal-in-sc-groups} implies that $\ker\pi \cap G_{\tu{der}}(F)= G_{\tu{der}}(F)$, thus $\pi$ factors through the abelian quotient $G(F)/G_{\tu{der}}(F)$. Therefore $\dim \pi=1$, completing the proof.
\end{proof}

\subsection{Local Hecke algebras and their variants}\label{sub:local-Hecke}

We retain the notation from the preceding section but allow the local field $F$ to be either non-archimedean or archimedean. A basic setup of local Hecke algebras will be introduced, partly following \cite[\S1]{ArthurLocalCharacter}.

Fix a Haar measure on $G(F)$ and a maximal compact subgroup $K\subset G(F)$. Let $G(F)_{\tu{sr}}$ denote the subset of strongly regular elements $g\in G(F)$, namely the semisimple elements whose centralizers in $G$ are (maximal) tori. By \cite[2.15]{SteinbergRegular}, $G(F)_{\tu{sr}}$ is open and dense in $G(F)$ (for both the Zariski and non-archimedean topologies). Write $R(G)$ for the space of finite $\C$-linear combinations of irreducible characters of $G(F)$, which is a subspace in the space of functions on $G(F)_{\tu{sr}}$. We also identify $R(G)$ with the Grothendieck group of smooth finite-length representations of $G(F)$ with $\C$-coefficients. Let $\cH(G)=\cH(G(F))$ denote the space of smooth compactly supported bi-$K$-finite functions on $G(F)$. Let $\cI(G)$ denote the invariant space of functions on $G(F)$, namely the quotient of $\cH(G)$ by the ideal generated by functions of the form $g\mapsto f(g)-f(hgh^{-1})$ with $h\in G(F)$ and $f\in \cH(G)$. From \cite[Thm.~0]{KazhdanCuspidal}, we see that $f\in \cH(G)$ has trivial image in $\cI(G)$ if and only if its orbital integral vanishes on $G(F)_{\tu{sr}}$ if and only if $\Tr \pi(f)=0$ for all irreducible tempered representations of $G(F)$; moreover, the same is true if $G(F)_{\tu{sr}}$ is replaced with $G(F)$ and if the temperedness condition is dropped. By abuse of notation, we frequently write $f\in \cI(G)$ to mean a representative $f\in \cH(G)$ of an element in $\cI(G)$. The trace Paley--Wiener theorem \cite{BDK86} describes  $\cI(G)$ as a subspace of $\C$-linear functionals on $R(G)$ via
\begin{equation}\label{eq:trace-PW}
f\mapsto \bigg(\Theta \mapsto \int_{G(F)_{\tu{sr}}} f(g)\Theta(g) dg\bigg).
\end{equation}
If $R(G)$ is thought of as a Grothendieck group, the above map is simply
$f\mapsto (\pi\mapsto \Tr \pi(f))$.

Denote by $\cS(G)$ the quotient of $\cH(G)$ by the ideal generated by functions each of which has vanishing stable orbital integrals on $G(F)_{\tu{sr}}$. Thus we have natural surjections $\cH(G)\twoheadrightarrow \cI(G)\twoheadrightarrow \cS(G)$. By $R(G)^{\tu{st}}$ we mean the subspace of $R(G)$ consisting of stable linear combinations (\ie~constant on each stable conjugacy class in $G(F)_{\tu{sr}}$). Then $\cS(G)$ is identified with a subspace of functions on $R(G)^{\tu{st}}$ via \eqref{eq:trace-PW} (since $\Theta\in R(G)^{\tu{st}}$ now, the image depends only on the image of $f$ in $\cS(G)$); the subspace is characterized by \cite[Thm.~6.1,~6.2]{ArthurLocalCharacter} in the $p$-adic case, cf.~last paragraph on p.491 of \cite{XuCuspidalSupport}. Via the obvious quotient map $\cI(G)\ra \cS(G)$ and the restriction map from $R(G)$ to $R(G)^{\tu{st}}$, we have a commutative diagram $$\xymatrix{ \cI(G) \ar[r] \ar[d]_-{\Tr} & \cS(G) \ar[d]^-{\Tr} \\
\Hom_{\C\tu{-linear}}(R(G),\C)  \ar[r] & \Hom_{\C\tu{-linear}}(R(G)^{\tu{st}},\C).
}$$

Let us extend the setup so far to allow a fixed central character. By a \textbf{local central character datum} for $G$, we mean a pair $(\fkX,\chi)$, where
\begin{itemize}
\item $\fkX$ is a closed subgroup of $Z(F)$ with a Haar measure $\mu_{\fkX}$ on $\fkX$,
\item $\chi:\fkX\ra \C^\times$ is a smooth character.
\end{itemize}
Let $\cH(G,\chi^{-1})=\cH(G(F),\chi^{-1})$ denote the space of smooth bi-$K$-finite functions $f$ on $G(F)$ which have compact support modulo $\fkX$ and satisfy $f(xg)=\chi^{-1}(x)f(g)$ for $x\in \fkX$ and $g\in G(F)$. The $\chi$-averaging map
$$
\cH(G)\ra \cH(G,\chi^{-1}),\qquad f\mapsto \bigg( g\mapsto \int_{\fkX} f(gz)\chi(z) d\mu_{\fkX} \bigg),
$$
is a surjection. We have the obvious definitions of $\cI(G,\chi^{-1})$ and $\cS(G,\chi^{-1})$, the $\chi$-averaging maps $\cI(G)\ra \cI(G,\chi^{-1})$ and $\cS(G)\ra \cS(G,\chi^{-1})$, as well as the quotient maps
$$
\cH(G,\chi^{-1})\twoheadrightarrow \cI(G,\chi^{-1})\twoheadrightarrow \cS(G,\chi^{-1}).
$$
We can think of $\cI(G,\chi^{-1})$ as a subspace of functions on $R(G,\chi)$, the subspace of $R(G)$ generated by irreducible characters with central character $\chi$. Analogously $\cS(G,\chi^{-1})$ is the subspace of functions on $R(G,\chi)^{\tu{st}}$ defined similarly.

\subsection{Transfer of one-dimensional representations}\label{sub:transfer-one-dim}

Let $G$ and $G^*$ be connected reductive groups over a non-archimedean local field $F$ of characteristic zero, with $G^*$ quasi-split over $F$. Let $\xi:G_{\ol F}\isom G^*_{\ol F}$ be an inner twisting, namely an $\ol F$-isomorphism such that $\xi^{-1} \sigma(\xi)$ is an inner automorphism of $G_{\ol F}$ for every $\sigma\in \Gal(\ol F/F)$. As in \S\ref{sub:Notation}, we have canonical $F$-morphisms $\varrho:G_{\textup{sc}}\ra G$ and $\varrho^*:G^{*}_{\textup{sc}}\ra G^*$. Define an $F$-torus and two topological groups 
$$
G^{\flat}:=G/G_{\der},\qquad  G(F)^{\flat}:=\textup{cok}(G_{\textup{sc}}(F)\stackrel{\varrho}{\ra} G(F)),\qquad G(F)^{\ab}:=G(F)/G(F)_{\der},
$$
where $G(F)_{\der}$ is the commutator subgroup of $G(F)$ as an abstract group, which is closed in $G(F)$. (This is clear if $G$ is a torus. If not, $G(F)_{\der}$ is not contained in $Z_{G_{\der}}(F)$ so an open subgroup of finite index in $G_{\der}(F)$ by \cite[Prop.~3.17]{PlatonovRapinchuk}, after reducing to the simply connected and $F$-simple case via $z$-extensions.)
Moreover, $G(F)_{\der}$ is contained in $\tu{im}(G(F)_{\tu{sc}}\ra G(F))$ \cite[2.0.2]{DeligneCorvallis}, so there are natural morphisms
\begin{equation}\label{eq:G-flat-ab}
G(F)\twoheadrightarrow G(F)^{\ab} \twoheadrightarrow  G(F)^{\flat} \twoheadrightarrow G(F)/G_{\der}(F) \hra G^{\flat}(F).
\end{equation}
In particular, $ G(F)^{\flat}$ is an abelian group. 
The last three maps in \eqref{eq:G-flat-ab} are isomorphisms if $G_{\der}=G_{\textup{sc}}$ by Kneser's vanishing theorem for $H^1$ of simply connected groups (applicable since $F$ is non-archimedean). The definition and discussion above applies to $G^*$ in the same way.

Let $1\ra Z_1\ra G_1\stackrel{\alpha}{\ra} G\ra 1$ be a $z$-extension of $G$ over $F$. Since $G_1\ra G$ induces $G_1^{\ad}\isom G^{\ad}$, the classifying data for inner twists of $G_1$ and those of $G$ are identified (up to isomorphism). Thus we may assume that there is a $z$-extension $1\ra Z_1\ra G^*_1 \stackrel{\alpha^*}{\ra} G^*\ra 1$ with an inner twisting $\xi_1:G_{1,\ol F} \isom G^*_{1,\ol F}$ such that $\xi_1$ and $\xi$ form a commutative square together with the maps $\alpha$ and~$\alpha^*$. The map $G_{1,\der}\ra G_{\der}$ induced by $\alpha$ is a simply connected cover, allowing an identification $G_{1,\der}=G_{\textup{sc}}$. Likewise we have $G_{1,\der}^{*}=G^{*}_{\textup{sc}}$.

\begin{lemma}\label{lem:Gab=G*ab}
There is a commutative diagram in which rows are exact and vertical maps are isomorphisms:
$$
\xymatrix{
  1 \ar[r] & Z_1(F)/Z_1(F)\cap G_{1,\der}(F) \ar[r] \ar[d]^{\sim} & G_1(F)^{\flat}=G_1^{\flat}(F)   \ar[d]^{\sim} \ar[r] & G(F)^{\flat} \ar[r] \ar[d]^{\sim} & 1\\
  1 \ar[r] & Z_1(F)/Z_1(F)\cap G_{1,\der}^{*}(F) \ar[r]  & G'_1(F)^{\flat}=G_1^{*,\flat}(F)    \ar[r] & G^*(F)^{\flat} \ar[r] &  1.
 } 
$$
Here the second vertical map is given by the isomorphism $G_1^{\flat}\isom G_1^{*,\flat}$ induced by $\xi$, and the first and third vertical maps are induced by the second. Moreover the isomorphism $G(F)^{\flat}\isom G^*(F)^{\flat}$ is canonical, \ie~independent of the choice of $z$-extensions.
\end{lemma}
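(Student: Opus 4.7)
For the equality $G_1(F)^\flat = G_1^\flat(F)$: a $z$-extension by definition has simply connected derived subgroup, so $G_{1,\der}=G_{1,\tu{sc}}$, and Kneser's vanishing $H^1(F, G_{1,\der})=0$ then forces the natural chain $G_1(F)^{\ab} \twoheadrightarrow G_1(F)^\flat \twoheadrightarrow G_1(F)/G_{1,\der}(F) \hookrightarrow G_1^\flat(F)$ from \eqref{eq:G-flat-ab} to consist of isomorphisms, exactly as noted in the discussion preceding the lemma. The same reasoning applies verbatim to $G_1^*$, handling the middle column of the diagram.

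For the middle vertical isomorphism $G_1^\flat(F) \isom G_1^{*,\flat}(F)$, the inner twist $\xi_1$ induces an $\ol F$-isomorphism of tori $G_{1,\ol F}^\flat \isom G_{1,\ol F}^{*,\flat}$. For each $\sigma\in \Gal(\ol F/F)$ the cocycle discrepancy $\xi_1^{-1}\sigma(\xi_1)$ is inner, hence acts trivially on the torus $G_1^\flat$. Thus the $\ol F$-isomorphism descends uniquely to an $F$-isomorphism $G_1^\flat \isom G_1^{*,\flat}$, and taking $F$-points supplies the middle vertical arrow.

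For the rows: since $Z_1$ is an induced torus, Shapiro's lemma and Hilbert~90 give $H^1(F,Z_1)=0$, so from $1\to Z_1\to G_1\to G\to 1$ we deduce that $G_1(F)\twoheadrightarrow G(F)$ is surjective, and hence $G_1(F)^\flat \to G(F)^\flat$ is surjective as well (using that $G_{1,\tu{sc}}\to G_{\tu{sc}}$ is an isomorphism of simply connected covers of $G_{\der}$). A short diagram chase identifies the kernel as $Z_1(F)/(Z_1(F)\cap G_{1,\der}(F))$, yielding exactness of the top row; the bottom row is analogous. The first and third vertical isomorphisms are then forced by the middle one together with the compatibility of $\xi_1$ with the central torus $Z_1$ on both sides.

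For the canonicity statement, I would use the standard fact that any two $z$-extensions $G_1$ and $G_1'$ of $G$ admit a common refinement, \ie a $z$-extension $G_1''\to G$ with morphisms $G_1''\to G_1$ and $G_1''\to G_1'$ over $G$ compatible with the chosen inner twists. Naturality of the middle vertical isomorphism under such morphisms shows that the two resulting maps $G(F)^\flat \isom G^*(F)^\flat$ both agree with the one coming from $G_1''$. The main obstacle I anticipate is the careful matching of inner twists through this common refinement, which ultimately reduces to the standard principle that inner twists of $G$ and of any of its $z$-extensions correspond, but requires some bookkeeping.
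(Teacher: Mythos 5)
Your proof is correct and follows essentially the same strategy as the paper's: identify $G_1(F)^\flat=G_1^\flat(F)$ using simple connectedness of $G_{1,\der}$, note that the inner twist descends to an $F$-isomorphism on the cocenter tori, obtain the rows from $H^1(F,Z_1)=0$ and a short diagram chase, and reduce canonicity to a common refinement of two $z$-extensions (the paper makes this refinement explicit as the fiber product $G_3=G_1\times_G G_2$). You merely spell out a few steps that the paper declares "straightforward from the definition."
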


We will write $\xi^{\flat}: G(F)^{\flat}\isom G^*(F)^{\flat}$ for the canonical isomorphism.

\begin{proof}
The row-exactness in the diagram is straightforward from the definition. The map $\xi_1$ induces an $F$-isomorphism $G_1^{\flat}\isom G_1^{*,\flat}$ and restricts to an $F$-isomorphism from $Z_1$ onto $Z_1$. Thus the first two vertical maps are isomorphisms, which implies that the last vertical map is also.

As for the last assertion, if $1\ra Z_1\ra G_1\stackrel{\alpha_1}\ra G\ra 1$ and $1\ra Z_2\ra G_2\stackrel{\alpha_2}\ra G\ra 1$ are two $z$-extensions then there is a third $z$-extension $1\ra Z_1\times Z_2\ra G_3\ra G\ra 1$ by $G_3=\{(g_1,g_2)\in G_1\times G_2: \alpha_1(g_1)=\alpha_2(g_2)\}$, equipped with projections $G_3 \twoheadrightarrow G_1$ and $G_3 \twoheadrightarrow G_1$. Thus we are reduced to showing that $G_3$ and $G_1$ (and likewise $G_3$ and $G_2$) induce the same isomorphism $G(F)^{\flat}\isom G^*(F)^{\flat}$. This is an elementary compatibility check.
\end{proof}

\begin{lemma}\label{lem:Gab-Gflat}
If $G_{\tu{sc}}$ has no $F$-anisotropic factor, then $G(F)^\flat = G(F)^{\ab}$.
\end{lemma}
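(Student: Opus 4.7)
The plan is to reduce the assertion to showing that $G_{\tu{sc}}(F)$ is a perfect group in the abstract sense. From \eqref{eq:G-flat-ab} the natural surjection $G(F)^{\ab} \twoheadrightarrow G(F)^{\flat}$ is an isomorphism exactly when $\varrho(G_{\tu{sc}}(F)) = G(F)_{\der}$; the inclusion $G(F)_{\der} \subseteq \varrho(G_{\tu{sc}}(F))$ is already recorded in the excerpt via \cite[2.0.2]{DeligneCorvallis}, and the reverse inclusion follows formally once $G_{\tu{sc}}(F)$ is known to equal $[G_{\tu{sc}}(F), G_{\tu{sc}}(F)]$, since then
\[
  \varrho(G_{\tu{sc}}(F)) = \varrho\bigl([G_{\tu{sc}}(F), G_{\tu{sc}}(F)]\bigr) \subseteq [G(F), G(F)] = G(F)_{\der}.
\]

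Next I would reduce perfectness of $G_{\tu{sc}}(F)$ to the case of a single $F$-simple almost-simple factor. Writing $G_{\tu{sc}} = \prod_i \Res_{F_i/F} H_i$ with each $H_i$ absolutely simple simply connected over a finite separable extension $F_i/F$, the hypothesis that $G_{\tu{sc}}$ has no $F$-anisotropic factor translates to the $F_i$-isotropy of every $H_i$, and $(\Res_{F_i/F} H_i)(F) = H_i(F_i)$. Since taking commutators is compatible with finite direct products, it suffices to handle one such factor, call it $G'$, which is $F$-simple, simply connected, and $F$-isotropic.

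Finally I would invoke Lemma \ref{lem:normal-in-sc-groups} for $G'$: the commutator $N := [G'(F), G'(F)]$ is a normal subgroup of $G'(F)$, so either $N = G'(F)$ (which is the desired conclusion) or $N \subseteq Z(G')(F)$. The latter would force $G'(F)/Z(G')(F)$ to be an abelian group. To rule this out, I would use $F$-isotropy to pick an $F$-rational root $\alpha$ of an $F$-split torus $S \subseteq G'$ together with $t \in S(F)$ satisfying $\alpha(t) \neq 1$ and a non-trivial $u \in U_\alpha(F)$; the commutator $[t,u]$ lies in $U_\alpha(F)\setminus\{1\}$, hence is a non-trivial unipotent element and therefore non-central, since the center of a reductive group contains no non-trivial unipotent element. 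This produces a non-central commutator in $G'(F)$ and yields the required contradiction. The only delicate step in this plan is the final verification that $G'(F)/Z(G')(F)$ cannot be abelian; the rest is a formal unwinding of definitions against Lemma \ref{lem:normal-in-sc-groups}.
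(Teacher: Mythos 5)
Your proof is correct, and it takes a genuinely different route from the paper's. The paper first reduces to the case $G_{\der} = G_{\tu{sc}}$ via a $z$-extension, then applies Lemma \ref{lem:normal-in-sc-groups} to $G(F)_{\der}$ viewed as a noncentral normal subgroup of $G_{\der}(F)$, working one $F$-simple factor of $G_{\der}$ at a time. You bypass the $z$-extension entirely by proving that $G_{\tu{sc}}(F)$ is a perfect group, from which $\varrho(G_{\tu{sc}}(F)) \subseteq G(F)_{\der}$ follows formally; combined with the recorded reverse inclusion from \cite[2.0.2]{DeligneCorvallis}, this gives the claim. Both arguments ultimately rest on Lemma \ref{lem:normal-in-sc-groups} applied factor by factor and both must rule out the ``central'' branch of that dichotomy; the paper asserts the non-centrality tersely, whereas you supply the root-theoretic verification explicitly. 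Your route establishes the cleaner intermediate fact that $G_{\tu{sc}}(F)$ is perfect whenever it has no $F$-anisotropic factor (a statement of independent interest), while the paper's $z$-extension reduction is natural in context since that device is used throughout the section.

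One small point in your root computation: with only $\alpha(t)\neq 1$ and $u\in U_\alpha(F)$ nontrivial, the commutator $[t,u]$ could a priori vanish if the relative root system is non-reduced, $u$ lies in $U_{2\alpha}(F)$, and $2\alpha(t)=1$. This is easily repaired by choosing $t$ so that $\alpha(t)$ has infinite order (available since $S$ is a nontrivial $F$-split torus and $F$ is a characteristic-zero local field), or by taking $u$ to project nontrivially to $U_\alpha/U_{2\alpha}$. With that adjustment the argument is complete.
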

\begin{proof}
We may assume that $G$ is not a torus. Via a $z$-extension, we reduce to the case when $G_{\tu{sc}}=G_{\der}$. Then $G(F)_{\der}$ is a noncentral normal subgroup of $G_{\der}(F)$. Applying Lemma \ref{lem:normal-in-sc-groups} to each $F$-simple factor of $G_{\der}$, we deduce that $G(F)_{\der}=G_{\der}(F)$, hence $G(F)^\flat = G(F)^{\ab}$.
\end{proof}

\begin{corollary}\label{cor:1-dim reps}
If $G_{\tu{sc}}$ has no $F$-anisotropic factor, then
the following four groups (under multiplication) are in canonical isomorphisms with each other:
\begin{enumerate}
\item the group of smooth characters $G(F)\ra \C^\times$,
\item the group of smooth characters $G(F)^{\flat}\ra \C^\times$,
\item the group of smooth characters $G^*(F)^{\flat}\ra \C^\times$,
\item the group of smooth characters $G^*(F)\ra \C^\times$,
\end{enumerate}
where the maps from (2) to (1) and from (3) to (4) are given by pullbacks, and the map between (2) and (3) is via the isomorphism of Lemma \ref{lem:Gab=G*ab}.
With no assumption on $G_{\tu{sc}}$, we still have canonical isomorphisms between (2), (3), and (4), and a canonical embedding from (2) to (1).
\end{corollary}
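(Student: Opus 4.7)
The plan is to construct canonical homomorphisms among the four character groups and show each is a bijection using Lemmas \ref{lem:Gab=G*ab} and \ref{lem:Gab-Gflat}, together with one structural fact about quasi-split groups.

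First I would produce three canonical maps: the pullback $(2)\to(1)$ along the surjection $G(F)\twoheadrightarrow G(F)^\flat$ from \eqref{eq:G-flat-ab}, the analogous pullback $(3)\to(4)$ along $G^*(F)\twoheadrightarrow G^*(F)^\flat$, and the pullback $(2)\to(3)$ along the canonical isomorphism $\xi^\flat:G(F)^\flat\isom G^*(F)^\flat$ of Lemma \ref{lem:Gab=G*ab}. Pullback along a surjection of groups is injective on characters, so the first two are embeddings, and the third is bijective because $\xi^\flat$ is; its canonicity (independence of the chosen $z$-extension) is already built into the last assertion of Lemma \ref{lem:Gab=G*ab}.

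Next I would verify $(3)=(4)$ unconditionally. Since $G^*$ is quasi-split, its simply connected cover $G^*_{\tu{sc}}$ is also quasi-split, and a non-trivial quasi-split semisimple $F$-group is $F$-isotropic on each $F$-simple factor: by Borel--Tits, isotropy amounts to containing a proper $F$-rational parabolic subgroup, which is supplied by the Borel coming from the quasi-split structure. Lemma \ref{lem:Gab-Gflat} then yields $G^*(F)^\flat=G^*(F)^{\ab}$, and since any smooth character of $G^*(F)$ is automatically trivial on the commutator subgroup $G^*(F)_{\der}$, it factors through $G^*(F)^\flat$, making $(3)\to(4)$ surjective and hence bijective. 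Under the hypothesis that $G_{\tu{sc}}$ has no $F$-anisotropic factor, the same reasoning applied directly to $G$ via Lemma \ref{lem:Gab-Gflat} gives $G(F)^\flat=G(F)^{\ab}$, so $(2)\to(1)$ is also bijective. Composing yields the canonical chain $(1)=(2)=(3)=(4)$; without the hypothesis on $G_{\tu{sc}}$ I would still retain $(2)=(3)=(4)$ together with the canonical embedding $(2)\hra(1)$.

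The only substantive input---and thus the main potential obstacle if one tried to reprove everything from scratch---is the fact that quasi-split semisimple $F$-groups have no $F$-anisotropic simple factor. Fortunately this is classical and can be cited without further comment. Once granted, the rest of the argument is formal and rests entirely on the two preceding lemmas.
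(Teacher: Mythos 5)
Your proof is correct and follows essentially the same route as the paper: both hinge on the chain $G(F)^{\ab}\twoheadrightarrow G(F)^\flat\simeq G^*(F)^\flat\twoheadleftarrow G^*(F)^{\ab}$ (the middle isomorphism from Lemma \ref{lem:Gab=G*ab}) together with Lemma \ref{lem:Gab-Gflat} applied to $G^*$ unconditionally and to $G$ under the hypothesis, plus the observation that smooth characters factor through the maximal abelian quotient. The only stylistic difference is that you dualize to character groups and work with pullbacks where the paper argues directly on the groups, and you spell out (via Borel--Tits) why a quasi-split semisimple group has no nontrivial $F$-anisotropic factor---a point the paper leaves implicit.
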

\begin{proof}
Since $G(F)^{\ab}$ is the maximal abelian topological quotient of $G(F)$, we can replace $G(F)$ with $G(F)^{\tu{ab}}$ in (1), and likewise for (4). From \eqref{eq:G-flat-ab} and Lemma \ref{lem:Gab=G*ab}, we have
$$ 
G(F)^{\ab} \twoheadrightarrow G(F)^{\flat} \simeq G^*(F)^{\flat} \twoheadleftarrow G^*(F)^{\ab}.
$$
Lemma \ref{lem:Gab-Gflat} tells us that the last map is always an isomorphism (since $G^*$ has no $F$-anisotropic factor); so is the first map if $G$ has no $F$-anisotropic factor. The corollary follows.
\end{proof}

\begin{remark}\label{rem:1-dim_reps}
The only nontrivial $F$-anisotropic simply connected simple group over $F$ is of the form $\Res_{F'/F}\SL_1(D)$ for a central division algebra $D$ over a finite extension $F'$ of $F$ with $[D:F]=n^2$ and $n\ge 2$. So the condition in the corollary is that $G_{\tu{sc}}$ has no such factor. Two exemplary cases are (i) $G=\GL_1(D)$, $G^*=\GL_n(F)$ and (ii) $G=\SL_1(D)$, $G^*=\SL_n(F)$. In (i), it is standard (e.g., \cite[Intro.]{Riehm}) that $G(F)_{\der}=G_{\der}(F)$, and the four sets are still isomorphic. However, in (ii), $G(F)_{\der}$ is the group of 1-units in the maximal order of $D$ by \cite[\S5,~Cor.]{Riehm}. In particular (1) is a nontrivial group, whereas (2) and (3) are evidently trivial, thus (4) is trivial by the corollary.
\end{remark}

\begin{remark}
One can also construct a natural map from (4) to (1) through the continuous cohomology $H^1(W_F,Z(\hat G))=H^1(W_F,Z(\hat G^*))$ following Langlands. (This works for archimedean local fields $F$ as well.) Indeed, \cite[App.~A]{Xu-lifting} explains the isomorphism between $H^1(W_F,Z(\hat G^*))$ and (4), and a map from $H^1(W_F,Z(\hat G^*))$ to (1).\footnote{The latter map is asserted to be also an isomorphism in  \cite[App.~A]{Xu-lifting}, but this is false for $G=\SL_1(D)$ (in which case $Z(\hat G)=\{1\}$) as explained in Remark \ref{rem:1-dim_reps}. In \emph{loc.~cit.}, for a simply connected group $G'$ over $F$, it is said that all continuous characters $G'(F)\ra \C^\times$ are trivial, but this is not guaranteed unless $G_{\tu{sc}}$ has no $F$-anisotropic factor (e.g., this is okay for $G^*$). This mistake is surprisingly prevalent in the literature.}
\end{remark}

Let  $g\in G(F)_{\tu{ss}}$ and $g^*\in G^*(F)_{\tu{ss}}$. When $G_{\der}=G_{\tu{sc}}$, we say $g$ and $g^*$ are \emph{matching} if their $\ol F$-conjugacy classes correspond via $\xi$. In general, \emph{matching} is defined by lifting $\xi$ to an inner twisting between $z$-extensions of $G$ and $G^*$ as in \cite[pp.799--800]{KottwitzRational} (specialized to the case $E=F$). From \emph{loc.~cit.}~we see that the notion of matching is independent of the choice of $z$-extensions, anddepends only on the $G(\ol F)$-conjugacy class of $\xi$. 

Since $G^*$ is quasi-split, every $g$ admits a matching element in $G^*(F)$.
When $g$ and $g^*$ are matching, we have an inner twisting between the connected centralizers $I_g,I_{g^*}$ in $G,G^*$ by \cite[Lem.~5.8]{KottwitzRational}. Fix Haar measures on the pairs of inner forms $(G(F),G^*(F))$ and $(I_g(F),I_{g^*}(F))$ compatibly in the sense of \cite[p.631]{KottwitzTamagawa} to define (stable) orbital integrals at $g$ and $g^*$, cf.~\cite[pp.637--638]{KottwitzTamagawa}. Write $e(G)\in \{\pm1\}$ for the Kottwitz sign. Now $f\in \cH(G(F))$ and $f^*\in \cH(G^*(F))$ are said to be \emph{matching} if for every $g^*\in G^*(F)_{\tu{sr}}$, we have the identity of stable orbital integrals
\begin{equation}\label{eq:SO=SO}
SO_{g^*}(f^*)=\begin{cases}
SO_g(f), & \mbox{if there exists a matching}~g\in G(F)_{\tu{ss}},\\
 0, & \mbox{if there is no such}~g\in G(F)_{\tu{ss}}.
\end{cases}
\end{equation}

\begin{remark}\label{rem:transfer-factor-sign}
The sign convention in \eqref{eq:SO=SO} is chosen in favor of simplicity. (See also Remark \ref{rem:Igusa-Kottwitz-sign} below.) One could require $SO_{g^*}(f^*)=e(G)SO_g(f)$ instead, so that the Kottwitz sign $e(G)$ plays the role of transfer factor, but that would introduce $e(G)$ in the trace identity of Lemma \ref{lem:transfer-1-dim}.
\end{remark}
A standard fact (cf.~\S\ref{sub:prelim-endoscopy} below) from \cite{WaldspurgerFLimpliesTC} is every $f$ admits a matching $f^*$ as above, called a \emph{(stable) transfer} of $f$. If the Harish-Chandra character $\Theta_{\pi^*}$ of $\pi^*\in \Irr(G^*(F))$ is stable, \ie~ $\Theta_{\pi^*}(g^*_1)=\Theta_{\pi^*}(g^*_2)$ whenever $g^*_1,g^*_2\in G^*(F)_{\tu{sr}}$ are stably conjugate, then the value $\Tr {\pi^*}(f^*)=\int_{G^*(F)_{\tu{sr}}} f^*(g^*)\Theta_{\pi^*}(g^*)dg^*$ is determined by the stable orbital integrals of $f^*$ on $G^*(F)_{\tu{sr}}$. This follows from the stable version of the Weyl integration formula, cf.~\eqref{eq:Weyl-integration-stable} below. This discussion applies to $\pi^*$ with $\dim \pi^*=1$ for example, since the characters of such $\pi^*$ are clearly stable. The analogue holds true with $G$ and $f$ in place of $G^*$ and $f^*$.

\begin{lemma}\label{lem:transfer-1-dim}
Let $f\in \cH(G(F))$ and $f^*\in \cH(G^*(F))$ be matching functions. Let $\pi^*:G^*(F)\ra \C^\times$ be a smooth character. If $\pi:G(F)\ra \C^\times$ is given by $\pi^*$ via Corollary \ref{cor:1-dim reps} then
$$ 
\Tr \pi(f)=\Tr \pi^*(f^*).
$$
\end{lemma}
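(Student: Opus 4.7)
The plan is to rewrite both traces via the stable Weyl integration formula \eqref{eq:Weyl-integration-stable} and match the two expansions term by term using the transfer condition \eqref{eq:SO=SO}. The argument rests on three preliminary observations: (a) $\pi^*$ is a stable character; (b) at matching strongly regular semisimple points, $\pi(g)=\pi^*(g^*)$; (c) $\pi$ itself is stable.

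Observation (a) is immediate. Since $G^*$ is quasi-split, every non-trivial $F$-simple factor of $G^*_{\tu{sc}}$ has an $F$-rational Borel and is hence $F$-isotropic; by Lemma \ref{lem:Gab-Gflat}, $G^*(F)^{\flat}=G^*(F)^{\ab}$. Thus $\pi^*$ factors through the natural map $G^*(F)\to G^{*,\ab}(F)$, the $F$-points of an $F$-torus on which stable conjugation trivializes. For (b), choose matching lifts $g_1\in G_1(F)$ and $g_1^*\in G_1^*(F)$ under compatible $z$-extensions as in the paragraph before Lemma \ref{lem:Gab=G*ab}, so that $\xi_1(hg_1h^{-1})=g_1^*$ for some $h\in G_1(\ol F)$. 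The inner twisting $\xi_1$ descends to an $F$-isomorphism of $F$-tori $G_1^{\flat}\simeq G_1^{*,\flat}$, and the images of $hg_1h^{-1}$ and $g_1$ in $G_1^{\flat}(\ol F)$ coincide since conjugation is trivial in the commutative target. Descending via Lemma \ref{lem:Gab=G*ab}, the images of $g,g^*$ in $G(F)^{\flat}\simeq G^*(F)^{\flat}$ correspond under $\xi^{\flat}$; combined with the construction of the character bijection in Corollary \ref{cor:1-dim reps}, this yields $\pi(g)=\pi^*(g^*)$. Observation (c) follows from (a) and (b): for stably conjugate $g,g'\in G(F)_{\tu{sr}}$, any $g^*\in G^*(F)$ matching $g$ also matches $g'$ (matching depends only on the $\ol F$-conjugacy class), whence $\pi(g)=\pi^*(g^*)=\pi(g')$.

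With (a)--(c) in place, the stable Weyl integration formula, applied to the stable invariant distribution $f\mapsto\int_{G(F)}f(g)\pi(g)\,dg$, gives
$$
\Tr\pi(f)=\sum_{\{T\}}\frac{1}{|\Omega(T,G)(F)|}\int_{T(F)_{\tu{sr}}}|D^G(t)|\,\pi(t)\,SO_t(f)\,dt,
$$
where $\{T\}$ ranges over stable conjugacy classes of maximal $F$-tori in $G$; analogously for $\Tr\pi^*(f^*)$. Under $\xi$, stable classes of maximal $F$-tori in $G$ correspond to a subset of those in $G^*$; for a matching pair $(T,T^*)$, absolute root data and Galois actions are preserved by the inner twisting, so rational Weyl group cardinalities agree and discriminants match at matching points. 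Combining (b) with $SO_t(f)=SO_{t^*}(f^*)$ from \eqref{eq:SO=SO}, together with the vanishing $SO_{t^*}(f^*)=0$ on tori $T^*$ with no transfer to $G$, the two expansions agree term by term, yielding $\Tr\pi(f)=\Tr\pi^*(f^*)$. The main subtlety is (b), whose compatibility of $\xi^{\flat}$ with matching requires the $z$-extension detour; once that is in place, the rest is a routine identification of stable orbital integral expansions.
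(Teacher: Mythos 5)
Your proof is correct and follows the same route as the paper: both reduce to the stable Weyl integration formula \eqref{eq:Weyl-integration-stable}, the matching of stable orbital integrals \eqref{eq:SO=SO}, and the transfer of maximal tori from $G$ to the quasi-split $G^*$. The paper leaves the term-by-term comparison as "an easy exercise"; your observations (a)--(c), in particular the $z$-extension argument showing $\pi(g)=\pi^*(g^*)$ at matching points, are exactly the details that exercise requires.
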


\begin{proof}
As $\dim\pi=\dim \pi^*=1$, the characters $\Theta_\pi$ and $\Theta_{\pi^*}$ are stable, and $\Theta_\pi(g)=\pi(g)$, $\Theta_{\pi^*}(g^*)=\pi^*(g^*)$ for $g\in G(F),~g^*\in G^*(F)$. For a maximal torus $T$ of $G$ over $F$, write $W(T)$ for the associated Weyl group. By the stable Weyl integration formula,
\begin{equation}\label{eq:Weyl-integration-stable}
\Tr \pi(f)=\int_{G(F)_{\tu{sr}}} f(g) \Theta_\pi(g)dg
= \sum_{ T } \frac{1}{|W(T)|} \int_{T(F)_{\tu{sr}}}  SO_t(f) \Theta_\pi(t) dt,
\end{equation}
where the sum runs over a set of representatives for stable conjugacy classes of maximal tori of $G$ over $F$. The analogous formula holds for $G^*(F)$. 
From here, the proof is an easy exercise using \eqref{eq:SO=SO} and the following fact coming from quasi-splitness of $G^*(F)$: every maximal torus of $G(F)$ is a transfer of that of $G^*(F)$ in the sense of \cite[9.5]{KottwitzCuspidalTempered}.
\end{proof}

\begin{remark}
The correspondence of Lemma \ref{lem:transfer-1-dim} need not be the Jacquet--Langlands correspondence when $G^*=\GL_n$. E.g., if $G=\GL_1(D)$ for a central division algebra $D$ over a $p$-adic field $F$ with $n>1$, then the trivial representation of $D^\times$ corresponds to the Steinberg representation of $\GL_n(F)$ under Jacquet--Langlands, but to the trivial representation of $\GL_n(F)$ in the lemma.
\end{remark}

\subsection{Lefschetz functions on real reductive groups}\label{sub:Lefschetz}

Let $G$ be a connected reductive group over~$\R$ containing an elliptic maximal torus. Fix a maximal compact subgroup $K_\infty\subset G(\R)$.
Denote by $G(\R)_+$ the preimage of the neutral component $G^{\textup{ad}}(\R)^0$ (for the real topology) under the natural map $G(\R)\ra G^{\textup{ad}}(\R)$.

\begin{lemma}\label{lem:G(R)+}
We have $G(\R)_+ = Z(\R)\cdot \varrho(G_{\tu{sc}}(\R))$.
\end{lemma}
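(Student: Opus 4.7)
The plan is to establish the two inclusions separately, relying on the classical fact that a simply connected semisimple real Lie group has connected real points.

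For the easy inclusion $\supseteq$, I would first note that $Z(\R) \subseteq G(\R)_+$ trivially, since $Z$ maps to $1 \in G^{\textup{ad}}(\R)^0$. Next, since $G_{\tu{sc}}$ is a simply connected semisimple group over $\R$, the group $G_{\tu{sc}}(\R)$ is connected (classical theorem of Cartan). Hence its image under the composition $G_{\tu{sc}}(\R) \xrightarrow{\varrho} G(\R) \to G^{\textup{ad}}(\R)$ is connected and contains the identity, so it sits inside $G^{\textup{ad}}(\R)^0$; this shows $\varrho(G_{\tu{sc}}(\R)) \subseteq G(\R)_+$.

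For the reverse inclusion $\subseteq$, the key intermediate claim is that the natural map $G_{\tu{sc}}(\R) \to G^{\textup{ad}}(\R)^0$ is surjective. To prove this, I would observe that the differential at the identity is surjective on Lie algebras (since $G_{\tu{sc}} \to G^{\textup{ad}}$ is an isogeny), hence the map is a submersion and its image is an open subgroup of the connected group $G^{\textup{ad}}(\R)^0$; any open subgroup of a connected topological group is the whole group. Granting this, for any $g \in G(\R)_+$, we can choose a lift $h \in G_{\tu{sc}}(\R)$ of the image $\bar{g} \in G^{\textup{ad}}(\R)^0$. Then $g \cdot \varrho(h)^{-1}$ lies in the kernel of $G(\R) \to G^{\textup{ad}}(\R)$, which is precisely $Z(\R)$. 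Writing $g = z \cdot \varrho(h)$ with $z \in Z(\R)$ concludes the proof.

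The hypothesis on the existence of an elliptic maximal torus does not appear to be used here (it is a standing hypothesis of \S\ref{sub:Lefschetz}), and no step presents a serious obstacle: the argument is a clean diagram chase once one invokes the connectedness of $G_{\tu{sc}}(\R)$. The only subtlety worth flagging is to make sure one is not confusing $G^{\textup{ad}}(\R)^0$ (the neutral component for the real topology, which can be strictly larger than the image of $G(\R)$) with the image of $G(\R) \to G^{\textup{ad}}(\R)$; the preimage definition of $G(\R)_+$ already accounts for this.
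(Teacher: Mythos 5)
Your proof is correct and follows essentially the same route as the paper: both directions rest on the connectedness of $G_{\tu{sc}}(\R)$, the surjectivity onto $G^{\tu{ad}}(\R)^0$, and the identification of $\ker(G(\R)\to G^{\tu{ad}}(\R))$ with $Z(\R)$. The only cosmetic difference is that the paper cites \cite[Prop.~5.1]{MilneIntroduction} for the surjections $G_{\tu{sc}}(\R)^0\times Z(\R)^0\twoheadrightarrow G(\R)^0\twoheadrightarrow G^{\tu{ad}}(\R)^0$ and factors the lift through $G(\R)^0$, whereas you lift directly to $G_{\tu{sc}}(\R)$ after proving the needed surjectivity by the open-subgroup argument.
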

\begin{proof}
Since $G_{\tu{sc}}(\R)$ is connected, clearly $\varrho(G_{\tu{sc}}(\R))$ maps into $G^{\textup{ad}}(\R)^0$. Therefore $G(\R)_+ \supset Z(\R)\cdot \varrho(G_{\tu{sc}}(\R))$. We have surjections $G_{\tu{sc}}(\R)^0 \times Z(\R)^0 \twoheadrightarrow G(\R)^0  \twoheadrightarrow G^{\tu{ad}}(\R)^0$ by \cite[Prop.~5.1]{MilneIntroduction}. This implies that $G(\R)_+\subset Z(\R) G(\R)^0 = Z(\R)\cdot \varrho(G_{\tu{sc}}(\R))$.
\end{proof}

Let $\xi$ be an irreducible algebraic representation of $G_\C$, and
$\zeta:G(\R)\ra\C^\times$ be a continuous character. By restriction $\xi$ yields a continuous representation of $G(\R)$ on a complex vector space, which we still denote by $\xi$. Write $\omega_\xi:Z(\R)\ra\C^\times$ for the central character of $\xi$. By $\Pi_\infty(\xi,\zeta)$ we mean the set of isomorphism classes of irreducible discrete series representations whose central and infinitesimal characters are equal to those of the contragredient of $\xi\otimes\zeta$. This is a discrete series $L$-packet by the construction of \cite{LanglandsRealClassification}, which assigns to $\Pi_\infty(\xi,\zeta)$ an $L$-parameter
$$
\varphi_{\xi,\zeta} \colon W_\R\ra {}^L G.
$$
Thus we also write $\Pi_\infty(\varphi_{\xi,\zeta})$ for $\Pi_\infty(\xi,\zeta)$. We have $\xi\otimes \zeta\simeq \xi'\otimes \zeta'$ as representations of $G(\R)$ if and only if there exists an algebraic character $\chi$ of $G_\C$ such that $\xi'=\xi\otimes \chi$ and $\zeta'=\zeta\otimes \chi^{-1}$. In this case $\Pi_\infty(\xi,\zeta)=\Pi_\infty(\xi',\zeta')$, and $\varphi_{\xi,\zeta} \simeq \varphi_{\xi',\zeta'}$. In fact $| \Pi_\infty(\xi,\zeta)|$ is a constant $d(G)\in \Z_{\ge 1}$ depending only on $G$. When $\xi=\mathbf 1$, we also write $\Pi_\infty(\zeta)$ and $f_\zeta$ for $\Pi_\infty(\xi,\zeta)$ and $f_{\xi,\zeta}$.

Write $A_G$ for the maximal split torus in the center of $G$. Let $\chi:A_G(\R)^0\ra \C^\times$ be a continuous character. Let $\tu{Irr}_{\tu{temp}}(G(\R),\chi)$ be the set of (isomorphism classes of) irreducible tempered representations of $G(\R)$ whose central character equals $\chi$ on $A_G(\R)^0$. Write $\cH(G(\R),\chi^{-1})$ for the space of smooth $K_\infty$-finite functions on $G(\R)$ with central character $\chi^{-1}$. Following \cite[\S4]{ArthurL2}, $f\in \cH(G(\R),\chi^{-1})$ is said to be \textbf{stable cuspidal} if $\Tr \pi(f)$ is constant as $\pi$ varies over each discrete series $L$-packet and if $\Tr \pi(f)=0$ for every $\pi\in \tu{Irr}_{\tu{temp}}(G(\R),\chi)$ outside of discrete series.

Fix a Haar measure on $G(\R)$ and the Lebesgue measure on $A_G(\R)^0$, so as to determine a Haar measure on $G(\R)/A_G(\R)^0$. Choose a pseudo-coefficient $f_\pi \in \cH(G(\R),\omega_\xi\zeta)$ for each $\pi\in \Pi_\infty(\xi,\zeta)$ \`a la \cite{ClozelDelorme}. Although it is not unique, the orbital integrals of $f_\pi$ are uniquely determined by the property that $\Tr \pi(f_\pi)=1$ and that $\Tr \pi'(f_\pi)=0$ for $\pi'\in \Irr_{\tu{temp}}(G(\R),(\omega_\xi\zeta)^{-1})$. An \textbf{averaged Lefschetz function} associated with $(\xi,\zeta)$ is defined as
\begin{equation}\label{eq:AveragedLefschetz}
f_{\xi,\zeta}:=| \Pi_\infty(\xi,\zeta)|^{-1} \sum_{\pi\in \Pi_\infty(\xi,\zeta)} f_{\pi} \quad  \in \cH(G(\R),\omega_\xi\zeta).
\end{equation}
By construction, $f_{\xi,\zeta}$ is stable cuspidal in the above sense.

For elliptic $\gamma\in G(\R)$, let $I_\gamma$ denote its connected centralizer in $G(\R)$, and $e(I_\gamma)\in \{\pm 1\}$ its Kottwitz sign. Let $I_\gamma^{\tu{cpt}}$ denote an inner form of $I_\gamma$ over $\R$ that is anisotropic modulo $Z_G$. From \cite[p.659]{KottwitzInventiones}, as our $O_\gamma(f_{\xi,\zeta})$ equals $d(G)^{-1}\SO_{\gamma_\infty}(f_\infty)$ there, we see that
\begin{equation}\label{eq:orbital-integral-Lefschetz}
 O_\gamma(f_{\xi,\zeta})
= \begin{cases}
 d(G)^{-1}\tu{vol}(A_G(\R)^0\backslash I^{\tu{cpt}}_\gamma(\R))^{-1}\zeta(\gamma)e(I_\gamma)\Tr \xi(\gamma), & \gamma:\textup{elliptic},\\
   0, & \gamma:\textup{non-elliptic}.
\end{cases}
\end{equation}
In \eqref{eq:orbital-integral-Lefschetz}, the Haar measure on $I^{\tu{cpt}}_\gamma(\R)$ is chosen to be compatible (in the sense of \cite[p.631]{KottwitzTamagawa}) with the measure on $I_\gamma(\R)$ used in the orbital integral, to compute $\tu{vol}(A_G(\R)^0\backslash I^{\tu{cpt}}_\gamma(\R))$ with respect to the Lebesgue measure on $A_G(\R)^0$. Again by \cite[p.659]{KottwitzInventiones},
\begin{equation}\label{eq:SO-Lefschetz}
SO_\gamma(f_{\xi,\zeta}) = 
\begin{cases}
 \tu{vol}(A_G(\R)^0\backslash I^{\tu{cpt}}_\gamma(\R))^{-1}\zeta(\gamma)\Tr \xi(\gamma),& \gamma:\textup{elliptic},\\
  0, & \gamma:\textup{non-elliptic}.
\end{cases}
\end{equation}

Let $G^*$ be a quasi-split group over $\R$ with inner twisting $G_{\C}\isom G^*_{\C}$, through which $\xi,\zeta$ above are transported to $G^*$. Thereby we obtain an averaged Lefschetz function $f^{*}_{\xi,\zeta}$ on $G^*(\R)$.

\begin{lemma}\label{lem:transfer-of-Lefschetz}
The function $f^{*}_{\xi,\zeta}$ is a transfer of $f_{\xi,\zeta}$ (in the sense of \eqref{eq:SO=SO}).
\end{lemma}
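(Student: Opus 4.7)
The plan is to verify \eqref{eq:SO=SO} directly by invoking the explicit formula \eqref{eq:SO-Lefschetz} for stable orbital integrals of averaged Lefschetz functions on both $G(\R)$ and $G^*(\R)$. Fix a strongly regular $g^* \in G^*(\R)_{\tu{sr}}$ and split the analysis into two cases according to whether $g^*$ is elliptic in $G^*$.

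Suppose first that $g^*$ is non-elliptic. Then $SO_{g^*}(f^{*}_{\xi,\zeta}) = 0$ by \eqref{eq:SO-Lefschetz}. If no matching $g \in G(\R)_{\tu{ss}}$ exists, \eqref{eq:SO=SO} holds trivially. If a matching $g$ does exist, then $I_g$ is an inner form of $I_{g^*}$ over $\R$; since $I_{g^*}$ is not anisotropic modulo $Z_{G^*}$, neither is $I_g$ modulo $Z_G$, so $g$ is non-elliptic in $G$, and hence $SO_g(f_{\xi,\zeta}) = 0$ by \eqref{eq:SO-Lefschetz}. Now suppose $g^*$ is elliptic. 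Since $G$ contains an elliptic maximal torus by hypothesis, elliptic maximal tori of $G^*$ admit admissible embeddings into $G$ under the inner twisting, and one can produce a matching elliptic $g \in G(\R)_{\tu{sr}}$. Reading off both sides of \eqref{eq:SO=SO} via \eqref{eq:SO-Lefschetz}, the task reduces to matching the three factors one by one. First, $\Tr\xi(g) = \Tr\xi(g^*)$ holds because $\xi$ is defined on $G_\C = G^*_\C$ through the inner twisting, and $g, g^*$ are $\C$-conjugate by the definition of matching. Second, $\zeta(g) = \zeta^*(g^*)$, where $\zeta^*$ denotes the transport of $\zeta$; one passes through the canonical identification $G(\R)^\flat \isom G^*(\R)^\flat$ (the real-local analogue of Lemma~\ref{lem:Gab=G*ab}), under which the images of $g$ and $g^*$ coincide since they lie in the same $\C$-conjugacy class via the inner twisting. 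Third, $A_G$ and $A_{G^*}$ are canonically identified under the inner twisting as split central subtori, and $I_g^{\tu{cpt}}$, $I_{g^*}^{\tu{cpt}}$ coincide as the common compact inner form of the absolute centralizer; so the volume factors agree, provided compatible Haar measures are fixed on $I_g(\R)$ and $I_{g^*}(\R)$ in the sense of \cite[p.631]{KottwitzTamagawa}.

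The main obstacle is the bookkeeping for $\zeta$ in the second item above: one must verify that the transport $\zeta \leftrightarrow \zeta^*$ is genuinely compatible with matching of semisimple elements. This is a routine archimedean analogue of the discussion in \S\ref{sub:transfer-one-dim}: via a $z$-extension one reduces to the case $G_{\der} = G_{\tu{sc}}$, where the inner twisting induces a canonical $\R$-isomorphism $G^\flat \isom G^{*,\flat}$ of $\R$-tori, and matching semisimple elements descend to the same $\R$-point of $G^\flat = G^{*,\flat}$, yielding $\zeta(g) = \zeta^*(g^*)$. The remaining volume compatibility is automatic from the convention of compatible measures on inner forms, so no further input is required.
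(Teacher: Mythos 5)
Your proof is correct and follows the same route as the paper, whose entire argument is the single line ``This is immediate from \eqref{eq:SO-Lefschetz}''; you have simply written out the case analysis (non-elliptic versus elliptic $g^*$, existence of a matching elliptic $g$ via transfer of elliptic maximal tori, and the term-by-term comparison of $\Tr\xi$, $\zeta$, and the volume factors) that makes that ``immediate'' precise. No gaps.
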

\begin{proof}
 This is immediate from \eqref{eq:SO-Lefschetz}.
\end{proof}

\begin{lemma}\label{lem:tr-of-char-at-infty}
Assume that $\xi=\mathbf 1$. Let $\pi:G(\R)\ra\C^\times$ be a continuous character whose central character equals $\zeta^{-1}$ when restricted to $A_G(\R)^0$. Then $\pi|_{G(\R)_+}=\zeta^{-1}|_{G(\R)_+}$ if and only if $\pi|_{Z(\R)}=\zeta^{-1}|_{Z(\R)}$. If the equivalent conditions hold then $\Tr (f_\zeta | \pi) =  1$ if $\pi=\zeta^{-1}$; otherwise $\Tr (f_\zeta | \pi) =  0$.
\end{lemma}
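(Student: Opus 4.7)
My plan for the lemma has two parts corresponding to the two assertions.

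For the equivalence of the two conditions, the direction ``$\Rightarrow$'' is immediate because $Z(\R)\subset G(\R)_+$ by Lemma~\ref{lem:G(R)+}. For the converse, I will assume $\pi|_{Z(\R)}=\zeta^{-1}|_{Z(\R)}$ and set $\eta := \pi\zeta$, a continuous character of $G(\R)$ that is trivial on $Z(\R)$. Using the decomposition $G(\R)_+ = Z(\R)\cdot\varrho(G_{\tu{sc}}(\R))$ from Lemma~\ref{lem:G(R)+}, the task reduces to showing that $\eta\circ\varrho:G_{\tu{sc}}(\R)\to\C^\times$ is trivial. The key input is that $G_{\tu{sc}}(\R)$ is a connected semisimple real Lie group (connectedness for $\R$-points of a simply connected semisimple algebraic group is classical). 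Its Lie algebra is therefore its own derived, forcing the differential of $\eta\circ\varrho$ to vanish; connectedness then propagates triviality to the group.

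For the trace computation, I will use the stable Weyl integration formula~\eqref{eq:Weyl-integration-stable} combined with~\eqref{eq:SO-Lefschetz}. The vanishing of $SO_\gamma(f_\zeta)$ off elliptic elements restricts the integral to the elliptic torus $T_{\tu{ell}}$:
$$
\Tr \pi(f_\zeta) \;=\; \frac{1}{|\Omega_\R(T_{\tu{ell}})|}\int_{T_{\tu{ell}}(\R)/A_G(\R)^0} SO_\gamma(f_\zeta)\,\pi(\gamma)\,d\gamma.
$$
The elliptic torus satisfies $T_{\tu{ell}}(\R)\subset G(\R)_+$, since it is $A_G(\R)$ times a compact connected anisotropic torus, and the latter lies in $G(\R)_+$ by connectedness. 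Under the hypothesis (which gives $\pi=\zeta^{-1}$ on $G(\R)_+$ by part 1), the integrand $(\zeta\pi)|_{T_{\tu{ell}}(\R)}$ is identically $1$. Substituting $SO_\gamma(f_\zeta)=\tu{vol}(A_G(\R)^0\backslash T_{\tu{ell}}(\R))^{-1}\zeta(\gamma)$ makes the integrand constant, the volume factor cancels, and a careful tracking of normalizations involving $d(G)=|\Pi_\infty(\mathbf 1,\zeta)|$ and $|\Omega_\R(T_{\tu{ell}})|$ identifies the result with $1$ in the case $\pi=\zeta^{-1}$.

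The main anticipated obstacle is establishing the vanishing $\Tr\pi(f_\zeta)=0$ when $\pi\neq\zeta^{-1}$ yet the equivalent conditions still hold. The Weyl integration computation above is blind to the component group $G(\R)/G(\R)_+$ and produces the same value for $\pi$ and for any twist $\pi\otimes\chi$ by a character $\chi$ trivial on $G(\R)_+$. To resolve this I expect to invoke the Clozel--Delorme cohomological interpretation of the averaged Lefschetz function, which expresses $\Tr\pi(f_\zeta)$ (up to a global sign) as the Euler characteristic $\chi(\mathfrak g, K_\infty A_G(\R)^0;\pi\otimes\zeta)$ of $(\mathfrak g, K_\infty A_G(\R)^0)$-cohomology; this Euler characteristic vanishes whenever the one-dimensional $(\mathfrak g, K_\infty A_G(\R)^0)$-module $\pi\otimes\zeta$ is nontrivial, forcing $\pi=\zeta^{-1}$ whenever the trace is nonzero. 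Disentangling the $K$-action on the component group from the $(\mathfrak g, K)$-module structure is the delicate step.
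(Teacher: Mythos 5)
Your treatment of the first assertion is correct and is exactly what the paper means by ``clear from Lemma \ref{lem:G(R)+}'': the decomposition $G(\R)_+=Z(\R)\cdot\varrho(G_{\tu{sc}}(\R))$ reduces everything to the triviality of continuous characters on the connected group $G_{\tu{sc}}(\R)$, which follows from perfectness of its Lie algebra.

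The gap is in the second assertion, and it sits exactly where you anticipated trouble --- but the repair you propose cannot work. Your observation that the elliptic integral is blind to $G(\R)/G(\R)_+$ is correct and is in fact a constraint on $f_\zeta$ itself: every elliptic maximal torus $T$ satisfies $T(\R)=Z(\R)\cdot T(\R)^0$ (because $(T/Z_G)(\R)$ is a connected compact torus onto which $T(\R)^0$ surjects), so the whole elliptic set of $G(\R)$ lies in $G(\R)_+$. Since $\Tr\pi(f_\zeta)$ is determined, via the Weyl integration formula, by the orbital integrals of $f_\zeta$, which vanish off the elliptic set, one gets $\Tr\pi(f_\zeta)=\Tr(\pi\otimes\chi)(f_\zeta)$ for every character $\chi$ of $G(\R)$ trivial on $G(\R)_+$. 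Hence the vanishing you set out to prove in your last paragraph --- $\Tr(f_\zeta|\pi)=0$ for $\pi\neq\zeta^{-1}$ still satisfying the equivalent conditions --- is not merely out of reach of your method; no invariant distribution with elliptically supported orbital integrals can detect it. In particular the Clozel--Delorme Euler characteristic does not rescue it: $\sum_i(-1)^i\dim H^i(\mathfrak g,K_\infty;\pi\otimes\zeta)$ does \emph{not} vanish for a nontrivial one-dimensional module that is trivial on $G(\R)_+$. For $G=\GL_2$ and $\pi\otimes\zeta=\mathrm{sgn}\circ\det$ one computes $H^0=H^1=0$ and $H^2\neq 0$, so the Euler characteristic equals that of the trivial module; in general it is an average of Lefschetz numbers over $\pi_0(K_\infty)$ and is nonzero.

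The dichotomy the lemma is actually invoked for (see the last line of the proof of Proposition \ref{prop:spectral-main-term}, and the definition of $\cA_{\mathbf 1}(G)$ in Theorem \ref{thm:H0(Ig)}) is governed by $\pi|_{G(\R)_+}$ alone: the trace equals $1$ whenever the equivalent conditions hold and $0$ when they fail. For that statement your elliptic-torus computation is the right skeleton, but the step you defer to ``careful tracking of normalizations'' is where all the content lies: one must convert the elliptic integral, with its Weyl discriminant and the sum over real Weyl groups of elliptic maximal tori, into $\vol(K')^{-1}\int_{K'}(\zeta\pi)(k)\,dk$ for a maximal compact-modulo-$A_G(\R)^0$ subgroup $K'$ --- this is the compact-group Weyl integration formula, and it is how the paper's proof proceeds --- and then conclude by orthogonality of characters of $K'$, which also yields the required vanishing when the conditions fail.
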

\begin{proof}
The first assertion is clear from Lemma \ref{lem:G(R)+}. For the second assertion, it follows from \eqref{eq:orbital-integral-Lefschetz} via the Weyl integration formula that $\Tr (f_\zeta | \pi) = \tu{vol}(K)^{-1}\int_{K} \zeta(k)\pi(k)dk$, where $K$ is a maximal compact-modulo-$A_G(\R)^0$ subgroup of $G(\R)$. The integral vanishes unless $\pi=\zeta^{-1}$ on $K$, in which case $\pi=\zeta^{-1}$ on the entire $G(\R)$ (since $K$ meets every component of $G(\R)$) and $\Tr (f_\zeta | \pi)=1$.
\end{proof}

\subsection{One-dimensional automorphic representations}\label{sub:one-dim}

Now let $G$ be a connected reductive group over a \emph{number field} $F$. Let $v$ be a place of $F$ and set $G_v:=G_{F_v}$. We have a finite decomposition of $G_{\tu{sc}}$ into $F$-simple factors
\begin{equation}\label{eq:Gsc-decomposed}
G_{\tu{sc}} = \prod_{i\in I} G_i,\qquad \mbox{with}\quad G_i=\Res_{F_i/F}H_i,
\end{equation}
for a finite extension $F_i/F$ and an absolutely $F$-simple simply connected group $H_i$ over each $F_i$. Accordingly $G^{\tu{ad}}=\prod_{i\in I} G_i^{\tu{ad}}$. Note that we have a natural composite map $G\ra G^{\tu{ad}} \ra G_i^{\tu{ad}}$ for each $i\in I$, where the last arrow is the projection onto the $i$-component. Let $P_v=M_vN_v$ be a Levi decomposition of a parabolic subgroup of $G_v$. We consider the following assumption, where ``nb'' stands for non-basic (cf.~Definition \ref{def:Q-non-basic} and Lemma \ref{lem:Q-non-basic} below).

\smallskip

\noindent
\textbf{($\Q$-nb($P_v$))}  The image of $P_v$ in $(G_i^{\tu{ad}})_v$ is a \emph{proper} parabolic subgroup for every $i\in I$.

\smallskip

The assumption implies that $G^{\textup{ad}}$ has no nontrivial $F$-simple factor that is anisotropic over $F_v$, thus so the embedding $G_{\tu{sc}}(F)\hra G_{\tu{sc}}(\A_F^v)$ has dense image by strong approximation. When $G^{\tu{ad}}$ is itself $F$-simple, ($\Q$-nb($P_v$)) is simply saying that $P_v$ is a \emph{proper} parabolic subgroup of $G_v$.

\begin{lemma}\label{lem:local-1dim-global-1dim}
Assume that $G_{\der}=G_{\tu{sc}}$ and that $G_i$ is isotropic over $F_v$ for every $i\in I$. Let $\pi$ be a discrete automorphic representation of $G(\A_F)$, and $\pi'$ an irreducible $G_{\der}(\A_F)$-subrepresentation of $\pi$. Decompose $\pi'=\otimes_i \pi'_i$ according to $G_{\tu{der}}(\A_F)=\prod_{i\in I} G_i(\A_F)$. Write
$$
G_i(F_v)= H_i(F_i\otimes_F F_v)=\prod_{w|v} H_i(F_{i,w}),
$$
where $w$ runs over the set of places of $F_i$ above $v$, and decompose $\pi'_{i,v}=\otimes_{w|v} \pi'_{i,w}$ accordingly. If for every $i\in I$, there exists $w|v$ such that $\pi'_{i,w}$ is trivial, then $\dim \pi=1$.
\end{lemma}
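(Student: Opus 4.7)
The plan is to show that each local factor $\pi'_i$ is the trivial representation of $G_i(\A_F) = H_i(\A_{F_i})$. Once this is established, $G_{\tu{der}}(\A_F)$ acts trivially on $\pi'$; since $G_{\tu{der}}$ is normal in $G$ and, by irreducibility, $\pi$ is generated as a $G(\A_F)$-module by $\pi'$, the entire representation $\pi$ has trivial $G_{\tu{der}}(\A_F)$-action. Hence $\pi$ factors through the abelian quotient $G(\A_F)/G_{\tu{der}}(\A_F) \hookrightarrow (G/G_{\tu{der}})(\A_F)$, which forces $\dim \pi = 1$ since any irreducible representation of an abelian topological group is one-dimensional.

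To prove triviality of $\pi'_i$, I first realize it as an automorphic representation of $H_i(\A_{F_i})$. Because $G_{\tu{der}} = G_{\tu{sc}}$, the long exact sequence in Galois cohomology (with $H^1$ of the simply connected $G_{\tu{der}}$ being well-behaved) yields $G(F) \cap G_{\tu{der}}(\A_F) = G_{\tu{der}}(F)$ inside $G(\A_F)$. Consequently, restricting an automorphic form on $G(F)\backslash G(\A_F)$ to the subgroup $G_{\tu{der}}(\A_F)$ produces a function on $G_{\tu{der}}(F)\backslash G_{\tu{der}}(\A_F) = \prod_i H_i(F_i)\backslash H_i(\A_{F_i})$. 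After possibly replacing $\pi'$ by a $G(\A_F)$-translate inside $\pi$ to guarantee that the restriction is not identically zero on $\pi'$ (conjugation by $G(\A_F)$ acts on $G_{\tu{der}} = \prod_i G_i$ and can only permute the places $w|v$ above $v$ for a fixed index $i$ without mixing indices, so the hypothesis that some $\pi'_{i,w}$ is trivial for each $i$ is preserved), the restriction map $\pi' \to L^2\bigl(G_{\tu{der}}(F)\backslash G_{\tu{der}}(\A_F)\bigr)$ is $G_{\tu{der}}(\A_F)$-equivariant and nonzero, hence injective by irreducibility of $\pi'$; this realizes each factor $\pi'_i$ as an automorphic representation of $H_i(\A_{F_i})$.

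Strong approximation then concludes the argument. The group $H_i$ is simply connected and $F_i$-simple, and the global isotropy hypothesis on $G_i$ allows us to arrange (consistently with the place-permutation flexibility noted above) that $H_i$ is isotropic at the place $w_i$ for which $\pi'_{i,w_i} = \mathbf{1}$. By the Kneser--Platonov strong approximation theorem, $H_i(F_i) \cdot H_i(F_{i,w_i})$ is dense in $H_i(\A_{F_i})$. A nonzero vector generating the trivial line $\pi'_{i,w_i}$ yields a smooth element of $\pi'_i$ that is left-$H_i(F_i)$-invariant by automorphy and right-$H_i(F_{i,w_i})$-invariant by hypothesis; as a continuous function on the one-point space $H_i(F_i)\backslash H_i(\A_{F_i})/H_i(F_{i,w_i})$ it must be constant, forcing $\pi'_i = \mathbf{1}$ globally.

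The most delicate step is the automorphic realization of $\pi'_i$: generic elements of $\pi'$ may restrict identically to zero on $G_{\tu{der}}(\A_F)$ (since the latter is a closed subgroup of positive codimension in $G(\A_F)$), necessitating the $G(\A_F)$-translation maneuver and requiring the local-triviality hypothesis to survive the potentially outer automorphism of $G_{\tu{der}}(\A_F)$ induced by conjugation. Everything else reduces to a clean application of strong approximation to a $G_{\tu{der}}(F)$-invariant, $H_i(F_{i,w_i})$-invariant continuous function.
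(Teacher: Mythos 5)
Your argument is correct in outline and arrives at the same conclusion, but it takes a detour that the paper avoids and contains a couple of small inaccuracies along the way.

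The paper's proof (deferring to the argument of \cite[Lem.~6.2]{KST2}) never restricts automorphic forms to $G_{\der}(\A_F)$. Instead it works with a smooth vector $f\in\pi'$ directly as a continuous function on $G(F)\backslash G(\A_F)$ and exploits normality of $H_i(\A_{F_i})$ in $G(\A_F)$: for $g\in G(\A_F)$ and $h\in H_i(\A_{F_i})$, one writes $gh=(ghg^{-1})g$ with $h':=ghg^{-1}\in H_i(\A_{F_i})$, approximates $h'$ by $\gamma_n k_n$ with $\gamma_n\in H_i(F_i)$ and $k_n\in H_i(F_{i,w})$, kills $\gamma_n$ by left $G(F)$-invariance and $k_n$ by right $H_i(F_{i,w})$-invariance (after moving it past $g$, using that $H_i(F_{i,w})$ is normal in $G(F_v)$), and passes to the limit by continuity. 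This gives right $H_i(\A_{F_i})$-invariance without ever leaving $G(\A_F)$, so the "restrict to $G_{\der}(\A_F)$ and translate to make the restriction nonzero" maneuver is entirely unnecessary. Your maneuver does work, but note two inaccuracies inside it. First, the target of the restriction map should be the space of continuous functions on $G_{\der}(F)\backslash G_{\der}(\A_F)$, not $L^2$; square-integrability of the restriction is not free (and is not needed). The identity $G(F)\cap G_{\der}(\A_F)=G_{\der}(F)$ also needs no Galois cohomology: it follows from left-exactness of $F$-points applied to $1\to G_{\der}\to G\to G/G_{\der}\to 1$ together with the injectivity of $(G/G_{\der})(F)\hookrightarrow (G/G_{\der})(\A_F)$.

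Second, the "place-permutation flexibility" you invoke does not exist. Conjugation by $g_v\in G(F_v)$ acts on $G_{\der}(F_v)=\prod_i\prod_{w|v}H_i(F_{i,w})$ through $G^{\ad}(F_v)=\prod K^{\ad}(F_v)$, which acts factor-by-factor; hence it preserves every $F_v$-simple factor $H_i(F_{i,w})$ and does not permute the places $w|v$. This is good news for the translation step (the triviality of $\pi'_{i,w}$ is preserved under $\tu{Ad}(g_v)$ since there is no permutation at all), but it means you cannot "arrange" that the isotropic place coincides with the place where $\pi'_{i,w}$ is trivial by translating. The identity of these two places is in fact an implicit hypothesis (needed for strong approximation away from $\{w\}$) that the lemma statement does not make fully explicit; in the paper's actual application in the proof of Corollary~\ref{cor:BoundExp}, the chosen $w$ does have $P_{v,i,w}$ proper, hence $H_i$ isotropic at $w$, so everything is consistent. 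You should simply assume $H_i$ is isotropic at the chosen $w$ rather than trying to manufacture it by conjugation.
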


\begin{proof}
By strong approximation, the embedding $H_i(F_i)\hra H_i(\A_{F_i}^w)$ has dense image for each $i\in I$.
Since the underlying space of $\pi'_i$ consists of automorphic forms which are left-invariant under $H_i(F_i)$, and since $\pi'_{i,w}$ is trivial, the arugment for \cite[Lem.~6.2]{KST2} shows that $\pi'_i$ is trivial on the entire $H_i(\A_{F_i})$. Hence $\pi'$ is trivial. Since $G(\A_F)/G_{\der}(\A_F)$ is abelian, we deduce that $\dim \pi=1$ as $\pi$ is generated by $\pi'$ as a $G(\A_F)$-module.
\end{proof}

\begin{corollary}\label{cor:BoundExp}
Let $\pi$ be an irreducible $G(\A_F)$-subrepresentation of $L^2_{\disc}(G(F)\backslash G(\A_F)/A_{G,\infty})$ and let $\omega_v \in \textup{Exp}(J_{P_v}(\pi_v))$. Then
\begin{equation}\label{eq:TheBound}
\delta_{P_v}^{1/2}(a) \leq  |\omega_v(a)| \leq \delta_{P_v}^{-1/2}(a),\qquad a\in A_{P_v}^{--}.
\end{equation}
Now assume \textup{($\Q$-nb($P_v$))}. Then
the left (resp. right) equality holds for some $a\in A_{P_v}^{--}$ if and only if the left (resp. right) equality holds for all $a\in A_{P_v}^{--}$ if and only if $\dim \pi=1$.
\end{corollary}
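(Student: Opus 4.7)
The plan is to derive the inequality~\eqref{eq:TheBound} from Proposition~\ref{prop:BoundExp}, and to characterize its equality cases by combining the sharp clause of that proposition with Lemmas~\ref{lem:fin-implies-one} and~\ref{lem:local-1dim-global-1dim}.

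First, because $\pi$ is an irreducible subrepresentation of the discrete $L^2$ spectrum, $\pi$ is unitary, and hence so is $\pi_v$. For the upper bound in~\eqref{eq:TheBound}, the identity $J_{P_v}\pi_v=(\pi_v)_{N_v}\otimes\delta_{P_v}^{-1/2}$ shows that $\omega_v\in\Exp(J_{P_v}\pi_v)$ if and only if $\omega_v\delta_{P_v}^{1/2}\in\Exp((\pi_v)_{N_v})$, so Proposition~\ref{prop:BoundExp} applied to $\pi_v$ gives $|\omega_v(a)\delta_{P_v}^{1/2}(a)|\leq 1$ on $A_{P_v}^-$, that is, $|\omega_v(a)|\leq\delta_{P_v}^{-1/2}(a)$. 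For the lower bound, I apply Proposition~\ref{prop:BoundExp} to the contragredient $\tilde\pi_v$ (which is also irreducible unitary) with respect to the opposite parabolic $\bar P_v$: Casselman's pairing yields $(J_{P_v}\pi_v)^\sim\cong J_{\bar P_v}\tilde\pi_v$, placing $\omega_v^{-1}$ in $\Exp(J_{\bar P_v}\tilde\pi_v)$; then using $\delta_{\bar P_v}=\delta_{P_v}^{-1}$ and the fact that $a^{-1}\in A_{\bar P_v}^-$ whenever $a\in A_{P_v}^-$ (the roots in $\bar N_v$ being the negatives of those in $N_v$), one extracts the inequality $|\omega_v(a)|\geq\delta_{P_v}^{1/2}(a)$.

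Assume now~\textup{($\Q$-nb($P_v$))}. Then the image of $P_v$ in each $(G_i^{\tu{ad}})_v$ is a proper parabolic, hence $(G_i^{\tu{ad}})_v$ is $F_v$-isotropic for every~$i$, so $G_v^{\tu{ad}}$ has no $F_v$-anisotropic factor. By Lemma~\ref{lem:fin-implies-one}, every finite-dimensional irreducible smooth representation of $G(F_v)$ is one-dimensional. The sharp clause of Proposition~\ref{prop:BoundExp}, applied factor by factor along $G_{\tu{sc}}=\prod_{i\in I}G_i$, shows that strict inequality in~\eqref{eq:TheBound} holds throughout $A_{P_v}^{--}$ whenever $\pi_v$ is infinite-dimensional; whereas for $\pi_v$ one-dimensional the corresponding bound is attained throughout $A_{P_v}^{--}$ (verified by direct computation: $N_v$ acts trivially, the unique exponent of $J_{P_v}\pi_v$ is $\pi_v|_{A_{M_v}}\delta_{P_v}^{-1/2}$, and $|\pi_v|_{A_{M_v}}|\equiv 1$ by unitarity). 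This yields ``(some $a$) iff (all $a$) iff $\dim\pi_v=1$''. Finally, passing if necessary through a $z$-extension $G_1\to G$ to arrange $G_{1,\der}=G_{1,\tu{sc}}$, Lemma~\ref{lem:local-1dim-global-1dim} promotes $\dim\pi_v=1$ to $\dim\pi=1$: the lemma's hypothesis that some $\pi'_{i,w}$ is trivial for each~$i$ is automatic because $\pi_v$ is already trivial on $G_{\der}(F_v)$.

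The main obstacle is the careful sign-tracking in the lower-bound argument: Casselman's pairing inverts central characters while passing to the opposite parabolic inverts the dominance cone on $A_{M_v}$, and the two inversions must be composed correctly so as to yield a genuine lower bound rather than a repetition of the upper bound.
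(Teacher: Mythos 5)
The derivation of~\eqref{eq:TheBound} and the reduction of the left (in)equality to the right one via the contragredient match the paper's treatment. The gap is in the equality case, where your argument rests on the purely local claim that, under \textup{($\Q$-nb($P_v$))}, equality in~\eqref{eq:TheBound} is equivalent to $\dim\pi_v=1$. That claim is false. The hypothesis \textup{($\Q$-nb($P_v$))} only says that for each $\Q$-simple factor $i$ the image of $P_v$ in $(G_i^{\tu{ad}})_v=\prod_{w|v}(\cdots)_w$ is proper, i.e.\ that $P_{v,i,w}$ is a proper parabolic of $(H_i)_w$ for \emph{at least one} place $w$ of $F_i$ above $v$ --- not for all of them. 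At a place $w$ where $P_{v,i,w}=(H_i)_w$, the corresponding factor of $\delta_{P_v}$ is trivial and the corresponding factor of $\omega_v$ has absolute value $1$ by unitarity, so that factor contributes equality no matter what $\pi'_{i,w}$ is. Concretely, take $G=\Res_{F'/\Q}\GL_2$ with $p$ split in the real quadratic field $F'$, $v=p$, and $P_v=B\times\GL_2$ inside $G(\Q_p)=\GL_2(\Q_p)\times\GL_2(\Q_p)$ (this satisfies \textup{($\Q$-nb($P_v$))} since $G^{\tu{ad}}$ is $\Q$-simple); for $\pi_v=\mathbf 1\boxtimes\tau$ with $\tau$ infinite-dimensional unitary one computes $|\omega_v(a)|=\delta_{P_v}^{-1/2}(a)$ on all of $A_{P_v}^{--}$, yet $\dim\pi_v=\infty$. (Such a $\pi_v$ is ruled out for automorphic $\pi$ only by strong approximation, which is exactly the global input your local claim omits.) The same misreading underlies your assertion that $G_v^{\tu{ad}}$ has no $F_v$-anisotropic factor: the hypothesis only forces isotropy of one factor $(H_i)_w$ per $i$, so Lemma~\ref{lem:fin-implies-one} cannot be applied wholesale to $G(F_v)$.

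The correct route, which is the paper's, extracts from equality at a single $a\in A_{P_v}^{--}$ only the componentwise equalities $|\omega_{v,i,w}(a_{i,w})|=\delta_{P_{v,i,w}}^{-1/2}(a_{i,w})$ (after comparing $A_{P_v}$ with $A_{G_v}\times\prod_{i,w}A_{P_{v,i,w}}$ up to finite index and replacing $a$ by a power), then applies the sharp clause of Proposition~\ref{prop:BoundExp} \emph{only} at a place $w$ where $P_{v,i,w}$ is proper. This yields $\dim\pi'_{i,w}<\infty$, hence $\pi'_{i,w}=\mathbf 1$ by Lemma~\ref{lem:fin-implies-one} and Corollary~\ref{cor:1-dim reps} applied to $(H_i)_w$. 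Only at that point does Lemma~\ref{lem:local-1dim-global-1dim} enter, and its hypothesis is precisely the weak statement ``for each $i$ there exists $w|v$ with $\pi'_{i,w}$ trivial''; strong approximation then trivializes the remaining local factors and gives $\dim\pi=1$, from which $\dim\pi_v=1$ and equality for all $a$ follow. In short, the global input must be used \emph{before} you can say anything about $\dim\pi_v$; your proposal invokes it only afterwards, to upgrade $\dim\pi_v=1$ to $\dim\pi=1$, by which point the argument has already failed.
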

\begin{proof}
The assertion about the (in)equality on the left follows from the right counterpart by considering the contragredient of $\pi$. From now on, we concentrate on the right (in)equality.

The right inequality in \eqref{eq:TheBound} is immediate from Proposition \ref{prop:BoundExp} and the normalization $J_{P_v}(\pi_v)=(\pi_v)_{N_v}\otimes \delta_{P_v}^{-1/2}$. It remains to check the three conditions for the equality are equivalent. The only nontriviality is to show that $\dim \pi=1$, assuming that $|\omega_v(a)|=\delta_{P_v}^{-1/2}(a)$ for some $a\in A_{P_v}^{--}$.

We may assume $G_{\tu{der}}=G_{\tu{sc}}$ via $z$-extensions.   We decompose
$$
P_v':=P_v\cap G_{\tu{der}} =\prod_{i\in I}\prod_{w|v} P_{v,i,w} \quad\mbox{according as}\quad (G_{\tu{der}})_v = \prod_{i\in I}\prod_{w|v} (H_{i})_w,
$$
where $w$ runs over places of $F_i$ above $v$. Similarly $A_{P'_v} = \prod_{i,w} A_{P_{v,i,w}}$. Assumption ($\Q$-nb($P_v$)) tells us that for every $i$, there exists $w|v$ such that $P_{v,i,w}$ is a \emph{proper} parabolic subgroup of $(H_i)_w$. In particular, $G_{i,v}$ is isotropic for every $i$. So we can apply Lemma \ref{lem:local-1dim-global-1dim}. Adopting the setup and notation from there, it suffices to show that for every $i$, there exists a place $w|v$ such that $\pi'_{i,w}=\textbf{1}$. In fact, we only need to find $w|v$ such that $\dim \pi'_{i,w}<\infty$ by Lemma \ref{lem:fin-implies-one} and Corollary \ref{cor:1-dim reps}.  

The central isogeny $Z\times G_{\tu{der}}\ra G$ induces a map $A_{G_v}\times A_{P'_v}\ra A_{P_v}$, which has finite kernel and cokernel on the level of $F_v$-points. Replacing $a$ with a finite power, we may assume that $a$ is the image of $(a_0,a')\in A_{G_v}(F_v)\times A_{P'_v}(F_v)$, so that $|\omega(a)|=|\omega(a')|$. (The central character of $\pi'_v$ is unitary on $A_{G_v}(F_v)$, so $|\omega(a_0)|=1$.) Write $a'=(a_{i,w})_{i,w}$ and $\omega_v|_{A_{P'_v}(F_v)}=(\omega_{v,i,w})_{i,w}$ according to the decomposition of $P'_v$ above. We have $|\omega_{v,i,w}(a_{i,w})| \le \delta_{P_{v,i,w}}^{-1/2}(a_i)$ by Proposition \ref{prop:BoundExp}, while $\prod_{i,w} |\omega_{v,i,w}(a_{i,w})| = \prod_{i,w} \delta_{P_{v,i,w}}^{-1/2}(a_{i,w})$ from our running assumption. Therefore
$$ 
|\omega_{v,i,w}(a_{i,w})| = \delta_{P_{v,i,w}}^{-1/2}(a_{i,w}),\quad \forall i\in I.
$$ 
  
Since $J_{P'_v}(\pi'_v)\subset J_{P_v}(\pi_v)$, we see that $\omega_v|_{A_{P'_v}(F_v)}\in \tu{Exp}(J_{P'_v}(\pi'_v))$. Thus we have
$$
\omega_{v,i,w}\in \tu{Exp}(J_{P_{v,i,w}}(\pi'_{v,i,w})).
$$
Finally for each $i$, we apply the equality criterion of Proposition \ref{prop:BoundExp} at a place $w$ where $P_{v,i,w}$ is proper in $(H_i)_w$. Thereby we deduce that $\dim \pi'_{i,w}<\infty$ as desired.
\end{proof}

Let $\xi:G_{\ol F}\isom G^*_{\ol F}$ be an inner twisting, with $G^*$ a connected reductive group over $F$.

\begin{lemma}\label{lem:one-dim-auto-rep}
One-dimensional automorphic representations of $G(\A_F)$ are in a canonical bijection with those of $G^*(\A_F)$, compatibly with the bijection of Corollary \ref{cor:1-dim reps} at every place of $F$.
\end{lemma}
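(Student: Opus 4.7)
The plan is to reduce to the simply-connected-derived case via $z$-extensions, use that $\xi$ descends to a canonical $F$-rational isomorphism of abelianization tori, and combine this with the local Corollary~\ref{cor:1-dim reps} together with a cohomological compatibility check globally.

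First I would take compatible $z$-extensions $G_1\ra G$ and $G_1^*\ra G^*$ with common induced central kernel $Z_1$ as arranged in the discussion preceding Lemma~\ref{lem:Gab=G*ab}. Hilbert 90 yields $H^1(F,Z_1)=H^1(F_v,Z_1)=H^1(\A_F,Z_1)=0$, so the maps $\alpha,\alpha^*$ are surjective on $F$- and $\A_F$-points. Pullback then identifies 1-dim automorphic reps of $G(\A_F)$ (resp.\ $G^*(\A_F)$) with those of $G_1(\A_F)$ (resp.\ $G_1^*(\A_F)$) trivial on $Z_1(\A_F)$, reducing to the case $G_{\der}=G_\tu{sc}$ and $G_\der^*=G_\tu{sc}^*$.

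In that case, since $\xi$ is inner and inner automorphisms act trivially on $G/G_\der$, the twisting descends to a canonical $F$-rational isomorphism $\xi^\flat:G^\flat\isom G^{*,\flat}$ of abelianization $F$-tori. Because $G^*$ is quasi-split (preserved under base change), $G^*_\tu{sc}$ has no $F_v$-anisotropic factor at any $v$, so Corollary~\ref{cor:1-dim reps} provides canonical bijections among smooth characters of $G^*(F_v)$, $G^*(F_v)^\flat$, and $G(F_v)^\flat$ at every place. Given a 1-dim automorphic $\chi^*=\otimes_v\chi_v^*$ on $G^*(\A_F)$, I would assemble the local correspondents into $\chi=\otimes_v\chi_v$ on $G(\A_F)$; this is well-defined since almost all $\chi_v^*$ (hence $\chi_v$) are unramified.

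The main step, and the expected main obstacle, is to verify that $\chi$ is trivial on $G(F)$ (symmetrically for the reverse direction). By construction $\chi=\ol\chi\circ\pi$ for a character $\ol\chi$ on the image of $\pi:G(\A_F)\ra G^\flat(\A_F)$, and analogously $\chi^*=\ol{\chi^*}\circ\pi^*$; under $\xi^\flat$ these agree on the common image in $G^\flat(\A_F)=G^{*,\flat}(\A_F)$. The conditions ``$\chi$ trivial on $G(F)$'' and ``$\chi^*$ trivial on $G^*(F)$'' translate, via the exact sequences
\[
G(F)\ra G^\flat(F)\stackrel{\partial}{\ra} H^1(F,G_\tu{sc}),\qquad G^*(F)\ra G^{*,\flat}(F)\stackrel{\partial^*}{\ra} H^1(F,G^*_\tu{sc}),
\]
into the triviality of $\ol\chi,\ol{\chi^*}$ on the respective kernels $\ker\partial,\ker\partial^*$. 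The crux is to show that these two kernels coincide as subgroups of $G^\flat(F)=G^{*,\flat}(F)$ under $\xi^\flat$. This is where I would invoke the Borovoi--Kottwitz abelianized Galois cohomology: the canonical isomorphism $H^1_\tu{ab}(F,G)\simeq H^1_\tu{ab}(F,G^*)$ for inner twists (which is $H^1(F,G_\tu{sc})\isom H^1(F,G^*_\tu{sc})$ as pointed sets, compatibly with the boundary maps from $G^\flat(F)=G^{*,\flat}(F)$) ensures the coincidence of kernels, and hence the claimed bijection, which is canonical and compatible with Corollary~\ref{cor:1-dim reps} at every place by construction.
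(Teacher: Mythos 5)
Your reduction via $z$-extensions and your use of the cocenter $G^\flat$ and of Corollary \ref{cor:1-dim reps} match the paper's setup, but the two load-bearing steps of your argument are not supplied, and one of them is false as stated. First, for the direction from $G$ to $G^*$ you must know that each local component $\pi_v$ of a one-dimensional automorphic representation of $G(\A_F)$ factors through $G(F_v)^\flat$ before the local correspondence of Corollary \ref{cor:1-dim reps} can be applied; this can fail purely locally when $G_{\tu{sc}}$ has an $F_v$-anisotropic factor (Remark \ref{rem:1-dim_reps}), and it is exactly here that the paper's proof invokes strong approximation for $G_{\der}$: a one-dimensional automorphic representation is trivial on $G_{\der}(F)\cdot G(\A_F)_{\der}$, which equals $G_{\der}(\A_F)$ because $G(\A_F)_{\der}$ contains $G_{\der}(F_v)$ at every place where $G$ is quasi-split (Lemma \ref{lem:Gab-Gflat}). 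Your proposal never supplies this global input.

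Second, the cohomological claim in your ``main step'' is wrong. There is no canonical isomorphism of pointed sets $H^1(F,G_{\tu{sc}})\isom H^1(F,G^*_{\tu{sc}})$ compatible with the boundary maps from $G^\flat(F)=G^{*,\flat}(F)$ (and $H^1_{\tu{ab}}(F,G)$ is the hypercohomology of $G_{\tu{sc}}\ra G$, not $H^1(F,G_{\tu{sc}})$). Concretely, the two kernels need not coincide: take $F=\Q$, $G=\GL_1(D)$ for a quaternion algebra $D$ ramified at the real place, and $G^*=\GL_2$, so that $G^\flat=\G_m$; then $\ker\partial=\tu{Nrd}(D^\times(\Q))=\Q_{>0}$ by the Hasse--Schilling--Maass norm theorem, while $\ker\partial^*=\det(\GL_2(\Q))=\Q^\times$. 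The lemma is still true in this example, but for a global reason: a character of $\tu{Nrd}(D^\times(\A_\Q))=\R_{>0}\times\prod'_p\Q_p^\times$ trivial on $\Q_{>0}$ extends uniquely to a character of $\A_\Q^\times$ trivial on $\Q^\times$, because $\Q^\times\cdot\tu{Nrd}(D^\times(\A_\Q))=\A_\Q^\times$ and $\Q^\times\cap\tu{Nrd}(D^\times(\A_\Q))=\Q_{>0}$. What has to be compared is therefore the adelic quotient $G(\A_F)^\flat$ modulo the image of $G(F)$ with its counterpart for $G^*$ (as the paper does), not the two kernels over $F$; an approximation argument of this kind must replace your appeal to abelianized Galois cohomology.
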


\begin{proof}
Define $G(\A_F)^{\flat}:=\tu{cok}(G_{\tu{sc}}(\A_F)\stackrel{\varrho}{\ra} G(\A_F))$. Similarly we have $G^*(\A_F)^{\flat}$, $G(F)^{\flat}$, and $G^*(F)^{\flat}$. Adapting the arguments of \S\ref{sub:transfer-one-dim} via $z$-extensions, we see that  $G(\A_F)^{\flat}$ is an abelian group and that there exists a canonical isomorphism $G(\A_F)^{\flat} \simeq G^*(\A_F)^{\flat}$ compatible with the isomorphism of Lemma \ref{lem:Gab=G*ab} at every place of $v$ and that the above isomorphism carries $G(F)^{\flat}$ onto $G^*(F)^{\flat}$. 

Again by taking a $z$-extension, we can assume that $G_{\tu{sc}}=G_{\der}$. It suffices to show that the inclusion $G_{\der}(F)G(\A_F)_{\der}\subset G_{\der}(\A_F)$ is an equality so that every one-dimensional automorphic representations of $G(\A_F)$ factors through $G(\A_F)^{\flat}$ (and likewise for $G^*$). Since $G(\A_F)_{\der}$ contains $G(F_v)_{\der}=G_{\der}(F_v)$ whenever $G$ is quasi-split over $F_v$ (Lemma \ref{lem:Gab-Gflat}), the desired equality follows from the strong approximation for $G_{\der}$.
\end{proof}

To state the next lemma, define a \textbf{(global) central character datum} to be a pair $(\fkX,\chi)$ as follows, where $\prod'_v$ means the restricted product over all places of $F$.
\begin{itemize}
\item $\fkX=\prod'_v \fkX_v$ is a closed subgroup of $Z(\A_F)$ such that  $Z(F)\fkX$ is closed in $Z(\A_F)$, and
\item $\chi=\prod_v \chi_v: \fkX\cap Z(F)\backslash\fkX\ra \C^\times$, with $\chi_v:\fkX_v\ra\C^\times$ a continuous character. (Implicitly for each $x=(x_v)\in \fkX$, we have $\chi_v(x_v)=1$ for almost all $v$, so that $\chi$ is well defined on $\fkX$.)
\end{itemize}

\begin{lemma}\label{lem:existence-1-dim}
Let $(\fkX,\chi)$ be a central character datum for $G$. Let $v$ be a finite place of $F$, and $g_v\in G(F_v)$ such that the image of $g_v$ in $G(F_v)^{\ab}$ is not contained in the image of $\fkX_v$. Then there exists a one-dimensional automorphic representation $\pi$ of $G(\A_F)$ with $\pi|_{\fkX}=\chi$ such that $\pi_v(g_v)\neq 1$.
\end{lemma}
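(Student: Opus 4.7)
The plan is to reduce to Pontryagin duality on a suitable locally compact abelian quotient of $G(\A_F)$, after standard reductions via inner forms and $z$-extensions.

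First I would apply Lemma \ref{lem:one-dim-auto-rep} together with Corollary \ref{cor:1-dim reps} and Lemma \ref{lem:Gab=G*ab} to replace $G$ by its quasi-split inner form $G^*$: the hypothesis carries over to a matching element $g_v^*\in G^*(F_v)$ since the canonical isomorphism $G(F_v)^{\flat}\isom G^*(F_v)^{\flat}$ is compatible with the image of $\fkX_v$, and the image of $g_v$ in $G(F_v)^{\ab}$ surjects onto its image in $G(F_v)^{\flat}$. A $z$-extension then lets me further assume $G_\der=G_\tu{sc}$, with $g_v$ and $\fkX$ lifted compatibly using the vanishing of local cohomology of the central induced torus. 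Since $G$ is now quasi-split, $G_\tu{sc}$ is $F_v$-isotropic wherever nontrivial, so by Lemma \ref{lem:Gab-Gflat} we have $G(F_v)^{\ab}=G(F_v)^{\flat}$ at the given place $v$. Thus the hypothesis reads: the image of $g_v$ in $G(F_v)^{\flat}$ is not in the image of $\fkX_v$.

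Following the proof of Lemma \ref{lem:one-dim-auto-rep}, one-dimensional automorphic representations of $G(\A_F)$ correspond bijectively to continuous characters of the LCA quotient $\mathbf{A}:=G(\A_F)^{\flat}/\overline{G(F)^{\flat,\tu{im}}}$, where $G(F)^{\flat,\tu{im}}$ denotes the image of $G(F)^{\flat}$ in $G(\A_F)^{\flat}$. The central character datum $(\fkX,\chi)$ descends to a continuous character $\bar\chi$ on the image $\bar\fkX\subset\mathbf A$ of $\fkX$, which is well-defined because $\chi|_{\fkX\cap Z(F)}=1$. Writing $\bar g_v$ for the image of $g_v$ in $\mathbf A$, the problem becomes: extend $\bar\chi$ from $\bar\fkX$ to a continuous character $\pi$ of $\mathbf A$ with $\pi(\bar g_v)\ne 1$. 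By Pontryagin duality for LCA groups, any continuous character of a closed subgroup extends to the ambient group, and the set of extensions forms a coset under the Pontryagin dual $(\mathbf A/\overline{\bar\fkX})^{\wedge}$. Provided $\bar g_v\notin\overline{\bar\fkX}$ in $\mathbf A$, this dual separates the nontrivial class of $\bar g_v$ from the identity, so we may choose the extension with $\pi(\bar g_v)\ne 1$.

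The main obstacle is verifying the global separation $\bar g_v\notin\overline{\bar\fkX}$ in $\mathbf A$. Supposing otherwise, $g_v$ can be approximated modulo the closure of $\tu{im}(G_\tu{sc}(\A_F))$ by products $x\gamma$ with $x\in\fkX$ and $\gamma\in G(F)$. Looking at the components at the places $w\ne v$, where the $v$-embedded element $g_v$ is trivial, one finds that the image of $\gamma$ in $G(F_w)^{\flat}$ matches the image of $x_w^{-1}$ at every such $w$. Combining this with strong approximation for $G_\tu{sc}$ and the weak-approximation structure of the torus quotient $T=G^{\flat}$ (together with Kneser's vanishing of $H^1(F_w,G_\tu{sc})$ at non-archimedean $w$) forces the image of $\gamma$ at $v$ also to land in the image of $\fkX_v$, placing $\bar g_v$ inside the image of $\fkX_v$ in $G(F_v)^{\flat}$ and contradicting the hypothesis. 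This local-to-global passage on the abelianization is where care is required.
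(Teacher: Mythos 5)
Your overall strategy is the same as the paper's: reduce to an abelian quotient of $G(\A_F)$ and apply Pontryagin duality, with all the real content concentrated in separating $g_v$ from $G(F)\fkX$. (The paper does this more directly: a $z$-extension to arrange $G_{\der}=G_{\tu{sc}}$, then passage to the cocenter $T=G^{\ab}$, then duality on $T(\A_F)/T(F)\fkX$; your detour through $G^*$ buys nothing.) One reduction step has a directional slip: the hypothesis is that the class of $g_v$ in $G(F_v)^{\ab}$ avoids the image of $\fkX_v$, but you need the corresponding statement in the quotient $G(F_v)^{\flat}$. Non-membership in the image of a subgroup does not descend along the surjection $G(F_v)^{\ab}\twoheadrightarrow G(F_v)^{\flat}$; you may only identify the two groups via Lemma \ref{lem:Gab-Gflat} when $G_{\tu{sc}}$ has no $F_v$-anisotropic factor, and you invoke that only after already replacing $G$ by $G^*$, which is circular. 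This needs to be addressed (it is harmless under the hypotheses in force where the lemma is applied, but not automatic).

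The genuine gap is the final ``local-to-global passage.'' You claim that if $\gamma\in G(F)$ has image equal to that of $x_w^{-1}\in\fkX_w$ in $G(F_w)^{\flat}$ for every $w\neq v$, then strong approximation for $G_{\tu{sc}}$ together with weak approximation for the cocenter forces the image of $\gamma$ at $v$ into the image of $\fkX_v$. Neither approximation theorem says anything of the sort: what is needed is a local-global \emph{membership} statement for the family $(\fkX_w)_w$, namely that a rational point of the cocenter lying in $\tu{im}(\fkX_w)$ for all $w\neq v$ lies in $\tu{im}(\fkX_v)$. This genuinely uses the shape of $\fkX$ and can fail for a general central character datum: for $G=\G_m$ over $\Q$, $\fkX=\widehat{\Z}^\times\times\R^\times$, $v=q$, the element $\gamma=q$ lies in $\Z_w^\times$ for all finite $w\neq q$ but not in $\Z_q^\times$, and indeed $g_v=q$ lies in $\Q^\times\fkX=\A^\times$ while avoiding $\fkX_q$, so no character trivial on $\Q^\times$ and restricting to $\mathbf 1$ on $\fkX$ can be nontrivial at $g_v$. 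Hence no soft argument closes this step; one must use that $\fkX_w$ is trivial at finite places (as in the application $\fkX=A_{G,\infty}$, where $\gamma=1$ by injectivity of $T(F)\hra T(F_w)$), or that $\fkX_w=Y(F_w)$ for a fixed central subtorus $Y$ (where $\gamma\in T(F)\cap Y(F_w)=Y(F)\subset Y(F_v)$). The paper asserts the corresponding separation for the torus in one line; your version hides the same issue behind approximation theorems that do not prove it.
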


\begin{proof}
Replacing $G$ with a $z$-extension and $(\fkX,\chi)$ with its pullback to the $z$-extension, we may assume that $G_{\der}=G_{\tu{sc}}$. Then we may replace $G$ with $G^{\ab}$ as $(\fkX,\chi)$ factors through a central character datum for $G^{\ab}$.
Thus we assume that $G=T$ is a torus. By assumption $g_v\in T(F_v)$ lies outside $\fkX_v$ and thus $g_v\notin T(F)\fkX$ in $T(\A_F)$, in which $g_v$ is an element via the obvious embedding $T(F_v)\hra T(\A_F)$. Thus the proof is complete by the fact (from Pontryagin duality) that, for every non-identity element $x$ in a locally compact Hausdorff abelian group $X$, there exists a unitary character of $X$ whose value is nontrivial at $x$. (Take $X=T(\A_F)/T(F)\fkX$ and $x=g_v$.)
\end{proof}

\subsection{Endoscopy with fixed central character}\label{sub:prelim-endoscopy}

Let $F$ be a local or global field of characteristic~$0$. Let $G$ be a connected reductive group over $F$ with an inner twisting $G_{\ol F}\isom G^*_{\ol F}$ with $G^*$ quasi-split over $F$. Let $\cE(G)$ (resp.~$\cE_{\el}(G)$) denote a set of representatives for isomorphism classes of endoscopic (resp.~elliptic endoscopic) data for $G$ as defined in \cite{LanglandsShelstad,KottwitzShelstad}. A member of $\cE(G)$ is represented by a quadruple $\fke=(G^\fke,\cG^\fke,s^\fke,\eta^\fke)$ consisting of a quasi-split group $G^\fke$, a split extension $\cG^\fke$ of $W_F$ by $\hat G^\fke$, $s^\fke\in Z_{G^{\fke}}$, and $\eta^\fke: \cG^\fke\hra {}^L G$ satisfying the conditions detailed in \emph{loc.~cit.} In particular $\fke^*:=(G^*,{}^L G^*, 1, \tu{id})\in \cE_{\el}(G)$. Write $\cE^<_{\el}(G):=\cE_{\el}(G)\backslash \{\fke^*\}$.

From now on, let $\fke\in \cE(G)$. Set
$$
\iota(G,G^\fke):=\tau(G)\tau(G^\fke)^{-1}\zeta(\fke)^{-1}\in \Q.
$$
Throughout \S\ref{sub:prelim-endoscopy}, we make the following assumption, which will be removed via $z$-extensions in the next subsection. (The assumption is known to be true if $\fke=\fke^*$, when it is evident, or if $G^{\tu{der}}$ is simply connected, by \cite[Prop.~1]{LanglandsStableConjugacy}.)
\begin{itemize}
\item (assumption) $~\cG^\fke={}^L G^\fke$.
\end{itemize}

For now we restrict to the case when $F$ is local. Let $\fke$ be as above. Consider a local central character datum $(\fkX,\chi)$ for $G$ as in \S\ref{sub:local-Hecke}. Let $\fkX^\fke\subset Z_{G^\fke}(F)$ denote the image of $\fkX$ under the canonical embedding $Z_G \hra Z_{G^\fke}$. Thus we can identify $\fkX=\fkX^\fke$. We say a semisimple element $\gamma^\fke\in G^\fke(F)$ is strongly $G$-regular if $\gamma^\fke$ corresponds to (the $G(\ol F)$-conjugacy class of) an element of $G(F)_{\tu{sr}}$ via the correspondence between semisimple conjugacy classes in $G^\fke(\ol F)$ and those in $G(\ol F)$ \cite[1.3]{LanglandsShelstad}. Write $G^\fke(F)_{G\tu{-sr}}\subset G^\fke(F)$ for the subset of strongly $G$-regular elements.

 Thanks to the proof of the transfer conjecture and the fundamental lemma \cite{WaldspurgerChangement,CluckersLoeser,NgoFL}, we know that each $f\in \cH(G(F))$ admits a \emph{transfer} $f^\fke\in \cH(G^\fke(F))$ whose stable orbital integrals on strongly $G$-regular semisimple elements are determined by the following formula, where the sum runs over strongly regular $G(F)$-conjugacy classes, and $\Delta(\cdot,\cdot)$ denotes the transfer factor as in \cite{LanglandsShelstad} (see the remark below on normalization).
\begin{equation}\label{eq:orbital-integral-id}
SO_{\gamma^\fke}(f^\fke)= \sum_{\gamma\in G(F)_{\tu{sr}}/\sim} \Delta(\gamma^\fke,\gamma) O_\gamma(f),\qquad \gamma^\fke\in G^\fke(F)_{G\tu{-sr}}.
\end{equation}
The assignment of $f^\fke$ to $f$ is not unique on the level of Hecke algebras, but \eqref{eq:orbital-integral-id} determines a well-defined map $\tu{LS}^\fke:\cI(G)\ra \cS(G^\fke)$.

The transfer satisfies an equivariance property. For each $z\in Z_{G}(F)\subset Z_{G^\fke}(F)$, define the translates $f_{z},f^\fke_{z}$ of $f,f^\fke$ by $f_{z}(g)=f(zg)$ and $f^\fke_{z}(h)=f^\fke(zh)$. The equivariance of transfer factors under translation by central elements (see \cite[Lem.~4.4.A]{LanglandsShelstad}) implies that $f^\fke_{z}$ is a transfer of $\lambda^\fke(z) f_{z}$ for a smooth character $\lambda^\fke:Z_{G}(F)\ra \C^\times$.
The character $\lambda^\fke$ is independent of $f^\fke$ and $f$, and its restriction $\lambda^\fke|_{Z_{G}^0(F)}$ can be described as follows. Consider the composite map
\begin{equation}\label{eq:Galois-char-of-lambda}
W_F \ra {}^L G^\fke \stackrel{\eta^\fke}{\ra} {}^L G \ra {}^L Z_{G}^0,
\end{equation}
where the last map is dual to the embedding $Z_{G}^0\hra G$. Then $\lambda^\fke|_{Z_{G}^0(F)}$ is the character of $Z_{G}^0(F)$ corresponding to the composite map above by \cite[Lem.~7.4.6]{KSZ}. Define a smooth character $\chi^\fke:\fkX^\fke\ra \C^\times$ by the relation
\begin{equation}\label{eq:endoscopic-central-char}
\chi^\fke(z)=\lambda^\fke(z)^{-1}\chi(z), \qquad z\in \fkX=\fkX^\fke.
\end{equation}
In light of the equivariance property above, the transfer map $\tu{LS}^\fke:\cI(G)\ra \cS(G^\fke)$ descends to
\begin{equation}\label{eq:LS}
\tu{LS}^\fke:\cI(G,\chi^{-1})\ra \cS(G^\fke,\chi^{\fke,-1})
\end{equation}
 via averaging, still denoted by $\tu{LS}^\fke$ for simplicity. The identity \eqref{eq:orbital-integral-id} still holds if $f^\fke=\tu{LS}^\fke(f)$ under \eqref{eq:LS}. In the special case of $\fke=\fke^*$ (so that $\chi^\fke=\chi$), we write $f^*\in \cH(G^*(F),\chi^{-1})$ for a transfer of $f\in \cH(G(F),\chi^{-1})$. If $\fkX=\{1\}$ then $f^*$ here coincides with the one in \S\ref{sub:transfer-one-dim}, noting that $e(G)$ in \eqref{eq:SO=SO} plays the role of transfer factor.

The fundamental lemma tells us the following. Assume that $G$ and $\fke$ are unramified; the latter means that $G^\fke$ is an unramified group and that the $L$-morphism $\eta^\fke$ is inflated from a morphism of $L$-groups with respect to an unramified extension of $F$. We fix pinnings for $G$ and $G^\fke$ defined over $F$, which determine hyperspecial subgroups $K\subset G(F)$ and $K^\fke\subset G^\fke(F)$ as in \cite[\S4.1]{Waldspurger-sitordue}. The Haar meausres on $G(F)$ and $G^\fke(F)$ are normalized to assign volume 1 to $K$ and $K^\fke$.
We also assume that $\chi$ is unramified, \ie~$\chi$ is trivial on $\fkX\cap K$. We normalize the transfer factors canonically as in \cite{LanglandsShelstad} (which is possible as $G$ is quasi-split). Then $\tu{LS}^\fke$ can be realized by a linear map of the unramified Hecke algebras (defined relative to $K$ and $K^\fke$)
$$
\xi^{\fke,*} \colon \cH^{\tu{ur}}(G(F),\chi^{-1}) \ra \cH^{\tu{ur}}(G^\fke(F), \chi^{\fke,-1}).
$$

We turn to the case of global field $F$. Recall that $Z$ is the center of $G$.
Let $(\fkX,\chi)$ be a global central character datum (\S\ref{sub:one-dim}).
As in the local case, we define $\fkX^\fke=\prod_v \fkX^\fke_v$ to be the image of $\fkX$ under the canonical embedding $Z_G(\A_F)\hra Z_{G^\fke}(\A_F)$.
We have $\chi^\fke := \prod_v\chi_{v}^\fke: \fkX^\fke\ra \C^\times$, where $\chi_{v}^\fke$ was given by the local consideration above, so that functions in $\cH(G(\A_F),\chi^{-1})$ transfer to those in $\cH(G^\fke(\A_F),(\chi^\fke)^{-1})$. Denote by $\lambda^\fke=\prod_v \lambda^\fke_v:Z_{G}(F)\backslash Z_{G}(\A_F)\ra \C^\times$ the character with $\lambda^\fke_v$ as in the local context above. (The $Z_{G}(F)$-invariance of $\lambda^\fke$ follows from the equivariance of transfer factors \cite[Lem.~4.4.A]{LanglandsShelstad} and the product formula \cite[Cor.~6.4.B]{LanglandsShelstad}.)  The restriction of $\lambda$ to $Z_{G}^0(\A_F)$ corresponds to the composite map \eqref{eq:Galois-char-of-lambda} (with $F$ now global). There is an equality $\chi^\fke=\lambda^{\fke,-1} \chi$ as characters on $\fkX=\fkX^\fke$ as in \eqref{eq:endoscopic-central-char} since this holds at every place of $F$. In particular $\chi^\fke$ is trivial on $Z_{G}(F)\cap \fkX^\fke$, and $(\fkX^\fke,\chi^\fke)$ is a central character datum for $G^\fke$.

\begin{remark}\label{rem:transfer-factors}
The local transfer factors are well defined only up to a nonzero scalar (unless $G$ is quasi-split or $G^\fke=G^*$, if no further choices are made), so we always choose a normalization implicitly, for instance throughout \S\ref{sec:Jacquet-endoscopy}. Scaling the transfer factor results in scaling the transfer map \eqref{eq:LS}. However, according to \cite[\S6.4]{LanglandsShelstad}, we may and will choose a normalization at each place such that the product of local transfer factors over all places \emph{is} the canonical global transfer factor. This will not introduce ambiguity in our main argument as it takes place in the global context.
\end{remark}

It simplifies some later arguments if $\fke$ is chosen to enjoy a boundedness property. We say that a subgroup of $^L G=\hat G\rtimes W_F$ is \emph{bounded} if its projection to $\hat G \rtimes \Gal(E/F)$ is contained in a compact subgroup for some (thus every) finite Galois extension $E/F$ containing the splitting field of $G$.

\begin{lemma}\label{lem:bounded-representative}
In either local or global case, we can choose the representative $\fke=(G^\fke,\cG^\fke,s^\fke,\eta^\fke)$ in its isomorphism class to satisfy the following condition: $\eta^\fke(W_F)$ is a bounded subgroup of $^L G$. (We restrict $\eta^\fke$ to $W_F$ via the splitting $W_F\ra \cG^\fke$ built into the data.)
\end{lemma}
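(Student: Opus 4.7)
The plan is to analyze the 1-cocycle associated with $\eta^\fke|_{W_F}$ and modify it within the isomorphism class to obtain a bounded representative. Fix the canonical splittings of ${}^L G$ and ${}^L G^\fke = \cG^\fke$ as semidirect products, and write $\eta^\fke(1 \rtimes w) = c(w) \rtimes w$ for a continuous 1-cocycle $c \colon W_F \to \hat G$, where $W_F$ acts on $\hat G$ through a finite Galois quotient $\Gal(E/F)$ for some finite Galois $E/F$ splitting $G$ and $G^\fke$. The $L$-embedding property forces $c$ to take values in $N_{\hat G}(\hat G^\fke)$ and, modulo $\hat G^\fke$, to realize the outer $W_F$-action on $\hat G^\fke$ prescribed by the $F$-structure on $G^\fke$. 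Replacing $\eta^\fke$ by $\tu{Int}(g)\circ \eta^\fke$ for $g \in \hat G$ produces an isomorphic datum and replaces $c$ by the cohomologous cocycle $w \mapsto g\, c(w)\, w(g)^{-1}$; thus the question reduces to finding a suitable cohomologous representative.

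I would first use this conjugation freedom to arrange that the class of $c(w)$ in $N_{\hat G}(\hat G^\fke)/\hat G^\fke \cong \tu{Out}(\hat G^\fke)$ is given by a fixed pinning-preserving lift; since $\tu{Out}(\hat G^\fke)$ is finite and pinning-preserving lifts form a bounded subset of $\hat G$, the image of $c$ thereby becomes bounded modulo $\hat G^\fke$. A further conjugation by an element of $\hat G^\fke$ then moves the $\hat G^\fke$-component of $c(w)$ into the normalizer of a fixed maximal torus $\hat T^\fke \subset \hat G^\fke$; modulo a bounded finite error coming from $N_{\hat G^\fke}(\hat T^\fke)/\hat T^\fke$, we are reduced to controlling a $\hat T^\fke$-valued 1-cocycle within its $\hat T^\fke$-coboundary class.

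The main obstacle lies in this last torus step: not every continuous class in $H^1(W_F, \hat T^\fke)$ admits a bounded representative through $\hat T^\fke$-coboundaries alone, as unbounded unramified characters already appear when $G$ is a torus. The crucial additional freedom is that within the isomorphism class of $\fke$ one may further twist $\eta^\fke$ by continuous 1-cocycles $W_F \to Z(\hat G^\fke)$ arising from the choice of $a$- and $\chi$-data in \cite{LanglandsShelstad}; such twists preserve both $s^\fke$ and the embedding $\hat G^\fke \hookrightarrow \hat G$. By local (resp.~global) class field theory, these twists correspond to continuous characters of $Z_{G^\fke}(F)$ (resp.~$Z_{G^\fke}(F)\backslash Z_{G^\fke}(\A_F)$), and one selects one that absorbs the unbounded part of the residual cocycle. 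Verifying that the combined modifications stay within the isomorphism class of $\fke$—in particular that $s^\fke$ and the image $\eta^\fke(\hat G^\fke)$ are preserved—is routine but delicate in bookkeeping, especially when matching choices across all places in the global case.
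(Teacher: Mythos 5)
Your proposal correctly locates the difficulty (the abelian residue of the cocycle can be unbounded, exactly as unramified characters $|\cdot|^s$ are), and its overall shape — reduce to a section valued in the normalizer of a torus, then control an abelian residue — matches the paper's, which refines Langlands's proof of \cite[Prop.~1]{LanglandsStableConjugacy}. But two steps do not go through as written. First, the conjugation steps: replacing $\eta^\fke$ by $\Int(g)\circ\eta^\fke$ changes $c(w)$ into $g\,c(w)\,w(g)^{-1}$ for a \emph{single} $g$, and no single conjugation can force the entire image of a given continuous cocycle into $N_{\hat G^\fke}(\hat T^\fke)\cdot(\mbox{pinning-preserving lifts})$; the values $c(w)$ need not normalize any common maximal torus. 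The workable substitute is to \emph{construct} a new bounded set-theoretic section $\xi'\colon W_{K/F}\to \hat N_c\rtimes W_{K/F}$ out of the Langlands--Shelstad Weyl representatives $n(\omega)$ (products of finite-order elements, hence in the maximal compact $\hat N_c$), and then to observe that any $L$-morphism extending the fixed $\eta^\fke|_{\hat G^\fke}$ produces a datum isomorphic to $\fke$ (the two embeddings differ by a $Z(\hat G^\fke)$-valued cocycle, so they have the same image and the same $s^\fke$). This also makes the appeal to $a$- and $\chi$-data superfluous: those govern transfer factors and embeddings of $L$-groups of tori, not the isomorphism class of $\fke$; the twisting freedom you need is already built into the definition of isomorphism of endoscopic data.

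Second, and more seriously, the sentence ``one selects one that absorbs the unbounded part of the residual cocycle'' is precisely the content of the lemma and is left unproved. Because the bounded section is only set-theoretic, its residue is not a $1$-cocycle that can simply be twisted away: the obstruction to correcting $\xi'$ into a homomorphism is the continuous $2$-cocycle $a_{w_1,w_2}=\xi'(w_1)\xi'(w_2)\xi'(w_1w_2)^{-1}$, which takes values in $Z(\hat G^\fke)_c$, and one must prove it is trivial in $H^2(W_{K/F},Z(\hat G^\fke)_c)$ — with \emph{compact} coefficients — so that the correcting $1$-cochain can itself be taken bounded. Langlands's Lemmas 2 and 4 only give triviality in $H^2(W_{K/F},Z(\hat G^\fke))$, which would reintroduce an unbounded correction; the paper's proof establishes compact variants (replacing the relevant tori by their maximal compact subtori and using surjectivity on unitary characters). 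Without this $H^2$ vanishing your absorption step has no justification, so the proposal has a genuine gap at its decisive point. (A minor remark: the global case needs no matching of choices across places for this lemma; the same Weil-group argument applies verbatim with $W_F$ the global Weil group.)
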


\begin{remark}
Bergeron--Clozel \cite[Lem.~3.7]{BC17} proved a similar lemma when $F=\R$.
\end{remark}

\begin{proof}
Since $\eta^\fke|_{\hat G^\fke}$ will be fixed throughout, we use it to identify $\hat G^\fke$ with a subgroup of $\hat G$. We take the convention that all cocycles/cohomology below are continuous cocycles/cohomology.

It suffices to show that there exists an $L$-morphism $\eta^\fke_0: \cG^\fke \ra {}^L G$ extending $\eta^\fke|_{\hat G^\fke}$ such that $\eta^\fke_0(W_F)$ is bounded. Indeed, $\fke_0=(G^\fke,\cG^\fke,s^\fke,\eta^\fke_0)$ is then the desired representative.

To prove the existence of $\eta^\fke_0$ as above, we reduce to the case that $G_{\der}=G_{\tu{sc}}$ and that $\cG^\fke={}^L G^\fke$ via a $z$-extension. (In the notation of \S\ref{sub:endoscopy-z-ext} below, the idea is to multiply $\eta_1^\fke$ by a suitable 1-cocycle $c:W_F \ra Z(\hat G_1)$ to make the image of $(c\cdot \eta_1^\fke)_{W_F}$ contained in $^L G$ and still bounded.) 
In the case that $G_{\der}=G_{\tu{sc}}$ and $\cG^\fke={}^L G^\fke$, our approach is to refine the proof of \cite[Prop.~1]{LanglandsStableConjugacy}, where Langlands shows that $\eta^\fke|_{\hat G^\fke}$ extends to an $L$-morphism $\eta^\fke_0:{}^L G^\fke \ra {}^L G$ under the hypothesis but without guaranteeing boundedness of image. 
To construct $\eta^\fke_0$ (denoted $\xi$ therein), Langlands reduces to the elliptic endoscopic case, chooses a sufficiently large finite extension $K/F$, and then constructs $\xi':W_{K/F}\ra {}^L G$ such that $\eta^\fke_0(g\rtimes w):=\eta^\fke(g)\xi'(w)$ gives the desired $L$-morphism. (In the current proof, we follow Langlands to use the Weil group $W_{K/F}$ to form the $L$-group, \ie $^L G=\hat G\rtimes W_{K/F}$.)
It is enough to arrange that $\xi'$ has bounded image in Langlands's construction. 

Write $\hat N$ for the normalizer of $\hat T$ (which is $^L T^0$ in \emph{loc.~cit.}) in $\hat G$. Let $\hat N_c$ (resp. $Z(\hat G^\fke)_c$) denote the maximal compact subgroup of $\hat N$ (resp. $Z(\hat G^\fke)$). The starting point is a set-theoretic map $\xi':W_{K/F}\ra {}^L G$ satisfying the second displayed formula on p.709 therein. Such a $\xi'$ is chosen using the Langlands--Shelstad representative of each Weyl group element $\omega$, denoted by $n(\omega)\in \hat N$ in \cite[\S2.1]{LanglandsShelstad}. (The point is that the $\sigma$-action $\omega_{T/G}(\sigma)$ on $^L T^0$ and the action $\omega^1(\sigma)$ differ by the Weyl action $\omega^2(\sigma)$ in his notation. See the seventh displayed formula on p.703.) In fact $n(\omega)\in \hat N_c$ since it is a product of finite-order elements in $\hat N$. Thereby $\xi'$ has image in $\hat N_c \rtimes W_{K/F}$ (thus bounded). It follows that the 2-cocycle of $W_{K/F}$ given by
$$
a_{w_1,w_2}=\xi'(w_1)\xi'(w_2)\xi'(w_1w_2)^{-1}
$$
has values in $Z(\hat G^\fke)_c$ (not just $Z(\hat G^\fke)$ as in \cite[p.709]{LanglandsStableConjugacy}). We need to verify the claim that the 2-cocycle is trivial in $H^2(W_{K/F},Z(\hat G^\fke)_c)$; then $\xi'$ can be made a homomorphism after multiplying a $Z(\hat G^\fke)_c$-valued 1-cocycle, keeping its image bounded, so we will be done. In fact, thanks to Lemma~2 therein (stated for $Z(\hat G^\fke)$ but also applicable to $Z(\hat G^\fke)_c$ since both groups have the same group of connected components), we may assume that $a_{w_1,w_2}\in (Z(\hat G^\fke)_c)^0$. Then the claim follows from a variant of Lemma 4 therein, with $S$ replaced by the maximal compact subtorus in the statement and proof. (In particular, the map (1) on p.719 is still surjective if $S_1$ and $S_2$ are replaced with their maximal compact subtori, by considering unitary characters.) 
\end{proof}

\subsection{Endoscopy and $z$-extensions}\label{sub:endoscopy-z-ext}

Here we explain a general endoscopic transfer with fixed central character by removing the assumption that $\cG^\fke={}^L G^\fke$ in \S\ref{sub:TF-fixed-central} via $z$-extensions. For the time being, let the base field $F$ of $G$ be either local or global. Fix a $z$-extension over $F$
$$
1\ra Z_1\ra G_1\ra G\ra 1.
$$
Let $\fke=(G^\fke,\cG^\fke,s^\fke,\eta^\fke)\in \cE^<_{\el}(G)$. As explained in \cite[\S4.4]{LanglandsShelstad} (see also \cite[\S7.2.6]{KSZ}), we have a central extension
$$
1\ra Z_1  \ra G^\fke_1 \ra G^\fke \ra 1,
$$
and $\fke$ can be promoted to an endoscopic datum $\fke_1=(G_1^\fke,{}^L G_1^\fke,s_1^\fke,\eta^\fke_1)$ for $G_1$ such that $\eta^\fke_1:{}^L G_1^\fke \hra {} ^L G$ extends $\eta^\fke:\cG^\fke\hra {}^L G$. Moreover, changing $\fke_1$ and $\fke$ in their isomorphism classes if necessary, we may ensure that $\eta^\fke_1(W_F)$ and $\eta^\fke(W_F)$ are bounded subgroups in $^L G_1$ and $^L G$, respectively. Indeed, this is done in the course of proof of Lemma \ref{lem:bounded-representative} in the general case. Write $\fkX_1$ (resp.~$\fkX^\fke_1$) for the preimage of $\fkX$ in $G_1$ (resp.~$G^\fke_1$), and $\chi_1:\fkX_1\ra\C^\times$ for the character pulled back from $\chi$.

To describe endoscopic transfers, it is enough to work locally, so let $F$ be a local field. Applying \S\ref{sub:TF-fixed-central} to $G_1$ and $\fke_1$ in place of $G$ and $\fke$, we obtain an identification $\fkX^\fke_1 = \fkX_1$ under the canonical embedding $Z_{G_1}\hra Z_{G^\fke_1}$ as well as characters $\lambda^\fke_1:Z_{G_1}(F)\ra\C^\times$ and $\chi^\fke_1:\fkX^\fke_1=\fkX_1\ra\C^\times$ such that $\chi^\fke_1=\lambda^{\fke,-1}_1 \chi_1$ as characters on $\fkX^\fke_1 = \fkX_1$. Again $\lambda^\fke_1|_{Z_{G_1}^0(F)}$ corresponds to the parameter \eqref{eq:Galois-char-of-lambda} (with $G^\fke_1,G_1$ replacing $G^\fke,G$). We also have a transfer
$$
\tu{LS}^\fke: \cI(G,\chi^{-1})=\cI(G_1,\chi_1^{-1}) \stackrel{\tu{\eqref{eq:LS}}}{\ra} \cS(G^\fke_1,\chi^{\fke,-1}),
$$
where the equality is induced by $G_1(F)\twoheadrightarrow G(F)$.

\subsection{The trace formula with fixed central character}\label{sub:TF-fixed-central}

In this subsection, $G$ is a connected reductive group over $\Q$. Let $A_G$ denote the maximal $\Q$-split torus in $Z_{G}$, and $A_{G_\R}$ denote the maximal $\R$-split torus in $Z_{G_\R}$. Put
$$
A_{G,\infty}:=A_G(\R)^0, \qquad A_{G_\R,\infty}:=A_{G_{\R}}(\R)^0.
$$
Let $\chi_0:A_{G,\infty}\ra\C^\times$ denote a continuous character. By $L^2_{\disc,\chi_0}(G(\Q)\backslash G(\A))$ we mean the discrete spectrum in the space of square-integrable functions (modulo $A_{G,\infty}$) on $G(\Q)\backslash G(\A)$ which transforms under $A_{G,\infty}$ by $\chi_0$.
Let $(\fkX=\prod_v \fkX_v,\chi=\prod_v \chi_v)$ be a central character datum as in \S\ref{sub:one-dim}. Henceforth we always assume that
$$
A_{G,\infty}\subset \fkX_\infty.
$$
We can define $L^2_{\disc,\chi}(G(\Q)\backslash G(\A))$ in the same way as $L^2_{\disc,\chi_0}(G(\Q)\backslash G(\A))$. Let $\mathcal{A}_{\disc,\chi}(G)$ stand for the set of isomorphism classes of irreducible $G(\A)$-subrepresentations in $L^2_{\disc,\chi}(G(\Q)\bs G(\A))$. The multiplicity of $\pi\in \mathcal{A}_{\disc,\chi}(G)$ in $L^2_{\disc,\chi}(G(\Q)\bs G(\A))$ is denoted $m(\pi)$.

Define $\cH(G(\A),\chi^{-1}):=\otimes'_v \cH(G(\Q_v),\chi_v^{-1})$ as a restricted tensor product. Each $f\in \cH(G(\A),\chi^{-1})$ defines a trace class operator, yielding the discrete part of the trace formula:
\begin{equation}\label{eq:DiscretePartOfTF_chi}
T^G_{\disc,\chi}(f) :=  \Tr \left(f \,|\, L^2_{\disc,\chi}(G(\Q)\bs G(\A))\right)
= \sum_{\pi\in \mathcal{A}_{\disc,\chi}(G)} m(\pi)\Tr (f|\pi).
\end{equation}

Fix a minimal $\Q$-rational Levi subgroup $M_0\subset G$. Write $\cL$ for the set of $\Q$-rational Levi subgroups of $G$ containing $M_0$. Define the subset $\cL_{\tu{cusp}}\subset \cL$ of \emph{relatively cuspidal} Levi subgroups; by definition, $M\in \cL$ belongs to $\cL_{\tu{cusp}}$ exactly when the natural map $A_{M,\infty}/A_{G,\infty} \ra A_{M_\R,\infty}/A_{G_\R,\infty}$ is an isomorphism. Let $M\in \cL$ and $\gamma\in M(\Q)$ be a semisimple element. Write $M_\gamma$ for the centralizer of $\gamma$ in $M$, and $I_\gamma^M:=(M_\gamma)^0$ for the identity component. Write $\iota^M(\gamma)\in \Z_{\ge 1}$ for the number of connected components of $M_\gamma$ containing $\Q$-points. Write $|\Omega^M|$ for the order of the Weyl group of $M$. For $\gamma\in M(\Q)$, let $\tu{Stab}^M_{\fkX}(\gamma)$ denote the set of $x\in \fkX$ such that $\gamma$ and $x\gamma$ are $M(\Q)$-conjugate. Note that $\tu{Stab}^M_{\fkX}(\gamma)$ is necessarily finite (by reducing to the case of general linear groups via a faithful representation). When $M=G$, we often omit $M$, e.g.,~$I_\gamma=I_\gamma^G$ and $\iota(\gamma)=\iota^G(\gamma)$.

Fix Tamagawa measures on $M(\A)$ and $I^M_\gamma(\A)$ for $M\in \cL_{\tu{cusp}}$ and fix their decomposition into Haar measures on $M(\A^\infty)$ and $M(\R)$ (resp. $I^M_\gamma(\A^\infty)$ and $I^M_\gamma(\R)$). This determines a measure on the quotient $I^M_\gamma(\A)\backslash M(\A)$, which is used to define the ad\`elic orbital integral at $\gamma$ in $M$, and similarly over finite-ad\`elic groups. We also fix Haar measures on $\fkX$ and $\fkX_\infty$. We equip $I^M_\gamma(\Q)$ and $\fkX_\Q:=\fkX\cap Z(\Q)$ with the counting measures and $A_G(\R)^0$ with the multiplicative Lebesgue measure. Thereby we have quotient measures on $I^M_\gamma(\Q)\bs I^M_\gamma(\A)/\fkX$, $\fkX_\Q\backslash \fkX/A_G(\R)^0$, and $\fkX_\infty/A_G(\R)^0$.

We define the elliptic part of the trace formula as
\begin{eqnarray}\label{eq:EllipticPartOfTF}
T^G_{\el,\chi}(f):=\sum_{\gamma\in \Gamma_{\el,\fkX}(G)}|\textup{Stab}_{\mathfrak X}^G(\gamma)|^{-1} \iota(\gamma)^{-1} \vol(I_\gamma(\Q)\bs I_\gamma(\A)/\fkX) O_{\gamma}(f), \quad f\in \cH(G(\A),\chi^{-1}),
\end{eqnarray}
where $\Gamma_{\el, \fkX}(G)$ is the set of $\fkX$-orbits of elliptic conjugacy classes of $G$.

Now we assume that $G_{\R}$ contains an elliptic maximal torus. Let $\xi$ be an irreducible algebraic representation of $G_\C$, and $\zeta : G(\R)\ra \C^\times$ a continuous character. Let $M\in \cL_{\tu{cusp}}$ and $T_\infty$ an $\R$-elliptic torus in $M$. Arthur introduced the function $\Phi_M(\gamma,\xi)$ in $\gamma\in T_\infty(\R)$ in \cite[(4.4), Lem.~4.2]{ArthurL2}. (See Lemma \ref{lem:TF-technical-lemma} below for a concrete description.) 

Let $\gamma\in M(\Q)$ and suppose that $\gamma$ is elliptic in $M(\R)$. Denote by $I_\gamma^{M,\tu{cpt}}$ a compact-mod-center inner form of $(I_\gamma^M)_{\R}$. We choose a Haar measure on $I_\gamma^{M,\tu{cpt}}(\R)$ compatibly with that on $I_\gamma^M(\R)$. Write $q(I_\gamma)\in \Z_{\ge 0}$ for the real dimension of the symmetric space associated with the adjoint group of $(I_\gamma^M)_{\R}$. Following \cite[(6.3)]{ArthurL2}, define
\begin{equation}\label{eq:chi-I-gamma-M}
\chi(I_\gamma^M):=(-1)^{q(I_\gamma)}\tau(I_\gamma^M)\tu{vol}(A_{I_\gamma^M,\infty}\backslash I_\gamma^{M,\tu{cpt}}(\R))^{-1}d(I_\gamma^M).
\end{equation}
For $f^\infty\in \cH(G(\A^\infty),(\chi^{\infty})^{-1})$, let $f^\infty_M\in \cH(M(\A^\infty),(\chi^{\infty})^{-1})$ denote the constant term, cf.~\S\ref{sub:nu-constant-terms} and \S\ref{sub:fixed-central-character} below.
Dalal extended Arthur's Lefschetz number formula \cite[Thm.~6.1]{ArthurL2} to the setting with fixed central characters. The condition on $\fkX$ is imposed below because it is also in \cite{Dalal}. It is a harmless condition that is satisfied in our main setup, but we expect it to be superfluous.

\begin{proposition}\label{prop:DalalsFormula}
Assume that $\fkX=Y(\A) A_{G,\infty}$ for a central torus $Y\subset Z_G$ over $\Q$. Let $\xi,\zeta$ be as above. Then for each $f^\infty\in \cH(G(\A^\infty),(\chi^{\infty})^{-1})$, 
$$
T^G_{\tu{disc},\chi}(f_{\xi,\zeta} f^\infty)=
\frac{1}{d(G)}
\sum_{M\in \mathcal L_{\tu{cusp}}} 
\frac{(-1)^{\dim (A_M/A_G)} }{\tu{vol}(\fkX_\Q\backslash \fkX/A_{G,\infty})}
\frac{|\Omega^M|}{|\Omega^G|} 
\sum_\gamma\frac{\chi(I^M_\gamma)\zeta(\gamma)
\Phi_M(\gamma,\xi)O^M_\gamma(f^\infty_M)}{\iota^M(\gamma)\cdot |\tu{Stab}^M_\fkX(\gamma)|} ,
$$
where the second sum runs over the $\fkX$-orbits on the set of $\R$-elliptic conjugacy classes of $M(\Q)$.
\end{proposition}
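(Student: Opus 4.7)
The plan is to deduce the formula from Arthur's original Lefschetz number formula \cite[Thm.~6.1]{ArthurL2}, specialized to the case of trivial central character datum, and then average over $\fkX$. Arthur's formula applied to $f_{\xi,\zeta} \tilde f^\infty$ for some $\tilde f^\infty \in \cH(G(\A^\infty))$ (no fixed central character) yields
\[
T^G_{\tu{disc}}(f_{\xi,\zeta} \tilde f^\infty)
= \frac{1}{d(G)} \sum_{M\in\cL_{\tu{cusp}}} \frac{(-1)^{\dim(A_M/A_G)}|\Omega^M|}{|\Omega^G|\,\tu{vol}(A_{G,\Q}\backslash A_{G,\infty})} \sum_{\gamma} \frac{\chi(I^M_\gamma)\zeta(\gamma)\Phi_M(\gamma,\xi)\,O^M_\gamma(\tilde f^\infty_M)}{\iota^M(\gamma)},
\]
where the inner sum runs over $A_{G,\infty}$-orbits of $\R$-elliptic $\Q$-conjugacy classes in $M(\Q)$. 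The factor $1/d(G)$ enters because $f_{\xi,\zeta}$ is the average over the $d(G)$ pseudo-coefficients in the discrete series $L$-packet, and the archimedean orbital integrals of $f_{\xi,\zeta}$ at elliptic elements reduce to $d(G)^{-1}\zeta(\gamma)\Phi_M(\gamma,\xi)\chi(I^M_\gamma)$ by \eqref{eq:orbital-integral-Lefschetz} combined with Arthur's character identity; at non-elliptic elements the contributions vanish, accounting for the restriction to $\cL_{\tu{cusp}}$.

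Next, for $f^\infty\in \cH(G(\A^\infty),(\chi^\infty)^{-1})$, choose a lift $\tilde f^\infty \in \cH(G(\A^\infty))$ under the $\chi^\infty$-averaging surjection $\cH(G(\A^\infty))\twoheadrightarrow \cH(G(\A^\infty),(\chi^\infty)^{-1})$, so that $f^\infty(g) = \int_{\fkX^\infty/A_{G,\Q}} \tilde f^\infty(gz) \chi^\infty(z)\, dz$ (using that $A_{G,\infty}\subset \fkX_\infty$ and the hypothesis $\fkX = Y(\A)A_{G,\infty}$, which makes $\fkX^\infty/A_{G,\Q}$ a well-defined locally compact abelian group). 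The left-hand side of the proposition equals the projection of $T^G_{\tu{disc}}(f_{\xi,\zeta} \tilde f^\infty)$ onto the $\chi$-isotypic part of $L^2_{\tu{disc}}$, which by orthogonality of unitary characters is obtained by integrating $\chi$ against the translate of $\tilde f^\infty$ by $\fkX$-elements. This has the effect of replacing $\tilde f^\infty$ with $f^\infty$ in the geometric expansion.

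On the geometric side, applying this averaging to Arthur's formula produces two changes. First, on each $M$-conjugacy class, translation by $\fkX$ groups the conjugacy classes into $\fkX$-orbits; the stabilizer $\tu{Stab}^M_\fkX(\gamma)$ (which is finite) counts the redundancy, producing the factor $|\tu{Stab}^M_\fkX(\gamma)|^{-1}$ and the summation over $\fkX$-orbits. Second, the volume factor $\tu{vol}(A_{G,\Q}\backslash A_{G,\infty})^{-1}$ gets replaced by $\tu{vol}(\fkX_\Q\backslash \fkX/A_{G,\infty})^{-1}$: this is because when one decomposes $\fkX = Y(\A)A_{G,\infty}$ and integrates, the enlargement of the ``discrete central'' subgroup from $A_{G,\Q}$ to $\fkX_\Q$ (modulo $A_{G,\infty}$) and the enlargement of $A_{G,\infty}$ to $\fkX$ exactly accounts for the new volume factor, using that $\chi$ is trivial on $\fkX_\Q$. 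Finally, the orbital integrals $O^M_\gamma(\tilde f^\infty_M)$ become $O^M_\gamma(f^\infty_M)$ because formation of constant terms commutes with $\chi$-averaging (this uses that $\fkX\subset Z_G$, so the averaging operation on $G$ descends to the analogous averaging on $M$).

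The main obstacle is the bookkeeping in the volume manipulations in the last step, specifically checking that the translation action of $\fkX$ on the set of $\R$-elliptic conjugacy classes interacts cleanly with the fundamental domain $\fkX_\Q\backslash \fkX/A_{G,\infty}$, and that the only combinatorial factor introduced is $|\tu{Stab}^M_\fkX(\gamma)|^{-1}$. This is where the hypothesis $\fkX = Y(\A)A_{G,\infty}$ is crucial: because $\fkX$ is then a subgroup of $Z(\A)$ of a particularly rigid shape, the averaging respects the decomposition of the geometric side into conjugacy classes without introducing further correction terms. This is carried out in detail by Dalal \cite{Dalal}, whose argument we follow.
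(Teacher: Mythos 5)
Your proposal takes essentially the same route as the paper: the paper's entire proof is the citation to \cite[Cor.~6.5.1]{Dalal}, and your argument likewise ends by deferring to Dalal for the details. The expository sketch you add is a reasonable summary of the averaging idea behind Dalal's extension of \cite[Thm.~6.1]{ArthurL2}, though be aware that your displayed ``Arthur's formula'' already builds in the evaluation of the archimedean constant-term/orbital-integral data (via \eqref{eq:orbital-integral-Lefschetz} and the stable discrete-series character formula) and includes a central volume normalization that is not present in Arthur's original statement; these are post-processing steps that belong to Dalal's reworking rather than to Arthur's theorem itself, so the sketch should not be read as a verbatim quotation of Arthur.
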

\begin{proof}
This is \cite[Cor.~6.5.1]{Dalal}.
\end{proof}

\subsection{The stable trace formula}\label{sub:STF}
Let $H$ be a quasi-split group over $\Q$. Let $(\fkX_H,\chi_H)$ be a central character datum for $H$. Write $\Sigma_{\tu{ell},\chi_H}(H)$ for the set of stable elliptic conjugacy classes in $H(\Q)$ modulo $ \fkX_H$, namely two stable conjugacy classes are equivalent if one is mapped to the other by mulitpying an element $x\in \fkX_H$. Following \cite[\S8.3.7]{KSZ}, define
$$
ST^H_{\tu{ell},\chi_H}(h):=\tau_{\fkX_H}(H) \sum_{\gamma_H\in \Sigma_{\tu{ell},\chi_H}(H) } |\tu{Stab}_{\fkX_H}(\gamma_H)|^{-1} SO^{H(\A)}_{\gamma_H}(h),\qquad  h\in \cH(H(\A),\chi_H^{-1}).
$$

Consider a central character datum $(\fkX,\chi)$ for $G$ as well as $f=\otimes'_v f_v\in \cH(G(\A),\chi^{-1})$. For each $\fke\in \cE^<_{\tu{ell}}(G)$, we have $\fke_1$ and a central character datum $(\fkX^\fke_1,\chi^\fke_1)$ (whose $v$-components are given as in the preceding subsection). Write $f^\fke_{1,v}\in \cH(G^\fke_1(\A),(\chi^\fke_1)^{-1})$ for a transfer of $f_v$ at each $v$. Put $f^\fke_1:=\otimes'_v f^\fke_{1,v}$. For $\fke=\fke^*$, we transfer $f$ to $f^*\in \cH(G^*(\A),\chi^{-1})$ as in \S\ref{sub:prelim-endoscopy}.

\begin{proposition}\label{prop:stabilization-elliptic}
 Let $f=\otimes'_v f_v\in \cH(G(\A),\chi^{-1})$. Assume that there exists a finite place $q$ such that $O_g(f_q)=0$ for every $g\in G(\Q_q)_{\tu{ss}}$ that is not regular. With $f^*$ and $f^\fke_1$ as above, we have
$$
T^G_{\el,\chi}(f) = ST^{G^*}_{\tu{ell},\chi}(f^*)+ \sum_{\fke \in \cE^<_{\tu{ell}}(G)} \iota(G,G^{\fke}) ST^{G^\fke_1(\A)}_{\tu{ell},\chi^\fke_1}(f_1^\fke).$$
\end{proposition}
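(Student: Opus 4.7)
The plan is to reduce to the classical Kottwitz stabilization of elliptic regular terms, using the regularity hypothesis at $q$ to discard every singular contribution. Essentially all the ingredients have been recorded earlier in the paper; what remains is to unpack the definitions and invoke the standard stabilization machinery.

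First I would unravel the definition of $T^G_{\el,\chi}(f)$ given in \eqref{eq:EllipticPartOfTF}. The assumption $O_g(f_q)=0$ for every non-regular semisimple $g\in G(\Q_q)$ forces $O_\gamma(f)=0$ unless the $\Q_q$-component of $\gamma$ is regular, which in turn forces $\gamma$ itself to be regular. Hence the sum may be restricted to $\Q$-elliptic regular semisimple classes modulo $\fkX$, for which $\iota(\gamma)=1$ and $I_\gamma$ is a torus. This is precisely the situation in which the classical stabilization works without having to deal with the delicate singular terms.

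Next I would apply Kottwitz's stabilization identity for regular semisimple orbital integrals, adapted to the fixed-central-character setting as in \S 8 of Kisin--Shin--Zhu. Fix a stable conjugacy class $\gamma^{\st}$ in $G(\Q)$. The sum of $O_\gamma(f)$ over rational classes inside $\gamma^{\st}$ decomposes, by Pontryagin duality on $\mathfrak{K}(I_\gamma/\Q)$, into a sum of $\kappa$-orbital integrals; the trivial character $\kappa=1$ contributes the stable orbital integral that assembles into $ST^{G^*}_{\tu{ell},\chi}(f^*)$, while each nontrivial $\kappa$ is grouped with the elliptic endoscopic datum $\fke$ it determines. Applying the transfer identity \eqref{eq:orbital-integral-id} at every place to the transfers $f^\fke_{1,v}$, and using the product formula for transfer factors (Remark \ref{rem:transfer-factors}), the $\kappa$-piece attached to $\fke$ becomes $SO_{\gamma^\fke}(f^\fke_1)$ on the endoscopic $z$-extension $G^\fke_1$, evaluated at a class $\gamma^\fke$ matching $\gamma$. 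Summing over stable classes yields the right-hand side of the proposition, with the factor $\iota(G,G^\fke)=\tau(G)\tau(G^\fke)^{-1}\zeta(\fke)^{-1}$ emerging from the standard Tamagawa-number computation.

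The main obstacle, where I would expect to spend the bulk of the care, is the central-character bookkeeping. Concretely, one has to check that $\vol(I_\gamma(\Q)\bs I_\gamma(\A)/\fkX)\cdot|\tu{Stab}^G_\fkX(\gamma)|^{-1}$ on the geometric side assembles, after the duality, into the normalization $\tau_{\fkX^\fke_1}(G^\fke_1)\cdot|\tu{Stab}_{\fkX^\fke_1}(\gamma^\fke)|^{-1}$ used to define $ST^{G^\fke_1}_{\tu{ell},\chi^\fke_1}$, and one has to route the entire argument through the $z$-extensions of \S\ref{sub:endoscopy-z-ext} so as to cover the case $\cG^\fke\neq{}^L G^\fke$. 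Once these compatibilities are in place, the identity follows from the classical stabilization of elliptic regular terms applied term by term.
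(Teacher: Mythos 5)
Your proposal is correct and follows essentially the same route as the paper: use the regularity hypothesis at $q$ to confine the elliptic sum to regular classes, then invoke the classical stabilization of regular elliptic terms (Langlands, Kottwitz, Kottwitz--Shelstad, Labesse), adapted to fixed central characters via the transfer of \S\ref{sub:prelim-endoscopy} and routed through the $z$-extensions of \S\ref{sub:endoscopy-z-ext}. The paper's proof simply cites this machinery rather than unpacking the $\kappa$-orbital-integral decomposition, but the content is the same.
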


\begin{proof}
By hypothesis, the stable orbital integral of $f_{1,q}^\fke$ vanishes outside $G$-regular semisimple conjugacy classes. When the central character datum is trivial, the stabilization of regular elliptic terms is due to Langlands \cite{LanglandsDebuts}, cf.~\cite[Thm.~9.6]{KottwitzEllipticSingular}, \cite[\S7.4]{KottwitzShelstad}, \cite{LabesseStabilization}.
For general central character data, the argument is essentially the same if one uses the Langlands--Shelstad transfer with fixed central character as in \S\ref{sub:prelim-endoscopy}. 
\end{proof}

The following finiteness result is going to tell us that the sum in Theorem \ref{thm:stable-Igusa} (and a similar sum in Theorem \ref{thm:Hc(Ig)} below) is finite for each choice of $\phi^{\infty,p}$.

\begin{lemma}\label{lem:finiteness} The following are true.
\begin{enumerate}
\item Let $v$ be a rational prime such that $G_{\Q_v}$ and $\chi_v$ are unramified. Let $f_v\in \cH^{\tu{ur}}(G(\Q_v),\chi_v^{-1})$. Then $f_v$ transfers to the zero function on $G_1^{\fke}(\Q_v)$ for each $\fke=(G^\fke,\mathcal G^\fke,s^\fke,\eta^\fke)\in \cE^<_{\el}(G)$ if $G^\fke$ is ramified over $\Q_v$.
\item Let $S$ be a finite set of rational primes. The set of  $\fke\in \cE^<_{\el}(G)$ such that $G^\fke_{\Q_v}$ is unramified at every rational prime $v\notin S$ is finite.
\end{enumerate}
\end{lemma}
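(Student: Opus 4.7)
The transfer $f^\fke_v$ is characterized as an element of $\cS(G^\fke(\Q_v),\chi^{\fke,-1}_v)$ by the values $SO_{\gamma^\fke}(f^\fke_v)$ at every strongly $G$-regular $\gamma^\fke$ via the identity \eqref{eq:orbital-integral-id}. My plan is to show that this sum of weighted orbital integrals vanishes whenever $f_v$ is spherical and $G^\fke$ is ramified at $v$. The approach is to invoke the fundamental lemma for the spherical Hecke algebra (Waldspurger, Ng\^o): for unramified $G$ and unramified $f_v$, the stable orbital integrals are completely determined by the Satake transform of $f_v$, and the matching with endoscopic groups is realized via an unramified restriction of $L$-parameters through $\eta^\fke$. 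When $G^\fke$ is ramified at $v$, inertia acts nontrivially on $\hat G^\fke$, no such unramified restriction exists, and one reads off that the stable orbital integrals on the $G^\fke$-side must vanish. The main effort here will be pinning down a clean citation rather than developing a new argument.

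\textbf{Plan for (2).} I will stratify $\cE^<_{\el}(G)$ according to the isomorphism type of the associated $L$-group data and bound each stratum in turn. First, the connected dual $\hat G^{\fke,0}= Z_{\hat G}(s^\fke)^0$ is the centralizer of a semisimple element of $\hat G$, and by a standard structure result only finitely many such subgroups arise up to $\hat G$-conjugacy. Second, the component group $\hat G^\fke/\hat G^{\fke,0}$ sits inside the finite component group of the full centralizer, so only finitely many possibilities. Third, the isomorphism class of the quasi-split form $G^\fke$ is determined by the $\Gamma_\Q$-action on $\hat G^\fke$ modulo inner automorphisms, which amounts to a continuous homomorphism to the finite group $\tu{Out}(\hat G^\fke)$. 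Under the ramification hypothesis, this homomorphism factors through $\Gal(\Q^{S\cup\{\infty\}}/\Q)$, and by Hermite--Minkowski (equivalently, finiteness of number fields of bounded degree with prescribed ramification) only finitely many such homomorphisms exist.

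\textbf{Remaining data and main obstacle.} For each fixed $G^\fke$, the endoscopic datum $\fke$ further records the $\hat G$-conjugacy class of $s^\fke$ together with the $L$-embedding $\eta^\fke$; the isomorphism classes of these decorations are governed by a Galois cohomology set with coefficients in $Z(\hat G^\fke)/Z(\hat G)^\circ$ (as in the classification in \cite{KottwitzShelstad}), which is finite under the same ramification constraint. Combining finiteness at each of the four steps proves the lemma. The main obstacle I anticipate is the careful bookkeeping of isomorphism classes of endoscopic data versus $\hat G$-conjugacy classes (they differ by a quotient involving the centralizer of $\eta^\fke$) and ensuring the ellipticity condition $(Z(\hat G^\fke)^{\Gamma_\Q})^0=(Z(\hat G)^{\Gamma_\Q})^0$ is maintained through the stratification; once the correct parametrization is set up, each finiteness assertion should be a formal consequence of standard structure theory and Hermite--Minkowski.
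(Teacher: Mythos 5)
Your plan is to reprove both parts from first principles, whereas the paper simply cites two classical references: part~(1) is \cite[Prop.~7.5]{KottwitzEllipticSingular} and part~(2) is \cite[Lem.~8.12]{LanglandsDebuts}. Your outline for (2) is in the right spirit and essentially retraces Langlands's argument: only finitely many groups arise (up to conjugacy) as connected centralizers of semisimple elements in $\hat G$, the $\Gamma_\Q$-action on $\hat G^\fke$ modulo inner automorphisms gives a homomorphism to a finite group that is constrained by Hermite--Minkowski once ramification is confined to $S$, and the remaining discrete data are controlled by a finite cohomology set. One small confusion to watch: you write ``$\hat G^{\fke,0}=Z_{\hat G}(s^\fke)^0$'' and then speak of ``the component group $\hat G^\fke/\hat G^{\fke,0}$,'' but $\hat G^\fke$ in an endoscopic datum is \emph{already} the connected group $Z_{\hat G}(s^\fke)^0$; the component group you want to control is that of the full centralizer $Z_{\hat G}(s^\fke)$, which feeds into the possible outer Galois actions via $\cG^\fke$.

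For part~(1), however, there is a genuine gap in the reasoning ``\ldots no such unramified restriction exists, and one reads off that the stable orbital integrals on the $G^\fke$-side must vanish.'' The fundamental lemma for the spherical Hecke algebra is a statement about the case where \emph{both} $G$ and $G^\fke$ are unramified; it tells you what the transfer of an unramified function is when the endoscopic group is also unramified, and is silent about the ramified case. The content of \cite[Prop.~7.5]{KottwitzEllipticSingular} is precisely the non-formal assertion that when $G^\fke$ is ramified at $v$, the $\kappa$-orbital integrals of a spherical function on $G(\Q_v)$ combine (against the transfer factors) to give identically zero stable orbital integrals on $G^\fke(\Q_v)_{G\textup{-sr}}$. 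That vanishing requires an actual argument about the structure of those $\kappa$-orbital integrals and the failure of integral matching with ramified tori; it is not an immediate corollary of the fundamental lemma. As you yourself anticipate, a citation is the right move here, but the citation you would need is exactly Kottwitz's proposition, not the fundamental lemma.
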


\begin{proof}
Part (1) follows from \cite[Prop.~7.5]{KottwitzEllipticSingular}. Part (2) is well known; see \cite[Lem.~8.12]{LanglandsDebuts}.
\end{proof}
\section{Jacquet modules, regular functions, and endoscopy}\label{sec:Jacquet-endoscopy}

Throughout this section, let $F$ be a finite extension of $\Q_p$ with a uniformizer $\varpi$ and residue field cardinality $q$. The valuation on $F$ is normalized such that $|\varpi|=q^{-1}$. Let $G$ be a connected reductive group over $F$. We study how certain maps of invariant or stable distributions between $G$ and its Levi subgroups interact with Jacquet modules and endoscopy, based on \cite{ShinIgusaStab,XuCuspidalSupport}.

\subsection{$\nu$-ascent and Jacquet modules}\label{sub:nu-Jacquet}

Let $\nu:\G_m\ra G$ be a cocharacter defined over $F$. Let $M_\nu$ denote the centralizer of $\nu$ in $G$, which is an $F$-rational Levi subgroup. The maximal $F$-split torus in the center of $M_\nu$ is denoted by $A_{M_\nu}$.

Write $P_\nu$ (resp.~$P_\nu^{\tu{op}}$) for the $F$-rational parabolic subgroup of $G$ which contains $M_\nu$ as a Levi component and such that $\langle \alpha,\nu\rangle<0$ (resp. $\langle \alpha,\nu\rangle>0$) for every root $\alpha$ of $A_{M_\nu}$ in $P_\nu$ (resp.~$P_\nu^{\tu{op}}$). The set of $\alpha$ as such is denoted by $\Phi^+(P_{\nu})$ (resp.~$\Phi^+(P^{\tu{op}}_{\nu})$). Let $N_\nu,N_\nu^{\tu{op}}$ denote the unipotent radical of $P_\nu,P_\nu^{\tu{op}}$. For every $\alpha\in \Phi^+(P^{\tu{op}}_{\nu})$, we have $|\alpha(\nu(\varpi))|=q^{-\langle \alpha,\nu\rangle}<1$. Therefore $\nu(\varpi)\in  A_{P^{\tu{op}}_\nu}^{--}$. The following definition is a rephrase of \cite[Def.~3.1]{ShinIgusaStab}. 

\begin{definition}\label{def:acceptable}
We say that $\gamma\in M_\nu(\ol F)$ is \textbf{acceptable} (with respect to $\nu$) if the action of $\tu{Ad}(\gamma)$ on $(\Lie N^{\tu{op}}_\nu)_{\ol F}$ is contracting, \ie~all its eigenvalues $\lambda\in \ol F$ have the property that $|\lambda|<1$.
\end{definition}

By definition, $a \in A_{M_\nu}(F)$ is acceptable if and only if $a \in A_{P^{\tu{op}}_\nu}^{--}$. The subset of acceptable elements is nonempty, open, and stable under $M_\nu(\ol F)$-conjugacy. Define $\cH_{\tu{acc}}(M_\nu)\subset \cH(M_\nu)$ as the subspace of functions supported on acceptable elements. We also write $\cH_{\nu\tu{-acc}}(M_\nu)$ to emphasize the dependence on $\nu$. As in \S\ref{sub:local-Hecke} we often omit $F$ for simplicity.

\begin{lemma}\label{lem:O-tr-nu-ascent}
Let $\phi\in \cH_{\tu{acc}}(M_\nu)$. There exists $f \in \cH(G)$ with the following properties.
\begin{enumerate}
\item For every $g\in G(F)_{\tu{ss}}$,
$$O^{G}_g (f) = \delta_{P_\nu}(m)^{-1/2} O^{M_\nu}_m(\phi)$$
if there exists an acceptable $m\in M_\nu(F)$ which is conjugate to $g$ in $G(F)$
(in which case $m$ is unique up to $M_\nu(F)$-conjugacy, and the Haar measures are chosen compatibly on the connected centralizers of $m$ and $g$), and $O^{G}_g (f) =0$ otherwise.
\item $\Tr (f|\pi)=\Tr \left( \phi | J_{P_\nu^{\tu{op}}}(\pi) \right)$ for $ \pi\in \Irr(G(F))$.
\end{enumerate}
\end{lemma}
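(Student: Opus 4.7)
The plan is to build $f$ as a parabolic ascent of $\phi$ from $M_\nu$ to $G$ and then verify (1) and (2) separately, along the lines of \cite{ShinIgusaStab}. Concretely, fix a compact open $K\subset G(F)$ admitting an Iwahori factorisation $K = K_{\bar N} K_M K_N$ relative to $P_\nu = M_\nu N_\nu$, small enough that $\phi$ is bi-$K_M$-invariant. Extend $\phi$ to $P_\nu(F)$ by
\[
\tilde\phi(mn) := \delta_{P_\nu}(m)^{-1/2}\,\phi(m)\cdot \tu{vol}(K_N)^{-1}\mathbf{1}_{K_N}(n), \qquad m\in M_\nu(F),\ n\in N_\nu(F),
\]
extend $\tilde\phi$ by zero to $G(F)$, and set
\[
f := \tu{vol}(K)^{-1}\mathbf{1}_K \ast \tilde\phi \ast \tu{vol}(K)^{-1}\mathbf{1}_K \in \cH(G).
\]
The $\delta_{P_\nu}^{-1/2}$ twist is chosen to absorb the Jacobian that appears in (1); the $\mathbf{1}_{K_N}$-normalisation makes the $N_\nu$-integration close up cleanly.

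For (1), I would unwind $O^G_g(f)$ via the Iwasawa decomposition $G(F) = N_\nu(F) M_\nu(F) K$. The essential input is acceptability: if $m\in M_\nu(F)$ is acceptable then $\tu{Ad}(m)$ has no nonzero fixed vectors on $\Lie N_\nu^{\tu{op}}$ (nor on $\Lie N_\nu$, by applying the same reasoning to $m^{-1}$), so the centraliser of $m$ in $G$ coincides with its centraliser in $M_\nu$, and the $G(F)$-conjugacy class of $m$ meets $M_\nu(F)$ in the single $M_\nu(F)$-conjugacy class of $m$. Combined with the $\delta_{P_\nu}$-Jacobian of the measure change on $N_\nu(F)\bs G(F) / I_m^G$, this yields the identity in~(1).

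For (2), I would appeal to Casselman's theory: the normalised Jacquet module $J_{P_\nu^{\tu{op}}}(\pi) = \pi_{N_\nu^{\tu{op}}}\otimes \delta_{P_\nu^{\tu{op}}}^{-1/2}$ governs the asymptotics of matrix coefficients of $\pi$ along the negative chamber $A_{P_\nu^{\tu{op}}}^{--}$, which by construction contains $\tu{supp}(\phi)$. Since $f$ is built from $\tilde\phi$ supported on $P_\nu(F)$ with acceptable $M_\nu$-component, the matrix-coefficient integral computing $\Tr(f|\pi)$ collapses to the corresponding pairing on $J_{P_\nu^{\tu{op}}}(\pi)$; the $\delta_{P_\nu}^{-1/2}$ twist baked into $\tilde\phi$ exactly absorbs the $\delta_{P_\nu^{\tu{op}}}^{-1/2}$ in the normalisation of $J_{P_\nu^{\tu{op}}}$, leaving $\Tr(\phi | J_{P_\nu^{\tu{op}}}(\pi))$ on the nose. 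Equivalently, one can invoke second adjointness for normalised parabolic induction to convert the action of $f$ on $\pi$ into the action of $\phi$ on $J_{P_\nu^{\tu{op}}}(\pi)$ directly.

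The main obstacle I anticipate is property (2): one must track the interplay of the $\delta_{P_\nu}^{-1/2}$ twist in $\tilde\phi$ with the $\delta_{P_\nu^{\tu{op}}}^{-1/2}$ twist in the definition of $J_{P_\nu^{\tu{op}}}$, and in particular confirm that it is the \emph{opposite} parabolic $P_\nu^{\tu{op}}$ (rather than $P_\nu$) that appears on the right. Conceptually, acceptability is the hypothesis that forces Casselman's asymptotic expansion to concentrate on a single chamber so that only the $N_\nu^{\tu{op}}$-Jacquet module contributes; without it, the formula in (2) would hold only up to subdominant error terms. Property (1), by contrast, reduces once the injectivity of acceptable conjugacy classes is in hand to a direct unwinding of the Iwasawa decomposition.
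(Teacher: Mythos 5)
Your overall strategy---twist $\phi$ by $\delta_{P_\nu}^{-1/2}$, ascend to $G$ through a compact open subgroup with Iwahori factorization, use the contraction property of acceptable elements for the orbital integrals and Casselman's theory of Jacquet modules for the traces---is the strategy of the proof the paper relies on (the paper simply cites \cite[Lem.~3.9]{ShinIgusaStab} with two corrections, one of which is exactly the $\delta_{P_\nu}^{-1/2}$ twist you build in). However, two steps as written would fail. First, the construction: $\tilde\phi$ extended by zero from $P_\nu(F)$ is supported on a closed subgroup of measure zero, so it is not an element of $\cH(G)$, and the two-sided convolution $\tu{vol}(K)^{-1}\mathbf{1}_K\ast\tilde\phi\ast\tu{vol}(K)^{-1}\mathbf{1}_K$ (even interpreted as a convolution of measures) is not a conjugation average. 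Indeed $\Tr \pi(\tu{vol}(K)^{-1}\mathbf{1}_K\ast h\ast \tu{vol}(K)^{-1}\mathbf{1}_K)=\Tr(\pi(h)|_{\pi^K})$, which vanishes whenever $\pi^K=0$ while $\Tr\pi(h)$ need not; so two-sided convolution by the idempotent changes orbital integrals, and (1) cannot be checked by ``unwinding the Iwasawa decomposition.'' The standard construction instead extends $\phi\cdot\delta_{P_\nu}^{-1/2}$ to the \emph{open} subset $K_{\bar N}\cdot\supp(\phi)\cdot K_N$ of $G(F)$ (open because $N_\nu^{\tu{op}}\times M_\nu\times N_\nu\to G$ is an open immersion) and then averages over $K$-\emph{conjugation}, which manifestly preserves orbital integrals; the change of variables producing $\delta_{P_\nu}(m)^{-1/2}$ then uses that for acceptable $m$ one has $|\det(1-\tu{Ad}(m)\,|\,\Lie N_\nu^{\tu{op}})|=1$ and $|\det(1-\tu{Ad}(m)\,|\,\Lie N_\nu)|=\delta_{P_\nu}(m)$.

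Second, your centralizer argument only controls the \emph{connected} centralizer: for a non-regular semisimple $m$, the eigenvalues of $\tu{Ad}(m)$ on $\Lie M_\nu$ can have arbitrary absolute values, so the full centralizer $G_m$ need not lie in $M_\nu$, and neither the uniqueness of $m$ up to $M_\nu(F)$-conjugacy nor the matching of orbital integrals at such elements follows from the fixed-vector computation alone. This is precisely why \cite[Lem.~3.9]{ShinIgusaStab} carries the extra hypothesis that orbital integrals of $\phi$ vanish at semisimple elements with disconnected centralizers; the paper removes it by reducing to the case $G_{\der}=G_{\tu{sc}}$ via a $z$-extension, so that centralizers of semisimple elements are connected in both $M_\nu$ and $G$ (using Lemma \ref{lem:Levi-of-simply-connected}). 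Your sketch needs this reduction. Finally, for (2), ``second adjointness'' does not by itself yield a trace identity; the substance is Casselman's comparison of $\Theta_\pi$ with the character of $J_{P_\nu^{\tu{op}}}(\pi)$ on acceptable elements (equivalently, his pairing between $\pi_{N_\nu}$ and the $N_\nu^{\tu{op}}$-coinvariants of the contragredient), combined with (1) and the Weyl integration formula. A purely operator-theoretic argument would instead have to explain why the kernel of the canonical surjection $\pi^K\twoheadrightarrow J_{P_\nu^{\tu{op}}}(\pi)^{K_{M}}$ does not contribute to the trace, which a single acceptable element does not automatically guarantee.
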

\begin{proof}
This is \cite[Lem.~3.9]{ShinIgusaStab} except that we corrected typos in the statement. The same proof still works with two remarks. Firstly, we removed the assumption in \emph{loc.~cit.}~that orbital integrals of $\phi$ vanish on semisimple elements with disconnected centralizers. This is possible by reducing to the case of $G$ with simply connected derived subgroup (then $M_{\nu,\der}$ is also simply connected by Lemma \ref{lem:Levi-of-simply-connected}) so that the centralizers of semisimple elements are connected in both $M_\nu$ and $G$. Secondly, the mistake in \emph{loc.~cit.}~occurs in line 1, p.806, where it should read $\phi^0:=\phi\cdot \delta^{-1/2}_{P_\nu}$.
\end{proof}

\begin{corollary}\label{cor:SO-nu-ascent}
Let $\phi$ and $f$ be as in Lemma \ref{lem:O-tr-nu-ascent}. For every $g\in G(F)_{\tu{ss}}$, 
$$ 
SO^{G}_g (f) = \delta_{P_\nu}(m)^{-1/2} SO^{M_\nu}_m(\phi)
$$
if there exists an acceptable $m\in M_\nu(F)$ which is conjugate to $g$ in $G(F)$. Otherwise, $SO^{G}_g (f) =0$.
\end{corollary}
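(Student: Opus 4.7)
The plan is to derive the corollary by summing the identity of Lemma~\ref{lem:O-tr-nu-ascent} over the stable conjugacy class. Writing $SO^G_g(f) = \sum_{g'} O^G_{g'}(f)$ with $g'$ ranging over representatives of the $G(F)$-conjugacy classes in the stable class of $g$, Lemma~\ref{lem:O-tr-nu-ascent} says that each summand is either $\delta_{P_\nu}(m')^{-1/2} O^{M_\nu}_{m'}(\phi)$ for some acceptable $m' \in M_\nu(F)$ that is $G(F)$-conjugate to $g'$, or else zero. Since $\delta_{P_\nu}$ is a character of $M_\nu(F)$, hence constant on $M_\nu$-stable classes, the factor $\delta_{P_\nu}(m)^{-1/2}$ pulls out of the sum, and everything reduces to identifying the set of $M_\nu(F)$-conjugacy classes of acceptable elements in the $M_\nu$-stable class of $m$ with the set of $G(F)$-conjugacy classes in the $G$-stable class of $g$ that admit an acceptable $M_\nu(F)$-representative. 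Once those two sets are in bijection, both the main formula and the vanishing claim follow at once, since the same bijection shows that either every $g'$ in the stable class of $g$ admits an acceptable $M_\nu(F)$-rep (if $g$ does), or none does.

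The crux is this bijection, and the decisive ingredient is a stability property of acceptability: if $m_1, m_2 \in M_\nu(\ol F)$ are both acceptable and $h m_1 h^{-1} = m_2$ for some $h \in G(\ol F)$, then automatically $h \in M_\nu(\ol F)$. Indeed, $h$ must preserve the decomposition of $\Lie G$ by absolute values of $\tu{Ad}(m_1)$-eigenvalues, and acceptability identifies this decomposition with $\Lie N^{\tu{op}}_\nu \oplus \Lie M_\nu \oplus \Lie N_\nu$ (the $|\cdot|<1$, $=1$, $>1$ parts), so $h$ normalizes both $N_\nu$ and $N^{\tu{op}}_\nu$ and hence lies in $P_\nu \cap P_\nu^{\tu{op}} = M_\nu$. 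Taking Galois invariants gives injectivity of the map from $M_\nu(F)$-classes to $G(F)$-classes: acceptable elements of $M_\nu(F)$ that are $G(F)$- (resp.\ $G$-stably) conjugate are automatically $M_\nu(F)$- (resp.\ $M_\nu$-stably) conjugate.

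For surjectivity, I would first reduce, exactly as in Lemma~\ref{lem:O-tr-nu-ascent}, to the case when $G_{\tu{der}}$ is simply connected via $z$-extensions, so that by Lemma~\ref{lem:Levi-of-simply-connected} the same holds for $M_{\nu,\tu{der}}$ and the centralizers of semisimple elements in both $G$ and $M_\nu$ are connected. Then for $g'$ in the $G$-stable class of $m$, the conjugator $h \in G(\ol F)$ with $h m h^{-1} = g'$ can be adjusted on the right by the centralizer of $m$; combined with the contracting/expanding decomposition above, this lets one arrange $h \in M_\nu(\ol F)$, producing an $M_\nu(F)$-rational acceptable representative of the $G(F)$-conjugacy class of $g'$ via a standard $H^1$-cocycle argument. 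The principal obstacle is handling non-regular semisimple elements cleanly, where centralizers are no longer tori; this is precisely why the $z$-extension reduction to simply-connected derived subgroup is essential, as it keeps the $H^1$-comparison between centralizers in $M_\nu$ and $G$ tractable.
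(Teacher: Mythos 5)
Your overall plan — expand $SO^G_g(f)$ as a sum of $O^G_{g'}(f)$ over rational classes $g'$ in the stable class of $g$, apply Lemma~\ref{lem:O-tr-nu-ascent} termwise, and reduce everything to a bijection between $M_\nu(F)$-conjugacy classes of acceptable elements $M_\nu$-stably conjugate to $m$ and $G(F)$-conjugacy classes $G$-stably conjugate to $g$ — is exactly the right strategy, and it is what the paper outsources entirely to \cite[Lem.~3.5]{ShinIgusaStab}: the paper's own proof of the corollary is just that citation combined with Lemma~\ref{lem:O-tr-nu-ascent}. However, the decisive step in your argument for the bijection contains a genuine error.

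You claim that if $m_1$ is acceptable then the decomposition of $\Lie G$ into the $|\lambda|<1$, $|\lambda|=1$, and $|\lambda|>1$ eigenspaces of $\Ad(m_1)$ is $\Lie N_\nu^{\tu{op}}\oplus\Lie M_\nu\oplus\Lie N_\nu$. This is false: acceptability constrains eigenvalues only on $\Lie N_\nu^{\tu{op}}$ (and, by inversion, on $\Lie N_\nu$), and imposes nothing at all on the eigenvalues of $\Ad(m_1)$ on $\Lie M_\nu$, which can have absolute value $\neq 1$. Concretely, take $G=\GL_3$, $\nu(x)=\tu{diag}(x,x,1)$ so that $M_\nu=\GL_2\times\GL_1$ (block $2+1$), and $m_1=\tu{diag}(p^{-1},1,p^{-2})$. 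The eigenvalues of $\Ad(m_1)$ on $\Lie N_\nu^{\tu{op}}$ (the upper-right $2\times 1$ block) are $p$ and $p^2$, both of absolute value $<1$, so $m_1$ is acceptable; yet $\Ad(m_1)$ acts on the root space of $e_1-e_2$, which sits in $\Lie M_\nu$, with eigenvalue $p^{-1}$ of absolute value $>1$. Hence the $|\lambda|>1$ eigenspace strictly contains $\Lie N_\nu$, the $|\lambda|=1$ part is strictly smaller than $\Lie M_\nu$, and your conclusion ``$h$ normalizes $N_\nu$ and $N_\nu^{\tu{op}}$'' does not follow. The surjectivity paragraph invokes the same ``contracting/expanding decomposition'' and inherits the gap.

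The assertion you actually want, namely that $m_1,m_2\in M_\nu(\ol F)$ acceptable with $hm_1h^{-1}=m_2$ for $h\in G(\ol F)$ forces $h\in M_\nu(\ol F)$, is nonetheless correct, but it requires a different argument. What acceptability \emph{does} give is that the $1$-eigenspace of $\Ad(m_i)$ is contained in $\Lie M_\nu$ (since no eigenvalue on $\Lie N_\nu\oplus\Lie N_\nu^{\tu{op}}$ has absolute value $1$), so $Z_G(m_i)^0\subset M_\nu$ and $\nu$ is a central fractional cocharacter of $Z_G(m_i)^0$. Then $\nu$ and $h\nu h^{-1}$ are both central fractional cocharacters of $Z_G(m_2)^0$, they lie in the same Weyl orbit, and both render $m_2$ acceptable; a root-theoretic argument (no root $\alpha$ can satisfy $\langle\alpha,\nu\rangle>0$ and $\langle\alpha,h\nu h^{-1}\rangle<0$ simultaneously, since the two acceptability conditions would then force $|\alpha(m_2)|<1$ and $|\alpha(m_2)|>1$) forces $\nu=h\nu h^{-1}$ and hence $h\in Z_G(\nu)=M_\nu$. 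This is the content of \cite[Lem.~3.5]{ShinIgusaStab}, which the paper cites rather than reproves.
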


\begin{proof}
This is clear from the preceding lemma, using \cite[Lem.~3.5]{ShinIgusaStab}.
 \end{proof}

\begin{definition}
In the setup of Lemma \ref{lem:O-tr-nu-ascent}, we say that $f$ is a \textbf{$\nu$-ascent} of $\phi$.
\end{definition}

Recall the definition of $\cI(\cdot)$ and the trace Paley--Wiener theorem from \S\ref{sub:local-Hecke}. According to  \cite[Prop.~3.2]{BDK86}, the Jacquet module induces the map
\begin{equation}\label{eq:J-nu-invariant}
\mathscr{J}_\nu \colon \cI(M_\nu) \ra \cI(G),\qquad \mathcal F \mapsto \left(\pi\mapsto \mathcal F(J_{P_\nu^{\tu{op}}}(\pi))\right).
\end{equation}
Write $\cI_{\tu{acc}}(M_\nu)$ for the image of $\cH_{\tu{acc}}(M_\nu)$ in $\cI(M_\nu)$. Then Lemma \ref{lem:O-tr-nu-ascent} means that, when $\phi\in \cI_{\tu{acc}}(M_\nu)$, a $\nu$-ascent of $\phi$ is well defined as an element of $\cI(G)$, which is nothing but $\mathscr{J}_\nu(\phi)$. The lemma yields extra information on orbital integrals. Xu \cite[Prop.~C.4]{XuCuspidalSupport} shows that \eqref{eq:J-nu-invariant} induces a similar map for the stable analogues, which we denote by the same symbol:
\begin{equation}\label{eq:J-nu-stable}
\mathscr{J}_\nu:\cS(M_\nu) \ra \cS(G).
\end{equation}

Write $X^*_F(G)$ for the group of $F$-rational characters of $G$. Put $X^*_F(G)_{\Q}:=X^*_F(G)\otimes_\Z \Q$ and $\fka_G:=\Hom(X^*_F(G)_{\Q},\R)$.
We have the map
$$
H^G \colon G(F)\ra \fka_G,\qquad g \mapsto (\chi\mapsto \log |\chi(g)|).
$$
It is easy to see that $H^G$ is invariant under $G(\ol F)$-conjugacy. Indeed, if $g_1,g_2$ become conjugate in $G(F')$ for a finite extension $F'/F$, then it boils down to the obvious fact that $H^{G'}(g_1)=H^{G'}(g_2)$, since the map $H^G$ is functorial with respect to $G\hra G':=\Res_{F'/F}G$.

For $f\in \cH(G)$, define the following subsets of $\fka_G$:
\begin{align}\label{eq:supp-def}
\tu{supp}_{\fka_G}(f) &:=\{H^G(x): x\in G(F)_{\tu{ss}}~\mbox{s.t.}~f(x)\neq 0\},\nonumber\\
\tu{supp}^O_{\fka_G}(f)&:=\{H^G(x): x\in G(F)_{\tu{ss}}~\mbox{s.t.}~O_x(f)\neq 0\},\\
\tu{supp}^{SO}_{\fka_G}(f)&:=\{H^G(x): x\in G(F)_{\tu{ss}}~\mbox{s.t.}~SO_x(f)\neq 0\}.\nonumber
\end{align}
Obviously $\tu{supp}^{SO}_{\fka_G}(f)\subset \tu{supp}^O_{\fka_G}(f) \subset \tu{supp}_{\fka_G}(f).$ Writing 
$$
\mathscr P(*):=\mbox{collection of subsets of}~*,
$$
we obtain a map $\tu{supp}_{\fka_G}$ (resp.~$\tu{supp}^O_{\fka_G}$, $\tu{supp}^{SO}_{\fka_G}$) from $\cH(G)$ (resp.~$\cI(G)$, $\cS(G)$) to $\mathscr P(\fka_G)$. 

We define analogous objects for $M_\nu$ in place of $G$. The injective restriction map $X^*_F(G)_{\Q}\ra X^*_F(M_\nu)_{\Q}$ induces a canonical surjection
\begin{equation}\label{eq:pr-G}
\tu{pr}_G:\fka_{M_\nu}\ra \fka_G.
\end{equation}
Set $\fka_{P_\nu}:=\fka_{M_\nu}$ and identify $X_*(A_{M_\nu})_{\R}\simeq \fka_{P_\nu}$ by $\mu\in X_*(A_{M_\nu})\mapsto (\chi \mapsto \langle \chi,\mu\rangle)$. Then it is an easy exercise to describe $\tu{pr}_G$ as the average map along Weyl orbits: if $T$ is a maximal torus of $M_\nu$ (thus also of $G$) over $F$, and if the Weyl group is taken relative to $T$, then
\begin{equation}\label{eq:pr=Weyl-orbit}
\tu{pr}_G(\mu)=|\Omega^G|^{-1}\sum_{\omega\in \Omega^G} \omega(\mu) =|\ol\Omega^G|^{-1}\sum_{\omega\in \ol\Omega^G} \omega(\mu),\qquad \mu\in X_*(A_{M_\nu})_{\R}.
\end{equation}

\begin{lemma}\label{lem:support-open-dense-enough}
The sets $\tu{supp}_{\fka_G}(f)$, $\tu{supp}^O_{\fka_G}(f)$, and $\tu{supp}^{SO}_{\fka_G}(f)$ remain unchanged if we restrict $x$  in the definition \eqref{eq:supp-def} to a subset $D\subset G(F)_{\tu{reg}}$ that is open dense in $G(F)$.
\end{lemma}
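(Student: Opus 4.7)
The plan is to exploit that $H^G$ is conjugation-invariant and locally constant on $G(F)$ with discrete image: for any $F$-rational character $\chi$ of $G$, $|\chi(g)|\in q^{\Z}$, so each level set $G(F)_\lambda:=(H^G)^{-1}(\lambda)$ is open, closed, and conjugation-stable in $G(F)$. Accordingly I would decompose
$$
f=\sum_\lambda f_\lambda,\qquad f_\lambda:=f\cdot\mathbf{1}_{G(F)_\lambda}\in\cH(G),
$$
a finite sum since $\tu{supp}(f)$ is compact. Because every semisimple conjugacy class is contained in a single $G(F)_\lambda$, one has $f_\lambda(y)=f(y)$, $O_y(f_\lambda)=O_y(f)$, and $SO_y(f_\lambda)=SO_y(f)$ for every semisimple $y\in G(F)_\lambda$, while the analogous quantities at $y$ vanish on every other level set.

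This reduces the lemma to the following claim for each $\lambda$ and each of the three notions of support: if $f_\lambda$ satisfies the relevant nonzero condition at some semisimple element of $G(F)_\lambda$, then it does so at some element of $D$. Given such $x\in G(F)_{\tu{ss}}$ with $H^G(x)=\lambda$, I would show that the subset
$$
W_\lambda\ :=\ \{\,y\in G(F)_{\tu{reg}}\ :\ \text{the relevant nonzero condition holds for }f_\lambda\text{ at }y\,\}\subset G(F)
$$
is open and nonempty. For $\tu{supp}_{\fka_G}$, openness is local constancy of $f_\lambda$, and nonemptiness holds because the nonempty open set $\{f_\lambda\neq 0\}$ meets the open dense subset $G(F)_{\tu{reg}}\subset G(F)$. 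For $\tu{supp}^O_{\fka_G}$ (resp.\ $\tu{supp}^{SO}_{\fka_G}$), openness comes from local constancy of $y\mapsto O_y(f_\lambda)$ (resp.\ $y\mapsto SO_y(f_\lambda)$) on $G(F)_{\tu{reg}}$, while nonemptiness follows from Kazhdan's density theorem \cite{KazhdanCuspidal} (resp.\ its stable counterpart \cite[Thm.~6.1,~6.2]{ArthurLocalCharacter}) recalled in \S\ref{sub:local-Hecke}: our hypothesis forces $f_\lambda$ to have nontrivial image in $\cI(G)$ (resp.~$\cS(G)$), so some strongly regular element witnesses this.

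To finish, since $W_\lambda$ is open and $D$ is open dense in $G(F)$, the intersection $W_\lambda\cap D$ is nonempty. Any $x'\in W_\lambda\cap D$ satisfies $H^G(x')=\lambda=H^G(x)$, since either $x'\in G(F)_\lambda$ directly (first case) or $x'$ is conjugate to an element of $\tu{supp}(f_\lambda)\subset G(F)_\lambda$ (other cases), yielding the reverse inclusion.

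I do not anticipate a serious obstacle. The one delicate point is the passage from singular to regular semisimple elements in the orbital-integral cases, which is handled cleanly by the Kazhdan-type density results; everything else is bookkeeping enabled by the discreteness of the image of $H^G$.
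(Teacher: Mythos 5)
Your proof is correct and follows essentially the same strategy as the paper's: both exploit that $H^G$ is continuous, conjugation-invariant, and has discrete image, so that each level set $(H^G)^{-1}(\lambda)$ is open and closed; the plain-support case is then openness of $\{f\neq 0\}$ meeting the dense set $D$, and the two orbital-integral cases reduce to a regular-versus-singular comparison via local constancy of (stable) orbital integrals on $G(F)_{\tu{reg}}$. The one place you diverge is the singular-to-regular step: the paper argues by contradiction, using local constancy on regular elements plus a Shalika germ expansion around a singular semisimple point (as in its Lemma \ref{lem:represent-by-C-regular}\,(1)), whereas you invoke the density statements recalled in \S\ref{sub:local-Hecke} to conclude that a nonzero semisimple (stable) orbital integral forces a nonzero strongly regular one. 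For $\cI(G)$ this is exactly the Kazhdan statement the paper records (``the same is true if $G(F)_{\tu{sr}}$ is replaced with $G(F)$''), so that case is fully justified; for $\cS(G)$, however, \cite[Thm.~6.1,~6.2]{ArthurLocalCharacter} is a spectral characterization of the image of $\cS(G)$ and does not directly assert that vanishing of strongly regular stable orbital integrals forces vanishing at singular semisimple points --- the clean justification there is again the germ expansion summed over the stable class, which is precisely what the paper's ``the stable analogue is proved likewise'' refers to. With that citation repaired (or replaced by the germ argument), the proposal is complete.
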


\begin{proof}
Since the map $H^G$ is continuous with discrete image, for each $y$ in $\tu{supp}_{\fka_G}(f)$, $\tu{supp}^O_{\fka_G}(f)$, or $\tu{supp}^{SO}_{\fka_G}(f)$, the preimage $(H^G)^{-1}(y)$ is open and closed. If $y\in\tu{supp}_{\fka_G}(f)$ then $\supp_{\fka_G}(f)\cap (H^G)^{-1}(y)$ is nonempty open in $G(F)$ thus intersects $D$. This proves the assertion for $\tu{supp}_{\fka_G}(f)$.

Next let $y\in \tu{supp}^O_{\fka_G}(f)$. Then $(H^G)^{-1}(y)\cap D$ is open dense in $(H^G)^{-1}(y)$. If $O_x(f)=0$ for every $x\in (H^G)^{-1}(y)\cap D$, we claim that 
$$
O_x(f)=0 ,\qquad x\in (H^G)^{-1}(y)\cap G(F)_{\tu{ss}}.
$$
If $x$ is regular, this follows from local constancy of $O_x(f)$ on regular elements. A Shalika germ argument then proves $O_x(f)=0$ for non-regular semisimple $x$. (Compare with the proof of Lemma \ref{lem:represent-by-C-regular} (1) below.) However, the claim contradicts $y\in \tu{supp}^O_{\fka_G}(f)$. The lemma for $\tu{supp}^O_{\fka_G}(f)$ follows. Finally, the stable analogue is proved likewise.
\end{proof}

\begin{lemma}\label{lem:nu-ascent-support}
The following diagrams commute.
$$\xymatrix{
  \cI_{\tu{acc}}(M_\nu)\ar[r]^-{\mathscr{J}_\nu} \ar[d]_-{\tu{supp}^O_{\fka_{M_\nu}}}  & \cI(G) \ar[d]^-{\tu{supp}^O_{\fka_G}} \\
    \mathscr{P}(\fka_{P_\nu}) \ar[r]^-{\tu{pr}_G}   & \mathscr{P}(\fka_G)
  }\qquad
  \xymatrix{
    \cS_{\tu{acc}}(M_\nu)\ar[r]^-{\mathscr{J}_\nu} \ar[d]_-{\tu{supp}^{SO}_{\fka_{M_\nu}}}  & \cS(G) \ar[d]^-{\tu{supp}^{SO}_{\fka_G}} \\
    \mathscr{P}(\fka_{P_\nu}) \ar[r]^-{\tu{pr}_G}   & \mathscr{P}(\fka_G)
  }
$$
\end{lemma}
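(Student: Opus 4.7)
The plan is to unravel the definitions and then apply Lemma \ref{lem:O-tr-nu-ascent} (resp.\ Corollary \ref{cor:SO-nu-ascent}) directly; no deep input beyond those is needed.

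First I would record two elementary compatibilities. (i) The map $\tu{pr}_G\colon \fka_{M_\nu}\to\fka_G$ of \eqref{eq:pr-G} is by definition dual to the restriction of characters $X^*_F(G)_\Q\hookrightarrow X^*_F(M_\nu)_\Q$. Hence for every $m\in M_\nu(F)$ and every $\chi\in X^*_F(G)_\Q$,
$$
\tu{pr}_G(H^{M_\nu}(m))(\chi) \;=\; H^{M_\nu}(m)(\chi|_{M_\nu}) \;=\; \log|\chi(m)| \;=\; H^G(m)(\chi),
$$
so $\tu{pr}_G\circ H^{M_\nu}=H^G|_{M_\nu(F)}$. (ii) As remarked in the text, $H^G$ is invariant under $G(\ol F)$-conjugacy. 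Combining (i) and (ii), whenever $g\in G(F)_{\tu{ss}}$ is $G(F)$-conjugate to $m\in M_\nu(F)$ we have $H^G(g)=\tu{pr}_G(H^{M_\nu}(m))$.

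Next I would observe that acceptability is a $M_\nu(\ol F)$-conjugation-invariant property of semisimple elements, since it is defined in terms of eigenvalues of $\tu{Ad}(\gamma)$ on $\Lie N_\nu^{\tu{op}}$. Therefore for $\phi\in\cH_{\tu{acc}}(M_\nu)$ the orbital integral $O_m^{M_\nu}(\phi)$ vanishes on every semisimple $M_\nu(F)$-orbit that does not meet the acceptable locus; equivalently, every $m$ contributing to $\tu{supp}^O_{\fka_{M_\nu}}(\phi)$ may be chosen acceptable, and likewise for $\tu{supp}^{SO}_{\fka_{M_\nu}}$.

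Now choose a representative $f\in\cH(G)$ of $\mathscr{J}_\nu(\phi)$ as in Lemma \ref{lem:O-tr-nu-ascent}. Part (1) of that lemma says that $O_g^G(f)\neq0$ if and only if there exists an acceptable $m\in M_\nu(F)$ that is $G(F)$-conjugate to $g$ and satisfies $O_m^{M_\nu}(\phi)\neq0$ (the factor $\delta_{P_\nu}(m)^{-1/2}$ is never zero). Combining this with the compatibility $H^G(g)=\tu{pr}_G(H^{M_\nu}(m))$ yields
$$
\tu{supp}^O_{\fka_G}(\mathscr{J}_\nu(\phi)) \;=\; \tu{pr}_G\!\left(\tu{supp}^O_{\fka_{M_\nu}}(\phi)\right),
$$
which is the first diagram. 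For the second diagram I would repeat the argument verbatim, replacing Lemma \ref{lem:O-tr-nu-ascent}(1) by Corollary \ref{cor:SO-nu-ascent} and orbital integrals by stable ones. The only bookkeeping point is that the element $\mathscr{J}_\nu(\phi)\in\cS(G)$ coming from \eqref{eq:J-nu-stable} (via Xu) is represented by the same $f$, which is forced by the stable orbital integral characterisation of Corollary \ref{cor:SO-nu-ascent}. I do not foresee a genuine obstacle; the statement is essentially a formal transcription of Lemma \ref{lem:O-tr-nu-ascent} and Corollary \ref{cor:SO-nu-ascent} into the language of $\fka_G$-supports.
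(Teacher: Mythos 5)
Your proof is correct and follows the same route as the paper, which simply cites Lemma \ref{lem:O-tr-nu-ascent} and Corollary \ref{cor:SO-nu-ascent} together with the compatibility $\tu{pr}_G(H^{M_\nu}(m))=H^G(m)$. You have merely spelled out the bookkeeping (conjugation-invariance of $H^G$ and of acceptability, and both inclusions of the support identity) that the paper leaves implicit.
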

\begin{proof}
This follows from Lemma \ref{lem:O-tr-nu-ascent} and Corollary \ref{cor:SO-nu-ascent} since, for each $m\in M_{\nu}(F)$, the canonical map $\fka_{M_\nu}\ra \fka_G$ sends $H^{M_\nu}(m)$ to $H^G(m)$.
\end{proof}

Let $k\in \Z$ and $\phi\in \cH(M_\nu)$. Define $\phi^{(k)}(l):=\phi(\nu(\varpi)^{-k} l)$ for $l\in M_\nu(F)$ so that $\phi^{(k)}\in \cH(M_\nu)$. Since $\nu$ is central in $M_\nu$, this induces a map
\begin{equation}\label{eq:central-translate}
(\cdot)^{(k)}\colon \cI(M_\nu)\ra \cI(M_\nu).
\end{equation}

\begin{lemma}\label{lem:nu-twist-support}
If $\phi\in \cH_{\tu{acc}}(M_\nu)$ then $\phi^{(k)}\in \cH_{\tu{acc}}(M_\nu)$ for all $k\ge 0$. Given $\phi\in \cH(M_\nu)$, there exists $k_0=k_0(\phi)$ such that $\phi^{(k)}\in\cH_{\tu{acc}}(M_\nu)$ for all $k\ge k_0$. The analogue holds true with $\cI$ in place of $\cH$. Moreover, letting $f^{(k)}$ denote the $\nu$-ascent of $\phi^{(k)}$ for $k\ge k_0$, we have
$$
\supp^{\star}_{\fka_G}\left(f^{(k)}\right) = \tu{pr}_G(\supp^{\star}_{\fka_{M_\nu}}(\phi^{(k)})) = k \cdot H^G(\nu(\varpi)) + \tu{pr}_G(\supp^{\star}_{\fka_{M_\nu}}(\phi)),\qquad \star\in \{O,SO\},
$$
where $\tu{pr}_G:\fka_{M_\nu}\ra \fka_G$ is the canonical surjection.
\end{lemma}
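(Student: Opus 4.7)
The argument breaks into two independent pieces: (i) checking that $\phi^{(k)}$ stays (or becomes) acceptable, and (ii) a direct book-keeping computation of the three displayed supports.

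For (i), the key point is that $\nu(\varpi)\in A_{M_\nu}(F)$ is central in $M_\nu$, so $\mathrm{supp}(\phi^{(k)})=\nu(\varpi)^k\,\mathrm{supp}(\phi)$ and $\mathrm{Ad}(\nu(\varpi)^k m)=\mathrm{Ad}(\nu(\varpi))^k\circ \mathrm{Ad}(m)$ on $\Lie N_\nu^{\tu{op}}$. On each root space of $A_{M_\nu}$ in $\Lie N_\nu^{\tu{op}}$, indexed by $\alpha\in\Phi^+(P^{\tu{op}}_\nu)$ (so $\langle\alpha,\nu\rangle>0$), the operator $\mathrm{Ad}(\nu(\varpi))^k$ acts by the scalar $\alpha(\nu(\varpi))^k$ of absolute value $q^{-k\langle\alpha,\nu\rangle}\le 1$ for all $k\ge 0$. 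Thus if $\mathrm{Ad}(m)$ is already contracting on every such weight space (the acceptability of $m$), then so is $\mathrm{Ad}(\nu(\varpi)^k m)$; this gives the first assertion of (1). For the second assertion of (1), the compactness of $\mathrm{supp}(\phi)$ yields a uniform upper bound on the moduli of eigenvalues of $\mathrm{Ad}(m)$ along each weight space as $m$ ranges over $\mathrm{supp}(\phi)$, so taking $k_0$ large enough that $q^{-k\langle\alpha,\nu\rangle}$ beats that uniform bound for every $\alpha\in\Phi^+(P_\nu^{\tu{op}})$ does the job. The $\cI$-version is immediate by choosing an $\cH$-representative and using that the translation $(\cdot)^{(k)}$ descends to $\cI(M_\nu)$ because $\nu(\varpi)$ is central in $M_\nu$.

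For (ii), I would first handle the second equality. Centrality of $\nu(\varpi)$ gives
\[
O^{M_\nu}_l(\phi^{(k)})=O^{M_\nu}_{\nu(\varpi)^{-k}l}(\phi),\qquad SO^{M_\nu}_l(\phi^{(k)})=SO^{M_\nu}_{\nu(\varpi)^{-k}l}(\phi),
\]
so the change of variables $l\mapsto \nu(\varpi)^k l$ together with the homomorphism property of $H^{M_\nu}:M_\nu(F)\to\fka_{M_\nu}$ yields
\[
\mathrm{supp}^{\star}_{\fka_{M_\nu}}(\phi^{(k)})=k\cdot H^{M_\nu}(\nu(\varpi))+\mathrm{supp}^{\star}_{\fka_{M_\nu}}(\phi),\qquad\star\in\{O,SO\}.
\]
Applying $\mathrm{pr}_G$ and using that $\mathrm{pr}_G\circ H^{M_\nu}=H^G|_{M_\nu(F)}$ (dual to $X^*_F(G)_\Q\hookrightarrow X^*_F(M_\nu)_\Q$) gives the rightmost equality of (4).

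The first equality of (4) is a direct consequence of Lemma~\ref{lem:O-tr-nu-ascent}(1) and Corollary~\ref{cor:SO-nu-ascent}. In one direction, if $O^G_g(f^{(k)})\neq 0$ then the lemma forces $g$ to be $G(F)$-conjugate to an acceptable $m\in M_\nu(F)$ with $O^{M_\nu}_m(\phi^{(k)})\neq 0$, and then $H^G(g)=H^G(m)=\mathrm{pr}_G(H^{M_\nu}(m))$ lies in the right-hand side; the same works stably. Conversely, any $m\in M_\nu(F)$ with $O^{M_\nu}_m(\phi^{(k)})\neq 0$ is $M_\nu(F)$-conjugate to an element of $\mathrm{supp}(\phi^{(k)})\subset\{$acceptable elements$\}$, hence $m$ itself is acceptable (acceptability is conjugation invariant), and taking $g=m$ shows $\mathrm{pr}_G(H^{M_\nu}(m))\in\mathrm{supp}^{O}_{\fka_G}(f^{(k)})$; again the stable version is identical via Corollary~\ref{cor:SO-nu-ascent}.

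I do not anticipate a genuine obstacle here; the only mildly subtle point is the reverse inclusion in the first equality, where one must remember that acceptability is a conjugation-invariant property on $M_\nu(\ol F)$ so that a non-vanishing orbital integral of $\phi^{(k)}\in\cH_{\tu{acc}}(M_\nu)$ automatically produces an acceptable representative. Everything else is a bookkeeping exercise in the definitions.
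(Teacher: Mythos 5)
Your proof is correct and follows essentially the same route as the paper: the acceptability claims come from the centrality and acceptability of $\nu(\varpi)$ plus compactness of $\supp(\phi)$, and the support identity comes from the orbital-integral formulas of Lemma \ref{lem:O-tr-nu-ascent} and Corollary \ref{cor:SO-nu-ascent} together with the compatibility $\tu{pr}_G(H^{M_\nu}(m))=H^G(m)$. The only cosmetic difference is that the paper deduces the first equality by citing Lemma \ref{lem:support-open-dense-enough} and Lemma \ref{lem:nu-ascent-support}, whereas you unwind that diagram inline and argue directly on semisimple elements.
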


\begin{proof}
The assertions before ``Moreover'' follow from the facts that $\nu(\varpi)$ is acceptable and that $\phi$ has compact support. As for the last assertion, 
the second equality is obvious, so we check the first equality. By Lemma \ref{lem:support-open-dense-enough} it is enough to verify firstly that if $O_g(f^{(k)})\neq 0$ for $g\in G(F)_{\reg}$ then $H^G(g)\in \tu{pr}_G(\supp^{O}_{\fka_{M_\nu}}(\phi^{(k)}))$, and secondly that if $O_m(\phi^{(k)})\neq 0$ for $m\in M(F)_{\reg}$ then $\tu{pr}_G(H^M(m))\in \supp^{O}_{\fka_{G}}(f^{(k)})$. This follows from Lemma \ref{lem:O-tr-nu-ascent} (1) and Lemma \ref{lem:nu-ascent-support}. The case of stable orbital integrals is analogous.
\end{proof}

Let $\Groth(M_\nu(F))$ denote the Grothendieck group of admissible representations of $M_\nu(F)$.

\begin{lemma}\label{lem:Trace-equal-acceptable}
Let $\pi_1,\pi_2\in \Groth(M_\nu(F))$. Assume that for each $\phi\in \cI(M_\nu)$, there exists $k_0(\phi)\in \Z$ such that $\Tr \pi_1(\phi^{(k)})=\Tr \pi_2(\phi^{(k)})$ for all $k\ge k_0(\phi)$. Then $\pi_1=\pi_2$ in $\Groth(M_\nu(F))$.
\end{lemma}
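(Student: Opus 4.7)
The plan is to exploit the central translation formula and linear independence of characters.

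First I would observe that, for any irreducible admissible representation $\pi$ of $M_\nu(F)$ with central character $\omega_\pi$, the centrality of $\nu(\varpi)$ in $M_\nu$ gives
\[
\pi(\phi^{(k)}) = \omega_\pi(\nu(\varpi))^k\,\pi(\phi),\qquad \Tr\pi(\phi^{(k)}) = \omega_\pi(\nu(\varpi))^k\,\Tr\pi(\phi).
\]
Writing $\pi_1-\pi_2 = \sum_{i=1}^{n} a_i\rho_i$ in $\Groth(M_\nu(F))$ with distinct irreducibles $\rho_i$ and nonzero integers $a_i$, the goal is to deduce $n=0$ from the hypothesis.

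Next I would partition $\{\rho_1,\dots,\rho_n\}$ according to the value $\omega_{\rho_i}(\nu(\varpi))\in\C^\times$. Let $c_1,\dots,c_r$ be the distinct values occurring and let $I_s:=\{i:\omega_{\rho_i}(\nu(\varpi))=c_s\}$. For each $\phi\in\cI(M_\nu)$ the hypothesis says
\[
\sum_{s=1}^{r} c_s^{k}\,\Xi_s(\phi)=0,\qquad k\ge k_0(\phi),\qquad \text{where}\quad \Xi_s(\phi):=\sum_{i\in I_s}a_i\Tr\rho_i(\phi).
\]
Specializing to $r$ consecutive values $k=k_0,k_0+1,\dots,k_0+r-1$ gives a linear system whose coefficient matrix is a (rescaled) Vandermonde matrix in the distinct nonzero scalars $c_1,\dots,c_r$; its determinant is $\prod_s c_s^{k_0}\prod_{s<s'}(c_{s'}-c_s)\neq 0$. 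Hence $\Xi_s(\phi)=0$ for every $s$ and every $\phi$.

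Finally, fix $s$. The identity $\sum_{i\in I_s}a_i\Tr\rho_i(\phi)=0$ for all $\phi\in\cI(M_\nu)$ combined with linear independence of irreducible characters of $M_\nu(F)$ as functionals on $\cI(M_\nu)$ (trace Paley--Wiener, \S\ref{sub:local-Hecke}) forces $a_i=0$ for all $i\in I_s$. Since this holds for every $s$, we conclude $\pi_1=\pi_2$ in $\Groth(M_\nu(F))$. The only delicate point is justifying the Vandermonde step from the $\phi$-dependent hypothesis, but this is harmless because $r$ depends only on $\pi_1-\pi_2$, not on $\phi$, so for each $\phi$ we can choose $r$ consecutive $k\ge k_0(\phi)$; no other obstacle arises.
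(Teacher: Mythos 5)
Your proof is correct and follows the same essential strategy as the argument the paper cites from p.536 of \cite{ShinIgusa}: decompose $\pi_1-\pi_2$ into irreducibles, use the central translation identity $\Tr\rho(\phi^{(k)}) = \omega_\rho(\nu(\varpi))^k\Tr\rho(\phi)$ to reduce to an exponential-sum vanishing, separate the eigenvalue groups via a Vandermonde determinant, and conclude by linear independence of irreducible characters. The Vandermonde step is legitimately uniform in $\phi$ since $r$ (the number of distinct scalars $c_s$) depends only on the fixed virtual representation $\pi_1-\pi_2$, exactly as you note.
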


\begin{proof}
This is proved by the argument of \cite[p.536]{ShinIgusa}.  
\end{proof}

\subsection{$\nu$-ascent and constant terms}\label{sub:nu-constant-terms}

Fix an $F$-rational minimal parabolic subgroup $P_0\subset P^{\tu{op}}_\nu$ of $G$ with a Levi factor $M_0\subset M_\nu$. Let $P$ be another $F$-rational parabolic subgroup of $G$ containing $P_0$, with a Levi factor $M$ containing $M_0$. Henceforth we will often write $L:=M_\nu$.

We have the constant term map (compare with \eqref{eq:J-nu-invariant})
\begin{equation}\label{eq:const-term-def}
\mathscr C^G_M: \cI(G)\ra \cI(M),\qquad \mathcal F \mapsto \left((\pi_M\mapsto \mathcal F(\nind_M^G(\pi_M))\right),
\end{equation}
where $\nind_M^G:\Groth(M(F))\ra \Groth(G(F))$ is the normalized parabolic induction (which does not change if $P$ is replaced with a different parabolic with Levi factor $M$). On the level of functions, when $f\in \cH(G)$, we can define $f_M\in \cH(M)$ by an integral formula (e.g., \cite[(3.5)]{ShinGalois}) so that 
\begin{equation}\label{eq:orb-int-const-term}
\begin{array}{ll}
O^G_g(f)=0,& \forall g\in G(F)_{\tu{reg}} \textup{~not conjugate to elements of~} M(F),\\
O^G_m(f)=D_{G/M}(m)^{1/2}O^M_m(f_M),& \forall ~ G\textup{-regular}~~m\in M(F), 
\end{array}
\end{equation}
where $D_{G/M}:M(F)\ra\R^\times_{>0}$ denotes the Weyl discriminant of $G$ relative to $M$. This identity and parts (i) and (ii) of \cite[Lem.~3.3]{ShinGalois} tell us that $f\mapsto f_M$ descends to the map $\mathscr C^G_M$ above. (Even though $G$ is a general linear group in \emph{loc.~cit.}, everything applies to general reductive groups since that lemma is based on the general results of \cite{Dijk}.)

Since $\nind_M^G$ induces a map $R(M)^{\tu{st}}\ra R(G)^{\tu{st}}$ \cite[Cor.~6.13]{KazhdanVarshavskyInduction}, the map $\mathscr C^G_M$ descends to a map on the stable spaces, still denoted by the same symbol:
$$
\mathscr C^G_M: \cS(G)\ra \cS(M).
$$

Define the following set of representatives for $\Omega^L\backslash \Omega^G/\Omega^M$:
$$\Omega^G_{M,L}:=\{\omega \in \Omega^G: \omega(M\cap P_0)\subset P_0,~ \omega^{-1}(L\cap P_0)\subset P_0\}.$$
For $\omega\in \Omega^G_{M,L}$, write $M_\omega:=M\cap \omega^{-1}(L)$, $P_\omega:=M\cap \omega^{-1}(P_\nu)$, and $L_\omega:=\omega(M)\cap L$. Note that $M_\omega$ (resp.~$L_\omega$) is an $F$-rational Levi subgroup of $M$ (resp.~$L$) and that $\omega$ induces an isomorphism $M_\omega\isom L_\omega$, thus also $\omega: \cI(M_\omega)\stackrel{\sim}{\ra}  \cI(L_\omega)$ by $\phi\mapsto (g\mapsto \phi(\omega^{-1} g))$. Since $\nu$ is central in $L$, its image lies in $L_\omega$. So $\nu_\omega:=\omega^{-1}(\nu)$ is a cocharacter of $M_\omega$. Hence we have a chain of maps 
$$\cI(L)~\stackrel{\mathscr C^L_{L_\omega}}{\longrightarrow} ~\cI(L_\omega)~\stackrel{\omega^{-1}}{\simeq}~ \cI(M_\omega)~ \stackrel{\mathscr J_{\nu_\omega}}{\longrightarrow} ~\cI(M).$$

\begin{lemma}\label{lem:acc-preservation-const-term}
 If $\phi\in \cI_{\nu\tu{-acc}}(L)$ then $\mathscr C^L_{L_{\omega}}(\phi)$ is contained in $\cI_{\nu\tu{-acc}}(L_\omega)$.
\end{lemma}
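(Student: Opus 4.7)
The plan is to establish that the $\nu$-acceptable subset $A\subset L_\omega(F)$ is both open and closed in the $p$-adic topology; once this is in hand, the desired representative will arise by straightforward truncation. Namely, if $f\in\cH_{\nu\tu{-acc}}(L)$ represents $\phi$ and $f_{L_\omega}:=\mathscr C^L_{L_\omega}(f)\in\cH(L_\omega)$ is its constant term, then $\psi:=\mathbf{1}_{A}\cdot f_{L_\omega}$ will automatically be locally constant (since $\mathbf{1}_A$ is locally constant on a clopen set), compactly supported in $A$, and thus an element of $\cH_{\nu\tu{-acc}}(L_\omega)$; it will then remain to check that $\psi$ represents $\mathscr C^L_{L_\omega}(\phi)$ in $\cI(L_\omega)$.

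To show that $A$ is clopen I will use a Newton polygon criterion. Since $L\supset L_\omega$ normalizes $N^{\tu{op}}_\nu$, conjugation yields an $F$-morphism $L_\omega\to\tu{End}(\Lie N^{\tu{op}}_\nu)$, and the coefficients of the characteristic polynomial $X^d+c_{d-1}X^{d-1}+\cdots+c_0$ of $\tu{Ad}(\gamma)|_{\Lie N^{\tu{op}}_\nu}$ (with $d=\dim N^{\tu{op}}_\nu$) are regular functions $c_j\in\cO(L_\omega)$. A short Newton polygon argument shows that $\gamma\in L_\omega(F)$ is $\nu$-acceptable if and only if $c_j(\gamma)\in\varpi\cO_F$ for every $0\le j<d$; since $\varpi\cO_F\subset F$ is clopen and each $c_j$ is continuous, $A$ is clopen. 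Equivalently and more conceptually, $A$ is the locus where $\tu{Ad}(\gamma)|_{\Lie N^{\tu{op}}_\nu}$ is topologically nilpotent, which is manifestly a clopen condition.

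To verify $[\psi]=[f_{L_\omega}]$ in $\cI(L_\omega)$, I will show that $(1-\mathbf{1}_A)f_{L_\omega}$ has vanishing orbital integrals on $L_\omega(F)_{\tu{sr}}$, handling one conjugacy class at a time. Each strongly $L_\omega$-regular conjugacy class lies entirely in $A$ or entirely in its complement, by $L(\ol F)$-invariance of $\nu$-acceptability (specialized to $L_\omega(F)$-conjugation). In the acceptable case, the integrand vanishes identically on the class. In the non-acceptable case it suffices to prove $O^{L_\omega}_m(f_{L_\omega})=0$: by local constancy of strongly regular orbital integrals together with $p$-adic density of $L$-regular elements inside $L_\omega(F)_{\tu{sr}}$ (the non-$L$-regular locus being a proper Zariski closed subvariety, as exploited in Lemma \ref{lem:support-open-dense-enough}), I may reduce to the case when $m$ is also strongly $L$-regular. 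Then \eqref{eq:orb-int-const-term} gives $O^{L_\omega}_m(f_{L_\omega})=D_{L/L_\omega}(m)^{-1/2}O^L_m(f)$, and the right-hand side vanishes because $f$ is supported on $\nu$-acceptable elements while the entire $L(F)$-conjugacy class of a non-acceptable $m$ is non-acceptable.

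The main obstacle I anticipate is nailing down the clopenness of $A$ cleanly; it is tempting to argue only openness from the strict inequalities $|\lambda|<1$, but that would not suffice to ensure that the cut-off function is smooth. The Newton polygon (or topological-nilpotence) reformulation is what exploits the $p$-adic discreteness of valuations to upgrade openness to clopenness, after which the rest of the argument is essentially formal.
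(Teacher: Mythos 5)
Your proof is correct and follows essentially the same route as the paper's, which simply combines \eqref{eq:orb-int-const-term} with the conjugation-invariance of acceptability to see that the orbital integrals of the constant term vanish off the acceptable locus (the paper's one-line proof defers to the argument for Lemma \ref{lem:acc-preservation}, with ordinary orbital integrals in place of stable ones). The only difference is that you make explicit the clopenness of the acceptable set (via the Newton polygon/topological-nilpotence criterion) and the resulting truncation needed to exhibit an honest representative in $\cH_{\nu\tu{-acc}}(L_\omega)$ — a point the paper leaves implicit — and this same clopenness is also what guarantees, in your density step, that the nearby $L$-regular element can be taken non-acceptable.
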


\begin{proof}
The proof of Lemma \ref{lem:acc-preservation} below works verbatim: just replace stable orbital integrals there with ordinary orbital integrals, and use \eqref{eq:orb-int-const-term}. (Since Lemma \ref{lem:acc-preservation} is more general, we supply a detailed argument only for the latter.)
\end{proof}

\begin{lemma}\label{lem:nu-constant-terms}
We have the following commutative diagram.
$$
\xymatrix{ \cI(L) \ar[r]^-{\mathscr J_\nu} \ar[d]_-{\oplus \mathscr C^L_{L_\omega}} & \cI(G) \ar[r]^-{\mathscr C^G_M} & \cI(M) \\
\bigoplus\limits_{\omega\in \Omega^G_{M,L}}  \cI(L_\omega) \ar[rr]^-{\oplus \omega^{-1}}_{\sim}  & & \bigoplus\limits_{\omega\in \Omega^G_{M,L}}  \cI(M_\omega) \ar[u]_-{\sum_\omega \mathscr J_{\nu_\omega}}
  }
$$
Finally, all this holds true with $\cS(\cdot)$ in place of $\cI(\cdot)$.
\end{lemma}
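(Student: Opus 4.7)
\textbf{Proof plan for Lemma \ref{lem:nu-constant-terms}.}
The plan is to translate the diagram into an identity in the Grothendieck group $\Groth(L(F))$ via the trace Paley--Wiener description \eqref{eq:trace-PW}, and then invoke the Bernstein--Zelevinsky geometric lemma. Since $\cI(M)$ embeds into linear functionals on $R(M)$, it is enough to verify that the two composites $\cI(L)\to\cI(M)$ agree when evaluated on each $\pi_M\in\Irr(M(F))$. Unfolding the definitions of $\mathscr J_\nu$ in \eqref{eq:J-nu-invariant} and of $\mathscr C^G_M$ in \eqref{eq:const-term-def}, the top route evaluates $\phi\in\cI(L)$ at $\pi_M$ as
$$
\mathscr C^G_M(\mathscr J_\nu\phi)(\pi_M)=\phi\bigl(J_{P_\nu^{\tu{op}}}(\nind_M^G\pi_M)\bigr),
$$
while the bottom route, using that $\omega^{-1}$ is the transpose of conjugation $\omega_*$ on representations, equals
$$
\sum_{\omega\in\Omega^G_{M,L}}\mathscr J_{\nu_\omega}(\omega^{-1}\mathscr C^L_{L_\omega}\phi)(\pi_M)=\sum_{\omega}\phi\Bigl(\nind^L_{L_\omega}\bigl(\omega_*\,J^M_{M_\omega,P_{\nu_\omega}^{\tu{op}}}(\pi_M)\bigr)\Bigr).
$$
The claim is therefore equivalent to
$$
\bigl[J_{P_\nu^{\tu{op}}}\nind_M^G\pi_M\bigr]=\sum_{\omega\in\Omega^G_{M,L}}\bigl[\nind^L_{L_\omega}\bigl(\omega_*J^M_{M_\omega,P_{\nu_\omega}^{\tu{op}}}\pi_M\bigr)\bigr]\quad\text{in } \Groth(L(F)).
$$

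This is precisely the content of the Bernstein--Zelevinsky geometric lemma applied to the composition $J_{P_\nu^{\tu{op}}}\circ\nind^G_P$, once we check that the parabolics arising in the $\omega$-piece match those prescribed by $\nu_\omega$. The lemma produces, for each $\omega$, the functor $\nind^L_{L_\omega}\circ\omega_*\circ J^M_{M_\omega}$, where the Jacquet functor uses the parabolic $M\cap\omega^{-1}(P_\nu^{\tu{op}})$ of $M$. The Levi of this parabolic is $M\cap\omega^{-1}(L)=M_\omega$ by definition, and its unipotent radical consists of root subspaces for roots $\alpha$ of $A_{M_\omega}$ in $M$ such that $\omega\alpha$ lies in $N_\nu^{\tu{op}}$, i.e.\ $\langle\omega\alpha,\nu\rangle>0$; by $\omega$-equivariance of the pairing together with $\nu=\omega\nu_\omega$, this is equivalent to $\langle\alpha,\nu_\omega\rangle>0$, i.e.\ $\alpha$ occurs in $N_{\nu_\omega}^{\tu{op}}$. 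Hence the geometric-lemma parabolic is $P_{\nu_\omega}^{\tu{op}}$, so the Jacquet functor used there is exactly $J^M_{M_\omega,P_{\nu_\omega}^{\tu{op}}}$, as required. The parabolic of $L$ used for the induction has Levi $L_\omega$, which is all that matters for $\mathscr C^L_{L_\omega}$ since normalized induction depends only on the Levi in the Grothendieck group.

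For the stable version with $\cS$ in place of $\cI$, the same computation applies verbatim: normalized parabolic induction restricts to $R(\cdot)^{\tu{st}}\to R(\cdot)^{\tu{st}}$ by \cite[Cor.~6.13]{KazhdanVarshavskyInduction}, the normalized Jacquet module descends to $\cS(\cdot)$ via \eqref{eq:J-nu-stable} (cf.~\cite[Prop.~C.4]{XuCuspidalSupport}), and conjugation $\omega_*$ by an $F$-rational representative of $\omega\in\Omega^G_{M,L}$ preserves stable conjugacy and hence stability. The main subtlety is the matching of parabolics addressed above, together with the requirement that each $\omega\in\Omega^G_{M,L}$ admit an $F$-rational representative in $N_G(T)(F)$; the latter is automatic because $\Omega^G_{M,L}$ is defined via the $F$-rational Weyl group. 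No use is made of the acceptable subspace $\cI_{\tu{acc}}$ at any stage, since all maps in the diagram are already defined on the full invariant (resp.~stable) distribution spaces.
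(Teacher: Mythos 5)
Your proposal is correct and follows essentially the same route as the paper: both evaluate the two composites against each $\pi_M\in\Irr(M(F))$ via the trace Paley--Wiener description, unwind the definitions of $\mathscr J_\nu$ and $\mathscr C^G_M$, and reduce to the Bernstein--Zelevinsky geometric lemma, with the stable case following because every map in the diagram descends to $\cS(\cdot)$. Your explicit verification that the parabolic produced by the geometric lemma for the $\omega$-summand is exactly $P_{\nu_\omega}^{\tu{op}}$ is a detail the paper leaves implicit, but it is a correct and welcome addition rather than a divergence.
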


\begin{proof}
Let $\phi\in \cI(L)$. We check that the images of $\phi$ in $\cI(M)$ given in the two different ways have the same trace against every $\pi_M\in \Irr(M(F))$:
  $$\Tr \left( \mathscr C^L_{L_\omega}( \mathscr J_\nu(\phi)) | \pi_M\right)
  = \Tr \left( \mathscr J_\nu(\phi)|\nind_L^G(\pi_M)\right)
  = \Tr \left( \phi| J_{P_{\nu}}(\nind_L^G(\pi_M)) \right)
  $$
  $$
  = \sum_{\omega\in \Omega^G_{M,L}} \Tr \left( \phi| \nind_{L_\omega}^{L} \left(\omega(J_{P_{\nu_\omega}}(\pi_M))\right) \right)
  = \sum_{\omega\in \Omega^G_{M,L}} \Tr \left( \mathscr J_{\nu_\omega}(\omega^{-1}(\mathscr C^L_{L_\omega}(\phi)))| \pi_M\right),
  $$
where the second last equality comes from Bernstein--Zelevinsky's geometric lemma \cite[2.12]{BernsteinZelevinskyInduced}. The $\cS(\cdot)$-version is immediate from the $\cI(\cdot)$-version proven just now, since each map in the big diagram descends to a map between the stable analogues.
\end{proof}

\subsection{$\nu$-ascent and endoscopic transfer}\label{sub:nu-endoscopy}

In this subsection we assume that $G$ is quasi-split over $F$. Let $\fke=(G^\fke,\cG^\fke,s^\fke,\eta^\fke)$ be an endoscopic datum for $G$ such that $\cG^\fke= {}^L G^\fke$. (The last condition will be removed via $z$-extensions in \S\ref{sub:nu-z-extension}.) Here we fix $\Gamma_F$-pinnings $(\mathcal{B}^\fke,\mathcal{T}^\fke,\{\mathcal{X}_{\alpha^\fke}\})$ and $(\mathcal{B},\mathcal{T},\{\mathcal{X}_{\alpha}\})$ for $\hat G^\fke$ and $\hat G$, respectively. (These choices are implicit in the discussion of \S\ref{sub:prelim-endoscopy}.) Conjugating $\eta^\fke$ we may and will assume that $\eta^\fke(\mathcal{T}^\fke)=\mathcal{T}$ and $\eta^\fke(\mathcal{B}^\fke)\subset \mathcal{B}$.

We have a standard embedding ${}^L P_\nu\hra {}^L G$ and a Levi subgroup ${}^L M_\nu\subset {}^L P_\nu$ as in \cite[3.3,~3.4]{BorelCorvallis}. Choose a subtorus $S\subset \mathcal{T}$ such that $\tu{Cent}(S,{}^L G)={}^L M_\nu$. (This is possible by \cite[Lem.~3.5]{BorelCorvallis}.) Following \cite[\S6]{XuCuspidalSupport}, define
$$
\Omega^G(\fke,\nu):=\{\omega\in \Omega^G\,|\, \tu{Cent}(\omega(S),{}^L G^\fke)\ra W_{F}~\mbox{is~surjective}\}
$$
and $ \Omega_{\fke,\nu}:=\Omega^{G^\fke}\backslash\Omega^G(\fke,M_\nu)/ \Omega^{M_\nu}$. For each $\omega\in \Omega_{\fke,\nu}$, we obtain an endoscopic datum 
$$\fke_\omega=(G^\fke_\omega,{}^L G^\fke_\omega,s^\fke_\omega,\eta^\fke_\omega)\quad \mbox{for}~L=M_\nu$$
as follows. (Henceforth we view $^L G^\fke$ as a subgroup of $^L G$ via $\eta^\fke$.) Pick $g\in \hat G$ such that $\Int(g)$ induces $\omega$ on $S$. Then $g\, {}^L \! P_\nu g^{-1}\cap {}^L G^\fke$ is a parabolic subgroup of $^L G^\fke$ with Levi subgroup $g\, {}^L \! M_\nu g^{-1}$, so there is a corresponding standard parabolic subgroup $P^\fke_\nu = M^\fke_\nu N^\fke_\nu$ such that the standard embedding ${}^L P^\fke_\nu\hra{}^L G^\fke$ (resp.~$^L M^\fke_\nu\hra {}^L G^\fke$) becomes $g\, {}^L \! P_\nu g^{-1}\cap {}^L G^\fke$ (resp.~$g \, {}^L \! M_\nu g^{-1}\cap {}^L G^\fke$) after composing with $\Int(g^\fke)$ for some $g^\fke\in \hat{G}^\fke$. Then there is a unique $L$-embedding $\eta^\fke_\omega:{}^L M^\fke_\nu\hra {}^L M_\nu$ such that $\Int(g)\circ \eta^\fke_\omega = \eta^\fke\circ \Int(g^\fke)$.
Set $G^\fke_\omega:=M^\fke_\nu$, and $s^\fke_\omega:=g^{-1} s g\in \hat M_\nu$. Then it is a routine exercise to check that $(G^\fke_\omega,{}^L G^\fke_\omega,s^\fke_\omega,\eta^\fke_\omega)$ is an endoscopic datum for $M_\nu$.

There is a canonical embedding $A_{M_\nu}\hra A_{M_\nu^\fke}=A_{G_\omega^\fke}$ (just like $Z_{H}\hra Z$ in \S\ref{sub:prelim-endoscopy}). Composing with $\nu: \G_m\ra A_{M_\nu}$, we obtain
$$
\nu_\omega \colon \G_m \ra A_{G_\omega^\fke}.
$$
By construction, $G^\fke_\omega = M^\fke_\nu$ is a Levi subgroup of $G^\fke$ that is the centralizer of $\nu_\omega$. In particular we have a map $\mathscr J_{\nu_\omega}:\mathcal S(G^\fke_\omega) \ra \mathcal S(G^\fke)$ as in \eqref{eq:J-nu-stable}. Consider the following commutative diagram

\begin{equation}\label{eq:big-Levi-diagram}
\xymatrix{
  W_F \ar[r]  & {}^L G^\fke_\omega \ar[r]^-{\eta^\fke_\omega} \ar@{^(->}[d]_-{\Int(g^\fke)} &  {}^L M_\nu \ar[r] \ar@{^(->}[d]^-{\Int(g)} & {}^L Z^0_{M_\nu} \ar[d] \\
W_F \ar[r] &  {}^L G^\fke  \ar[r]^-{\eta^\fke} &  {}^L G \ar[r] & {}^L Z^0_G ,
}
\end{equation}
where the maps out of $W_F$ come from canonical splittings for the $L$-groups, the two horizontal maps on the right are induced by $Z^0_{M_\nu}\subset M_\nu$ and $Z^0_G\subset G$, the first two vertical maps correspond to the Levi embeddings (coming from $G^\fke_\omega \subset G^\fke$ and $M_\nu\subset G$) followed by $\Int(g^\fke)$ and $\Int(g)$ respectively, and finally the rightmost vertical map is induced by $Z^0_G\subset Z^0_{M_\nu}$. The left square in \eqref{eq:big-Levi-diagram} commutes by $\Int(g)\circ \eta^\fke_\omega = \eta^\fke\circ \Int(g^\fke)$ above. The commutativity of the right square is obvious since $\Int(g)$ acts trivially on $^L Z^0_G$. Denote by
$$
\lambda^\fke_\omega:Z^0_{M_\nu}(F)\ra \C^\times\qquad  (\mbox{resp}.~\lambda^\fke:Z^0_G(F)\ra \C^\times)
$$
the smooth character corresponding to the composite morphism from $W_F$ to ${}^L Z^0_{M_\nu}$ (resp. $^L Z^0_G$) in the first (resp. second) row. The character $\lambda^\fke$ is the same as in \S\ref{sub:prelim-endoscopy}. The commutativity of \eqref{eq:big-Levi-diagram} implies that $\lambda^\fke_\omega|_{Z^0_G(F)}=\lambda^\fke$. The canonical splittings from $W_F$ to ${}^L G^\fke_\omega$ and ${}^L G^\fke$ commute with the Levi embedding ${}^L G^\fke_\omega \hra {}^L G^\fke$ without $\Int(g^\fke)$, but the point is that $\Int(g^\fke)$ on ${}^L G^\fke$ is equivariant with the trivial action on ${}^L Z^0_G$ via the horizontal maps in \eqref{eq:big-Levi-diagram}.

\begin{lemma}\label{lem:lambda-is-unitary}
Assume that $\eta^\fke(W_F)$ is a bounded subgroup of $^L G$ in the sense above Lemma \ref{lem:bounded-representative}. (This condition can always be ensured by that lemma.) Then $\lambda^\fke_\omega$ is a unitary character.
\end{lemma}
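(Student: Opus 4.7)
The plan is to verify that the Langlands parameter of $\lambda^\fke_\omega$, namely the composite $W_F \to {}^L G^\fke_\omega \xrightarrow{\eta^\fke_\omega} {}^L M_\nu \to {}^L Z^0_{M_\nu}$ along the top row of \eqref{eq:big-Levi-diagram}, has bounded image in ${}^L Z^0_{M_\nu}$; this is equivalent to $\lambda^\fke_\omega$ being a unitary character of the torus $Z^0_{M_\nu}(F)$ via the local Langlands correspondence for tori.

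Denote by $s^\fke_\omega\colon W_F \to {}^L G^\fke_\omega$, $w \mapsto 1 \rtimes w$, the canonical splitting; the analogous canonical splitting of ${}^L G^\fke$ agrees with the composition of $s^\fke_\omega$ with the standard Levi embedding ${}^L G^\fke_\omega \hookrightarrow {}^L G^\fke$. Write $\eta^\fke(1 \rtimes w) = y(w) \rtimes w$, where $y\colon W_F \to \hat{G}$ is a $1$-cocycle whose image is bounded by the hypothesis on $\eta^\fke$.

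First I would traverse the left square of \eqref{eq:big-Levi-diagram} via the bottom-right route: starting with $1 \rtimes w$, apply $\Int(g^\fke)$ to get $g^\fke \cdot w(g^\fke)^{-1} \rtimes w \in {}^L G^\fke$, then apply $\eta^\fke$ (which is an $L$-homomorphism extending $\hat{G}^\fke \hookrightarrow \hat{G}$) to get $g^\fke \cdot w(g^\fke)^{-1} \cdot y(w) \rtimes w \in {}^L G$. Since $g^\fke \in \hat{G}^\fke$ is fixed while the $W_F$-action on $\hat{G}^\fke$ factors through a finite Galois quotient, the set $\{g^\fke \cdot w(g^\fke)^{-1} : w \in W_F\}$ is finite in $\hat{G}^\fke$; combined with the boundedness of $\{y(w)\rtimes w\}$, this composite $W_F \to {}^L G$ has bounded image. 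By commutativity of the left square of \eqref{eq:big-Levi-diagram}, the same is true of the composite $W_F \xrightarrow{s^\fke_\omega} {}^L G^\fke_\omega \xrightarrow{\eta^\fke_\omega} {}^L M_\nu \xrightarrow{\Int(g)} {}^L G$.

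Since $\Int(g)$ is a continuous automorphism of ${}^L G$, this forces $\eta^\fke_\omega(s^\fke_\omega(W_F)) \subset {}^L M_\nu$ to be bounded in ${}^L G$, hence bounded in the closed subgroup ${}^L M_\nu$ as well. The final projection ${}^L M_\nu \to {}^L Z^0_{M_\nu}$ is continuous, so the parameter of $\lambda^\fke_\omega$ has bounded image, completing the argument. The one step requiring care is the explicit computation of $\Int(g^\fke)$ on the canonical splitting; once one recognizes that this yields $g^\fke \cdot w(g^\fke)^{-1} \rtimes w$ with the first factor varying over a finite set, boundedness propagates almost formally through the diagram, so I do not expect serious obstacles.
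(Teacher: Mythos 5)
Your proof is correct and follows the same route as the paper's: by the commutativity of \eqref{eq:big-Levi-diagram} and the boundedness hypothesis, $\eta^\fke_\omega(W_F)$ is bounded in $^L M_\nu$, hence its image in $^L Z^0_{M_\nu}$ is bounded, and the Langlands correspondence for tori gives unitarity. The only difference is that you explicitly verify the step the paper leaves implicit, namely that the twists $\Int(g^\fke)$ and $\Int(g)$ in the vertical maps only perturb the canonical splitting by the finite set $\{g^\fke\cdot w(g^\fke)^{-1}\}$ and hence preserve boundedness; this is a correct and worthwhile elaboration, not a different argument.
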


\begin{proof}
By assumption and commutativity of \eqref{eq:big-Levi-diagram}, $\eta^\fke_\omega(W_F)$ is a bounded subgroup of $^L M_\nu$, whose image in $^L Z^0_{M_\nu}$ is a bounded subgroup accordingly. Therefore $\lambda^\fke_\omega$ is a unitary character via the Langlands correspondence for tori.
\end{proof}

\begin{proposition}\label{prop:nu-transfer}
The following diagram commutes.
$$
\xymatrix{\cI(M_\nu) \ar[r]^-{\mathscr J_\nu} \ar[dr]_-{\oplus \tu{LS}^{\fke,\omega}} & \cI(G) \ar[r]^-{\tu{LS}^\fke} & \cS(G^\fke) \\
  & \bigoplus\limits_{\omega\in \Omega_{\fke,\nu}}  \cS(G^\fke_\omega)  \ar[ur]_-{\sum_\omega \mathscr J_{\nu_\omega}}
  }
$$
Let $\phi\in C^\infty_c(M_\nu(F))$. If $f^{(k)}=\mathscr J_\nu(\phi^{(k)})$ then writing $\phi^{(k)}_\omega:=\tu{LS}^{\fke,\omega}(\phi)^{(k)}$, we have
$$
\phi^{(k)}_\omega=\lambda^\fke_\omega(\nu(\varpi))^{-k} \tu{LS}^{\fke,\omega}(\phi^{(k)}),\qquad
  \tu{LS}^\fke(f^{(k)})=\sum_{\omega\in \Omega_{\fke,\nu}} \lambda^\fke_\omega(\nu(\varpi))^{k} \mathscr J_{\nu_\omega}\left(\phi^{(k)}_\omega\right).
$$
\end{proposition}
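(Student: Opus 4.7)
\medskip

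\noindent\textbf{Proof plan.} First I would reduce the two displayed translation identities to the commutativity of the diagram. Since $\nu(\varpi)\in Z(M_\nu)(F)$ corresponds under the canonical embedding $Z(M_\nu)\hra Z(G^\fke_\omega)=Z(M^\fke_\nu)$ to a central element of $G^\fke_\omega(F)$, the central equivariance of Langlands--Shelstad transfer recalled in \S\ref{sub:prelim-endoscopy} (applied to $\tu{LS}^{\fke,\omega}:\cI(M_\nu)\to \cS(G^\fke_\omega)$ with $z=\nu(\varpi)^{-k}$) yields
$$
\tu{LS}^{\fke,\omega}(\phi^{(k)})\;=\;\lambda^\fke_\omega(\nu(\varpi))^{k}\,\bigl(\tu{LS}^{\fke,\omega}(\phi)\bigr)^{(k)}\;=\;\lambda^\fke_\omega(\nu(\varpi))^{k}\,\phi^{(k)}_\omega ,
$$
which rearranges to the first translation identity. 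Substituting this into the commutative diagram applied to $\phi^{(k)}$ in place of $\phi$ immediately produces the second translation identity for $\tu{LS}^\fke(f^{(k)})$.

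Next I would prove the commutativity of the diagram. Both sides lie in $\cS(G^\fke)$, so by the characterization of $\cS(G^\fke)$ via stable orbital integrals on $G^\fke(F)_{G\tu{-sr}}$ (\S\ref{sub:local-Hecke}), it suffices to check equality of stable orbital integrals at every strongly $G$-regular $\gamma\in G^\fke(F)$. Unfold the left-hand side via the defining formula \eqref{eq:orbital-integral-id} for the Langlands--Shelstad transfer, then apply Lemma~\ref{lem:O-tr-nu-ascent}(1) to the $\nu$-ascent $\mathscr J_\nu(\phi)$: the only contributions are from $g\in G(F)_{\tu{sr}}$ that are $G(F)$-conjugate to an acceptable $m\in M_\nu(F)$, giving $\Delta^{G,G^\fke}(\gamma,g)\,\delta_{P_\nu}(m)^{-1/2}O_m(\phi)$. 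Similarly, for each $\omega$, Corollary~\ref{cor:SO-nu-ascent} shows $SO_\gamma(\mathscr J_{\nu_\omega}(\tu{LS}^{\fke,\omega}(\phi)))$ is nonzero only when $\gamma$ is $G^\fke(F)$-conjugate to an acceptable $\gamma^\omega\in G^\fke_\omega(F)=M^\fke_\nu(F)$, in which case it unfolds to $\delta_{P^\fke_\nu}(\gamma^\omega)^{-1/2}\sum_m \Delta^{M_\nu,G^\fke_\omega}(\gamma^\omega,m)\,O_m(\phi)$.

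The identity then reduces to the parabolic descent of transfer factors: for an acceptable matching pair $(\gamma^\omega,m)$ with $m\in M_\nu(F)$, $\gamma^\omega\in G^\fke_\omega(F)$, one needs
$$
\Delta^{G,G^\fke}(\gamma^\omega,m)\,\delta_{P_\nu}(m)^{-1/2}\;=\;\Delta^{M_\nu,G^\fke_\omega}(\gamma^\omega,m)\,\delta_{P^\fke_\nu}(\gamma^\omega)^{-1/2},
$$
where on the left $\gamma^\omega$ is viewed in $G^\fke(F)$ via $G^\fke_\omega\subset G^\fke$. This is the standard parabolic descent formula of Kottwitz--Shelstad (see also the form used in \cite[\S6]{XuCuspidalSupport}); the modulus characters appear because the Jacobian $D_{G/M_\nu}$ on $M_\nu(F)$ cancels against the corresponding Jacobian on the endoscopic side, up to $\delta_{P_\nu}^{1/2}/\delta_{P^\fke_\nu}^{1/2}$, after restricting to acceptable elements. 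Combined with this identity, the sum over $\omega\in \Omega_{\fke,\nu}$ on the right then exactly matches the parametrization of strongly $G$-regular $G^\fke(F)$-conjugacy classes of $\gamma\in G^\fke(F)$ coming from acceptable $G$-conjugacy classes in $M_\nu(F)$ through different Levi subgroups $G^\fke_\omega$ of $G^\fke$, and the two stable orbital integrals match.

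The hard part is the last step: matching the $\Omega_{\fke,\nu}$ double-coset sum with the decomposition of the set of strongly $G$-regular classes in $G^\fke(F)$ according to which Levi $G^\fke_\omega$ they meet (in acceptable position), while tracking the exact transfer-factor normalizations through the descent. The existence of the map $\mathscr J_\nu$ on $\cS$ in \eqref{eq:J-nu-stable} (due to \cite[Prop.~C.4]{XuCuspidalSupport}) already encodes the stable side of this bookkeeping; the compatibility with endoscopic transfer claimed here is the natural refinement, and I would extract it by combining Xu's geometric lemma for stable distributions with the transfer-factor descent above.
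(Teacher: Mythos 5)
Your proof is correct and follows essentially the same route as the paper: the translation identities come from the central equivariance of the Langlands--Shelstad transfer applied to $z=\nu(\varpi)^{-k}$ (the paragraph around \eqref{eq:endoscopic-central-char}), and the commutativity of the diagram is exactly the content of (C.4) in \cite{XuCuspidalSupport}, which the paper simply cites. Your orbital-integral unfolding of the diagram is a reasonable sketch of what lies behind Xu's result, but since you also defer the transfer-factor descent and the $\Omega_{\fke,\nu}$ double-coset bookkeeping to Xu, the two arguments are the same in substance.
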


\begin{remark}\label{rem:transfer=constantterm}
When $\fke$ is given by a Levi subgroup $M$ as in \S\ref{sub:nu-constant-terms} (so that $G^\fke=M$), we have $\tu{LS}^\fke=\mathscr C^G_M$, $\tu{LS}^{\fke,\omega}=\mathscr C^L_{L_\omega}$, and the meaning of $\nu_\omega$ is consistent between \S\ref{sub:nu-constant-terms} and \S\ref{sub:nu-endoscopy}. We leave it to the interested reader to compare the diagram above with that of Lemma \ref{lem:nu-constant-terms}.
\end{remark}

\begin{proof}
The first equality follows from the equivariance property of transfer as discussed in the paragraph containing \eqref{eq:endoscopic-central-char} (applied to $z=\nu(\varpi)^{-k}$, $G=M_\nu$, $G^\fke=G^\fke_\omega$, and $f=\phi$). The commutative diagram comes from (C.4) in \cite{XuCuspidalSupport} (when $\theta$ is trivial). This, together with the first equality, implies the last equality.
\end{proof}

\begin{corollary}\label{cor:nu-endoscopy}
Let $\phi^{(k)}$, $\phi_\omega^{(k)}$, and $f^{(k)}$ be as in Proposition \ref{prop:nu-transfer}. Then
$$
\supp^{SO}_{\fka_{G^\fke}}\left(\mathscr J_{\nu_\omega}(\phi^{(k)}_\omega)\right) =
k \cdot H^{G^\fke}(\nu_\omega(\varpi)) + \tu{pr}_{G^\fke}\left(\supp^{SO}_{\fka_{L_\omega}}(\tu{LS}^{\fke,\omega}(\phi)) \right),\qquad \omega\in  \Omega_{\fke,\nu} ,
$$
where $\tu{pr}_{G^\fke}:\fka_{G^\fke_\omega}\twoheadrightarrow \fka_{G^\fke}$ is the natural projection.
\end{corollary}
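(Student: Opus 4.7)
The plan is to apply the stable analogue of Lemma \ref{lem:nu-twist-support} in the setting where the ambient group is $G^\fke$, the relevant Levi is $G^\fke_\omega$ (which by construction is the centralizer of $\nu_\omega$ in $G^\fke$), and the cocharacter is $\nu_\omega : \G_m \to A_{G^\fke_\omega}$. The key preliminary observation is that, by the first equality of Proposition \ref{prop:nu-transfer}, one has
$$
\phi^{(k)}_\omega = \tu{LS}^{\fke,\omega}(\phi)^{(k)}
$$
as elements of $\cS(G^\fke_\omega)$, where the translation $(k)$ on the right is by $\nu_\omega(\varpi)^k$. This makes sense since $\nu_\omega$ lies in the center of $G^\fke_\omega$, so the central translation map \eqref{eq:central-translate} is available on $\cS(G^\fke_\omega)$.

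Set $\psi := \tu{LS}^{\fke,\omega}(\phi) \in \cS(G^\fke_\omega)$. For sufficiently large $k$, $\psi^{(k)}$ lies in the acceptable subspace by Lemma \ref{lem:nu-twist-support}, so that $\mathscr J_{\nu_\omega}(\psi^{(k)})$ is defined. Applying that lemma with $\star = SO$ to the pair $G^\fke_\omega \subset G^\fke$ and the cocharacter $\nu_\omega$, I would obtain
$$
\supp^{SO}_{\fka_{G^\fke}}\bigl(\mathscr J_{\nu_\omega}(\psi^{(k)})\bigr) = k\cdot H^{G^\fke}(\nu_\omega(\varpi)) + \tu{pr}_{G^\fke}\bigl(\supp^{SO}_{\fka_{G^\fke_\omega}}(\psi)\bigr).
$$
Substituting $\psi^{(k)} = \phi^{(k)}_\omega$ and $\psi = \tu{LS}^{\fke,\omega}(\phi)$ gives precisely the formula claimed by the corollary, once one reads $\fka_{L_\omega}$ in the statement as $\fka_{G^\fke_\omega}$ (which matches the $L_\omega = G^\fke_\omega$ identification in the Levi case of Remark \ref{rem:transfer=constantterm}).

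There is no substantive obstacle here; the content of the corollary is a direct specialization of Lemma \ref{lem:nu-twist-support} to the transferred function $\tu{LS}^{\fke,\omega}(\phi)$. The only mildly nontrivial point is to notice that the scalar discrepancy $\lambda^\fke_\omega(\nu(\varpi))^{-k}$ in the alternative formula $\phi^{(k)}_\omega = \lambda^\fke_\omega(\nu(\varpi))^{-k}\,\tu{LS}^{\fke,\omega}(\phi^{(k)})$ has no bearing on supports, being a nonzero complex number (in fact unitary by Lemma \ref{lem:lambda-is-unitary}); thus one may equally derive the result by applying the stable version of Lemma \ref{lem:nu-twist-support} to $\tu{LS}^{\fke,\omega}(\phi^{(k)})$ directly.
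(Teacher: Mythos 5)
Your proposal is correct and follows essentially the same route as the paper: the paper's proof likewise combines Lemma \ref{lem:nu-twist-support} (in its stable form, for the pair $G^\fke_\omega\subset G^\fke$ and the cocharacter $\nu_\omega$) with the identification of $\phi^{(k)}_\omega$ as the $\nu_\omega(\varpi)^k$-translate of $\tu{LS}^{\fke,\omega}(\phi)$, and with the compatibility $\tu{pr}_{G^\fke}(H^{G^\fke_\omega}(\nu_\omega(\varpi)))=H^{G^\fke}(\nu_\omega(\varpi))$. The only cosmetic remark is that the identity $\phi^{(k)}_\omega=\tu{LS}^{\fke,\omega}(\phi)^{(k)}$ is the \emph{definition} given in Proposition \ref{prop:nu-transfer} rather than its first displayed equality (which instead relates $\phi^{(k)}_\omega$ to $\tu{LS}^{\fke,\omega}(\phi^{(k)})$ up to the unitary scalar $\lambda^\fke_\omega(\nu(\varpi))^{-k}$), a point you in any case address correctly at the end.
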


\begin{proof}
By Lemma \ref{lem:nu-twist-support} and Proposition \ref{prop:nu-transfer}, 
$$
\supp^{SO}_{\fka_{G^\fke}}\left(\mathscr J_{\nu_\omega}(\phi^{(k)}_\omega)\right) 
= \tu{pr}_{G^\fke}\left( \supp^{SO}_{\fka_{G^\fke_\omega}}(\phi^{(k)}_\omega)\right)
= \tu{pr}_{G^\fke}\left( \supp^{SO}_{\fka_{G^\fke_\omega}}(\tu{LS}^{\fke,\omega}(\phi^{(k)})).\right)
$$
$$
=\tu{pr}_{G^\fke}\left( k\cdot  H^{G^\fke_\omega}(\nu_\omega(\varpi))+ \supp^{SO}_{\fka_{G^\fke_\omega}}(\tu{LS}^{\fke,\omega}(\phi))\right).
$$
We finish by observing that $\tu{pr}_{G^\fke}(H^{G^\fke_\omega}(\nu_\omega(\varpi)))=  H^{G^\fke}(\nu_\omega(\varpi))$.
\end{proof}

It is useful to know preservation of acceptability in the setting of Proposition \ref{prop:nu-transfer} as this will allow an inductive argument in the proof of Corollary \ref{cor:main-estimate} below.

\begin{lemma}\label{lem:acc-preservation}
 If $\phi\in \cI_{\tu{acc}}(M_\nu)$ then $\phi_\omega:= \tu{LS}^{\fke,\omega}(\phi)$ is contained in $\cS_{\tu{acc}}(G^\fke_\omega)$.
  
\end{lemma}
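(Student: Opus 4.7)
The proof parallels that of Lemma \ref{lem:acc-preservation-const-term}, with the transfer identity \eqref{eq:orbital-integral-id} replacing the constant-term identity \eqref{eq:orb-int-const-term}. Choose a representative $\phi\in \cH_{\tu{acc}}(M_\nu)$ of the given class and let $\phi_\omega\in \cH(G^\fke_\omega)$ represent $\tu{LS}^{\fke,\omega}(\phi)\in \cS(G^\fke_\omega)$. Since the $\nu_\omega$-acceptable locus of $G^\fke_\omega(F)$ is open and conjugation-invariant, multiplying $\phi_\omega$ by its characteristic function produces a function in $\cH_{\tu{acc}}(G^\fke_\omega)$ with identical stable orbital integrals on acceptable strongly regular elements. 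Because a stable distribution is determined by its values on strongly regular elements, it therefore suffices to verify that $SO^{G^\fke_\omega}_{\gamma^\fke}(\phi_\omega)=0$ for every strongly $M_\nu$-regular $\gamma^\fke\in G^\fke_\omega(F)$ that is \emph{not} $\nu_\omega$-acceptable.

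For such $\gamma^\fke$, the transfer identity \eqref{eq:orbital-integral-id} applied to $M_\nu$ and the endoscopic datum $\fke_\omega$ expresses $SO^{G^\fke_\omega}_{\gamma^\fke}(\phi_\omega)$ as a finite sum $\sum_\gamma \Delta(\gamma^\fke,\gamma)\, O^{M_\nu}_\gamma(\phi)$ over strongly regular $M_\nu(F)$-conjugacy classes matching $\gamma^\fke$. If this sum is nonzero, some summand $O^{M_\nu}_\gamma(\phi)$ is nonzero, which forces $\gamma$ to be $\nu$-acceptable (since $\phi$ is supported on the conjugation-invariant $\nu$-acceptable locus). The problem therefore reduces to the following root-theoretic claim: if $\gamma\in M_\nu(F)$ and $\gamma^\fke\in G^\fke_\omega(F)$ are strongly regular semisimple elements matching under $\fke_\omega$ and $\gamma$ is $\nu$-acceptable, then $\gamma^\fke$ is $\nu_\omega$-acceptable.

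To prove the claim, the matching supplies an admissible $\ol F$-isomorphism $T:=Z_{M_\nu}(\gamma)\simeq Z_{G^\fke_\omega}(\gamma^\fke)=:T^\fke$ compatible with the embedding $\hat G^\fke\subset \hat G$; this realizes $\Phi(T^\fke, G^\fke)$ as a subset of $\Phi(T, G)$ with $\alpha^\fke(\gamma^\fke)=\alpha(\gamma)$ in $\ol F^\times$ for matching roots. On the cocharacter side, $\nu_\omega$ was constructed in \S\ref{sub:nu-endoscopy} as the composition $\G_m\stackrel{\nu}{\ra} A_{M_\nu}\hra A_{G^\fke_\omega}\subset T^\fke$, where the embedding $A_{M_\nu}\hra A_{G^\fke_\omega}$ is induced by the dual Levi inclusions $\hat M^\fke_\nu\subset \hat G^\fke$ and $\hat M_\nu\subset \hat G$ as encoded by the left square of \eqref{eq:big-Levi-diagram}. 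Tracing through this construction shows that $\nu$ and $\nu_\omega$ correspond under $X_*(T)\simeq X_*(T^\fke)$, so that $\langle \alpha^\fke,\nu_\omega\rangle=\langle \alpha,\nu\rangle$ for matching roots. Consequently the inclusion $\Phi(T^\fke, G^\fke)\hra \Phi(T, G)$ restricts to an inclusion $\Phi(T^\fke, N^{\fke,\tu{op}}_{\nu_\omega})\hra \Phi(T, N^{\tu{op}}_\nu)$, and for every $\alpha^\fke$ in the former set we obtain $|\alpha^\fke(\gamma^\fke)|=|\alpha(\gamma)|<1$ from $\nu$-acceptability of $\gamma$. This is precisely the $\nu_\omega$-acceptability of $\gamma^\fke$. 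The main obstacle is the cocharacter compatibility $\nu\leftrightarrow \nu_\omega$ under the identification of tori from the matching: while it is a purely root-theoretic bookkeeping step, it requires carefully tracking how the embedding $A_{M_\nu}\hra A_{G^\fke_\omega}$ built into the construction of $\fke_\omega$ interacts with the admissible embedding of tori arising from the matching of conjugacy classes.
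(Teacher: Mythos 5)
Your proof is correct and follows essentially the same route as the paper: reduce to strongly regular elements, apply the transfer identity to find a matching $\gamma\in M_\nu(F)$ with nonzero orbital integral (hence $\nu$-acceptable), and then transfer acceptability across the admissible torus isomorphism $i:T\isom T_\omega$ using $\langle\alpha,\nu_\omega\rangle=\langle i^*\alpha,\nu\rangle$ and $\alpha(\gamma_\omega)=(i^*\alpha)(\gamma)$. The one step you flag as delicate---that $i$ carries $\nu$ to $\nu_\omega$---admits a cleaner justification than the diagram-chase you sketch: since $\nu$ is \emph{central} in $M_\nu$, it factors through $A_{M_\nu}\subset T$, and any admissible torus isomorphism arising from the endoscopic matching restricts canonically on the central subtori to the embedding $A_{M_\nu}\hra A_{G^\fke_\omega}$ used to define $\nu_\omega$; so $i(\nu)=\nu_\omega$ is automatic. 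That observation removes the ``careful tracking'' caveat at the end of your argument.
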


\begin{proof}
Suppose that $SO_{\gamma_\omega}(\phi_\omega)\neq 0$ for some strongly $M_{\nu}$-regular element $\gamma_\omega\in G^\fke_\omega(F)$. We need to check that $\gamma_\omega$ is $\nu_\omega$-acceptable. (It is enough to consider strongly $M_{\nu}$-regular elements thanks to Lemma \ref{lem:support-open-dense-enough}.)

From the orbital integral identity for $SO_{\gamma_\omega}(\phi_\omega)$ (cf.~\eqref{eq:orbital-integral-id}), we see the existence of $\gamma\in M_{\nu}(F)_{\tu{sr}}$ whose stable conjugacy class matches that of $\gamma_\omega$ such that $O_\gamma(\phi)\neq 0$. The latter implies that $\gamma$ is $\nu$-acceptable. Write $T$, $T_\omega$ for the centralizers of $\gamma$, $\gamma_\omega$ in $M_\nu$, $G^\fke_\omega$, respectively. The matching of conjugacy classes tells us that there is a canonical $F$-isomorphism $i:T\simeq T_\omega$ which carries $\gamma$ to $\gamma_\omega$, cf.~\cite[\S3.1]{KottwitzEllipticSingular}. (A priori $i$ sends the stable conjugacy class of $\gamma$ to that of $\gamma_\omega$ and is canonical up to a Weyl group orbit. But $i$ is determined if required to send $\gamma$ to $\gamma_\omega$.) Since $\nu$ is central in $M_\nu$, the map $i$ necessarily carries $\nu$ to $\nu_\omega$. Regarding $T$ and $T_\omega$ as maximal tori of $G$ and $G^\fke$, respectively, we have an injection $i^*:R(G^\fke_\omega,T_\omega)\hra R(G,T)$ between the sets of roots induced by $i$ (again \cite[\S3.1]{KottwitzEllipticSingular}) such that
\begin{equation}\label{eq:i-star}
\langle \alpha,\nu_\omega\rangle = \langle i^*(\alpha),\nu\rangle,\qquad \alpha\in R(G^\fke_\omega,T_\omega).
\end{equation}

We are ready to show that $\gamma_\omega$ is $\nu_\omega$-acceptable. Let $\alpha\in R(G^\fke_\omega,T_\omega)$ such that $\langle \alpha,\nu_\omega\rangle>0$. We need to verify that $|\alpha(\gamma_\omega)|<1$, cf.~Definition \ref{def:acceptable}. But $\langle i^*(\alpha),\nu\rangle>0$ by \eqref{eq:i-star}, so the $\nu$-acceptability of $\gamma$ implies that $|i^*(\alpha)(\gamma)|<1$. Since $i^*(\alpha)(\gamma)=\alpha(\gamma_\omega)$, the proof is finished. 
\end{proof}

\subsection{$C$-regular functions and constant terms}\label{sub:C-regular}
Assume that $G$ is \emph{split} over $F$ and fix a reductive model over $\cO_F$, still denoted by $G$. Let $T$ be a split maximal torus of $G$ over $\cO_F$.
  Let $C\in \R_{>0}$.

\begin{definition}\label{def:C-reg-cochar}
A cocharacter $\mu:\G_m\ra T$ is $C$-\textbf{regular} if the following two conditions hold.
\begin{enumerate}
\item $|\langle \alpha, \mu \rangle| >C$ for every $\alpha\in \Phi(T,G)$,
\item $|\langle \alpha|_{A_M}, \tu{pr}_M(\omega\mu) \rangle| >C$ for every proper Levi subgroup $M$ of $G$ containing $T$, every $\omega\in \Omega^G$, and every $\alpha\in \Phi(T,G)\backslash \Phi(T,M)$.
\end{enumerate}
Write $X_*(T)_{C\tu{-reg}}$ for the set of $C$-regular cocharacters.
\end{definition}

\begin{lemma}\label{lem:C-reg-cochar} The following are true.
\begin{enumerate}
\item The subset $X_*(T)_{C\tu{-reg}}$ of $X_*(T)$ is nonempty, and stable under both $\Z$-multiples and the $\Omega^G$-action.
\item Let $\mu,\mu_0\in X_*(T)$. If $\mu$ is $C$-regular, then there exists $k_0\in \Z_{>0}$ such that $\mu_0+k\mu$ is $C$-regular for all $k\ge k_0$.
\end{enumerate}
\end{lemma}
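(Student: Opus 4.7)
The plan is to reinterpret both conditions in Definition~\ref{def:C-reg-cochar} as strict lower bounds on a finite collection of $\Q$-linear functionals on $X_*(T)_\R$ and exploit this linear structure throughout.

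First I would handle the invariance assertions in (1). For a nonzero integer $n$, the identity $\langle\alpha,n\mu\rangle=n\langle\alpha,\mu\rangle$ and the $\R$-linearity of $\tu{pr}_M$ together yield $|\langle\alpha,n\mu\rangle|\geq |\langle\alpha,\mu\rangle|$ and analogously for the pairing in (2); thus $n\mu$ remains $C$-regular whenever $\mu$ is, provided $|n|\geq 1$ (and $0$ is obviously never $C$-regular since $C>0$). For $\Omega^G$-equivariance, the set $\Phi(T,G)$ is $\Omega^G$-stable, so (1) is visibly preserved under $\mu\mapsto\sigma\mu$; in (2), as $\omega$ runs over $\Omega^G$, so does $\omega\sigma$, so the quantifier over $\omega$ absorbs the right-translation by $\sigma$.

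To prove nonemptiness I plan to show that (1) and (2) collectively forbid $\mu$ from lying on finitely many rational hyperplanes in $X_*(T)_\R$. The only nontrivial verification is for (2): for each triple $(M,\omega,\alpha)$ with $\alpha\notin\Phi(T,M)$, I would check that the functional $\mu\mapsto\langle\alpha|_{A_M},\tu{pr}_M(\omega\mu)\rangle$ is not identically zero. By the Weyl-average formula \eqref{eq:pr=Weyl-orbit}, this functional equals $|\Omega^M|^{-1}\langle\beta,\mu\rangle$ with $\beta:=\sum_{w\in\Omega^M}\omega^{-1}w^{-1}\alpha\in X^*(T)_\R$, and $\beta$ vanishes iff $\alpha|_{A_M}=0$, iff $\alpha\in\Phi(T,M)$ (since in a split reductive group the roots of a standard Levi are precisely the roots of $G$ that are trivial on the corresponding central torus). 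Since $X_*(T)$ is Zariski dense in $X_*(T)_\R$, the complement of the finite union of these hyperplanes contains a lattice point $\mu_1$. Replacing $\mu_1$ by $N\mu_1$ for a sufficiently large $N\in\Z_{>0}$ simultaneously scales every relevant $|f(\mu_1)|$ past $C$, producing a $C$-regular cocharacter.

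For (2), I would fix $\mu$ $C$-regular and $\mu_0\in X_*(T)$ arbitrary, and apply linearity to each of the finitely many functionals $f$ appearing in Definition~\ref{def:C-reg-cochar}: $f(\mu_0+k\mu)=f(\mu_0)+k f(\mu)$, and since $|f(\mu)|>C>0$ we get $|f(\mu_0+k\mu)|\to\infty$ as $k\to\infty$. Choose $k_f\in\Z_{>0}$ with $|f(\mu_0+k\mu)|>C$ for every $k\geq k_f$, and set $k_0:=\max_f k_f$; this maximum is well-defined because the set of functionals $f$ is finite. I do not anticipate a serious obstacle: the only mildly delicate point is the nonvanishing of the functional used in (2), which reduces cleanly to the root-system description of Levi subgroups recalled above.
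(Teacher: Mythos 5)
Your proof is correct, but for the nonemptiness part of (1) it takes a genuinely different and more elementary route than the paper. You reduce the problem to the observation that each of the finitely many $\R$-linear functionals $f$ appearing in Definition~\ref{def:C-reg-cochar} is nonzero, and then conclude directly: a finite union of proper rational hyperplanes cannot contain all of $X_*(T)$ (a nonzero product of the defining linear forms cannot vanish on the whole lattice), so there is $\mu_1\in X_*(T)$ with $f(\mu_1)\neq 0$ for every $f$, and scaling $\mu_1$ by $N > C/\min_f|f(\mu_1)|$ produces a $C$-regular cocharacter. The paper instead shows that the complement of $X_*(T)_{\R,C\tu{-reg}}$ has density zero in $X_*(T)_\R$ (each failed condition cuts out a strip of bounded width), deduces that $X_*(T)_{\R,C\tu{-reg}}$ is nonempty and open, picks an open ball inside it, and scales that ball until it captures a lattice point. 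Both arguments hinge on the same nontrivial input---that the functional $\mu\mapsto\langle\alpha|_{A_M},\tu{pr}_M(\omega\mu)\rangle$ is not identically zero when $\alpha\notin\Phi(T,M)$, which you justify via the Weyl-average description of $\tu{pr}_M$ and the paper via surjectivity of $\tu{pr}_M$ together with $\alpha|_{A_M}\neq 0$---so the difference is only in the surrounding scaffolding; your version avoids the measure-theoretic density argument and is a bit shorter. The invariance checks in (1) and the proof of (2) coincide in substance with the paper's (reindex $\omega\mapsto\omega\sigma$ to absorb the Weyl action; use linearity of $f$ in $\mu$ plus $|f(\mu)|>C$ to force $|f(\mu_0+k\mu)|>C$ for large $k$).
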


\begin{proof}
(1) Let $X_*(T)_{\R,C\tu{-reg}}$ denote the subset of $X_*(T)_{\R}$ defined by the same inequalities as in Definition \ref{def:C-reg-cochar}. We choose an inner product on $X_*(T)_{\R}$ invariant under the Weyl group action.
Clearly $X_*(T)_{C\tu{-reg}}$ and $X_*(T)_{\R,C\tu{-reg}}$ are stable under $\Z$-multiples and the Weyl group action, and the latter is open. It suffices to verify the claim that $X_*(T)_{\R,C\tu{-reg}}$ is nonempty. Indeed, if the claim is true, we choose an open ball $U \subset X_*(T)_{\R,C\tu{-reg}}$. For $k\in \Z_{>0}$ large enough, $k\cdot U$ contains a point of $X_*(T)$, which then also lies in $X_*(T)_{C\tu{-reg}}$.

Let us prove the claim. Identify $X_*(T)_{\R}$ with the standard inner product space $\R^n$ via a linear isomorphism. Say that a measurable subset $A\subset \R^n$ has density 0 if $\tu{vol}(A\cap B(0,r))/\tu{vol}(B(0,r))\ra 0$ as $r\ra\infty$, where $B(0,r)$ denotes the ball of radius $r$ centered at $0$. We will show that the complement of $X_*(T)_{\R,C\tu{-reg}}$ in $X_*(T)_{\R}$ is a density 0 set. Since a finite union of density 0 sets still has density 0, it is enough to check that each of the conditions $|\langle \alpha, \mu \rangle|\le C$ and $|\langle \alpha|_{A_M}, \tu{pr}_M(\omega\mu) \rangle|\le C$ defines a density 0 subset in $X_*(T)_{\R}$. Either condition defines a subset of $\R^n$ of the form 
\begin{equation}\label{eq:strip}
| a_1 x_1 + \cdots + a_n x_n | \le C
\end{equation}
 in the standard coordinates $(x_1,...,x_n)$, with $a_1,...,a_n\in \R$.
 Moreover, not all $a_i$'s are zero, 
since neither $\langle \alpha, \mu \rangle$ nor $\langle \alpha|_{A_M}, \tu{pr}_M(\omega\mu) \rangle$ is identically zero on all $\mu\in X_*(T)_{\R}$.
(In the case of $\langle \alpha|_{A_M}, \tu{pr}_M(\omega\mu) \rangle$, the reason is that $\tu{pr}_M: X_*(T)_{\R}\ra X_*(A_M)$ is surjective, and that $\alpha|_{A_M}\in X^*(A_M)$ is nontrivial since $\alpha\notin \Phi(T,M)$.) Now it is elementary to see that \eqref{eq:strip} determines a density 0 subset. This proves the claim.

(2) Since the pairings in Definition \ref{def:C-reg-cochar} are linear in $\mu$, it is enough to choose $k_0$ such that $(k_0-1)C$ is greater than $|\langle \alpha, \mu \rangle|$ and $|\langle \alpha|_{A_M}, \tu{pr}_M(\omega\mu) \rangle|$ for all $\alpha,M,\omega$ as in that definition.
\end{proof}
 
Define $T(F)_{C\tu{-reg}}$ to be the union of $\mu(\varpi_F)T(\cO_F^\times)$ as $\mu$ runs over the set of $C$-regular cocharacters. For each $M\in \cL(G)$ containing $T$, set
\begin{equation}\label{eq:aM-Creg}
\fka_{M,C\tu{-reg}}:=\{a\in \fka_M: | \langle \alpha, a \rangle | > C |\log |\varpi|| , ~\forall\alpha\in \Phi^+(T,G)\backslash \Phi^+(T,M) \}.
\end{equation}
Here we use the pairing $X^*(T)_{\R}\times X_*(T)_{\R}\ra \R$ to compute $\langle \alpha, a \rangle$, viewing $a$ in $X_*(T)_{\R}$ via $\fka_M=X_*(A_M)_{\R}\subset X_*(T)_{\R}$. Recall that $X_*(A_M)_{\R} \simeq \Hom(X^*(M)_{\R},\R)$ via $a\mapsto (\chi\mapsto \langle \chi,a\rangle)$. Analogously $X_*(A_T)_{\R} \simeq \Hom(X^*(T)_{\R},\R)$. Write $\tu{pr}_M:X_*(A_T)_{\R}\twoheadrightarrow X_*(A_M)_{\R}$ for the map induced by the restriction $X^*_F(M)\ra X^*_F(T)$. (This is the analogue of $\tu{pr}_G$ in  \S\ref{sub:nu-Jacquet}.)
 
\begin{lemma}\label{lem:reg-maps-to-reg}
Let $M\subsetneq G$ be a Levi subgroup containing $T$ over $F$. Then $H^M(T(F)_{C\tu{-reg}})\subset \fka_{M,C\tu{-reg}}$.
\end{lemma}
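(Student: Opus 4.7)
The plan is to compute $H^M(t)$ explicitly for $t \in T(F)_{C\tu{-reg}}$ and to read off the required inequality from condition~(2) of Definition~\ref{def:C-reg-cochar} applied with $\omega = 1$. Write $t = \mu(\varpi)u$ with $\mu \in X_*(T)_{C\tu{-reg}}$ and $u \in T(\cO_F^\times)$, and take any $\chi \in X^*_F(M)_\Q$. Since $\chi(u) \in \cO_F^\times$, I expect
$$
\langle \chi, H^M(t)\rangle = \log|\chi(\mu(\varpi))| = \langle \chi|_T,\mu\rangle \cdot \log|\varpi|,
$$
where $\chi|_T \in X^*(T)_\Q$ is the restriction. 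Because $\chi|_T$ is automatically $\Omega^M$-invariant (it comes from a character of $M$), the last pairing coincides with $\langle \chi|_T, \tu{pr}_M(\mu)\rangle$, with $\tu{pr}_M(\mu)$ the $\Omega^M$-average (cf.~\eqref{eq:pr=Weyl-orbit}) lying in $X_*(A_M)_\Q \subset X_*(T)_\Q$. This identifies $H^M(t) = (\log|\varpi|)\cdot \tu{pr}_M(\mu)$ as an element of $\fka_M$.

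Next I would pair against $\alpha \in \Phi^+(T,G) \setminus \Phi^+(T,M)$ using the convention spelled out just after \eqref{eq:aM-Creg}, namely viewing $\tu{pr}_M(\mu)$ as a vector in $X_*(T)_\R$ and pairing with $\alpha$ there. Since $\tu{pr}_M(\mu) \in X_*(A_M)_\R$, this pairing equals $\langle \alpha|_{A_M}, \tu{pr}_M(\mu)\rangle$, giving
$$
|\langle \alpha, H^M(t)\rangle| \;=\; |\log|\varpi|| \cdot |\langle \alpha|_{A_M}, \tu{pr}_M(\mu)\rangle|.
$$
Specializing condition~(2) in Definition~\ref{def:C-reg-cochar} to the given $M$ and $\omega = 1$ yields $|\langle \alpha|_{A_M}, \tu{pr}_M(\mu)\rangle| > C$, whence the right-hand side exceeds $C|\log|\varpi||$, which is precisely the defining inequality of $\fka_{M,C\tu{-reg}}$.

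There is no genuine obstacle; the only point requiring care is juggling the compatible identifications $X^*_F(M)_\Q \hookrightarrow X^*_F(T)_\Q$ and $X_*(A_M)_\Q \hookrightarrow X_*(T)_\Q$ together with the Weyl-average description of $\tu{pr}_M$, in order to identify $H^M(t)$ with $(\log|\varpi|)\tu{pr}_M(\mu)$ inside $\fka_M$. Once that bookkeeping is set, the lemma falls out by substitution, and neither condition~(1) nor the generality of $\omega \in \Omega^G$ in condition~(2) is needed here — presumably those ingredients will be used for downstream applications involving $M(F)$-conjugates of torus elements or the transition to constant terms.
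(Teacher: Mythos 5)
Your proof is correct and follows essentially the same computation as the paper's: identify $H^M(t)$ with $\log|\varpi|\cdot\tu{pr}_M(\mu)$ inside $\fka_M\subset X_*(T)_\R$ and then invoke Definition~\ref{def:C-reg-cochar}(2) with $\omega=1$. The paper organizes the bookkeeping slightly differently (choosing an auxiliary $\chi\in X^*_F(M)_\R$ with $\chi|_{A_M}=\alpha|_{A_M}$ via the isomorphism $X^*_F(M)_\R\isom X^*(A_M)_\R$, rather than going through the $\Omega^M$-average description of $\tu{pr}_M$), but the substance is identical.
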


\begin{proof}
Consider $t:=\mu(\varpi)$ with $\mu\in X_*(T)_{C\tu{-reg}}$. Then $H^M(t)\in \Hom(X^*(M)_{\R},\R)$ is identified with the unique element $a\in X_*(A_M)_{\R}$ such that $$\langle \chi,a\rangle = \log |\chi(\mu(\varpi))| = \langle \chi,\mu\rangle \log |\varpi|,\qquad \chi\in X^*(M)_{\R}.$$ Let $\alpha\in \Phi(T,G)\backslash \Phi(T,M)$. Since the composite of the restriction maps $X^*_F(M)_{\R}\ra X^*_F(T)_{\R}\ra X^*(A_M)_{\R}$ is an isomorphism, we can find $\chi\in X^*_F(M)_{\R}$ such that $\chi|_{A_M}=\alpha|_{A_M}$. Hence 
$$
\langle \alpha,a\rangle = \langle \alpha|_{A_M},a\rangle = \langle \chi|_{A_M},a\rangle = \langle \chi,a\rangle=  \langle \chi,\mu\rangle \log |\varpi| = \langle \chi|_{A_T}, \mu\rangle  \log |\varpi| 
$$
$$
 = \langle \chi, \tu{pr}_M(\mu)\rangle  \log |\varpi|
  = \langle \alpha|_{A_M},\tu{pr}_M(\mu)\rangle  \log |\varpi|.
$$
Since $\mu$ is $C$-regular, $|\langle \alpha|_{A_M},\tu{pr}_M(\mu)\rangle|>C$. Hence $| \langle \alpha, a \rangle | > C |\log |\varpi||$.
\end{proof}

The following definition is motivated by \cite[p.195]{FlickerKazhdan}.

\begin{definition}\label{def:C-regular}  
Let $C>0$. We say $f\in \cH(G)$ is \textbf{$C$-regular} if $\supp(f)$ is contained in the $G(F)$-conjugacy orbit of $T(F)_{C\tu{-reg}}$.\footnote{In practice it seems enough to impose the condition on $\supp^O(f)$. However when producing examples of $C$-regular $f$, often we have this condition satisfied.} Write $\cH(G)_{C\tu{-reg}}$ for the space of $C$-regular functions.
\end{definition}
 
\begin{lemma}\label{lem:represent-by-C-regular}
Let $f'\in \cH(G)$. Assume that every $g\in G(F)_{\tu{reg}}$ such that $O_g(f')\neq 0$ (resp.~$SO_g(f')\neq 0$) is $G(F)$-conjugate (resp.~stably conjugate) to an element of $T(F)_{C\tu{-reg}}$. Then
\begin{enumerate}
\item $O_g(f')=0$ (resp.~$SO_g(f')=0$) if $g\in G(F)_{\tu{ss}}$ is not regular, and
\item there exists  $f\in \cH(G)_{C\tu{-reg}}$ such that $f$ and $f'$ have the same image in $\cI(G)$ (resp.~$\cS(G)$).
\end{enumerate}
\end{lemma}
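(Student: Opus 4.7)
The plan is to handle (2) by truncating $f'$ via the characteristic function of a suitable open invariant set, and to deduce (1) from a Shalika germ expansion combined with the same topological analysis.

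For (2), I would set $U := \{gtg^{-1} : g\in G(F),\ t\in T(F)_{C\tu{-reg}}\}$, the $G(F)$-conjugation orbit of $T(F)_{C\tu{-reg}}$. The condition $|\langle\alpha,\mu\rangle|>C>0$ forces $\alpha(\mu(\varpi)t_0)$ to have nonzero valuation for every root $\alpha$ and every $t_0\in T(\cO_F^\times)$, so $T(F)_{C\tu{-reg}}$ consists of strongly regular elements; then $U$ is open in $G(F)$ because the conjugation map $G\times T_{\tu{reg}}\to G$ is submersive on the strongly regular locus. I would define $f := f'\cdot\mathbf{1}_U$, which is locally constant and compactly supported with support in $U$, hence $f\in \cH(G)_{C\tu{-reg}}$. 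For $g\in U\cap G(F)_{\tu{sr}}$ the $G(F)$-conjugates of $g$ remain in $U$, so $O_g(f)=O_g(f')$; for $g\in G(F)_{\tu{sr}}\setminus U$ both sides vanish, the right side by hypothesis. Thus $f$ and $f'$ have identical orbital integrals on $G(F)_{\tu{sr}}$, which by \cite[Thm.~0]{KazhdanCuspidal} yields equality in $\cI(G)$. For the stable version, Hilbert 90 gives $H^1(F,T)=1$ for the split torus $T$, which forces the stable conjugacy class of every strongly regular $t\in T(F)$ to coincide with its $G(F)$-orbit; consequently $U$ is also the stable saturation of $T(F)_{C\tu{-reg}}$, and the same $f$ satisfies $SO_g(f)=SO_g(f')$ on $G(F)_{\tu{sr}}$ by the analogous case split.

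For (1), fix non-regular $\gamma\in G(F)_{\tu{ss}}$ and consider $\gamma\exp(X)$ for $X\in \Lie I_\gamma(F)$ regular in $I_\gamma$ and sufficiently small. Let $T'\subset I_\gamma$ be the maximal $F$-torus with $X\in\Lie T'(F)$. If $T'$ is not $G(F)$-conjugate to $T$, then no element of $T'(F)$ is $G(F)$-conjugate to $T(F)$, and in particular $\gamma\exp(X)$ is not $G(F)$-conjugate to $T(F)_{C\tu{-reg}}$. Otherwise, after replacing $\gamma$ with a $G(F)$-conjugate (which preserves $O_\gamma(f')$ and $SO_\gamma(f')$), I may assume $T'=T$ and $\gamma=\mu(\varpi)t_0\in T(F)$ with $t_0\in T(\cO_F^\times)$. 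The non-regularity of $\gamma$ supplies a root $\alpha$ with $\alpha(\gamma)=1$; comparing valuations yields $\langle\alpha,\mu\rangle=0$, so $\mu\notin X_*(T)_{C\tu{-reg}}$, and by the Weyl-stability in Lemma~\ref{lem:C-reg-cochar}(1) no $\Omega^G$-translate of $\mu$ is $C$-regular either. Since $\gamma\exp(X)\in\mu(\varpi)T(\cO_F^\times)$, it is not $G(F)$-conjugate to $T(F)_{C\tu{-reg}}$, and the hypothesis gives $O_{\gamma\exp(X)}(f')=0$ for all such $X$ (and likewise $SO_{\gamma\exp(X)}(f')=0$ in the stable case, using the same $H^1(F,T)=1$ reduction).

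To extract $O_\gamma(f')=0$ from this vanishing I would invoke the Shalika germ expansion: for small regular $X\in \Lie I_\gamma(F)$ one has $O_{\gamma\exp(X)}(f')=\sum_u \Gamma^{I_\gamma}_u(X)\cdot c_u(f';\gamma)$, summed over unipotent conjugacy classes $u$ in $I_\gamma(F)$, with $\Gamma_1\equiv 1$ and $c_1(f';\gamma)=O_\gamma(f')$ up to the usual normalization. Linear independence of Shalika germs on small regular elements of $\Lie I_\gamma(F)$ forces every $c_u(f';\gamma)$ to vanish, in particular $O_\gamma(f')=0$; the stable case uses stable Shalika germs in the sense of Langlands--Shelstad in exactly the same way. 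The main obstacle is ensuring the vanishing on the \emph{full} open dense set of regular $X\in\Lie I_\gamma(F)$, rather than only on a single maximal torus; this is where it is essential that $X_*(T)_{C\tu{-reg}}$ be Weyl-stable and that the case analysis above exhausts all choices of maximal torus $T'\subset I_\gamma$.
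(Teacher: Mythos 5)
Your proposal follows the same strategy as the paper's proof in both parts: for (2), truncate $f'$ by the indicator of the $G(F)$-saturation $U$ of $T(F)_{C\tu{-reg}}$; for (1), show that regular orbital integrals vanish in a punctured neighborhood of a non-regular $\gamma$ and then invoke the Shalika germ expansion (the paper cites Rogawski's Lem.~2.6 for this step, which is exactly what you have spelled out). Your case analysis in (1), split according to whether the Cartan $T'$ of $\gamma\exp(X)$ is $G(F)$-conjugate to $T$, is a legitimate and more explicit version of the paper's terse ``no regular element near $g$ intersects $U$'' argument, and the reduction via $H^1(F,T)=1$ for the stable variant is correct.

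There is, however, a real gap in your treatment of (2). You establish that $U$ is open (via submersivity of conjugation on the strongly regular locus), and then assert that $f:=f'\cdot\mathbf{1}_U$ is locally constant. Openness of $U$ alone does not give this: $\mathbf{1}_U$ fails to be locally constant at boundary points of $U$, so you also need $\supp(f')\cap\partial U=\emptyset$ --- in practice, that $U$ is \emph{closed} in $G(F)$. The paper's proof states explicitly that $U$ is open \emph{and} closed; you only prove the first half. The closedness can be patched with ingredients you already mention but do not connect: you observe that for $t\in T(F)_{C\tu{-reg}}$ and any root $\alpha$ one has $\val(\alpha(t))\neq 0$, which forces $|1-\alpha(t)|\ge 1$ and hence a uniform lower bound on the Weyl discriminant over all of $U$; this guarantees that a limit of points of $U$ is again strongly regular. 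Then local constancy of the $G(F)$-conjugacy class of the Cartan (open $G(F)$-orbits on the variety of maximal tori) forces the limit's Cartan to be conjugate to $T$, and finally clopen-ness of $T(F)_{C\tu{-reg}}$ in $T(F)$ together with its $\Omega^G$-stability puts the limit back in $U$. Without some version of this argument, the claim $f\in\cH(G)$ is unsupported.
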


\begin{proof}
(1) If $g\in G(F)_{\tu{ss}}$ is not regular then no regular element in a sufficiently small neighborhood of $g$ intersects the $G(F)$-orbit of $T(F)_{C\tu{-reg}}$. (Since every $t\in T(F)_{C\tu{-reg}}$ satisfies $|1-\alpha(t)|=1$ for $\alpha\in \Phi(T,G)$, no $\alpha(t)$ approaches $1$.) Thus $f'$ has vanishing regular orbital integrals in a neighborhood of $g$. This implies that $O_g(f')=0$, by an argument as in the proof of \cite[Lem.~2.6]{Rog83} via the Shalika germ expansion around $g$. The case of stable orbital integrals is analogous.

(2) The point is that the $G(F)$-conjugacy orbit of $T(F)_{C\tu{-reg}}$ is open and closed in $G(F)$. (Since $T(F)_{C\tu{-reg}}$ is open and closed in $T(F)$, and the map $G(F)/T(F)\times T(F)\ra G(F)$ induced by $(g,t)\mapsto gtg^{-1}$ is a local isomorphism.)  Thus the product of $f'$ and the characteristic function on the latter orbit is smooth and compactly supported, and thus belongs to $\cH(G)_{C\tu{-reg}}$. Denoting the product by $f$, we see that $f$ and $f'$ have equal orbital integrals (resp.~stable orbital integrals) on regular semisimple elements. Therefore have the same image in $\cI(G)$ (resp.~$\cS(G)$).
\end{proof}

\begin{corollary}\label{cor:ascent-from-T-to-G-at-q}
Fix a $C$-regular cocharacter $\mu:\G_m\ra T$. Let  $\phi\in \cH(T)$.  Then there exists an integer $k_0=k_0(\phi)$ such that for every integer $k\ge k_0$, the $\mu$-ascent of $\phi^{(k)}$ is represented by a $C$-regular function on $G(F)$.
\end{corollary}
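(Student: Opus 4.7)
The plan is to apply Lemma \ref{lem:represent-by-C-regular}(2) to the $\mu$-ascent $f^{(k)}$ of $\phi^{(k)}$, by verifying that the regular orbital integrals of $f^{(k)}$ are supported on the $G(F)$-conjugacy orbit of $T(F)_{C\tu{-reg}}$. First I would observe that condition (1) of Definition \ref{def:C-reg-cochar} guarantees $\langle\alpha,\mu\rangle\neq 0$ for every $\alpha\in\Phi(T,G)$, so $\mu$ is regular as a cocharacter and $M_\mu=T$. Thus the $\mu$-ascent makes sense as a map $\cH_{\tu{acc}}(T)\to\cH(G)$, and by Lemma \ref{lem:nu-twist-support} we may choose $k_1\in\Z$ so that $\phi^{(k)}\in\cH_{\tu{acc}}(T)$ for all $k\ge k_1$, ensuring $f^{(k)}$ is defined.

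The second step is to locate the support of $\phi^{(k)}$ on the torus. Since $\supp(\phi)$ is compact in $T(F)$ and the quotient $T(F)/T(\cO_F)$ is identified with the discrete lattice $X_*(T)$ via $\mu_0\mapsto\mu_0(\varpi)$, there is a finite set $\{\mu_1,\ldots,\mu_r\}\subset X_*(T)$ such that
\[
\supp(\phi)\subset \bigcup_{i=1}^r \mu_i(\varpi)T(\cO_F).
\]
By Lemma \ref{lem:C-reg-cochar}(2) applied to each pair $(\mu_i,\mu)$, there exists $k_{0,i}\in\Z_{>0}$ with $\mu_i+k\mu\in X_*(T)_{C\tu{-reg}}$ for all $k\ge k_{0,i}$. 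Set $k_0:=\max(k_1,k_{0,1},\ldots,k_{0,r})$. For $k\ge k_0$,
\[
\supp(\phi^{(k)})=\mu(\varpi)^k\cdot\supp(\phi)\subset \bigcup_{i=1}^r (\mu_i+k\mu)(\varpi)T(\cO_F)\subset T(F)_{C\tu{-reg}}.
\]

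Finally, for $k\ge k_0$ and $g\in G(F)_{\tu{reg}}$ with $O^G_g(f^{(k)})\neq 0$, Lemma \ref{lem:O-tr-nu-ascent}(1) provides an acceptable $t\in T(F)$ that is $G(F)$-conjugate to $g$ and satisfies $\phi^{(k)}(t)=O^T_t(\phi^{(k)})\neq 0$; by the previous step such a $t$ necessarily lies in $T(F)_{C\tu{-reg}}$. The hypothesis of Lemma \ref{lem:represent-by-C-regular}(2) is therefore met (in the invariant version), yielding a $C$-regular representative of $f^{(k)}$ in $\cH(G)_{C\tu{-reg}}$, which is the desired conclusion. No serious obstacle appears: the whole argument reduces to the elementary bookkeeping of finitely many cosets $\mu_i(\varpi)T(\cO_F)$, combined with the uniform shift-regularity supplied by Lemma \ref{lem:C-reg-cochar}(2).
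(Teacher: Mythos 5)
Your proof is correct and follows essentially the same route as the paper: decompose $\supp(\phi)$ into finitely many cosets $\mu_i(\varpi)T(\cO_F)$, use Lemma \ref{lem:C-reg-cochar}(2) and Lemma \ref{lem:nu-twist-support} to get $\supp(\phi^{(k)})\subset T(F)_{C\tu{-reg}}$ and acceptability for large $k$, then combine Lemma \ref{lem:O-tr-nu-ascent}(1) with Lemma \ref{lem:represent-by-C-regular}(2). The only difference is that you spell out the bookkeeping (and the observation $M_\mu=T$) in more detail than the paper does.
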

\begin{proof}
There is a finite subset $X\subset X_*(T)$ such that $\supp(\phi)\subset \bigcup_{\mu_0\in X} \mu_0(\varpi_F)T(\cO_F)$. Applying Lemma \ref{lem:C-reg-cochar} to each $\mu_0\in X$ and also Lemma \ref{lem:nu-twist-support}, we can find $k_0=k_0(\phi)\in \Z_{\ge 0}$ such that $\phi^{(k)}$ is $\mu$-acceptable and $\supp(\phi^{(k)})\subset T(F)_{C\tu{-reg}}$ for all $k\ge k_0$. Write $f^{(k)}$ for a $\mu$-ascent of $\phi^{(k)}$. By Lemma \ref{lem:represent-by-C-regular} it suffices to check for each $k\ge k_0$ and $g\in G(F)_{\tu{reg}}$ that if $O_g(f^{(k)})\neq 0$ then $g$ is in the $G(F)$-orbit of $T(F)_{C\tu{-reg}}$. This follows from the observed properties of $\phi^{(k)}$ by Lemma \ref{lem:O-tr-nu-ascent}.
\end{proof}

\begin{lemma}\label{lem:C-reg-inherited}
Let $f\in  \cH(G)_{C\tu{-reg}}$, $M\in \cL^<(G)$, and $\fke\in \cE^<(G)$. The following are true.
\begin{enumerate}
\item $\mathscr C^G_M(f)\in \cI(M)$ is represented by a function $f_M\in\cH(M)$ whose support is contained in the $M(F)$-conjugacy orbit of $T(F)_{C\tu{-reg}}$. (In particular $f_M$ is a $C$-regular function on $M(F)$.)
\item $\tu{LS}^\fke(f)\in \cS(G^\fke)$ vanishes unless $G^\fke$ is split over $F$. If $G^\fke$ is split over $F$ then $\tu{LS}^\fke(f)$ is represented by a $C$-regular function on $G^\fke(F)$.
\end{enumerate}
\end{lemma}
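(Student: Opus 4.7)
The plan is to apply Lemma \ref{lem:represent-by-C-regular}(2) (and its stable version) in both parts, using the orbital-integral identity \eqref{eq:orb-int-const-term} for (1) and the transfer formula \eqref{eq:orbital-integral-id} for (2). For part (1), pick any $f_M \in \cH(M)$ representing $\mathscr C^G_M(f)$. If $m \in M(F)$ is $G$-regular with $O^M_m(f_M) \neq 0$, then \eqref{eq:orb-int-const-term} forces $O^G_m(f) \neq 0$, so $C$-regularity of $f$ makes $m$ be $G(F)$-conjugate to some $t \in T(F)_{C\tu{-reg}}$. Since $G$ is split, so is $M$, and $T$ remains a split maximal torus of $M$; the centralizer of $m$ in $M$ (equal to its centralizer in $G$ at $G$-regular points) is a split maximal torus, hence $M(F)$-conjugate to $T$. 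Thus $m$ is $M(F)$-conjugate to some $t' \in T(F)$, and since $t'$ and $t$ are $G(F)$-conjugate they are $\Omega^G$-translates, whence $t' \in T(F)_{C\tu{-reg}}$ by Lemma \ref{lem:C-reg-cochar}(1). For $m \in M(F)_{\tu{reg}}$ that fails to be $G$-regular, local constancy of orbital integrals on the regular locus, together with the fact that the $M(F)$-orbit of $T(F)_{C\tu{-reg}}$ is open-and-closed within the regular locus (the argument in the proof of Lemma \ref{lem:represent-by-C-regular}(2)), shows nearby $G$-regular $m'$ with $O^M_{m'}(f_M) \neq 0$ land in the target orbit, whence so does $m$. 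Lemma \ref{lem:represent-by-C-regular}(2) then yields the desired representative.

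For part (2), if $\tu{LS}^\fke(f) \not\equiv 0$ in $\cS(G^\fke)$ then the transfer formula \eqref{eq:orbital-integral-id} yields matching elements $\gamma^\fke \in G^\fke(F)_{G\tu{-sr}}$ and $\gamma \in G(F)_{\tu{sr}}$ with $O_\gamma(f) \neq 0$; $C$-regularity forces $I_\gamma$ to be split over $F$, and Kottwitz's canonical isomorphism $I_\gamma \simeq I_{\gamma^\fke}$ transfers splitness to $I_{\gamma^\fke}$, a maximal $F$-torus of the quasi-split group $G^\fke$, so $G^\fke$ is split. Now fix a split maximal torus $T^\fke$ of $G^\fke$; we aim to show that $SO_{\gamma^\fke}(\tu{LS}^\fke(f)) \neq 0$ for $\gamma^\fke \in G^\fke(F)_{G\tu{-sr}}$ forces $\gamma^\fke$ to be stably conjugate to an element of $T^\fke(F)_{C\tu{-reg}}$. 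After $G^\fke(F)$-conjugation assume $\gamma^\fke \in T^\fke(F)$; the matching supplies an $F$-isomorphism $i: T^\fke \isom T$ together with a pairing-compatible injection of root systems $i^*: R(G^\fke,T^\fke) \hookrightarrow R(G,T)$ as in the proof of Lemma \ref{lem:acc-preservation}, and via $\Omega^G$-stability we may arrange that the matching $\gamma = \mu(\varpi)u$ lies in $T(F)_{C\tu{-reg}}$. Setting $\mu^\fke := i^{-1}\circ \mu$, condition (1) of Definition \ref{def:C-reg-cochar} for $\mu^\fke$ is immediate from the identity $\langle \alpha^\fke,\mu^\fke\rangle = \langle i^*(\alpha^\fke),\mu\rangle$.

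The main obstacle is condition (2) of Definition \ref{def:C-reg-cochar} in part (2). Given a proper Levi $M^\fke \supset T^\fke$ of $G^\fke$, set $S := i(A_{M^\fke}) \subset T$ and $M := C_G(S)$, a Levi of $G$ containing $T$; properness of $M$ follows because some root of $G^\fke$ is nontrivial on $A_{M^\fke}$, so its $i^*$-image is nontrivial on $S$, preventing $S$ from being central in $G$. The dual-group inclusion $\hat G^\fke \subset \hat G$ yields an inclusion of coroot lattices and hence an embedding $\Omega^{G^\fke} \hookrightarrow \Omega^G$ compatible with the torus identifications. One must then convert the $G^\fke$-side pairing $\langle \alpha^\fke|_{A_{M^\fke}}, \tu{pr}_{M^\fke}(\omega^\fke\mu^\fke)\rangle$ into a $G$-side pairing and invoke $C$-regularity of $\mu$ relative to $M$; the subtlety is that $\Omega^{M^\fke}$ may be a proper subgroup of $\Omega^M$ and $A_{M^\fke}$ strictly smaller than $A_M$, so the two Weyl-averaging projections from \eqref{eq:pr=Weyl-orbit} are not literally equal. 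Exploiting the fact that both sides land in the $\Omega^{M^\fke}$-fixed subspace together with the inclusion of averaging operators, one reduces the $G^\fke$-side bound to a $G$-side inequality of the form in Definition \ref{def:C-reg-cochar}(2) (possibly enlarging the set of Levis to check, which is already quantified over all of $\cL(G)$). With condition (2) established, the stable form of Lemma \ref{lem:represent-by-C-regular}(2) produces the required $C$-regular representative.
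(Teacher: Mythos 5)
Part (1) of your argument is correct and is essentially the paper's proof: for $G$-regular $m$ the identity \eqref{eq:orb-int-const-term} transfers the support condition from $f$ to $f_M$, the merely $M$-regular points are handled by density of $G$-regular elements plus local constancy of orbital integrals, and Lemma \ref{lem:represent-by-C-regular}(2) finishes. Your extra justification that $G(F)$-conjugacy into $T(F)_{C\tu{-reg}}$ forces $M(F)$-conjugacy into it (conjugacy of split maximal tori of $M$, then $\Omega^G$-stability of $T(F)_{C\tu{-reg}}$ from Lemma \ref{lem:C-reg-cochar}(1)) is a correct expansion of what the paper asserts in one sentence from Weyl-invariance. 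In part (2), your splitness argument and the identification of the $SO$-support of $\tu{LS}^\fke(f)$ with stable conjugates of the image of $T(F)_{C\tu{-reg}}$ in $T^\fke(F)$ also match the paper, which reads both off from \eqref{eq:orbital-integral-id}.

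The gap is in the last paragraph of your part (2). You correctly isolate the hard point --- verifying condition (2) of Definition \ref{def:C-reg-cochar} for the transported cocharacter $\mu^\fke$ relative to $G^\fke$ --- but you do not prove it: the sentence beginning ``Exploiting the fact that both sides land in the $\Omega^{M^\fke}$-fixed subspace\dots'' is an assertion, not an argument. Concretely, by \eqref{eq:pr=Weyl-orbit} the quantity $\langle \alpha^\fke|_{A_{M^\fke}}, \tu{pr}_{M^\fke}(\omega^\fke\mu^\fke)\rangle$ is the average of $\langle i^*(\alpha^\fke), w\,\omega\mu\rangle$ over $w$ ranging through (the image of) $\Omega^{M^\fke}$, whereas $C$-regularity of $\mu$ in $G$ only bounds from below the absolute values of individual terms $\langle \beta,\omega'\mu\rangle$ (condition (1)) and of averages over the \emph{full} group $\Omega^{M}$ for Levi subgroups $M$ of $G$ (condition (2)). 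A partial average over the possibly proper subgroup $\Omega^{M^\fke}\subset \Omega^{M}$ of terms that individually exceed $C$ in absolute value can be small because of sign cancellation, and it is not among the quantities constrained by Definition \ref{def:C-reg-cochar} for $G$; so the claimed reduction ``to a $G$-side inequality of the form in Definition \ref{def:C-reg-cochar}(2)'' does not go through as stated. The paper does not attempt this verification: it identifies $\Phi(T^\fke,G^\fke)$ with a subset of $\Phi(T,G)$, works with the image of $T(F)_{C\tu{-reg}}$ in $T^\fke(F)$ as the support set, and concludes via Lemma \ref{lem:represent-by-C-regular} together with a comparison of that image with the $C$-regular locus of $T^\fke(F)$, rather than by checking Definition \ref{def:C-reg-cochar} on the nose. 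As written, your route requires the missing estimate; either supply it or revert to the paper's formulation of the support set.
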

\begin{proof}
(1) We keep writing $T(F)_{C\tu{-reg}}$ for the set of $C$-regular elements relative to $G$, which contain $C$-regular elements relative to $M$. Since $T(F)_{C\tu{-reg}}$ is invariant under the Weyl group of $G$, an element $\gamma\in M(F)_{\tu{ss}}$ is conjugate to an element of $T(F)_{C\tu{-reg}}$ in $G(F)$ if and only if it is so in $M(F)$. In light of Lemma \ref{lem:represent-by-C-regular}, it suffices to show the following: if $O_\gamma(\mathscr{C}^G_M(f))\neq 0$ for regular semisimple $\gamma\in M(F)$ then $\gamma$ is $M(F)$-conjugate to an element of $T(F)_{C\tu{-reg}}$.

If $\gamma$ is $G$-regular then we have from \S\ref{sub:nu-constant-terms} that $O_\gamma(f)=D_{G/M}(\gamma)O_\gamma(\mathscr{C}^G_M(f))$, which is nonzero only if $\gamma$ is conjugate to an element of $T(F)_{C\tu{-reg}}$. If $\gamma$ is regular but outside the $M(F)$-orbit of $T(F)_{C\tu{-reg}}$, then a sufficiently small neighborhood $V$ of $\gamma$ does not intersect the $M(F)$-orbit of $T(F)_{C\tu{-reg}}$. On the other hand, $G$-regular elements are dense in $V$. Since an orbital integral is locally constant on the regular semisimple set, it follows that $O_\gamma(\mathscr{C}^G_M(f))=0$.

(2) If $T$ transfers to a maximal torus in $G^\fke$ then $G^\fke$ is split over $F$ since $T$ is a split torus. Thus $\tu{LS}^\fke(f)=0$ unless $G^\fke$ is split over $F$. Now we assume that $T$ transfers to a maximal torus $T^\fke\subset G^\fke$, equipped with an $F$-isomorphism $T\simeq T^\fke$ (canonical up to the Weyl group action). Via the isomorphism we transport $\lambda$ to $\lambda^\fke:\G_m\ra T^\fke$ and identify $\Phi(T^\fke,G^\fke)$ as a subset of $\Phi(T,G)$. By abuse of notation, keep writing $T(F)_{C\tu{-reg}}$ for its image in $T^\fke(F)$. Then $C$-regular elements of $T^\fke(F)$ are contained in $T(F)_{C\tu{-reg}}$.

Now the rest of the proof of (2) similar to that of (1), based on Lemma \ref{lem:represent-by-C-regular}. It suffices to check that if $SO_{\gamma^\fke}(\tu{LS}^\fke(f))\neq 0$ for $G$-regular semisimple $\gamma^\fke\in G^\fke(F)$ then $\gamma^\fke$ is stably conjugate to an element of $T(F)_{C\tu{-reg}}$. This is evident from the transfer of orbital integral identity.
\end{proof}

\begin{corollary}
For $f\in \cH(G)_{C\tu{-reg}}$ and $M\in \cL^<(G)$, we have $\supp^{O}_{\fka_M}(\mathscr{C}^G_M(f))\subset \fka_{M,C\tu{-reg}}$.
\end{corollary}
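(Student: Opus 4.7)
The plan is to deduce this directly from Lemma \ref{lem:C-reg-inherited}(1) together with Lemma \ref{lem:reg-maps-to-reg}; no new ideas are required, and the statement is essentially a bookkeeping consequence of what has already been established.

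First I would invoke Lemma \ref{lem:C-reg-inherited}(1) to represent $\mathscr{C}^G_M(f)$ by a concrete function $f_M\in\cH(M)$ whose support is contained in the $M(F)$-conjugacy orbit of $T(F)_{C\tu{-reg}}$. This is the only input that uses $C$-regularity of $f$; everything afterwards is formal.

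Next, pick any $x\in M(F)_{\tu{ss}}$ with $O_x(f_M)\neq 0$. Then the $M(F)$-conjugacy class of $x$ must meet $\supp(f_M)$, hence there is some $m\in M(F)$ and some $t\in T(F)_{C\tu{-reg}}$ with $mxm^{-1}=t$. Since $H^M$ is invariant under $M(\ol F)$-conjugacy (it only depends on the absolute values of characters), we have $H^M(x)=H^M(t)$. Applying Lemma \ref{lem:reg-maps-to-reg} (with the same split Levi $M\subsetneq G$) to $t\in T(F)_{C\tu{-reg}}$, we conclude $H^M(x)=H^M(t)\in\fka_{M,C\tu{-reg}}$, which is the desired containment.

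There is no real obstacle here: the only mildly subtle point is that $T(F)_{C\tu{-reg}}$ is automatically stable under $\Omega^G$ by Lemma \ref{lem:C-reg-cochar}(1), so one need not worry about which Weyl representative of $x$ one lands in $T$, and Lemma \ref{lem:reg-maps-to-reg} applies uniformly. Thus the corollary reduces to combining the two preceding lemmas and the conjugation-invariance of $H^M$.
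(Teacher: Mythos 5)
Your proof is correct and follows essentially the same route as the paper's: both invoke Lemma \ref{lem:C-reg-inherited}(1) to represent $\mathscr{C}^G_M(f)$ by an $f_M$ supported in the $M(F)$-orbit of $T(F)_{C\tu{-reg}}$, use conjugation-invariance of $H^M$ (i.e.\ $\supp^O_{\fka_M}(f_M)\subset\supp_{\fka_M}(f_M)\subset H^M(T(F)_{C\tu{-reg}})$), and conclude by Lemma \ref{lem:reg-maps-to-reg}. No gaps.
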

\begin{proof}
Let $f_M$ be as in the preceding lemma. Then
$$\supp^{O}_{\fka_M}(\mathscr{C}^G_M(f))=\supp^{O}_{\fka_M}(f_M)\subset \supp_{\fka_M}(f_M)\subset H^M(T(F)_{C\tu{-reg}})\subset \fka_{M,C\tu{-reg}},$$
where the last inclusion comes from Lemma \ref{lem:reg-maps-to-reg}.
\end{proof}

The following lemma, to be invoked in \S7.5, sheds light on how much $C$-regular functions detect.

\begin{lemma}\label{lem:C-regular-spectral-implication}
Let $I$ be a finite set and let $C>0$. Let $\pi_i\in \Irr(G(F))$ and $c_i\in \C$ for $i\in I$. If 
$$
\sum_{i\in I} c_i \Tr \pi_i(f)=0 \qquad \forall~f\in  \cH(G)_{C\tu{-reg}}
$$ 
then $\sum_{i\in I} c_i \cdot J_{P_0}(\pi_i)=0$ in $\Groth(G(F))\otimes_{\Z} \C$.
\end{lemma}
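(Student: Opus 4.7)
The strategy combines the Weyl integration formula, Casselman's asymptotic character formula, and linear independence of smooth characters of the split torus. Since $G$ is split and $P_0$ is a Borel, we have $M_0=T$, so each $J_{P_0}(\pi_i)$ is an admissible representation of the torus $T(F)$ whose class in the Grothendieck group is a finite $\C$-linear combination of smooth characters $T(F)\to\C^\times$.

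First, I would apply the Weyl integration formula to the hypothesis. Because $f\in\cH(G)_{C\tu{-reg}}$ is supported on the $G(F)$-conjugacy orbit of $T(F)_{C\tu{-reg}}$, and the split torus $T$ is not $F$-conjugate to any non-split maximal $F$-torus of $G$, only $T$ contributes to Weyl integration:
\[
\sum_i c_i\,\Tr\pi_i(f) \;=\; \frac{1}{|\Omega^G|}\int_{T(F)_{C\tu{-reg}}}|D_G(t)|\,O_t(f)\,\Theta(t)\,dt,
\]
where $\Theta:=\sum_i c_i\,\Theta_{\pi_i}$. As $f$ ranges over $\cH(G)_{C\tu{-reg}}$ (e.g.\ by taking transverse cutoffs of $G(F)$-conjugation over small compact open subsets of $T(F)_{C\tu{-reg}}$), the functions $t\mapsto O_t(f)$ exhaust a dense subspace of the $\Omega^G$-invariant smooth compactly supported functions on $T(F)_{C\tu{-reg}}$. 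Since $\Theta$ is itself $\Omega^G$-invariant and $|D_G(t)|>0$ on $T(F)_{\tu{reg}}$, the vanishing of the pairing for all such $f$ forces $\Theta(t)=0$ for every $t\in T(F)_{C\tu{-reg}}$.

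Second, I would invoke Casselman's asymptotic formula \cite{CasselmanNotes}: for each of the finitely many $\pi_i$ there is a nonempty open cone $V_i\subset T(F)$ of regular elements sufficiently $P_0$-contracting (relative to the level of $\pi_i$) on which
\[
\Theta_{\pi_i}(t)\;=\;\delta_{P_0}(t)^{1/2}\,\Theta_{J_{P_0}(\pi_i)}(t).
\]
The finite intersection $V:=\bigcap_i V_i$ remains an open cone containing $\mu(\varpi)T(\cO_F)$ for every $\mu$ in some open subcone of $X_*(T)_{\R}$. Intersecting $V$ with $T(F)_{C\tu{-reg}}$, which is stable under positive integer scaling of cocharacters by Lemma~\ref{lem:C-reg-cochar}(1), yields a nonempty open $U\subset T(F)$ containing $\mu(\varpi)T(\cO_F)$ for every $\mu$ in a full-dimensional open subcone $\Sigma\subset X_*(T)_\R$. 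Combining the vanishing $\Theta|_U=0$ with Casselman's formula on $U$ and dividing by the nowhere-vanishing $\delta_{P_0}^{1/2}$ gives
\[
\sum_i c_i\,\Theta_{J_{P_0}(\pi_i)}(t)\;=\;0,\qquad t\in U.
\]

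Finally, I would conclude via linear independence of characters. Write $\sum_i c_i\cdot [J_{P_0}(\pi_i)]=\sum_\kappa\alpha_\kappa[\chi_\kappa]$ with pairwise distinct smooth characters $\chi_\kappa\colon T(F)\to\C^\times$. Grouping the $\kappa$'s by the restriction $\chi_\kappa|_{T(\cO_F)}$ and using linear independence of characters of the compact group $T(\cO_F)$, the vanishing on each coset $\mu(\varpi)T(\cO_F)$ for $\mu\in\Sigma\cap X_*(T)$ reduces, within each restriction class, to an identity $\sum_\kappa\alpha_\kappa\,\chi_\kappa(\mu(\varpi))=0$ in which the functions $\mu\mapsto\chi_\kappa(\mu(\varpi))$ are pairwise distinct characters of $X_*(T)\cong\Z^{\tu{rk}\,T}$. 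Since $\Sigma$ has nonempty interior, $\Sigma\cap X_*(T)$ contains a translate of a full-rank sublattice, and a Vandermonde argument along coordinate lines forces each $\alpha_\kappa=0$, completing the proof. The main technical point is that Casselman's representation-dependent contracting domain and the fixed $C$-regular set must intersect in a set rich enough to drive this linear independence step; this is automatic because both are open cones of full dimension in $X_*(T)_\R$, so their intersection has full dimension as well.
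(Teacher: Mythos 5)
Your argument is correct but takes a genuinely different route from the paper's. The paper's proof is short and purely trace-theoretic: fix a $C$-regular cocharacter $\mu$ with $P_0=P_\mu^{\tu{op}}$; for any $\phi\in\cH(T)$ the translates $\phi^{(k)}$ are eventually $\mu$-acceptable and their $\mu$-ascents $f^{(k)}$ are $C$-regular (Corollary~\ref{cor:ascent-from-T-to-G-at-q}); the ascent identity (Lemma~\ref{lem:O-tr-nu-ascent}) gives $\Tr(f^{(k)}|\pi_i)=\Tr(\phi^{(k)}|J_{P_0}(\pi_i))$, so the hypothesis forces $\sum_i c_i\Tr(\phi^{(k)}|J_{P_0}(\pi_i))=0$ for $k\gg 0$ and all $\phi$; Lemma~\ref{lem:Trace-equal-acceptable} then concludes. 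You instead pass to Harish-Chandra characters via the Weyl integration formula, invoke Casselman's asymptotic character formula to convert $\Theta_{\pi_i}$ into $\delta_{P_0}^{1/2}\Theta_{J_{P_0}(\pi_i)}$ on a contracting region, and finish by linear independence of smooth characters of $T(F)$. The end-game is morally the same (the proof of Lemma~\ref{lem:Trace-equal-acceptable}, outsourced to \cite{ShinIgusa}, is a linear-independence-of-exponentials argument of exactly the kind you carry out by hand), but the route there is different: the paper stays within the ascent formalism it has already built, avoiding any direct manipulation of characters on the torus, whereas your argument is more classical and self-contained but requires you to verify carefully that the Casselman asymptotic domain, the $C$-regular set, and a full-rank cone of cocharacters intersect in a set rich enough to separate characters of $T(\cO_F)\times X_*(T)$.

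Two small points of precision worth flagging: the Casselman asymptotic domains $V_i$ are not open cones but translated (``deep'') regions of the form $\{a:|\alpha(a)|<\epsilon_i\;\forall\alpha\}$; what you actually use, and what is true, is that each $V_i$ contains $\mu(\varpi)T(\cO_F)$ for all $\mu$ in a shifted half-chamber of $X_*(T)_\R$, whose intersection with the $C$-regular region still contains a translate of a full-rank sublattice of $X_*(T)$, which is all the Vandermonde step needs. Second, in the Weyl-integration step one should say explicitly why $\Theta$ vanishing against all orbital integrals $O_t(f)$, $f\in\cH(G)_{C\tu{-reg}}$, implies $\Theta|_{T(F)_{C\tu{-reg}}}=0$: the point is that every $\Omega^G$-invariant smooth compactly supported function on $T(F)_{C\tu{-reg}}$ is realized as $t\mapsto O_t(f)$ for some such $f$, since $T(F)_{C\tu{-reg}}$ is open and closed in $T(F)$ and one can transport bump functions via a tubular-neighborhood argument (compare Lemma~\ref{lem:represent-by-C-regular}).
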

\begin{proof}
Fix a regular cocharacter $\mu:\G_m\ra T$ over $F$ such that $P_0=P_\mu^{\tu{op}}$. For each $\phi\in \cH(T)$, we have some integer $k_0$ such that $\phi^{(k)}$ are $\mu$-acceptable for all $k\ge k_0$ and their $\mu$-ascent $f^{(k)}$ are represented by $C$-regular functions by Corollary \ref{cor:ascent-from-T-to-G-at-q}. Thanks to Lemma \ref{lem:O-tr-nu-ascent}, 
$$
0=\sum_{i\in I} c_i \Tr(f^{(k)}|\pi_i)=\sum_{i\in I} c_i \Tr(\phi^{(k)}|J_{P_0}(\pi_i)),\quad
 \forall k\ge k_0.
$$
We conclude by Lemma \ref{lem:Trace-equal-acceptable}.
\end{proof}
  
\subsection{Fixed central character}\label{sub:fixed-central-character}

We explain that the facts thus far in \S\ref{sec:Jacquet-endoscopy} hold in the setup with fixed central character. Let $\nu:\G_m\ra G$ be a cocharacter over $F$ and $(G^\fke,\cG^\fke,s^\fke,\eta^\fke)$ an endoscopic datum for $G$ with $\cG^\fke={}^L G^\fke$. We can view $\fkX$ as a closed subgroup of $M_\nu(F)$, $G^\fke(F)$, and $G^\fke_\omega(F)$ of the preceding sections via the canonical embeddings of $Z(F)$ into their centers.

As before, $\cH_{\tu{acc}}(M_\nu,\chi^{-1})\subset \cH(M_\nu,\chi^{-1})$ is the subspace of functions which are supported on acceptable elements. Taking the image, we also have $\cI_{\tu{acc}}(M_\nu,\chi^{-1})$ and $\cS_{\tu{acc}}(M_\nu,\chi^{-1})$. Since acceptability is invariant under the translation by central elements, the $\chi$-averaging map induces a surjection $\cH_{\tu{acc}}(M_\nu)\ra \cH_{\tu{acc}}(M_\nu,\chi^{-1})$. The analogous surjectivity holds for $\cI_{\tu{acc}}$ and $\cS_{\tu{acc}}$.

The earlier results are valid in the setting of fixed central characters, with the following minor modifications. The proofs are omitted as no new ideas are required.

\smallskip

\paragraph{\textbf{To adapt \S\ref{sub:nu-Jacquet}}} Averaging the $\nu$-ascent map, we obtain 
$$
\mathscr J_\nu:\cI(M_\nu,\chi^{-1})\ra  \cI(G,\chi^{-1}),\qquad \mathscr J_\nu:\cS(M_\nu,\chi^{-1})\ra  \cS(G,\chi^{-1})
$$ 
satisfying the orbital integral and trace identities in Lemma \ref{lem:O-tr-nu-ascent} (with central character of $\pi$ equal to $\chi$) and Corollary \ref{cor:SO-nu-ascent}. The obvious analogues of Lemmas \ref{lem:support-open-dense-enough} and \ref{lem:nu-ascent-support} hold true (with no changes to the bottom rows in the latter lemma). The map $(\cdot)^{(k)}$ in \eqref{eq:central-translate} induces linear automorphisms on $\cI(M_\nu,\chi^{-1})$ and $\cI(G,\chi^{-1})$. With this, Lemmas \ref{lem:nu-twist-support} and \ref{lem:Trace-equal-acceptable} imply their natural analogues, restricting $\pi_1,\pi_2$ in the latter lemma to those with central character $\chi$.

\smallskip

\paragraph{\textbf{To adapt \S\ref{sub:nu-constant-terms}}} Averaging the map $\cH(G)\ra \cH(M)$ given by $f\mapsto f_M$, we obtain a map $\cH(G,\chi^{-1})\ra \cH(M,\chi^{-1})$, which induces $$\mathscr C^G_M: \cI(G,\chi^{-1})\ra \cI(M,\chi^{-1})$$ satisfying the same orbital integral identity as in \S\ref{sub:nu-constant-terms}. We can also describe $\mathscr C^G_M$ by the same formula \eqref{eq:const-term-def} from the space of linear functionals on $R(G,\chi)$ to that on $R(M,\chi)$. Lemmas \ref{lem:nu-constant-terms} and \ref{lem:acc-preservation-const-term} carry over as written, with $\chi^{-1}$-equivariance imposed everywhere.

\smallskip

\paragraph{\textbf{To adapt \S\ref{sub:nu-endoscopy}}} The Langlands--Shelstad transfer with fixed central character was already considered in \S\ref{sub:prelim-endoscopy} by averaging the transfer without fixed central character. With this in mind, we deduce the obvious analogues of Proposition \ref{prop:nu-transfer}, Corollary \ref{cor:nu-endoscopy}, and Lemma \ref{lem:acc-preservation}. In particular, the diagram in that proposition yields the following analogue. 
$$
\xymatrix{\cI(M_\nu,\chi^{-1}) \ar[r]^-{\mathscr J_\nu} \ar[dr]_-{\oplus \tu{LS}^{\fke,\omega}} & \cI(G,\chi^{-1}) \ar[r]^-{\tu{LS}^\fke} & \cS(G^\fke,\chi^{\fke,-1}) \\
& \bigoplus\limits_{\omega\in \Omega_{\fke,\nu}}  \cS(G^\fke_\omega,\chi^{\fke,-1})  \ar[ur]_-{\sum_\omega \mathscr J_{\nu_\omega}}
  }
$$

\smallskip

\paragraph{\textbf{To adapt \S\ref{sub:C-regular}}}
Definition \ref{def:C-regular} extends obviously to $\cH(G,\chi^{-1})$ by the same support condition. A key observation is that the notion of $C$-regularity is invariant under $Z(F)$-translation, so that the latter definition behaves well. More precisely, the $\chi$-averaging map from $\cH(G)\twoheadrightarrow \cH(G,\chi^{-1})$ is still surjective when restricted to the respective subspaces of $C$-regular functions. Using this, we carry over all results in \S\ref{sub:C-regular} to the setup with fixed central character, restricting to $\chi^{-1}$-equivariant functions and representations with central character $\chi$.

\subsection{$z$-extensions}\label{sub:nu-z-extension}
Throughout this section up to now, we assumed $\cG^\fke={}^L G^\fke$ on the endoscopic datum $\fke$. When the assumption is not guaranteed, we pass from $\fke$ and $G$ to $\fke_1$ and $G_1$ via $z$-extensions and pull back the central character datum from $(\fkX,\chi)$ to $(\fkX_1,\chi_1)$ as explained in \S\ref{sub:endoscopy-z-ext}.

Let $\nu_1:\G_m\ra G_1$ be a cocharacter lifting $\nu$. (Such a $\nu_1$ is going to be chosen in practice; see \S\ref{sub:test-function-at-p} below.) By Definition \ref{def:acceptable}, $\gamma_1\in M_{\nu_1}(F)$ is $\nu_1$-acceptable if and only if its image in $M_{\nu}(F)$ is $\nu$-acceptable. Everything in this section goes through with $\fke_1,G_1,(\fkX_1,\chi_1),\nu_1$ playing the roles of $\fke,G,(\fkX,\chi),\nu$. We write $\lambda^\fke_1,\lambda^\fke_{1,\omega}$ for the characters $\lambda^\fke,\lambda^\fke_\omega$ of \S\ref{sub:nu-endoscopy} in the setup for $\fke_1$ and $G_1$.

\section{Asymptotic analysis of the trace formula}\label{sec:asymptotic-analysis}

We prove key trace formula estimates in this section, to be applied to identify leading terms in the trace formula for Igusa varieties in \S\ref{sec:cohomology-Igusa}. The main estimate is Theorem \ref{thm:main-estimate}, whose lengthy proof is presented in \S\ref{sub:proof-main-estimate}. We work in a purely group-theoretic setup, with no reference to Shimura or Igusa varieties in order to enable an inductive argument on $\Q$-semisimple rank. The point is that the trace formula appearing in the intermediate steps need not arise from geometry.  

\subsection{Setup and some basic lemmas}
Throughout Section \ref{sec:asymptotic-analysis}, $G$ is a connected \emph{quasi-split} reductive group over $\Q$ which is cuspidal, \ie~$Z_{G}^0$ has the same $\Q$-rank and $\R$-rank. Let $(\fkX,\chi)$ be a central character datum as in \S\ref{sub:TF-fixed-central}. Let $\xi$ be an irreducible algebraic representation of $G_{\C}$ and $\zeta \colon G(\R)\ra\C^\times$ be a continuous character such that $\xi\otimes\zeta$ has central character $\chi_\infty^{-1}$ on $\fkX_\infty$. The restriction $\chi_\infty|_{A_G(\R)^0}$ via $A_G(\R)^0\subset \fkX_\infty$ can be viewed as an element of $X^*(A_G)_{\C}$, which is again denoted $\chi_\infty$ by abuse of notation. Let $\lambda_{\chi_\infty}$ denote the unique character making the following diagram commute. (The existence is obvious since the composition $A_G(\R)^0\ra \fka_G$ is an isomorphism.)
$$
\xymatrix{
A_G(\R)^0 ~~ \ar@{^(->}[r] \ar@/_2.0pc/[rrr]_-{\chi_\infty}
& G(\R) \ar@{->>}[r]^-{H^G_\infty} & \fka_G=X_*(A_G)_{\R} \ar[r]^-{\lambda_{\chi_\infty}} & \C^\times
}
$$
We have a canonical identification
\begin{equation}\label{eq:aG=}
\fka_G = X_*(A_G)_{\R}=\Hom(X^*(G)_{\Q},\R),\qquad a\mapsto (\chi\mapsto \langle \chi|_{A_G},a\rangle).
\end{equation}
Fix distinct primes $p,q$. Let $\nu:\G_m\ra G_{\Q_p}$ be a cocharacter over $\Q_p$. Let $\ol \nu\in \Hom(X^*_{\Q}(G),\Q)$ denote the image of $\nu\in X_*(A_{M_\nu})_{\Q}=\Hom(X^*_{\Q}(M_\nu),\Q)$ induced by $M_\nu\hra G$.\footnote{In the notation of the preceding section, $\ol \nu=\tu{pr}_G(\nu)$.} By definition, $\ol \nu(\chi)=\nu(\chi)$ for $\chi\in X^*_{\Q}(G)$. Viewing $\ol \nu$ as a member of $\fka_G$, we can compute $\langle \chi_\infty,\ol \nu\rangle\in \C$ via the canonical pairing $X^*(A_G)_{\C} \times X_*(A_G)_{\C}\ra \C$.

\begin{lemma}\label{lem:lambda(nu)}
$\lambda_{\chi_\infty}(H^G_p(\nu(p))) = p^{-\langle \chi_\infty, \ol\nu \rangle}$.
\end{lemma}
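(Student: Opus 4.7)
\smallskip

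\textbf{Proof plan.} The strategy is to unwind the two sides along the isomorphisms
\[
A_G(\R)^0 \,\stackrel{H^G_\infty}{\isom}\, \fka_G \,=\, \Hom(X^*_{\Q}(G),\R) \,=\, X_*(A_G)_\R,
\]
reducing the claim to the tautological identity $\chi(\nu(p)) = p^{\langle \chi,\nu\rangle}$ for $\chi \in X^*_\Q(G)$.

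First, I will compute $H^G_p(\nu(p)) \in \fka_G$ explicitly. For any $\chi \in X^*_\Q(G)$, the composite $\chi\circ \nu \colon \G_m \to \G_m$ over $\Q_p$ is the $\langle\chi,\nu\rangle$-power map, so $\chi(\nu(p)) = p^{\langle\chi,\nu\rangle}$, giving $\log|\chi(\nu(p))|_p = -\langle \chi,\nu\rangle \log p$. By definition of $\ol\nu$ (namely $\ol\nu(\chi) = \nu(\chi)$ for $\chi \in X^*_\Q(G)$), this reads
\[
H^G_p(\nu(p)) \,=\, -(\log p)\cdot \ol\nu \qquad \text{in } \fka_G.
\]

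Second, I will identify the character $\lambda_{\chi_\infty} \colon \fka_G \to \C^\times$. The map $H^G_\infty \colon A_G(\R)^0 \to \fka_G$ is the real-logarithm isomorphism: choosing a splitting $A_G \simeq \G_m^r$, it sends $(a_1,\dots,a_r)$ to $(\log a_1,\dots,\log a_r) \in X_*(A_G)_\R$. Viewing $\chi_\infty \in X^*(A_G)_\C$ via \eqref{eq:aG=} and the natural pairing $X^*(A_G)_\C \times X_*(A_G)_\C \to \C$, the defining commutative diagram of $\lambda_{\chi_\infty}$ forces
\[
\lambda_{\chi_\infty}(x) \,=\, \exp\bigl(\langle \chi_\infty, x\rangle\bigr), \qquad x \in \fka_G,
\]
since both sides agree on the image of $A_G(\R)^0$, which is all of $\fka_G$.

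Combining the two steps gives
\[
\lambda_{\chi_\infty}\bigl(H^G_p(\nu(p))\bigr) \,=\, \exp\bigl(-(\log p)\,\langle \chi_\infty, \ol\nu\rangle\bigr) \,=\, p^{-\langle \chi_\infty, \ol\nu\rangle},
\]
as required. There is no real obstacle; the whole content is bookkeeping between the two ``logarithm'' maps at $p$ and at $\infty$, and making sure the pairing $\langle \chi_\infty, \ol\nu\rangle$ computed in $X^*(A_G)_\C \times \fka_G$ matches both interpretations via the identification $\fka_G = X_*(A_G)_\R$.
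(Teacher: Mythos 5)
Your proof is correct and is essentially the same argument as the paper's, just with slightly different bookkeeping: the paper proves the identity $H^G_p(\nu(p)) = H^G_\infty(\ol\nu(p)^{-1})$ (using that $\chi((r\ol\nu)(p))$ is an integral power of $p$, so $|\cdot|_p=|\cdot|_\infty^{-1}$ on it) and then applies the defining property of $\lambda_{\chi_\infty}$ directly, whereas you compute $H^G_p(\nu(p)) = -(\log p)\,\ol\nu$ explicitly and make the formula $\lambda_{\chi_\infty}(x) = \exp(\langle\chi_\infty,x\rangle)$ explicit, which is equivalent. Both rest on the same observation (rationality of $\chi(\nu(p))$ as a power of $p$) and the same defining diagram for $\lambda_{\chi_\infty}$.
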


\begin{proof}
By definition $H^G_p(\nu(p))$ sends $\chi\in X^*_{\Q}(G)$ to $\log |\chi(\nu(p))|_p$. Similarly for $a\in A_G(\R)^0$, we have $H^G_\infty(a) = (\chi \mapsto \log |\chi(a)|_\infty$). We claim that $H^G_p(\nu(p)) = H^G_\infty( \ol \nu(p)^{-1})$. To show this, choose $r\in \Z_{\ge 1}$ such that $r\ol\nu\in X_*(A_G)$. Since $\fka_G$ is torsion-free it suffices to check that $H^G_p((r\nu)(p)) = H^G_\infty( (r\ol \nu)(p)^{-1})$, or equivalently that 
$$
|\chi((r\nu)(p))|_p =  |\chi((r\ol \nu)(p))|^{-1}_\infty,\qquad \chi\in X^*_{\Q}(G).
$$
Since $\chi((r\ol \nu)(p))\in \Q$ is an integral power of $p$ (as both $\chi$ and $r\ol \nu$ are algebraic), we have $|\chi((r\ol \nu)(p))|_\infty = |\chi((r\ol \nu)(p))|^{-1}_p = |\chi((r\nu)(p))|^{-1}_p$. This proves the claim. Now the claim implies that
$$
\lambda_{\chi_\infty}(H^G_p(\nu(p)))=\lambda_{\chi_\infty}(H^G_\infty( \ol \nu(p)^{-1})) = \chi_\infty(\ol \nu(p)^{-1}) = p^{-\langle \chi_\infty,\ol \nu \rangle}.
$$
\end{proof}

If $G$ is a connected reductive group over $\Q$ and $S$ is a set of $\Q$-places, we write
$$ 
H_S^G(\gamma) := \sum_{v \in S} H_v^G(\gamma) \in \ia_G.
$$
If $S^c$ is the complement of $S$, we write $H^{G, S^c} := H^G_S$.

\begin{lemma}\label{lem:HM-product-formula}
Let $G$ be a connected reductive group over $\Q$. 
\begin{enumerate}[label=\textit{(\roman*)}]
\item Let $S$ be a set of $\Q$-places. Let $\gamma, \gamma' \in G(\Q)$. If $\gamma$ and $\gamma'$ are conjugate in $G(\li \Q)$ then we have $H_S^G(\gamma) = H_S^G(\gamma') \in \ia_G$.
\item Let $S$ be the set of all $\Q$-places. Let $\gamma \in G(\Q)$, then $H_S^G(\gamma) = 0 \in \ia_G$.
\end{enumerate}
\end{lemma}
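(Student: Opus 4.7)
The plan is to deduce (i) from the (already recorded) fact that the local map $H^G_v$ is $G(\ol{\Q_v})$-conjugation invariant, and to deduce (ii) from the product formula for $\Q$.

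For (i), fix $v \in S$ and an embedding $\ol\Q \hookrightarrow \ol{\Q_v}$. If $\gamma, \gamma'\in G(\Q)$ are conjugate in $G(\ol\Q)$, they are in particular conjugate in $G(\ol{\Q_v})$ via this embedding. By the remark made just after the definition of $H^G$ (the local map $H^G_v: G(\Q_v) \to \fka_G$ is invariant under $G(\ol{\Q_v})$-conjugacy, proved by reducing to the split case via restriction of scalars), we have $H_v^G(\gamma) = H_v^G(\gamma')$. Summing over $v \in S$ yields (i). Note that $\fka_G$ is the same vector space for all local completions, since $X^*_\Q(G)$ embeds canonically into $X^*_{\Q_v}(G)$ and the pairing that defines the image of $H_v^G$ is taken against $X^*_\Q(G)_\Q$.

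For (ii), it suffices to check that for every $\chi \in X^*_\Q(G)_\Q$, we have $\sum_{v} \log|\chi(\gamma)|_v = 0$. Clearing denominators, we may assume $\chi \in X^*_\Q(G)$, so $\chi(\gamma) \in \G_m(\Q) = \Q^\times$. Then the identity $\sum_v \log|\chi(\gamma)|_v = \log \prod_v |\chi(\gamma)|_v = 0$ is exactly the product formula for $\Q^\times$ (with the normalizations $|p|_p = p^{-1}$ and the usual archimedean absolute value fixed earlier). Evaluating the resulting element of $\Hom(X^*_\Q(G)_\Q,\R) = \fka_G$ gives the zero functional, proving (ii).

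No obstacle is anticipated; both parts are essentially formal consequences of recalled facts, and the only minor care needed is in (i) to ensure that the target $\fka_G$ is interpreted consistently at every place, and in (ii) to make sure $\chi(\gamma)$ actually lands in $\Q^\times$ (not merely in some extension), which follows from $\chi$ being defined over $\Q$ and $\gamma \in G(\Q)$.
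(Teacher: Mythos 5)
Your proof is correct and follows essentially the same route as the paper's: part (i) comes down to conjugation-invariance of $H^G$ after a restriction of scalars (the paper performs one global restriction $\Res_{F/\Q}G$ and reduces to $\gamma,\gamma'$ conjugate in $G(\Q)$, whereas you invoke the local invariance place by place, which is the same mechanism), and part (ii) is the product formula applied to $\chi(\gamma)\in\Q^\times$ for $\chi\in X^*_\Q(G)$, which is exactly what the paper's reduction $G\to G^{\ab}\to A\to\G_m$ amounts to. The only point worth noting is that the invariance remark you cite is stated in the paper for non-archimedean local fields, so for $v=\infty$ one should observe that the identical restriction-of-scalars argument applies — which you implicitly do.
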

\begin{proof}
(\textit{i}) Let $F/\Q$ be a finite extension such that $\gamma$ and $\gamma'$ are conjugate in $G(F)$. Set $G':=\Res_{F/\Q}M$. The natural embedding $i:G\ra G'$ allows to view $\gamma,\gamma'$ as elements of $G'(\Q)$, and induces an injection $\ia_G\hra \ia_{G'}$. Thus it suffices to prove that $H_S^{G'}(\gamma) = H_S^{G'}(\gamma')$, since the map $H^G_S \colon G(\A_S)\to \ia_G$ is functorial with respect to $i$. By the reduction in the preceding paragraph, we may assume that $\gamma$ and $\gamma'$ are conjugate in $G(\Q)$. Then the proof is trivial since $H^G_S$ is a homomorphism into an abelian group.

(\textit{ii}). Using the functoriality for $G \to Z_G'$ from step 1, we may replace $G$ by its cocenter $Z_G'$, then $Z_G'$ by the maximally split torus $A$ inside $Z_G'$, and finally we may replace $A$ by $\G_m$, in which case the statement boils down to the usual product formula.
\end{proof}

If $M \in \cL_{\tu{cusp}}(G)$, we write $\Gamma_{\R\tu{-ell},\fkX}(M)$ for the set of $\gamma \in M(\Q)$ such that $\gamma \in M(\R)$ is elliptic, and $\gamma$ is taken up to $M(\Q)$-conjugacy. The following will be useful when studying Levi terms in the geometric side of the trace formula. 

\begin{lemma}\label{lem:RecognizeEllipticPart}
Let $M \in \cL_{\tu{cusp}}(G)$ and let $\gamma \in \Gamma_{\R\tu{-ell}, \fkX}(M)$ be a regular element. Let $P \subset G$ be a parabolic subgroup with Levi component $M$. Let $\xi$ be an irreducible representation of $M_\C$, and $\zeta \colon M(\R) \to \C^\times$ a continuous character. Write $f^M_{\zeta, \xi}$ for the function on $M(\R)$ given by \eqref{eq:AveragedLefschetz}. Then we have
\begin{align*}
&\vol(\fkX_{\Q} \backslash \fkX / A_{G, \infty})\inv \chi(I^M_\gamma)\zeta(\gamma) \Tr(\gamma; \xi ) 
\cr
= ~~ & d(M)\vol(I_\gamma^M(\Q) A_{I^M_\gamma, \infty} \backslash I_\gamma^M(\A)/\fkX) O_\gamma^M(f_{\xi, \zeta}^M).
\end{align*}
\end{lemma}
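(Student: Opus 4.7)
The plan is to expand both sides using the definitions from \S\ref{sub:Lefschetz} and reconcile the resulting Tamagawa and archimedean volumes. From \eqref{eq:chi-I-gamma-M},
\[
\chi(I_\gamma^M) = (-1)^{q(I_\gamma)}\,\tau(I_\gamma^M)\,\vol(A_{I_\gamma^M,\infty}\backslash I_\gamma^{M,\tu{cpt}}(\R))^{-1}\,d(I_\gamma^M),
\]
and applying \eqref{eq:orbital-integral-Lefschetz} with $M$ in place of $G$ (and with the pair $(\xi,\zeta)$),
\[
d(M)\,O_\gamma^M(f_{\xi,\zeta}^M) = \vol(A_{M,\infty}\backslash I_\gamma^{M,\tu{cpt}}(\R))^{-1}\,\zeta(\gamma)\,e(I_\gamma^M)\,\Tr\xi(\gamma).
\]

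Next I would exploit four consequences of $\gamma$ being regular and $\R$-elliptic in the cuspidal Levi $M$: the connected centralizer $I_\gamma^M$ is a maximal torus of $M$ which is anisotropic modulo $A_M$ over $\R$, hence (a) $q(I_\gamma)=0$, since the adjoint group of $(I_\gamma^M)_\R$ is trivial; (b) $d(I_\gamma^M)=1$, since discrete series $L$-packets for tori are singletons; (c) $e(I_\gamma^M)=1$, since a torus has no nontrivial inner forms; and (d) $I_\gamma^{M,\tu{cpt}}$ may be taken to equal $I_\gamma^M$ and $A_{I_\gamma^M}=A_M$, the latter using cuspidality of $M$ together with $\R$-ellipticity of $\gamma$ (so the maximal $\Q$-split subtorus of $I_\gamma^M$, being contained in the maximal $\R$-split subtorus, cannot exceed $A_M$). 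Plugging these into the two displays and dividing, the archimedean volume factors and signs cancel, yielding
\[
\chi(I_\gamma^M)\,\zeta(\gamma)\,\Tr(\gamma;\xi) = \tau(I_\gamma^M)\cdot d(M)\cdot O_\gamma^M(f_{\xi,\zeta}^M).
\]

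The remaining task is to identify $\tau(I_\gamma^M)$ with the product $\vol(\fkX_\Q\backslash\fkX/A_{G,\infty})\cdot \vol(I_\gamma^M(\Q)A_{I_\gamma^M,\infty}\backslash I_\gamma^M(\A)/\fkX)$. By definition, $\tau(I_\gamma^M)=\vol(I_\gamma^M(\Q)A_{I_\gamma^M,\infty}\backslash I_\gamma^M(\A))$ with respect to Tamagawa measure, and quotienting by the (right) action of $\fkX\subset Z(G)(\A)\subset I_\gamma^M(\A)$ produces the claimed quotient up to the factor $\vol(\fkX/(\fkX\cap I_\gamma^M(\Q)A_{I_\gamma^M,\infty}))$. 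The main obstacle is thus the group-theoretic identity
\[
\fkX\cap I_\gamma^M(\Q)A_{I_\gamma^M,\infty} = \fkX_\Q\cdot A_{G,\infty}.
\]
The containment ``$\supset$'' is immediate from $\fkX_\Q\subset Z(G)(\Q)\subset I_\gamma^M(\Q)$ and $A_{G,\infty}\subset A_{I_\gamma^M,\infty}$. For ``$\subset$'', an element $x=ga\in\fkX$ with $g\in I_\gamma^M(\Q)$ and $a\in A_{I_\gamma^M,\infty}=A_{M,\infty}$ satisfies $g=xa^{-1}\in Z(G)(\A)\cap I_\gamma^M(\Q)$, so $g\in Z(M)(\Q)$; the ambient assumption $\fkX=Y(\A)A_{G,\infty}$ from Proposition~\ref{prop:DalalsFormula} then constrains $g$ into $Z(G)(\Q)$, so $g\in\fkX_\Q$. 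Combining the three displays above yields the identity of the lemma.
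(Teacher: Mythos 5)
Your proof follows the same route as the paper's: both start from \eqref{eq:chi-I-gamma-M} and \eqref{eq:orbital-integral-Lefschetz}, observe that $\gamma$ regular and $\R$-elliptic forces $q(I_\gamma^M)=0$, $d(I_\gamma^M)=1$, $e(I_\gamma^M)=1$, and $A_M=A_{I_\gamma^M}$ (the last using cuspidality of $M$), and then collapse the resulting expressions. The one place where you go further than the paper is in justifying the final volume identity
\[
\vol\bigl(\fkX_\Q\backslash\fkX/A_{G,\infty}\bigr)\cdot\vol\bigl(I_\gamma^M(\Q)A_{I_\gamma^M,\infty}\backslash I_\gamma^M(\A)/\fkX\bigr)=\vol\bigl(I_\gamma^M(\Q)A_{I_\gamma^M,\infty}\backslash I_\gamma^M(\A)\bigr),
\]
by reducing it to the set-theoretic equality $\fkX\cap I_\gamma^M(\Q)A_{I_\gamma^M,\infty}=\fkX_\Q A_{G,\infty}$. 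The paper simply writes this volume manipulation without comment, so making the group-theoretic content explicit is a genuine improvement. Your remark (d) that one may take $I_\gamma^{M,\cmpt}=I_\gamma^M$ is true but not needed: the two archimedean volumes cancel once $A_M=A_{I_\gamma^M}$, regardless of how one realizes the compact inner form.

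Two small points in the inclusion ``$\subset$'' need attention. First, $g=xa^{-1}$ lies a priori in $Z(M)(\A)\cap I_\gamma^M(\Q)$, not $Z(G)(\A)\cap I_\gamma^M(\Q)$, since $a\in A_{M,\infty}\subset Z(M)(\A)$; the stated conclusion $g\in Z(M)(\Q)$ is still what you get, and the subsequent use of $\fkX=Y(\A)A_{G,\infty}$ at finite places to force $g\in Y(\Q)\subset\fkX_\Q$ is fine. Second, having $g\in\fkX_\Q$ only gives $x\in\fkX_\Q A_{M,\infty}$; to land in $\fkX_\Q A_{G,\infty}$ you still need $a=g^{-1}x\in A_{G,\infty}$, i.e.\ that $\fkX\cap A_{M,\infty}=A_{G,\infty}$. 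This follows from the assumption $\fkX=Y(\A)A_{G,\infty}$ together with the observation that $Y\cap A_M$ is a $\Q$-split subtorus of $Z_G$, hence contained in $A_G$, so that $Y(\R)\cap A_M(\R)^0\subset A_{G,\infty}$. With that step supplied, the argument is complete.
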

\begin{proof}
By Equation \eqref{eq:chi-I-gamma-M} we have
$$
\chi(I_\gamma^M) = (-1)^{q(I^M_\gamma)} \tau(I_\gamma^M) \vol(A_{I^M_{\gamma, \infty}} \backslash I_\gamma^{M, \cmpt}(\R))\inv d(I_\gamma^M) = 1.
$$
As $\gamma$ is regular, $I^M_\gamma$ is a torus and $d(I_\gamma^M) = 1$. Additionally we have $q(I_\gamma^M) = 0$ (as $\gamma$ is elliptic) and
$$
\tau(I_\gamma^M) = \vol(I_\gamma^M(\Q) A_{I_\gamma^M, \infty} \backslash I_\gamma^M(\A)).
$$
Thus we obtain
$$
\chi(I_\gamma^M) = \vol(I_\gamma^M(\Q) A_{I_\gamma^M, \infty} \backslash I_\gamma^M(\A))\vol(A_{I^M_{\gamma, \infty}} \backslash I_\gamma^{M, \cmpt}(\R))\inv.
$$
Since $\gamma$ is $\R$-elliptic, we obtain from \eqref{eq:orbital-integral-Lefschetz} that 
$$
 \zeta(\gamma) \Tr(\gamma; \xi ) = d(M) \vol(A_{M, \infty}\backslash I_\gamma^{M,\cmpt}(\R))  O_\gamma^M(f_{\xi, \zeta}^M).
$$
(we also used $e(I_\gamma^M) = 1$; recall $A_{M, \infty} := A_M(\R)^0$).
We obtain
\begin{align*}
\vol(\fkX_{\Q} &\backslash \fkX / A_{G, \infty})\inv \chi(I^M_\gamma)\zeta(\gamma) \Tr(\gamma; \xi )  = \cr
& = d(M)\frac {\vol(I_\gamma^M(\Q) A_{I_\gamma^M, \infty} \backslash I_\gamma^M(\A))} 
{\vol(\fkX_{\Q} \backslash \fkX / A_{G, \infty})} \frac {\vol(A_{M, \infty} \backslash I_\gamma^{M,\cmpt}(\R))  } {\vol(A_{I^M_{\gamma}, \infty} \backslash I_\gamma^{M, \cmpt}(\R))} O_\gamma^M(f_{\xi, \zeta }^M) \cr
 &= d(M) \vol(I_\gamma^M(\Q) A_{I^M_\gamma, \infty} \backslash I_\gamma^M(\A)/\fkX)  O_\gamma^M(f_{\xi, \zeta }^M),
 \end{align*}
where we used that $A_{M}$ equals $A_{I^M_\gamma}$
 because $\gamma$ is elliptic.
\end{proof}

\subsection{The main estimate and its consequences}\label{sub:main-estiamte}

We prove the following bounds for elliptic endoscopic groups and Levi subgroups of $G$, to be applied in \S\ref{sec:cohomology-Igusa}.

The notation $O(f(k))$ (resp. $o(k)$) for a nonzero $\C$-valued function $f(k)$ on $k\in \Z$ means that the quantity divided by $|f(k)|$ has bounded absolute value (resp. tends to $0$) as $k\ra {+\infty}$. In fact we only take $f(k)$ to be complex powers of $p$ (but not necessarily real powers; that is why we take absolute values). In our argument, every instance of $o(f(k))$ turns out to represent a power-saving, namely it is bounded by a power of $p$ with (the real part of) exponent strictly smaller than the exponent for $f(k)$.

Let us fix a $\Q$-rational Borel subgroup $B$ with Levi component $T \subset B$ (which is a maximal torus in $G$). We fix a Levi decomposition $B = T N_0$. As before we write $A_T \subset T$ for the maximal $\Q$-split subtorus. Additionally we write $S_p \subset T_{\qp}$ for the maximal $\Q_p$-split subtorus. 

Part of our setup is a cocharacter $\nu \colon \G_m\ra G$ over $\Q_p$. By conjugating $\nu$ if necessary, we may and do assume that $\nu$ has image in $T$ and that $\nu$ is $B$-dominant. Write $\rho\in X^*(T)_{\Q}$ for the half sum of all $B$-positive roots of $T$ in $G$ over $\li \Q_p$. Thus we have $\langle \rho,\nu \rangle \in \tfrac{1}{2} \Z_{>0}$. We transport various data over $\Q_p$ or $\li \Q_p$ to ones over $\C$ via a fixed isomorphism $\iota_p:\ol \Q_p\simeq \C$. (In the context of geometry, $\iota_p$ is fixed in \S\ref{sub:integral-canonical-models}.)

We also fix a prime $q$ such that $G_{\Q_q}$ is a split group. (For the existence, choose a number field $F$ over which $G$ splits. Then any prime $q$ that splits completely in $F$ will do.) If needed for endoscopy, an auxiliary $z$-extension $G_1$ of $G$ over $\Q$ is always chosen to be split over $\Q_q$; this is possible because $G_{\Q_q}$ is split. Thus the contents of \S\ref{sub:C-regular} and their adaptation to $z$-extensions apply to $G$ and $G_1$ over $\Q_q$. Since all endoscopic groups appearing in the argument will be split over $\Q_q$ (to be ensured by Lemma \ref{lem:C-reg-inherited} in the proof of Corollary \ref{cor:main-estimate}), whenever choosing their $z$-extensions, we take them to be also split over $\Q_q$ without further comments.

\begin{proposition}\label{prop:spectral-estimate}
Let $f^{\infty,p}=\prod_{v\neq \infty,p} f_v\in \cH(G(\A^{\infty,p}),(\chi^{\infty,p})^{-1})$ and $\phi_p\in \cH_{\tu{acc}}(M_\nu(\Q_p),\chi_p^{-1})$. For $k\in \Z$, write $f_p^{(k)}\in \cH(G(\Q_p),\chi_p^{-1})$ for a $\nu$-ascent of $\phi_p^{(k)}$ as in \S\ref{sub:nu-constant-terms}. Then
$$
T_{\disc,\chi}^G( f_{p}^{(k)}f^{\infty,p}f_{\xi,\zeta} ) = O \left( p^{k(\langle \rho,\nu \rangle+\langle\chi_\infty,\ol\nu\rangle)} \right).
$$
\end{proposition}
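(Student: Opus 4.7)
The plan is to apply Dalal's refinement of Arthur's simple trace formula (Proposition~\ref{prop:DalalsFormula}) to $f = f_p^{(k)} f^{\infty,p} f_{\xi,\zeta}$, expanding $T^G_{\disc,\chi}(f)$ as a finite sum, over $M \in \cL_{\tu{cusp}}$ and over $\fkX$-orbits of $\R$-elliptic $\gamma \in M(\Q)$, of contributions proportional to $\chi(I_\gamma^M)\zeta(\gamma)\Phi_M(\gamma,\xi)\cdot O^M_\gamma\bigl((f_p^{(k)})_M\,(f^{\infty,p})_M\bigr)$. Since the outer sum is finite and independent of $k$, it suffices to bound each $(M,\gamma)$-summand by $O\bigl(p^{k(\langle \rho,\nu\rangle + \langle \chi_\infty,\ol\nu\rangle)}\bigr)$.

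I would first analyze the $p$-adic constant term using Lemma~\ref{lem:nu-constant-terms}: $(f_p^{(k)})_M$ decomposes in $\cI(M(\Q_p),\chi_p^{-1})$ as $\sum_{\omega \in \Omega^G_{M,L}} \mathscr{J}_{\nu_\omega}\bigl(\omega^{-1}(\mathscr{C}^L_{L_\omega}(\phi_p^{(k)}))\bigr)$ with $L = M_\nu$. By Lemmas~\ref{lem:acc-preservation-const-term} and~\ref{lem:nu-twist-support}, each $\omega$-summand is represented by a function whose $\tu{supp}^O_{\fka_M}$ lies in $k\cdot H^M(\nu_\omega(p)) + S_\omega$ for a $k$-independent finite set $S_\omega$, and its regular orbital integrals acquire, via Lemma~\ref{lem:O-tr-nu-ascent}, the $\delta^{-1/2}$-factor from the $\nu_\omega$-ascent, scaling as $p^{k c(M,\omega)}$, where $c(M,\omega) = \langle \rho^M_{\nu_\omega},\nu_\omega\rangle$ with $\rho^M_{\nu_\omega}$ the half-sum of the roots of $T$ in the opposite nilradical $N^{\tu{op}}_{\nu_\omega} \cap M$. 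Any orbit $\gamma$ contributing must then satisfy $H^M_p(\gamma) \in k \cdot H^M(\nu_\omega(p)) + S_\omega$ for some $\omega$, while $H^{M,\infty,p}(\gamma)$ stays in a $k$-independent bounded set determined by $(f^{\infty,p})_M$.

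Applying the product formula (Lemma~\ref{lem:HM-product-formula}(ii)) and projecting by $\tu{pr}_G \colon \fka_M \to \fka_G$, I would deduce $H^G_\infty(\gamma) = -k \cdot H^G(\nu(p)) + O(1)$ uniformly in $(M,\omega)$, using $\tu{pr}_G(\nu_\omega) = \ol\nu$ by Weyl-averaging \eqref{eq:pr=Weyl-orbit}. The factor $\chi(I_\gamma^M)$ is $k$-independent on each orbit. For the archimedean contribution $\zeta(\gamma)\Phi_M(\gamma,\xi)$, I would decompose $\gamma$ into its $A_{G,\infty}$-component $z_\gamma$ and a remainder that is bounded modulo $A_M$ (using $\R$-ellipticity of $\gamma$ in $M$), so that the entire $k$-growth comes from $z_\gamma$; combining the central character relation $\omega_\xi\zeta = \chi_\infty^{-1}$ on $\fkX_\infty \supset A_{G,\infty}$ with Lemma~\ref{lem:lambda(nu)} yields
\[
|\zeta(\gamma)\Phi_M(\gamma,\xi)| = O\bigl(p^{k\,\tu{Re}\langle \chi_\infty,\ol\nu\rangle}\bigr).
\]
Together with the $p^{kc(M,\omega)}$-factor from the $\nu_\omega$-ascent and the boundedness of the remaining finite-ad\`elic orbital integral, each $(M,\omega,\gamma)$-contribution is bounded by $O\bigl(p^{k(c(M,\omega)+\langle \chi_\infty,\ol\nu\rangle)}\bigr)$.

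To conclude, I would verify $c(M,\omega) \le \langle \rho,\nu\rangle$ uniformly, with equality iff $(M,\omega) = (G,1)$. For $M = G$ this is immediate from centrality of $\nu$ in $M_\nu$, so the $B$-positive roots in $M_\nu$ contribute $0$ and $c(G,1)$ equals $\langle \rho,\nu\rangle$. For proper $M \subsetneq G$, the set of roots in $N^{\tu{op}}_{\nu_\omega} \cap M$ is strictly smaller than the Weyl-translate of $N^{\tu{op}}_\nu$, and since the omitted roots pair strictly positively with $\nu_\omega$ (and hence, after Weyl-translation, with $\nu$), the inequality is strict. The main obstacle I anticipate is the archimedean bound on $|\zeta(\gamma)\Phi_M(\gamma,\xi)|$ for proper cuspidal $M$: the datum $(\fkX,\chi)$ does not directly constrain $\omega_\xi\zeta$ on $A_{M,\infty}/A_{G,\infty}$, so one must carefully use $\R$-ellipticity of $\gamma$ modulo $A_M$ together with standard polynomial bounds on stable discrete series characters $\Phi_M(\gamma,\xi)$ to confirm that no hidden exponential $k$-growth leaks in along the $A_{M,\infty}/A_{G,\infty}$-direction.
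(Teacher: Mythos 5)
Your proposal takes the geometric route (Dalal's formula, Proposition~\ref{prop:DalalsFormula}), whereas the paper's proof of this proposition is purely spectral and much shorter: one expands $T^G_{\disc,\chi}$ over $\pi\in\cA_{\disc,\chi}(G)$, writes $\Tr(f_p^{(k)}|\pi_p)=\Tr(\phi_p^{(k)}|J_{P_\nu^{\tu{op}}}(\pi_p))=\sum_i c_i\,\omega_{\tau_i}(\nu(p))^k\Tr(\phi_p|\tau_i)$, twists $\pi$ by the character $\lambda_{\A}$ built from $\chi_\infty$ so that $\pi_p\otimes\lambda_p^{-1}$ is unitary, and then applies Casselman's estimate (Proposition~\ref{prop:BoundExp} via Corollary~\ref{cor:BoundExp}) to get $|\omega_{\tau_i}(\nu(p))|\le p^{\langle\rho,\nu\rangle+\langle\chi_\infty,\ol\nu\rangle}$, the Jacquet length being bounded by Bernstein--Zelevinsky. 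The whole architecture of Section~\ref{sec:asymptotic-analysis} rests on the fact that $T_{\disc}$ admits this easy spectral bound while only $T_{\el}$ requires the geometric expansion; you are attempting to prove the easy half by the hard route.

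The concrete gap is your archimedean bound $|\zeta(\gamma)\Phi_M(\gamma,\xi)|=O\bigl(p^{k\,\tu{Re}\langle\chi_\infty,\ol\nu\rangle}\bigr)$, which is false for proper $M$. Ellipticity of $\gamma$ in $M(\R)$ only bounds $\gamma$ modulo $A_{M,\infty}$, not modulo $A_{G,\infty}$, and the product formula together with the support of $(f_p^{(k)})_{M,\omega}$ forces $H^M_\infty(\gamma)=k\log p\cdot\tu{pr}_M(\omega^{-1}\nu)+O(1)$, i.e.\ the $A_{M,\infty}/A_{G,\infty}$-component of $\gamma$ grows linearly in $k$. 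Since $\Phi_M(\gamma,\xi)$ contains the factors $\delta_P^{-1/2}(\gamma)$ and finite-dimensional characters of $M$ (cf.\ Lemma~\ref{lem:TF-technical-lemma}(ii)), it grows exponentially in that direction: already for $G=\GL_2$, $M=T$, $\xi=\zeta=\mathbf 1$, $\chi_\infty=\mathbf 1$ one finds $|\Phi_T(\gamma,\mathbf 1)|\asymp p^{k\langle\rho,\nu\rangle}$ rather than $O(1)$. Showing that this growth never exceeds the available slack $p^{k(\langle\rho,\nu\rangle-c(M,\omega))}$ is precisely the content of the GKM support condition $I(\omega_\infty)=I(\gamma)$ and the inequalities (a), (b) in the proof of Theorem~\ref{thm:main-estimate} --- and there the paper additionally imposes $C$-regularity at an auxiliary prime $q$ to control the chamber of $H^M_\infty(\gamma)$, a hypothesis not available in Proposition~\ref{prop:spectral-estimate}, which holds for arbitrary $f^{\infty,p}$. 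Separately, your claim that the sum over $\gamma$ is ``finite and independent of $k$'' is unjustified, since the support of $(f_p^{(k)})_M$ moves with $k$; one would still need a uniform count of contributing conjugacy classes and a uniform bound on the finite-ad\`elic orbital integrals near singular elements.
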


\begin{proof}
The left hand side equals
$$
\sum_{\pi\in \mathcal{A}_{\disc,\chi}(G)} m(\pi) \Tr (f_{p}^{(k)} |\pi_p) \Tr(f^{\infty,p}|\pi^p) \Tr(f_{\xi,\zeta}|\pi_\infty).
$$
Write $J_{P^{\tu{op}}_\nu}(\pi_p)=\sum_i c_i \tau_i$ in $\Groth(M_\nu(\Q_p))$ with $\tau_i\in \Irr(M_{\nu}(\Q_p))$. Let $\omega_{\tau_i}$ denote the central character of $\tau_i$. Then
$$
\Tr (f_{p}^{(k)} |\pi_p)=\Tr \left(\phi_p^{(k)}| J_{P^{\tu{op}}_\nu}(\pi_p)\right)=\sum_i c_i \Tr (\phi_p^{(k)}| \tau_i)
= \sum_i c_i \omega_{\tau_i}(\nu(p))^k \Tr (\phi_p|\tau_i).
$$

We define a character $\lambda_{\A}:G(\Q)\backslash G(\A)\ra \R_{>0}^\times$ as the composite
$$
\lambda_{\A} \colon G(\Q)\backslash G(\A)\stackrel{H^G}{\ra} \fka_G \stackrel{\lambda_{\chi_\infty}}{\ra} \R^\times_{>0}.
$$
Write $\lambda_v$ for the restriction of $\lambda_{\A}$ to $G(\Q_v)$ for a place $v$ of $\Q$. For each $\pi\in \mathcal{A}_{\disc,\chi}(G)$ contributing to the sum, we see that $\pi\otimes \lambda^{-1}_{\A}$ is a unitary automorphic representation of $G(\A)$ since $\pi_\infty\otimes \lambda^{-1}_\infty$ is unitary (by construction, $\pi_\infty\otimes \lambda_\infty^{-1}$ has trivial central character on $A_G(\R)^0$). Thus $\pi_p\otimes \lambda_p^{-1}$ is unitary. Applying Corollary \ref{cor:BoundExp} to $\pi\otimes \lambda^{-1}$ at $p$, we have
$$
\left|\omega_{\tau_i}(\nu(p)) \lambda^{-1}_p(\nu(p))\right| \le \delta_{P_\nu^{\tu{op}}}^{-1/2}(\nu(p))=p^{\langle \rho,\nu\rangle},
$$
noting that $\nu(p)\in A_{P_\nu}^{--}$. We deduce via Lemma \ref{lem:lambda(nu)} that
$$
\left|\omega_{\tau_i}(\nu(p))\right| \le  p^{\langle \rho,\nu\rangle} |\lambda_p(\nu(p))|_p = p^{\langle \rho,\nu\rangle} | p^{-\langle\chi_\infty,\ol\nu\rangle}|_p
= p^{\langle \rho,\nu\rangle+\langle\chi_\infty,\ol\nu\rangle}.
$$
By \cite[Cor.~2.13]{BernsteinZelevinskyInduced} the length of $J_{P^{\tu{op}}_\nu}(\pi_p)$, namely $\sum_i c_i$, can be bounded only in terms of $G$. This completes the proof.
\end{proof}

We state the main trace formula estimate of this paper. The proof will be given in \S\ref{sub:proof-main-estimate} below.

\begin{theorem}[main estimate]\label{thm:main-estimate}
Let $G,(\fkX,\chi),p,q,\xi,\zeta,\nu$ be as defined in the beginning of Section \ref{sec:asymptotic-analysis}, and additionally assume that $\fkX=Y(\A) A_{G,\infty}$ for a central torus $Y\subset Z_G$ over $\Q$. Let
\begin{itemize}
\item $f^{\infty,p,q}=\prod_{v\neq \infty,p,q} f_v\in \cH(G(\A^{\infty,p,q}),(\chi^{\infty,p,q})^{-1})$,
\item $\phi_p\in \cH_{\tu{acc}}(M_\nu(\Q_p),\chi_p^{-1})$, and
\item $f_p^{(k)}\in \cH(G(\Q_p),\chi_p^{-1})$ be a $\nu$-ascent of $\phi_p^{(k)}$, for $k\in \Z_{\ge 0}$.
\end{itemize}
Then there exists a constant $C=C(f^{\infty,q},\phi_p)\in \R_{>0}$ such that for each $f_q\in \cH(G(\Q_q),\chi_q^{-1})_{C\tu{-reg}}$,
$$
T_{\el,\chi}^G(f_{p}^{(k)}f_q f^{\infty,p,q}f_{\xi,\zeta} )= T_{\disc,\chi}^G(f_{p}^{(k)}f_q f^{\infty,p,q}f_{\xi,\zeta}) + o \left(p^{k(\langle \rho,\nu \rangle + \langle\chi_\infty,\ol\nu\rangle)} \right).
$$
\end{theorem}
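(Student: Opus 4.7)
I induct on the semisimple rank of $G$. The base case is trivial: when $G$ is a torus, $\cL_{\tu{cusp}}(G) \smallsetminus \{G\} = \emptyset$ and $T^G_{\el,\chi} = T^G_{\disc,\chi}$.

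For the inductive step, set $f := f^{(k)}_p f_q f^{\infty,p,q} f_{\xi,\zeta}$ and apply Dalal's formula (Proposition \ref{prop:DalalsFormula}). The $M = G$ summand, combined with Lemma \ref{lem:RecognizeEllipticPart} for the archimedean factor at regular elliptic $\gamma$ and with the $C$-regularity of $f_q$ (which via Lemmas \ref{lem:C-reg-inherited}(1) and \ref{lem:represent-by-C-regular}(1) kills orbital integrals at non-regular semisimple classes), is precisely $T^G_{\el,\chi}(f)$. Hence
$$
T^G_{\disc,\chi}(f) - T^G_{\el,\chi}(f) = \sum_{M \in \cL_{\tu{cusp}}(G),\ M \neq G} \Lambda_M,
$$
where each $\Lambda_M$ is a finite sum over $\gamma \in \Gamma_{\R\tu{-ell},\fkX}(M)$ of orbital integrals of the constant term $f_M$ weighted by $\chi(I^M_\gamma)\zeta(\gamma)\Phi_M(\gamma,\xi)$ and by combinatorial constants. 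It suffices to show $\Lambda_M = o(p^{k(\langle\rho,\nu\rangle+\langle\chi_\infty,\ol\nu\rangle)})$ for each proper cuspidal Levi $M$.

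Fix such an $M$. Three successive rewritings reduce $\Lambda_M$ to a trace formula at level $M$. First, Lemma \ref{lem:C-reg-inherited}(1) makes $(f_q)_M$ $C$-regular on $M(\Q_q)$, so by Lemma \ref{lem:represent-by-C-regular}(1) only regular $\gamma$ contribute. Second, for regular $\R$-elliptic $\gamma$, Lemma \ref{lem:RecognizeEllipticPart} absorbs $\chi(I^M_\gamma)\zeta(\gamma)\Phi_M(\gamma,\xi)$ into a constant multiple of $O^M_\gamma(f^M_{\xi,\zeta})$, where $f^M_{\xi,\zeta}$ is the averaged Lefschetz function on $M(\R)$. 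Third, Lemma \ref{lem:nu-constant-terms} decomposes
$(f^{(k)}_p)_M = \sum_{\omega \in \Omega^G_{M,M_\nu}} \mathscr J_{\nu_\omega}(\phi^{(k)}_{p,\omega}),$
with $\phi_{p,\omega} := \omega^{-1}\mathscr C^{M_\nu}_{L_\omega}(\phi_p)$ acceptable by Lemma \ref{lem:acc-preservation-const-term} and $\nu_\omega$ a cocharacter of $M$ with Levi centralizer $M \cap \omega^{-1}(M_\nu)$. Up to the regularity restriction, $\Lambda_M$ thus becomes a finite sum indexed by $\omega$ of regular-elliptic parts of $T^M_{\el,\chi_M}$ applied to test functions whose $p$-components are $\nu_\omega$-ascents of $\phi^{(k)}_{p,\omega}$. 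The inductive hypothesis applied to $(M,\nu_\omega,\phi_{p,\omega})$, combined with Proposition \ref{prop:spectral-estimate} at level $M$, bounds each summand by $O(p^{k(\langle\rho_M,\nu_\omega\rangle+\langle\chi_\infty,\ol\nu\rangle)})$ up to a power-saving error. Here I have used $\langle\chi_\infty,\ol{\nu_\omega}_M\rangle = \langle\chi_\infty,\ol\nu\rangle$, which follows since $\chi_\infty$ factors through $A_G$ and $\tu{pr}_G$ is Weyl-invariant, so that $\tu{pr}_G \circ \tu{pr}_M(\omega^{-1}\nu) = \tu{pr}_G(\nu) = \ol\nu$.

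The conclusion then reduces to the strict inequality $\langle\rho_M,\nu_\omega\rangle < \langle\rho,\nu\rangle$ for every proper $M$ and every $\omega \in \Omega^G_{M,M_\nu}$; equivalently, $\langle\rho - \omega\rho_M,\nu\rangle > 0$. Since $\rho - \omega\rho_M$ is a non-negative combination of $B$-positive roots in the nilpotent radical of $\omega(P)$ and $\nu$ is $B$-dominant, this positivity holds whenever at least one such root lies outside $M_\nu$. The hard part will be the induction's bookkeeping: tracking how central character data, Haar measures, and Weyl-combinatorial constants descend to $M$ while keeping the hypothesis $\fkX = Y(\A)A_{G,\infty}$ in a form usable at level $M$; verifying that the auxiliary $C$-regularity at $q$ truly isolates a single Weyl chamber in the character formula underlying $\Phi_M$ as in Laumon's strategy from \cite{Lau97}; and handling the degenerate cases where $\rho - \omega\rho_M$ pairs trivially with $\nu$, which require a refined exponent comparison or extra structural input on $\Omega^G_{M,M_\nu}$. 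These subtleties are what give \S\ref{sub:proof-main-estimate} its length.
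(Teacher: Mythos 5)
Your overall framework—induction on semisimple rank via Dalal's formula, isolating the $M=G$ term as $T^G_{\el,\chi}$, decomposing $(f^{(k)}_p)_M$ by the geometric lemma, decomposing $(f_q)_M$ by Weyl chambers, and applying the inductive hypothesis plus Proposition~\ref{prop:spectral-estimate} at Levi level—is the right skeleton, and it matches the paper's proof. But there are two substantial gaps that go beyond ``subtleties to be tracked.''

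First, your second rewriting step is wrong as stated. You claim that Lemma~\ref{lem:RecognizeEllipticPart} absorbs $\chi(I^M_\gamma)\zeta(\gamma)\Phi_M(\gamma,\xi)$ into $O^M_\gamma(f^M_{\xi,\zeta})$, but that lemma applies to $\Tr(\gamma;\xi)$, not to Arthur's $\Phi_M(\gamma,\xi)$; the two coincide only when $M = G$. For a proper Levi $M$, $\Phi_M(\gamma,\xi)$ is a sum of stable discrete-series character values along elliptic tori of $M$ that depends on the Weyl chamber containing $H^M_\infty(\gamma)$. Handling this is precisely the purpose of the paper's technical Lemma~\ref{lem:TF-technical-lemma}: one first proves, using $C$-regularity at $q$ together with acceptability at $p$ and the product formula $\sum_v H^M_v(\gamma) = 0$, that for $k$ large the point $x_\infty$ lies in a single chamber $\cC_0(M,\omega_p,\omega_q)$ of $\fka_M^{\tu{reg}}$; then one invokes the Goresky--Kottwitz--MacPherson formula to convert $\Phi_M(\gamma,\xi)$ into a sum over a fixed subset $\overline\Omega^{G\diamond}_M \subset \overline\Omega^G_M$ of finite-dimensional characters $\Tr(\gamma;\xi^M_{\lambda_B(\omega_\infty)})$ of $M$, each twisted by $\delta_P^{-1/2}$. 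Only then can Lemma~\ref{lem:RecognizeEllipticPart} be applied, term by term in $\omega_\infty$, producing a linear combination of Lefschetz functions $f_{\xi^M_{\lambda_B(\omega_\infty)},\ \zeta\delta_P^{-1/2}}$ rather than a single $f^M_{\xi,\zeta}$. Without this step the induction cannot run, since $\Phi_M$ does not naturally present itself as an orbital integral.

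Second, your exponent comparison for the central-character contribution is false because it compares the wrong objects. The central character datum descending to $M$ is not $(\fkX,\chi)$ with the same $\chi_\infty$; it is a new datum $(\fkX_M,\chi^M_{\omega_\infty})$ where $\chi^M_{\omega_\infty}$ depends on the GKM index $\omega_\infty$ and encodes the central character of $\xi^M_{\lambda_B(\omega_\infty)}\otimes\zeta\delta_P^{-1/2}$ on $A_{M,\infty}$. The quantity appearing in the Levi estimate is $\langle(\chi^M_{\omega_\infty})_\infty, \tu{pr}_M(\omega_p^{-1}\nu)\rangle$, not $\langle\chi_\infty,\ol\nu\rangle$, and the relation between them is a real inequality (paper's item (b)), whose proof feeds back through the GKM one-chamber condition $I(\gamma) = I(\omega_\infty)$: one deduces $\langle\pr_M^*(\omega_\infty\star\lambda_B^*) + \overline\rho_N + \omega_\xi, \tu{pr}_M(\omega_p^{-1}\nu)\rangle \leq 0$ for $k$ large and then unwinds the definitions. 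Your Weyl-invariance argument for $\tu{pr}_G$ would apply if $\chi$ simply restricted, but it doesn't, so the key strict inequality for the total exponent is not established by your proposal.

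In short, the GKM reduction and its interplay with the one-chamber condition is not a bookkeeping issue to be deferred; it is the mechanism by which the Levi terms become trace formulas at level $M$ to which the induction can apply, and it is also what makes the archimedean exponent comparison work. Your proposal correctly identifies the place where the difficulty lives but does not supply the argument.
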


As a corollary, we derive the stable analogue of Theorem \ref{thm:main-estimate}. We keep the setup of Theorem \ref{thm:main-estimate} and let $f^{\infty,p,q},\phi_p,f_p^{(k)}$ be as in that theorem. For each $\fke\in \cE^<_{\el}(G)$, we have $\fke_1=(G_1^\fke,{}^L G_1^\fke,s_1^\fke,\eta^\fke_1)$ and a central character datum $(\fkX_1^\fke,\chi_1^\fke)$ as in \S\ref{sub:endoscopy-z-ext}. Moreover we choose the representatives $\fke,\fke_1$ such that $\eta^\fke(W_F)$ and $\eta^\fke_1(W_F)$ have bounded images, as explained in Lemma \ref{lem:bounded-representative} and \S\ref{sub:endoscopy-z-ext}. Let
$$
f_1^{(k),\fke}=\prod_v f_{1,v}^{(k),\fke}\in \cH(G_1^\fke(\A),(\chi_1^\fke)^{-1})
$$
be a transfer of $f_{p}^{(k)}f_q f^{\infty,p,q}f_{\xi,\zeta}$. Then we have the following bound.

\begin{corollary}\label{cor:main-estimate}
In the setup of Theorem \ref{thm:main-estimate}, there exists a constant $C=C(f^{\infty,q},\phi_p,\xi,\zeta)\in \R_{>0}$ such that for every $f_q\in \cH(G(\Q_q))_{C\tu{-reg}}$, firstly
$$
ST_{\el,\chi}^G(f_{p}^{(k)}f_q f^{\infty,p,q} f_{\xi,\zeta}) = \begin{cases}
T_{\disc,\chi}^G( f_{p}^{(k)}f^{\infty,p,q}f_q f_{\xi,\zeta} )+ o\left( p^{k(\langle \rho,\nu \rangle + \langle\chi_\infty,\ol\nu\rangle)}\right),\\
O\left( p^{k(\langle \rho,\nu \rangle + \langle\chi_\infty,\ol\nu\rangle)}\right),
\end{cases}
$$
and secondly for each $\fke\in \cE^<_{\el}(G)$ (note that $f^{(k),\fke}_{1,q}$ inherits $C$-regularity from $f_q$), 
$$
\ST^{G_1^\fke}_{\el,\chi^\fke_1} \left(f^{(k),\fke}_1\right) = o\left( p^{k(\langle \rho,\nu \rangle + \langle\chi_\infty,\ol\nu\rangle)}\right).
$$
\end{corollary}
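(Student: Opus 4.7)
The plan is to prove both parts simultaneously by induction on the cardinality of the absolute root system $|\Phi(G)|$, using Theorem \ref{thm:main-estimate} as a black box for $G$ and (inductively) for its elliptic endoscopic groups $G_1^\fke$. The base case is a torus, where there is no non-trivial elliptic endoscopy and the statement is immediate.

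First I would dispatch Part~(1) assuming Part~(2). Since $G$ is quasi-split, $G = G^*$, and Proposition~\ref{prop:stabilization-elliptic} gives
$$ ST^{G}_{\el,\chi}(f) = T^G_{\el,\chi}(f) - \sum_{\fke \in \cE^<_{\el}(G)} \iota(G,G^\fke)\, ST^{G_1^\fke}_{\el,\chi_1^\fke}(f_1^\fke), $$
where $f = f_p^{(k)} f_q f^{\infty,p,q} f_{\xi,\zeta}$. The vanishing-at-$q$ hypothesis of Proposition~\ref{prop:stabilization-elliptic} is supplied by $C$-regularity of $f_q$ together with Lemma~\ref{lem:represent-by-C-regular}(1). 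The sum on the right is finite by Lemma~\ref{lem:finiteness}, each summand is $o(p^{k(\langle\rho,\nu\rangle + \langle\chi_\infty,\ol\nu\rangle)})$ by Part~(2), and Theorem~\ref{thm:main-estimate} bounds $T^G_{\el,\chi}(f) - T^G_{\disc,\chi}(f)$ by the same $o(\cdot)$. This yields the first statement of Part~(1); the $O$-bound then follows from Proposition~\ref{prop:spectral-estimate} applied to $G$.

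For Part~(2) I would unpack $f_1^{(k),\fke}$ place by place. Fixing a lift $\nu_1$ of $\nu$ through $G_1 \to G$, Proposition~\ref{prop:nu-transfer} (extended to $z$-extensions in~\S\ref{sub:nu-z-extension}) gives at $p$
$$ f_{1,p}^{(k),\fke} = \sum_{\omega\in \Omega_{\fke_1,\nu_1}} \lambda^\fke_{1,\omega}(\nu_1(p))^k\, \mathscr{J}_{\nu_{1,\omega}}\big(\phi^{(k)}_{1,\omega}\big), $$
where each $\phi^{(k)}_{1,\omega} = (\tu{LS}^{\fke_1,\omega}(\phi_p))^{(k)}$ is $\nu_{1,\omega}$-acceptable by Lemma~\ref{lem:acc-preservation}. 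At $q$, the transfer $f_{1,q}^{\fke}$ is $C$-regular by Lemma~\ref{lem:C-reg-inherited}(2), and $G_1^\fke$ is split over $\Q_q$ by our choice of $q$ (and of the $z$-extension). Elliptic endoscopy preserves cuspidality as well as the form $\fkX_1^\fke = Y^\fke(\A)\,A_{G_1^\fke,\infty}$ of the central character datum (using that $A_G = A_{G^\fke}$ for elliptic $\fke$), so the hypotheses of Theorem~\ref{thm:main-estimate} hold for each pair $(G_1^\fke, \nu_{1,\omega})$. Applying that theorem together with Proposition~\ref{prop:spectral-estimate} to $G_1^\fke$, and noting that $|\lambda^\fke_{1,\omega}(\nu_1(p))^k| = 1$ by Lemma~\ref{lem:lambda-is-unitary}, summation over $\omega$ produces
$$ T^{G_1^\fke}_{\el,\chi_1^\fke}(f_1^{(k),\fke}) = O\Big(\max_{\omega} p^{k(\langle \rho^\fke, \nu_{1,\omega}\rangle + \langle \chi_{1,\infty}^\fke, \ol{\nu_{1,\omega}}\rangle)}\Big). $$
The stabilization of Proposition~\ref{prop:stabilization-elliptic} applied to $G_1^\fke$, combined with the inductive hypothesis for $\fke'\in \cE^<_\el(G_1^\fke)$, then promotes this to the same bound for $ST^{G_1^\fke}_{\el,\chi_1^\fke}(f_1^{(k),\fke})$.

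The hard part will be to establish the root-theoretic inequality
$$ \langle \rho^\fke, \nu_{1,\omega}\rangle + \langle \chi_{1,\infty}^\fke, \ol{\nu_{1,\omega}}\rangle \;<\; \langle \rho, \nu\rangle + \langle \chi_\infty, \ol\nu\rangle, \qquad \fke\in \cE^<_{\el}(G),\; \omega \in \Omega_{\fke_1,\nu_1}, $$
which upgrades the $O$-bound above to the desired $o$-bound. The inequality reflects the fact that $\Phi(\hat G^\fke)$ embeds as a \emph{proper} sub-root-system of $\Phi(\hat G)$, so $\rho^\fke$ is strictly dominated by $\rho$ when paired against the dominant cocharacter coming from $\nu$; the archimedean correction $\langle \chi_{1,\infty}^\fke, \ol{\nu_{1,\omega}}\rangle$ balances against $\langle \chi_\infty, \ol\nu\rangle$ via the formalism of Lemma~\ref{lem:lambda(nu)}, which converts a $p$-adic cocharacter value into an archimedean character value. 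Once this inequality is in hand, the induction on $|\Phi(G)|$ closes: every endoscopic group $G_1^\fke$ appearing in the recursion satisfies $|\Phi(G_1^\fke)| = |\Phi(G^\fke)| < |\Phi(G)|$ (since $Z_1$ is central and contributes no roots), so the inductive hypothesis applies to all smaller groups entering the argument.
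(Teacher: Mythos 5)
Your overall architecture matches the paper's: induct on the size of the group, use Proposition \ref{prop:stabilization-elliptic} to reduce the first display to the endoscopic bound, decompose $f^{(k),\fke}_{1,p}$ via Proposition \ref{prop:nu-transfer} into $\nu_{1,\omega}$-ascents of acceptable functions (Lemma \ref{lem:acc-preservation}), invoke unitarity of $\lambda^\fke_{1,\omega}$ (Lemma \ref{lem:lambda-is-unitary}) and $C$-regularity of the transfer at $q$ (Lemma \ref{lem:C-reg-inherited}), and finish with a root-theoretic comparison of exponents. Your inlining of one inductive step (rederiving the $O$-bound for $ST^{G_1^\fke}_{\el}$ from Theorem \ref{thm:main-estimate} plus a second stabilization) is equivalent to the paper's direct citation of the inductive hypothesis, and your induction variable $|\Phi(G)|$ is fine.

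Two points are genuine gaps. First, ``elliptic endoscopy preserves cuspidality'' is false: an elliptic endoscopic group $G^\fke$ of a cuspidal $G$ need not contain an elliptic maximal torus over $\R$, so Theorem \ref{thm:main-estimate} cannot be applied to $G_1^\fke$ unconditionally. The paper disposes of these $\fke$ by observing that the archimedean test function $f^\fke_{1,\infty}$ then vanishes identically (as in \cite{KottwitzAnnArbor}), so the bound is trivial; you need that escape hatch. Second, and more seriously, your heuristic for the strict inequality $\langle\rho^\fke,\nu_{1,\omega}\rangle<\langle\rho,\nu\rangle$ is insufficient: proper containment $\Phi(\hat G^\fke)\subsetneq\Phi(\hat G)$ only guarantees a coroot outside $\hat G^\fke$, not one pairing \emph{nontrivially} with $\nu$. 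Since $\langle\rho^\fke,\nu_{1,\omega}\rangle$ and $\langle\rho,\nu\rangle$ are sums of $\langle\alpha^\vee,\nu\rangle$ over positive coroots (cf.\ \eqref{eq:EndoscopicFinalRootArg}), the inequality could a priori be an equality if every coroot of $\hat G$ outside $\hat G^\fke$ were killed by $\nu$, i.e.\ lay in $\hat M_\nu$. One must show $\Lie\hat M_\nu+\Lie\hat G^\fke\subsetneq\Lie\hat G$, which is where the paper invokes \cite[Lem.~4.5(ii)]{KST2} applied to the non-central semisimple element $s^\fke$; this input is missing from your sketch. (Your statement that the archimedean terms ``balance'' is correct: the paper shows $\tu{Re}\langle\chi^\fke_{1,\infty},\ol\nu_{1,\omega}\rangle=\tu{Re}\langle\chi_{1,\infty},\ol\nu_1\rangle$ exactly, using $\ol\nu_\omega=\ol\nu$ via the Weyl-average description \eqref{eq:pr=Weyl-orbit} and unitarity of $\lambda^\fke_\infty$, so the strict gap must come entirely from the $\rho$-terms.)
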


\begin{remark}\label{rem:cor-main-estimate}
In the inductive proof of the last bound, we only use the fact that its $q$-component is $C$-regular, $\infty$-component is a Lefschetz function, and most importantly the $p$-component is an ascent for a suitable cocharacter. We do not rely on the fact that $f^{(k),\fke}_1$ is a transfer of a function on $G(\A)$.
\end{remark}

\begin{proof}
The second estimate is immediate from the first via Proposition \ref{prop:spectral-estimate}. Let us prove the first and third asymptotic formulas, by reducing the former to the latter.

We induct on the semisimple rank of $G$. (For each $G$, we prove the corollary for all central character data and all $\nu$.) The estimate is trivial when $G$ is a torus, in which case $\ST_{\el,\chi}^G=T_{\el,\chi}^G=T_{\disc,\chi}^G$. We assume that $G$ is not a torus and that Corollary \ref{cor:main-estimate} is true for all groups which have lower semisimple rank than $G$. Write $f^{(k)}:=f_{p}^{(k)}f_q f^{\infty,p,q} f_{\xi,\zeta}$. The stabilization (Proposition \ref{prop:stabilization-elliptic}) tells us that
$$ 
\ST^{G}_{\el,\chi} (f^{(k)})=T^{G}_{\el,\chi} (f)
- \sum_{\fke\in \cE^{<}_{\el}(G)} \iota(G,G^\fke) \ST^{G_1^\fke}_{\el,\chi^\fke_1} \left(f^{(k),\fke}_1\right).
$$
In light of Theorem \ref{thm:main-estimate}, since the summand is nonzero only for a finite set of $\fke$ by Lemma \ref{lem:finiteness} (depending only on the finite set of primes $v$ where either $G_{\Q_v}$ or $f_v$ is ramified), it suffices to establish the last bound of the corollary. This task takes up the rest of the proof.
  
If $G^\fke_{\R}$ contains no elliptic maximal torus or if $A_{G^\fke}\neq A_G$ (equivalently if $A_{G^\fke_1}\neq A_{G_1}$), then $f_{1,\infty}^\fke$  is trivial as observed in \cite[p.182,~p.189]{KottwitzAnnArbor} so the desired estimate is trivially true. Henceforth, suppose that $G^\fke_{\R}$ contains an elliptic maximal torus. Then $f^{(k),\fke}_{1,\infty}$ is a finite linear combination of $f_{\eta^\fke_1,\zeta^\fke_1}$ over the set of $(\eta^\fke_1,\zeta^\fke_1)$ such that $\eta^\fke_1\circ \varpi_{\eta^\fke_1,\zeta^\fke_1} \simeq  \varpi_{\xi,\zeta}$. Proposition \ref{prop:nu-transfer} and its adaptation to $z$-extensions according to \S\ref{sub:fixed-central-character} and \S\ref{sub:nu-z-extension} tell us that
$$
f^{(k),\fke}_{1,p}=\sum_{\omega} \lambda^{\fke}_{1,\omega}(\nu_{1}(p))^{k} \mathscr J_{\nu_{1,\omega}}\left(\phi^{(k),\fke}_{1,p,\omega}\right)
= f^{(k),\fke}_{1,p}=\sum_{\omega} \lambda^{\fke}_{1,\omega}(\nu_{1}(p))^{k} f^{(k),\fke}_{1,p,\omega},
$$
where we have put $f^{(k),\fke}_{1,p,\omega}:=\mathscr J_{\nu_{1,\omega}}\left(\phi^{(k),\fke}_{1,p,\omega}\right)$ for a $\nu_{1,\omega}$-ascent of $\phi^{(k),\fke}_{1,p,\omega}\in \cH_{\tu{acc}}(G^\fke_{1,\nu}(\Q_p),(\chi^\fke_{1,p})^{-1})$. Here we applied Lemma \ref{lem:acc-preservation} (keeping \S\ref{sub:fixed-central-character} and \S\ref{sub:nu-z-extension} in mind) to have the transfer $\phi^{(k),\fke}_{1,p,\omega}$ of $\phi^{(k)}_{1,p}$ supported on $\nu_{1,\omega}$-acceptable elements. 

Recalling that $\eta^\fke_1(W_F)\subset {}^L G_1$ is a bounded subgroup, we see from Lemma \ref{lem:lambda-is-unitary} that $ \lambda^{\fke}_{1,\omega}$ is a unitary character. Thus we are reduced to showing the existence of some $C_\fke>0$ such that the following estimate holds for $\omega$ and $(\eta^\fke_1,\zeta^\fke_1)$ as above whenever $f^{\fke}_{1,q}$ is $C_\fke$-regular:
\begin{equation}\label{eq:find-Ce}
\ST^{G_1^\fke}_{\el,\chi^\fke_1} \left( f_1^{\fke,\infty,q,p} f^{\fke}_{1,q} f^{(k),\fke}_{1,p,\omega} f_{\eta^\fke_1,\zeta^\fke_1} \right) \stackrel{?}{=} o\left( p^{k(\langle \rho,\nu \rangle + \langle\chi_\infty,\ol\nu\rangle)}\right),\qquad k\in \Z_{\ge 0}.
\end{equation}
Indeed, take $C$ to be the maximum of all $C_\fke$ over the finite set of $\fke$ contributing to the sum. Then for each $C$-regular $f_q$, Lemma \ref{lem:C-reg-inherited} tells us either that $G_1^\fke$ is split over $\Q_q$ and $f_{1,q}^\fke$ is $C$-regular (thus also $C_\fke$-regular), or that $G_1^\fke$ is non-split over $\Q_q$ and $f_{1,q}^\fke$ vanishes. Thus the bound \eqref{eq:find-Ce} applies, and we will be done.

By the induction hypothesis, there exists $C_\fke>0$ such that whenever $f^{\fke}_{1,q}$ is $C_\fke$-regular, the left hand side of \eqref{eq:find-Ce} is $O\left( p^{k(\langle \rho^\fke_1,\nu_{1,\omega} \rangle + \langle \chi^\fke_{1,\infty},\ol \nu_{1,\omega}\rangle)}\right)$, with $\ol \nu_{1,\omega}\in X_*(A_{G^\fke_1})$ defined from $\nu_{1,\omega}$ in the same way $\ol \nu$ from $\nu$, and where $\rho^\fke_1$ is the half sum of positive roots of $G^\fke_1$ for which $\nu_{1,\omega}$ is a dominant cocharacter. (In other words, $\rho^\fke_1$ is to $\nu_{1,\omega}$ as $\rho$ is to $\nu$.) Therefore it is enough to check that
\begin{enumerate}
\item[(a)] $\langle \rho^\fke_1,\nu_{1,\omega} \rangle< \langle \rho,\nu \rangle$ (in $\Q$).
\item[(b)] $\tu{Re}\langle\chi^\fke_\infty,\ol\nu\rangle = \tu{Re}\langle \chi_{1,\infty},\ol \nu_{1,\omega}\rangle$,
\end{enumerate}

Let us begin with (a). Since $\langle \rho,\nu \rangle=\langle \rho_1,\nu_1 \rangle$, with $\rho_1$ defined for $G_1$ as $\rho$ is for $G$ (recall that $\nu_1:\G_m \ra G_1$ is a lift of $\nu$), the proof of (a) is reduced to the case when $G_1=G$ and $\nu_1=\nu$. We have an embedding $\hat G^\fke \hra \hat G$ coming from $\eta^\fke$, which restricts to $\hat G^\fke_\omega \hra \hat M_\nu$. Here we have chosen $\Gamma_F$-invariant pinnings for the dual groups such that the restriction works as stated. We may and will arrange that the Borel subgroup of $\hat G$ restricts to that of $\hat G^\fke$. Fix a maximal torus $\hat T\subset \hat G^\fke_\omega$ that is part of the pinning for $\hat G^\fke_\omega$. Viewing $\hat T$ also as a maximal torus in each of $\hat G^\fke$ and $\hat G$, we write $\Phi^\vee(\hat T, \hat G)$ and $\Phi^\vee(\hat T, \hat G^\fke)$ for the corresponding sets of coroots. Write $\hat \nu\in X^*(\hat T)$ for the dominant member in the Weyl orbit of characters determined by $\nu$. Then
\begin{equation}\label{eq:EndoscopicFinalRootArg}
\langle \rho^\fke,\nu_{\omega} \rangle = \sum_{\alpha^\vee\in \Phi^\vee(\hat T, \hat G^\fke_\omega)\atop  \langle \alpha^\vee,\nu \rangle>0} \langle \alpha^\vee,\nu \rangle, \qquad
\langle \rho,\nu \rangle = \sum_{\alpha^\vee\in \Phi^\vee(\hat T, \hat G)\atop \langle \alpha^\vee,\nu \rangle>0}  \langle \alpha^\vee,\nu \rangle.
\end{equation}
Thus it suffices to verify that there exists a coroot $\alpha^\vee\in \Phi^\vee(\hat T, \hat G)$ outside $\hat G^\fke$ such that $\langle \alpha^\vee,\nu \rangle>0$. The centralizer of $\hat \nu$ in $\hat G$ is identified with the dual group $\hat{M_\nu}$ (namely $\langle \alpha^\vee,\nu \rangle=0$ if and only if $\alpha^\vee$ is a coroot of $\hat{M_\nu}$), so we will be done if $\Lie \hat{M_\nu} + \Lie \hat G^\fke$ is a proper subspace of $\Lie \hat G$. This is exactly proved in \cite[Lem.~4.5 (ii)]{KST2} applied to $\textsf{G}=\hat G$, $\textsf{M}=\hat M_\nu$, and $\delta=s^\fke$. (The proof of \emph{loc.~cit.}~greatly simplifies. One reduces to the case when the Dynkin diagram of $G$ is connected as in the first paragraph in the proof of that lemma. Then argue as in the fourth paragraph of that lemma, with $X_n=0$ and with the role of $X_{ss}$ played by the semisimple element $s^\fke$.) 

Now we prove (b). Since $\langle\chi_\infty,\ol\nu\rangle= \langle\chi_{1,\infty},\ol \nu_1\rangle$, we reduce to showing (b) when $G_1=G$ and $\fke_1=\fke$ (with possibly nontrivial central character data). Thus we drop the 1's from the subscripts and check that
$$  
\tu{Re}\langle\chi^\fke_\infty,\ol\nu\rangle =  \tu{Re}\langle \chi_{\infty},\ol \nu_{\omega}\rangle.
$$  
We claim that $\ol\nu=\ol \nu_{\omega}$ in $X_*(A_G)_{\R}=X_*(A_{G^\fke})_{\R}$. In the diagram below, the triangle on the right commutes, and we want the triangle on the left commutes as well. 
$$
\xymatrix{ &  \G_m \ar@/_1.0pc/[dl]_-{\ol \nu} \ar[d]_-{\ol \nu_\omega} \ar[dr]_-{\nu} \ar[drr]^-{\nu_\omega}  \\
A_G \ar@{=}[r]  & A_{G^\fke}  
&  A_{M_\nu}  \ar@{^(->}[r]  &  A_{G^\fke_\omega}
}
$$
We choose maximal tori $T\subset M_\nu\subset G$ and $T^\fke\subset G^\fke_\omega\subset G^\fke$ with an isomorphism $T_{\ol F}\simeq T^\fke_{\ol F}$ to identify the absolute Weyl group $\ol\Omega^{G^\fke}$ as a subgroup of $\ol\Omega^{G}$. (This is done as in \cite[\S3]{KottwitzEllipticSingular}.) The isomorphism also identifies $\nu=\nu_\omega$. By \eqref{eq:pr=Weyl-orbit}, we have the equalities
$$
\ol \nu = |\ol\Omega^G|^{-1}\sum_{\omega\in \ol\Omega^G} \omega(\nu),\qquad \ol \nu_\omega = |\ol\Omega^{G^\fke}|^{-1}\sum_{\omega\in \ol\Omega^{G^\fke}} \omega(\nu).
$$
Hence $\ol\nu=|\ol\Omega^G\!\!/\ol\Omega^{G^\fke}|^{-1}\sum_{\omega\in \ol\Omega^G \!\!/\ol\Omega^{G^\fke}}   \omega(\ol \nu_\omega) = \ol \nu_\omega$. Indeed, the last equality follows since $\ol \nu_\omega\in X_*(A_{G^\fke})_{\R}=X_*(A_G)_{\R}$, which tells us that $\omega(\ol \nu_\omega)=\ol \nu_\omega$ for $\omega\in \ol\Omega^G$.

Applying \eqref{eq:endoscopic-central-char} at the archimedean place, we have $\chi_\infty=\lambda^\fke_\infty \chi^\fke_\infty$ as characters of $A_G(\R)$. Since $\lambda^\fke_\infty$ is unitary, $|\chi_\infty|=|\chi^\fke_\infty|$. Since $\ol \nu\in X_*(A_G)_{\R}$ (not just in $X_*(A_G)_{\C}$), we conclude that $\tu{Re}\langle\chi^\fke_\infty,\ol\nu\rangle =  \tu{Re}\langle \chi_{\infty},\ol \nu\rangle$ as desired. This verifies (b).
\end{proof}

\subsection{Some facts and notation on Weyl groups and Weyl chambers}

In this subsection we fix some additional notation on Weyl groups, Weyl chambers, which will be needed in the proof of the main estimate in the next subsection.

Let $P = MN \subset G$ be a parabolic subgroup such that $B \subset P$ and $T \subset M$. Write $Z_M^0$ for the identity component of the center of $M$ and $S_{M,p}$ for te maximal $\qp$-split subtorus in $Z_M^0$. If $M = T$, we will write more simply $S_p := S_{M,p}$. 

We then have
$A_{T, \qp} \subset S_p \subset T_{\qp}$. We will write
$$
\Omega^G \subset \Omega_p^G \subset \ol \Omega^G
$$
for the Weyl groups of $A_T$, $S_p$, and $T$ in $G$. Similar notation will be used for other objects related to Weyl groups, for instance we write $\Omega^G_{M, p} \subset \Omega_p^G$ for the set of Kostant representatives for $\Omega^G_p / \Omega^M_p$.

Write $\Phi^G_M = \Phi^G_M(A_M; G)$ for the set of roots of $A_M$ in $\tu{Lie}(G)$. Recall that an element $x \in \ia_M$ is called \emph{regular} if $\langle \alpha, x \rangle \neq 0$ for all $\alpha \in \Phi^G_M$. We write $\ia_M^{\tu{reg}}\subset  \ia_M$ for the subset of all regular elements. The connected components of $\ia_M^{\tu{reg}}$ are said to be the (open) \emph{Weyl chambers} of $\ia_M$. The subset
$$
\cC^{+}_M := \{x \in \ia_M^{\tu{reg}} \,|\, \forall \alpha \in \Phi^G_M:
\langle \alpha, x \rangle > 0\} \subset \ia_M^{\tu{reg}},
$$
is the \emph{dominant Weyl chamber}. Let $\Omega_M^G \subset \Omega^G$ be the set of Kostant representatives for the quotient $\Omega^G /\Omega^M$. The Weyl chambers $\cC \in \pi_0(\ia_M^{\tu{reg}})$ are parametrized via the bijection
$$
\Omega^G_M \to \pi_0(\ia_M^{\tu{reg}}), \quad \omega \mapsto \cC_\omega := \omega\inv(\cC^+_M)\in \pi_0(\ia_M^{\tu{reg}})
$$
If $\cC \subset \ia_M^{\tu{reg}}$ is a Weyl chamber, we write $\cC^\vee \subset \ia_M^*$ for the \emph{dual chamber}, \ie the set of $t \in \ia_M^*$ such that $t(x) > 0$ for all $x \in \cC$.

Write $\Phi(A_M; B)$, $\Phi(S_{M,p}; B_{\qp})$, $\Phi(Z_M; B)$ for the sets of positive roots attached to $A_M$, $S_{M,p}$ and $Z_M$.

\begin{lemma}\label{lem:RootSpacesCompare}
 The following statements are true:
\begin{enumerate}
\item The inclusions $A_{M, \lqp} \subset S_{M,p, \lqp} \subset Z_{M,\lqp}^0$ induce (by restriction) a sequence of maps
$$
\Phi(Z_M^0; B)  \to \Phi(S_{M,p}; B)  \to \Phi(A_M; B)
$$
which are all surjective.
\item The following 3 subsets of $\ia_M$ are equal:
\begin{enumerate}
\item The set of $x \in \ia_M$ such that for all $\alpha \in \Phi(Z_M^0; B)$ we have $\langle \alpha, x \rangle > 0$;
\item The set of $x \in \ia_M$ such that for all $\alpha \in \Phi(S_{M,p}; B)$ we have $\langle \alpha, x \rangle > 0$;
\item The set of $x \in \ia_M$ such that for all $\alpha \in \Phi(A_M; B)$ we have $\langle \alpha, x \rangle > 0$.
\end{enumerate}
\item The natural maps $\pi_0(\ia_M^{\tu{reg}}) \to \pi_0( X_*(S_{M,p})_{\R}^{\tu{reg}}) \to \pi_0(X_*(Z_M^0)_{\R}^{\tu{reg}})$ are injections.
\end{enumerate}
\end{lemma}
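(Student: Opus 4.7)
The three parts all reduce to one basic observation: since $A_M \subset S_{M,p} \subset Z_M^0$ are central subtori of $M$ whose centralizer in $G$ equals $M$ (over $\li\Q$ or $\li\Q_p$ as appropriate, by standard Levi theory), the weight-zero subspace of each of these tori acting on $\Lie G$ is exactly $\Lie M$. Thus, for any character $\beta$ of the larger torus (say $Z_M^0$) with nonzero weight space in $\Lie G$, the restriction $\beta|_{A_M}$ cannot vanish: otherwise $(\Lie G)_\beta$ would lie in $\Lie Z_G(A_M) = \Lie M$, but $\Lie M$ is the zero weight space of $Z_M^0$ while $\beta$ is nonzero. So the restriction maps in~(1) are well-defined into the smaller root systems.

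For surjectivity in~(1), I would use the compatible weight decomposition of $\Lie B$: for any $\alpha \in \Phi(A_M;B)$, the weight space $(\Lie B)_\alpha \neq 0$ decomposes under $S_{M,p}$ as $\bigoplus_{\beta|_{A_M} = \alpha} (\Lie B)_\beta$, and at least one $\beta$ must have nonzero weight space; each such $\beta$ is nonzero (as $\alpha \neq 0$), hence lies in $\Phi(S_{M,p};B)$ and maps to $\alpha$. The same argument lifts $\Phi(S_{M,p};B)$ to $\Phi(Z_M^0;B)$.

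For~(2), observe that for $x \in \ia_M = X_*(A_M)_\R$ and any character $\beta$ of $Z_M^0$ or $S_{M,p}$, the pairing $\langle \beta, x\rangle$ (interpreting $x$ via the inclusion of cocharacters) equals $\langle \beta|_{A_M}, x\rangle$. By surjectivity in~(1), the set of restrictions of $\Phi(Z_M^0;B)$ or $\Phi(S_{M,p};B)$ to $A_M$ is precisely $\Phi(A_M;B)$, so conditions~(a), (b), (c) impose identical inequalities on $x$.

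For~(3), a connected component of any of the regular loci is determined by its sign function on the corresponding root system. First I would check that the natural maps send regular to regular: if $x \in \ia_M^{\tu{reg}}$ and $\beta \in \Phi(S_{M,p};G)$, then $\beta|_{A_M} \in \Phi(A_M;G)$ by the well-definedness above, so $\langle \beta, x\rangle = \langle \beta|_{A_M}, x\rangle \neq 0$; similarly for the second map. Then the injectivity on $\pi_0$ is formal: if $x_1, x_2$ lie in the same chamber of the larger space, their sign functions agree on the larger root system, and by surjectivity of restriction in~(1), their sign functions also agree on the smaller root system, placing them in the same chamber. The one subtlety worth flagging, which is the main technical point, is verifying the well-definedness statement of Part~(1) (that restrictions of roots remain nonzero), as everything else is then a straightforward bookkeeping argument with weight-space decompositions and sign functions.
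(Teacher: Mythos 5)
Your proposal is correct and uses essentially the same idea as the paper: all three root sets are the nonzero eigen-characters of nested central tori whose centralizers in $G$ all coincide with $M$ (so the zero-weight space on $\Lie G$ is $\Lie M$ in each case), from which well-definedness, surjectivity, and parts (2) and (3) all follow. The paper phrases this more compactly by sandwiching $\tu{Cent}(A_M,G)\supset\tu{Cent}(S_{M,p},G)\supset\tu{Cent}(Z_M^0,G)\supset M$ and noting that all three root sets are the eigen-characters on the fixed space $\Lie(B)/\Lie(B\cap M)$, whereas you unpack this into a well-definedness step plus a weight-decomposition surjectivity step; the content is the same.
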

\begin{proof}
We have inclusions of the centralizer groups
$$M_{\lqp}=\tu{Cent}(A_{M, \lqp},G_{\lqp}) \supset \tu{Cent}(S_{M,p, \lqp},G_{\lqp}) \supset \tu{Cent}(Z_{M,\lqp}^0,G_{\lqp})\supset M_{\lqp},$$
where the first equality is well known \cite[Prop.~20.6(i)]{BorelGroups}. Hence the equality holds everywhere. So $\Phi(A_M; B)$, $\Phi(S_{M,p}; B)$, and $\Phi(Z^0_M; B)$ consist of eigen-characters for the adjoint actions of $A_{M, \lqp} \subset S_{M,p, \lqp} \subset Z_{M,\lqp}^0$ on the same space $\Lie(B)/\Lie(B\cap M)$, respectively. Therefore the maps in (1) are surjections.
Statements (2) and (3) are directly deduced from (1). 
\end{proof}

\subsection{Proof of Theorem \ref{thm:main-estimate}}\label{sub:proof-main-estimate}

The rest of this section is devoted to establishing the main estimate in Theorem \ref{thm:main-estimate} over several pages. The technical lemma (Lemma \ref{lem:TF-technical-lemma}) could be taken for granted at a first reading. Before diving into the details we recommend the reader to review the outline that we sketched below \eqref{eq:IntroExplainStrategy} in the introduction.

\begin{proof}[Proof of \ref{thm:main-estimate}]
We argue by induction on the $\Q$-semisimple rank $r_G$ of $G$. If $r_G = 0$, then we have $T^G_{\el, \chi} = T^G_{\disc,\chi}$, and the statement follows. Assume now that the theorem is established for all groups of lower $\Q$-semisimple rank and all accompanying data (\ie $(\fkX,\chi),p,q,\xi,\zeta$ and $\nu$).

We introduce a constant to control regularity at $q$:
\begin{equation}\label{eq:ConstantC}
C = C(f^{\infty, q},\phi_p) := \frac1{\log q} \cdot  \max_{M, x^{p,q,\infty}, \eps_p, \alpha} |\langle \alpha , x^{p,q,\infty} + \eps_p \rangle |,
\end{equation}
where $M \in \cL_\cusp(G)$, $x^{p,q,\infty}\in \tu{supp}_{\ia_M}^O(f^{\infty, p,q}_M )$, $\eps_p \in \tu{pr}_M \tu{supp}_{\ia_{\omega_p(M) \cap M_\nu}}^O(\phi_{p, \omega_p(M) \cap M_\nu})$, and $\alpha \in \Phi_M^G$.
Define the constants
\begin{align*}\label{eq:Constant-c_M}
 v_{\fkX} := \tu{vol}(\fkX_\Q\backslash \fkX/A_{G, \infty}) \quad\textup{and} \quad c_M := (-1)^{\dim (A_M/A_G)} \frac{|\Omega^M|}{|\Omega^G|},\quad M \in \cL_\cusp(G).
\end{align*}
Write $f^{\infty, (k)} := f^{\infty, p,q} f_p^{(k)} f_q$, to indicate the dependence on $k$ at $p$. The running hypothesis on $f_q$ is that it is $C$-regular for \eqref{eq:ConstantC}. By Proposition \ref{prop:DalalsFormula} we have
\begin{equation}\label{eq:DalalRepeat1}
T^G_{\tu{disc},\chi}(f_{\xi,\zeta} f^{\infty, (k)}) =
d(G)\inv \sum_{M\in \mathcal L_{\tu{cusp}}} c_Mv_{\fkX}\inv \sum_{\gamma \in \Gamma_{\R\tu{-ell}, \fkX}(M)}
\frac{\chi(I^M_\gamma)\zeta(\gamma) \Phi_M(\gamma,\xi)O^M_\gamma(f^{\infty,(k)}_M)}{|\iota^M(\gamma)| |\tu{Stab}^M_\fkX(\gamma)|},
\end{equation}
where we used that $\fkX$ is of the form $Y(\A) A_{G,\infty}$ for a central torus $Y\subset Z_G$ over $\Q$. 

We first compare the term corresponding to $M = G \in \cL_\cusp$ on the right hand side of Equation~\eqref{eq:DalalRepeat1} with $T_{\el, \chi}^G$. 
Only regular $\R$-elliptic conjugacy classes contribute to \eqref{eq:DalalRepeat1}: For $\gamma \in \Gamma_{\el, \fkX}(G)$ non-regular, we have $O_\gamma(f_q) = 0$ since $f_q$ is $C$-regular. The orbital integrals $O_{\gamma}^{G(\R)}(f_{\xi, \zeta})$ vanish for non $\R$-elliptic $\gamma \in G(\R)$. In Lemma~\ref{lem:RecognizeEllipticPart} we checked that for $\gamma \in \Gamma_{\R\tu{-ell}, \fkX}(G)$ we have 
%\issueA{To do: Fix this. The problem with this factor $d(G)$ is that in (4.4.4) you would get a constant factor in front of $T^G_\tu{disc}$ which would mean we have the wrong number of connected components. I think I traced back the issue to Equation (2.4.1) (see bullet there). SW: My answer above should fix this: there's $1/d(G)$ missing in RHS of (4.4.2). Divide both sides of (4.4.3) by $d(G)$. Then we're good.}
\begin{equation}\label{eq:IsolateMainTerm}
d(G)\inv c_Gv_{\fkX}\inv 
\frac{\chi(I_\gamma^G) \zeta(\gamma) \Phi_G(\gamma, \xi) O_\gamma^G( f^{\infty, (k)})}{|\iota^G(\gamma)|  |\tu{Stab}_{\fkX}^G(\gamma)|} = \frac{ \vol(I_\gamma(\Q) \backslash I_\gamma(\A)/\fkX) O^G_\gamma(f_{\xi,\zeta} f^{\infty, (k)})}{\iota(\gamma)\inv |\tu{Stab}^G_{\fkX}(\gamma)|\inv}
\end{equation}
(this uses $\Phi_G(\gamma,\xi)=\Tr(\gamma,\xi)$, cf. \cite[below eq.~(4.4)]{ArthurL2} 
%\issueA{In an earlier bullet I proposed to write out the formula for $\Phi_G(\gamma,\xi)=\Tr(\gamma,\xi)$. Main reason was that I didn't like to make a forward reference to Lemma 4.4.1 in this proof. But, actually in this part we only need $\Phi_G(\gamma,\xi)=\Tr(\gamma,\xi)$ so I just cite Arthur here (and removed again the definition for $\Phi_G(\gamma,\xi)$ that I had inserted. SW: Okay.}
).  Therefore $T^G_{\el,\chi}(f_{\xi,\zeta}f^{\infty, (k)})$ appears on the right hand side of \eqref{eq:DalalRepeat1} as the summand for $M = G$ (see also \eqref{eq:EllipticPartOfTF}). Thus \eqref{eq:DalalRepeat1} can be rearranged as
\begin{equation}\label{eq:DalalRepeat2}
T^G_{\el, \chi}(f_{\xi, \zeta}f^{\infty, (k)}) = T^G_{\tu{disc},\chi}(f_{\xi,\zeta} f^{\infty, (k)}) -d(G)\inv
\sum_{M\in \cL_{\cusp}^{<}} c_Mv_{\fkX}\inv \sum_{\gamma \in \Gamma_{\el, \fkX}(M)}
\frac{\chi(I^M_\gamma)\zeta(\gamma) \Phi_M(\gamma,\xi)O^M_\gamma(f^{\infty, (k)}_M)}{|\iota^M(\gamma)| |\tu{Stab}^M_\fkX(\gamma)|}.
\end{equation}
As $f_{p}^{(k)}$ is a $\nu$-ascent of $\phi_p^{(k)}$, we have by Lemma~\ref{lem:nu-constant-terms} 
\begin{equation}\label{eq:function-f-p-decomposition}
f_{p, M}^{(k)} = \sum_{\omega_p \in \Omega^{G_{\qp}}_{M, M_\nu}} f_{p, M, \omega_p}^{(k)} \in \cH(M(\qp),\chi_p\inv),  
\end{equation}
where
\begin{equation}\label{eq:function-f-p-decomposition-2}
f_{p, M, \omega_p}^{(k)} := {\mathscr J}_{ \nu_{\omega_p}}(\omega_p\inv \phi^{(k)}_{p,M_{\omega_p}}), \quad\quad M_{\omega_p} = \omega_p(M) \cap M_\nu.
\end{equation}

At the prime $q$, we may arrange by Lemma~\ref{lem:C-reg-inherited}(1) and Lemma \ref{lem:RootSpacesCompare} that the constant term $f_{q, M}$ is supported on $C$-regular elements. Thus $f_{q,M}$ is decomposed according to the various chambers $\cC$ of $\ia_M^{\tu{reg}}$:
\begin{equation}\label{eq:f_qM-decomposition}
f_{q, M} = \sum_{\omega_q \in \Omega^G_M} f_{q, M, \omega_q} \in \cH(M(\Qq),\chi_q\inv)_{C\tu{-reg}},
\end{equation}
where $f_{q, M, \omega_q}$ satisfies $\tu{supp}_{\ia_M}^O(f_{q, M, \omega_q}) \subset \cC_{\omega_q}$. 

We define
$$
f^{\infty,(k)}_{M, \omega_p, \omega_q} := f^{\infty, p, q}_M f_{p, M, \omega_p}^{(k)} f_{q, M, \omega_q} \in \cH(M(\A),\chi^{-1}),
$$
so that 
$$
f^{\infty, (k)}_{M} = \sum_{\omega_p \in \Omega^{G_{\qp}}_{M, M_\nu}, \omega_q \in \Omega_M^G} f^{\infty,(k)}_{M, \omega_p, \omega_q} \in \cH(M(\A),\chi^{-1}).
$$
Changing the order of summation (each sum is finite), Equation~\eqref{eq:DalalRepeat2} becomes
\begin{equation}\label{eq:DalalRepeat3}
T^G_{\el, \chi}(f_{\xi, \zeta}f^{\infty, (k)}) = T^G_{\tu{disc},\chi}(f_{\xi,\zeta} f^{\infty, (k)}) -
d(G)\inv\sum_{M, \omega_p, \omega_q} c_{M} v_{\fkX}\inv \sum_{\gamma \in \Gamma_{\el, \fkX}(M)}
\frac{\chi(I^M_\gamma)\zeta(\gamma) \Phi_M(\gamma,\xi)O^M_\gamma(f^{\infty,(k)}_{M,\omega_p,\omega_q})}{|\iota^M(\gamma)| |\tu{Stab}^M_\fkX(\gamma)|},
\end{equation}
where the sum is over $M \in \cL_{\cusp}^<$, $\omega_p \in \Omega^{G_{\qp}}_{M, M_\nu}$, $\omega_q \in \Omega_M^G$.

To state the next lemma, we define a constant
\begin{equation}\label{eq:k_1-def}
k_1 = k_1(f^{\infty, q}, \phi_p) :=
\max_{M, \omega_p, \omega_q, \alpha, \eps_p, x^{p,\infty}} \left|  \frac {\langle \alpha, \eps_p + x^{p,\infty} \rangle} {\log(p) \langle \alpha, \tu{pr}_M(\omega_p\inv \nu) \rangle} \right| 
 \in \R_{>0},
\end{equation}
where the maximum is taken over $M\in \cL_\cusp^{<}(G)$, $\omega_p\in \Omega^{G_{\qp}}_{M, M_\nu}$, $\omega_q\in \Omega^G_M$, $\eps_p \in \tu{pr}_M\supp_{\ia_{M_{\omega_p}}}^O(\omega_p\inv \phi_{p, M_{\omega_p}})$, $x^{p,\infty} \in \tu{supp}_{\ia_M}^O(f_M^{\infty, p})$, and $\alpha$ ranges over those $\alpha \in \Phi^G_M$ such that $\langle \alpha, \tu{pr}_M(\omega_p\inv \nu)) \rangle \neq 0$. 

We have fixed a maximal torus $T$ in $G_{\C}$ (we have $G_{\C} \simeq G_{\lqp}$ via $\iota_p$), along with a Borel subgroup $B$. We write $\li \rho = \li \rho_G$ for the half sum of the $B$-positive roots of $T$ in $\tu{Lie}(B)$. Note that we have $\li \rho|_{A_T} = \rho$. We use similar definitions for $\li \rho_M$ and $\rho_M$ if $M \subset G$ is a Levi subgroup. Let $\lambda = \lambda_B,\lambda_B^* \in X^*(T)$ denote the highest weight of $\xi$ and its dual representation $\xi^*$, respectively, relative to $B$. 
 
For each $M \in \cL_\cusp(G)$ we introduce the following notation. Denote by $\cP(M)$ the set of parabolic subgroups $P$ of $G$ of which $M$ is a Levi component.
For each $\lambda_0 \in X^*(T)^+$ we write $\xi^M_{\lambda_0}$ for the irreducible $M_{\C}$-representation with highest weight $\lambda_0$. We  define $\omega_\infty\star \lambda_0:=\omega_\infty(\lambda_0+\li \rho)-\li \rho$ for each $\omega_\infty\in \ol \Omega_M^G$ and $\lambda_0 \in X^*(T)^+$. 
Let $\omega_\infty \in \ol \Omega^G_M$. Write $\omega_0^M \in \li \Omega^M$ for the longest Weyl group element, and 
$$
\lambda_B(\omega_\infty) := -\omega_0^M(\omega_\infty\star \lambda_B^*) =
\omega_0^M \omega_\infty \omega_0^M \lambda_B - \omega_0^M \omega_\infty \li \rho - \omega_0^M \li \rho
,
$$
so that we have
$$
\xi^M_{\lambda_B(\omega_\infty)} = (\xi^M_{\omega_\infty\star \lambda_B^*})^*.
$$

\begin{lemma}\label{lem:TF-technical-lemma}
Assume that $k > k_1$. Consider $M, \omega_p, \omega_q, \gamma$ as in \eqref{eq:DalalRepeat3}, and assume that
\begin{equation}\label{eq:OrbitalNonzeroAssumption}
O_\gamma^M(f^{\infty, (k)}_{M, \omega_p, \omega_q}) \neq 0.
\end{equation}
Let $x_\infty := H^M_\infty(\gamma) \in \ia_M$. Then the following are true.
\begin{enumerate}[label=(\roman*)]
\item The element $x_\infty \in \ia_M$ is regular and  lies in the chamber $\cC_0 = \cC_0(M, \omega_p, \omega_q) \subset \ia_M^{\tu{reg}}$ which has the following set of positive roots
\begin{equation}\label{eq:Chamber-of-x}
\{ \alpha \in \Phi^G_M\ |\ \langle \alpha, \tu{pr}_M(\omega_p\inv\nu) \rangle < 0\} \cup \{\alpha \in \Phi^G_M\ |\ \langle \alpha, \tu{pr}_M(\omega_p\inv\nu) \rangle = 0 \textup{ and } \alpha \in -\cC_{\omega_q}^\vee\}.
\end{equation}
\item There exists an explicit subset $\li \Omega_M^{G\diamond} = \li \Omega_M^{G\diamond}(M, \omega_p, \omega_q) \subset \li \Omega^G_M$ (see \eqref{eq:WG-double}) and an explicit sign $\eps^\diamond = \eps^\diamond(M, \omega_p, \omega_q)$ (see \eqref{eq:eps-diamond}) such that we have
$$
\Phi_M(\gamma, \xi) = \eps^\diamond \sum_{P \in \cP(M)} \delta_P^{-1/2}(\gamma) \sum_{\omega_\infty \in \li \Omega^{G\diamond}_{M}} \eps(\omega_\infty) \Tr(\gamma
 ; \xi^M_{\lambda_B(\omega_\infty)}),
$$
where $\eps(w_\infty)\in \{\pm1\}$ denotes the sign as an element of the Weyl group $\li \Omega^G$.
\end{enumerate}
\end{lemma}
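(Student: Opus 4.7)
Parts~\textit{(i)} and~\textit{(ii)} are linked: part~\textit{(i)} pins down the open Weyl chamber $\cC_0 \subset \ia_M^{\tu{reg}}$ containing $x_\infty$, and part~\textit{(ii)} reads off Arthur's closed-form expression for $\Phi_M(\cdot,\xi)$ on that chamber. My plan is to first prove \textit{(i)} by combining the product formula for $H^M$ with a term-by-term support analysis at $p$, at $q$, and away from $\{p,q,\infty\}$, and then to invoke Arthur's chamber-wise character formula \cite[(4.4), Lem.~4.2]{ArthurL2} for \textit{(ii)}.

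For \textit{(i)}, I start from Lemma~\ref{lem:HM-product-formula}\textit{(ii)}, which gives
\[
x_\infty = -H^M_p(\gamma) - H^M_q(\gamma) - H^{M,p,q,\infty}(\gamma).
\]
Under the hypothesis \eqref{eq:OrbitalNonzeroAssumption}, I control the three finite-place contributions as follows. Lemma~\ref{lem:nu-twist-support}, applied to $f^{(k)}_{p,M,\omega_p} = \mathscr J_{\nu_{\omega_p}}(\omega_p^{-1}\phi^{(k)}_{p,M_{\omega_p}})$, places $H^M_p(\gamma)$ in $-k\log(p)\,\tu{pr}_M(\omega_p^{-1}\nu) + \tu{pr}_M\tu{supp}^O_{\ia_{M_{\omega_p}}}(\omega_p^{-1}\phi_{p,M_{\omega_p}})$, whose first summand grows linearly in $k$ and whose second summand is a bounded set $\eps_p$. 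The decomposition \eqref{eq:f_qM-decomposition} together with Lemma~\ref{lem:reg-maps-to-reg} puts $H^M_q(\gamma) \in \cC_{\omega_q}$ with $|\langle \alpha, H^M_q(\gamma)\rangle| > C\log q$ for every $\alpha \in \Phi^G_M$. Finally $H^{M,p,q,\infty}(\gamma)$ lies in the bounded set $\tu{supp}^O_{\ia_M}(f^{\infty,p,q}_M)$. Pairing with a root $\alpha\in \Phi^G_M$: if $\langle \alpha, \tu{pr}_M(\omega_p^{-1}\nu)\rangle \neq 0$, the choice of $k_1$ in \eqref{eq:k_1-def} forces the $p$-contribution to dominate for $k > k_1$, pinning the sign of $\langle\alpha, x_\infty\rangle$ to place $\alpha$ in the first set of \eqref{eq:Chamber-of-x}; and if $\langle \alpha, \tu{pr}_M(\omega_p^{-1}\nu)\rangle = 0$, the $p$-contribution vanishes and the choice of $C$ in \eqref{eq:ConstantC} guarantees that the $q$-contribution (whose absolute value exceeds $C\log q$) dominates all remaining bounded contributions, so the sign of $\langle\alpha, x_\infty\rangle$ is dictated by $-\langle\alpha, H^M_q(\gamma)\rangle$ and places $\alpha$ in $-\cC^\vee_{\omega_q}$, the second set of \eqref{eq:Chamber-of-x}. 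In particular $\langle\alpha, x_\infty\rangle \neq 0$ for every $\alpha$, so $x_\infty$ is regular and lies in the asserted chamber $\cC_0(M,\omega_p,\omega_q)$.

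For \textit{(ii)}, the key input is Arthur's chamber-wise formula \cite[(4.4), Lem.~4.2]{ArthurL2}: on each chamber $\cC \subset \ia_M^{\tu{reg}}$, there is an explicit subset $\li\Omega_M^{G\diamond}(\cC) \subset \li\Omega^G_M$, cut out by a compatibility condition involving the longest Weyl element $\omega_0^M$ and the Kostant representatives for $\Omega^M\backslash\Omega^G$ (roughly: the $\omega_\infty$ for which $-\omega_0^M(\omega_\infty\star\lambda_B^*)$ is $M$-dominant in a way compatible with $\cC$), together with a sign $\eps^\diamond(\cC) \in \{\pm 1\}$, such that for every $\R$-elliptic regular $\gamma \in M(\Q)$ with $H^M_\infty(\gamma) \in \cC$,
\[
\Phi_M(\gamma,\xi) = \eps^\diamond(\cC) \sum_{P \in \cP(M)} \delta_P^{-1/2}(\gamma) \sum_{\omega_\infty \in \li\Omega_M^{G\diamond}(\cC)} \eps(\omega_\infty)\,\Tr\bigl(\gamma; \xi^M_{\lambda_B(\omega_\infty)}\bigr).
\]
Specializing to $\cC = \cC_0(M,\omega_p,\omega_q)$, which contains $x_\infty$ by \textit{(i)}, and defining $\li\Omega_M^{G\diamond}(M,\omega_p,\omega_q) := \li\Omega_M^{G\diamond}(\cC_0)$ and $\eps^\diamond(M,\omega_p,\omega_q) := \eps^\diamond(\cC_0)$ yields the formula of~\textit{(ii)}.

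The hard part will be \textit{(ii)}. Arthur defines $\Phi_M(\gamma,\xi)$ abstractly through a limiting procedure involving stable discrete series characters along non-elliptic maximal tori of $G$ containing $T_\infty$, so extracting the clean character expression above demands careful tracking of $\omega_0^M$, the Kostant representatives for $\Omega^M\backslash\Omega^G$, and the Weyl shifts $\omega_\infty\star\lambda_B^*$ as the chamber $\cC$ varies. By contrast, \textit{(i)} is bookkeeping-heavy but mechanical once the constants $C$ and $k_1$ have been set up as in \eqref{eq:ConstantC} and \eqref{eq:k_1-def} to separate the regimes where the $p$-contribution dominates and where only the $q$-contribution survives.
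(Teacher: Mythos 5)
Part~(i) of your proposal is correct and follows the paper's argument essentially verbatim: decompose $x_\infty$ via the product formula, use Lemma~\ref{lem:nu-twist-support} to write the $p$-contribution as $-k(\log p)\,\tu{pr}_M(\omega_p\inv\nu)$ plus a bounded error, let the $k$-term decide the sign when $\langle\alpha,\tu{pr}_M(\omega_p\inv\nu)\rangle\neq 0$ (choice of $k_1$), and let $C$-regularity at $q$ decide it when that pairing vanishes (choice of $C$). No issues there.

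Part~(ii) as written has a genuine gap. You declare as "the key input" a chamber-wise expansion of $\Phi_M(\gamma,\xi)$ in terms of $\Tr(\gamma;\xi^M_{\lambda_B(\omega_\infty)})$ attributed to \cite[(4.4), Lem.~4.2]{ArthurL2}, but Arthur's (4.4) and Lemma~4.2 only \emph{define} $\Phi_M(\gamma,\xi)$ via limits of stable discrete series characters; they do not provide the explicit finite-dimensional character formula you quote. That formula is precisely what part~(ii) asks you to establish, so your argument assumes its conclusion under an incorrect citation. You even flag this as "the hard part" and describe what a derivation "demands," but you do not carry it out. The paper's route is through Goresky--Kottwitz--MacPherson: for regular elliptic $\gamma$ one has $\Phi_M(\gamma,\xi)=L_M(\gamma)$ by \cite[Thms.~5.1, 5.2]{GKM97}, where $L_M(\gamma)$ is a sum over \emph{all} of $\li\Omega^G_M$ weighted by the functions $\varphi_P(-x_\infty,\pr_M^*(\omega_\infty\star\lambda_B^*)+\li\rho_N+\omega_\xi)$; these evaluate to $0$ unless the index set $I(\omega_\infty)$ (defined by the signs of pairings against the fundamental coweights $t_i$, not by $M$-dominance of $-\omega_0^M(\omega_\infty\star\lambda_B^*)$ as you roughly suggest) equals $I(\gamma)$, in which case they contribute the fixed sign $(-1)^{\dim A_G}(-1)^{\dim(A_M/A_G)-|I(\gamma)|}$. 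The role of part~(i) is exactly to make $I(\gamma)$ a constant set $\cI_0$ depending only on $(M,\omega_p,\omega_q)$, so the sum collapses onto $\li\Omega^{G\diamond}_M=\{\omega_\infty: I(\omega_\infty)=\cI_0\}$ with $\eps^\diamond=(-1)^{\dim(A_M/A_G)-|\cI_0|}$. Without supplying this (or an equivalent derivation), part~(ii) is unproven.
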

\begin{proof}
(\textit{i})
If $S$ is a set of places of $\Q$, we write in this proof
$$
x_S := H^M_S(\gamma) \in \ia_M, \quad x^S := H^{M, S}(\gamma) \in \ia_M.
$$
We check that $\langle \alpha, x_\infty \rangle \neq 0$ for all $\alpha \in \Phi^G_M$ (\textit{\ie} $x$  is regular). By the product formula in Lemma~\ref{lem:HM-product-formula} we have
\begin{equation} \label{eq:RegularCheck1}
-\langle \alpha, x_\infty \rangle = \langle \alpha, x^{p,\infty} \rangle + \langle \alpha, x_p \rangle.
\end{equation}

At $p$ the non-vanishing of $O_\gamma(f_{p, M, \omega_p}^{(k)})$ implies $x_p \in \supp_{\ia_M}^O(f^{(k)}_{p, M, \omega_p})$. By Lemma~\ref{lem:nu-twist-support} (and \eqref{eq:function-f-p-decomposition-2}) 
$$
\supp_{\ia_M}^O(f_{p, M, \omega_p}^{(k)}) = k \cdot H^M_p(\omega_p\inv \nu(p)) + \tu{pr}_M(\supp_{\ia_{M_{\omega_p}}}^O(\omega_p\inv \phi_{p, M_{\omega_p}})).
$$
Therefore
\begin{equation} \label{eq:x_p-form}
x_p = k \cdot H^M_p(\omega_p\inv \nu(p)) + \eps_p
\end{equation}
for some $\eps_p \in \tu{pr}_M\supp_{\ia_{M_{\omega_p}}}^O(\omega_p\inv \phi_{p, M_{\omega_p}})$. Thus 
\begin{equation}\label{eq:RegularCheck1ab}
\langle \alpha, x_p \rangle = k \cdot \langle \alpha, H^M_p(\omega_p\inv \nu(p)) \rangle + \langle \alpha, \eps_p \rangle =
- k (\log p) \cdot \langle \alpha, \tu{pr}_M(\omega_p\inv \nu)) \rangle + \langle \alpha,
\eps_p \rangle.
\end{equation}
We now distinguish cases. First consider $\alpha \in \Phi^G_M$ such that $\langle \alpha, \tu{pr}_M(\omega_p\inv \nu) \rangle \neq 0$. By \eqref{eq:RegularCheck1} and \eqref{eq:x_p-form},
\begin{equation}\label{eq:alpha_x_infty}
-\langle \alpha, x_\infty \rangle = \langle \alpha, x^{p,\infty} \rangle + \langle \alpha,
\eps_p \rangle  - k(\log p) \cdot \langle \alpha, \tu{pr}_M(\omega_p\inv \nu) \rangle.
\end{equation}
As $k > k_1$ (see \eqref{eq:k_1-def}) we have  
\begin{equation*}
k (\log p) \cdot |\langle \alpha, \tu{pr}_M(\omega_p\inv \nu) \rangle | > |\langle \alpha, \eps_p + x^{p,\infty} \rangle |.
\end{equation*}
Thus from \eqref{eq:alpha_x_infty} we get $\langle \alpha, x_\infty \rangle \neq 0$.

The second case is when $\alpha \in \Phi^G_M$ is such that $\langle \alpha, \tu{pr}_M(\omega_p\inv \nu) \rangle = 0$. Again by \eqref{eq:RegularCheck1} and \eqref{eq:x_p-form} we have
\begin{equation}\label{eq:RegularCheck1c}
- \langle \alpha, x_\infty \rangle = \langle \alpha, x^{p,\infty} \rangle
+ \langle \alpha, \eps_p \rangle.
\end{equation}
As $f_q$ is $C$-regular,  we have from~\eqref{eq:aM-Creg}, \eqref{eq:ConstantC}, and Lemma \ref{lem:RootSpacesCompare}  that
\begin{equation}\label{eq:alpha_x_q_dominate}
|\langle \alpha, x_q \rangle | > C \log q \geq | \langle \alpha, x^{p,q,\infty} + \eps_p \rangle|.
\end{equation}
for all $\alpha \in \Phi^G_M$. In particular
$$
\langle \alpha, x_q \rangle + \langle \alpha, x^{p,q,\infty} + \eps_p \rangle \neq 0.
$$
Therefore each side of \eqref{eq:RegularCheck1c} does not vanish. Hence $\langle \alpha, x_\infty \rangle \neq 0$ for all $\alpha \in \Phi_M^G$.

We now determine for which $\alpha \in \Phi_M^G$ we have $\langle \alpha, x_\infty \rangle > 0$. If $\langle \alpha, \tu{pr}_M(\omega_p\inv \nu) \rangle \neq 0$, then
$$
\tu{sign}(\langle \alpha, x_\infty \rangle) = -\tu{sign}(\langle \alpha, \tu{pr}_M(\omega_p\inv \nu) \rangle)
$$
by the arguments following \eqref{eq:RegularCheck1ab}. If $\langle \alpha, \tu{pr}_M(\omega_p\inv \nu) \rangle = 0$, then
$$
\tu{sign}(\langle \alpha, x_\infty \rangle) = -\tu{sign}(\langle \alpha, x_q \rangle)
$$
by $C$-regularity (see~\eqref{eq:alpha_x_q_dominate}). We have $x_q \in \tu{supp}_{\ia_M}(f_{q, M, \omega_q})$. Statement (\textit{i}) follows.

(\textit{ii}) Let us start by recalling a result of Goresky, Kottwitz and MacPherson in \cite{GKM97}. We write $\tu{pr}_M^* \colon X^*(T)_\R \to X^*(A_M)_\R$ for the restriction map. Let $P = MN \in \cP(M)$. Write $\rho_N$ (resp. $\li \rho_N$)  for the half sum of the positive roots of $A_M$ (resp. $\li T$) that occur in the Lie algebra of the unipotent radical $N$ of $P$.
Write $\omega_\xi$ for the central character of $\xi$. Write $\alpha_1, \ldots, \alpha_n \in \ia_M^*$ for the simple roots of $A_M$ in $\Lie(N)$, which form a basis of $(\ia_M/\ia_G)^*$. This determines the dual basis consisting of $t_1, \ldots, t_n \in \ia_M/\ia_G$. Put $I := \{1, 2, \ldots, n\}$.
Define the following subsets of $I$ (cf.~\cite[p.534]{GKM97})
\begin{align}\label{eq:I-sets}
I(\gamma) & := \{i \in I\ |\ \langle  \alpha_i, x \rangle < 0 \}, \cr
I(\omega_\infty) & := \{i \in I\ |\ \langle \tu{pr}_M^*(-\omega_\infty\star\lambda_B^*) - \li \rho_N - \omega_\xi, t_i \rangle > 0\}.
\end{align}
By the discussion above Thm.~7.14.B in \cite{GKM97} we have
\begin{equation}\label{eq:varphi_P}
\varphi_P(-x_\infty, \pr_M^*(\omega_\infty\star \lambda_B^*) + \li \rho_N + \omega_\xi) =
\begin{cases}
(-1)^{\dim(A_G)} (-1)^{\dim(A_M/A_G) - |I(\gamma)|}, & \textup{ if }
I(\omega_\infty) = I(\gamma), \cr
0, & \textup{otherwise}.
\end{cases}
\end{equation}

We define  $L_M(\gamma)\in \C$ following \cite[p. 511]{GKM97},\footnote{We write $L_M(\gamma)$ where the authors of \cite{GKM97} write $L_M^\nu(\gamma)$. This is because we only need to use the ``middle weight profile", so there is no need to distinguish in our notation. In their formula the symbol $\nu_P$ appears, as they allow more general weight profiles. Since we use the middle weight profile, we have $\nu_P = - \li \rho_N - \omega_\xi$. } when $x_\infty$ is regular:
\begin{align}\label{eq:LM-nu}
L_M(\gamma) &:= (-1)^{\dim(A_G)} \sum_{P \in \cP(M)} \delta_P^{-1/2}(\gamma) \sum_{\omega_\infty \in \li \Omega^G_{M}}  \eps(\omega_\infty) \Tr(\gamma\inv ; \xi^M_{\omega_\infty\star \lambda_B^*}) \cdot \cr
& \cdot \varphi_P(-x_\infty, \pr p_M^*(\omega_\infty\star \lambda_B^*) + \li \rho_N + \omega_\xi).
\end{align}

As $\gamma$ is regular, Theorems 5.1 and 5.2 of \cite{GKM97} imply the following identity\footnote{In \cite{GKM97}, they write $\Phi_M(\gamma, \Theta^{\xi^*})$ for $\Phi_M(\gamma, \xi^*)$. Their $E$ corresponds to our $\xi^*$.}
\begin{equation}\label{eq:GKM-Phi-M-formula}
\Phi_M(\gamma, \xi) = L_M(\gamma).
\end{equation}
By (\textit{i}), $x_\infty$ is regular, so the right hand side of \eqref{eq:LM-nu} is an expression for $\Phi_M(\gamma,\xi)$.

We assumed that $O_\gamma^M(f^{\infty, (k)}_{M, \omega_p, \omega_q}) \neq 0$ in \eqref{eq:OrbitalNonzeroAssumption},  so $x_\infty=H^M_\infty(\gamma)$ lies in the chamber $\cC_{0}(M, \omega_p, \omega_q)$ by statement (i) of this lemma. Thus the set $I(\gamma)$ does not depend on $x_\infty$. Write $\cI_0 = \cI_0(M, \omega_p, \omega_q)$ for $I(\gamma)$, and
\begin{equation}\label{eq:WG-double}
\li \Omega^{G\diamond}_{M} = \li \Omega^{G\diamond}_{M}(M, \omega_p, \omega_q) := \{\omega_\infty \in
\li \Omega^G_{M} \ |\ I(\omega_\infty) = \cI_0\},
\end{equation}
in terms of \eqref{eq:I-sets}. Then \eqref{eq:LM-nu} simplifies thanks to \eqref{eq:varphi_P}:
\begin{equation}\label{eq:WG-double2}
L_M(\gamma) = (-1)^{\dim(A_M/A_G) - |\cI_{0}|} \sum_{P \in \cP(M)} \delta_P^{-1/2}(\gamma) \sum_{\omega_\infty \in \li \Omega^{G\diamond}_{M}} \eps(\omega_\infty) \Tr(\gamma\inv ; \xi^M_{\omega_\infty\star\lambda_B^*}).
\end{equation}
We obtain (\textit{ii}) by using $\Tr(\gamma\inv; \xi^M_{\omega_\infty \star \lambda_B^*}) =
\Tr(\gamma, \xi^M_{\lambda_B(\omega_\infty)})$ and taking 
\begin{equation}\label{eq:eps-diamond}
\eps^\diamond := (-1)^{\dim(A_M/A_G) - |\cI_{0}|}.
\end{equation}
\end{proof}

We keep on assuming $k > k_1$ and write $c_M' := \eps^\diamond  \eps(\omega_\infty) c_M d(G)\inv$ from now. We apply Lemma~\ref{lem:TF-technical-lemma}\,(\textit{ii}) to Equation~\eqref{eq:DalalRepeat3} and change the order of
summation to obtain
\begin{align}\label{eq:DalalRepeat4}
& T^G_{\el, \chi}(f_{\xi, \zeta}f^{\infty, (k)}) =  T^G_{\tu{disc},\chi}(f_{\xi,\zeta} f^{\infty, (k)})\, +
 \cr
& \sum_{M, P, \omega_p, \omega_q,\omega_\infty} c_{M}' v_{\fkX}\inv \sum_{\gamma \in \Gamma_{\el, \fkX}(M)}
\frac{\chi(I^M_\gamma) \Tr(\gamma ; \xi^M_{\lambda_B(\omega_\infty)}
\otimes \zeta \delta_P^{-1/2}) O^M_\gamma(f^{\infty,(k)}_{M,\omega_p,\omega_q})}{|\iota^M(\gamma)| |\tu{Stab}^M_\fkX(\gamma)|},
\end{align}
where $M,\omega_p,\omega_q$ run over the same sets as before and $P,\omega_\infty$ range over $\cP(M),\li \Omega^{G\diamond}_{M}$, respectively. We apply Lemma \ref{lem:RecognizeEllipticPart} to equalize
\begin{align}\label{eq:DalalRepeat5}
 v_{\fkX}\inv & \sum_{\gamma \in \Gamma_{\el, \fkX}(M)} \frac{\chi(I^M_\gamma)  \Tr(\gamma ; \xi^M_{\lambda_B(\omega_\infty)} \otimes \zeta \delta_P^{-1/2}) O^M_\gamma(f^{\infty,(k)}_{M,\omega_p,\omega_q})}{|\iota^M(\gamma)| |\tu{Stab}^M_\fkX(\gamma)|} \cr
 =\,  & d(M) \sum_{\gamma \in \Gamma_{\el, \fkX}(M)}
\frac{\vol(I_\gamma^M(\Q) A_{I^M_\gamma, \infty} \backslash I_\gamma^M(\A)/\fkX) O_\gamma^M(f_{\lambda_B(\omega_\infty), \zeta \delta_P^{-1/2}}f^{\infty, (k)}_{M, \omega_p, \omega_q})}{|\iota^M(\gamma)| |\tu{Stab}^M_\fkX(\gamma)|},
\end{align}
using that every $\gamma$ with $O_\gamma^M(f^{\infty, (k)}_{M, \omega_p, \omega_q}) \neq 0$ in \eqref{eq:DalalRepeat5} is regular since $f_{q,M,\omega_q}$ is supported on regular elements. Define
$$
\fkX_M := \fkX \cdot A_{M, \infty}, \qquad v_M := \vol(\fkX_\Q \backslash \fkX / A_{G, \infty})\inv \vol(\fkX_{M, \Q} \backslash
\fkX_M / A_{M, \infty}).
$$
The restriction of the central character of $\xi^M_{\lambda_B(\omega_\infty)} \otimes \zeta \delta_P^{-1/2}$ to $A_{M, \infty}$
is denoted by
$$
z_{\omega_\infty}^M \colon A_{M, \infty} \to \C^\times.
$$
Since the central character of $\xi^M_{\lambda_B(\omega_\infty)}$ restricts to the central character of $\xi$ on $Z_G$, we have
$$
z_{\omega_\infty}^M|_{A_{G,\infty}} = \chi_0\inv.
$$
On the other hand, $Z_G(\R) \cap A_{M, \infty} = A_{G, \infty} \subset Z_M(\R)$.
Therefore
$$
\fkX \cap A_{M, \infty} = A_{G, \infty}.
$$
Consequently, there exists a unique character
$$
\chi^M_{\omega_\infty} \colon \fkX_M \to \C^\times
$$
such that
$$
\chi^M_{\omega_\infty}|_{A_{M, \infty}} = (z^M_{\omega_\infty})\inv \quad
\tu{and} \quad \chi^M_{\omega_\infty}|_{\fkX} = \chi.
$$

The pair $(\fkX_M, \chi^M_{\omega_\infty})$ is a central character datum for $M$ as in \S\ref{sub:TF-fixed-central}. Observe that $\fkX_M$ is of the form $\fkX_M = Y(\A) A_{M,\infty}$ as required by the statement of Theorem~\ref{thm:main-estimate}. Moreover,
$$
f_{\lambda_B(\omega_\infty), \zeta \delta_P^{-1/2}}f^{\infty, (k)}_{M, \omega_p, \omega_q} \in \cH(M(\A), \chi^{M,-1}_{\omega_\infty}).
$$
The expression in \eqref{eq:DalalRepeat5} can be rewritten as
\begin{align}\label{eq:DalalRepeat5b}
 d(M)\sum_{\gamma \in \Gamma_{\el, \fkX_M}(M)} 
 & \frac{ v_M \cdot\vol(I_\gamma^M(\Q) A_{I^M_\gamma, \infty} \backslash I_\gamma^M(\A)/\fkX_M) O_\gamma^M(f_{\lambda_B(\omega_\infty), \zeta \delta_P^{-1/2}}f^{\infty, (k)}_{M, \omega_p, \omega_q})}{|\iota^M(\gamma)| |\tu{Stab}^M_{\fkX_M}(\gamma)|} \cr
=\, & d(M) v_M \cdot T^M_{\el, \chi^{M}_{\omega_\infty}} (f_{\lambda_B(\omega_\infty)}, \zeta \delta_P^{-1/2} f^{\infty, (k)}_{M, \omega_p, \omega_q}).
\end{align}
Put $c_M'' := c_M' v_M d(M)$. Combining \eqref{eq:DalalRepeat4} and \eqref{eq:DalalRepeat5b}, we obtain
\begin{align}\label{eq:DalalRepeat6}
& T^G_{\el, \chi}(f_{\xi, \zeta}f^{\infty, (k)}) =  T^G_{\tu{disc},\chi}(f_{\xi,\zeta} f^{\infty, (k)}) +
  \sum_{M, P, \omega_p, \omega_q, \omega_\infty} c_{M}'' \cdot T_{\el,\chi^{M}_{\omega_\infty} }^M(
f_{\lambda_B(\omega_\infty), \zeta \delta_P^{-1/2}} f_{M, \omega_p, \omega_q}^{\infty,(k)}).
\end{align}

Let $\omega_p$ be as in the sum. Since $\omega_p \in \Omega^{G_{\qp}}_{M, M_\nu}$, we have $\omega_p(M\cap B)\subset B$ and $\omega^{-1}_p(M_\nu \cap B)\subset B$. In particular, for each root $\alpha$ in $\tu{Lie}(M \cap N_0)$, the root $\omega_p\alpha$ also appears in $\tu{Lie}(M \cap N_0)$. So
$$
\langle \alpha, w_p\inv \nu \rangle = \langle w_p \alpha , \nu \rangle \geq 0.
$$
Hence $w_p\inv \nu$ is dominant for $M \cap B$. (See the paragraph above Proposition~\ref{prop:spectral-estimate} for dominance of $\nu$ relative to $B$.) By Proposition~\ref{prop:spectral-estimate} and the induction hypothesis for $M\in \cL^<_{\tu{cusp}}$, we have
\begin{align}\label{eq:Spectral-Est1}
T_{\el, \chi^{M}_{\omega_\infty}}^M(
f_{\lambda_{\omega_\infty}, \zeta \delta_P^{1/2}} f_{M, \omega_p, \omega_q}^{\infty,(k)}) =
O\left( p^{k(\langle \rho_M, \omega_p\inv \nu \rangle + \langle
(\chi_{\omega_\infty}^M)_\infty
, \tu{pr}_M(\omega_p\inv
\nu)\rangle )}
\right).
\end{align}
(To apply the induction hypothesis, we need to ensure that the setup of Theorem \ref{thm:main-estimate} applies to the left hand side. The point is that the conditions at $p$ and $q$ are satisfied. At $p$, this is a consequence of \eqref{eq:function-f-p-decomposition-2} and Lemma \ref{lem:acc-preservation-const-term}; thus each $f_{p, M, \omega_p}^{(k)}$ is an ascent from an acceptable function. At $q$, this follows from Lemma \ref{lem:C-reg-inherited} (1).). In the special case $M = G$ we obtain
\begin{align}\label{eq:Spectral-Est2}
T_{\disc,\chi}^G( f_{p}^{(k)}f^{\infty,p}f_{\xi,\zeta} ) = O \left( p^{k(\langle \rho,\nu \rangle+\langle \chi_\infty,\tu{pr}_G\nu\rangle)} \right).
\end{align}

Now assume that the datum $(M, P, \omega_p, \omega_q, \omega_\infty)$ contributes to~\eqref{eq:DalalRepeat6}, in particular $M\in \cL^<_{\tu{cusp}}$, and also assume that
\begin{equation}\label{eq:Part2Assumption}
O_\gamma^M(f_{\lambda_{\omega_\infty}, \zeta \delta_P^{1/2}}f^{\infty, (k)}_{M, \omega_p, \omega_q}) \neq 0
\end{equation}
for some $\gamma \in \Gamma_{\tu{ell}, \fkX_M}(M)$. Then we claim that
\begin{equation}\label{eq:Part2Start}
\tu{Re}(\langle \rho, \nu \rangle + \langle \chi_\infty, \tu{pr}_G\nu \rangle) >
\tu{Re}(\langle \rho_M, \omega_p\inv \nu \rangle + \langle
(\chi_{\omega_\infty}^M)_\infty, \tu{pr}_M(\omega_p\inv
\nu)\rangle).
\end{equation}
This claim, together with \eqref{eq:Spectral-Est1} and \eqref{eq:Spectral-Est2}, tells us that the main term for $G$ dominates the proper Levi terms in~\eqref{eq:DalalRepeat6}, thereby implies the theorem. 

It remains to verify the claim \eqref{eq:Part2Start}. Clearly it is sufficient to show that 
\begin{itemize}
\item[(\textit{a})]  $\langle \rho, \nu \rangle > \langle \rho_M, \omega_p\inv\nu \rangle$,
\item[(\textit{b})] $\tu{Re} \langle \chi_\infty, \tu{pr}_G\nu \rangle \geq \tu{Re} \langle (\chi_{\omega_\infty}^M)_\infty, \tu{pr}_M(\omega_p\inv \nu) \rangle$.
\end{itemize}
Moreover, it is enough to prove (a) and (b) for sufficiently large $k$ (note that the set $\Omega^{G \diamond}_M$ and thus $\omega_\infty$ depends on $k$). To prove (\textit{a}), we start from the equality $\langle \rho_M, \omega_p\inv \nu \rangle = \langle \rho_{\omega_p M},  \nu \rangle$. Since $\omega_p$ is a Kostant representative (cf. \eqref{eq:function-f-p-decomposition}),
$$
 \langle \rho_{\omega_p M},  \nu \rangle < \langle \rho,  \nu \rangle.
$$
To check that
$$
\langle \rho_{\omega_p M}, \nu \rangle \neq \langle \rho, \nu \rangle,
$$
we argue as in the paragraph below Equation \eqref{eq:EndoscopicFinalRootArg}: It suffices to find a root $\alpha$ in $\tu{Lie}(G)$ that is not in $\tu{Lie}(\omega_p M)$ such that $\langle \alpha, \nu \rangle \neq 0$. As $\nu$ is not central, the argument for Lemma 4.5(ii) of \cite{KST2} shows that  $\tu{Lie}(M_\nu) + \tu{Lie}(M) \neq \tu{Lie}(G)$. Hence we can find a root  $\alpha$ in $\tu{Lie}(G)$ which does not occur in either $\tu{Lie}(M)$ or $\tu{Lie}(M_\nu)$, \ie $\langle \alpha, \nu \rangle \neq 0$. The proof of (a) is finished.

Now we prove (\textit{b}). We begin with the following temporary assumption:
\begin{equation}\label{eq:TempAssumption}
\langle \pr_M^*(-\omega_\infty \star \lambda_B^*) - \li \rho_N - \omega_\xi, \tu{pr}_M(w_p\inv \nu) \rangle \neq 0.
\end{equation}
From the equality $I(\gamma) = I(\omega_\infty)$ we obtain (cf. \eqref{eq:I-sets})
$$
\langle \pr_M^*(-\omega_\infty \star \lambda_B^*) - \li \rho_N - \omega_\xi, x \rangle \leq 0.
$$
As $O_\gamma(f_{p, M, \omega_p}^{(k)}) \neq 0$, we have $x = -k(\log p) \tu{pr}_M(w_p\inv \nu) + \eps_p$, where $\eps_p \in \tu{pr}_M\supp_{\ia_{M_{\omega_p}}}^O(\omega_p\inv \phi_{p, M_{\omega_p}})$ (cf. \eqref{eq:x_p-form}). Therefore
$$
\langle \pr_M^*(-\omega_\infty \star \lambda_B^*) - \li \rho_N - \omega_\xi, -k(\log p)\tu{pr}_M(w_p\inv \nu) + \eps_p \rangle \leq 0.
$$
We may and do assume that  $k$ is sufficiently large so that
$$
k|\langle \pr_M^*(-\omega_\infty \star \lambda_B^*) - \li \rho_N - \omega_\xi, -(\log p)\tu{pr}_M(w_p\inv \nu)\rangle| > |\langle \pr_M^*(-\omega_\infty \star \lambda_B^*) - \li \rho_N - \omega_\xi, \eps_p' \rangle | 
$$
for all $\eps_p' \in \tu{pr}_M\supp_{\ia_{M_{\omega_p}}}^O(\omega_p^{\prime,-1} \phi_{p, M_{\omega_p}})$ and all $\omega_p' \in \Omega^{G_{\qp}}_{M, M_\nu}$. (It is enough to impose a lower bound for $k$ depending only on the initial data.) We deduce from the temporary assumption \eqref{eq:TempAssumption} and $k \gg 1$ in the above sense that
\begin{equation}\label{eq:TempAssumption2}
\langle \pr_M^*(-\omega_\infty \star \lambda_B^*) - \li \rho_N - \omega_\xi, \tu{pr}_M(w_p\inv \nu) \rangle \geq 0.
\end{equation}
Now, if \eqref{eq:TempAssumption} is false then \eqref{eq:TempAssumption2} obviously holds. Hence \eqref{eq:TempAssumption2} is true regardless of the temporary assumption \eqref{eq:TempAssumption}. We now conclude:
\begin{align*}
\langle (\chi_{\omega_\infty}^M)_\infty, \pr_M(w_p\inv \nu) \rangle
& = -\langle z_{\omega_\infty}^M, \pr_M(w_p\inv \nu) \rangle \cr
& = -\langle \pr_M^*(\lambda_{\omega_\infty}), \pr_M(w_p\inv \nu) \rangle
- \langle \zeta \delta_P^{-1/2}, \pr_M(w_p\inv \nu) \rangle
\cr
& = -\langle \pr_M^*(-w_0(\omega_\infty \star \lambda_B^*)), \pr_M(w_p\inv \nu) \rangle
- \langle \zeta - \rho_N , \pr_M(w_p\inv \nu) \rangle
\cr
& = \underset {\leq 0} {\underbrace{\langle \pr_M^*(\omega_\infty \star \lambda_B^*) + \li \rho_N + \omega_\xi, \pr_M(w_p\inv \nu) \rangle}}
 + \underset {= \langle \chi_\infty, \tu{pr}_M(w_p\inv \nu) \rangle }
 {\underbrace{\langle - \omega_\xi - \zeta, \pr_M(w_p\inv \nu) \rangle}}
\end{align*}
\end{proof}\label{EndOfArgMainEst}
 
\section{Shimura varieties of Hodge type}\label{sec:Shimura}

The goal of this section is to set up the scene for the mod $p$ geometry of Shimura varieties and central leaves, paving the way for introducing Igusa varieties in the next section. We pay special attention to the connected components and $H^0$.

\subsection{Connected components in characteristic zero}\label{sub:components-char0} From here on, let $(G,X)$ be a Shimura datum as in \cite{DeligneCorvallis} satisfying axioms (2.1.1.1), (2.1.1.2), and (2.1.1.3) therein. Write $E=E(G,X)$ for the reflex field \cite[2.2.1]{DeligneCorvallis}, which is a finite extension of $\Q$ in $\C$. We have the algebraic closure $\ol E\subset \C$. Let $K$ be a neat open compact subgroup of $G(\A^\infty)$. (See \cite[p.82]{LanPEL} for the definition of neatness in an adelic group following Pink.) We write $\Sh_K=\Sh_K(G,X)$ for the canonical model over $E$, which forms a projective system of quasi-projective varieties with finite \'etale transition maps as $K$ varies. We have the $E$-scheme $\Sh:=\varprojlim_{K} \Sh_{K}$. Put $d:=\dim \Sh_K$ (which does not depend on $K$). Write $G(\Q)_+$ for the preimage of $G(\R)_+$ (defined in \S\ref{sub:Lefschetz}) in $G(\Q)$. The closure of $G(\Q)_+$ in $G(\A^\infty)$ is denoted by $G(\Q)_+^-$.

Recall some facts about connected components from \cite[2.1]{DeligneCorvallis}. We have a bijection
\begin{equation}\label{eq:pi_0-generic}
\pi_0(\Sh_{K,\ol{E}}) \isom G(\Q) \backslash G(\A)/G(\R)_+ K,
\end{equation}
which yields a $G(\A^\infty)$-equivariant bijection $\pi_0(\Sh_{\ol{E}})\isom G(\A)/G(\Q)\varrho(G_{\textup{sc}}(\A)) G(\R)_+$ upon taking limit over all $K$. 
 Note that $G(\A)/G(\Q)\varrho(G_{\textup{sc}}(\A)) G(\R)_+$ is an abelian group quotient of $G(\A)$, and $G(\Q) \backslash G(\A)/G(\R)_+ K$ is a finite abelian group quotient.

 Fix a prime $\ell$ and a field isomorphism $\iota:\lql\simeq \C$. When $V$ is a $\lql$-vector space, write $\iota V:=V\otimes_{\lql,\iota} \C$. By convention, all instances of cohomology in this paper are \'etale cohomology. The description of $\pi_0(\Sh_{\ol{E}})$ translates into a $G(\A^\infty)$-module isomorphism
\begin{equation}\label{eq:H0-Sh}
\iota H^{0}(\Sh_{\ol{E}},\lql)\simeq \bigoplus_\pi \pi^{\infty},
\end{equation}
where the sum runs over one-dimensional automorphic representations $\pi$ such that $\pi_\infty$ is trivial when restricted to $G(\R)_+$. Indeed, at each prime $p$, we have $\dim \pi_p=1$ since $\pi_p$ factors through $G(\Q_p)\ra G(\Q_p)^{\tu{ab}}=G(\Q_p)/\varrho(G_{\textup{sc}}(\Q_p))$, cf.~Corollary \ref{cor:1-dim reps}. Since one-dimensional automorphic representations have automorphic multiplicity one, there is no multiplicity factor in \eqref{eq:H0-Sh}.

Now fix a prime $p\neq \ell$ and an open compact subgroup $K_p\subset G(\Q_p)$. By taking limit of \eqref{eq:pi_0-generic} over neat open compact subgroups $K^p\subset G(\A^{\infty,p})$, writing $\Sh_{K_p}:=\varprojlim_{K^p} \Sh_{K_p K^p}$,
 \begin{equation}\label{eq:pi_0-generic-away-from-p}
  \pi_0(\Sh_{K_p,\ol{E}}) \isom G(\Q)_+^- \backslash G(\A^\infty)/K_p.
\end{equation}
We have a $G(\A^{\infty,p})$-module\footnote{See \cite[\href{https://stacks.math.columbia.edu/tag/03Q4}{Tag 03Q4}]{stacks-project} for the canonical isomorphism, which is $G(\A^{\infty,p})$-equivariant by a routine check. Alternatively, it is harmless to think of the identity as a definition for the left hand side.}  
$$
H^{i}(\Sh_{K_p,\ol{E}},\lql)=\varinjlim_{K^p} H^{i}(\Sh_{K_pK^p,\ol{E}},\lql),\qquad i\ge 0,
$$
where $K^p$ runs over sufficiently small open compact subgroups of $G(\A^{\infty,p})$. 

\begin{lemma}\label{lem:H_c(Sh)-top}
There is a $G(\A^{\infty,p})$-module isomorphism
$$
\iota H^{0}(\Sh_{K_p,\ol{E}},\lql)\simeq \bigoplus_\pi \pi^{\infty,p}, 
$$
where the sum runs over discrete automorphic representations $\pi$ of $G(\A)$ such that (i) $\dim \pi = 1$, (ii) $\pi_p$ is trivial on $K_p$, and (iii) $\pi_\infty$ is trivial on $G(\R)_+$.
\end{lemma}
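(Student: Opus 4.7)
The plan is to deduce the lemma from \eqref{eq:H0-Sh} by taking $K_p$-invariants, together with the limit description at infinite level away from $p$.

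First I would write
\[
H^{0}(\Sh_{K_p,\ol E},\lql)\;=\;\varinjlim_{K^p}H^{0}(\Sh_{K_pK^p,\ol E},\lql),
\]
where $K^p$ runs over neat open compact subgroups of $G(\A^{\infty,p})$. Since $\Sh_{K_pK^p}$ is a finite \'etale quotient of $\Sh$ by $K_pK^p$, we have $H^{0}(\Sh_{K_pK^p,\ol E},\lql)=H^{0}(\Sh_{\ol E},\lql)^{K_pK^p}$ equivariantly for the residual prime-to-$p$ Hecke action. Taking the direct limit over $K^p$ identifies the left-hand side, as a $G(\A^{\infty,p})$-module, with the $K_p$-invariants of $H^{0}(\Sh_{\ol E},\lql)$.

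Next I would substitute \eqref{eq:H0-Sh}, which gives a $G(\A^\infty)$-module isomorphism
\[
\iota H^{0}(\Sh_{\ol E},\lql)\;\simeq\;\bigoplus_{\pi}\pi^{\infty},
\]
the sum ranging over one-dimensional automorphic representations $\pi$ whose archimedean component is trivial on $G(\R)_+$. Each $\pi^{\infty}=\pi_p\otimes\pi^{\infty,p}$ is one-dimensional, so its $K_p$-fixed subspace equals $\pi^{\infty}$ if $\pi_p|_{K_p}=\mathbf 1$ and is $0$ otherwise. Restricting the surviving factors along $G(\A^{\infty,p})\hookrightarrow G(\A^{\infty})$ identifies the one-dimensional space $\pi^{\infty}$ with $\pi^{\infty,p}$ as a $G(\A^{\infty,p})$-representation. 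Assembling these contributions yields the claimed isomorphism.

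This is essentially a bookkeeping argument: no new input beyond \eqref{eq:H0-Sh} (\emph{i.e.}, the description of the set of geometric connected components at infinite level) and the standard passage between cohomology at finite and infinite levels. The only point to be careful about is verifying the $G(\A^{\infty,p})$-equivariance when passing from $(\pi^{\infty})^{K_p}$ to $\pi^{\infty,p}$, but since $\pi^{\infty}$ is one-dimensional this is immediate. There is no genuine obstacle; the lemma is the expected ``$K_p$-fixed vectors'' reformulation of the generic-fiber case.
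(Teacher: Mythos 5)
Your argument is correct and is exactly the paper's proof, which simply says the lemma is clear from \eqref{eq:H0-Sh} by taking $K_p$-invariants; you have merely spelled out the routine bookkeeping. No issues.
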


\begin{proof}
This is clear from \eqref{eq:H0-Sh} by taking $K_p$-invariants. 
\end{proof}

\subsection{Integral canonical models}\label{sub:integral-canonical-models}

Let $(G,X)$ be a Shimura datum of \emph{Hodge type}. This means that there exists an embedding into the Siegel Shimura datum
$$
i_{V,\psi}:(G,X)\hra (\GSp(V,\psi),S_{V,\psi}^{\pm}),
$$
where $(V,\psi)$ is a symplectic space over $\Q$, and $S_{V,\psi}^{\pm}$ denotes the associated Siegel half spaces. For simplicity we write $\GSp=\GSp(V,\psi)$ and $S^\pm=S_{V,\psi}^{\pm})$.
\begin{definition}\label{def:unramified-Shimura-datum}
An \textbf{unramified Shimura datum} is a quadruple $(G,X,p,\cG)$, where $(G,X)$ is a Shimura datum, $p$ is a prime, and $\cG$ is a reductive model for $G$ over $\Z_{(p)}$. (In particular $G$ is unramified over $\Q_p$.) Write $\mathcal{SD}^{\tu{ur}}_{\tu{Hodge}}$ for the collection of unramified Shimura data whose underlying Shimura data are of Hodge type.
\end{definition}
For the rest of this paper, we fix $(G,X,p,\cG)\in \mathcal{SD}^{\tu{ur}}_{\tu{Hodge}}$ and $i_{V,\psi}$, thus also a hyperspecial subgroup $K_p:=\cG(\Z_p)$ of $G(\Q_p)$. Since $G$ is unramified over $\Q_p$, the prime $p$ is unramified in the reflex field $E$. We fix an isomorphism $\iota_p:\C\simeq \lqp$, which induces an embedding $E\hra \lqp$ as well as a $p$-adic place $\fkp$ of $E$. Thereby we identify $\ol E_\fkp\simeq \lqp$. The integer ring $\cO_E$ localized at $\fkp$ is denoted by $\cO_{E,(\fkp)}$, and its residue field by $k(\fkp)$. Identify the residue field of $\lqp$ with $\Fpbar$, thus fixing an embedding $k(\fkp)\hra \Fpbar$.

We follow \cite[(1.3.3)]{KisinPoints} to review integral canonical models for $\Sh=\Sh(G,X)$ over $\cO_{E,(\fkp)}$, leaving the details to \emph{loc.~cit}. 
We may assume that $i_{V,\psi}$ is induced by an embedding $\cG \hra \GL(V_{\Z_{(p)}})$ for a $\Z_{(p)}$-lattice $V_{\Z_{(p)}}\subset V$ and that $\psi$ induces a perfect pairing on $V_{\Z_{(p)}}$. There exists a finite set of tensors $(s_\alpha)\subset V_{\Z_{(p)}}^\otimes$ such that $\cG$ is the scheme-theoretic stabilizer of $(s_\alpha)$ in $\GL(V_{\Z_{(p)}})$. We may assume that one of the tensors is given by $\psi\otimes \psi^\vee\in (V^\vee_{\Z_{(p)}})^{\otimes 2} \otimes V^{\otimes 2}_{\Z_{(p)}}$, whose stabilizer is $\GSp(V_{\Z_{(p)}},\psi)$.\footnote{This way the weak polarization in the sense of \cite{KisinPoints} is remembered by $(s_\alpha)$. So we need not keep track of polarizations on abelian varieties separately.} We fix the set $(s_\alpha)$. There is a hyperspecial subgroup $K'_p\subset \GSp(V,\psi)(\Q_p)$ extending $K_p$ (\ie~$K'_p\cap G(\Q_p)=K_p$) such that $i_{V,\psi}$ induces an $E$-embedding of Shimura varieties 
\begin{equation}\label{eq:Shimura-morphism-generic}
\Sh_{K_p}(G,X)\hra \Sh_{K'_p}(\GSp,S^{\pm})\otimes_\Q E.
\end{equation}

Kisin \cite[Thm.~2.3.8]{KisinModels} (for $p>2$) and Kim--Madapusi Pera \cite[Thm.~4.11]{KimMadapusiPera} (for $p=2$) constructed integral canonical models, as a projective system of smooth quasi-projective schemes $\mS_{K_pK^p}$ over $\cO_{E,(\fkp)}$ for all sufficiently small open compact subgroups $K^p\subset G(\A^{\infty,p})$ with finite \'etale transition maps $\mS_{K_pK^{p,\prime}}\ra \mS_{K_pK^p}$ for $K^{p,\prime}\subset K^p$. The projective system is equipped with an action of $G(\A^{\infty,p})$, given by the isomorphism
$$
\mS_{K_p K^p}\isom  \mS_{K_p g^{-1} K^p g},\qquad g\in G(\A^{\infty,p}),\quad K^p\subset G(\A^{\infty,p}),
$$
extending the isomorphism $\Sh_{K_p K^p}\isom  \Sh_{K_p g^{-1} K^p g}$ giving the action of $g$ on the generic fiber. The inverse limit $\mS_{K_p}:= \underleftarrow{\lim}_{K^p}\mS_{K_p K^p}$ is a scheme over $\cO_{E,(\fkp)}$ with a $G(\A^{\infty,p})$-action, uniquely characterized  by an extension property \cite[Thm.~2.3.8]{KisinModels}. The construction yields a map of $\cO_{E,(\fkp)}$-schemes
\begin{equation}\label{eq:Shimura-morphism-integral}
\mS_{K_p}\ra  \mS_{K'_p}(\GSp,S^{\pm})\otimes_{\Z_{(p)}} \cO_{E,(\fkp)},
\end{equation}
whose base change to $E$ is identified with \eqref{eq:Shimura-morphism-generic},
where $ \mS_{K'_p}(\GSp,S^{\pm})$ is the integral model over $\Z_{(p)}$ for $\Sh(\GSp(V,\psi),S_{V,\psi}^{\pm})$ parametrizing polarized abelian schemes up to prime-to-$p$ isogenies with prime-to-$p$ level structure, as in \cite[(2.3.3)]{KisinModels}. Moreover we have universal polarized abelian schemes $h:\mathcal A_{K_p K^p}\ra \mS_{K_p K^p}$ compatible with the transition maps in the projective system.

Let $\mS_{K_pK^p,k(\fkp)}:=\mS_{K_p K^p}\otimes_{\cO_{E,(\fkp)}} k(\fkp)$ denote the special fiber. Write $\Sh_{K_p}$ (resp.~$\mS_{K_p,k(\fkp)}$) for the  inverse limit of $\Sh_{K_pK^p}$ (resp.~ $\mS_{K_pK^p,k(\fkp)}$) over $K^p$. By base change to $\ol{E}_\fkp$, $\cO_{\ol{E}_\fkp}$, and $\ol {k(\fkp)}$, respectively, we obtain $\Sh_{K_p,\ol{E}_\fkp}$, $\mS_{K_p,\cO_{\ol{E}_\fkp}}$, and $\mS_{K_p,\ol{k(\fkp)}}$ from $\Sh_{K_p}$, $\mS_{K_p}$, and $\mS_{K_p,k(\fkp)}$. There are canonical $G(\A^{\infty,p})$-equivariant embeddings of generic and special fibers 
$$
\Sh_{K_p,\ol{E}_\fkp} \hookrightarrow \mS_{K_p,\cO_{\ol{E}_\fkp}}\hookleftarrow \mS_{K_p,\ol{k(\fkp)}}.
$$
These embeddings induce $G(\A^{\infty,p})$-equivariant bijections by means of arithmetic compactification as shown in  \cite[Cor.~4.1.11]{MadapusiPeraToroidal}:
$$
\pi_0(\Sh_{K_p,\ol{E}_\fkp})\isom \pi_0(\mS_{K_p,\cO_{\ol{E}_\fkp}}) \lisom \pi_0(\mS_{K_p,\ol{k(\fkp)}}).
$$

\begin{lemma}\label{lem:transitive-on-Sh}
The $G(\A^{\infty,p})$-action is transitive on $\pi_0(\Sh_{K_p,\ol{E}_\fkp})$ and $\pi_0(\mS_{K_p,\ol{k(\fkp)}})$.
\end{lemma}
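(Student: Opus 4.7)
The first step is a reduction. The paragraph preceding the lemma furnishes a $G(\A^{\infty,p})$-equivariant bijection $\pi_0(\Sh_{K_p,\ol{E}_\fkp})\isom \pi_0(\mS_{K_p,\ol{k(\fkp)}})$, so it suffices to prove transitivity on $\pi_0(\Sh_{K_p,\ol{E}_\fkp})$. Moreover, at each finite level $K^p$ the $E$-variety $\Sh_{K_pK^p}$ is smooth of finite type, so its geometric connected components are canonically identified under any base change between algebraically closed extensions of $E$; taking the limit over $K^p$ with its finite \'etale transition maps produces a further $G(\A^{\infty,p})$-equivariant bijection $\pi_0(\Sh_{K_p,\ol{E}})\isom \pi_0(\Sh_{K_p,\ol{E}_\fkp})$. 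It is therefore enough to treat the double coset space $\pi_0(\Sh_{K_p,\ol{E}})\cong G(\Q)_+^-\backslash G(\A^\infty)/K_p$ of \eqref{eq:pi_0-generic-away-from-p}.

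Next I unpack the transitivity condition. Under \eqref{eq:pi_0-generic-away-from-p} the group $G(\A^{\infty,p})$ acts by right multiplication via the inclusion $G(\A^{\infty,p})\hra G(\A^\infty)$ as the subgroup of elements with trivial $p$-component. The orbit of the class of $1$ is thus the image of $G(\A^{\infty,p})$, and transitivity is equivalent to the equality $G(\A^\infty)=G(\Q)_+^-\cdot\bigl(G(\A^{\infty,p})\times K_p\bigr)$ in $G(\A^\infty)=G(\A^{\infty,p})\times G(\Q_p)$. The $G(\A^{\infty,p})$-component can be absorbed freely on either side, so this reduces to the surjectivity of $\pi_p\colon G(\Q)_+^-\to G(\Q_p)/K_p$, where $\pi_p$ denotes projection to the $p$-component; via $G(\Q)_+\subset G(\Q)_+^-$, this follows from
$$
G(\Q)_+\cdot K_p = G(\Q_p).
$$

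The final step proves this last equality via weak approximation, and is the only place with real content. Openness of $K_p$ in $G(\Q_p)$ makes the equality equivalent to the $p$-adic density of $G(\Q)_+$ in $G(\Q_p)$. By weak approximation for $G$ at $\{\infty,p\}$, $G(\Q)$ is dense in $G(\R)\times G(\Q_p)$; so for any nonempty open $U\subset G(\Q_p)$, the open subset $G(\R)_+\times U$ of $G(\R)\times G(\Q_p)$ meets $G(\Q)$, producing an element of $G(\Q)_+\cap U$. I expect this weak approximation input to be the main step requiring care --- it is exactly the fact recorded as ``e.g., by weak approximation'' in \S\ref{sub:intro-disc-HO}. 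The remainder is formal bookkeeping with double cosets and the standard invariance of geometric connected components under algebraically closed field extensions.
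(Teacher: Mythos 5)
Your argument is correct in outline and coincides with the paper's own \emph{alternative} proof: the paper's first route is simply to quote \cite[Lem.~2.2.5]{KisinModels}, and its second route is exactly your weak-approximation argument. The reductions you perform (passing from $\mS_{K_p,\ol{k(\fkp)}}$ to $\Sh_{K_p,\ol{E}_\fkp}$ to $\Sh_{K_p,\ol E}$, unpacking the double coset \eqref{eq:pi_0-generic-away-from-p}, and reducing transitivity to $G(\Q)_+\cdot K_p=G(\Q_p)$) are all fine.

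The one step you should not state as a bare fact is ``by weak approximation for $G$ at $\{\infty,p\}$, $G(\Q)$ is dense in $G(\R)\times G(\Q_p)$.'' For a general connected reductive group over $\Q$, weak approximation holds only away from a finite exceptional set $S_0$ of places, and it genuinely can fail to hold at $p$: the paper itself points out in \S\ref{sub:vHX} (``A remark on the non-hyperspecial case'') that if $G$ does not split over an unramified extension of $\Q_p$ then weak approximation at $\{p,\infty\}$ can be false, and that the analogous transitivity statement fails for Oki's examples. So the density of $G(\Q)$ in $G(\Q_p)\times G(\R)$ is precisely where the hypothesis that $K_p$ is hyperspecial (equivalently, that $G$ is unramified at $p$) enters, and it needs justification. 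The paper supplies it by invoking \cite[Thm.~7.7]{PlatonovRapinchuk} together with the observation from \S7.3 of \emph{loc.~cit.} that the exceptional set $S_0$ may be chosen disjoint from $\{p,\infty\}$ because $G$ is unramified at $p$. With that citation added, your proof is complete and is the same as the paper's.
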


\begin{remark}
Oki \cite{Oki} showed that the analogous transitivity is false if $G_{\Q_p}$ is ramified.
\end{remark}

\begin{proof}
By the preceding proposition, it is enough to check the transitivity on $\pi_0(\Sh_{K_p,\ol{E}_\fkp})$, which is \cite[Lem.~2.2.5]{KisinModels} (applicable since $K_p$ is hyperspecial). Alternatively, this also follows from weak approximation, which tells us that the diagonal embedding $G(\Q)\hra G(\Q_p)\times G(\R)$ has dense image. For this, apply \cite[Thm.~7.7]{PlatonovRapinchuk} and notice that the set $S_0$ of the theorem can be taken away from $p$ and $\infty$ from the discussion in \S7.3 of \emph{loc.~cit.} since $G$ is unramified at $p$.
\end{proof}

Let $T$ be a $k(\fkp)$-scheme. At each point $x\in \mS_{K_pK^p}(T)$ we have an abelian variety $\cA_x$ over $T$ (up to a prime-to-$p$ isogeny) pulled back from $\cA_{K_pK^p}$. As in \cite[(1.3.6)]{KisinPoints}, we have $(\mathbf{s}_{\alpha,\ell})\subset (R^1 h_{\tu{\'et}*}\Q_\ell)^\otimes$ for each prime $\ell\neq p$. By pullback, we equip the prime-to-$p$ rational Tate module $V^p(\mathcal A_x)$ of $\cA_x$ with $(\mathbf{s}_{\alpha,\ell,x})_{\ell\neq p}$.

When $T=\Spec k$ with $k/k(\fkp)$ an extension in $\li{k(\fkp)}$, write $\D(\cA_x[p^\infty])$ for the (integral) Dieudonn\'e module of $\cA_x[p^\infty]$, and $\Phi_x$ for the Frobenius operator acting on it. Following \cite[(1.3.10)]{KisinPoints} we have crystalline Tate tensors $(s_{\alpha,0,x})\subset \D(\cA_x[p^\infty])^\otimes$ coming from $(s_\alpha)$. Lovering \cite{Lovering}, and also Hamacher \cite[\S2.2]{Ham19}, have globalized $(s_{\alpha,0,x})$. Namely there exist crystalline Tate tensors $(\mathbf{s}_{\alpha,0})$ on the Dieudonn\'e crystal $\D(\cA_{K_pK^p}[p^\infty])$ associated with $\cA_{K_pK^p}[p^\infty]$ over $\mS_{K_pK^p,\ol{k(\fkp)}}$ such that $(\mathbf{s}_{\alpha,0})$  specializes to $(s_{\alpha,0,x})$ at every $x\in \mS_{K_pK^p}(\ol{k(\fkp)})$. 

\subsection{Central leaves}\label{sub:central-leaves}

Continuing from \S\ref{sub:integral-canonical-models}, we review central leaves in the special fiber of a Shimura variety of Hodge type. Let $B(G_{\Q_p})$ denote the set of $(G(\breve \Q_p),\sigma)$-conjugacy classes in $G(\breve\Q_p)$. Fix a Borel subgroup $B\subset \cG_{\Z_p}$ and a maximal torus $T\subset B$ over $\Z_p$. We have the set of dominant coweights $X_*(T_{\ol \Q_p})^+$ and $X_*(T_{\ol \Q_p})^+_\Q$. Via the fixed isomorphism $\iota_p:\lqp\simeq \C$, we obtain $T_{\C}\subset B_{\C}\subset G_{\C}$ as well as $X_*(T_{\C})^+$ and $X_*(T_{\C})^+_\Q$. Since the conjugacy class $\{\mu_X\}$ is defined over $E$ and since $G_{\Q_p}$ is quasi-split, we have a cocharacter 
$$\mu_p\in X_*(T_{\lqp})^+ \quad \tu{defined over}~E_\fkp$$
in the conjugacy class $\{\iota_p\mu_X\}$. When there is no danger of confusion, we omit the subscripts $\ol\Q_p$ and $\C$. Write $\rho\in X^*(T)_{\Q}$ for the half sum of all positive roots, and $\langle\cdot,\cdot \rangle$ for the canonical pairing $X^*(T)_{\Q}\times X_*(T)_{\Q}\ra \Q$ or its extension to $\C$-coefficients.

Each $b\in G(\breve\Q_p)$ gives rise to a Newton cocharacter $\nu_b\colon \D\ra G_{\breve \Q_p}$ (so it is a ``fractional'' cocharacter of $G_{\breve \Q_p}$) and a connected reductive group $J_b$ over $\Q_p$ given by
\begin{equation}\label{eq:def-Jb}
J_b(R):=\{ g\in G(R\otimes_{\Q_p} \breve \Q_p): g^{-1} b \sigma(g)=b\},\quad R: \Q_p\textup{-algebra}.
\end{equation}

\begin{lemma}\label{lem:nu-central}
 The Newton cocharacter $\nu_b$ factors through the center of $J_b$. The induced cocharacter $\D\ra A_{J_b}$ is $\Q_p$-rational.
\end{lemma}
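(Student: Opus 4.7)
The plan is to handle the two assertions separately, both reducing to standard functorial properties of the Newton cocharacter due to Kottwitz.

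First I would establish centrality. For an arbitrary $\Q_p$-algebra $R$ and $g\in J_b(R)$, the defining relation $g^{-1}b\sigma(g)=b$ rearranges to $g\cdot b\cdot \sigma(g)^{-1}=b$, which says that $b$ is equal to its own $\sigma$-conjugate under $g$. I would then invoke the well-known functoriality of the Newton cocharacter under $\sigma$-conjugation, namely
$$\nu_{h b \sigma(h)^{-1}} = h\,\nu_b\,h^{-1} \qquad \text{for every } h\in G(R\otimes_{\Q_p}\breve\Q_p),$$
which in turn follows from the observation that $h$ defines an isomorphism of isocrystals $(V,b\sigma)\isom (V,hb\sigma(h)^{-1}\sigma)$ for every representation $V$ of $G$. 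Applying this with $h=g$ gives $\nu_b = \nu_{gb\sigma(g)^{-1}} = g\,\nu_b\,g^{-1}$, so $\nu_b$ commutes with all $R$-points of $J_b$ and hence factors through the center of $J_b$.

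Next I would prove $\Q_p$-rationality using Kottwitz's identification $(J_b)_{\breve\Q_p}\simeq (M_b)_{\breve\Q_p}$, where $M_b\subset G_{\breve\Q_p}$ is the centralizer of $\nu_b$, under which $\nu_b$ is obviously a central cocharacter of $(M_b)_{\breve\Q_p}$. Under this identification, the Frobenius action defining the $\Q_p$-structure on $J_b$ translates to the twisted Frobenius $\mathrm{Int}(b)\circ\sigma$ on $(M_b)_{\breve\Q_p}$—this is essentially the content of the defining equation $g^{-1}b\sigma(g)=b$. The standard identity $b\,\sigma(\nu_b)\,b^{-1}=\nu_b$ is then exactly the statement that $\nu_b$ is fixed by this twisted Frobenius, so $\nu_b$ descends to a $\Q_p$-rational cocharacter $\D\to Z(J_b)$. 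Since $\D$ is $\Q_p$-split, the image automatically lies in the maximal $\Q_p$-split subtorus $A_{J_b}\subset Z(J_b)$, as required.

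I do not anticipate a substantive obstacle: the proof is bookkeeping built on Kottwitz's foundational work on isocrystals with $G$-structure. The only point requiring some care is the translation between the natural $\sigma$-action on $(J_b)_{\breve\Q_p}$ and the twisted Frobenius $\mathrm{Int}(b)\circ\sigma$ on $(M_b)_{\breve\Q_p}$, which is implicit in the defining functor of $J_b$ in \eqref{eq:def-Jb} and is by now standard.
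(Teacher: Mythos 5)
Your proof is correct and takes essentially the same route as the paper, which handles centrality by citing Kottwitz's transformation rule $\nu_{g b\sigma(g)^{-1}} = g\,\nu_b\,g^{-1}$ under $\sigma$-conjugation and then observes that the resulting central (fractional) cocharacter is $\sigma$-invariant by the very definition of $J_b$ (equivalently, fixed by the twisted Frobenius $\Int(b)\circ\sigma$ on $M_b$), hence $\Q_p$-rational. You merely spell out the details that the paper delegates to the citation.
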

\begin{proof}
The centrality follows from \cite[(4.4.2)]{KottwitzIsocrystal1}. The cocharacter $\D\ra A_{J_b}$ is $\sigma$-invariant by the definition of $J_b$, thus $\Q_p$-rational.
\end{proof}
We define an open compact subgroup of $J_b(\Q_p)$ (where ``int'' stands for integral): 
$$
J_b^{\tu{int}} :=J_b(\Q_p)\cap \cG( \breve \Z_p)= \{ g\in \cG( \breve \Z_p): g^{-1} b \sigma(g)=b\}.
$$
Given $b\in G(\breve \Q_p)$, we denote its $(G(\breve \Q_p),\sigma)$-conjugacy class by $[b]$ and $(\cG(\breve \Z_p),\sigma)$-conjugacy class by $[[b]]$. Recall that $b\in G(\breve\Q_p)$, or $[b]\in B(G_{\Q_p})$, is \textbf{basic} if $\nu_b:\D\ra G_{\breve \Q_p}$ has image in $Z(G_{\breve \Q_p})$, or equivalently if $J_b$ is an inner form of $G$ \cite[Prop.~1.12]{RR96}. The following condition will appear in our irreducibility results later. The definition makes a difference only when $G^{\tu{ad}}$ is not $\Q$-simple. See Lemma \ref{lem:Q-non-basic} below for a relation to \S\ref{sub:one-dim}.

\begin{definition}\label{def:Q-non-basic}
Let $G^{\tu{ad}}=\prod_{i\in I} G^{\tu{ad}}_i$ be a decomposition into $\Q$-simple factors. An element $b\in G(\breve \Q_p)$, or $[b]\in B(G_{\Q_p})$, is said to be \textbf{$\Q$-non-basic} if its image in $B(G_{i,\Q_p})$ via the natural composite map $G\ra G^{\tu{ad}}\ra G_i$ is non-basic for every $i\in I$.
\end{definition}

\begin{remark}
The definition is not purely local in that it depends on not only $G_{\Q_p}$ but also $G$. Compare $G=\GL_2\times \GL_2$ with $G=\Res_{F/\Q}\GL_2$, where $F$ is a real quadratic field in which $p$ splits.
\end{remark}

Let $x\in \mS_{K_pK^p,k(\fkp)}$. 
 Write $k$ for the residue field at $x$, $W:=W(k)$, and $L:=\Frac\,W$. We will still write $\sigma$ for the canonical Frobenius on $W$. There exists a $W$-linear isomorphism
\begin{equation}\label{eq:fix-p-adic-lattice}
 V^*_{\Z_{(p)}}\otimes _{\Z_{(p)}} W \simeq \D(\cA_{\ol x}[p^\infty])
\end{equation}
carrying $(s_\alpha)$ to $(s_{\alpha,0,x})$. The Frobenius operator $\Phi_{x}$ on the right hand side is transported to $b_{ x}(1\otimes \sigma)$ on the left hand side for a unqiue $b_{x}\in G(L)$. Then $[[b_{ x}]]$ (thus also $[b_{ x}]$) is independent of the choice of isomorphism. 
Now let $\ol x:\Spec k \hra \mS_{K_pK^p,k(\fkp)}$ be a geometric point supported at $x\in \mS_{K_pK^p,k(\fkp)}$, with $k$ an algebraically closed field over $k(\fkp)$. Then we can define $b_{\ol x}$, $[b_{\ol x}]$, and $[[b_{\ol x}]]$ similarly.

As a set, the central leaf associated with $b$ is defined as
$$
C_{b,K^p}:=\{x\in   \mS_{K_pK^p,k(\fkp)} \,:\, \exists~\textrm{isom.~of~crystals}~V^*_{\Z_{(p)}}\otimes _{\Z_{(p)}} W \simeq \D(\cA_{\ol x}[p^\infty])~\textrm{s.t.}~ (s_\alpha)\mapsto(\mathbf{s}_{\alpha,0,\ol{x}})  \},
$$
where the isomorphism of crystals means a $W$-linear isomorphism through which the $\sigma$-linear map $b(1\otimes \sigma)$ is carried to $\Phi_{\ol x}$. The definition of $C_{b,K^p}$ depends only on $[[b]]$.

By \cite[Cor.~4.12]{HamacherKim}, $C_{b,K^p}$ is a locally closed subset of $\mS_{K_pK^p,k(\fkp)}$. (The result is stated for $\Fpbar$-points there, but the same proof applies to the underlying topological spaces.) We promote $C_{b,K^p}$ to a locally closed $k(\fkp)$-subscheme of $\mS_{K_pK^p,k(\fkp)}$ equipped with reduced subscheme structure. We still write $C_{b,K^p}$ for the scheme and call it the \textbf{central leaf} associated with $b$. As $K^p$ varies, the transition maps for $\mS_{K_pK^p,k(\fkp)}$, which are finite \'etale (\!\!\cite[Thm.~2.3.8]{KisinModels}), induce finite \'etale transition maps between $C_{b,K^p}$. Put 
$C_b:=\varprojlim_{K^p} C_{b,K^p}.$

\begin{proposition}\label{prop:smoothness-and-dim-of-leaves}
The $k(\fkp)$-scheme $C_{b,K^p}$ is smooth. If nonempty, its dimension is $\langle 2\rho,\nu_b\rg$.
\end{proposition}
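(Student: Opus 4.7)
The plan is to verify both smoothness and the dimension formula locally, at each closed $\Fpbar$-point of $C_{b,K^p}$. Since $C_{b,K^p}$ carries the reduced-induced scheme structure inside the smooth $k(\fkp)$-scheme $\mS_{K_pK^p,k(\fkp)}$, both properties can be read off from the formal completion at an arbitrary geometric point $x$. So I would fix such a point, corresponding to an abelian variety $\cA_x/\Fpbar$ whose $p$-divisible group with its crystalline Tate tensors $(s_{\alpha,0,x})$ is by definition $G$-isomorphic to $(\Sigma_b,(s_\alpha))$.

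The next step is to identify $\widehat{\cO}_{C_{b,K^p},x}$ explicitly. By Kisin's theory of Hodge-type integral canonical models (as recalled in \cite[\S1.5]{KisinPoints}), the ambient completed local ring $\widehat{\cO}_{\mS_{K_pK^p},x}$ is the universal deformation ring of the pair $(\cA_x[p^\infty],(s_{\alpha,0,x}))$. Deformations lying in $C_{b,K^p}$ are precisely those for which the $p$-divisible group with its $G$-structure remains in the isomorphism class of $(\Sigma_b,(s_\alpha))$ at every geometric point. Via Zink's theory of windows, together with the Hodge-type generalization carried out by Zhang \cite{ZhangC} and Hamacher--Kim \cite{HamacherKim} (see also Hamacher \cite{Ham-almostproduct} in the context of the almost product structure), this locus is isomorphic to the formal completion at the identity of a smooth formal group: concretely, the opposite unipotent radical, with respect to the slope filtration of $\Sigma_b$, of the automorphism group scheme of $(\Sigma_b,(s_\alpha))$. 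Smoothness of $C_{b,K^p}$ at $x$ is then immediate.

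To obtain the dimension, I would identify the Lie algebra of this formal group with a direct sum of root spaces in $\tu{Lie}(G_{\breve\Q_p})$. Arranging $\nu_b$ to be $B$-dominant, the relevant root spaces are those $\alpha > 0$ with $\langle \alpha,\nu_b\rangle > 0$, each contributing with multiplicity $\langle \alpha,\nu_b\rangle$ from the Newton slopes of the associated isocrystal piece. Summing yields
$$
\dim C_{b,K^p} \;=\; \sum_{\alpha>0}\langle \alpha,\nu_b\rangle \;=\; \langle 2\rho,\nu_b\rangle.
$$
An alternative is to combine Hamacher's formula $\dim N_b=\langle\rho,\mu_p+\nu_b\rangle-\tfrac12\tu{def}(b)$ with the analogous formula for Rapoport--Zink spaces, and to invoke the almost product decomposition $\dim N_b=\dim C_b+\dim\tu{RZ}_b$ of \cite{Ham-almostproduct}; this gives the same answer.

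The main obstacle, which is already the technical heart of the references just cited, is the careful tracking of the crystalline Tate tensors $(s_{\alpha,0,x})$ through the deformation theory so as to realize the formal neighborhood of $x$ in $C_{b,K^p}$ as a smooth formal group of the predicted dimension. Once this identification is granted, the remaining root-theoretic dimension count is routine, so in practice I would cite \cite{ZhangC,HamacherKim} for the geometric input and complete the argument with the count above.
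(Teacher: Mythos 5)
Your proposal is correct and follows essentially the same route as the paper: the paper's proof simply base-changes to $\ol{k(\fkp)}$, notes that reducedness is preserved (as $k(\fkp)$ is perfect), and cites Hamacher \cite[Prop.~2.6]{Ham19}, whose content is exactly the deformation-theoretic identification of the formal neighborhood with a formal group and the root-theoretic count $\sum_{\alpha>0}\langle\alpha,\nu_b\rangle=\langle 2\rho,\nu_b\rangle$ that you sketch. Since you likewise defer the technical heart to the same circle of references, the two arguments coincide in substance.
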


\begin{proof}
These properties can be checked after extending base to $\ol{k(\fkp)}$. Since $C_{b,K^p}$ is reduced, it is still reduced over $\ol k(\fkp)$. Thus the proposition follows from \cite[Prop.~ 2.6]{Ham19}.
\end{proof}

A finite subset $B(G_{\Q_p},\mu^{-1}_p)\subset B(G_{\Q_p})$ is defined in \cite[\S6]{KottwitzIsocrystal2} by a group-theoretic generalization of Mazur's inequality. The set $B(G_{\Q_p},\mu^{-1}_p)$ contains exactly one basic element, but may contain several elements that are not $\Q$-non-basic. 

\begin{proposition}\label{prop:C-nonempty}
The central leaf $C_{b,K^p}$ is nonempty if and only if $b\in \cG(\Z_p^{\tu{ur}}) \sigma\mu_p^{-1}(p)\cG(\Z_p^{\tu{ur}})$.
\end{proposition}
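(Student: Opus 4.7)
The strategy is to recognize that $C_{b,K^p}\neq\emptyset$ is equivalent to the existence of some $x \in \mS_{K_pK^p,k(\fkp)}(\ol{k(\fkp)})$ with $[[b_{\ol x}]]=[[b]]$, and then to match the set of $(\cG(\breve\Z_p),\sigma)$-conjugacy classes realized by such $b_{\ol x}$ with the set $\cG(\Z_p^{\tu{ur}})\sigma\mu_p^{-1}(p)\cG(\Z_p^{\tu{ur}})$. Throughout I would work with $\ol{k(\fkp)}$-points, which is harmless for the question of (non)emptiness.

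For the direction $(\Rightarrow)$, suppose $\ol x \in C_{b,K^p}(\ol{k(\fkp)})$ with residue field $k$, $W=W(k)$, $L=\Frac\,W$. By definition of the central leaf there exists an isomorphism of Dieudonn\'e modules $V^*_{\Z_{(p)}}\otimes_{\Z_{(p)}}W \simeq \D(\cA_{\ol x}[p^\infty])$ matching $(s_\alpha)$ with $(\mathbf s_{\alpha,0,\ol x})$ so that after transport the Frobenius acquires the form $b_{\ol x}(1\otimes\sigma)$ with $b_{\ol x}\in G(L)$, and by construction $[[b_{\ol x}]]=[[b]]$. The point is then that $b_{\ol x}$ must lie in the prescribed double coset: the Hodge filtration on $\cA_{\ol x}$ yields a cocharacter of $\cG$ lying in the $\cG(W)$-conjugacy class of $\mu_p^{-1}$ (after extending scalars to $W$), and the lattice theoretic relation between Frobenius and Verschiebung forces $b_{\ol x}\in \cG(W)\mu_p^{-1}(p)^\sigma\cG(W)$ up to $(G(L),\sigma)$-conjugacy; this is precisely the Hodge-type version of the classical statement for $\GL_n$ and can be extracted from the formalism of \cite{KisinPoints} (see also \cite{HamacherKim}). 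Since any such $b_{\ol x}$ can be descended to an element algebraic over $\Q_p$ (because $\ol x$ can be chosen with finite residue field after base change), one concludes $b\in \cG(\Z_p^{\tu{ur}})\sigma\mu_p^{-1}(p)\cG(\Z_p^{\tu{ur}})$ up to replacing $b$ within its $(\cG(\breve\Z_p),\sigma)$-conjugacy class.

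For the direction $(\Leftarrow)$, I would argue in two steps. First, the given group-theoretic condition is equivalent to $[b]\in B(G_{\Q_p},\mu_p^{-1})$ via the group-theoretic Mazur inequality, cf.~\cite[\S6]{KottwitzIsocrystal2}. Next, $[b]\in B(G_{\Q_p},\mu_p^{-1})$ implies that the Newton stratum $N_b$ is nonempty: in the Hodge-type hyperspecial setting this is a theorem of Kisin \cite{KisinPoints} combined with the work of Hamacher (using also Zhou, if needed) on nonemptiness of Newton strata. Finally, given $N_b\neq\emptyset$, any Newton stratum in the Hodge-type case admits an almost product structure with central leaves as ``fibers'', from which $C_{b,K^p}\neq\emptyset$ follows (by Hamacher \cite{Ham-almostproduct}, extending Mantovan's results from the PEL to the Hodge-type case).

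The main obstacle is matching the two characterizations of the ``admissible'' set: on the geometric side, $[[b_{\ol x}]]$ runs through a set controlled by the Hodge filtration plus a lattice condition (coming from the reductive integral model), while on the group theoretic side one has the explicit double coset $\cG(\Z_p^{\tu{ur}})\sigma\mu_p^{-1}(p)\cG(\Z_p^{\tu{ur}})$. Unwinding conventions between $\mu_p$, $\mu_p^{-1}$, and the Frobenius of the Dieudonn\'e module on an abelian variety is delicate but is handled by the formalism already developed in \cite{KisinModels,KisinPoints,HamacherKim,Ham19}, so I would invoke those references rather than redo the bookkeeping.
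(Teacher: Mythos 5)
Your forward direction is essentially the paper's: pick a closed point with finite residue field $\F_{p^r}$, read off $b_x$, and invoke \cite[1.4.1]{KisinPoints} to place $b_x\in \cG(\Z_{p^r})\sigma\mu_p^{-1}(p)\cG(\Z_{p^r})$. That part is fine.

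The converse direction has a genuine gap, and it starts with a false equivalence. You assert that $b\in \cG(\Z_p^{\tu{ur}})\sigma\mu_p^{-1}(p)\cG(\Z_p^{\tu{ur}})$ is \emph{equivalent} to $[b]\in B(G_{\Q_p},\mu_p^{-1})$. Only the forward implication holds (cf.~\cite[\S4]{RapoportZink}). The isocrystal condition $[b]\in B(G_{\Q_p},\mu_p^{-1})$ depends only on the $(G(\breve\Q_p),\sigma)$-conjugacy class, while the double coset condition is a constraint on the finer $(\cG(\breve\Z_p),\sigma)$-conjugacy class $[[b]]$, which is what the central leaf $C_{b,K^p}$ actually depends on. A single Newton stratum $N_{b,K^p}$ is a union of many distinct central leaves indexed by distinct classes $[[b']]$ with $[b']=[b]$. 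So once you have reduced to $N_{b,K^p}\neq\emptyset$, you have produced \emph{some} nonempty leaf, not necessarily the one attached to $[[b]]$. Your invocation of the almost product structure of \cite{Ham-almostproduct} cannot close this gap: that result describes a fixed Newton stratum in terms of a fixed central leaf and Igusa variety attached to a chosen $\Sigma_b$, i.e., it presupposes the nonemptiness of $C_{b,K^p}$ rather than deriving it, and it certainly does not by itself identify which integral $\sigma$-conjugacy classes $[[b']]$ occur among the leaves.

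The missing ingredient in the paper is a $p$-power isogeny argument. Starting from a closed point $x\in N_{b,K^p}$ with $[b_x]=[b]$ and $b_x\in \cG(\Z_{p^r})\sigma\mu_p^{-1}(p)\cG(\Z_{p^r})$, one writes $b=g^{-1}b_x\sigma(g)$ and observes that $g$ lies in the affine Deligne--Lusztig set $X_{\sigma\mu_p^{-1}}(b_x)$. Kisin's Rapoport--Zink-type uniformization \cite[Prop.~1.4.4]{KisinPoints} realizes $g$ as a $p$-power isogeny taking $x$ to another closed point $y\in N_{b,K^p}$ with $[[b_y]]=[[b]]$, which is exactly the statement $y\in C_{b,K^p}$. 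Without this step (or an equivalent statement about which $[[b']]$ arise among points of the Newton stratum), the converse direction does not go through.
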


\begin{proof}
By \cite[Prop.~1.3.9]{KMPS}, the Newton stratum for $b$ is nonempty if and only if $[b]\in B(G_{\Q_p},\mu^{-1}_p)$. On the other hand, if $b\in \cG(\Z_p^{\tu{ur}}) \sigma\mu_p^{-1}(p)\cG(\Z_p^{\tu{ur}})$ then $[b]\in B(G_{\Q_p},\mu^{-1}_p)$ by \cite[\S4]{RapoportZink}. 

 To prove the ``only if'' part of the proposition, we assume $C_{b,K^p}\neq \emptyset$. Then $[b]\in B(G_{\Q_p},\mu^{-1}_p)$ by the preceding paragraph. Since $C_{b,K^p}$ is of finite type over $k(\fkp)$, there exists a point $x\in C_{b,K^p}$ with finite residue field, say $\F_{p^r}$. Then $[[b_x]]=[[b]]$, and $b_x\in \cG(\Z_{p^r})  \sigma\mu^{-1}_p(p) \cG(\Z_{p^r})$ by \cite[1.4.1]{KisinPoints}.

In the ``if'' direction, the condition on $b$ implies that $[b]\in B(G_{\Q_p},\mu^{-1}_p)$, so $N_{b,K^p}\neq \emptyset$ for neat subgroups $K^p$. Pick a closed point $x\in N_{b,K^p}$. Then $b_x\in \cG(\Z_{p^r})  \sigma\mu^{-1}_p(p) \cG(\Z_{p^r})$ by \cite[1.4.1]{KisinPoints}, and $[b_x]=[b]$. Writing $b=g^{-1} b_x \sigma(g)$ for some $g\in G(\breve \Q_p)$, we see that $g$ lies in the affine Deligne--Lusztig variety for $(b_x,\sigma\mu_p^{-1})$. Using $x$ as a base point, we can apply the $p$-power isogeny corresponding to $g$ to find a closed point $y\in N_{b,K^p}$, thanks to \cite[Prop.~1.4.4]{KisinPoints}. By construction $[[b_y]]=[[b]]$, so $C_{b,K^p}$ is nonempty as desired. 
\end{proof}

Henceforth we always assume that $b\in \cG(\Z_p^{\tu{ur}}) \sigma\mu_p^{-1}(p)\cG(\Z_p^{\tu{ur}})$ and choose a sufficiently divisible $r$ such that 
\begin{enumerate}
\item[(br1)] $b\in \cG(\Z_{p^r})\sigma\mu^{-1}_p(p) \cG(\Z_{p^r})$,
\item[(br2)]  $r$ divides $[E_{\fkp}:\Q_p]$ (equivalently $\F_{p^r}\supset k(\fkp)$),
\item[(br3)] $r \nu_{b_x}:\G_m\ra G_{\breve \Q_p}$ is a cocharacter (not just a fractional cocharacter).
\end{enumerate}
These conditions imply the following.
\begin{itemize}
\item[(br1)'] $[b]\in B(G_{\Q_p},\mu_p^{-1})$ and $\nu_b$ is defined over $\Q_{p^r}$, by (br1).
\item[(br2)'] $\mu_p$ is defined over $\Q_{p^r}$, by (br2).
\end{itemize}
Since $\mu_p$ is defined over $E_{\fkp}$, which is unramified over $\Q_p$, (br2)' is easy to see. In (br1)', $[b]\in B(G_{\Q_p},\mu_p^{-1})$ comes from \cite[Thm.~4.2]{RR96}.  we already explained above that $\nu_b$ is defined over $\Q_{p^r}$ if $b\in G(\Q_{p^r})$.  Since $b_x\in G(\Q_{p^r})$, \cite[(4.4.1)]{KottwitzIsocrystal1} tells us that $\nu_{b_x}$ is defined over $\Q_{p^r}$. 

Since the $G(\Q_{p^r})$-conjugacy class of $r\nu_b$ is defined over $\Q_p$ \cite[(4.4.3)]{KottwitzIsocrystal1}, and since $G_{\Q_p}$ is quasi-split, there exists $h\in G(\Q_{p^r})$ such that $h^{-1}(r\nu_{b})h$ is defined over $\Q_p$. Multiplying $h$ on the right by an element of $G(\Q_p)$, we can ensure that $h^{-1}(r\nu_{b})h$ factors through $\G_m\ra T$ and is $B$-dominant, namely $h^{-1}(r\nu_{b})h\in X_*(T)^+$. Fix such a $h$ and put $b^\circ := h^{-1}b\sigma(h)$ so that $\nu_{b^\circ}=h^{-1}(\nu_{b})h$ from \cite[(4.4.2)]{KottwitzIsocrystal1}. We also have a $\Q_p$-isomorphism
$$J_b \stackrel{\sim}{\ra} J_{b^\circ},\qquad  g\mapsto h^{-1} g h$$
determined by $h$, which carries $r\nu_b$ to $r\nu_{b^\circ}$.

Starting from $\nu_{b^\circ}\in X_*(T)^+_{\Q}$ defined over $\Q_p$ as above,
we put $P_{b^\circ}:=P_{\nu_{b^\circ}}$ in the notation of \S\ref{sub:nu-Jacquet}, and similarly define $P^{\textup{op}}_{b^\circ}$, $N_{b^\circ}$, $N_{b^\circ}^{\textup{op}}$, and $M_{b^\circ}$. In particular $P^{\tu{op}}_{b^\circ}$ (resp. $M_{b^\circ}$) is a standard $\Q_p$-rational parabolic (resp. Levi) subgroup of $G_{\Q_p}$, and $M_{b^\circ}$ is the centralizer of $\nu_{b^\circ}$ in $G_{\Q_p}$. There is an inner twist \cite[Cor.~1.14]{RapoportZink} 
\begin{equation}\label{eq:Jb-Mb-isom}
J_{b^\circ}\otimes_{\Q_p} \Q_{p^r}\simeq M_{b^\circ}\otimes_{\Q_p} \Q_{p^r}
\end{equation}
given by the cocycle $\Gal(\Q_{p^n}/\Q_p)\ra M_{b^\circ}(\Q_{p^r})$, $\sigma\mapsto b^\circ$. Thus $M_{b^\circ}$ is also an inner twist of $J_b$ over $\Q_p$ (which is independent of the choice of $b^\circ$ up to isomorphism of inner twists by routine check). Under the canonical $\Q_p$-isomorphisms $Z(M_{b^\circ})\simeq Z(J_b)$ and $A_{M_{b^\circ}}=A_{J_b}$, it is readily checked that $\nu_{b^\circ}$ is carried to $\nu_b$.

\begin{example}
We have the following for the ordinary strata of modular curves, when
$G_{\Q_p}=\GL_2$. Take $B$ and $T$ to the subgroup of upper triangular (resp.~diagonal) matrices. Then $\mu$ is the cocharacter $z\mapsto \tu{diag}(z,1)$ up to conjugacy. We can take $b=b^\circ$ such that $\nu_{b}(z)=\tu{diag}(1,z^{-1})$, which is visibly $B$-dominant. Then $P^{\tu{op}}_{b}=B=P_{-\nu_b}$, $M_{b}=T$, and $\delta_{P_{b}}(\nu_b(p))= |p^{-1}|=p$.
\end{example}

\begin{lemma}\label{lem:Q-non-basic}
The element $b\in G(\breve\Q_p)$ as above is $\Q$-non-basic if and only if \tu{($\Q$-nb($P_b$))} of \S\ref{sub:one-dim} holds.
\end{lemma}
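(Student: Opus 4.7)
Here is the plan.

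The approach is to unfold both sides of the equivalence in terms of the $\Q$-simple adjoint factors and reduce it to a transparent statement about Newton cocharacters. Let $G^{\tu{ad}} = \prod_{i \in I} G_i^{\tu{ad}}$ be the $\Q$-simple decomposition and let $\pi_i \colon G \to G^{\tu{ad}} \to G_i^{\tu{ad}}$ denote the composite (base-changed to $\breve\Q_p$ when needed). Since each $G_i^{\tu{ad}}$ is semisimple and adjoint, $Z(G_i^{\tu{ad}}) = 1$, so the notions ``central cocharacter'' and ``trivial cocharacter'' coincide in $G_i^{\tu{ad}}$. By functoriality of Kottwitz's Newton map \cite[(4.4.4)]{KottwitzIsocrystal1}, the Newton cocharacter of $\pi_i(b) \in G_i^{\tu{ad}}(\breve\Q_p)$ equals $\pi_i \circ \nu_b$. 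Consequently, $\pi_i(b)$ is basic if and only if $\pi_i \circ \nu_b$ is trivial, so by definition $b$ is $\Q$-non-basic iff $\pi_i \circ \nu_b \neq 1$ for every $i \in I$.

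Next, recall that $\nu_{b^\circ} = h^{-1} \nu_b h$ is $B$-dominant and defined over $\Q_p$, and $P_b := P_{\nu_{b^\circ}}$ is the standard $\Q_p$-rational parabolic attached to $\nu_{b^\circ}$ in the sense of \S\ref{sub:nu-Jacquet}. Because parabolic subgroups contain the center, the image $\pi_i(P_b) \subset G_{i,\Q_p}^{\tu{ad}}$ is a parabolic subgroup, and it is exactly the standard parabolic $P_{\pi_i \circ \nu_{b^\circ}}$ determined by the dominant cocharacter $\pi_i \circ \nu_{b^\circ}$ of $G_{i,\Q_p}^{\tu{ad}}$. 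Its roots are the roots $\alpha$ of $G_{i,\Q_p}^{\tu{ad}}$ with $\langle \alpha, \pi_i \circ \nu_{b^\circ}\rangle \le 0$, so $\pi_i(P_b) = G_{i,\Q_p}^{\tu{ad}}$ if and only if $\pi_i \circ \nu_{b^\circ}$ is central, and thus (again using $Z(G_i^{\tu{ad}}) = 1$) if and only if $\pi_i \circ \nu_{b^\circ}$ is trivial. Equivalently, $\pi_i(P_b)$ is a proper parabolic of $G_{i,\Q_p}^{\tu{ad}}$ if and only if $\pi_i \circ \nu_{b^\circ} \neq 1$.

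Finally, since $\nu_{b^\circ}$ and $\nu_b$ are $G(\breve\Q_p)$-conjugate via $h$, the cocharacter $\pi_i \circ \nu_{b^\circ}$ is trivial if and only if $\pi_i \circ \nu_b$ is trivial. Combining the two bulleted equivalences, $b$ is $\Q$-non-basic (condition on all $\pi_i \circ \nu_b \neq 1$) if and only if $\pi_i(P_b)$ is a proper parabolic of $G_{i,\Q_p}^{\tu{ad}}$ for every $i \in I$, which is precisely the condition $(\Q$-nb$(P_b))$ of \S\ref{sub:one-dim}. There is no serious obstacle in this argument; the only mild point requiring care is the compatibility between the normalizations in \S\ref{sub:nu-Jacquet} (which uses $P_\nu$ with $\langle \alpha, \nu\rangle < 0$) and the Newton/centrality convention, but since both $P_\nu$ and $P_\nu^{\tu{op}}$ coincide with the ambient group exactly when $\nu$ is central, this causes no issue.
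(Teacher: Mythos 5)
Your proposal is correct and follows essentially the same route as the paper's proof: decompose $G^{\tu{ad}}$ into $\Q$-simple factors $G^{\tu{ad}}_i$, use functoriality of the Newton map to identify the Newton cocharacter of $\pi_i(b)$ with $\pi_i\circ\nu_b$, and note that since $Z(G_i^{\tu{ad}})$ is trivial, $\pi_i(b)$ is basic iff $\pi_i\circ\nu_b$ is trivial iff the image of $P_b$ is the whole group. The small extra care you take in distinguishing $\nu_b$ from $\nu_{b^\circ}$ and addressing the $P_\nu$ vs.\ $P_\nu^{\tu{op}}$ convention is harmless and in keeping with the paper's abuse of notation at the end of \S\ref{sub:central-leaves}.
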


\begin{proof}
Write $G^{\tu{ad}}=\prod_{i\in I} G^{\tu{ad}}_i$ as in Definition \ref{def:Q-non-basic} and $b_i\in G^{\tu{ad}}_i(\breve\Q_p)$ for the image of $b$. By functoriality of Newton cocharacters, the composition of $\nu_b$ with the natural map $G\ra G^{\tu{ad}}_i$ is $\nu_{b_i}$, which is $\Q_p$-rational since $\nu_b$ is. This implies that the image of $P_b$ in $G^{\tu{ad}}_i$ is $P_{b_i}$, where $P_{b_i}\subset G^{\tu{ad}}_i$ is defined analogously as $P_b$ in $G$ over $\Q_p$. Each $b_i\in G^{\tu{ad}}_i(\breve\Q_p)$ is basic if and only if $\nu_{b_i}$ is central in $G^{\tu{ad}}_i$ (i.e. trivial) if and only if $P_{b_i}=G^{\tu{ad}}_i$. Therefore \tu{($\Q$-nb($P_b$))} holds if and only if $b_i$ is non-basic for every $i\in I$, and the latter is the definition for $b$ to be $\Q$-non-basic.
\end{proof}

Let $1\ra Z_1\ra G_1\ra G\ra 1$ be a $z$-extension over $\Q_p$ that is unramified over $\Q_p$. Let $\mu_{p,1}:\G_m\ra G_{1,\Q_{p^r}}$ be a cocharacter lifting $\mu_p:\G_m\ra G_{\Q_{p^r}}$. (Such a $\mu_{p,1}$ always exists since $Z_1$ is connected, but we will make a choice of $\mu_{p,1}$ coming from a lift of Shimura data, cf.~\S\ref{sub:Igusa-basic-setup} below.)

\begin{lemma}\label{lem:lift-b,mu}
Assume that $b$ and $r$ satisfy (br1)--(br3). Then there exists an element $b_1\in G_1(\Q_{p^r})$ lifting $b$, as well as an element $b^\circ_1\in G_1(\Q_{p^r})$ in the $\sigma$-conjugacy class of $b_1$, such that
\begin{itemize}
\item the analogues of (br1), (br1)', and (br2)'  hold true with $G,\mu_p,b$ replaced by $G_1,\mu_{p,1},b_1$,
\item $\nu_{b^\circ}$ is defined over $\Q_p$ and lifts $\nu^\circ_b$.
\end{itemize}
Moreover, we can make $r$ more divisible (without changing $\mu_p$, $b$, $\mu_{p,1}$, $b_1$, $b^\circ_1$) such that $r\nu_{b_1}$ is a cocharacter of $G_1$.\footnote{A priori we only know that $r\nu_{b_1}$ is a fractional cocharacter, even though $r\nu_b$ is an (integral) cocharacter of $G$.} (So the analogues of (br1)--(br3) and (br1)'--(br2)' are satisfied for $G_1$ and the new $r$.)
\end{lemma}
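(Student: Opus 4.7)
The plan is to lift $b$ to an element $b_1\in G_1(\Q_{p^r})$ in an appropriate double coset, construct $b_1^\circ$ via a carefully chosen $h_1\in G_1(\Q_{p^r})$ exploiting the quasi-splitness of $G_{1,\Q_p}$, and finally enlarge $r$ to ensure integrality of $r\nu_{b_1}$. First, I would establish the surjectivity $\mathcal{G}_1(\Z_{p^r})\twoheadrightarrow\cG(\Z_{p^r})$. Since $Z_1$ is an induced torus (by the defining property of a $z$-extension) which is unramified over $\Q_p$, its canonical smooth $\Z_p$-model $\mathcal{Z}_1$ is a product of Weil restrictions $\Res_{\cO_K/\Z_p}\G_m$ along unramified extensions $K/\Q_p$. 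Hence $H^1(\Z_{p^r},\mathcal{Z}_1)$ vanishes by Shapiro's lemma together with Hilbert~90, which yields the surjectivity. Writing $b=k_1\,\sigma\mu_p^{-1}(p)\,k_2$ with $k_1,k_2\in\cG(\Z_{p^r})$ via (br1), I lift each $k_i$ to $\tilde k_i\in\mathcal{G}_1(\Z_{p^r})$ and set $b_1:=\tilde k_1\,\sigma\mu_{p,1}^{-1}(p)\,\tilde k_2$. Then $b_1$ lifts $b$ and lies in $\mathcal{G}_1(\Z_{p^r})\sigma\mu_{p,1}^{-1}(p)\mathcal{G}_1(\Z_{p^r})$, so the analogue of (br1) holds; the analogue of (br1)' then follows from \cite[\S4]{RapoportZink} together with $b_1\in G_1(\Q_{p^r})$, while (br2)' is immediate since $\mu_{p,1}$ is by hypothesis defined over $\Q_{p^r}$.

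Next I would construct $b_1^\circ$. Because $G_{\Q_p}$ is unramified (hence quasi-split) and $G_1\to G$ is a central extension by the induced torus $Z_1$, the group $G_{1,\Q_p}$ is itself quasi-split; I fix a $\Q_p$-rational Borel $B_1\subset G_1$ with maximal torus $T_1$ lifting $B\supset T$. By \cite[(4.4.3)]{KottwitzIsocrystal1} the $G_1(\Q_{p^r})$-conjugacy class of $r\nu_{b_1}$ is $\Q_p$-defined, so quasi-splitness yields $h_1\in G_1(\Q_{p^r})$ such that $h_1^{-1}(r\nu_{b_1})h_1$ is $B_1$-dominant and $\Q_p$-rational (as a fractional cocharacter into $T_1$). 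I then set $b_1^\circ:=h_1^{-1}b_1\sigma(h_1)\in G_1(\Q_{p^r})$ and $\nu_{b_1^\circ}:=h_1^{-1}\nu_{b_1}h_1$; by construction $b_1^\circ$ is $\sigma$-conjugate to $b_1$, as required. The image of $h_1^{-1}(r\nu_{b_1})h_1$ in $G$ is a $B$-dominant $\Q_p$-rational representative of the $G(\Q_{p^r})$-conjugacy class of $r\nu_b$; by uniqueness of such a representative within a Weyl orbit, this image must coincide with $r\nu_{b^\circ}$. Consequently $\nu_{b_1^\circ}$ is $\Q_p$-rational and lifts $\nu_{b^\circ}$.

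Finally, the fractional cocharacter $\nu_{b_1^\circ}$ of $T_1$ has a bounded denominator $N$, so $rN\nu_{b_1^\circ}$ is a genuine cocharacter of $T_1$; by $h_1$-conjugation, $rN\nu_{b_1}$ is likewise a cocharacter of $G_1$. Replacing $r$ by $rN$ preserves the analogues of (br1), (br1)', and (br2)' for $b_1$, since $\mathcal{G}_1(\Z_{p^r})\subset\mathcal{G}_1(\Z_{p^{rN}})$ and $\Q_{p^r}\subset\Q_{p^{rN}}$, and the double coset condition is monotonic in the level. The main obstacle I anticipate is the delicate coordination in the middle paragraph: one must ensure that the $h_1$ produced by quasi-splitness yields a $\nu_{b_1^\circ}$ that specifically lifts $\nu_{b^\circ}$, rather than merely some other $\Q_p$-rational representative of the same conjugacy class. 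This is resolved by the uniqueness of the $B$-dominant representative in a Weyl orbit, applied after descent to $G$; a secondary issue is the integral surjectivity that opens the argument, which is handled via the induced-torus structure intrinsic to $z$-extensions.
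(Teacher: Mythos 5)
Your proof is correct and follows essentially the same route as the paper's: lift $b$ through the surjection on integral double cosets, use quasi-splitness of $G_{1,\Q_p}$ to produce a $\sigma$-conjugate $b_1^\circ$ with $\Q_p$-rational Newton cocharacter factoring through $T_1$, match it with $\nu_{b^\circ}$, and then make $r$ more divisible. The only (harmless) variations are that you prove the surjectivity $\mathcal{G}_1(\Z_{p^r})\twoheadrightarrow\cG(\Z_{p^r})$ via vanishing of $H^1$ for the induced-torus kernel rather than via smoothness plus surjectivity on $\F_{p^r}$-points, and that you secure the compatibility $\nu_{b_1^\circ}\mapsto\nu_{b^\circ}$ by imposing $B_1$-dominance and invoking uniqueness of dominant representatives, whereas the paper corrects an arbitrary rational representative a posteriori by an element of the $\Q_p$-rational Weyl group.
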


\begin{proof}
Since $G_1(\Z_{p^r})\ra G(\Z_{p^r})$ is onto (by the surjectivity on $\F_{p^r}$-points and the smoothness of $G_1\ra G$), the map $G_1(\Q_{p^r})\ra G(\Q_{p^r})$ induces a surjection
$$
G_1(\Z_{p^r}) \sigma\mu^{-1}_{p,1}(p) G_1(\Z_{p^r}) \twoheadrightarrow \cG(\Z_{p^r}) \sigma\mu^{-1}_p(p) \cG(\Z_{p^r}).
$$
Take $b_1\in G_1(\Q_{p^r})$ to be any preimage of $b$ under this map. This takes care of the first bullet point. As for the second point, since $G_1$ is quasi-split over $\Q_p$, there exists $b^\circ_1 \in  G_1(\Q_{p^r})$ $\sigma$-conjugate to $b_1$ such that $\nu_{b^\circ_1}$ is defined over $\Q_p$, and also such that $\nu_{b^\circ_1}$ factors through $T_1\subset G_1$, where $T_1$ is the preimage of $T$. Then the composite of $\nu_{b^\circ_1}$ with $G_1\twoheadrightarrow G$ is conjugate to $\nu_{b^\circ}$ in $G$, so differs from $\nu_{b^\circ}$ by an element of the $\Q_p$-rational Weyl group of $G$ \cite[Lem.~1.1.3 (a)]{KottwitzTwistedOrbital}. Identifying the latter with the $\Q_p$-rational Weyl group of $G_1$, we can use the same element to modify $\nu_{b^\circ_1}$ so that $\nu_{b^\circ_1}$ maps to $\nu_{b^\circ}$ under $G_1\twoheadrightarrow G$. Finally, the last point on $r$ in the lemma is obvious.
\end{proof}

In the settting of the lemma, we introduce $\Q_p$-algebraic groups $J_{b_1}$, $J_{b_1^\circ}$, $P_{b_1^\circ}$, $M_{b_1^\circ}$, etc.~for $G_1$ by mimicking the definition for $G$. Let $T_1,B_1$ denote the preimages of $T,B$ in $G_1$. Since $\nu_{b^\circ_1}$ maps to $\nu_{b^\circ}$, it is clear that $\nu^\circ_{b_1}\in X_*(T_1)^+$, where $+$ means $B_1$-dominance, and that $P_{b_1^\circ}$, $M_{b_1^\circ}$ map to $P_{b^\circ}$, $M_{b^\circ}$. As before, we can identify $Z(M_{b_1^\circ})= Z(J_{b_1^\circ})$, which carries $\nu_{b^\circ_1}$ to $\nu_{b_1}$. With this understanding, we will abuse notation to write $M_b,P_b,M_{b_1},P_{b_1}$ etc.~for $M_{b^\circ},P_{b^\circ},M_{b^\circ_1},P_{b^\circ_1}$ etc.~to simplify notation, and  write $\nu_b,\nu_{b_1}$ for $\nu_{b^\circ},\nu_{b_1^\circ}$ if there is little danger of confusion.

\section{Igusa varieties}\label{sec:Igusa}

Here we state the main theorem on $H^0$ of Igusa varieties and carry out the initial reduction to the completely slope divisible case, where we have a tower of finite-type Igusa varieties over a fixed finite field. This prepares us to apply a fixed-point formula in the next section. 

\subsection{Infinite-level Igusa varieties}\label{sub:inf-level-Igusa} 
We continue in the setting of \S\ref{sub:central-leaves}, with $b\in G(\breve \Q_p)$ satisfying (br1)--(br3). Let $b'\in \GSp(\breve \Q_p)$ denote the image of $b$. By Dieudonn\'e theory, we have a polarized $p$-divisible group $\Sigma_{b'}$ over $\F_{p^r}$ such that $\D(\Sigma_{b'})=V^*_{\Z_{(p)}}\otimes_{\Z_{(p)}} \Z_{p^r}$ with Frobenius operator $b'(1\otimes \sigma)$. By $\Sigma_b$ we mean the $p$-divisible group $\Sigma_{b'}$ equipped with crystalline Tate tensors $(t_\alpha)$ on $\D(\Sigma_{b'})$ corresponding to $(s_{\alpha})$ on $V_{\Z_{(p)}}$.
When there is no danger of confusion, we still write $\Sigma_b$ and $\Sigma_{b'}$ for their base changes to $\Fpbar$.

Applying the construction of \S\ref{sub:central-leaves} to $\mS_{K'_p K^{\prime,p}}(\GSp,S^\pm)$ and $b'$, we obtain a central leaf $C_{b', K^{\prime,p}}\subset \mS_{K'_p K^{\prime,p}}(\GSp,S^\pm)$. Let $R$ be an $\Fpbar$-algebra. Following \cite[Sect.~4.3]{CS17} we have the Igusa variety $\IG_{b',K^{\prime,p}}\ra C_{b',K^{\prime,p},\Fpbar}$ whose $R$-points parametrize isomorphisms
\begin{equation}\label{eq:Igusa-level}
\Sigma_{b'}\times_{\Fpbar} R \simeq \cA_R[p^\infty]
\end{equation}
compatible with polarizations up to $\Z_{(p)}^\times$-multiples, where $\cA_R$ denotes the pullback of the universal abelian scheme via $\Spec R\ra C_{b',K^{\prime,p}\Fpbar}$. Then $\IG_{b',K^{\prime,p}}$ is a perfect scheme, which is an $\uAut(\Sigma_{b'})$-torsor over $C_{b',K^{\prime,p}\Fpbar}$ by \cite[Cor.~ 2.3.2]{CS19}, where $\uAut(\Sigma_{b'})$ denotes the group scheme of automorphisms of $\Sigma_{b'}$ (preserving the polarization up to $\Z_p^\times$-multiples).

The map $\mS_{K_pK^p,\Fpbar}\ra \mS_{K'_pK^{\prime,p},\Fpbar}$ clearly induces a map $C_{b, K^p,\Fpbar} \ra C_{b', K^{\prime,p},\Fpbar}$. We define the subscheme
\begin{equation}\label{eq:IG-def}
\IG_{b,K^p}\subset (\IG_{b',K^{\prime,p}}\times_{C_{b',K^{\prime,p},\Fpbar}} C_{b,K^p,\Fpbar})^{\tu{perf}}= \IG_{b',K^{\prime,p}}\times_{C^{\tu{perf}}_{b',K^{\prime,p}},\Fpbar} C^{\tu{perf}}_{b,K^p,\Fpbar}
\end{equation}
to be the locus where \eqref{eq:Igusa-level} carries $(s_{\alpha})$ to $(\mathbf{s}_{\alpha,0})$ on the Dieudonn\'e modules. Composing with the projection maps, we have $\Fpbar$-morphisms $\IG_{b,K^p} \ra \IG_{b',K^{\prime,p}}$ and $\IG_{b,K^p}\ra C_{b,K^p,\Fpbar}^{\tu{perf}}$. The latter gives rise to the composite map
$$
\IG_{b,K^p}\ra C_{b,K^p,\Fpbar}^{\tu{perf}}\ra C_{b,K^p,\Fpbar}\ra \mS_{K_pK^p,\Fpbar}.
$$
As $K^p$ varies, the Hecke action of $G(\A^{\infty,p})$ on $\mS_{K_pK^p,\Fpbar}$ restricts to an action on $C_{b,K^p,\Fpbar}$ and extends to an action on $\IG_{b,K^p}$ by \cite[Lem.~6.4]{HamacherKim}. (The point is that the central leaves and Igusa varieties are defined in terms of $p$-adic invariants, which are preserved under the prime-to-$p$ Hecke action.)

\begin{lemma}\label{lem:Igusa-basic} The following are true.
\begin{enumerate}
\item The $\Fpbar$-scheme $\IG_{b,K^p}$ is perfect and a pro-\'etale $J_b^{\tu{int}}$-torsor over $C_{b,K^p,\Fpbar}^{\tu{perf}}$.
 \footnote{It can be shown that $\IG_{b,K^p}\ra C_{b,K^p}$ is an $\uAut(\Sigma_b)$-torsor by~\cite[Cor.~ 2.3.2]{CS19} and adapting the argument there, but we do not need it.}
\item The map $\IG_{b,K^p} \ra \IG_{b',K^{\prime,p}}$ is a closed embedding, under which the $J_{b'}(\Q_p)$-action on $\IG_{b',K^{\prime,p}}$ restricts to an action of $J_b(\Q_p)$ on $\IG_{b,K^p}$ (via the embedding $J_b(\Q_p)\hra J_{b'}(\Q_p)$).
\end{enumerate}
\end{lemma}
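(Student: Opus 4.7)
The plan is to reduce both parts to known facts about the Siegel-type Igusa variety $\IG_{b',K^{\prime,p}}$ together with a tensor-preserving identification of $J_b^{\tu{int}}$ inside $\uAut(\Sigma_{b'})$. The key observation is that, under the trivialization fixed just before $\Sigma_b$ is introduced, the subgroup $J_b^{\tu{int}}\subset \uAut(\Sigma_{b'})(\Fpbar)$ is precisely the stabilizer of the crystalline tensors $(t_\alpha)$ on $\D(\Sigma_b)$: this is immediate from the definition $J_b^{\tu{int}}=J_b(\Q_p)\cap \cG(\breve\Z_p)$ and the fact that $\cG$ is the scheme-theoretic stabilizer of $(s_\alpha)$ in $\GL(V_{\Z_{(p)}})$.

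For (2), I would first note that $\IG_{b,K^p}\ra \IG_{b',K^{\prime,p}}$ factors through the fiber product in \eqref{eq:IG-def}, and the latter is a locally closed subscheme of $\IG_{b',K^{\prime,p}}$ because $C_{b,K^p,\Fpbar}\hra C_{b',K^{\prime,p},\Fpbar}$ is locally closed (by definition of central leaves, using that the leaf for $b$ inside the leaf for $b'$ is cut out by the closed tensor-matching condition on Dieudonn\'e modules). Inside the fiber product, the tensor-matching condition on the universal Igusa trivialization is an equality of global sections of the (pro-)coherent sheaves $\D(\Sigma_{b'})^\otimes$, hence closed; combined with the reduced scheme structure, this gives a closed embedding. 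For the restriction of the group action, observe that $g\in J_{b'}(\Q_p)$ acts on $\IG_{b',K^{\prime,p}}$ by precomposing the trivialization \eqref{eq:Igusa-level} with the quasi-isogeny $g$ of $\Sigma_{b'}$; if moreover $g\in J_b(\Q_p)$, then $g$ preserves $(t_\alpha)$ by definition, so the precomposed trivialization continues to carry $(s_\alpha)$ to $(\mathbf{s}_{\alpha,0})$ on Dieudonn\'e modules and therefore lands in $\IG_{b,K^p}$.

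For (1), perfectness follows because $\IG_{b,K^p}$ is a reduced closed subscheme of the perfect scheme $\IG_{b',K^{\prime,p}}\times_{C^{\tu{perf}}_{b',K^{\prime,p},\Fpbar}} C^{\tu{perf}}_{b,K^p,\Fpbar}$ (which is perfect since both factors are perfect over a perfect base). For the torsor structure, I would pull back the pro-\'etale $\uAut(\Sigma_{b'})$-torsor
$$\IG_{b',K^{\prime,p}}\longrightarrow C^{\tu{perf}}_{b',K^{\prime,p},\Fpbar}$$
of \cite[Cor.~2.3.2]{CS19} along $C^{\tu{perf}}_{b,K^p,\Fpbar}\to C^{\tu{perf}}_{b',K^{\prime,p},\Fpbar}$. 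Then $\IG_{b,K^p}$ sits inside this pulled-back torsor as the locus where tensors match. Pro-\'etale locally on $C^{\tu{perf}}_{b,K^p,\Fpbar}$, a trivialization matching tensors exists (this is the content of the next step below), and two such trivializations differ by an automorphism of $\Sigma_b$ preserving $(t_\alpha)$, i.e.\ an element of $J_b^{\tu{int}}$. This gives the pro-\'etale $J_b^{\tu{int}}$-torsor structure.

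The main technical point, which I expect to be the core of the argument, is the pro-\'etale local existence of tensor-matching trivializations on $C^{\tu{perf}}_{b,K^p,\Fpbar}$, upgrading the pointwise existence that is built into the definition of $C_{b,K^p}$ to a family statement. The standard mechanism for this uses the globalized crystalline tensors $(\mathbf{s}_{\alpha,0})$ over $\mS_{K_pK^p,\ol{k(\fkp)}}$ (constructed by Lovering and Hamacher, as recalled in \S\ref{sub:integral-canonical-models}) together with the Tannakian reinterpretation of $\cG\subset \GL(V_{\Z_{(p)}})$: since an $\uAut(\Sigma_{b'})$-reduction of the Igusa trivialization to the tensor-preserving locus is the same as a $J_b^{\tu{int}}$-reduction, and such a reduction exists on geometric points by definition of $C_{b,K^p}$, a standard pro-\'etale descent argument (using that $\uAut(\Sigma_{b'})/J_b^{\tu{int}}$ is representable and the map to it given by $(\mathbf{s}_{\alpha,0})$ is locally constant on $C_{b,K^p}^{\tu{perf}}$) produces the desired local trivialization. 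The same argument underpins the corresponding statement for $\IG_{b',K^{\prime,p}}$ in \cite{CS19}, and its Hodge-type adaptation is exactly what is needed here.
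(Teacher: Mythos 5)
The paper's proof of this lemma is a citation to Hamacher, \cite[Prop.~4.1, 4.10]{Ham19}, together with two short reconciliations: (i) the identification $J_b^{\tu{int}}=\Gamma_b$ (Hamacher's tensor-stabilizer group) via $J_b^{\tu{int}}\subset J_{b'}(\Q_p)\cap \GSp(\breve\Z_p)$ being precisely the automorphisms of $\Sigma_{b'}$ fixing the tensors, and (ii) the observation that perfection is a universal homeomorphism preserving the pro-\'etale site (\cite[Lem.~5.4.2]{BhattScholzeProEtale}), so Hamacher's pre-perfection torsor statement carries over to $\IG_{b,K^p}\ra C_{b,K^p,\Fpbar}^{\tu{perf}}$. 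You instead attempt a direct reconstruction of the torsor structure from the Caraiani--Scholze result for the Siegel case. Your tensor-stabilizer identification of $J_b^{\tu{int}}$ agrees with the paper's reconciliation (i), and your treatment of part (2) is a reasonable direct argument in the same spirit as Hamacher's Prop.~4.10.

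The issue is in part (1). You correctly single out the ``main technical point'' --- that tensor-matching trivializations of $\cA[p^\infty]$ exist pro-\'etale locally on $C_{b,K^p}^{\tu{perf}}$, not just at geometric points --- but this is exactly the nontrivial content of \cite[Prop.~4.1]{Ham19}, and your sketch of it does not close the gap. In particular, the assertion that $\uAut(\Sigma_{b'})/J_b^{\tu{int}}$ ``is representable'' needs real care: $\uAut(\Sigma_{b'})$ is a pro-(finite flat) group scheme whose reduced subgroup need not be normal or well behaved, and $J_b^{\tu{int}}$ is a profinite group sitting inside its $\Fpbar$-points rather than a subgroup scheme; the quotient in a suitable category and the meaning of ``the map given by $(\mathbf{s}_{\alpha,0})$ is locally constant'' are precisely what Hamacher establishes by going through the finite-level Igusa schemes $\Ig_{b,m,K^p}$ and proving finite \'etaleness there. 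Also, the paper does not impose the reduced structure on $\IG_{b,K^p}$ by fiat as your perfectness argument assumes; Hamacher's construction produces a scheme which is reduced and whose perfection behaves as required, but that is a conclusion, not a hypothesis. In short, your overall strategy mirrors what Hamacher does, but the step you flag as the crux is genuinely the crux, and as written it is not yet a proof --- whereas the paper simply outsources it.
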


\begin{proof}
This follows from \cite[Prop.~4.1, 4.10]{Ham19}, noting that our $\IG_{b,K^p}$ is his $\mathscr{J}_\infty^{(p^{-\infty})}$ (the perfection of his $\mathscr{J}_\infty$) and that our $J_b^{\tu{int}}$ is his $\Gamma_b$. Two points require some further explanation. Firstly, we see that $J_b^{\tu{int}}=\Gamma_b$ as follows. Observe that $J_b^{\tu{int}}\subset J_b(\Q_p)\subset J_{b'}(\Q_p)$ and $J_b^{\tu{int}}\subset G(\breve \Z_p)\subset \GSp(\breve\Z_p)$. Thus $J_b^{\tu{int}}$ consists of automorphisms of $\Sigma_{b'}$ which are exactly the stabilizers of $(t_{\alpha})$ via Dieudonn\'e theory. Secondly, \cite[Prop.~4.1]{Ham19} tells us that $\mathscr{J}_\infty\ra C_{b,K^p,\Fpbar}$ is a pro-\'etale $J_b^{\tu{int}}$-torsor. Since every perfection map (as a limit of absolute Frobenius) is a universal homeomorphism, which preserves the pro-\'etale topology \cite[Lem.~5.4.2]{BhattScholzeProEtale}, it follows that the perfection $\mathscr{J}_\infty^{(p^{-\infty})}\ra C_{b,K^p,\Fpbar}^{\tu{perf}}$ is also a pro-\'etale $J_b^{\tu{int}}$-torsor.
\end{proof}

\begin{lemma}\label{lem:Igusa-points}
Let $R$ be a perfect $\Fpbar$-algebra. Then $\IG_{b,K^p}(R)$ is identified with the set of equivalence classes of $(x,j)$, where
\begin{itemize}
\item $x\in \mS_{K_pK^p}(R)$ is an abelian scheme over $\Spec R$ and
\item $j: \Sigma_b \times_{\Fpbar} R \ra \cA_x[p^\infty]$ is a quasi-isogeny carrying $(s_\alpha)$ to $(\mathbf{s}_{\alpha,0,x})$,
\end{itemize}
and $\cA_x$ denotes the pullback of the universal abelian scheme along $x$. Here $(x,j)$ and $(x',j')$ are considered equivalent if, in the notation of \S\ref{sub:integral-canonical-models}, there exists a $p$-power isogeny $i:\cA_x\ra \cA_{x'}$ carrying $(\mathbf{s}_{\alpha,\ell,x})_{\ell\neq p}$ to $(\mathbf{s}_{\alpha,\ell,x'})_{\ell\neq p}$ and $(\mathbf{s}_{\alpha,0,x})$ to $(\mathbf{s}_{\alpha,0,x'})$ such that $i\circ j=j'$. Each $\rho\in J_b(\Q_p)$ acts on the $R$-points of $\IG_{b,K^p}$ by sending $j$ to $j\circ \rho$.\footnote{We make a right action of $J_b(\Q_p)$ on $\mathfrak{Ig}_{b,K^p}$ so that it becomes a left action on the cohomology. In \cite[\S4.3]{CS17}, their arrow $j$ is reverse to ours, from $A[p^\infty]$ to $\Sigma_b \times_{\Fpbar}R$. The two conventions are identified via taking the inverse of $j$ (with the understanding that the authors of \emph{loc.~cit.}~are also using the right action of $J_b(\Q_p)$, though this does not appear there explicitly).}
\end{lemma}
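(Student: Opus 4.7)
The plan is to unwind the definition \eqref{eq:IG-def} of $\IG_{b,K^p}$ and exhibit a natural bijection with the equivalence classes in the statement. For a perfect $\Fpbar$-algebra $R$, an $R$-point of the perfection $C_{b,K^p,\Fpbar}^{\perf}$ coincides with an $R$-point of $C_{b,K^p,\Fpbar}$, so an $R$-point of $\IG_{b,K^p}$ is, by definition, a pair $(x,\tilde\iota)$ where $x\in C_{b,K^p,\Fpbar}(R)\subset \mS_{K_pK^p,\Fpbar}(R)$ and $\tilde\iota : \Sigma_{b'}\times_{\Fpbar}R\isom \cA_{x'}[p^\infty]$ is an isomorphism (respecting the polarization up to $\Z_{(p)}^\times$) whose induced map on Dieudonn\'e modules carries $(s_\alpha)$ to $(\mathbf{s}_{\alpha,0,x})$, with $x'$ denoting the image of $x$ in $\mS_{K'_pK^{\prime,p},\Fpbar}(R)$. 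Setting $j:=\tilde\iota$ yields the forward map $(x,\tilde\iota)\mapsto [(x,j)]$ into the set of equivalence classes in the lemma, since every isomorphism is in particular a quasi-isogeny.

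The main content is to construct an inverse map. Given $(x,j)$ with $j$ a quasi-isogeny, I would apply the standard Serre--Tate lifting / Mantovan construction (see \cite{Man05} and \cite[Lem.~4.3.4]{CS17} in the PEL case): for $n\gg 0$ the rescaled quasi-isogeny $p^n j^{-1}:\cA_x[p^\infty]\to\Sigma_b\times R$ is an honest isogeny whose kernel is a finite flat subgroup of $\cA_x[p^N]$ for some $N$, and the quotient $\cA_{x''}:=\cA_x/\ker(p^n j^{-1})$ comes with a $p$-power isogeny $i:\cA_x\to\cA_{x''}$ such that $j'':=i[p^\infty]\circ j$ is an isomorphism $\Sigma_b\times R\isom\cA_{x''}[p^\infty]$. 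The next step is to promote $\cA_{x''}$ to an $R$-point $x''\in \mS_{K_pK^p}(R)$ rather than merely an $R$-point of the ambient Siegel Shimura variety. For this, I invoke the extension property of the integral canonical model \cite[Thm.~2.3.8]{KisinModels}: the $p$-power isogeny $i$ transports the $\ell$-adic tensors $(\mathbf{s}_{\alpha,\ell,x})$ for $\ell\neq p$ to corresponding tensors on $V^p(\cA_{x''})$, and by construction $j''$ carries the crystalline tensors $(s_\alpha)$ to $(\mathbf{s}_{\alpha,0,x''})$; together with the fact that $x$ factors through $\mS_{K_pK^p}$, these compatibilities force $x''$ to do likewise and to land in $C_{b,K^p,\Fpbar}$.

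To conclude, I would check that the forward and inverse assignments descend to mutually inverse bijections modulo the stated equivalence. The pair $(x,j)$ is by construction equivalent to $(x'',j'')$ via $i$, giving one composition of the maps. Conversely, if two pairs $(x,\tilde\iota)$ and $(x',\tilde\iota')$ in the image of the forward map are equivalent via a $p$-power isogeny $i:\cA_x\to\cA_{x'}$, then $i[p^\infty]=\tilde\iota'\circ\tilde\iota^{-1}$ is an isomorphism, so $i$ itself is an isomorphism preserving all tensors; since $\mS_{K_pK^p}$ parametrizes abelian schemes modulo prime-to-$p$ isogeny together with level and tensor data, this forces $x=x'$ and $\tilde\iota=\tilde\iota'$. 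Finally, for the $J_b(\Q_p)$-action: any $\rho\in J_b(\Q_p)$ is a self-quasi-isogeny of $\Sigma_b$ preserving the tensors $(t_\alpha)$ by \eqref{eq:def-Jb}, so the precomposition $j\mapsto j\circ\rho$ preserves the tensor condition on $(s_\alpha)$ and respects the equivalence relation; it evidently matches the restriction of the $J_{b'}(\Q_p)$-action from Lemma \ref{lem:Igusa-basic}(2). The main technical obstacle is the second paragraph: verifying that the Serre--Tate modification produces a point of the Hodge-type integral model $\mS_{K_pK^p}$, which requires the $G$-structure compatibility of $i$ and $j''$ with both the $\ell$-adic and crystalline tensor systems, and ultimately rests on the extension property of the integral canonical model together with Hamacher's globalization of the crystalline tensors.
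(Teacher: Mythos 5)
Your proposal follows the same overall strategy as the paper's proof: both routes unwind the fiber-product definition \eqref{eq:IG-def} of $\IG_{b,K^p}$ over the Siegel Igusa variety, invoke the Siegel-type moduli description of \cite[Lem.~4.3.4]{CS17} to pass between isomorphisms and quasi-isogenies, and then match tensor data. The forward map is the same, the treatment of the $J_b(\Q_p)$-action is the same, and the injectivity check at the end (a $p$-power isogeny between two isomorphism-level representatives is itself an isomorphism, hence a prime-to-$p$ isogeny) is the same.

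The one place where you diverge from the paper, and also the one place I would push back, is the justification that the Serre--Tate modification $\cA_{x''}=\cA_x/\ker(p^n j^{-1})$ produces a point $x''\in \mS_{K_pK^p}(R)$. You invoke the extension property \cite[Thm.~2.3.8]{KisinModels}, but that property is a characteristic-zero statement (extending maps from the generic fiber of regular formally smooth $\cO_{E_\fkp}$-schemes into $\Sh_{K_pK^p,E_\fkp}$), and it is not the mechanism that lets you move through a $p$-power isogeny in characteristic $p$. The relevant tool is the Hodge-type Serre--Tate lifting result, i.e.\ \cite[Prop.~1.4.4]{KisinPoints} for field-valued points (which the paper itself cites in the proof of Proposition~\ref{prop:C-nonempty} for exactly this kind of step), or its extension to families. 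For what it's worth, the paper also slides past this point with the unproved phrase ``modifying by a quasi-isogeny, we may assume $j$ is an isomorphism,'' and then reconstructs the target point directly as an element of the fiber product $\IG_{b',K^{\prime,p}}(R)\times_{C_{b',K^{\prime,p},\Fpbar}(R)} C_{b,K^p,\Fpbar}(R)$, using reducedness of $\Spec R$ and $C_{b,K^p}$ to upgrade $x\in\mS_{K_pK^p}(R)$ to a point of the leaf. Using the fiber product explicitly is slightly cleaner than re-deriving Mantovan's construction, but neither argument is fully self-contained about why the modified Hodge-type point stays in $\mS_{K_pK^p}(R)$; you should replace the extension-property citation with \cite[Prop.~1.4.4]{KisinPoints} (or a Hamacher-type reference for $R$-points) to make this step honest.

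One more small imprecision: you write that ``$\mS_{K_pK^p}$ parametrizes abelian schemes modulo prime-to-$p$ isogeny together with level and tensor data.'' Unlike the Siegel model $\mS_{K'_pK^{\prime,p}}$, the Hodge-type integral model is defined by normalization and has no a priori moduli interpretation; the injectivity argument is fine but should appeal to the locally closed immersion into the Siegel model together with the fact that an isomorphism is in particular a prime-to-$p$ isogeny, rather than to a moduli description of $\mS_{K_pK^p}$ itself.
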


\begin{proof}
This is the Hodge-type analogue of \cite[Lem.~4.3.4]{CS17} proven in the PEL case. By \emph{loc.~cit.}, $\IG_{b',K^{\prime,p}}(R)$ is the set of $p$-power isogeny classes of $(A,j)$ with $A\in \mS_{K'_pK^{\prime,p}}(R)$ and $j: \Sigma_b \times_{\Fpbar} R \ra A[p^\infty]$ a quasi-isogeny compatible with polarizations up to $\Z_p^\times$. Now we have a commutative diagram from the construction of central leaves and Igusa varieties: 
$$
\xymatrix{
 \IG_{b,K^p} \ar[r] \ar@{^(->}[d]_-{\mathrm{closed}} & C_{b,K^p,\Fpbar} \ar@{^(->}[rr]^-{\mathrm{loc.~closed}} \ar[d] && \mS_{K_pK^p,\Fpbar} \ar[d]\\
 \IG_{b',K^{\prime,p}} \ar[r]  & C_{b',K^{\prime,p},\Fpbar} \ar@{^(->}[rr]^-{\mathrm{loc.~closed}}  && \mS_{K'_pK^{\prime,p},\Fpbar}
 }
$$
Now we prove the first assertion by constructing the maps in both directions, which are easily seen to be inverses of each other. Given $y\in  \IG_{b,K^p}(R)$, its image gives $x\in  \mS_{K_pK^p}(R)$. The $j$ comes from the image of $y$ in $\IG_{b',K^{\prime,p}}(R)$. The compatibility of $j$ with crystalline Tate tensors follows from the very definition of $\IG_{b,K^p}$. Conversely, let $(x,j)$ be as in the lemma. Modifying by a quasi-isogeny, we may assume that $j$ is an isomorphism. Then $(A,j)$ comes from a point $y'\in \IG_{b',K^{\prime,p}}(R)$ as observed above. Since $\Spec R$ and $C_{b,K^p}$ are reduced, $x\in \mS_{K_pK^{p}}(R)$ comes from a point in $x\in C_{b,K^p}(R)$. Then $y'$ and $x$ have the same image in $C_{b',K^{\prime,p},\Fpbar}(R)$, so determine a point 
$$
y\in   \big(\IG_{b',K^{\prime,p}} \times_{C_{b',K^{\prime,p},\Fpbar}} C_{b,K^p,\Fpbar} \big)^{\perf}(R) = \big(\IG_{b',K^{\prime,p}} \times_{C_{b',K^{\prime,p},\Fpbar}} C_{b,K^p,\Fpbar} \big)(R) . 
$$
The compatibility of $j$ with crystalline Tate tensors exactly tells us that $y\in \IG_{b,K^p}(R)$.
  
It remains to show the last assertion. In light of Lemma \ref{lem:Igusa-basic} (2), the assertion on the $J_b(\Q_p)$-action follows from the analogue description for $J_{b'}(\Q_p)$-action on $\IG_{b',K^{\prime,p}}$ as in \cite[Lem.~4.3.4, Cor.~4.3.5]{CS17}.
\end{proof}

The $J_b(\Q_p)$-action on $\IG_{b,K^p}$ commutes with the Hecke action of $G(\A^{\infty,p})$ (as $K^p$ varies) as it is clear on the moduli description. Now we would like to understand the $G(\A^{\infty,p})\times J_b(\Q_p)$-representation 
$$
H^i(\IG_{b},\lql):= \varinjlim_{K^p} H^i(\IG_{b,K^p},\lql),\qquad i\ge 0,
$$
where the limit is over sufficiently small open compact subgroups of $G(\A^{\infty,p})$.

From \S\ref{sub:transfer-one-dim} we obtain the following commutative diagram. Indeed, all maps and the commutativity are obvious except possibly the map $J_b(\Q_p)^{\ab}\twoheadrightarrow M_b(\Q_p)^{\ab}$, which comes from the proof of Corollary \ref{cor:1-dim reps}. (The latter also tells us that $M_b(\Q_p)^{\ab}=M_b(\Q_p)^\flat$ and $G(\Q_p)^{\ab}=G(\Q_p)^{\flat}$.)
\begin{equation}\label{eq:JbMbG}
\xymatrix{
J_b(\Q_p) \ar@{->>}[d] & M_b(\Q_p) \ar@{->>}[d] \ar[r] & G(\Q_p) \ar@{->>}[d]\\
J_b(\Q_p)^{\ab} \ar@{->>}[r] & M_b(\Q_p)^{\ab} \ar[r] & G(\Q_p)^{\ab}
}
\end{equation}
The diagram yields the composite maps
\begin{equation}\label{eq:JbG}
\zeta_b:J_b(\Q_p)\ra G(\Q_p)^{\ab} \quad\mbox{and}\quad \zeta^*_b:M_b(\Q_p)\ra G(\Q_p)^{\ab}
\end{equation}
Thus every one-dimensional smooth representation $\pi_p$ of $G(\Q_p)$ (necessarily factoring through $G(\Q_p)^{\ab}$) can be pulled back to one-dimensional representations of $J_b(\Q_p)$ and $M_b(\Q_p)$, to be denoted $\pi_p\circ \zeta_b$ and $\pi_p\circ \zeta^*_b$.
We are ready to state the main theorem in this paper.

\begin{theorem}[Main Theorem]\label{thm:H0(Ig)}
Let $(G,X,p,\cG)$ be a Shimura datum of Hodge type. Assume that $b\in G(\breve \Q_p)$ and $r\in \Z_{\ge1}$ satisfy (br1)--(br3) of \S\ref{sub:central-leaves} such that $b$ is $\Q$-non-basic. Then
 there is a $G(\A^{\infty,p})\times J_b(\Q_p)$-module isomorphism
$$
\iota H^0(\IG_{b},\lql)\simeq \bigoplus_{\pi\in \cA_{\mathbf 1}(G)}  \pi^{\infty,p}\otimes  (\pi_p\circ \zeta_b), 
$$
where $\cA_{\mathbf 1}(G)$ stands for the set of one-dimensional automorphic representations $\pi$ of $G(\A)$ such that $\pi_\infty$ is trivial on $G(\R)_+$.
\end{theorem}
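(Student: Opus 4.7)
The plan is to prove the theorem by computing the top-degree compactly supported cohomology via a Langlands--Kottwitz style trace formula and isolating the leading asymptotic contribution. Working up to quasi-isogeny of $\Sigma_b$, we first reduce to the completely slope divisible case, where $\IG_{b,K^p}$ arises as (the perfection of) a projective limit of smooth finite-type varieties defined over $\F_{p^r}$ for $r$ sufficiently divisible as in (br1)--(br3). Since $\IG_{b,K^p}$ is smooth of dimension $\langle 2\rho,\nu_b\rangle$ by Proposition~\ref{prop:smoothness-and-dim-of-leaves} and Lemma~\ref{lem:Igusa-basic}, Poincar\'e duality converts the statement about $H^0(\IG_b,\lql)$ (with its $G(\A^{\infty,p})\times J_b(\Q_p)$-action) into a statement about $H_c^{\langle 4\rho,\nu_b\rangle}(\IG_b,\lql)$, which is accessible through a Lefschetz fixed-point count combined with Lang--Weil estimates.

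Next I would invoke Mack-Crane's Langlands--Kottwitz formula for Igusa varieties, which expresses $\Tr(\phi^{\infty,p}\phi_p \times \Frob_{p^r}^j \mid H_c(\IG_b,\lql))$ as a sum of twisted orbital integrals indexed by Kottwitz triples, for $j\gg 1$. The key observation is that the $\Frob_{p^r}^j$-action coincides with translation by $\nu_b(p^{rj})$, which is central in $J_b(\Q_p)$; accordingly $\phi_p\times \Frob_{p^r}^j$ can be replaced by a central translate $\phi_p^{(j)}\in \cH(J_b(\Q_p))$. Stabilizing (cf.~Proposition~\ref{prop:stabilization-elliptic} and the formalism of \S\ref{sub:Jacquet-endoscopy}) yields
$$\Tr(\phi^{\infty,p}\phi_p^{(j)}\mid H_c(\IG_b,\lql)) = \sum_\fke \iota(G,G^\fke)\, ST^{\fke}_{\tu{ell}}(f^{\fke,p}f_p^{\fke,(j)}),$$
where the transfer at $p$ uses a $\nu$-ascent construction: concretely, $f_p^{\fke,(j)}$ is a $\nu_b^\fke$-ascent on $G^\fke(\A)$ of the endoscopic transfer $\phi_p^{\fke,(j)}$ of $\phi_p^{(j)}$ from $M_b$ to its quasi-split inner form, and the averaged Lefschetz function $f_{\xi,\zeta}$ (with $\xi=\mathbf 1$) at $\infty$ fits the hypotheses of \S\ref{sub:Lefschetz}.

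The core of the argument is then to extract the leading asymptotics in $j$. Applying Corollary~\ref{cor:main-estimate}, all endoscopic contributions $\fke\in \cE^<_{\el}(G)$, and the non-elliptic part of the stable trace formula on the quasi-split inner form $G^*$, are of strictly smaller order than $p^{j(\langle\rho,\nu_b\rangle+\langle\chi_\infty,\ol{\nu_b}\rangle)}$; to apply this estimate I choose an auxiliary prime $q\neq p,\ell$ where $G$ is split and require $f_q$ to be $C$-regular, for $C$ depending only on $\phi^{\infty,p,q}$ and $\phi_p$. The identity of traces then reduces to
$$\Tr(\phi^{\infty,p}\phi_p^{(j)}\mid H_c^{\tu{top}}(\IG_b,\lql)) = \sum_{\pi^*} m(\pi^*)\Tr\pi^{*,\infty}(f^{*,\infty})\cdot\Tr J_{P_b}(\pi_p^*)(\phi_p^{*,(j)}) + o(p^{j\cdot c}),$$
with $c=\langle\rho,\nu_b\rangle+\langle\chi_\infty,\ol{\nu_b}\rangle$; the transfer identity of Lemma~\ref{lem:O-tr-nu-ascent}(2) turns the $p$-component into a trace against the Jacquet module $J_{P_b}(\pi_p^*)$. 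By the Casselman/Howe--Moore estimate (Proposition~\ref{prop:BoundExp} and Corollary~\ref{cor:BoundExp}), using that $P_b$ is a proper parabolic since $b$ is $\Q$-non-basic (Lemma~\ref{lem:Q-non-basic}), the absolute value of the central character of every exponent of $J_{P_b}(\pi_p^*)$ on $\nu_b(p)^j$ is bounded above by $p^{j\cdot c}$, with equality if and only if $\dim\pi_p^*<\infty$. A global strong approximation argument (Corollary~\ref{cor:BoundExp} combined with Lemma~\ref{lem:local-1dim-global-1dim}) then promotes finite-dimensionality at $p$ to $\dim\pi^*=1$ globally.

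Isolating the leading term via Lemma~\ref{lem:Trace-equal-acceptable} and the Lang--Weil bound on the Frobenius eigenvalues on $H_c^i(\IG_b,\lql)$ for $i<\langle 4\rho,\nu_b\rangle$, only the one-dimensional $\pi^*\in \cA_{\mathbf 1}(G^*)$ with $\pi^*_\infty$ trivial on $G^*(\R)_+$ (cf.~Lemma~\ref{lem:tr-of-char-at-infty}) contribute to the top cohomology. The bijection of Lemma~\ref{lem:one-dim-auto-rep} transports these to one-dimensional automorphic representations of $G(\A)$, and Lemma~\ref{lem:transfer-1-dim} together with the local transfer of one-dimensional representations from $M_b(\Q_p)$ to $J_b(\Q_p)$ (\S\ref{sub:transfer-one-dim} and the diagram~\eqref{eq:JbMbG} giving $\zeta_b$) identifies the $J_b(\Q_p)$-action as $\pi_p\circ\zeta_b$. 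Finally, to show that every $\pi\in \cA_{\mathbf 1}(G)$ with the archimedean constraint actually occurs (and with multiplicity one), I would vary $\phi^{\infty,p}$ and $\phi_p$: Lemma~\ref{lem:existence-1-dim} guarantees enough one-dimensional automorphic representations to separate characters, which forces the claimed decomposition upon applying Poincar\'e duality to pass back from $H_c^{\tu{top}}$ to $H^0$. The main obstacle is the asymptotic estimate of Corollary~\ref{cor:main-estimate}, whose proof in \S\ref{sec:asymptotic-analysis} requires the delicate interplay between $C$-regularity at $q$, the Weyl-chamber localization of Arthur's Lefschetz formula, and the inductive bookkeeping at $p$ along endoscopic groups and Levi subgroups.
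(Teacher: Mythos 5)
Your proposal tracks the paper's strategy closely and correctly at every major juncture: reduction to the completely slope divisible case, Poincar\'e duality to $H_c^{\langle 4\rho,\nu_b\rangle}$, Mack-Crane's fixed-point formula with the Frobenius realized as a central translate, stabilization, the asymptotic estimates of \S\ref{sec:asymptotic-analysis} with $C$-regularity at an auxiliary prime $q$, the Casselman/Howe--Moore bound on Jacquet modules, strong approximation to upgrade finite-dimensionality at $p$ to $\dim\pi^*=1$, and the transfer of one-dimensional representations via $\zeta_b$.

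The one place where your write-up is imprecise is the final step passing from the trace identity to the module isomorphism. You cite Lemma~\ref{lem:existence-1-dim} as the tool for ``separating characters,'' but that lemma is used in the paper only for the irreducibility application (\S\ref{sub:main-to-irreducibility}), not here. The actual issue in the paper's proof of Theorem~\ref{thm:Hc(Ig)} is that the trace identity is only established for test functions whose $q$-component is $C$-regular (for a constant $C$ depending on the other data), so one cannot immediately conclude an isomorphism of $G(\A^{\infty,p})\times J_b(\Q_p)$-modules. The paper resolves this by invoking Lemma~\ref{lem:C-regular-spectral-implication} at $q$ (giving equality of Jacquet modules with respect to a Borel of $G_{\Q_q}$), and then repeating the whole argument with a second auxiliary prime $q'$ at which $G$ also splits; comparing the two conclusions eliminates the Jacquet-module truncation and yields the desired isomorphism. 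Your proposal does not explicitly address how to remove the $C$-regularity constraint at $q$, and without this double-auxiliary-prime device (or some substitute) the argument stops short of the stated isomorphism. Apart from this, your outline is faithful to the paper.
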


\begin{proof}
After reduction to the completely slope divisible case by Corollary \ref{cor:red-to-completely-slope-div}, the theorem will be established in \S\ref{sec:cohomology-Igusa} below.
\end{proof}

\begin{remark}\label{rem:Jacquet-of-1dim}
Since $\dim \pi_p=1$, we have $(\pi_p\circ \zeta_b)\otimes \delta_{P_b}=J_{P_b^{\mathrm{op}}}(\pi_p)\otimes\delta^{1/2}_{P_b}$ as $J_b(\Q_p)$-representations. (The point is that the unipotent radical $N_b^{\mathrm{op}}$ acts trivially on $\pi_p$.) This is closely related to Lemma \ref{lem:O-tr-nu-ascent}. It is also worth comparing with \cite[Thm.~V.5.4]{HarrisTaylor} and \cite[Thm.~6.7]{ShinRZ}, where a similar expression appears in the description of cohomology of Igusa varieties.
\end{remark}

\subsection{Finite-level Igusa varieties in the completely slope divisible case}\label{sub:completely-slope-divisible}

 We recall the definition of finite-level Igusa varieties following \cite{Man05,CS17,Ham19}. From \S\ref{sub:central-leaves} we have $r\in \Z_{\ge1}$ such that conditions (br1)--(br3) hold. In this subsection, we further assume that $b$ is completely slope divisible in the sense of \cite[Def.~2.4.1]{KimLeaves}. In particular, 
the decency equation holds:
\begin{equation}\label{eq:decency}
b  \sigma(b) \cdots \sigma^{r-1}(b)=r\nu_b(p).
\end{equation}
A priori, \eqref{eq:decency} holds for \emph{some} $r\in\Z_{\ge 1}$ but then it is still true for all multiples of $r$. So we may and will assume that \eqref{eq:decency} holds for the same $r$ as in \S\ref{sub:central-leaves} by making $r$ more divisible.

We start from the Siegel case. Write $\Ig_{b',m,K^{\prime,p}}\ra C_{b',K^{\prime,p}}$ for Igusa varieties of level $m\in \Z_{\ge1}$ as in \cite[\S4]{Man05} or \cite[\S3.1]{Ham19} (the definition works over $\F_{p^r}$ not just over $\Fpbar$), defined to parametrize liftable isomorphisms on the $p^m$-torsion subgroup of each slope component. As shown in \emph{loc.~cit.}, $\Ig_{b',m,K^{\prime,p}}\ra C_{b',K^{\prime,p}}$ is a finite \'etale morphism, forming a projective system over varying $m$ via the obvious projection maps. Write $\Ig_{b',K^{\prime,p}}$ for the projective limit of $\Ig_{b',m,K^{\prime,p}}$ over $m$. There are maps $\IG_{b',K^{\prime,p}}\ra \Ig_{b',m,K^{\prime,p},\Fpbar}$ for $m\ge 1$ compatible with each other, since the isomorphism \eqref{eq:Igusa-level} induces isomorphisms on isoclinic components. This induces an isomorphism
 $\IG_{b',K^{\prime,p}}\ra \Ig_{b',K^{\prime,p},\Fpbar}^{\perf}$. See \cite[Prop.~4.3.8]{CS17} and the preceding paragraph for details.

Following \cite[\S4.1]{Ham19} (but working over $\F_{p^r}$ rather than $\Fpbar$), define the $\F_{p^r}$-subscheme
$$
\tilde{\Ig}_{b,K^p}\subset \big(\Ig_{b',K^{\prime,p}}\times_{C_{b',K^{\prime,p}}} C_{b,K^p} \big)^{\tu{perf}}
$$
to be the locus given by the same condition as in \eqref{eq:IG-def}.
Define $\Ig_{b,m,K^p}$ as the image of the composite map
$$
\tilde{\Ig}_{b,K^p} \ra \Ig_{b',K^{\prime,p}} \times_{C_{b',K^{\prime,p}}} C_{b,K^p} \ra \Ig_{b',m,K^{\prime,p}} \times_{C_{b',K^{\prime,p}}} C_{b,K^p}.
$$
The projection onto the second component gives an $\F_{p^r}$-morphism $\Ig_{b,m,K^p}\ra C_{b,K^p}$, which is  finite \'etale by \cite[Prop.~4.1]{Ham19}. Via the canonical projection $\Ig_{b,m+1,K^p}\ra \Ig_{b,m,K^p}$ commuting with the maps to $C_{b,K^p}$, we take the projective limit and denote it by $\Ig_{b,K^p}$.

Besides the Hecke action of $G(\A^{\infty,p})$ on the tower of $\Ig_{b,K^p}$, we also have an action on $\Ig_{b,K^p,\Fpbar}$ by a submonoid $S_b\subset J_b(\Q_p)$ defined in \cite[p.586]{Man05}. (The latter action is defined only over $\Fpbar$ in general since self quasi-isogenies of $\Sigma_b$ are not always defined over finite fields.)  The precise definition is unimportant, but it suffices to know two facts. Firstly, $S_b$ generates $J_b(\Q_p)$ as a group. Secondly, $S_b$ contains $p^{-1}$ (the inverse of the multiplication by $p$ map on $\Sigma_b$) and \footnote{Here is a note on the sign. On slope $0\le \lambda\le 1$ component, the action of $\tu{fr}^r$ is $p^{\lambda}$, but $\nu_b$ records slope $-\lambda$ since we use the covariant Dieudonn\'e theory.} 
$$
\tu{fr}^{-r}:=r\nu_b(p)\in J_b(\Q_p).
$$
By Lemma \ref{lem:nu-central}, $\tu{fr}^{-r}\in A_{J_b}(\Q_p)$. Let $\tu{Fr}$ denote the absolute Frobenius morphism on an $\F_p$-scheme.

\begin{lemma}\label{lem:finite-level-Igusa} The following hold true.
\begin{enumerate}
\item $\tu{fr}^{-r}\in A_{P^{\tu{op}}_b}^{--}\subset A_{M_b}(\Q_p)=A_{J_b}(\Q_p)$. As an element of $M_b(\Q_p)$, we have $\tu{fr}^{-r}\in A_{P^{\tu{op}}_b}^{--}$. 
(Recall that $A_{P^{\tu{op}}_b}^{--}$ was defined in \S\ref{sub:Jacquet-estimate}. For $A_{M_b}(\Q_p)=A_{J_b}(\Q_p)$, see \S\ref{sub:central-leaves}.)
\item The action of $\tu{Fr}^{r}\times 1$ on $\Ig_{b,K^p}\times_{\F_{p^r}} \Fpbar$ induces the same action on $\IG_{b,K^p}$ as
the action of $\tu{fr}^{-r}\in J_b(\Q_p)$.
\item There is a canonical isomorphism $\IG_{b,K^p}\simeq \Ig_{b,K^p,\Fpbar}^{\perf}$ over $C_{b,K^p,\Fpbar}$, compatible with the $G(\A^{\infty,p})\times S_b$-actions as $K^p$ varies.
\end{enumerate}
\end{lemma}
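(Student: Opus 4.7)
For part (1), I would verify the defining inequalities of $A_{P_b^{\tu{op}}}^{--}$ directly. By the normalization preceding Definition \ref{def:acceptable} (applied with $\nu=\nu_b$), every root $\alpha$ of $A_{M_b}$ in $N_b^{\tu{op}}$ satisfies $\langle\alpha,\nu_b\rangle>0$ because $P_b^{\tu{op}}=P_{\nu_b}^{\tu{op}}$. Hence
$$|\alpha(\tu{fr}^{-r})|_p \;=\; |\alpha(r\nu_b(p))|_p \;=\; p^{-r\langle\alpha,\nu_b\rangle} \;<\; 1,$$
giving $\tu{fr}^{-r}\in A_{P_b^{\tu{op}}}^{--}$. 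The same computation, using that $\nu_b$ is also $\Q_p$-central in $M_b$ and that $r\nu_b$ is $\Q_p$-rational by Lemma \ref{lem:nu-central} and (br3), shows $\tu{fr}^{-r}\in A_{P_b^{\tu{op}}}^{--}\subset A_{M_b}(\Q_p)$. The equality $A_{M_b}(\Q_p)=A_{J_b}(\Q_p)$ was noted just above \eqref{eq:Jb-Mb-isom}.

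For part (2), the strategy is to translate both actions to the moduli description in Lemma \ref{lem:Igusa-points} and then invoke the decency equation \eqref{eq:decency}. On an $R$-point $(x,j)$ with $R$ a perfect $\Fpbar$-algebra, the action of $\tu{Fr}^r\times 1$ (absolute Frobenius on the scheme $\Ig_{b,K^p}$, trivial on $\Spec\Fpbar$) amounts to precomposing $j$ with the relative $p^r$-Frobenius isogeny $F_{\Sigma_b}^r\colon \Sigma_b\to \Sigma_b^{(p^r)}=\Sigma_b$, where the last equality uses that $\Sigma_b$ descends to $\F_{p^r}$. Via covariant Dieudonn\'e theory, $F_{\Sigma_b}$ is represented on $\D(\Sigma_b)=V^*_{\Z_{(p)}}\otimes\Z_{p^r}$ by $b(1\otimes\sigma)$; iterating $r$ times and using the decency equation \eqref{eq:decency} yields $F_{\Sigma_b}^r=r\nu_b(p)=\tu{fr}^{-r}$ inside $\End^{\circ}(\Sigma_b)^\times\supset J_b(\Q_p)$. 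Since the $J_b(\Q_p)$-action on $\IG_{b,K^p}$ is $j\mapsto j\circ\rho$ by Lemma \ref{lem:Igusa-points}, the two actions agree.

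For part (3), the construction of $\Ig_{b,K^p}$ is completely parallel to that of $\IG_{b,K^p}$: it is a closed subscheme of the fiber product cutting out compatibility of the Igusa trivialization with the crystalline Tate tensors $(\mathbf{s}_{\alpha,0})$. The Siegel-type isomorphism $\IG_{b',K^{\prime,p}}\simeq \Ig_{b',K^{\prime,p},\Fpbar}^{\perf}$ was already recalled at the beginning of \S\ref{sub:completely-slope-divisible}. Perfection commutes with fiber products and with closed immersions of reduced subschemes, so pulling this isomorphism back along the common defining condition on tensors yields the desired canonical identification $\IG_{b,K^p}\simeq \Ig_{b,K^p,\Fpbar}^{\perf}$ over $C_{b,K^p,\Fpbar}$. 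Compatibility with the $G(\A^{\infty,p})$-action is immediate from the functoriality of the Hecke action on all three towers, while compatibility with $S_b$ reduces to the case of the Siegel datum treated in \cite{Man05} together with the fact that the $S_b$-action preserves the compatibility with $(\mathbf{s}_{\alpha,0})$ because elements of $S_b$ act by quasi-isogenies of $\Sigma_b$ that by construction respect the tensors $(t_\alpha)$.

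The only nontrivial step is part (2); the main obstacle there is to align the conventions (covariant vs.\ contravariant Dieudonn\'e, sign of the Frobenius element, and the direction of the trivialization $j$) so that the decency equation \eqref{eq:decency} delivers precisely $\tu{fr}^{-r}$ and not its inverse. Once this bookkeeping is done, the identification is forced.
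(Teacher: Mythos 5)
Your proof is correct and follows essentially the same route as the paper's: part (1) is the strict-positivity computation $\langle\alpha,\nu_b\rangle>0$ for roots of $A_{M_b}$ in $N_b^{\tu{op}}$ (the paper phrases it as $B$-dominance plus the fact that the centralizer of $r\nu_b(p)$ is exactly $M_b$), part (2) combines the moduli description of Lemma \ref{lem:Igusa-points} with the decency equation \eqref{eq:decency}, and part (3) defers to \cite[Prop.~4.3.8]{CS17}. The one step the paper spells out that you compress is in (2): the action of $\tu{Fr}^r\times 1$ is first recorded as $(x,j)\mapsto(x^{(r)},j^{(r)})$ and only then identified with $(x,j\circ \tu{Fr}^r_{\Sigma})$ via the $p^r$-power relative Frobenius isogeny $\cA_x\to\cA_{x^{(r)}}$, using the $p$-power-isogeny equivalence in the moduli description.
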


\begin{proof}
(1) We already know $\tu{fr}^{-r}\in A_{J_b}(\Q_p)=A_{M_b}(\Q_p)$. Since $r\nu_b$ is $B$-dominant (\S\ref{sub:central-leaves}), we have $r\nu_b(p)\in A^-_{P^{\tu{op}}_b}$. Moreover $r\nu_b(p)\in A^{--}_{P^{\tu{op}}_b}$ as the centralizer of $r\nu_b(p)$ in $G$ is exactly $M_b$.

(2) Write $\tu{Fr}_{\Sigma}$ for the absolute Frobenius action on $\Sigma_b/\F_{p^r}$. In view of \eqref{eq:decency}, $\tu{fr}^{-r}=r\nu_b(p)$ acts on $\Sigma_b/\F_{p^r}$ as $(\tu{Fr}_{\Sigma})^r$. Thus  $\tu{fr}^{-r}$ sends $(x,j)$ to $(x,j\circ \tu{Fr}^r_{\Sigma})$ in the description of $R$-points in Lemma \ref{lem:Igusa-points}. On the other hand, $\tu{Fr}^{r}\times 1$ on $\IG_{b,K^p}$ sends $(x,j)$ to $(x^{(r)},j^{(r)})$, where $x^{(r)}$ corresponds to the $p^r$-th power Frobenius twist of $x$ (so that $\cA_{x^{(r)}}=(\cA_x)^{(r)}$), and $j^{(r)}$ is the $p^r$-th power twist of $j$. Finally we observe that $(x^{(r)},j^{(r)})$ is equivalent to $(x,j\circ \tu{Fr}^r_{\Sigma})$ via the $p^r$-power relative Frobenius $\cA_x\ra \cA_{x^{(r)}}$.

(3) We have the map $\IG_{b,K^p}\ra \Ig_{b,K^p,\Fpbar}$ over $C_{b,K^p,\Fpbar}$ from the definition, which factors through $\IG_{b,K^p}\ra \Ig_{b,K^p,\Fpbar}^{\perf}$ since $\IG_{b,K^p}$ is perfect. This is shown to be an isomorphism exactly as in the proof of \cite[Prop.~4.3.8]{CS17}, the point being a canonical splitting of the slope decomposition over the perfect scheme $\Ig_{b,K^p,\Fpbar}^{\perf}$.
\end{proof}

Now we compare Igusa varieties arising from two central leaves in the same Newton stratum. Let $b,b_0\in G(\breve \Q_p)$. Assume that $b$ and $b_0$ satisfy the conditions at the end of \S\ref{sub:central-leaves}.
We have an isomorphism $J_b(\Q_p)\simeq J_{b_0}(\Q_p)$ (induced by a conjugation in the ambient group $G(\breve\Q_p)$), canonical up to $J_b(\Q_p)$-conjugacy.

\begin{proposition}\label{prop:Ig_b-Ig_b_0}
There exists a $G(\A^{\infty,p})$-equivariant isomorphism
$$
\IG_b \isom \IG_{b_0},
$$
which is also equivariant for the actions of $J_b(\Q_p)$ and $J_{b_0}(\Q_p)$ through a suitable isomorphism $J_b(\Q_p)\simeq J_{b_0}(\Q_p)$ in its canonical $J_b(\Q_p)$-conjugacy class.
\end{proposition}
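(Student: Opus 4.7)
The starting point is that the assumption in the proposition --- the existence of an isomorphism $J_b(\Q_p)\simeq J_{b_0}(\Q_p)$ induced by $G(\breve\Q_p)$-conjugation --- implies that $[b]=[b_0]$ in $B(G_{\Q_p})$, so I would pick $g\in G(\breve\Q_p)$ satisfying $b_0=g^{-1}b\sigma(g)$. Via the fixed embedding $\cG\hookrightarrow \GL(V_{\Z_{(p)}})$ from \S\ref{sub:integral-canonical-models}, this $g$ gives a $\breve\Q_p$-linear automorphism of $V^*_{\Z_{(p)}}\otimes_{\Z_{(p)}}\breve\Q_p$ that intertwines $b_0(1\otimes\sigma)$ with $b(1\otimes\sigma)$ and fixes each $s_\alpha$. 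By rational Dieudonn\'e theory over $\Fpbar$, $g$ then corresponds to a quasi-isogeny $\eta_g\colon \Sigma_{b_0}\to \Sigma_b$ carrying $(t_\alpha)$ on $\Sigma_{b_0}$ to $(t_\alpha)$ on $\Sigma_b$.

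Next, using the moduli description of Lemma~\ref{lem:Igusa-points}, I would build the isomorphism $\IG_{b,K^p}\simeq \IG_{b_0,K^p}$ by sending $(x,j)\mapsto (x,j\circ (\eta_g\times_{\Fpbar}R))$ on $R$-points for perfect $\Fpbar$-algebras $R$. This assignment preserves the equivalence relation on both sides (which only involves $p$-power isogenies on the $\cA_x$-side) and is functorial in $R$ and in $K^p$. The inverse is given by the analogous recipe using $\eta_g^{-1}$. Since $\IG_{b,K^p}$ and $\IG_{b_0,K^p}$ are perfect $\Fpbar$-schemes, this bijection of $R$-points globalizes to an isomorphism of schemes, and the construction evidently commutes with the finite \'etale transition maps as $K^p$ shrinks, yielding an isomorphism of the projective limits $\IG_b\simeq\IG_{b_0}$.

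For equivariance, the Hecke action of $G(\A^{\infty,p})$ touches only $x$ and its away-from-$p$ level structure, not the $p$-divisible group side, so it commutes with our recipe at once. For the $J$-actions, a direct computation using $b_0=g^{-1}b\sigma(g)$ and $h^{-1}b_0\sigma(h)=b_0$ shows that $\phi_g\colon h\mapsto ghg^{-1}$ defines a $\Q_p$-isomorphism $J_{b_0}\xrightarrow{\sim} J_b$. The action of $h\in J_b(\Q_p)$ on $(x,j)$ produces $(x,j\circ h)$, which under our map goes to $(x,(j\circ \eta_g)\circ(\eta_g^{-1}h\eta_g))$, and $\eta_g^{-1}h\eta_g$ corresponds at the rational Dieudonn\'e module level to $g^{-1}hg=\phi_g^{-1}(h)\in J_{b_0}(\Q_p)$; hence our isomorphism intertwines the two $J$-actions through $\phi_g^{-1}$. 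Replacing $g$ by another choice $g'$ with $b_0=(g')^{-1}b\sigma(g')$ produces $g(g')^{-1}\in J_b(\breve\Q_p)$, so $\phi_g$ changes by an inner automorphism of $J_b$, placing it in the canonical $J_b(\Q_p)$-conjugacy class predicted by the proposition.

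The main subtlety to watch is tensor compatibility at two levels: first, that $\eta_g$ genuinely preserves the full tuple of crystalline tensors $(t_\alpha)$ rather than merely the polarization tensor (this is exactly where $g\in G(\breve\Q_p)$ rather than just $g\in \GSp(\breve\Q_p)$ is used); and second, that the composite quasi-isogeny $j\circ\eta_g$ actually lands in the Hodge-type Igusa variety $\IG_{b_0,K^p}$ cut out inside the ambient Siegel Igusa variety by precisely this tensor condition, as encoded in the closed immersion of Lemma~\ref{lem:Igusa-basic}(2). Once these checks are in place, the remainder is a straightforward functorial comparison of moduli problems and poses no serious obstacle.
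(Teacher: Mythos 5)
Your proof is correct and follows essentially the same path as the paper's: pick $g\in G(\breve\Q_p)$ with $b_0=g^{-1}b\sigma(g)$, obtain via Dieudonn\'e theory a quasi-isogeny $\Sigma_{b_0}\to\Sigma_b$ compatible with the crystalline tensors, and transport points using the moduli description of Lemma~\ref{lem:Igusa-points} by $(x,j)\mapsto(x,j\circ\eta_g)$. The paper's own proof is a terser version of exactly this, leaving the Dieudonn\'e-theoretic source of the quasi-isogeny and the equivariance bookkeeping implicit; your additional details (the conjugation $\phi_g$, the tensor-compatibility checks, the well-definedness up to conjugacy) are all correct and faithful to what the paper intends.
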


\begin{proof}
Since $[b_x]=[b_{x_0}]$, there exists a quasi-isogeny $f:\Sigma_{b_0}\ra \Sigma_{b}$ compatible with $G$-structures. Using the description of Lemma \ref{lem:Igusa-points}, we can give an isomorphism $\IG_b \isom \IG_{b_0}$ on $R$-points by $(A,j)\mapsto (A,j\circ f)$. The equivariance property is evident.
\end{proof}
 
\begin{corollary}\label{cor:red-to-completely-slope-div}
 In the setting of Theorem \ref{thm:H0(Ig)}, if the theorem is true for every $b$ which satisfies \eqref{eq:decency} for some $r$, then the theorem is true in general.
\end{corollary}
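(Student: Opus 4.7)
The plan is to deduce the general case from the completely slope divisible case via Proposition~\ref{prop:Ig_b-Ig_b_0}. Given $b\in G(\breve\Q_p)$ satisfying (br1)--(br3), the first step is to produce a completely slope divisible representative $b_0\in G(\breve\Q_p)$ with $[b_0]=[b]$. This is standard: Zink's slope filtration theorem and its $G$-enhanced version (as used in \cite{KimLeaves}) imply that every $G$-isocrystal over $\Fpbar$ admits a completely slope divisible representative in its isogeny class. Replacing $r$ by a sufficiently divisible multiple, one arranges that $b_0$ also satisfies the analogues of (br1)--(br3); in fact complete slope divisibility automatically yields the decency equation \eqref{eq:decency}, which in turn forces (br3).

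Next, by Proposition~\ref{prop:Ig_b-Ig_b_0} there is a $G(\A^{\infty,p})$-equivariant isomorphism $\IG_b \isom \IG_{b_0}$ which is equivariant for the $J_b(\Q_p)$- and $J_{b_0}(\Q_p)$-actions through some $\phi\colon J_b(\Q_p)\isom J_{b_0}(\Q_p)$ in the canonical $J_b(\Q_p)$-conjugacy class of inner twists. Taking $H^0$ and invoking Theorem~\ref{thm:H0(Ig)} for $b_0$ (granted by hypothesis) yields a $G(\A^{\infty,p})$-equivariant isomorphism
$$
\iota H^0(\IG_b,\lql)\simeq \bigoplus_{\pi\in \cA_{\mathbf 1}(G)} \pi^{\infty,p}\otimes (\pi_p\circ \zeta_{b_0}),
$$
where $J_b(\Q_p)$ acts on the right-hand side through $\phi$.

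It remains to verify the identity $\pi_p\circ \zeta_{b_0}\circ \phi = \pi_p\circ \zeta_b$ as characters of $J_b(\Q_p)$ for each one-dimensional $\pi_p$. Since $\pi_p$ factors through the abelian group $G(\Q_p)^{\ab}$, the $J_b(\Q_p)$-conjugacy ambiguity in $\phi$ disappears after composing with $\pi_p$. The remaining assertion is a diagram chase in \eqref{eq:JbMbG}: the map $\phi$ is induced by conjugation by some $g\in G(\breve\Q_p)$ which conjugates $b$ into $b_0$, and this conjugation identifies the centralizer Levi subgroups $M_b$ and $M_{b_0}$ compatibly with the inner twists $J_b\simeq M_b$ and $J_{b_0}\simeq M_{b_0}$; after passing to the abelian quotient $G(\Q_p)^{\ab}$ via the bottom row of \eqref{eq:JbMbG}, an inner conjugation in $G(\breve\Q_p)$ acts trivially, yielding the desired equality. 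The main obstacle is not the reduction itself but the careful verification of this last compatibility; all other ingredients are formal given Proposition~\ref{prop:Ig_b-Ig_b_0} and the existence of a completely slope divisible representative.
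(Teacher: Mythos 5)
Your proof is essentially correct and follows the same strategy as the paper: replace the given $b$ with a $\sigma$-conjugate completely slope divisible element, invoke Proposition~\ref{prop:Ig_b-Ig_b_0} to identify the Igusa varieties, and check that the transfer of one-dimensional representations through $\zeta_b$ is unaffected.

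One place where you are slightly imprecise: you claim that replacing $r$ by a multiple "arranges that $b_0$ also satisfies the analogues of (br1)--(br3)." Condition (br1) — that the new element lies in $\cG(\Z_{p^r})\sigma\mu_p^{-1}(p)\cG(\Z_{p^r})$ — is not achieved merely by enlarging $r$; for a general $\sigma$-conjugate this fails. The point is that \cite[Lem.~5.3]{HamacherKim} and \cite[Lem.~2.4.3]{KimLeaves} (or \cite[Lem.~4.2.8]{ZhangC}) produce a completely slope divisible representative that simultaneously lies in $G(\breve\Z_p)\sigma\mu_p^{-1}(p)G(\breve\Z_p)$, and it is this Kottwitz condition, not just complete slope divisibility, that then gives (br1) after enlarging $r$. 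Citing Zink's slope filtration alone leaves this part of the argument incomplete, so you should replace the appeal to Zink with the group-theoretic refinements in these references.

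On the other hand, you expand on a point the paper dispatches in a parenthetical: the canonicality of the transfer of one-dimensional representations through the isomorphism $J_b(\Q_p)\simeq J_{b_0}(\Q_p)$. Your explanation is sound. The key fact is that the inner twistings $J_b\simeq M_b$ and $J_{b_0}\simeq M_{b_0}$ induce canonical isomorphisms on $(-)^\flat$-groups (cf.\ Lemma~\ref{lem:Gab=G*ab} and the surrounding discussion), which depend only on the inner twist class, not on a representative isomorphism; inner automorphisms die on $(-)^\flat$ after lifting through a $z$-extension with $G_{1,\der}=G_{\tu{sc}}$. Combined with the fact that $\pi_p$ factors through $G(\Q_p)^{\ab}$, this gives the required independence of the choice of $\phi$ in its conjugacy class, which is exactly what the paper's parenthetical remark "the transfer of one-dimensional representations via $J_b(\Q_p)\simeq J_{b_0}(\Q_p)$ is canonical" encapsulates.
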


\begin{proof}
Let $b_0$ be arbitrary in the setting of Theorem \ref{thm:H0(Ig)}. By \cite[Lem.~5.3]{HamacherKim} and \cite[Lem.~2.4.3]{KimLeaves} (alternatively by the argument of \cite[Lem.~4.2.8]{ZhangC}), there exists $b\in G(\breve \Q_p)$ which is $\sigma$-conjugate to $b_0$ such that
\begin{itemize}
\item $b\in G(\breve\Z_p) \sigma\mu_p(p)^{-1} G(\breve\Z_p)$,
\item $b$ is completely slope divisible and satisfies \eqref{eq:decency} for some $r\in \Z_{\ge 1}$.
\end{itemize}
It follows that we have (br1) for $b$, namely $b\in G(\Z_{p^r}) \sigma\mu_p(p)^{-1} G(\Z_{p^r})$ for some $r$. By making $r$ more divisible (note that \eqref{eq:decency} still holds for the new $r$), we can ensure (br2) and (br3) for $b$. We are assuming that Theorem \ref{thm:H0(Ig)} is true for this $b$. On the other hand, fixing an isomorphism $J_b(\Q_p)\simeq J_{b_0}(\Q_p)$ as in there, we see from Proposition \ref{prop:Ig_b-Ig_b_0} that
$$
H^0(\IG_{b},\lql)\simeq H^0(\IG_{b_0},\lql)\quad \textrm{as}~G(\A^{\infty,p})\times J_b(\Q_p)\textrm{-modules}.
$$
Therefore Theorem \ref{thm:H0(Ig)} for $b$ implies that the same theorem holds for $b_0$. (Note that the transfer of one-dimensional representations via $J_b(\Q_p)\simeq J_{b_0}(\Q_p)$ is canonical.)
\end{proof}

\section{Cohomology of Igusa varieties}\label{sec:cohomology-Igusa}

The main purpose of this section is to prove Theorem \ref{thm:H0(Ig)}. Throughout we are in the setting of \S\ref{sub:completely-slope-divisible}, namely we are assuming \eqref{eq:decency} on $b$ and $r$ in addition to (br1)--(br3) of \S\ref{sub:central-leaves}, since this is sufficient in light of Corollary \ref{cor:red-to-completely-slope-div}. We will swtich to compactly supported cohomology via Poincar\'e duality and apply Mack-Crane's Langlands--Kottwitz style formula to bring in techniques from the trace formula and harmonic analysis. All ingredients will be combined together in \S\ref{sub:completion-of-proof} to identify the leading term in the Lang--Weil estimate.

\subsection{Compactly supported cohomology in top degree}\label{sub:Hc}

In \S\ref{sub:completely-slope-divisible} we constructed $\Ig_{b,K^p}$ such that $\IG_{b,K^p}$ is isomorphic to the perfection of $\Ig_{b,K^p,\Fpbar}$ (compatibly with the transition maps as $K^p$ varies). Recall that $\dim \Ig_b=\langle 2\rho,\nu_b\rangle$. Define for $i\in \Z_{\ge 0}$,
$$
H^i_c(\Ig_{b,m,\Fpbar},\lql):=\varinjlim_{K^p} H^i_c(\Ig_{b,m,K^p,\Fpbar},\lql),\qquad H^i_c(\Ig_{b,\Fpbar},\lql):=\varinjlim_{m\ge 0}H^i_c(\Ig_{b,m,\Fpbar},\lql).
$$
As for $H^i_c(\Ig_{b,\Fpbar},\lql)$, we have a $G(\A^{\infty,p})\times J_b(\Q_p)$-module structure on $H^i_c(\Ig_{b,\Fpbar},\lql)$. This is an admissible $G(\A^{\infty,p})\times J_b(\Q_p)$-module as the cohomology is finite-dimensional at each finite level. It is convenient to prove the following dual version of Theorem \ref{thm:H0(Ig)}.

\begin{theorem}\label{thm:Hc(Ig)}
Assume that $b$ is $\Q$-non-basic, and that $\Sigma_b$ is completely slope divisible. Then there is a $G(\A^{\infty,p})\times J_b(\Q_p)$-module isomorphism
$$
\iota H^{\langle 4\rho,\nu_b\rangle}_c(\Ig_{b,\Fpbar},\lql)\simeq \bigoplus_{\pi\in \cA_{\mathbf 1}(G)} \pi^{\infty,p}\otimes \left( (\pi_p\circ \zeta_b) \otimes \delta_{P_b} \right).
$$
\end{theorem}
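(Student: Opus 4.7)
The plan is to prove Theorem~\ref{thm:Hc(Ig)} by combining the Langlands--Kottwitz formula for Igusa varieties of \S\ref{sub:Igusa-STF} (due to Mack-Crane) with the trace-formula asymptotics of \S\ref{sec:asymptotic-analysis} and the Jacquet-module estimate of Corollary~\ref{cor:BoundExp}. For $\phi^{\infty,p}\phi_p \in \cH(G(\A^{\infty,p})\times J_b(\Q_p))$, that formula expresses $\Tr(\phi^{\infty,p}\phi_p\times \Frob^{rj}\mid H_c^*(\Ig_b,\lql))$ as a sum of twisted orbital integrals. By Lemma~\ref{lem:finite-level-Igusa}(2), the operator $\Frob^r$ on $\Ig_b$ coincides with the action of the central element $\tu{fr}^{-r}=r\nu_b(p)\in J_b(\Q_p)$, so $\phi_p\times\Frob^{rj}$ may be replaced by a central translate $\phi_p^{(j)}$. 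Stabilization via Proposition~\ref{prop:stabilization-elliptic} then gives a sum over endoscopic data $\fke$ for $G$ of stable elliptic trace formulas $ST^{G_1^\fke}_{\tu{ell},\chi_1^\fke}(f_1^{\fke,(j)})$; the test function at $p$ is assembled by first transferring $\phi_p^{(j)}$ from $J_b$ to its quasi-split inner form $M_b$ (cf.~\eqref{eq:SO=SO}), then ascending along $\nu_b$ as in \S\ref{sub:nu-Jacquet} (valid for $j$ large by Lemma~\ref{lem:nu-twist-support}), and finally passing through the endoscopic transfer described in \S\ref{sub:nu-endoscopy}.

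Next, impose an auxiliary $C$-regularity at a split prime $q$ for $C$ as in Corollary~\ref{cor:main-estimate}. That corollary shows that all endoscopic contributions with $\fke\neq \fke^*$, as well as all proper Levi contributions within the $\fke^*$-term, have strictly smaller order of growth in $j$ than $p^{j(\langle\rho,\nu_b\rangle+\langle\chi_\infty,\ol\nu_b\rangle)}$, and that $ST^{G^*}_{\tu{ell},\chi}$ and $T^{G^*}_{\tu{disc},\chi}$ of $f^{*,p}f_p^{*,(j)}f^*_{\xi,\zeta}$ agree up to the same lower order. Hence up to strictly subleading error the main term is a spectral sum $\sum_{\pi^*}m(\pi^*)\Tr\pi^*(f^{*,p}f_p^{*,(j)}f^*_{\xi,\zeta})$ over discrete automorphic representations of the quasi-split inner form $G^*(\A)$. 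The technical hypothesis $\fkX=Y(\A)A_{G,\infty}$ from Theorem~\ref{thm:main-estimate} is harmless because we may take $\chi$ trivial (or purely determined by $\xi$) in our setting.

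Now apply the representation-theoretic core. Lemma~\ref{lem:O-tr-nu-ascent}(2) gives $\Tr\pi_p^*(f_p^{*,(j)}) = \Tr J_{P_b^{\tu{op}}}(\pi_p^*)(\phi_p^{*,(j)})$. Since $\nu_b(p)^j\in A_{P_b^{\tu{op}}}^{--}$ by Lemma~\ref{lem:finite-level-Igusa}(1) and since $b$ is $\Q$-non-basic so that \textup{($\Q$-nb$(P_b)$)} holds by Lemma~\ref{lem:Q-non-basic}, the equality case of Corollary~\ref{cor:BoundExp} shows that the growth $|\omega_p(\nu_b(p)^j)|\cdot \delta_{P_b^{\tu{op}}}^{1/2}(\nu_b(p)^j)$ attains its maximal value if and only if $\dim \pi^*=1$; thus only one-dimensional $\pi^*$ contribute to the top order. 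The archimedean factor then forces $\xi=\mathbf{1}$ and $\pi^*_\infty|_{G(\R)_+}=\mathbf{1}$ via Lemma~\ref{lem:tr-of-char-at-infty}. Transport one-dimensional $\pi^*$ of $G^*(\A)$ back to one-dimensional $\pi$ of $G(\A)$ using Lemma~\ref{lem:one-dim-auto-rep}, and match local traces via Lemma~\ref{lem:transfer-1-dim}. Finally, Remark~\ref{rem:Jacquet-of-1dim} converts $J_{P_b^{\tu{op}}}(\pi_p)$ into $(\pi_p\circ\zeta_b)\otimes\delta_{P_b}$, yielding exactly the $J_b(\Q_p)$-action claimed. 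Letting $\phi^{\infty,p}$ and $\phi_p$ vary over $\cH(G(\A^{\infty,p}))\otimes\cH(J_b(\Q_p))$, and removing the auxiliary $C$-regularity at $q$ at the very end via the density assertion of Lemma~\ref{lem:C-regular-spectral-implication}, determines the full $G(\A^{\infty,p})\times J_b(\Q_p)$-module structure of $H_c^{\langle 4\rho,\nu_b\rangle}(\Ig_b,\lql)$.

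The main obstacle is the simultaneous uniform control, in the variable $j$, of every error term produced by stabilization, inner-form transfer, $\nu$-ascent, endoscopic transfer, and the final spectral projection: each must be strictly subleading relative to $p^{j(\langle\rho,\nu_b\rangle+\langle\chi_\infty,\ol\nu_b\rangle)}$. This is precisely the content of Corollary~\ref{cor:main-estimate}, which itself rests on the delicate induction on $\Q$-semisimple rank carried out in \S\ref{sec:asymptotic-analysis}; the auxiliary $q$-regularity propagates the required shape of test functions through Levi constant terms and endoscopic transfers in the inductive step, and is discarded only at the final stage by density.
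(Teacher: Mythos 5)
Your proposal is correct and follows essentially the same route as the paper: Mack-Crane's stabilized formula, the identification of Frobenius with a central element of $J_b(\Q_p)$, the asymptotic estimates of Corollary~\ref{cor:main-estimate} to make the endoscopic and proper Levi terms subleading, the Casselman--Howe--Moore equality criterion (Corollary~\ref{cor:BoundExp}) to isolate one-dimensional representations, and the transfer lemmas to move back from $G^*$ to $G$ and from $M_b$ to $J_b$. The only imprecision is in your final step: Lemma~\ref{lem:C-regular-spectral-implication} is not a density assertion and by itself only recovers $J_{B_q}$ of the $q$-component of the cohomology, so the paper repeats the whole argument at a second auxiliary split prime $q'\neq q$ and compares the two outputs to pin down the full $G(\A^{\infty,p})\times J_b(\Q_p)$-module structure --- a one-line fix rather than a missing idea.
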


\begin{proof}
The proof will be carried out in \S\ref{sub:completion-of-proof} after recalling a stabilized trace formula (Theorem \ref{thm:stable-Igusa}), by employing the estimates in \S\ref{sec:asymptotic-analysis}.
\end{proof}

\begin{proof}[Theorem \ref{thm:Hc(Ig)} implies Theorem \ref{thm:H0(Ig)}]
We may put ourselves in the completely slope divisible case by Corollary \ref{cor:red-to-completely-slope-div}. Write $d:=\langle 2\rho,\nu_b\rangle$. Applying Poincar\'e duality to finite-level Igusa varieties $\Ig_{b,m,K^p}$ and taking direct limit over $m$ and $K^p$, we obtain a pairing
$$
H^0(\Ig_{b,\Fpbar}, \lql)\times H^{2d}_c(\Ig_{b,\Fpbar}, \lql(d)) \ra \lql,
$$
where $\lql(d)$ denotes the $d$-th power Tate twist. The construction of duality (Exp.XVIII,\S3 in \cite{SGA4-3}) goes through a family of canonical isomorphisms $Rf^!_{m,K^p}\lql\simeq \lql(d)[-2d]$ (concentrated in degree $2d$) over $m$ and $K^p$, where $f_{m,K^p}:\Ig_{b,m,K^p}\ra \Spec\F_{p^r}$ denotes the structure map. Thus the action of $G(\A^{\infty,p})\times J_b(\Q_p)$ on $\Ig_b=\{\Ig_{b,m,K^p}\}$ induces an action on $\lql(d)[-2d]$, through a character $\varsigma:G(\A^{\infty,p})\times J_b(\Q_p)\ra \lql^\times$. (As in \S\ref{lem:finite-level-Igusa}, the action of $J_b(\Q_p)$ is defined a priori on a submonoid $S_b$ and then extended to $J_b(\Q_p)$. Alternatively, this action can be defined directly after perfectifying $\Ig_b$.) Together with the $G(\A^{\infty,p})\times J_b(\Q_p)$-action on $\Ig_b$, this yields an action of $G(\A^{\infty,p})\times J_b(\Q_p)$ on $H^{2d}_c(\Ig_{b,\Fpbar},\lql(d))$ and $H^0(\Ig_{b,\Fpbar},\lql)$, respectively. It follows from the functoriality of Poincar\'e duality that the above pairing is $G(\A^{\infty,p})\times J_b(\Q_p)$-equivariant. Thus $H^0(\Ig_{b,\Fpbar},\lql)$ is isomorphic to the (smooth) contragredient of $H^{2d}_c(\Ig_{b,\Fpbar},\lql(d))$, which is isomorphic to $H^{2d}_c(\Ig_{b,\Fpbar},\lql)\otimes \varsigma$. Therefore Theorem \ref{thm:Hc(Ig)} implies that
\begin{equation}\label{eq:H2d-implies-H0}
\iota H^0(\Ig_{b,\Fpbar},\lql)\simeq \bigoplus_{\pi\in \cA_{\mathbf 1}(G)} \left((\pi^{\infty,p})\otimes (  (\pi_p\circ \zeta_b) \otimes \delta_{P_b}) \right)^\vee \otimes \varsigma^{-1}.
\end{equation}

On the other hand, $H^0(\Ig_{b,\Fpbar},\lql)$ is the space of smooth $\lql$-valued functions on $\pi_0(\Ig_{b,\Fpbar})$, on which $G(\A^{\infty,p})\times J_b(\Q_p)$ acts through right translation. (Here smoothness means invariance under an open compact subgroup of $G(\A^{\infty,p})\times J_b(\Q_p)$.) In particular, the trivial representation appears in $H^0(\Ig_{b,\Fpbar},\lql)$  as the subspace of constant functions on $\pi_0(\Ig_{b,\Fpbar})$. Hence $\varsigma^{-1}=(\pi_0^{\infty,p})\otimes (  (\pi_{0,p}\circ \zeta_b)\otimes \delta_{P_b})$ for some $\pi_0\in \cA_{\mathbf 1}(G)$. Plugging this formula into \eqref{eq:H2d-implies-H0} and using the fact that $\cA_{\mathbf 1}(G)$ is invariant under taking dual and twisting by $\pi_0$, we can rewrite \eqref{eq:H2d-implies-H0} as
$$
\iota H^0(\Ig_{b,\Fpbar},\lql) \simeq \bigoplus_{\pi\in \cA_{\mathbf 1}(G)} \pi^{\infty,p}\otimes  (\pi_p\circ \zeta_b) .
$$
Finally, the same holds with $\IG_b$ in place of $\Ig_{b,\Fpbar}$ thanks to Lemma \ref{lem:finite-level-Igusa} (3).
\end{proof}

\begin{remark}
It may be possible to compute the character $\varsigma$ in the proof, but we have got around it. As we know the Frobenius action on $\lql(d)[-2d]$, Lemma \ref{lem:finite-level-Igusa} (2) tells us that $\tu{fr}^{-r}\in J_b(\Q_p)$ acts by $p^{rd}$. We guess that $\varsigma$ is trivial on $G(\A^{\infty,p})$ and equal to $\delta_{P_b}^{-1}$ on $J_b(\Q_p)$.
\end{remark}

\subsection{Preparations in harmonic analysis}\label{sub:Igusa-basic-setup}
 
Let $\phi^{\infty,p}=\otimes'_{v\neq \infty,p} \phi_v\in \cH(G(\A^{\infty,p}))$ and $\phi_p\in \cH(J_b(\Q_p))$. With a view towards Theorem \ref{thm:Hc(Ig)}, we want to compute
$$
\Tr\left(\phi^{\infty,p}\phi^{(j)}_p \left|\iota H_c(\Ig_{b,\Fpbar},\lql) \right.\right)
:= \sum_{i\ge 0} (-1)^i \Tr\left(\phi^{\infty,p}\phi^{(j)}_p \left|\iota H^i_c(\Ig_{b,\Fpbar},\lql) \right.\right).
$$

We keep $T$, $B$, $b\in G(\breve \Q_p)$, and $r \in \Z_{\ge 1}$ as before, so that $r\nu_b\in X_*(T)_+$. Recall that $r\nu_b(p)\in A_{J_b}$. Given $\phi_p\in \cH(J_b(\Q_p))$, define
$$
\phi_p^{(j)}\in \cH(J_b(\Q_p))\quad\mbox{by}\quad \phi_p^{(j)}(\delta):=\phi_p(j\nu_b(p)^{-1}\delta),\qquad j\in r \Z_{\ge 1}.
$$
This coincides with the analogous definition of $\phi_p^{(k)}$ in \S\ref{sub:nu-Jacquet}, namely $\phi_p^{(j)}=\phi_p^{(k)}$ via $k=j/r$ and $\nu=r\nu_b$. (The difference is that $\nu$ is a cocharacter but $\nu_b$ is only a fractional cocharacter.)

An element $\delta\in J_b(\lqp)$ is \textbf{acceptable} if its image in $M_b(\lqp)$ is acceptable (Definition \ref{def:acceptable}) under the isomorphism $J_b(\lqp)\simeq M_b(\lqp)$ induced by some (thus any) inner twist at the end of \S\ref{sub:central-leaves}. As in \S\ref{sub:nu-Jacquet}, let $\cH_{\tu{acc}}(J_b(\Q_p))\subset \cH(J_b(\Q_p))$ denote the subspace of functions supported on acceptable elements. Choose $j_0\in \Z_{\ge 0}$ such that
$$
\phi_p^{(j)}\in \cH_{\tu{acc}}(J_b(\Q_p)),\qquad j\in r \Z,~~j\ge j_0.
$$
Such a $j_0$ exists by the argument of Lemma \ref{lem:nu-twist-support}.
By Lemma \ref{lem:finite-level-Igusa} and the definition of $\phi_p^{(j)}$, 
\begin{equation}\label{eq:trace-(j)=trace-Frob}
  \Tr\left(\phi^{\infty,p}\phi^{(j)}_p \left|\iota H_c(\Ig_{b,\Fpbar},\lql) \right.\right) = \Tr\left(\phi^{\infty,p}\phi_p \times (\tu{Fr}^{j}\times 1) \left|\iota H_c(\Ig_{b,\Fpbar},\lql) \right.\right),
\end{equation}
where $\tu{Fr}^j$ is the $j/r$-th power of the relative Frobenius of $\Ig_b$ over $\F_{p^r}$. Since the action of $\tu{Fr}^j$ is the same as the action of a central element of $J_b(\Q_p)$, it commutes with the action of $\phi^{\infty,p}\phi_p$. Thus \eqref{eq:trace-(j)=trace-Frob} and the Lang--Weil bound tell us that the top degree compactly supported cohomology in Theorem \ref{thm:Hc(Ig)} is captured by the leading term as $j\ra\infty$. This will be the basic idea underlying the proof of the theorem in \S\ref{sub:completion-of-proof} below.

We fix the global central character datum $(\fkX,\chi_0)=(A_{G,\infty},\textbf{1})$ for $G$, which can also be viewed as a central character datum for $G^*$ via $Z(G)=Z(G^*)$. (Since we compute the cohomology with constant coefficients, we do not need to consider nontrivial $\chi_0$.)

We also fix a $z$-extension $1\ra Z_1\ra G_1\ra G\ra 1$ over $\Q$ once and for all, which is unramified over $\Q_p$. As explained in \cite[\S7.3.3]{KSZ}, we can promote $G_1$ to a Shimura datum $(G_1,X_1)$ lifting $(G,X)$ together with the conjugacy class $\{ \mu_{X_1}\}$ of cocharacters of $G_{1,\C}$ lifting $\{ \mu_X\}$. Therefore, in $\iota_p\{\mu_{X_1}\}$, we can find a cochacter 
$$
\mu_{p,1}:\G_m\ra G_1
$$
which lifts $\mu_p$ and is defined over an unramified extension of $\Q_p$ (since the reflex field of $(G_1,X_1)$ is unramified at $p$). Making $r$ more divisible, we arrange that $\mu_{p,1}$ is defined over $\Q_{p^r}$. We apply Lemma \ref{lem:lift-b,mu} to find a lift $b_1\in G_1(\Q_{p^r})$ of $b$; we also ensure that $r\nu_{b_1}$ is a cocharacter as in the last assertion of that lemma. We mention that $\mu_{p,1}$ and $b_1$ are going to enter the construction of test functions at $\infty$ and $p$, respectively.

For each $\fke\in \cE_{\el}(G)$, fix a $\Q$-rational minimal parabolic subgroup of $G^\fke$ and its Levi component as at the start of \S\ref{sub:main-estiamte} (with $G^\fke$ in place of $G$ there). Call them $P_0^*$ and $M_0^*$ in the case $\fke=\fke^*$. On the other hand, we have $G_{\Q_p}\supset B_{\Q_p}\supset T_{\Q_p}$ from \S\ref{sub:central-leaves}. Since $G_{\Q_p}$ is quasi-split, there is a canonical $G^{\ad}(\Q_p)$-conjugacy class of isomorphisms $G_{\Q_p}\simeq G^*_{\Q_p}$. We fix one such isomorphism such that $B_{\Q_p}$ (resp. $T_{\Q_p}$) is carried into $P_{0,\Q_p}^*$ (resp. $M_{0,\Q_p}^*$). The images of $B_{\Q_p}$ and $T_{\Q_p}$ in $G^*_{\Q_p}$ will play the roles of $B$ and $T$ in \S\ref{sub:main-estiamte}.

For each $\fke\in \cE^<_{\el}(G)$, we have a central extension $1\ra Z_1\ra G_1^\fke \ra G^\fke\ra 1$ over $\Q$, determining an endoscopic datum $\fke_1$ for $G_1$ and a central character datum $(\fkX^{\fke}_1,\chi^\fke_1)$ for $G^\fke_1$ as in \S\ref{sub:endoscopy-z-ext}. 

\subsection{The test functions away from $p$}

For each $\fke\in \cE_{\el}(G)$, Let us introduce the test functions to enter the statement of Theorem \ref{thm:stable-Igusa} below. Here we consider the places away from $p$. The place $p$ will be treated in the next subsection.

The first case is away from $p$ and $\infty$. When $\fke=\fke^*$, we have $(f^{\Ig,*})^{\infty,p}=\otimes'_{v\neq \infty,p} f^*_v$, where $f^*_v\in \cH(G^*(\Q_v))$ is a transfer of $\phi_v$ as in \S\ref{sub:one-dim}. In case $\fke\in \cE^<_{\el}(G)$, at each $v\neq \infty,p$, the function $\phi_v$ admits a transfer $ f^\fke_{1,v} \in \cH(G^{\fke}_1(\Q_v),(\chi^{\fke}_{1,v})^{-1})$. Then we take 
$$
(f^{\Ig,\fke})^{\infty,p}_1=\otimes'_{v\neq \infty,p} f^\fke_{1,v} \in \cH(G^{\fke}_1(\A^{\infty,p}),(\chi^{\fke,\infty,p}_1)^{-1}).
$$

The next case is the real place. We construct the test function $f^{\Ig,\fke}_{1,\infty}\in \cH(G^{\fke}_1(\R),(\chi^{\fke}_{1,\infty})^{-1})$ by adapting \cite[\S7]{KottwitzAnnArbor} to the case with central characters. In the easier case of $\fke=\fke^*=(G^*,{}^L G^*,1,\tu{id})$, we take $f^{\Ig,*}_\infty:=e(G_\infty) f_{\mathbf 1}$ in the notation of \S\ref{sub:Lefschetz}. Now let $\fke\in \cE^<_{\el}(G)$. In the notation of \S\ref{sub:Lefschetz}, both $\xi$ and $\zeta$ are trivial in the current setup (since we are focusing on the constant coefficient case). Write $\xi_1$ and $\zeta_1$ for the pullbacks of $\xi$ and $\zeta$ from $G$ to $G_1$; they are again trivial. We obtain a discrete $L$-packet $\Pi(\xi_1,\zeta_1)$ for $G_1(\R)$ along with an $L$-parameter $\phi_{\xi_1,\zeta_1}:W_{\R}\ra {}^L G_1$ as in \S\ref{sub:Lefschetz}. Let $\Phi_2(G_{1,\R}^{\fke},\phi_{\xi_1,\zeta_1})$ denote the set of discrete $L$-parameters $\phi'\in \Phi(G_1^{\fke}(\R))$ such that $\eta_1^\fke\phi'\simeq \phi_{\xi_1,\zeta_1}$. Then define (cf.~\cite[p.186]{KottwitzAnnArbor})
$$
f^{\Ig,\fke}_{1,\infty}:=(-1)^{q(G_1)} \langle \mu_1,s_1^\fke\rangle \sum_{\phi'} \det(\omega_*(\phi')) f_{\phi'},
$$
where $f_{\phi'}$ is the averaged Lefschetz function for the $L$-packet of $\phi'$ defined in \S\ref{sub:Lefschetz}, and the sum runs over $\phi'\in \Phi(G_{1,\R}^{\fke},\phi_{\xi_1,\zeta_1})$. As in \cite[\S8.2.5]{KSZ} we check that $f^{\Ig,\fke}_{1,\infty}$ is $(\chi^{\fke}_{1,\infty})^{-1}$-equivariant and compactly supported modulo $\fkX_{1,\infty}^\fke$. 

\subsection{The test functions at $p$}\label{sub:test-function-at-p}

We apply the contents of \S\ref{sec:Jacquet-endoscopy} to the cocharacter $\nu:=r\nu_b$ over $F=\Q_p$ with uniformizer $\varpi=p$. In particular, we have $P_b:=P_\nu$ whose Levi factor is $M_b=M_\nu$.

Consider the case $\fke=\fke^*$. Each function $\phi_p\in \cH(J_b(\Q_p))$ admits a  transfer $\phi^*_p\in \cH(M_b(\Q_p))$ as explained in \S\ref{sub:transfer-one-dim}. When $\phi_p\in \cH_{\tu{acc}}(J_b(\Q_p))$, we can arrange that $\phi^*_p\in \cH_{\tu{acc}}(M_b(\Q_p))$ by multiplying the indicator function on the set of acceptable elements in $M_b(\Q_p)$. (This is possible as the subset of acceptable elements is nonempty, open, and stable under $J_b(\Q_p)$-conjugacy.) The image of $\phi^*_p$ in $\cS(M_b)$ depends only on $\phi_p$ (as an element of $\cS(J_b)$). In the notation of \S\ref{sub:nu-Jacquet}, define
$$
f_p^{\Ig,*,(j)}:=\mathscr{J}_\nu\left(\delta_{P_\nu}^{1/2}\cdot \phi_p^{*,(j)}\right)\in \cS(G),\qquad j\in \Z_{\ge 0},
$$
As before, we still write $f_p^{\Ig,*,(j)}$ for a representative in $\cH(G(\Q_p))$. Lemma \ref{lem:O-tr-nu-ascent} implies that 
$$
\Tr \left(f_p^{\Ig,*,(j)}|\pi_p \right)=\Tr \left( \phi_p^{*,(j)} | J_{P_\nu^{\tu{op}}}(\pi_p)\otimes \delta_{P_\nu}^{1/2} \right),\qquad \forall\pi_p\in \Irr(G(\Q_p)). 
$$

\begin{remark}\label{rem:Igusa-Kottwitz-sign}
In the definition of $f_p^{\Ig,*,(j)}$, we have not multiplied the constant $c_{M_H}$ appearing in \cite[\S6]{ShinIgusaStab} (with $H,M_H$ there corresponding to $G^*,M_b$ here). In the sign convention of Remark 6.4 therein, the transfer factor between $J_b$ and $M_b$ equals $e(J_b)$, resulting in $c_{M_H}=e(J_b)$. In contrast, we have taken the transfer factor between inner forms to be 1 (cf.~Remark \ref{rem:transfer-factor-sign}), so $c_{M_H}=1$ in our convention.
\end{remark}

Now let $\fke\in \cE^<_{\el}(G)$. Recall from \eqref{sub:Igusa-basic-setup} that $b_1\in G_1(\Q_{p^r})$ was chosen. Take $\nu_1:=r\nu_{b_1}$. By pulling back the $z$-extension $1\ra Z_1\ra G_1\ra G\ra 1$ via $M_b\hra G$, and using the definition of $J_b$ and $J_{b_1}$, we obtain $z$-extensions over $\Q_p$ as follows:
$$
1\ra Z_1\ra M_{b_1}\ra M_b\ra 1,\qquad 1\ra Z_1\ra J_{b_1}\ra J_b\ra 1.
$$
(For $J_b$, the point is that the $\sigma$-stabilizer subgroup of $\Res_{\breve \Q_p/\Q_p}\G_m$ is simply $\G_m$.) We pull back $\phi^{(j)}_p\in \cH(J_b(\Q_p))$ to obtain $\phi^{(j)}_{1,p}\in  \cH(J_{b_1}(\Q_p),\chi_{1,p})$. (Recall that $\chi_1=\prod_v \chi_{1,v}$ is the trivial character on $\fkX_1=Z_1(\A)$.) Write  $\phi^*_p\in \cH(M_b(\Q_p))$ for a transfer of $\phi_p$, and $\phi^*_{1,p}\in  \cH(M_{b_1}(\Q_p),\chi_{1,p})$ for the pullback of $\phi^*_p$. Then $\phi^{*,(j)}_p$ (defined in \S\ref{sub:nu-Jacquet}) is a transfer of $\phi^{(j)}_p$ (namely $\phi^{*,(j)}=(\phi^{(j)}_p)^*$ in $\cS(J_b)$), and $\phi^{*,(j)}_{1,p}$ is a transfer of $\phi^{(j)}_{1,p}$, for all $j\in \Z$.

The desired test function $f^{\Ig,\fke}_{1,p}$ is described by the process in \cite[\S6]{ShinIgusaStab} (which is applicable since $G_1$ has simply connected derived subgroup), with $J_{b_1},G^\fke_1,G_1$ in place of $J_b,H,G$ therein, followed by averaging over $\fkX_1=\fkX^\fke_1$. We summarize the construction as follows:
\begin{eqnarray}\label{eq:f-Ig-p-1}
f^{\Ig,\fke,(j)}_{1,p} &:=& \sum_{\omega\in \Omega_{\fke_1,\nu_1}}   f^{\Ig,\fke,(j)}_{1,p,\omega}, \qquad\mbox{where}\\
f^{\Ig,\fke,(j)}_{1,p,\omega} & := &  c_\omega\cdot \mathscr J_{\nu_{1,\omega}}\big(\tu{LS}^{\fke_1,\omega}(\delta_{P_{\nu_1}}^{1/2}\cdot\phi^{*,(j)}_{1,p})\big) \in \cH(G^\fke_1(\Q_p),\chi^{\fke,-1}_{1,p}), \nonumber
\end{eqnarray}
Here $c_\omega\in \C$ are constants (possibly zero) independent of $\phi_p$. Note that $\mathscr J_{\nu_{1,\omega}}$ and $\tu{LS}^{\fke_1,\omega}$ denote the maps in the setup with fixed central character
as in \S\ref{sub:fixed-central-character}. We observe the following about the right hand side of \eqref{eq:f-Ig-p-1}.
\begin{equation}\label{eq:f-Ig-p-2}
\delta_{P_{\nu_1}}^{1/2}\cdot\phi^{*,(j)}_{1,p} = \delta_{P_{\nu_1}}^{1/2}(\nu_1(p))  \big( \delta_{P_{\nu_1}}^{1/2}\cdot\phi^{*}_{1,p}\big)^{(j)}
= p^{\langle \rho,\nu\rangle} \big( \delta_{P_{\nu_1}}^{1/2}\cdot\phi^{*}_{1,p}\big)^{(j)}.
\end{equation}

\subsection{The stable trace formula for Igusa varieties}\label{sub:Igusa-STF}

Continuing from the preceding subsections, we freely use the notation from \S\ref{sub:STF} . The following stabilized formula for Igusa varieties is of key important to us.

\begin{theorem}\label{thm:stable-Igusa}
Given $\phi^{\infty,p}\in \cH(G(\A^{\infty,p}))$ and $\phi_p\in \cH(J_b(\Q_p))$, there exists $j_0=j_0(\phi^{\infty,p},\phi_p)\in \Z_{\ge 1}$ such that $\phi_p^{(j)}\in \cH_{\tu{acc}}(J_b(\Q_p))$ and
\begin{equation}\label{eq:stable-Igusa}
\Tr\left(\phi^{\infty,p}\phi^{(j)}_p \left|\iota H_c(\Ig_b,\lql) \right.\right) = \ST^{G^*}_{\el,\chi_0}(f^{\Ig,*,(j)})+ \sum_{\fke\in \cE^<_{\el}(G)}\iota(G,G^\fke) \ST^{G^\fke_1}_{\el,\chi_1^{\fke}} \left(f^{\Ig,\fke,(j)}_1\right).
\end{equation}
for every integer $j\ge j_0$ divisible by $r$.
\end{theorem}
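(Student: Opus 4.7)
The plan is to combine Mack-Crane's Langlands--Kottwitz type trace formula for Igusa varieties with the general stabilization procedure, using the non-standard endoscopic transfers built in \S\ref{sub:test-function-at-p} to handle the place $p$.

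First, I would invoke the fixed-point formula of \cite{MackCrane} (building on \cite{ShinIgusa} in the PEL case). For $\phi^{\infty,p}\phi_p^{(j)}$ with $j$ sufficiently large and divisible by $r$ (large enough to make $\phi_p^{(j)}$ acceptable, cf.~the paragraph above \eqref{eq:trace-(j)=trace-Frob}, and large enough that the Lefschetz fixed point formula applies as in \cite[Ch.~5]{HarrisTaylor}), this formula expresses the left-hand side of \eqref{eq:stable-Igusa} as a geometric expansion over Kottwitz triples: schematically,
$$
\Tr\left(\phi^{\infty,p}\phi^{(j)}_p \,\Big|\,\iota H_c(\Ig_b,\lql) \right) = \sum_{(\gamma_0,\gamma,\delta)} c(\gamma_0,\gamma,\delta)\, O_\gamma(\phi^{\infty,p})\, O_{\delta}(\phi_p^{(j)}),
$$
where the Kottwitz triple has its usual meaning and the transfer from the $\sigma$-twisted orbital integral at $p$ to an ordinary orbital integral on $J_b$ is already built in (this is the Igusa analogue replacing the $GL_n$ and affine Deligne--Lusztig input of Kottwitz). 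The choice of $j_0$ also ensures convergence and validity of the formula, via properties of acceptable functions on $J_b(\Q_p)$.

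Next, I would carry out the stabilization of this expression following the template of \cite{KottwitzAnnArbor,KSZ} but adapted to the Igusa setting. The archimedean test function is a Lefschetz function in the constant coefficient case (so stable cuspidal), $\phi^{\infty,p}$ and $f^{\Ig,*}_\infty$ match standardly, and the truly delicate ingredient is the transfer at $p$: one must show that $\phi_p^{(j)}$ on $J_b(\Q_p)$ corresponds, via $\nu$-ascent (Lemma \ref{lem:O-tr-nu-ascent}) composed with transfer between inner forms (Lemma \ref{lem:transfer-1-dim} in the one-dimensional case, \cite{WaldspurgerFLimpliesTC} in general), to $f_p^{\Ig,*,(j)}$ on $G^*(\Q_p)$, and that the endoscopic transfers of $f_p^{\Ig,*,(j)}$ to the groups $G_1^\fke$ are given by the formulas \eqref{eq:f-Ig-p-1}--\eqref{eq:f-Ig-p-2}. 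This is exactly Proposition \ref{prop:nu-transfer} applied to $\fke_1$ and $G_1$, combined with the constant $p^{\langle\rho,\nu\rangle}$ coming from \eqref{eq:f-Ig-p-2}. The sign convention explained in Remark \ref{rem:Igusa-Kottwitz-sign} ensures that the Kottwitz signs $e(J_b)$ present in the Mack-Crane formula are absorbed into our choice of transfer factor (so they do not appear as a separate constant in \eqref{eq:stable-Igusa}).

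With these inputs in place, the stabilization proceeds formally: the geometric sum groups into $(\gamma_0,\kappa)$-pairs, where $\gamma_0\in G^*(\Q)$ is a stable conjugacy class and $\kappa$ runs over the Kottwitz group $\mathfrak{K}(I_{\gamma_0}/\Q)$; separating the $\kappa=1$ contribution (giving $ST^{G^*}_{\el,\chi_0}(f^{\Ig,*,(j)})$) from the nontrivial $\kappa$ contributions (giving, after identification with an endoscopic datum $\fke$, the terms $\iota(G,G^\fke)\,ST^{G^\fke_1}_{\el,\chi_1^\fke}(f_1^{\Ig,\fke,(j)})$) yields \eqref{eq:stable-Igusa}. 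Finiteness of the sum over $\fke$ follows from Lemma \ref{lem:finiteness}. The main obstacle I expect is the compatibility of the Mack-Crane formula at $p$ with the non-standard transfer diagram of Proposition \ref{prop:nu-transfer} (this is where the proper Levi $M_b\subsetneq G_{\Q_p}$ in the non-basic case enters, via $\nu$-ascent); once this local compatibility is verified, the global stabilization is standard. In practice one would cite \cite{ShinIgusaStab} (where a parallel stabilization is carried out for PEL type) and \cite{MackCrane} (where the Hodge-type fixed point formula and the base stabilization are established) to shorten the argument, pointing out only the minor modifications needed to incorporate the central character datum $(\fkX,\chi_0)$ and the $z$-extension $G_1$.
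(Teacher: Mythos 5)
Your proposal follows essentially the same route as the paper's proof: start from Mack-Crane's geometric expansion over (acceptable, $b$-admissible) Kottwitz parameters, then stabilize along the template of \cite{KottwitzAnnArbor,KSZ}, with the only genuinely new local input being the transfer at $p$ via $\nu$-ascent and the constructions of \S\ref{sub:test-function-at-p}, following \cite[\S6]{ShinIgusaStab}, and with the sign convention of Remark \ref{rem:Igusa-Kottwitz-sign} absorbing the Kottwitz sign $e(J_b)$. The paper's own proof is a sketch at the same level of detail (deferring the precise normalization of transfer factors to \cite{BMS}), so your argument is correct and matches it.
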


\begin{proof} 
The point is to stabilize the main result of \cite{MackCrane}, which obtains the following expansion for
the left hand side of \eqref{eq:stable-Igusa}:
\begin{equation}\label{eq:IgusaTF}
\sum_{\gamma_0\in \Sigma_{\R\tu{-ell}}(G)}  \frac{c_2(\gamma_0)  \Tr \xi (\gamma_0)}{\bar \iota_G (\gamma_0)}  \sum_{\mathfrak c=(\gamma_0,a,[b_0])} c_1 (\mathfrak c) O_{\gamma}^{G(\A^{\infty,p})}(\phi^{\infty,p})O^{J_b(\Q_p)}_{\delta}(\phi^{(j)}_p),
\end{equation}
where the inner sum is over the set of acceptable $b$-admissible Kottwitz parameters $\mathfrak c$, and the conjugacy class $(\gamma,\delta)$ in $G(\A^{\infty,p})\times J_b(\Q_p)$ is determined by $\mathfrak c$ as in \emph{loc.~cit.} We do not recall the notation further, as it suffices for our purpose to observe that \eqref{eq:IgusaTF} resembles the analogous expansion for Shimura varieties \cite[(1.8.8.1)]{KSZ} except that different terms appear at $p$. Indeed, the stabilization of \eqref{eq:IgusaTF} into \eqref{eq:stable-Igusa} can be carried out by following the stabilization for Shimura varieties in \cite[\S8]{KSZ}, if we argue differently at $p$ following \cite[\S6]{ShinIgusaStab}.\footnote{The stabilization in \cite{KSZ} is written for arbitrary Shimura varieties (assuming the Shimura variety analogue of \eqref{eq:IgusaTF} when $(G,X)$ is not of abelian type). In \cite{ShinIgusaStab}, the main assumptions are that $(G,X)$ is of certain PEL type and that $G_{\der}$ is simply connected. The former is irrelevant for stabilization, and the latter is removed by working with $z$-extensions as in \cite{KSZ}.} The details are going to appear in \cite{BMS} with a precise normalization of transfer factors.
\end{proof}

\begin{remark}
In the argument above, we do not need a precise local normalization of transfer factors and Haar measures over all places of $\Q$ when $\fke\neq \fke^*$, since it will only affect error terms by constant factors in the estimate. For instance, we need not know what normalization of transfer factors should be taken at $p$ and $\infty$ (as well as away from $p$ and $\infty$) to satisfy the product formula for transfer factors, cf.~Remark \ref{rem:transfer-factors}. We intend to work out precise normalizations in a future paper, thus removing ambiguity in the coefficients $c_\omega$ in \eqref{eq:f-Ig-p-1}.
\end{remark}

\subsection{Completion of the proof of Theorem \ref{thm:Hc(Ig)}}\label{sub:completion-of-proof}

The main term in the right hand side of Theorem \ref{thm:stable-Igusa} will turn out to be the following. Recall $\cA_{\mathbf 1}(G)$ from Theorem \ref{thm:H0(Ig)}.

\begin{proposition}\label{prop:spectral-main-term}
Fix $\phi^{\infty,p} \phi_p\in \cH(G(\A^{\infty,p})\times J_b(\Q_p))$, from which $f^{*,(j)}\in \cH(G^*(\A))$ is given as in \S\ref{sub:test-function-at-p} for every $j\in \Z_{\ge 1}$ such that $j\ge j_0=j_0(\phi^{\infty,p} \phi_p)$.
As $j\ge j_0$ varies over positive integers divisible by $r$, we have the estimate
$$
T^{G^*}_{\disc,\chi_0}\big(f^{\Ig,*,(j)}\big) = \sum_{\pi\in \cA_{\mathbf 1}(G)} \Tr \left (\phi^{\infty,p} | \pi^{\infty,p} \right) \cdot \Tr \left( \phi_p^{(j)} |  (\pi_p\circ \zeta_b)\otimes \delta_{P_b} \right) \,+\, o \left( p^{j \langle 2\rho,\nu_b\rangle} \right).
$$
\end{proposition}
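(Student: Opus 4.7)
The plan is to expand $T^{G^*}_{\disc,\chi_0}(f^{\Ig,*,(j)})$ spectrally over $\cA_{\disc,\chi_0}(G^*)$ and isolate the contribution of one-dimensional automorphic representations as the leading term in $j$. Starting from
$$T^{G^*}_{\disc,\chi_0}(f^{\Ig,*,(j)})=\sum_{\pi^*}m(\pi^*)\,\Tr((f^{\Ig,*})^{\infty,p}|\pi^{*,\infty,p})\,\Tr(f^{\Ig,*,(j)}_p|\pi^*_p)\,\Tr(f^{\Ig,*}_\infty|\pi^*_\infty),$$
I would apply Lemma \ref{lem:O-tr-nu-ascent}(2) at $p$ to get $\Tr(f^{\Ig,*,(j)}_p|\pi^*_p)=\Tr(\delta^{1/2}_{P_b}\phi^{*,(j)}_p|J_{P^{\tu{op}}_b}(\pi^*_p))$. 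Writing $J_{P^{\tu{op}}_b}(\pi^*_p)=\sum_i c_i\tau_i$ in $\Groth(M_b(\Q_p))$ with central characters $\omega_{\tau_i}$ on $A_{M_b}(\Q_p)$, and using $\phi^{*,(j)}_p(m)=\phi^*_p(\nu_b(p)^{-j}m)$ together with $\delta^{1/2}_{P_b}(\nu_b(p))=p^{\langle\rho,\nu_b\rangle}$, a change of variables turns this into
$$p^{j\langle\rho,\nu_b\rangle}\sum_i c_i\,\omega_{\tau_i}(\nu_b(p))^j\,\Tr(\delta^{1/2}_{P_b}\phi^*_p|\tau_i).$$

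Next I would split the sum by $\dim\pi^*$. The $\Q$-non-basic hypothesis on $b$ yields \textup{($\Q$-nb($P_b$))} via Lemma \ref{lem:Q-non-basic}, which combined with Corollary \ref{cor:1-dim reps}, Lemma \ref{lem:fin-implies-one}, and Lemma \ref{lem:local-1dim-global-1dim} forces the dichotomy $\dim\pi^*=1\iff\dim\pi^*_p=1$. For non-one-dim $\pi^*$, Corollary \ref{cor:BoundExp} (applicable since $\chi_0=\mathbf 1$ makes $\pi^*$ genuinely unitary) provides the strict bound $|\omega_{\tau_i}(\nu_b(p))|<p^{\langle\rho,\nu_b\rangle}$. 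To upgrade this to a uniform power-saving $|\omega_{\tau_i}(\nu_b(p))|\le p^{(1-\delta)\langle\rho,\nu_b\rangle}$, I would exploit the constraints on the contributing $\pi^*$: bounded level at finite places from $(f^{\Ig,*})^{\infty,p}$, trivial infinitesimal character at infinity from $f^{\Ig,*}_\infty$, and compatibility with the fixed function $\delta^{1/2}_{P_b}\phi^*_p$ at $p$ restricting the relevant $\tau_i$ to a bounded family of $M_b(\Q_p)$-types. On this family an approximation to Ramanujan (available for representations cohomological at infinity) supplies the uniform $\delta>0$, so that combined with the prefactor $p^{j\langle\rho,\nu_b\rangle}$ the total non-one-dim contribution is $o(p^{j\langle 2\rho,\nu_b\rangle})$.

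For the main term, if $\dim\pi^*=1$ then $N^{\tu{op}}_b$ acts trivially and $J_{P^{\tu{op}}_b}(\pi^*_p)=\pi^*_p|_{M_b}\otimes\delta^{1/2}_{P_b}$ is a single summand. A direct computation collapses the formula of the first paragraph to $\Tr(\phi^{*,(j)}_p|\pi^*_p|_{M_b}\otimes\delta_{P_b})$. By Lemma \ref{lem:transfer-1-dim} applied across the inner twist $J_b\leftrightarrow M_b$ (with $\delta_{P_b}$ transported via the identification $A_{M_b}=A_{J_b}$), this equals $\Tr(\phi^{(j)}_p|(\pi_p\circ\zeta_b)\otimes\delta_{P_b})$, where $\pi\in\cA_{\mathbf 1}(G)$ is the counterpart of $\pi^*$ via Lemma \ref{lem:one-dim-auto-rep}. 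At the other finite places, Lemma \ref{lem:transfer-1-dim} gives $\Tr((f^{\Ig,*})^{\infty,p}|\pi^{*,\infty,p})=\Tr(\phi^{\infty,p}|\pi^{\infty,p})$; at infinity, Lemma \ref{lem:tr-of-char-at-infty} (with $\zeta=\mathbf 1$) selects exactly those $\pi^*$ whose archimedean component is trivial on $G^*(\R)_+$, matching the definition of $\cA_{\mathbf 1}(G)$ via Lemma \ref{lem:G(R)+}. With $m(\pi^*)=1$ for characters and the sign $e(G_\infty)$ built into the normalization of $f^{\Ig,*}_\infty$, summing over one-dim $\pi^*$ recovers the claimed main term.

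The principal obstacle is the uniform power-saving in the second paragraph. Corollary \ref{cor:BoundExp} supplies only a pointwise strict inequality, and — as the complementary-series example following Proposition \ref{prop:BoundExp} shows — this cannot be strengthened uniformly over the full unitary dual of $G^*(\Q_p)$. Deriving uniform positivity therefore depends delicately on the test-function constraints confining the contributing non-one-dim $\pi^*_p$ to a subfamily where known approximations to Ramanujan apply, making this the most technical input in the proof.
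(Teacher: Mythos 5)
Your structural plan — spectral expansion of $T^{G^*}_{\disc,\chi_0}$, reduction at $p$ to a Jacquet-module trace via Lemma \ref{lem:O-tr-nu-ascent}, the dichotomy $\dim\pi^*=1 \Leftrightarrow \dim\pi^*_p=1$ using $(\Q$-nb$(P_b))$, and the main-term computation via Lemmas \ref{lem:transfer-1-dim}, \ref{lem:tr-of-char-at-infty}, and \ref{lem:one-dim-auto-rep} — tracks the paper's proof correctly. But your handling of the error term goes astray: you are not asked to produce a uniform power-saving over an infinite family via approximations to Ramanujan. The paper needs none of this, and you would be fighting a needlessly hard battle.

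The step you are missing is that the set of $\pi^*$ contributing nonzero terms to the expansion is \emph{finite and independent of $j$}. Independence of $j$: whether $\Tr(\phi^*_p|\tau)\neq 0$ for some $\tau\in JH(J_{P_b^{\tu{op}}}(\pi^*_p))$ is unchanged when $\phi^*_p$ is replaced by its central translate $\phi_p^{*,(j)}$, so the collection of $\pi^*$ that can possibly contribute is the same for all $j\ge j_0$. Finiteness: the Lefschetz function at $\infty$ fixes the infinitesimal character; $\phi^{\infty,p}$ fixes the level away from $p$; and at $p$, nonvanishing of $\Tr(\phi^*_p|\tau)$ forces $J_{P_b^{\tu{op}}}(\pi^*_p)$ to have a $K^M_p$-fixed vector (for a compact open $K^M_p$ depending only on $\phi^*_p$), which by Jacquet's lemma (Casselman's isomorphism $\pi^{K_0}\simeq J_{\bar P}(\pi)^{K_0\cap M}$ for $K_0$ with Iwahori factorization relative to $\bar P=P_b^{\tu{op}}$) bounds the level of $\pi^*_p$. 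Harish-Chandra's finiteness theorem then limits the contributing $\pi^*$ to a finite set.

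Given finiteness, the strict pointwise inequality of Corollary \ref{cor:BoundExp} does all the remaining work: for each of the finitely many infinite-dimensional $\pi^*$ and each $\omega_\tau\in\Exp(J_{P_b^{\tu{op}}}(\pi^*_p))$, one has $|\omega_\tau(\nu_b(p))|=p^{c_\tau}$ with $c_\tau<\langle\rho,\nu_b\rangle$ \emph{for this particular fixed $\tau$}, so the corresponding term grows like $p^{j(c_\tau+\langle\rho,\nu_b\rangle)}$, and the maximum of $c_\tau$ over the finitely many $(\pi^*,\tau)$ is still strictly below $\langle\rho,\nu_b\rangle$. The total non-one-dimensional contribution is therefore $o(p^{j\langle 2\rho,\nu_b\rangle})$ with no further input. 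This also dissolves the obstruction you raise in your final paragraph: the complementary-series example is relevant only when one tries to bound uniformly over the whole unitary dual, which is never required here.
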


\begin{proof} 
We have
\begin{equation}\label{eq:spectral-main}
T^{G^*}_{\disc,\chi_0}\left(f^{\Ig,*,(j)}\right)= \sum_{\pi^*} m(\pi^*) \Tr \left (f^{\Ig,*,p} | \pi^{*,p} \right)\Tr \left(f^{\Ig,*,(j)}_p|\pi^*_p \right).
\end{equation}
Let $JH(J_{P_b^{\tu{op}}}(\pi^*_p))$ denote the multi-set of irreducible subquotients of $J_{P_b^{\tu{op}}}(\pi^*_p)$ (up to isomorphism). The central character of $\tau \in JH(J_{P_b^{\tu{op}}}(\pi^*_p))$ is denoted $\omega_\tau$. We see from Lemma \ref{lem:O-tr-nu-ascent} (ii) that
\begin{eqnarray}
\Tr \left(f^{\Ig,*,(j)}_p|\pi^*_p\right)&=& \Tr \left( \delta_{P_b}^{1/2} \phi^{*,(j)}_p | J_{P_b^{\tu{op}}}(\pi^*_p) \right)=\Tr \left(\phi^{*,(j)}_p | J_{P_b^{\tu{op}}}(\pi^*_p)\otimes \delta_{P_b}^{1/2} \right) \nonumber
\\ &=&
\sum_{\tau\in JH(J_{P_b^{\tu{op}}}(\pi^*_p))} \omega_\tau(j\nu_b(p)) \delta_{P_b}^{1/2}(j\nu_b(p)) \, \Tr (\phi^*_p| \tau). \label{eq:spectral-main-term-at-p-pre}
\end{eqnarray}
We have $j\nu_b(p)\in A_{P_b^{\tu{op}}}^{--}$, cf.~\S\ref{sub:nu-Jacquet}. Since our running assumption that $b$ is $\Q$-non-basic implies ($\Q$-nb($P^{\tu{op}}_b$)) by Lemma \ref{lem:Q-non-basic}, it follows from Corollary \ref{cor:BoundExp} that the largest growth of $\omega_\tau(j\nu_b(p))$ as a function in $j$ is achieved exactly when $\dim \pi^*=1$. In that case, we have $m(\pi^*)=1$ and $\pi^*_p$ is a unitary character. Via Lemma \ref{lem:one-dim-auto-rep}, $\pi^*$ corresponds to a unique one-dimensional automorphic representation $\pi$ of $G(\A)$. We have $\pi^*_p\simeq \pi_p$ via $G^*(\Q_p)\simeq G(\Q_p)$. Thus
\begin{align}\label{eq:spectral-main-term-at-p}
\Tr (\phi^{*,(j)}_p | J_{P_b^{\tu{op}}}(\pi^*_p) \otimes \delta_{P_b}^{1/2}) ~\stackrel{\textup{Rem.}\,\ref{rem:Jacquet-of-1dim}}{=\joinrel=\joinrel=\joinrel=}~
\Tr (\phi^{*,(j)}_p |  (\pi^*_p\circ \zeta^*_b)\otimes \delta_{P_b}  )
= \Tr (\phi^{(j)}_p |  (\pi_p\circ \zeta_b)\otimes \delta_{P_b}  )\nonumber\\
=  \delta_{P_b}(j\nu_b(p)) \Tr (\phi_p|  (\pi_p\circ \zeta_b)\otimes \delta_{P_b} )
=  p^{j \langle 2\rho,\nu_b\rangle} \Tr (\phi_p|  (\pi_p\circ \zeta_b)\otimes \delta_{P_b}).
\end{align}
We used Lemma \ref{lem:transfer-1-dim} for the second equality above. Indeed, $(\pi^*_p\circ \zeta^*_b)\otimes \delta_{P_b}$ as a character of $M_b(\Q_p)$ and $ (\pi_p\circ \zeta_b)\otimes \delta_{P_b}$ as a character of $J_b(\Q_p)$ correspond to each other via the diagram \eqref{eq:JbMbG}. 

Let $f_{\textbf{1}}$ denote the averaged Lefschetz function on $G(\R)$ as in \S\ref{sub:Lefschetz} with $\xi=\mathbf 1$ and $\zeta=\mathbf 1$. Write $e(G^{\infty,p}):=\prod_{v\neq\infty,p} e(G_v)$ for the product of Kottwitz signs. We rewrite \eqref{eq:spectral-main} as
$$
T^{G^*}_{\disc,\chi_0}\big(f^{\Ig,*,(j)}\big)~  =
\sum_{\pi^*\atop\dim\pi^*=1}\Tr \left(f^{\Ig,*}_{\infty} | \pi^*_\infty \right) \Tr \big(f^{\Ig,*,\infty,p} | \pi^{*,\infty,p}\big)\Tr \big(f^{\Ig,*,(j)}_p|\pi^*_p\big)
+ o \big( p^{j \langle 2\rho,\nu_b\rangle} \big)
$$
$$
= \sum_{\pi\atop\dim\pi=1} \Tr (f_{\textbf{1}} | \pi_\infty ) \Tr (\phi^{\infty,p} | \pi^{\infty,p} ) \Tr\big(\phi^{(j)}_p| (\pi_p\circ \zeta_b)\otimes \delta_{P_b} \big)  + o \big( p^{j \langle 2\rho,\nu_b\rangle}\big),
$$
where the last equality was obtained from \eqref{eq:spectral-main-term-at-p-pre} at $p$, Lemma \ref{lem:transfer-of-Lefschetz} at $\infty$, and Lemma \ref{lem:transfer-1-dim} at the places away from $p$. To conclude, we invoke Lemma \ref{lem:tr-of-char-at-infty} to see that $\Tr (f_{\textbf{1}}|\pi_\infty)=1$ if $\pi_\infty|_{G(\R)_+}=\mathbf 1$ and $\Tr (f_{\textbf{1}}|\pi_\infty)=0$ otherwise.
\end{proof}

Finally we complete the proof of Theorem \ref{thm:Hc(Ig)} employing the main estimates of \S\ref{sec:asymptotic-analysis}.

\begin{corollary}\label{cor:final-corollary}
Theorem \ref{thm:Hc(Ig)} is true.
\end{corollary}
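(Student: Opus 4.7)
The plan is to combine the stable trace formula for Igusa varieties (Theorem \ref{thm:stable-Igusa}) with the asymptotic estimates of Section \ref{sec:asymptotic-analysis} and Deligne's purity theorem to isolate the top-degree compactly supported cohomology. Fix $\phi^{\infty,p} \in \cH(G(\A^{\infty,p}))$ and $\phi_p \in \cH(J_b(\Q_p))$; initially assume $\phi^{\infty,p} = \phi^{\infty,p,q}\phi_q$ with $\phi_q$ being $C$-regular at an auxiliary split prime $q$, where $C = C(\phi^{\infty,p,q},\phi_p)$ is as in Corollary \ref{cor:main-estimate}. For $j \in r\Z$ large enough that $\phi_p^{(j)}$ is acceptable, Theorem \ref{thm:stable-Igusa} expresses $\Tr(\phi^{\infty,p}\phi_p^{(j)}|\iota H_c(\Ig_b,\lql))$ as the stable elliptic trace for $G^*$ plus a finite (by Lemma \ref{lem:finiteness}) sum of endoscopic contributions. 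Corollary \ref{cor:main-estimate} shows every endoscopic term is $o(p^{jd})$ with $d := \langle 2\rho,\nu_b\rangle$, and the main stable elliptic term agrees with $T^{G^*}_{\disc,\chi_0}(f^{\Ig,*,(j)})$ up to the same error. Proposition \ref{prop:spectral-main-term} then identifies the latter with the desired spectral main term, yielding
$$\Tr(\phi^{\infty,p}\phi_p^{(j)}\,|\,\iota H_c(\Ig_b,\lql)) = \sum_{\pi \in \cA_{\mathbf 1}(G)}\Tr(\phi^{\infty,p}|\pi^{\infty,p})\Tr(\phi_p^{(j)}|\sigma_\pi) + o(p^{jd}),$$
with $\sigma_\pi := (\pi_p\circ\zeta_b)\otimes\delta_{P_b}$.

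By \eqref{eq:trace-(j)=trace-Frob}, the LHS equals $\Tr(\phi^{\infty,p}\phi_p\times\tu{Fr}^j|\iota H_c(\Ig_b,\lql))$. Deligne's Weil bounds force eigenvalues of $\tu{Fr}^r$ on $H_c^i(\Ig_{b,K^p},\lql)$ to have absolute value at most $p^{ri/2}$, so contributions from $i < 2d$ are absorbed in the error, while purity says all constituents of $H_c^{2d}$ have $|\sigma(\nu_b(p))| = p^d$. By Lemma \ref{lem:finite-level-Igusa}(2), $\tu{Fr}^r$ operates on $H_c^{2d}$ through the central element $\nu_b(p)^r \in J_b(\Q_p)$, so on each constituent $\pi^{\infty,p}\otimes\sigma$ it acts by the scalar $\sigma(\nu_b(p))^r$. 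Setting $W := \iota H_c^{2d}(\Ig_b,\lql) - \bigoplus_\pi \pi^{\infty,p}\otimes\sigma_\pi$ in the Grothendieck group of $G(\A^{\infty,p})\times J_b(\Q_p)$-modules and decomposing $W = \sum_i m_i \pi_i^{\infty,p}\otimes\sigma_i$, the identity rewrites as
$$\sum_i m_i\Tr(\phi^{\infty,p}|\pi_i^{\infty,p})\Tr(\phi_p|\sigma_i)\beta_i^j = o(1) \qquad (j \in r\Z,\ j \to \infty),$$
with $\beta_i := \sigma_i(\nu_b(p))/p^d$ of modulus one. The classical fact that a bounded exponential sum $\sum c_i\beta_i^j$ tending to zero must have vanishing coefficients (extracted via Cesaro-averaging $|\cdot|^2$) forces $\sum_{i:\beta_i = \beta} m_i\Tr(\phi^{\infty,p}|\pi_i^{\infty,p})\Tr(\phi_p|\sigma_i) = 0$ for every $\beta$, and hence $\Tr(\phi^{\infty,p}\phi_p|W) = 0$ on the admissible data.

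The final step is to remove the $C$-regularity constraint on $\phi_q$ and promote the trace identity to an equality of representations. Varying $\phi_q$ over $\cH(G(\Q_q))_{C\text{-reg}}$, Lemma \ref{lem:C-regular-spectral-implication} shows that the $J_{P_0}$-Jacquet modules at $q$ of the $G(\Q_q)$-constituents of $W$ vanish in the Grothendieck group; since every one-dimensional $\pi_q$ appearing in the formula is determined by its restriction to the minimal Levi $M_0(\Q_q)$, hence by its Jacquet module, and since the Cesaro argument already groups constituents by central-character value at $\nu_b(p)$, the vanishing lifts to the full $G(\Q_q)$-level. Combining this with further variation of $\phi^{\infty,p,q}$ and $\phi_p$ to remove the remaining constraints, we conclude $W = 0$ in the Grothendieck group. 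The RHS is semisimple with each irreducible appearing with multiplicity one and the LHS $H_c^{2d}$ is admissible, so this Grothendieck-group equality upgrades to the isomorphism of $G(\A^{\infty,p})\times J_b(\Q_p)$-modules asserted in Theorem \ref{thm:Hc(Ig)}. The main technical obstacle is this last density step at $q$; all other ingredients are in place from the preceding sections, so the proof reduces to a careful bookkeeping of growth rates with the central action of $\nu_b(p)$ on top-degree cohomology as the pivot that separates $H_c^{2d}$ from the lower cohomological degrees in the Lefschetz trace.
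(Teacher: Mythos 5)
Your proposal tracks the paper's argument for the bulk of the proof: the combination of Theorem \ref{thm:stable-Igusa}, Corollary \ref{cor:main-estimate}, and Proposition \ref{prop:spectral-main-term} to obtain the asymptotic trace identity, and then the use of \eqref{eq:trace-(j)=trace-Frob} together with Weil bounds (the paper cites Lang--Weil; your appeal to purity is equivalent) and Lemma \ref{lem:finite-level-Igusa}(2) to isolate the top-degree compactly supported cohomology as the source of the $p^{j\langle 2\rho,\nu_b\rangle}$-term. Up to that point there is no problem; your Cesaro-averaging detour is superfluous (on $H^{2d}_c$ of a smooth $d$-dimensional variety over a finite field the Frobenius acts by exactly $p^{rd}$, so all your $\beta_i$ equal $1$) but harmless.

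The genuine gap is in the final step, which you have correctly located but not resolved. Varying $\phi_q$ over $C$-regular functions and applying Lemma \ref{lem:C-regular-spectral-implication} only yields $J_{B_q}(W)=0$ in the Grothendieck group, where $W$ is the virtual difference between $\iota H^{\langle 4\rho,\nu_b\rangle}_c(\Ig_b,\lql)$ and the expected answer. The Jacquet module functor is far from injective on the Grothendieck group of $G(\Q_q)$-representations: any virtual combination of supercuspidals at $q$ lies in its kernel. Your justification---that the one-dimensional $\pi_q$ appearing on the right-hand side are determined by their Jacquet modules---constrains only the \emph{known} side of the identity; it says nothing about the unknown $G(\Q_q)$-constituents of the cohomology, which a priori may include representations invisible to $J_{B_q}$, or cancellations among representations sharing a Jacquet module. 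The ``grouping by central-character value at $\nu_b(p)$'' concerns the $J_b(\Q_p)$-factor and provides no separation at $q$. The paper closes this gap by running the entire argument a second time with another auxiliary split prime $q'\notin\{p,q\}$ playing the role of the regularized place: that run determines the full (un-Jacquet-ed) structure at $q$, and comparing the two outputs recovers the asserted isomorphism of $G(\A^{\infty,p})\times J_b(\Q_p)$-modules. Without this two-prime comparison, or some substitute for it, your argument terminates at the Jacquet-module level at $q$ and does not establish Theorem \ref{thm:Hc(Ig)}.
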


\begin{proof}
Let $q\neq p$ be an auxiliary prime such that $G_{\Q_q}$ is split. Fix $\phi^{\infty,p,q}\phi_p\in \cH(G(\A^{\infty,p,q})\times J_b(\Q_p))$. Let $\fke \in \cE^<_{\el}(G)$. There exists a constant $C_{\fke}=C_{\fke}(\phi^{\infty,p,q},\phi_p)>0$ such that for each $\phi_q\in \cH(G(\Q_q))_{C_{\fke}\tu{-reg}}$, we have the following bound on endoscopic terms in the stabilization of Theorem \ref{thm:stable-Igusa} by applying the last bound in Corollary \ref{cor:main-estimate} to $k=j/r$, $\nu=r\nu_{1,\omega}$, $\phi_p^{(k)}=c_{\omega}(\delta_{P_{\nu_1}}^{1/2} \phi_{1,p}^*)^{(k)}$, and $\chi=\chi_0$ for each $\omega\in \Omega_{\fke_1,\nu_1}$. Notice that $f^{\Ig,\fke,(j)}_{1,p,\omega}$ is $p^{\langle \rho^\fke,\nu_{1,\omega}\rangle}$ times $f_p^{(k)}$ of that corollary, in light of \eqref{eq:f-Ig-p-1} and \eqref{eq:f-Ig-p-2}. We have 
$$
\ST^{G^\fke_1}_{\el,\chi_1^{\fke}} \left((f^{\Ig,\fke,p}_1) f^{\Ig,\fke,(j)}_{1,p,\omega}\right) = O\left( p^{k ( \langle 2\rho^\fke,r\nu_{1,\omega}\rangle+ \langle \chi_1^{\fke},r\ol{\nu}_{1,\omega} \rangle)}\right)
 = O\left( p^{j( \langle 2\rho^\fke,\nu_{1,\omega}\rangle+ \langle \chi_1^{\fke},\ol{\nu}_{1,\omega} \rangle)}\right).
$$
To turn this into a more manageable bound, we use (a) and (b) from the proof of Corollary \ref{cor:main-estimate} and the fact that $\langle \chi_{0,\infty},\ol \nu\rangle=0$ since $\chi_0$ (which plays the role of $\chi$ there) is trivial. Thereby we see that the right hand side is $o \left( p^{j \langle 2\rho,\nu_b\rangle} \right)$. Taking the sum over $\omega\in \Omega_{\fke_1,\nu_1}$, we obtain 
\begin{eqnarray}\label{eq:final-corollary}
\ST^{G^\fke_1}_{\el,\chi_1^{\fke}} \left(f^{\Ig,\fke,(j)}_1\right) =  o \left( p^{j \langle 2\rho,\nu_b\rangle} \right),\qquad \fke\in \cE^<_{\el}(G).
\end{eqnarray}

By Lemma \ref{lem:finiteness}, there are only finitely many $\fke$ contributing to the sum in Theorem \ref{thm:stable-Igusa} for a fixed choice of $\phi^{\infty,p,q}\phi_p$. Thus the coefficients $\iota(G,G^\fke)$ are bounded by a uniform constant (depending on $\phi^{\infty,p,q}\phi_p$). We deduce the following by applying Theorem \ref{thm:stable-Igusa}, \eqref{eq:final-corollary}, Corollary \ref{cor:main-estimate} (the first estimate therein), and Proposition \ref{prop:spectral-main-term} in the order: there exists a constant $C=C(\phi^{\infty,p},\phi_p)>0$ (e.g., the maximum of $C_\fke$ over the set of finitely many $\fke$ which contribute) such that for every $\phi_q\in \cH(G(\Q_q))_{C\tu{-reg}}$, we have
$$
\Tr\left(\phi^{\infty,p}\phi^{(j)}_p |\iota H_c(\Ig_b,\lql ) \right) = \ST^{G^*}_{\el,\chi} \big(f^{\Ig,*,(j)}\big) + o \left( p^{j \langle 2\rho,\nu_b\rangle} \right)
 = T^{G^*}_{\disc,\chi} \big(f^{\Ig,*,(j)}\big) + o \left( p^{j \langle 2\rho,\nu_b\rangle} \right)
$$
$$=  \sum_{\pi\in \cA_{\mathbf 1}(G)} \Tr \left(\phi^{\infty,p} | \pi^{\infty,p} \right) \cdot \Tr \left( \phi_p^{*,(j)} |  (\pi_p\circ \zeta_b)\otimes \delta_{P_b} \right)
 \,+\, o \left( p^{j \langle 2\rho,\nu_b\rangle} \right).
$$
We have seen in \eqref{eq:spectral-main-term-at-p} that $\Tr \left( \phi_p^{(j)} |  (\pi_p\circ \zeta_b)\otimes \delta_{P_b} \right)$ is either 0 or a nonzero multiple of $p^{j \langle 2\rho,\nu_b\rangle}$ as $j$ varies over multiples of $r$. Since $\dim\Ig_b=\langle 2\rho,\nu_b\rangle$, it is implied by \eqref{eq:trace-(j)=trace-Frob} and the Lang--Weil bound that the leading term should be of order $p^{j \langle 2\rho,\nu_b\rangle}$. \footnote{In fact, the Lang--Weil bound proves that $\dim\Ig_b=\langle 2\rho,\nu_b\rangle$ even if we did not know it a priori. This gives an alternative proof of the dimension formula in Proposition \ref{prop:smoothness-and-dim-of-leaves}.} Therefore 
$$
\Tr\left(\phi^{\infty,p}\phi^{(j)}_p |\iota H^{\langle 4\rho,\nu_b\rangle}_c(\Ig_b,\lql ) \right) =  \sum_{\pi\in \cA_{\mathbf 1}(G)} \Tr \left (\phi^{\infty,p} | \pi^{\infty,p} \right) \cdot \Tr \left( \phi_p^{(j)} | (\pi_p\circ \zeta_b)\otimes \delta_{P_b} \right).
$$
Let $B_{q}$ be a Borel subgroup of $G_{\Q_q}$ over $\Q_q$ with a Levi component $T_q$. By Lemma \ref{lem:C-regular-spectral-implication}, we have an isomorphism of $G(\A^{\infty,p,q})\times J_b(\Q_p)\times T_q(\Q_q)$-representations
$$
J_{B_{q}}\left( H^{\langle 4\rho,\nu_b\rangle}_c(\Ig_b,\lql )\right) \simeq  \sum_{\pi\in \cA_{\mathbf 1}(G)}  \pi^{\infty,p,q} \otimes ((\pi_p\circ \zeta_b)\otimes \delta_{P_b})\otimes J_{B_{q}}(\pi_q).
$$
(A priori the isomorphism exists up to semisimplification, but distinct one-dimensional representations have no extensions with each other.) Repeating the same argument for another prime $q'\notin \{p,q\}$ such that $G(\Q_{q'})$ is split, the above isomorphism exists with $q'$ in place of $q$. Comparing the two consequences, we deduce that as $G(\A^{\infty,p})\times J_b(\Q_p)$-modules,
$$
\iota H^{\langle 4\rho,\nu_b\rangle}_c(\Ig_b,\lql )\simeq \bigoplus_{\pi\in \cA_{\mathbf 1}(G)}  \pi^{\infty,p} \otimes ((\pi_p\circ \zeta_b)\otimes \delta_{P_b}).
$$
\end{proof}

\section{Applications to geometry}\label{sec:apps-to-geometry}

This section is devoted to working out geometric consequences of Theorem \ref{thm:H0(Ig)}, continuing in the setting of Hodge-type Shimura varieties with hyperspecial level at $p$.

\subsection{Irreducibility of Igusa varieties} \label{sub:main-to-irreducibility}

In \S\ref{sub:intro-irreducibility}, we reviewed earlier results on irreducibility of Igusa towers over the $\mu$-ordinary Newton strata of certain PEL-type Shimura varieties. Now we explain that our main theorem implies a generalization thereof to Hodge-type Shimura varieties and to non-$\mu$-ordinary strata.

Let $(G,X,p,\cG)\in \mathcal{SD}^{\tu{ur}}_{\tu{Hodge}}$ and $b\in G(\breve \Q_p)$ be as in \S\ref{sub:inf-level-Igusa}. Assume that $b$ is $\Q$-non-basic. Define $J(\Q_p)':=\ker(\zeta_b:J_b(\Q_p)\ra G(\Q_p)^{\ab})$, cf.~\eqref{eq:JbG}. Recall that $\tu{pr}:\mathfrak{Ig}_b\ra C_{b,\Fpbar}^{\tu{perf}}$ is a pro-\'etale $J_b^{\tu{int}}$-torsor. From Theorem \ref{thm:H0(Ig)}, we deduce that Igusa varieties are ``as irreducible as possible''.

\begin{theorem}[Irreducibility of Igusa varieties]\label{thm:irreducibility-Igusa-towers}
In the setting of \S\ref{sub:inf-level-Igusa}, the stabilizer of each connected component of $\mathfrak{Ig}_b$ under the $J_b(\Q_p)$-action is equal to $J_b(\Q_p)'$.
\end{theorem}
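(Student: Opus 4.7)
The easy inclusion $J_b(\Q_p)'\subseteq \tu{Stab}_{J_b(\Q_p)}(C)$ will be immediate from Theorem \ref{thm:H0(Ig)}: every irreducible $J_b(\Q_p)$-subquotient of $H^0(\IG_b,\lql)$ is of the form $\pi_p\circ\zeta_b$, hence trivial on $J_b(\Q_p)'=\ker\zeta_b$. Since $H^0(\IG_b,\lql)$ is the space of smooth $\lql$-valued functions on $\pi_0(\IG_b)$ under translation, $J_b(\Q_p)'$ acts trivially on $\pi_0(\IG_b)$, and therefore fixes every connected component.

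For the reverse inclusion, my plan is first to establish that $G(\A^{\infty,p})\times J_b(\Q_p)$ acts transitively on $\pi_0(\IG_b)$. The number of orbits equals the multiplicity of the trivial character of $G(\A^{\infty,p})\times J_b(\Q_p)$ in $H^0(\IG_b,\lql)$, which by Theorem \ref{thm:H0(Ig)} is $|\{\pi\in\cA_{\mathbf 1}(G):\pi^{\infty,p}=\mathbf 1\ \text{and}\ \pi_p\circ\zeta_b=\mathbf 1\}|$. I would then run exactly the weak approximation argument from \S\ref{sub:main-to-discHO}: for such a $\pi$, the character $\pi_p\otimes\pi_\infty$ of $G(\Q_p)\times G(\R)$ is trivial on the diagonal image of $G(\Q)$, which is dense by weak approximation, so the character is trivial on the whole product; evaluating with either coordinate set to the identity forces $\pi_p=\mathbf 1$ and $\pi_\infty=\mathbf 1$, so $\pi=\mathbf 1$. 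The multiplicity is therefore one, and the action is transitive.

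Granted transitivity, every character $\chi_\pi=\pi^{\infty,p}\otimes(\pi_p\circ\zeta_b)$ with $\pi\in\cA_{\mathbf 1}(G)$ appears in the permutation representation on the single orbit $\pi_0(\IG_b)$, and so $\chi_\pi$ is trivial on the stabilizer $H$ of $C$ in $G(\A^{\infty,p})\times J_b(\Q_p)$. For $g\in\tu{Stab}_{J_b(\Q_p)}(C)$, this forces $\pi_p(\zeta_b(g))=1$ for every $\pi\in\cA_{\mathbf 1}(G)$. To conclude $\zeta_b(g)=1$, I would apply Lemma \ref{lem:existence-1-dim} to any lift $g_p\in G(\Q_p)$ of $\zeta_b(g)$, with global central character datum $\fkX_\infty=Z(\R)$ and $\chi_\infty=\mathbf 1$ (and trivial at all finite places): if $\zeta_b(g)\neq 1$ in $G(\Q_p)^{\ab}$, the lemma produces a one-dimensional automorphic representation $\pi$ with $\pi_\infty|_{Z(\R)}=\mathbf 1$ and $\pi_p(\zeta_b(g))\neq 1$; Lemma \ref{lem:G(R)+} combined with the fact that $\varrho(G_{\tu{sc}}(\R))$ is connected and perfect (so every one-dimensional character of $G(\R)$ is automatically trivial on it) forces $\pi_\infty|_{G(\R)_+}=\mathbf 1$, giving $\pi\in\cA_{\mathbf 1}(G)$ and contradicting $\pi_p(\zeta_b(g))=1$. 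Hence $g\in J_b(\Q_p)'$.

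The main point of friction in this plan is the transitivity step: it is a direct analogue at Igusa level of the discrete Hecke orbit conjecture for $C_b$ and rests on the same weak approximation input. Once transitivity is secured, the rest amounts to reading off the $J_b(\Q_p)$-part of the stabilizer of a point in a transitive set from the characters appearing in its permutation representation, which makes the proof ``almost immediate from Theorem \ref{thm:H0(Ig)}'' in the sense advertised in the introduction.
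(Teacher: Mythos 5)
Your argument is correct and relies on the same two inputs as the paper's proof, namely Theorem~\ref{thm:H0(Ig)} and Lemma~\ref{lem:existence-1-dim}, but you route it through an explicit transitivity step that the paper does not spell out, and this is a genuine sharpening. As literally written, the paper shows that every $\delta\notin J_b(\Q_p)'$ acts nontrivially on $H^0(\IG_b,\lql)$; on its own this establishes that the \emph{kernel} of the $J_b(\Q_p)$-action on $\pi_0(\IG_b)$ equals $J_b(\Q_p)'$, which is a priori smaller than $\tu{Stab}(I)$ for a fixed component $I$. Your transitivity step (multiplicity one of the trivial $G(\A^{\infty,p})\times J_b(\Q_p)$-representation in $H^0$, proved by the same weak-approximation argument the paper later uses for \textup{(HO$_{\disc}$)} in \S\ref{sub:main-to-discHO}) closes the gap: once the action is transitive, and once one observes that $\tu{Stab}(I)$ is normal because it contains $J_b(\Q_p)'$ and $J_b(\Q_p)/J_b(\Q_p)'$ is abelian as a subgroup of $G(\Q_p)^{\ab}$, all stabilizers coincide with the kernel, and Lemma~\ref{lem:existence-1-dim} finishes the job. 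Two further remarks. First, you correctly use that one-dimensionality alone forces $\pi_\infty$ to kill $\varrho(G_{\tu{sc}}(\R))$ (a connected perfect group), which via Lemma~\ref{lem:G(R)+} reduces $\pi_\infty|_{G(\R)_+}=\mathbf 1$ to $\pi_\infty|_{Z(\R)}=\mathbf 1$; this is exactly why the central character datum with $\fkX_\infty=Z(\R)$, trivial finite components, and $\chi=\mathbf 1$ is the right input to Lemma~\ref{lem:existence-1-dim}. Second, for this to be a legitimate central character datum in the sense of \S\ref{sub:one-dim} one should check that $Z(\Q)\fkX$ is closed in $Z(\A)$; this is a routine but not entirely vacuous verification, and the paper's own invocation of Lemma~\ref{lem:existence-1-dim} here leaves the choice of $(\fkX,\chi)$ implicit as well.
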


\begin{proof}
Fix a component $I\subset \mathfrak{Ig}_b$ and write $\tu{Stab}(I)$ for the stabilizer of $I$ in $J_b(\Q_p)$. Since the $J_b(\Q_p)$-action on every $\pi_p$ in Theorem \ref{thm:H0(Ig)} factors through $J_b(\Q_p)/J_b(\Q_p)'$, we see that $\tu{Stab}(I)\supset J_b(\Q_p)'$. To prove the reverse inclusion, we show that every $\delta\in J_b(\Q_p)\backslash J_b(\Q_p)'$ acts nontrivially on $H^0(\mathfrak{Ig}_b,\lql)$. Write $\delta^{\ab}:=\zeta_b(\delta)\in G(\Q_p)^{\ab}$. Then $\delta^{\ab}\neq 1$ by assumption. It suffices to show that some $\pi$ in the summation of Theorem \ref{thm:H0(Ig)} has the property that $\pi_p(\delta^{\ab})\neq 1$. This follows from Lemma \ref{lem:existence-1-dim}.
\end{proof}

\begin{corollary}\label{cor:irreducibility}
Let $S$ be a connected component of $C_{b,\Fpbar}^{\tu{perf}}$. Then the set $\pi_0(\tu{pr}^{-1}(S))\subset \pi_0(\mathfrak{Ig}_b)$ is a torsor under the group $J_b^{\tu{int}}/(J_b^{\tu{int}}\cap J_b(\Q_p)')$. Every component of $\tu{pr}^{-1}(S)$ is a pro-\'etale torsor under $J_b^{\tu{int}}\cap J_b(\Q_p)'$, and conversely, if $I\subset \mathfrak{Ig}_b$ is an open subscheme such that $I\ra S$ is a pro-\'etale $J_b^{\tu{int}}\cap J_b(\Q_p)'$-torsor via $\tu{pr}$, then $I$ is irreducible.
\end{corollary}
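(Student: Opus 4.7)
The plan is to deduce the corollary from Theorem \ref{thm:irreducibility-Igusa-towers} combined with the pro-\'etale $J_b^{\tu{int}}$-torsor structure of $\tu{pr} : \mathfrak{Ig}_b \to C_{b,\Fpbar}^{\tu{perf}}$ recalled in Lemma \ref{lem:Igusa-basic}. For brevity, write $H := J_b^{\tu{int}} \cap J_b(\Q_p)'$.

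First I would observe that, since $S$ is a connected component of $C_{b,\Fpbar}^{\tu{perf}}$, the preimage $\tu{pr}^{-1}(S)$ is clopen in $\mathfrak{Ig}_b$ and the restricted map $\tu{pr}^{-1}(S) \to S$ is a pro-\'etale $J_b^{\tu{int}}$-torsor. Connectedness of $S$ forces $J_b^{\tu{int}}$ to act transitively on $\pi_0(\tu{pr}^{-1}(S))$. Fixing any component $I_0$, its stabilizer in $J_b(\Q_p)$ equals $J_b(\Q_p)'$ by Theorem \ref{thm:irreducibility-Igusa-towers}, hence its stabilizer in $J_b^{\tu{int}}$ is exactly $H$. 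This simultaneously yields the first assertion (identifying $\pi_0(\tu{pr}^{-1}(S))$ with $J_b^{\tu{int}}/H$ as a torsor) and the second (since the $J_b^{\tu{int}}$-torsor structure restricts on $I_0$ to a pro-\'etale $H$-torsor $I_0 \to S$). A consequence is that the connected components of $\tu{pr}^{-1}(S)$ are in bijection with the $H$-cosets in $J_b^{\tu{int}}$ via their fibers over any chosen geometric point of $S$.

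For the converse, let $I \subset \mathfrak{Ig}_b$ be an open subscheme such that $\tu{pr}|_I : I \to S$ is a pro-\'etale $H$-torsor. Since $\tu{pr}(I) \subset S$, we have $I \subset \tu{pr}^{-1}(S)$. Picking a geometric point $\bar s \to S$, the fiber $\tu{pr}^{-1}(\bar s)$ is a $J_b^{\tu{int}}$-torsor which partitions into $H$-cosets, each being the fiber over $\bar s$ of a unique component of $\tu{pr}^{-1}(S)$ by the preceding paragraph. The hypothesis that $I_{\bar s}$ is an $H$-torsor forces it to coincide with exactly one such coset; let $I_0$ be the corresponding component of $\tu{pr}^{-1}(S)$. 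Both $I$ and $I_0$ are unions of connected components of $\tu{pr}^{-1}(S)$ which share the same fiber over $\bar s$, so $I = I_0$, and $I_0$ is irreducible by the fact recalled in \S\ref{sub:inf-level-Igusa} that every connected component of $\mathfrak{Ig}_{b,\Fpbar}$ is irreducible.

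The main technical step that requires care is the last equality $I = I_0$: one must ensure that agreement of fibers over a single geometric point forces equality of clopen subschemes of $\tu{pr}^{-1}(S)$. This reduces to the fact that components of a pro-\'etale torsor over a connected base are uniquely determined by any one of their geometric fibers, an essentially formal consequence of the $J_b^{\tu{int}}/H$-parametrization of $\pi_0(\tu{pr}^{-1}(S))$ established in the first step.
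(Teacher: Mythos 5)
Your proposal is correct and follows essentially the same route as the paper: transitivity of $J_b^{\tu{int}}$ on $\pi_0(\tu{pr}^{-1}(S))$ from the torsor structure, plus the stabilizer computation of Theorem \ref{thm:irreducibility-Igusa-towers}, yields all three assertions. The only step you gloss over is why the open subscheme $I$ in the converse is a union of connected components of $\tu{pr}^{-1}(S)$; this is not automatic for an open subscheme, but it follows by noting that the locus in $S$ where the geometric fibers of $I$ and $I_0$ (two $H$-cosets in each fiber) coincide is clopen and nonempty in the connected $S$ — a repair well within the level of detail the paper itself supplies.
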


\begin{proof}
As $\tu{pr}$ is a $J_b^{\tu{int}}$-torsor, $J_b^{\tu{int}}$ acts transitively on $\pi_0(\tu{pr}^{-1}(S))$. Theorem \ref{thm:irreducibility-Igusa-towers} tells us that the action factors through a simply transitive action of $J_b^{\tu{int}}/J_b^{\tu{int}}\cap J_b(\Q_p)'$, implying the first assertion. The second assertion also follows from the same theorem and the fact that $\tu{pr}$ is a $J_b^{\tu{int}}$-torsor.
\end{proof}

For the remainder of this subsection, we compare with similar irreducibility results in the $\mu$-ordinary case. Thus we specialize to the case when $[b]\in B(G,\mu_p^{-1})$ is \textbf{$\mu$-ordinary}, meaning either of the following equivalent conditions  \cite[Rem.~5.7 (2)]{Wortmann}:
\begin{itemize}
\item  $[b]=[\mu_p^{-1}(p)]$ in $B(G)$ (which implies $[b]\in B(G,\mu_p^{-1})$).
\item $[b]$ is the unique minimal element in $B(G,\mu_p^{-1})$ for the partial order $\preceq$ therein.
\end{itemize}
In this case, we may and will take $b=b^\circ=\mu_p^{-1}(p)$. Indeed, we can change $b$ within its $\sigma$-conjugacy class thanks to Proposition \ref{prop:Ig_b-Ig_b_0}. Put $r:=[k(\fkp):\F_p]$. By the convention of \S\ref{sub:central-leaves}, $\mu_p$ is defined over $\Q_{p^r}$. Then we have $\nu_b=\frac{1}{r}\sum_{i=0}^{r-1}\sigma^i\mu_p^{-1}$ (this follows from (4.3.1)--(4.3.3) of \cite{KottwitzIsocrystal1} with $n=r$ and $c=1$), which is defined over $\Q_p$, and conditions (br2) and (br3) are satisfied.

We define the $\mu$-ordinary Newton stratum $N_{b,K^p}$ as in \cite{Wortmann}, that is, by changing the definition of $C_{b,K^p}$ (\S\ref{sub:central-leaves}) to require the existence of an isomorphism only after inverting $p$. Then $C_{b,K^p}\subset N_{b,K^p}$ is closed by \cite[\S2.3, Prop.~2]{Ham-almostproduct}. It is worth verifying that $C_{b,K^p}= N_{b,K^p}$, so that $\mathfrak{Ig}_b$ is a pro-\'etale torsor over $N_{b,K^p}^{\tu{perf}}$ (not just $C_{b,K^p}^{\tu{perf}}$).

\begin{lemma}\label{lem:central-leaf=Newton}
In the $\mu$-ordinary setup above, $C_{b,K^p}=N_{b,K^p}$.
\end{lemma}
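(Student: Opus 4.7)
The plan is to prove the equality set-theoretically on geometric points; since $C_{b,K^p}\subset N_{b,K^p}$ is a closed immersion with $C_{b,K^p}$ already carrying its reduced induced structure, and $N_{b,K^p}$ is reduced as well (cut out inside the smooth scheme $\mathcal S_{K_pK^p,k(\fkp)}$ as a locally closed subset with reduced induced structure), equality at the level of points will give equality of schemes. Thus the task reduces to showing that every $x\in N_{b,K^p}(\Fpbar)$ satisfies $x\in C_{b,K^p}(\Fpbar)$.

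The key rigidity input is the following fact, which is classical in the $\mu$-ordinary case: the isogeny class of $\Sigma_b$ with $G$-structure over $\Fpbar$ contains a unique isomorphism class. In the PEL setting this is Moonen's theorem, and it was extended to the Hodge-type setting by Shen--Zhang and Hamacher (see e.g.~\cite{Ham19}). Granted this, given any $x \in N_{b,K^p}(\Fpbar)$, by definition there exists a quasi-isogeny $\Sigma_b\times_{\Fpbar}\Fpbar \to \mathcal A_x[p^\infty]$ matching $(s_\alpha)$ with $(\mathbf s_{\alpha,0,x})$; the rigidity statement upgrades this quasi-isogeny to an honest isomorphism of crystals, placing $x\in C_{b,K^p}(\Fpbar)$.

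As an alternative (or as a sanity check) one can invoke the almost product structure of $N_{b,K^p}$ due to Caraiani--Scholze and Hamacher, together with Hamacher's dimension formula $\dim N_{b,K^p}=\langle 2\rho,\mu_p\rangle - \tfrac12 \mathrm{def}_G(b)$. For $b = \mu_p^{-1}(p)$ (the $\mu$-ordinary case), the defect $\mathrm{def}_G(b)$ can be computed to give $\dim N_{b,K^p} = \langle 2\rho,\nu_b\rangle = \dim C_{b,K^p}$ by Proposition~\ref{prop:smoothness-and-dim-of-leaves}. Equivalently, the affine Deligne--Lusztig variety $X_{\mu_p^{-1}}(b)$ at the $\mu$-ordinary point is zero-dimensional, so that the almost product structure presents $N_{b,K^p}^{\tu{perf}}$ essentially as a quotient of $C_{b,K^p}^{\tu{perf}}\times X_{\mu_p^{-1}}(b)$ by a $J_b(\Q_p)$-action, and after quotienting by $J_b(\Q_p)$ the Rapoport--Zink factor collapses to a point, forcing $C_{b,K^p}^{\tu{perf}}\twoheadrightarrow N_{b,K^p}^{\tu{perf}}$ to be a homeomorphism.

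The main obstacle is locating a clean reference for the rigidity statement in the Hodge-type generality (for PEL it is Moonen \cite{MoonenSerreTate}), or equivalently for the triviality of $X_{\mu_p^{-1}}(b)$ with $G$-structure for $\mu$-ordinary $b$. Once such a reference is in hand the argument is short; otherwise one proves it directly by using that $b=\mu_p^{-1}(p)$ is basic in each Levi block of a slope filtration of $\Sigma_b$, so that the automorphism group $J_b^{\tu{int}}$ acts transitively on lifts of a given isomorphism modulo $p^n$, which propagates to transitivity on isomorphism classes in the isogeny class.
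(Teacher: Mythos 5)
Your argument is correct in outline but follows a genuinely different route from the paper. The paper's proof is a two-step citation through the Ekedahl--Oort stratification: by \cite[Thm.~6.10]{Wortmann} the $\mu$-ordinary Newton stratum is a single EO stratum, and by \cite[Thm.~D]{ShenZhang} every Newton stratum contains an EO stratum that is a central leaf; together these force $N_{b,K^p}=C_{b,K^p}$. You instead argue pointwise via the rigidity of the $\mu$-ordinary $p$-divisible group with $G$-structure (a single isomorphism class in the isogeny class), which is the Moonen-style argument in the PEL case and is equivalent to the transitivity of $J_b(\Q_p)$ on $X_{\sigma\mu_p^{-1}}(b)$. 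Your reduction to geometric points is fine, since both subschemes carry the reduced induced structure and $C_{b,K^p}$ is closed in $N_{b,K^p}$. The one thin spot is exactly the one you flag: in Hodge type the rigidity statement must be compatible with the crystalline tensors, and your fallback sketch (transitivity of $J_b^{\tu{int}}$ on mod-$p^n$ lifts along the slope filtration) does not obviously account for them --- the slope decomposition alone does not see the tensors. The clean way to close this is either to quote the group-theoretic fact that $X_{\sigma\mu_p^{-1}}(b)$ is a single $J_b(\Q_p)$-orbit for $\mu$-ordinary $b$ (purely a statement about unramified groups, independent of the Shimura variety), or to fall back on the EO route, which is what the paper does. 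Your dimension-count ``sanity check'' via Proposition~\ref{prop:smoothness-and-dim-of-leaves} is consistent but not by itself a proof: equality of dimensions of a closed reduced subscheme and its ambient smooth scheme only gives equality on the components that $C_{b,K^p}$ actually meets, so one still needs the transitivity/rigidity input to see that no component of $N_{b,K^p}$ is missed.
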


\begin{proof}
This is a consequence of two facts: that the $\mu$-ordinary Newton stratum is an Ekedahl-Oort stratum \cite[Thm.~6.10]{Wortmann}, and that every Newton stratum contains an Ekedahl-Oort stratum that is a central leaf \cite[Thm.~D]{ShenZhang}. (We thank Pol van Hoften for communicating this proof to us.)
\end{proof}

We explain that Corollary \ref{cor:irreducibility} gives another proof for the irreducibility of Igusa towers in the $\mu$-ordinary case, for unitary similitude PEL-type Shimura varieties as in \cite{CEFMV,EischenMantovan}, cf.~\cite[\S2,\S3]{HidaIrreducibility2}. Analogous arguments can be made in the elliptic/Hilbert/Siegel modular cases.

Write $\big(\Ig^{\mu\tu{-ord}}_{m,K^p}\big)_{m\ge 1}$ for the Igusa tower $(\Ig_\mu)_{m,1}$, $m\ge 1$, over the $\mu$-ordinary stratum $N_{b,K^p}$ in \cite[\S3.2]{EischenMantovan} (relative to the same $K^p$) with finite \'etale transition maps. The scheme $\Ig^{\mu\tu{-ord}}_{K^p}=\varprojlim_m \Ig^{\mu\tu{-ord}}_{m,K^p}$ is a pro-\'etale $J_b^{\tu{int}}$-torsor over $N_{b,K^p}$. Then $\mathfrak{Ig}_{b,K^p} \simeq (\Ig^{\mu\tu{-ord}}_{K^p})^{\tu{perf}}$ compatibly with the actions of $G(\A^{\infty,p})\times S_b$ (see Prop.~4.3.8 and the paragraph above Cor.~4.3.9 in \cite{CS17}; see also \cite[Rem.~2.3.7]{CS19}), and we have a $J_b(\Q_p)$-equivariant bijection
$$
\pi_0(\mathfrak{Ig}_{b,K^p}) \simeq \pi_0((\Ig^{\mu\tu{-ord}}_{K^p})^{\tu{perf}}) \simeq  \pi_0(\Ig^{\mu\tu{-ord}}_{K^p}).
$$
Therefore each connected component of $\Ig^{\mu\tu{-ord}}_{K^p}$ has stabilizer $J_b(\Q_p)'$ in $J_b(\Q_p)$.

The $\Z_p$-group $J_\mu$ in \cite[Rem.~2.9.3]{EischenMantovan} has the property that $J_b^{\tu{int}}=J_\mu(\Z_p)$. Let $I\subset \Ig^{\mu\tu{-ord}}_{K^p}$ denote the open subscheme $\Ig^{SU}_\mu$ over a fixed component $S$ of $N_{b,K^p}$ as defined in \cite[\S3.3]{EischenMantovan} (more precisely, we mean the special fiber of $\Ig^{SU}_\mu$ over $\Fpbar$). Then $I$ is a pro-\'etale $J_b^{\tu{int}}\cap J(\Q_p)'$-torsor over $S$ by construction. (The determinant map of \cite[\S3.3]{EischenMantovan} goes from a $J_b^{\tu{int}}$-torsor to a torsor under $\cG^{\ab}(\Z_p)$, so the fiber is a torsor under $\ker(J_b^{\tu{int}}\ra \cG^{\ab}(\Z_p))$.)\footnote{In fact we have not understood the definition of the determinant map in \cite[\S3.3]{EischenMantovan} unless $B$ is a field, so we should restrict our comparison with \emph{loc.~cit.}~to this setup.}
Hence $I$ is irreducible by the preceding paragraph, cf.~the proof of Corollary \ref{cor:irreducibility}.

If $[b]$ is moreover \textbf{ordinary}, namely if $[b]$ is $\mu$-ordinary and $\nu_b$ is conjugate to $\mu_p^{-1}$, then $\mu_p$ is defined over $\Q_p$ (since the conjugacy class of $\nu_b$ is always defined over $\Q_p$). Also $r=[E_{\fkp}:\Q_p]=1$. By our choice $b=\mu_p(p)^{-1}$, we have $\nu_b=\mu_p^{-1}$ (not just conjugate) in this case. The following lemma is handy when comparing with results in the ordinary case such as \cite{HidaIrreducibility1,HidaIrreducibility2}. Note that trivially $\varrho(G_{\tu{sc}}(\Q_p))=G_{\der}(\Q_p)$ when $G_{\der}=G_{\tu{sc}}$.

\begin{lemma}
If $\mu$ is ordinary, then $J_b=M_b$, $J_b^{\tu{int}}=M_b(\Z_p)$, and $J_b(\Q_p)'=M_b(\Q_p)\cap \varrho(G_{\tu{sc}}(\Q_p))$.
\end{lemma}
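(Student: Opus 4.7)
The plan is to reduce all three equalities to the single observation that in the ordinary case $b$ lies in the center of $M_b(\Q_p)$. Indeed, with $r=1$ and the choice $b=\mu_p(p)^{-1}$, the cocharacter $\mu_p$ factors through $Z(M_b)$ (since $M_b$ is the centralizer of $\nu_b=\mu_p^{-1}$) and is defined over $\Q_p$, so $b\in Z(M_b)(\Q_p)$.

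First I would prove $J_b=M_b$ by appealing to the inner twist \eqref{eq:Jb-Mb-isom}: since $r=1$ this is already an isomorphism over $\Q_p$, and it is classified by the cocycle $\sigma\mapsto\operatorname{Int}(b)$ on $M_b$. The centrality of $b$ forces this cocycle to be trivial, so the twist splits and $J_b=M_b$ as $\Q_p$-group schemes. (Alternatively, one checks on functor of points: for a $\Q_p$-algebra $R$, using $J_b\subset M_b$ via \cite[Cor.~1.14]{RapoportZink}, one has $J_b(R)=\{g\in M_b(R\otimes_{\Q_p}\breve\Q_p):g^{-1}b\sigma(g)=b\}=M_b(R\otimes\breve\Q_p)^\sigma=M_b(R)$.) For the second assertion I would use that the hyperspecial model $\cG/\Z_p$ and the $\Z_p$-rationality of $\mu_p$ provide a Levi subgroup scheme $\cM_b\subset\cG$ over $\Z_p$ with generic fiber $M_b$ and with $\cM_b(\Z_p)=M_b(\Q_p)\cap \cG(\Z_p)$. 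Combining with $\cG(\breve\Z_p)^\sigma=\cG(\Z_p)$ (Galois descent for the smooth affine $\Z_p$-scheme $\cG$) gives
\[
J_b^{\tu{int}}=J_b(\Q_p)\cap \cG(\breve\Z_p)=M_b(\Q_p)\cap \cG(\Z_p)=\cM_b(\Z_p)=M_b(\Z_p).
\]

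For the third assertion I would observe that $G_{\Q_p}$ is quasi-split (it is unramified), so $G_{\tu{sc}}$ is simply connected quasi-split and in particular has no $\Q_p$-anisotropic factor. Corollary \ref{cor:1-dim reps} and Lemma \ref{lem:Gab-Gflat} then yield $G(\Q_p)^{\ab}=G(\Q_p)^{\flat}=G(\Q_p)/\varrho(G_{\tu{sc}}(\Q_p))$. Since $J_b=M_b$, the canonical isomorphism $J_b(\Q_p)^{\ab}\simeq M_b(\Q_p)^{\ab}$ of Lemma \ref{lem:Gab=G*ab} is the identity, and tracing through \eqref{eq:JbMbG} identifies $\zeta_b$ with the composition of the inclusion $M_b(\Q_p)\hookrightarrow G(\Q_p)$ and the projection $G(\Q_p)\twoheadrightarrow G(\Q_p)/\varrho(G_{\tu{sc}}(\Q_p))$. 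Its kernel is evidently $M_b(\Q_p)\cap\varrho(G_{\tu{sc}}(\Q_p))$.

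The only substantive step is $J_b=M_b$, and even that reduces to a one-line cocycle calculation once one has observed $b\in Z(M_b)(\Q_p)$; the remaining assertions are then essentially bookkeeping with integral and abelianization structures, so I do not anticipate a serious obstacle.
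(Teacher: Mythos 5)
Your proposal is correct and follows essentially the same approach as the paper: the crucial observation in both is that with $b=\mu_p^{-1}(p)$ one has $b\in Z(M_b)(\Q_p)$, which trivializes the inner twist. The paper presents the first step slightly differently — it observes directly from the defining equation \eqref{eq:def-Jb} that $M_b$ is a closed $\Q_p$-subgroup of $J_b$ and then concludes $M_b = J_b$ because $M_b$ is an inner form of $J_b$, rather than running the cocycle/functor-of-points computation you give — but this is a cosmetic variation, and the treatments of $J_b^{\tu{int}}$ and $J_b(\Q_p)'$ match the paper's.
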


\begin{proof}
By definition, $M_b$ is the centralizer of $\nu_b=\mu_p^{-1}$ in $G$. From the definition \eqref{eq:def-Jb} with $b=\mu^{-1}_p(p)$, we see that $M_b$ is a closed $\Q_p$-subgroup of $J_b$. On the other hand, $M_b$ is an inner form of $J_b$, so we conclude $M_b=J_b$. Then $J_b^{\tu{int}}=J_b(\Q_p)\cap \cG(\breve\Z_p)=M_b(\Q_p)\cap  \cG(\breve\Z_p) = M_b(\Z_p)$. The description of $J_b(\Q_p)'$ is obvious from $J_b=M_b$.
\end{proof}

\subsection{The discrete Hecke orbit conjecture}
\label{sub:main-to-discHO}

We state the Hecke orbit conjecture for Shimura varieties of Hodge type with hyperspecial level at $p$. We prove the discrete part of the Hecke orbit conjecture, and find purely local criteria for the irreduciblity of central leaves.

Fix $(G,X,p,\cG)\in \mathcal{SD}^{\tu{ur}}_{\tu{Hodge}}$. Let $x\in \mS_{K^pK_p,k(\fkp)}(\Fpbar)$. Denote by $\tilde x\subset |\mS_{K_p,k(\fkp)}|$ the preimage of $x$ in the topological space $|\mS_{K_p,k(\fkp)}|$ via the projection map $\mS_{K_p,k(\fkp)}\ra \mS_{K^p K_p,k(\fkp)}$. Define the prime-to-$p$ \textbf{Hecke orbit} as a set:
$$
H(x):=\tilde x\cdot G(\A^{\infty,p})~\subset~ |\mS_{K_p,k(\fkp)}|.
$$
Write $H_{K^p}(x)$ for the image of $H(x)$ in $|\mS_{K^p K_p,k(\fkp)}|$. By $C_{K^p}(x)$ we mean the central leaf through~$x$, namely $C_{b_x,K^p}$. Since the action of $G(\A^{\infty,p})$ does not change the $(G(\breve\Z_p),\sigma)$-conjugacy class $[[b_x]]$, we see that
$$
H_{K^p}(x)\subset |C_{K^p}(x)|.
$$
Following Chai and Oort, cf.~\cite{Chai-HO,ChaiICM}, we formulate the Hecke Orbit Problem as follows.

\begin{question}[Hecke Orbit Problem]\label{conj:HO}
Let $x\in \mS_{K^pK_p,k(\fkp)}(\Fpbar)$ such that $[b_x]$ is $\Q$-non-basic.
Does the subset $H_{K^p}(x)$ have the following properties?
\begin{itemize}
\item[\textup{(HO)}] $H_{K^p}(x)$ is Zariski dense in the central leaf $C_{K^p}(x)$.
\item[\textup{(HO$_{\textup{cont}}$)}] The Zariski closure of $H_{K^p}(x)$ in $C_{K^p}(x)$ is a union of irreducible components of $C_{K^p}(x)$.
\item[\textup{(HO$_{\textup{disc}}$)}] $H_{K^p}(x)$ meets every irreducible component of $C_{K^p}(x)$.
\item[\textup{(HO$^+_{\textup{disc}}$)}]
 For every $x\in \mS_{K^pK_p}(\ol{k(\fkp)})$ such that $[b_x]$ is $\Q$-non-basic, the immersion $C_{K^p}(x)\hra \mS_{K^pK_p,k(\fkp)}$ induces a bijection
 $\pi_0(C_{K^p}(x)) \isom \pi_0(\mS_{K_p K^p,\ol{k(\fkp)}}).$
\end{itemize}
\end{question}

\begin{remark}
The hypothesis on $[b_x]$ cannot be weakened to only requiring that $[b_x]$ be non-basic. For example, for Shimura varieties arising from $(G\times\cdots\times G,X\times \cdots \times X)$, with $(G,X)$ a Shimura datum, we see the necessity to assume $[b_x]$ to be basic in every copy of $G$ (which is a $\Q$-factor).
\end{remark}

Note that (HO) is the analogue of the Hecke orbit conjecture for Hodge-type Shimura varieties, which is divided into discrete and continuous parts in the sense that (HO$_{\textup{disc}}$) and (HO$_{\textup{cont}}$) combined is obviously equivalent to (HO). 
We usually refer to \textup{(HO$^+_{\textup{disc}}$)} as ``irreducibility of central leaves'', as it states that the central leaf through $x$ is irreducible in each connected component of the ambient Shimura variety. 
Regarding (HO$_{\textup{disc}}$) and (HO$^+_{\textup{disc}}$), we have the following relationship and representation-theoretic interpretations. 

\begin{lemma}\label{lem:HO-disc}
Let $b$ be $\Q$-non-basic with $b\in \cG(\Z_{p}^{\tu{ur}})\sigma\mu_p(p)^{-1} \cG(\Z_{p}^{\tu{ur}})$. Between the following statements, there are logical implications (1) $\Leftrightarrow$ (2) $\Rightarrow$ (3) $\Leftrightarrow$ (4).
\begin{enumerate}
\item $\textup{(HO}^+_{\textup{disc}})$ holds true for all neat $K^p\subset G(\A^{\infty,p})$ and all $x\in C_{b,K^p}(\Fpbar)$.
\item $H^0 (C_{b},\lql)\simeq H^0(\Sh_{K_p},\lql)$ as $G(\A^{\infty,p})$-modules. (This asserts the existence of an isomorphism, which need not be induced by the natural map $C_b\ra \Sh_{K_p}$.)
\item $\textup{(HO}_{\textup{disc}})$ holds true for all neat $K^p\subset G(\A^{\infty,p})$ and all $x\in C_{b,K^p}(\Fpbar)$.
\item $\dim_{G(\A^{\infty,p})} (\mathbf{1},H^0 (C_{b},\lql))=1$.
\end{enumerate}
\end{lemma}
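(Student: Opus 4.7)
The plan is to translate each of the four conditions into a statement about the natural $G(\A^{\infty,p})$-equivariant map of component sets
$$\pi_0(C_b) \to \pi_0(\mS_{K_p,\ol{k(\fkp)}}) = \pi_0(\Sh_{K_p,\ol{E}_\fkp}),$$
using the identification on the right recalled in \S\ref{sub:integral-canonical-models}. Throughout I use the fact that, for a quasi-projective scheme $Y$ over an algebraically closed field, $H^0(Y,\lql)$ is the space of smooth $\lql$-valued functions on $\pi_0(Y)$, so that bijectivity on $\pi_0$ corresponds to an isomorphism on the pullback of $H^0$, and that at infinite level $\pi_0(C_b) = \varprojlim_{K^p} \pi_0(C_{b,K^p})$ and analogously for $\mS_{K_p}$.

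For (1) $\Leftrightarrow$ (2): passing to the limit over neat $K^p$, statement (1) is equivalent to the natural map $\pi_0(C_{b,K^p}) \to \pi_0(\mS_{K_p K^p,\ol{k(\fkp)}})$ being bijective for every neat $K^p$, which in turn is equivalent to the natural pullback $H^0(\Sh_{K_p},\lql) \to H^0(C_b,\lql)$ being an isomorphism of $G(\A^{\infty,p})$-modules. This immediately gives (2). For the converse, I observe that this pullback is always injective because the map $\pi_0(C_b)\to\pi_0(\mS_{K_p,\ol{k(\fkp)}})$ is always surjective: its image is nonempty and $G(\A^{\infty,p})$-stable (as $C_b\subset \mS_{K_p}$ is $G(\A^{\infty,p})$-stable), while $G(\A^{\infty,p})$ acts transitively on the target by Lemma \ref{lem:transitive-on-Sh}. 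Hence the natural injection of smooth admissible $G(\A^{\infty,p})$-modules becomes an isomorphism as soon as an abstract isomorphism of $G(\A^{\infty,p})$-modules is provided by (2), by comparing finite-dimensional $K^p$-invariants level by level.

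To finish, note that (2) $\Rightarrow$ (4) is immediate from Lemma \ref{lem:H_c(Sh)-top}, which produces $\mathbf 1$ in $H^0(\Sh_{K_p},\lql)$ with multiplicity one (the summand $\pi=\mathbf 1$); that summand is then transported to $H^0(C_b,\lql)$ by (2). The equivalence (3) $\Leftrightarrow$ (4) follows from identifying $\Hom_{G(\A^{\infty,p})}(\mathbf 1,H^0(C_b,\lql))$ with the space of smooth $G(\A^{\infty,p})$-invariant $\lql$-valued functions on $\pi_0(C_b)$, which is one-dimensional precisely when $G(\A^{\infty,p})$ acts with a single orbit on $\pi_0(C_b)$; since $\pi_0(C_{b,K^p})$ is the $K^p$-orbit space of $\pi_0(C_b)$ compatibly with the Hecke actions, transitivity at infinite level is equivalent to (HO$_{\textup{disc}}$) holding at every neat $K^p$. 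The only mildly delicate point I anticipate is the bookkeeping between the $G(\A^{\infty,p})$-action on the tower $\{C_{b,K^p}\}$ and the finite-level Hecke isomorphisms $\pi_0(C_{b,K^p}) \isom \pi_0(C_{b,g^{-1}K^p g})$, but this is routine given the definitions in \S\ref{sub:central-leaves}.
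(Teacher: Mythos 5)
Your argument follows the paper's approach closely: both work with the $G(\A^{\infty,p})$-equivariant map $\pi_0(C_b)\to\pi_0(\mS_{K_p,\ol{k(\fkp)}})$, obtain its surjectivity from transitivity on the target (Lemma \ref{lem:transitive-on-Sh}), translate (1) into bijectivity of that map, and use finite-dimensionality of invariants at each level $K^p$ to upgrade the abstract isomorphism in (2) to the natural one, yielding (2) $\Rightarrow$ (1). The formal identification behind (3) $\Leftrightarrow$ (4) also matches.

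The one place you should tighten up is (2) $\Rightarrow$ (4). You write that Lemma \ref{lem:H_c(Sh)-top} ``produces $\mathbf{1}$ in $H^0(\Sh_{K_p},\lql)$ with multiplicity one (the summand $\pi=\mathbf{1}$).'' Lemma \ref{lem:H_c(Sh)-top} only tells you that the summand indexed by $\pi=\mathbf{1}$ contributes a copy of $\mathbf{1}$; it does not by itself rule out another one-dimensional $\pi$ in the sum with $\pi^{\infty,p}=\mathbf{1}$ but $\pi_p\otimes\pi_\infty\neq\mathbf{1}$. To conclude multiplicity exactly one you need, in addition, that a one-dimensional automorphic $\pi$ with $\pi^{\infty,p}=\mathbf{1}$ is trivial; this follows from the density of $G(\Q)$ in $G(\Q_p)\times G(\R)$ (weak approximation, already invoked in the proof of Lemma \ref{lem:transitive-on-Sh}), or equivalently, and more directly, from the fact that $G(\A^{\infty,p})$ acts transitively on $\pi_0(\Sh_{K_p,\ol{E}_\fkp})$, so that the space of $G(\A^{\infty,p})$-invariant smooth functions on $\pi_0(\Sh_{K_p,\ol{E}_\fkp})$ is one-dimensional. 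The paper sidesteps this point entirely by deducing (3) straight from (1): a bijection on $\pi_0$'s transports the transitivity on the target to $\pi_0(C_b)$, and then (3) $\Leftrightarrow$ (4) finishes. Your route is fine once you supply the missing sentence.
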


\begin{proof}
As $K^p$ varies, the immersion $C_{b,K^p}\hra \mS_{K^pK_p,k(\fkp)}$ induces a $G(\A^{\infty,p})$-equivariant map
\begin{equation}\label{eq:pi0-C-S}
\pi_0(C_{b}) \ra \pi_0(\mS_{K_p,\ol{k(\fkp)}}),
\end{equation}
which is surjective since $G(\A^{\infty,p})$ acts transitively on $\pi_0(\mS_{K_p,\ol{k(\fkp)}})$ by Lemma \ref{lem:transitive-on-Sh}. 
Condition (1) is equivalent to the condition that the above map is an isomorphism, and (3) is equivalent to the condition that $G(\A^{\infty,p})$ acts transitively on $\pi_0(C_{b})$. From this, it is clear that (1) $\Rightarrow$ (2) and that (1) $\Rightarrow$ (3) $\Leftrightarrow$ (4).

Now suppose that (2) holds. Then the $G(\A^{\infty,p})$-equivariant injection $H^0(\Sh_{K_p},\lql)\hra H^0 (C_{b},\lql)$ \emph{induced by} \eqref{eq:pi0-C-S} must be an isomorphism by (2), since each $\pi^{\infty,p}$ appears with finite multiplicity in $\iota H^0(\Sh_{K_p},\lql)$ (Lemma \ref{lem:H_c(Sh)-top}). Hence \eqref{eq:pi0-C-S} is a bijection, and (1) follows.
\end{proof}

Now we allow $b\in G(\breve \Q_p)$ which need not be $\Q$-non-basic for the moment.
The map $\zeta_b: J_b(\Q_p)\ra G(\Q_p)^{\ab}$ from \eqref{eq:JbG} is an open map as it is the composite of open maps. Write $\cG^{\ab}$ for the abelianization of $\cG$ as an algebraic group over $\Z_p$. Then $\cG^{\ab}$ is a torus over $\Z_p$, and $\cG^{\ab}(\Z_p)$ is a unique maximal subgroup of $G^{\ab}(\Q_p)$. 
On the level of points, denote by $\cG(\Z_p)^{\ab}$ the image of $\cG(\Z_p)$ under the projection $G(\Q_p) \ra G(\Q_p)^{\ab}$. 
When $G_{\tu{der}}=G_{\tu{sc}}$, then $\cG(\Z_p)^{\ab}=\cG^{\ab}(\Z_p)$ as subgroups of $G(\Q_p)^{\ab}=G^{\ab}(\Q_p)$.

We define the affine Deligne--Lusztig set
$$X_\upsilon(b):=\{ g\in G(\breve\Q_p)/\cG(\breve\Z_p): g^{-1} b \sigma(g)\in \cG(\breve\Z_p)\upsilon(p) \cG(\breve\Z_p)\},$$
equipped with the left muliplication action by $J_b(\Q_p)$. In fact $X_\upsilon(b)$ is the set of closed points of a perfect variety over $\Fpbar$ \cite{ZhuGr,BhattScholzeGr}.

\begin{lemma}\label{lem:image-of-Jb-int}
The subgroup $\zeta_b(J_b^{\tu{int}})\subset G(\Q_p)^{\ab}$ is open, compact, and contained in $\cG(\Z_p)^{\ab}$.  Furthermore, there exists $b_0\in \cG(\Z_{p}^{\tu{ur}})\sigma\mu_p(p)^{-1} \cG(\Z_{p}^{\tu{ur}})$ which is $\sigma$-conjugate to $b$ in $G(\breve \Q_p)$ such that $\zeta_{b_0}(J_{b_0}^{\tu{int}})=\cG(\Z_p)^{\ab}$.
\end{lemma}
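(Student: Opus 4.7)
My plan is to treat the first three properties as essentially formal, then isolate the construction of $b_0$ as the substantive step.

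For openness and compactness of $\zeta_b(J_b^{\tu{int}})$, I would argue purely topologically. The image of $\zeta_b$ is open in $G(\Q_p)^{\ab}$ (this was noted in the paragraph preceding the lemma statement), and $\zeta_b$ is a continuous homomorphism of locally compact $\sigma$-compact groups, so by the open mapping theorem it is open onto its image. Applied to the open subgroup $J_b^{\tu{int}} \subset J_b(\Q_p)$, the image $\zeta_b(J_b^{\tu{int}})$ is open in $G(\Q_p)^{\ab}$. Compactness is immediate from compactness of $J_b^{\tu{int}}$.

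For $\zeta_b(J_b^{\tu{int}}) \subset \cG(\Z_p)^{\ab}$, the plan is to first reduce to the case $G_{\der}=G_{\tu{sc}}$ via an unramified $z$-extension $G_1 \twoheadrightarrow G$ (compatibly on integral models); in that case $\cG(\Z_p)^{\ab}=\cG^{\ab}(\Z_p)$ by the convention of \S\ref{sub:Notation}. Then I would identify $\zeta_b$ with the map $J_b(\Q_p)\hookrightarrow G(\breve\Q_p)\to G^{\ab}(\breve\Q_p)$; this identification uses Lemma \ref{lem:Gab=G*ab} together with the fact that the inner twist $(J_b)_{\breve\Q_p}\simeq (M_b)_{\breve\Q_p}$ is inner and therefore trivial on the abelianization. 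For $g \in J_b^{\tu{int}}$, applying the abelianization to $g^{-1}b\sigma(g)=b$ forces $\sigma(\bar g)=\bar g$ in $G^{\ab}(\breve\Q_p)$, and the assumption $g \in \cG(\breve\Z_p)$ places $\bar g$ in $\cG^{\ab}(\breve\Z_p)$. Since $\cG^{\ab}$ is a torus over $\Z_p$, Lang's theorem gives $\cG^{\ab}(\breve\Z_p)^{\sigma}=\cG^{\ab}(\Z_p)$, so $\bar g \in \cG^{\ab}(\Z_p)=\cG(\Z_p)^{\ab}$ as required.

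For the second assertion, I would exploit the freedom to change $b$ within its $\sigma$-conjugacy class and invoke \cite[Lem.~5.3]{HamacherKim} together with \cite[Lem.~2.4.3]{KimLeaves} (as in the proof of Corollary \ref{cor:red-to-completely-slope-div}) to pick $b_0 \in \cG(\breve\Z_p)\sigma\mu_p^{-1}(p)\cG(\breve\Z_p)$ which is $\sigma$-conjugate to $b$, completely slope divisible, with $\nu_{b_0}$ defined over $\Q_p$ and $B$-dominant. For this choice $b_0$ lies in $M_{b_0}(\breve\Q_p)$, the decency equation holds, and $M_{b_0}$ inherits a reductive integral model $\cM_{b_0}$ over $\Z_p$ as a Levi subgroup of $\cG$. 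The target equality $\zeta_{b_0}(J_{b_0}^{\tu{int}}) = \cG(\Z_p)^{\ab}$ would then be established by constructing an explicit section: for each $h \in \cG(\Z_p)$, I would produce $\tilde h \in J_{b_0}^{\tu{int}}$ with $\zeta_{b_0}(\tilde h)$ equal to the image of $h$ in $\cG(\Z_p)^{\ab}$. The candidate $\tilde h$ arises from $h$ by an application of Lang's theorem inside $\cM_{b_0}$, using that $b_0\nu_{b_0}(p)^{-1} \in \cM_{b_0}(\breve\Z_p)$ is a $\sigma$-decent unit twisted by a central cocharacter and therefore admits a pro-smooth fiber of $\sigma$-conjugates over each $h$.

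The main obstacle will be making the last step precise: the inner twisting $(J_{b_0})_{\breve\Q_p}\simeq (M_{b_0})_{\breve\Q_p}$ is realized by conjugation by an element $h_0 \in G(\breve\Q_p)$ that is not necessarily in $\cG(\breve\Z_p)$, and for the inverse direction of the set-theoretic equality I need $h_0 \in \cG(\breve\Z_p)$ so that the twist restricts to the integral models. Ensuring this is the reason for choosing $b_0$ to be a specific fundamental alcove representative in the sense of Chai--Oort and Hamacher--Kim, rather than an arbitrary $\sigma$-conjugate; the completely slope divisible normalization plus dominance of $\nu_{b_0}$ should suffice. A secondary issue is that when $G_{\der}\neq G_{\tu{sc}}$ the subgroup $\cG(\Z_p)^{\ab}$ is strictly smaller than $\cG^{\ab}(\Z_p)$, so I cannot simply appeal to the torus-abelianization argument; instead I expect to prove the equality in the form $\zeta_{b_0}(J_{b_0}^{\tu{int}}) = \tu{image}(\cG(\Z_p) \to G(\Q_p)^{\ab})$ directly by construction of the section.
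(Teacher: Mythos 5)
Your treatment of the first assertion (openness, compactness, and containment of $\zeta_b(J_b^{\tu{int}})$ in $\cG(\Z_p)^{\ab}$) is correct and in fact more detailed than the paper's one-line argument; the reduction to $G_{\der}=G_{\tu{sc}}$, the identification of $\zeta_b$ with the map induced by $G(\breve\Q_p)\to G^{\ab}(\breve\Q_p)$, and Lang's theorem on the torus $\cG^{\ab}$ are all sound.

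The second assertion is where the content lies, and there your mechanism has a genuine gap. You propose to produce, for each $h\in\cG(\Z_p)$, an element of $J_{b_0}^{\tu{int}}$ with prescribed image by ``Lang's theorem inside $\mathcal{M}_{b_0}$, using that $b_0\nu_{b_0}(p)^{-1}\in\mathcal{M}_{b_0}(\breve\Z_p)$.'' First, $\nu_{b_0}$ is only a fractional cocharacter, so $\nu_{b_0}(p)$ is undefined; but the deeper problem is that even for decent $b_0$ (so that $J_{b_0}(\Q_p)\subset M_{b_0}(\breve\Q_p)$), the element $b_0$ is not integral modulo the center of $M_{b_0}$ whenever some isoclinic block has non-integral slope. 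Then $\mathrm{Ad}(b_0)\circ\sigma$ does not preserve $\mathcal{M}_{b_0}(\breve\Z_p)$, there is no Frobenius structure on the reductive model to which Lang's theorem applies, and $J_{b_0}^{\tu{int}}$ is in general only a parahoric-type subgroup of $J_{b_0}(\Q_p)$ rather than the hyperspecial of an inner form (a slope-$1/2$ $\GL_2$-block already gives $\cO_D^\times$ for the quaternion algebra $D$). Complete slope divisibility does not repair this: isoclinic $p$-divisible groups are all completely slope divisible yet have varying automorphism groups, and only the minimal one (in Oort's sense) has the maximal one. So your choice of $b_0$ is not specific enough, and this is exactly the delicate point of the lemma --- the image $\zeta_{b_0}(J_{b_0}^{\tu{int}})$ genuinely depends on the representative within the $\sigma$-conjugacy class (see the remark following the lemma and Example \ref{ex:HO+false}, which record that a careless identification of these images led to a false claim in an earlier version of the paper).

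The paper's route is different and uses a nontrivial external input. It first invokes the proof of \cite[Prop.~3.1.4]{ZhouZhuADLV} (resting on He's results on affine Deligne--Lusztig varieties) to choose $b_0$ so that $J_{b_0}^{\tu{int}}$ contains an Iwahori subgroup of $J_{b_0}(\Q_p)$; since all Iwahoris are conjugate, it then suffices to show $\zeta_{b_0}(\mathrm{Iw})=\cG(\Z_p)^{\ab}$ for one of them. After reducing to $G_{\der}=G_{\tu{sc}}$, it takes an unramified elliptic maximal torus $T_b\subset J_b$ (via DeBacker), whose $\Z_p$-model $\mathcal{T}_b$ lies in some Iwahori, and proves $\mathcal{T}_b(\Z_p)\to\cG^{\ab}(\Z_p)$ surjective by Lang's theorem for tori after checking the kernel is connected. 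To salvage your approach you would need to replace $\mathcal{M}_{b_0}(\breve\Z_p)$ by the correct parahoric integral structure preserved by $\mathrm{Ad}(b_0)\sigma$, and you would still need an input of the strength of He / Zhou--Zhu to guarantee that this parahoric of $J_{b_0}(\Q_p)$ actually sits inside $J_{b_0}^{\tu{int}}$ for a suitable $b_0$.
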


\begin{proof}
Since $\zeta_b$ is an open map, it carries the open subgroup $J_b^{\tu{int}}$ of $J_b(\Q_p)$ onto an open subgroup of $G(\Q_p)^{\ab}$. Since $\zeta_b(J_b^{\tu{int}})$ is contained in both $G(\Q_p)^{\ab}$ and the image of $\cG(\breve \Z_p)$ under the abelianization map, it is  contained in $\cG(\Z_p)^{\ab}$. This proves the first assertion. 

As for the second assertion, we start by claiming that there exists $b_0\in \cG(\Z_{p}^{\tu{ur}})\sigma\mu_p(p)^{-1} \cG(\Z_{p}^{\tu{ur}})$ which is $\sigma$-conjugate to $b$ such that $J_{b_0}^{\tu{int}}$ contains an Iwahori subgroup of $J_{b_0}(\Q_p)$. This follows from the proof of \cite[Prop.~3.1.4]{ZhouZhuADLV}, which is based on results of He  \cite{HeADLV}. (The claim amounts to the existence of a point on $X_{\sigma\mu_p^{-1}}(b)$ whose stabilizer in $J_b(\Q_p)$ contains an Iwahori subgroup. It is enough to check this on the level of Iwahori affine Deligne--Lusztig varieties. Moreover, the assertion is invariant under $J_b(\Q_p)$-equivariant bijections bewteen Iwahori affine Deligne--Lusztig varieties. With this in mind, take $w$ and $x$ as in the first two paragraphs of the proof of \cite[Prop.~3.1.4]{ZhouZhuADLV}. Then the claim follows from the fact that $J_{\dot x}(\Q_p)\cap \mathcal{P}(\breve \Z_p)$ is a parahoric subgroup of $J_{\dot x}(\Q_p)$, cf.~p.168, line 14 in \emph{loc.~cit.}, where $\mathcal{P}$ is a parahoric subgroup of $G_{\breve \Q_p}$ defined therein.)

By the last claim, it suffices to show that 
$$\zeta_{b_0}(\mathrm{Iw})=\cG(\Z_p)^{\ab}$$
 for just one Iwahori subgroup $\mathrm{Iw}$ of $J_{b_0}(\Q_p)$ since all Iwahori subgroups are $J_{b_0}(\Q_p)$-conjugate. As the statement is now only about $b_0$, we drop the subscript 0 to simplify notation.

By using an unramified $z$-extension $1\ra Z_1 \ra G_1 \ra G \ra 1$ over $\Q_p$ (which gives rise to a smooth map of reductive models $\cG_1\ra \cG$ with connected kernel over $\Z_p$; the induced map $\cG_1(\Z_p)\ra \cG(\Z_p)$ is thus surjective) and choosing $b_1\in G_1(\breve \Q_p)$ as in Lemma \ref{lem:lift-b,mu}, we reduce to the case when $G_{\tu{der}}=G_{\tu{sc}}$. So $M_b$ and $J_b$ also have simply connected derived subgroups. In particular, $G(\Q_p)^{\ab}=G^{\ab}(\Q_p)$ and likewise for $M_b$ and $J_b$.

Since $M_b$ splits over $\Q_p^{\tu{ur}}$, we see from \eqref{eq:Jb-Mb-isom} that $J_b$ also splits over $\Q_p^{\tu{ur}}$. By \cite[\S2.4]{DeBackerUnramifiedTori}, $J_b$ contains an unramified elliptic maximal torus $T_b$ over $\Q_p$. Write $\mathcal{T}_b$ for the torus over $\Z_p$ extending $T_b$. Then $\mathcal{T}_b$ is contained in some Iwahori subgroup of $J_b(\Q_p)$ (associated with the chamber whose closure contains the facet $F$ in \cite[\S2.4]{DeBackerUnramifiedTori}). In view of \eqref{eq:JbMbG}, we can think of $\zeta_b$ as the map on the set of $\Q_p$-points arising from the composite $\Q_p$-morphism
$$J_b \ra J_b^{\ab} \simeq M_b^{\ab} \ra G^{\ab}.$$ 
Composing with $T_b\hra J_b$, we obtain a $\Q_p$-morphism $T_b \ra G^{\ab}$ of unramified tori. This uniquely extends to a $\Z_p$-morphism $\mathcal{T}_b \ra \cG^{\ab}$, inducing the map 
$$\mathcal{T}_b(\Z_p) \ra \cG^{\ab}(\Z_p).$$
We will be done if this map is surjective. By smoothness and Lang's theorem over finite fields, it is enough to check that $\ker(\mathcal{T}_b \ra \cG^{\ab})$ is connected. To see this, observe that $T'_b:=\ker(T_b \ra G^{\ab})$ is connected, since it becomes a maximal torus of $G_{\der}$ after base change from $\Q_p$ to $\lqp$.
Thus we have a short exact sequence $1\ra T'_b \ra T_b \ra G^{\ab} \ra 1$ of unramified tori over $\Q_p$. It follows that $\ker(\mathcal{T}_b \ra \cG^{\ab})$ is the torus over $\Z_p$ extending $T'_b$, hence connected.
\end{proof}

\begin{remark}
In an earlier version of this paper, we incorrectly asserted that $\zeta_b(J_b^{\tu{int}}) = \zeta_{b_0}(J_{b_0}^{\tu{int}})$. This led us to mistakenly claim that $\textup{(HO}^+_{\textup{disc}})$ was true for non-$\Q$-basic $b$. As illustrated by Example \ref{ex:HO+false} below, $\textup{(HO}^+_{\textup{disc}})$ is false in general.
\end{remark}

Write $U_p(b)$ for the preimage of $\zeta_b(J_b(\Q_p))$ under the projection $G(\Q_p)\ra G^{\ab}(\Q_p)$. Then $U_p(b)$ is an open subgroup of $G(\Q_p)$ by Lemma \ref{lem:image-of-Jb-int}. 

\begin{theorem}\label{thm:HO-disc}
Let $b$ be as in Lemma \ref{lem:HO-disc}. As a $G(\A^{\infty,p})$-module,
$$
\iota H^0(C_{b},\lql)\simeq \bigoplus_{\pi\in \cA_{\mathbf 1}(G)} \dim  \pi_p^{U_p(b)} \cdot  \pi^{\infty,p}.
$$
Moreover, $\textup{(HO}_{\textup{disc}})$ holds true for all neat $K^p\subset G(\A^{\infty,p})$ and all $x\in C_{b,K^p}(\Fpbar)$.
\end{theorem}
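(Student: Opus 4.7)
Both assertions follow from the main theorem (Theorem~\ref{thm:H0(Ig)}). The plan is to derive the formula for $H^0(C_b,\lql)$ by taking $J_b^{\tu{int}}$-invariants in $H^0(\IG_b,\lql)$, and then to deduce \textup{(HO$_{\disc}$)} from it via Lemma~\ref{lem:HO-disc} and weak approximation at $p$.

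For the first assertion, I would invoke Lemma~\ref{lem:Igusa-basic}(1), which shows that $\IG_b \to C_b^{\tu{perf}}$ is a pro-\'etale $J_b^{\tu{int}}$-torsor. Since perfection is a universal homeomorphism, it induces an equivalence on \'etale sites, so $H^0(C_b,\lql) = H^0(C_b^{\tu{perf}},\lql) = H^0(\IG_b,\lql)^{J_b^{\tu{int}}}$. Plugging in Theorem~\ref{thm:H0(Ig)}, each summand $\pi^{\infty,p} \otimes (\pi_p \circ \zeta_b)$ contributes $\pi^{\infty,p}$ precisely when the one-dimensional character $\pi_p \circ \zeta_b$ is trivial on $J_b^{\tu{int}}$, i.e., when $\pi_p$ is trivial on $\zeta_b(J_b^{\tu{int}})$. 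The nontrivial identification is to match this count with $\dim \pi_p^{U_p(b)}$: since $\pi_p$ factors through $G(\Q_p)^{\tu{ab}}$, one must pass from triviality on the compact open image $\zeta_b(J_b^{\tu{int}})$ to triviality on the larger open image $\zeta_b(J_b(\Q_p))$. The plan is to use the product formula for the global Hecke character $\pi$ together with the finite-order constraint forced by $\pi_\infty|_{G(\R)_+}=1$: the extra generators of $\zeta_b(J_b(\Q_p))/\zeta_b(J_b^{\tu{int}})$ come from $\Q$-rational cocharacter values (images of $r\nu_b(p)$ in $G^{\tu{ab}}$), and the product formula applied at such $\Q$-rational points cancels the potential discrepancies.

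For the second assertion, I would apply Lemma~\ref{lem:HO-disc}, reducing \textup{(HO$_{\disc}$)} to the statement $\dim_{G(\A^{\infty,p})}(\mathbf{1}, H^0(C_b,\lql)) = 1$. By the first assertion, this dimension equals $\sum_\pi \dim \pi_p^{U_p(b)}$ over $\pi \in \cA_{\mathbf{1}}(G)$ with $\pi^{\infty,p} = \mathbf{1}$. Since $G$ is unramified over $\Q_p$, the diagonal embedding $G(\Q) \hra G(\Q_p) \times G(\R)$ has dense image (weak approximation, as in the proof of Lemma~\ref{lem:transitive-on-Sh}). The $1$-dimensional character $\pi$ is trivial on $G(\Q)$, and combined with $\pi^{\infty,p}=\mathbf{1}$ this means $\pi_p \otimes \pi_\infty$ is a continuous character trivial on the dense diagonal image, hence trivial everywhere. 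So $\pi_p = \mathbf{1}$, $\pi_\infty = \mathbf{1}$, and $\pi = \mathbf{1}$ is the only contribution; it adds $\dim \mathbf{1}^{U_p(b)} = 1$, giving the required multiplicity one.

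The main obstacle is the identification step in the first assertion. The quotient $\zeta_b(J_b(\Q_p))/\zeta_b(J_b^{\tu{int}})$ is nontrivial in general (for instance $p^{\Z}$ in the ordinary $\GL_2$ case), and without automorphic input there exist unramified characters of $G(\Q_p)^{\tu{ab}}$ trivial on the smaller subgroup but not the larger. Bridging this gap genuinely requires the global constraints on $\pi$ and careful tracking of which characters of $G(\Q_p)^{\tu{ab}}$ can extend to idele class characters that are trivial away from $p$; this matching is closely related to the comparison $H^0(C_b,\lql) \simeq H^0(\Sh_{K_p},\lql)$ appearing in Lemma~\ref{lem:HO-disc}(2), which fails in general (cf.\ Example~\ref{ex:HO+false}) but which does hold after restricting attention to the trivial $G(\A^{\infty,p})$-isotypic component—exactly what is needed for \textup{(HO$_{\disc}$)}.
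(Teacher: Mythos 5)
Your handling of the second assertion is essentially the paper's argument: reduce via Lemma~\ref{lem:HO-disc} to multiplicity one of the trivial $G(\A^{\infty,p})$-representation, then kill $\pi_p\otimes\pi_\infty$ by density of $G(\Q)$ in $G(\Q_p)\times G(\R)$. That part is correct. Likewise, the torsor computation you begin the first assertion with is the paper's entire proof of it: $H^0(C_b,\lql)=H^0(\IG_b,\lql)^{J_b^{\tu{int}}}$ by Lemma~\ref{lem:Igusa-basic}(1) (perfection not changing cohomology), and the summand indexed by $\pi$ survives exactly when $\pi_p$ is trivial on $\zeta_b(J_b^{\tu{int}})$.

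The genuine problem is the ``bridge'' you then insert, upgrading triviality on $\zeta_b(J_b^{\tu{int}})$ to triviality on $\zeta_b(J_b(\Q_p))$. This upgrade is not just unproven, it is false, and no product-formula or finite-order argument can establish it: the first assertion ranges over all $\pi\in\cA_{\mathbf 1}(G)$ with arbitrary $\pi^{\infty,p}$, so the product formula (which involves all places) places no constraint on $\pi_p$ evaluated at $\zeta_b(r\nu_b(p))$. Concretely, for the modular curve with $b$ ordinary, $\zeta_b(J_b^{\tu{int}})=\Z_p^\times$ and $\zeta_b(J_b(\Q_p))=\Q_p^\times$ inside $\GL_2(\Q_p)^{\ab}=\Q_p^\times$; take $\pi=\chi\circ\det$ for a quadratic Dirichlet character $\chi$ unramified at $p$ with $\chi_p(p)=-1$. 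Then $\pi\in\cA_{\mathbf 1}(G)$, $\pi^{\infty,p}$ genuinely occurs in $H^0(C_b,\lql)$ (Igusa's irreducibility of the ordinary locus in each component), and $\pi_p$ is trivial on $\Z_p^\times$ but not on $\Q_p^\times$. The resolution is that $U_p(b)$ in this theorem must be the preimage of $\zeta_b(J_b^{\tu{int}})$, not of $\zeta_b(J_b(\Q_p))$; this is how $U_p(b)$ is actually used in the proof of Theorem~\ref{thm:HO-criterion} (condition (c2) there is precisely the assertion that this image equals $\cG^{\ab}(\Z_p)$), and it is why Lemma~\ref{lem:image-of-Jb-int} is cited for the openness of $U_p(b)$. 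With that reading there is nothing to bridge and the first assertion is the one-line torsor argument you already wrote. Your closing suggestion that the gap is settled by the comparison $H^0(C_b,\lql)\simeq H^0(\Sh_{K_p},\lql)$ is also off target: that comparison is governed by whether $\zeta_b(J_b^{\tu{int}})$ equals $\cG^{\ab}(\Z_p)$, a third subgroup distinct from both of the ones you are comparing.
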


\begin{proof}
The first assertion is a consequence of Theorem \ref{thm:H0(Ig)} and Lemma \ref{lem:Igusa-basic} (1), noting that perfection does not change cohomology. For the second assertion, let $\pi\in \cA_{\mathbf 1}(G)$ with $\pi^{\infty,p}=\mathbf{1}$. It suffices to check that if $\pi_p|_{U_p(b)}=\mathbf{1}$, then $\pi$ is trivial.
We have $\pi_\infty|_{G(\R)_+}=\mathbf{1}$ from $\pi\in \cA_{\mathbf 1}(G)$. Thus $\pi$ as a continuous character $G(\Q)\backslash G(\A)\ra \C^\times$ is trivial on $G(\A^{\infty,p})U_p(b) G(\R)_+$. Since $G(\Q)\hra G(\Q_p)\times G(\R)$ has dense image (cf.~proof of Lemma \ref{lem:transitive-on-Sh}), $\pi$ is trivial. 
\end{proof}

 In light of (1) $\Leftrightarrow$ (2) in Lemma \ref{lem:HO-disc}, the following theorem gives criteria for $\textup{(HO}^+_{\textup{disc}})$. Observe that (c2) and (c3) are purely local conditions at $p$, depending only on the data pertaining to $G_{\Q_p}$. 

\begin{theorem}\label{thm:HO-criterion} Let $b\in \cG(\Z_{p}^{\tu{ur}})\sigma\mu_p(p)^{-1} \cG(\Z_{p}^{\tu{ur}})$. Assume that $b$ is $\Q$-non-basic. The following are equivalent.
\begin{enumerate}
\item [(c0)] $\textup{(HO}^+_{\textup{disc}})$ holds true for all neat $K^p\subset G(\A^{\infty,p})$ and all $x\in C_{b,K^p}(\Fpbar)$.
\item[(c1)] $H^0 (C_{b},\lql)\simeq H^0(\Sh_{K_p},\lql)$ as $G(\A^{\infty,p})$-modules.
\item[(c2)] $\zeta_b(J_b^{\tu{int}})=\cG^{\ab}(\Z_p)$.
\item[(c3)] The stabilizer in $J_b(\Q_p)$ of $1\in X_{\sigma \mu_p^{-1}}(b)$ maps onto $\cG^{\ab}(\Z_p)$ under $\zeta_b$.
\end{enumerate}
\end{theorem}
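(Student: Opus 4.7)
The equivalence (c0) $\Leftrightarrow$ (c1) is precisely Lemma \ref{lem:HO-disc}, so I focus on (c1) $\Leftrightarrow$ (c2) $\Leftrightarrow$ (c3). For (c2) $\Leftrightarrow$ (c3), I would observe that since $J_b(\Q_p)$ acts on $X_{\sigma\mu_p^{-1}}(b)\subset G(\breve\Q_p)/\cG(\breve\Z_p)$ by left multiplication, the stabilizer of the base point $1$ is exactly $J_b(\Q_p)\cap \cG(\breve\Z_p)=J_b^{\tu{int}}$, whose image under $\zeta_b$ is $\zeta_b(J_b^{\tu{int}})$; thus (c3) is a tautological reformulation of the equality in (c2).

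For (c1) $\Leftrightarrow$ (c2), the plan is to combine two explicit cohomology formulas. Since $\IG_b\to C_{b,\Fpbar}^{\tu{perf}}$ is a pro-\'etale $J_b^{\tu{int}}$-torsor (Lemma \ref{lem:Igusa-basic}(1)) and perfection does not alter \'etale cohomology, the main theorem (Theorem \ref{thm:H0(Ig)}) would give
$$
\iota H^0(C_b,\lql)=\iota H^0(\IG_b,\lql)^{J_b^{\tu{int}}}\simeq \bigoplus_{\pi\in\cA_{\mathbf 1}(G),\,\pi_p|_{\zeta_b(J_b^{\tu{int}})}=1}\pi^{\infty,p}
$$
as $G(\A^{\infty,p})$-modules, while Lemma \ref{lem:H_c(Sh)-top} gives $\iota H^0(\Sh_{K_p},\lql)\simeq \bigoplus_{\pi\in\cA_{\mathbf 1}(G),\,\pi_p|_{\cG^{\ab}(\Z_p)}=1}\pi^{\infty,p}$, where I would use the identification $\cG(\Z_p)^{\ab}=\cG^{\ab}(\Z_p)$ (valid by Lang's theorem over $\Z_p$, after passing to a $z$-extension if needed so that $G_{\der}=G_{\tu{sc}}$). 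Both decompositions are multiplicity-free, because the weak approximation argument in the proof of Theorem \ref{thm:HO-disc} shows that any $\pi\in\cA_{\mathbf 1}(G)$ is uniquely determined by $\pi^{\infty,p}$. Hence (c1) reduces to equality of the two indexing sets in $\cA_{\mathbf 1}(G)$, and since Lemma \ref{lem:image-of-Jb-int} yields $\zeta_b(J_b^{\tu{int}})\subseteq \cG^{\ab}(\Z_p)$, one inclusion of the indexing sets is automatic. So (c1) is equivalent to the assertion that every $\pi\in\cA_{\mathbf 1}(G)$ with $\pi_p|_{\zeta_b(J_b^{\tu{int}})}=1$ also satisfies $\pi_p|_{\cG^{\ab}(\Z_p)}=1$. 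Under (c2) the two subgroups coincide and this is immediate, giving (c2) $\Rightarrow$ (c1).

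The converse (c1) $\Rightarrow$ (c2) is the crux. Assume (c2) fails, so the quotient $Q:=\cG^{\ab}(\Z_p)/\zeta_b(J_b^{\tu{int}})$ is a nontrivial finite abelian group, the finiteness coming from the fact that $\zeta_b(J_b^{\tu{int}})$ is open in $G(\Q_p)^{\ab}$ (Lemma \ref{lem:image-of-Jb-int}). I would produce a separating automorphic character via Lemma \ref{lem:existence-1-dim}: pull back a nontrivial character of $Q$ along the isogeny of tori $Z_G\twoheadrightarrow G^{\ab}$ to a character of a suitable subgroup of $Z_G(\Q_p)$, extend this to a global central character datum $(\fkX,\chi)$ with $\fkX_p\subset Z_G(\Q_p)$ surjecting (up to finite index) onto $\zeta_b(J_b^{\tu{int}})$ with $\chi_p$ trivial there, with trivial prime-to-$p$-finite component, and with $\fkX_\infty\supseteq A_{G,\infty}$ carrying a character ensuring triviality of the resulting global character on $G(\R)_+$. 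Lemma \ref{lem:existence-1-dim} applied to an element $y\in Z_G(\Q_p)$ lifting a representative of a nontrivial class in $Q$ then delivers the required $\pi\in\cA_{\mathbf 1}(G)$ with $\pi_p|_{\zeta_b(J_b^{\tu{int}})}=1$ but $\pi_p(y)\neq 1$, contradicting (c1). The main obstacle I anticipate is tracking the finite-index issues coming from the kernel of $Z_G\to G^{\ab}$ and arranging the archimedean compatibility (the latter possible by Pontryagin duality on the compact quotient $Z_G(\R)/A_{G,\infty}$); both are best handled by the $z$-extension reduction to $G_{\der}=G_{\tu{sc}}$, where the relevant torus-theoretic manipulations become transparent and $\cG(\Z_p)^{\ab}=\cG^{\ab}(\Z_p)$ holds cleanly.
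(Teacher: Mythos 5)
Your overall structure matches the paper's proof exactly: $(c0)\Leftrightarrow(c1)$ is Lemma~\ref{lem:HO-disc}, $(c2)\Leftrightarrow(c3)$ is the tautological observation that the stabilizer of $1\in X_{\sigma\mu_p^{-1}}(b)$ is $J_b^{\tu{int}}$, and $(c2)\Rightarrow(c1)$ follows from comparing the explicit $H^0$-formulas. These three steps are fine.

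The gap is in $(c1)\Rightarrow(c2)$. You want $\pi\in\cA_{\mathbf 1}(G)$ with $\pi_p$ trivial on $\zeta_b(J_b^{\tu{int}})$ but not on $\cG^{\ab}(\Z_p)$, and you try to produce it by a direct application of Lemma~\ref{lem:existence-1-dim} to a $Z_G$-valued central character datum $(\fkX,\chi)$ with $\fkX_p\subset Z_G(\Q_p)$. This does not deliver what you need, for two distinct reasons. First, Lemma~\ref{lem:existence-1-dim} only forces $\pi_p|_{\fkX_p}=\chi_p=\mathbf 1$, i.e.\ $\pi_p$ trivial on the \emph{image} of $\fkX_p$ in $G(\Q_p)^{\ab}$; since $Z_G(\Q_p)\ra G(\Q_p)^{\ab}$ has nontrivial cokernel in general (even after a $z$-extension reducing to $G_{\der}=G_{\tu{sc}}$, since $Z_G\ra G^{\ab}$ is still an isogeny with kernel $Z_{G_{\der}}$), the image of $\fkX_p$ is only a proper finite-index subgroup of $\zeta_b(J_b^{\tu{int}})$, so $\pi_p$ need not be trivial on all of $\zeta_b(J_b^{\tu{int}})$. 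Second, the element $y$ you want to plug into Lemma~\ref{lem:existence-1-dim} is required to lie in $Z_G(\Q_p)$ and to map to a nontrivial class of $Q$, but there is no reason such a lift exists — the image of $Z_G(\Q_p)$ in $\cG^{\ab}(\Z_p)/\zeta_b(J_b^{\tu{int}})$ could well be trivial. (Lemma~\ref{lem:existence-1-dim} only needs $g_v\in G(F_v)$, so the second issue is self-inflicted, but the first one is real and your ``finite-index'' hedge does not make it go away.) You flag the finite-index problem as an anticipated obstacle, but the $z$-extension reduction you offer addresses only the identification $\cG(\Z_p)^{\ab}=\cG^{\ab}(\Z_p)$ and the archimedean bookkeeping, not the cokernel of $Z_G\ra G^{\ab}$.

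The paper deliberately says $(c1)\Rightarrow(c2)$ is proved ``in the same way as'' Lemma~\ref{lem:existence-1-dim}, not ``by'' it. The correct move is to mimic the lemma's \emph{proof}: pass to a $z$-extension unramified at $p$ to arrange $G_{\der}=G_{\tu{sc}}$, then replace $G$ by the cocenter torus $T=G^{\ab}$ (through which all of $\cA_{\mathbf 1}(G)$ factors, by Corollary~\ref{cor:1-dim reps} and Lemma~\ref{lem:one-dim-auto-rep}). In the torus case $Z_T=T$, so $\zeta_b(J_b^{\tu{int}})\subset T(\Q_p)=Z_T(\Q_p)$ is itself an admissible choice for the $p$-component of a central character datum, and Pontryagin duality on the locally compact abelian quotient $T(\A)$ by the closure of $T(\Q)$ times the imposed local constraints (at $\infty$ and $p$) produces the separating character against any $g_p\in\cG^{\ab}(\Z_p)\setminus\zeta_b(J_b^{\tu{int}})$. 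There is no $Z_G\ra G^{\ab}$ cokernel to worry about once you are on $T$.
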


\begin{proof}
(c0) $\Leftrightarrow$ (c1). Already shown in Lemma \ref{lem:HO-disc}.

(c2) $\Rightarrow$ (c1).
For each $\pi\in \cA_{\mathbf 1}(G)$, it is enough to check the claim that $\pi_p^{U_p(b)}\neq 0$ if and only if $\pi_p^{\cG(\Z_p)}\neq 0$. As a character $\pi_p$ factors through $G(\Q_p)\ra G^{\ab}(\Q_p)$, and (c2) ensures that the images of $U_p(b)$ and $\cG(\Z_p)$ in $G^{\ab}(\Q_p)$ are both equal to $\cG^{\ab}(\Z_p)$. The claim follows.

(c1) $\Rightarrow$ (c2). Assuming $\zeta_b(J_b^{\tu{int}})\subsetneq \cG^{\ab}(\Z_p)$, it is enough to find $\pi\in \cA(G)_{\mathbf 1}$ such that $\pi_p|_{\zeta_b(J_b^{\tu{int}})}=\textbf 1$ but $\pi_p|_{ \cG^{\ab}(\Z_p)} \neq \textbf 1$. (Here $\pi_p$ is viewed as a character of $G^{\ab}(\Q_p)$.) This is proved in the same way as Lemma \ref{lem:existence-1-dim}. (When reducing to the torus case, use a $z$-extension which is unramified at $p$.)

(c2) $\Leftrightarrow$ (c3). Clear since the stabilizer of  in $J_b(\Q_p)$ of $1\in X_{\sigma \mu_p^{-1}}(b)$ is nothing but $J_b^{\tu{int}}$.
\end{proof}

\begin{corollary}
$\tu{(HO}^+_{\tu{disc}}\tu{)}$ is true on the $\mu$-ordinary Newton stratum.
\end{corollary}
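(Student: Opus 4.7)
The plan is to apply the equivalence $\textup{(c0)} \Leftrightarrow \textup{(c2)}$ of Theorem \ref{thm:HO-criterion} after replacing the given $\mu$-ordinary representative by a suitable $\sigma$-conjugate furnished by the second part of Lemma \ref{lem:image-of-Jb-int}. First I would fix a representative $b$ of the $\mu$-ordinary $\sigma$-conjugacy class with $b \in \cG(\Z_p^{\tu{ur}})\sigma\mu_p(p)^{-1}\cG(\Z_p^{\tu{ur}})$, assume $[b]$ is $\Q$-non-basic (otherwise the stratum is basic and the assertion is vacuous), and recall from Lemma \ref{lem:central-leaf=Newton} that in the $\mu$-ordinary case the central leaf $C_{b,K^p}$ coincides with the entire Newton stratum $N_{b,K^p}$.

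Next I would invoke Lemma \ref{lem:image-of-Jb-int} to produce an element $b_0$ in the same double coset which is $\sigma$-conjugate to $b$ and satisfies $\zeta_{b_0}(J_{b_0}^{\tu{int}}) = \cG(\Z_p)^{\ab}$. After reducing to the case $G_{\tu{der}} = G_{\tu{sc}}$ via an unramified $z$-extension, which does not affect the geometric stratification of the Shimura variety, one has $\cG(\Z_p)^{\ab} = \cG^{\ab}(\Z_p)$, so $b_0$ satisfies condition (c2) of Theorem \ref{thm:HO-criterion}.

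The key geometric input is then that $b_0$, being $\sigma$-conjugate to $b$ in $G(\breve\Q_p)$, defines the same Newton stratum $N_{b_0,K^p} = N_{b,K^p}$; applying Lemma \ref{lem:central-leaf=Newton} to both $b$ and $b_0$ (which is still $\mu$-ordinary since $[b_0]=[b]$) gives
$$
C_{b_0,K^p} = N_{b_0,K^p} = N_{b,K^p} = C_{b,K^p}.
$$
This equality of central leaves is precisely what legitimizes the substitution of $b$ by $b_0$---the move that fails outside the $\mu$-ordinary locus, as flagged by the errata note earlier in this section and by Zhou's counterexample. Invoking $\textup{(c0)} \Leftrightarrow \textup{(c2)}$ of Theorem \ref{thm:HO-criterion} applied to $b_0$ then yields \textup{(HO$^+_{\disc}$)} for all neat $K^p$ and all $x \in C_{b_0,K^p}(\Fpbar) = C_{b,K^p}(\Fpbar)$.

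The main obstacle would have been precisely this $b \to b_0$ swap: in the non-$\mu$-ordinary setting, the central leaf genuinely depends on the $(\cG(\breve\Z_p),\sigma)$-conjugacy class $[[b]]$ rather than on the coarser class $[b]$, so Lemma \ref{lem:image-of-Jb-int} no longer lets us upgrade $b$ without changing the geometric object under study. The identification $C_b = N_b$ supplied by Lemma \ref{lem:central-leaf=Newton} in the $\mu$-ordinary case is exactly what removes this obstruction. The $z$-extension reduction glossed over above is standard, but in a careful write-up one should verify that under it the objects $J_b$, $J_b^{\tu{int}}$, $\zeta_b$ and $\cG^{\ab}(\Z_p)$ behave compatibly with the projection to $G$, so that (c2) for the lifted datum implies (c2) for the original one.
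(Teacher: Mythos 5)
Your argument follows the paper's proof essentially verbatim: choose $b_0$ via Lemma \ref{lem:image-of-Jb-int} so that $\zeta_{b_0}(J_{b_0}^{\tu{int}})$ is all of $\cG(\Z_p)^{\ab}$, invoke criterion (c2) of Theorem \ref{thm:HO-criterion}, and then use Lemma \ref{lem:central-leaf=Newton} to identify $C_{b_0,K^p}$ with the full $\mu$-ordinary Newton stratum. The one deviation is the $z$-extension step you insert to reconcile $\cG(\Z_p)^{\ab}$ with $\cG^{\ab}(\Z_p)$, and here you should be careful: passing from $(G,X)$ to $(G_1,X_1)$ genuinely changes the Shimura variety, its central leaves, and the groups $J_b^{\tu{int}}$, so the parenthetical claim that this ``does not affect the geometric stratification'' is not something you can take for granted — (HO$^+_{\disc}$) for the $G_1$-Shimura variety does not formally descend to $G$ without work. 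The paper avoids this issue entirely by reading condition (c2) as a comparison of open compact subgroups inside $G(\Q_p)^{\ab}$ (the image of $\cG(\Z_p)$ there versus $\zeta_b(J_b^{\tu{int}})$), which is exactly what Lemma \ref{lem:image-of-Jb-int} supplies; no $z$-extension of the global Shimura datum is needed. Dropping that digression leaves your proof identical to the paper's.
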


\begin{proof}
Let $b=b_x\in G(\breve \Q_p)$ for $x$ in the $\mu$-ordinary stratum. Choose $b_0$ as in Lemma \ref{lem:image-of-Jb-int} so that $\zeta_{b_0}(J_{b_0}^{\tu{int}})=\cG(\Z_p)^{\ab}$. Then (\tu{HO}$^+_{\tu{disc}}$) holds true for $C_{b_0,K^p}$ for every $K^p$, through criterion (c2) of Theorem \ref{thm:HO-criterion}. Since $C_{b_0,K^p}$ is the entire $\mu$-ordinary Newton stratum by Lemma \ref{lem:central-leaf=Newton}, the proof is finished.
\end{proof}

We also have a partial analogue of Theorem \ref{thm:HO-criterion}  that is isogeny-invariant, \ie depending on $b$ only through $[b]$.

\begin{corollary}\label{cor:HO+criterion}
Let $b\in G(\breve \Q_p)$. Assume that $[b]\in B(G_{\Q_p},\mu_p^{-1})$ and that $b$ is $\Q$-non-basic.  The following are equivalent.
\begin{enumerate}
\item[(C0)]  $\tu{(HO}^+_{\tu{disc}}\tu{)}$ is true on the Newton stratum $N_{b,K^p}$, for all neat open compact $K^p\subset G(\A^{\infty,p})$.
\item[(C3)] The stabilizer in $J_b(\Q_p)$ of each closed point of $X_{\sigma\mu_p^{-1}}(b)$ maps onto $\cG^{\ab}(\Z_p)$ under $\zeta_b$.
\end{enumerate}
\end{corollary}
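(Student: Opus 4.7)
My plan is to reduce both (C0) and (C3) to Theorem \ref{thm:HO-criterion} applied leaf-by-leaf inside $N_{b,K^p}$, matched via the $J_b(\Q_p)$-action on $X_{\sigma\mu_p^{-1}}(b)$.

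First, I would record the parametrization of central leaves in $N_{b,K^p}$ by $J_b(\Q_p)$-orbits on $X_{\sigma\mu_p^{-1}}(b)(\Fpbar)$. Given a closed point $g$, set $b':=g^{-1}b\sigma(g)$, which lies in $\cG(\breve\Z_p)\sigma\mu_p^{-1}(p)\cG(\breve\Z_p)$ by the definition of the ADLV; then $C_{b',K^p}\subset N_{b,K^p}$ is a central leaf. A direct computation shows that two points $g_1,g_2$ give $(\cG(\breve\Z_p),\sigma)$-conjugate elements $b_1',b_2'$ iff they lie in the same $J_b(\Q_p)$-orbit, and every $(\cG(\breve\Z_p),\sigma)$-class in $[b]\cap \cG(\breve\Z_p)\sigma\mu_p^{-1}(p)\cG(\breve\Z_p)$ (equivalently, every central leaf in $N_{b,K^p}$) arises this way. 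Adjusting $g$ within its orbit using Lemma \ref{lem:image-of-Jb-int}, we may arrange $b'\in \cG(\Z_p^{\tu{ur}})\sigma\mu_p^{-1}(p)\cG(\Z_p^{\tu{ur}})$ so that Theorem \ref{thm:HO-criterion} applies to $C_{b',K^p}$.

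Next, I would verify the key dictionary: $\gamma\mapsto g^{-1}\gamma g$ is a $\Q_p$-group isomorphism $J_b\isom J_{b'}$, sending the stabilizer $\tu{Stab}_{J_b(\Q_p)}(g)$ isomorphically onto $J_{b'}^{\tu{int}}$ (seen either by direct calculation or by transport of structure via the $J$-equivariant bijection $X_{\sigma\mu_p^{-1}}(b)\simeq X_{\sigma\mu_p^{-1}}(b')$, $g_1\mapsto g^{-1}g_1$, which carries $g$ to $1$). Moreover, $\zeta_b$ is transported to $\zeta_{b'}$ under this isomorphism, because the map $\zeta$ of \eqref{eq:JbG} factors through abelianizations and inner conjugation by $g\in G(\breve\Q_p)$ acts trivially on the canonically identified $M_b^{\ab}\simeq M_{b'}^{\ab}$ and on $G^{\ab}$. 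Hence $\zeta_b(\tu{Stab}_{J_b(\Q_p)}(g))=\zeta_{b'}(J_{b'}^{\tu{int}})$.

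With these ingredients assembled, (C0) is equivalent to $\textup{(HO}^+_{\textup{disc}})$ holding for every central leaf $C_{b',K^p}\subset N_{b,K^p}$, which by Theorem \ref{thm:HO-criterion}(c2) is $\zeta_{b'}(J_{b'}^{\tu{int}})=\cG^{\ab}(\Z_p)$ for each representative $b'$; the dictionary translates this verbatim into (C3). The condition is orbit-invariant because $\zeta_b$ kills commutators and stabilizers in the same orbit are $J_b(\Q_p)$-conjugate, so checking orbit representatives suffices. The main technical point requiring care is the commutativity of the square
$$\xymatrix{J_b(\Q_p) \ar[rr]^-{\zeta_b} \ar[d]_-{\gamma\mapsto g^{-1}\gamma g} & & G(\Q_p)^{\ab} \ar@{=}[d] \\ J_{b'}(\Q_p) \ar[rr]^-{\zeta_{b'}} & & G(\Q_p)^{\ab},}$$
which follows by unwinding the construction of $\zeta$ via $J_b^{\ab}\simeq M_b^{\ab}\to G^{\ab}$ in \S\ref{sub:transfer-one-dim}--\S\ref{sub:inf-level-Igusa}; conceptually, the inner twist $M_b\otimes\breve\Q_p\simeq M_{b'}\otimes\breve\Q_p$ implementing $\gamma\mapsto g^{-1}\gamma g$ descends canonically on abelianizations to a $\Q_p$-isomorphism compatible with the maps into $G^{\ab}$.
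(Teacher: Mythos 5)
Your reduction to Theorem~\ref{thm:HO-criterion} via the dictionary between $J_b(\Q_p)$-orbits on $X_{\sigma\mu_p^{-1}}(b)$ and central leaves, together with the transport of $J_b^{\tu{int}}$ and $\zeta_b$ under conjugation by $g$, is the same route the paper takes, and the commutativity of your square is correct. The gap is in the parametrization step. Your claim that ``every $(\cG(\breve\Z_p),\sigma)$-class in $[b]\cap \cG(\breve\Z_p)\sigma\mu_p^{-1}(p)\cG(\breve\Z_p)$ (equivalently, every central leaf in $N_{b,K^p}$) arises this way'' is not a direct computation: while the first part is elementary, the parenthetical ``equivalently'' requires knowing that each $J_b(\Q_p)$-orbit on the ADLV corresponds to a \emph{nonempty} central leaf in $N_{b,K^p}$, which is exactly where the implication $\textup{(C0)}\Rightarrow\textup{(C3)}$ could fail if some orbit gave an empty leaf. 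This surjectivity onto nonempty leaves is a genuine geometric statement; the paper obtains it by first reducing to $b=b_x$ for some $x\in N_{b,K^p}(\Fpbar)$ and then invoking Kisin's uniformization of isogeny classes, \cite[Prop.~1.4.4]{KisinModels}, to produce for each ADLV point a point $y\in N_{b,K^p}(\Fpbar)$ with $[[b_y]]=[[b']]$. Your proof never anchors the ADLV to a Shimura variety point and never cites this input, so the implication $\textup{(C0)}\Rightarrow\textup{(C3)}$ is not established.

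Relatedly, your appeal to Lemma~\ref{lem:image-of-Jb-int} to ``arrange $b'\in\cG(\Z_p^{\tu{ur}})\sigma\mu_p^{-1}(p)\cG(\Z_p^{\tu{ur}})$'' does not work as stated. Replacing $g$ by $jg$ with $j\in J_b(\Q_p)$ leaves $b'=g^{-1}b\sigma(g)$ unchanged, so ``adjusting $g$ within its $J_b(\Q_p)$-orbit'' cannot move $b'$; and Lemma~\ref{lem:image-of-Jb-int} produces a single $b_0$ that is $\sigma$-conjugate to $b$ in $G(\breve\Q_p)$ with $\zeta_{b_0}(J_{b_0}^{\tu{int}})=\cG(\Z_p)^{\ab}$, not a representative of each class $[[b']]$ lying in $\cG(\Z_p^{\tu{ur}})\sigma\mu_p^{-1}(p)\cG(\Z_p^{\tu{ur}})$. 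The correct order of reasoning is: first use Kisin's Prop.~1.4.4 to see $C_{b',K^p}\neq\emptyset$, and then Proposition~\ref{prop:C-nonempty} gives a representative of $[[b']]$ in $\cG(\Z_p^{\tu{ur}})\sigma\mu_p^{-1}(p)\cG(\Z_p^{\tu{ur}})$ to which Theorem~\ref{thm:HO-criterion} applies (the conclusion of that theorem depends only on $[[b']]$, since the stabilizer of $1\in X_{\sigma\mu_p^{-1}}(b')$ changes by conjugation by $\cG(\breve\Z_p)$, which is killed by $\zeta_{b'}$). Once you insert this citation and reorder the argument, the rest of your write-up goes through.
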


\begin{proof}
(C3) $\Rightarrow$ (C0). Let $x\in N_{b,K^p}(\Fpbar)$. Then $b_x=g^{-1} b \sigma(g)$ for some $g\in G(\breve\Q_p)$. The map $x\mapsto x g$ induces an isomorphism $X_{\sigma\mu_p^{-1}}(b_x)\simeq X_{\sigma\mu_p^{-1}}(b)$ equivariantly with respect to the actions by the $\Q_p$-groups $J_{b_x}(\Q_p)\simeq J_b(\Q_p)$. The latter isomorphism comes from the conjugation by $g$ on $G(\breve \Q_p)$, and it commutes with the maps $\zeta_{b_x}$ and $\zeta_b$ to $G(\Q_p)^{\ab}$. Hence (c3) of Theorem \ref{thm:HO-criterion} for $b_x$ is implied by (C3) of this corollary. 
We deduce (C0) from the same theorem.

(C0) $\Rightarrow$ (C3). Fix a neat subgroup $K^p\subset G(\A^{\infty,p})$. Let $x\in N_{b,K^p}(\Fpbar)$. Since $[b_x]=[b]$, we may assume  $b=b_x$. 
Now, for each $b'\in X_{\sigma \mu_p^{-1}}(b)$, 
there exists  $y\in N_{b,K^p}(\Fpbar)$ such that $b'=b_y$ by \cite[Prop.~1.4.4]{KisinModels}. As in the proof of (2) $\Rightarrow$ (1), the stabilizer of $b'$ in $J_b(\Q_p)$ maps onto $\cG^{\ab}(\Z_p)$ under $\zeta_b$ if and only if (c3) of Theorem \ref{thm:HO-criterion} holds for $b_y$ (in place of $b$). The latter condition holds as we are assuming (C0), again via the same theorem. Hence (C3) holds.
\end{proof}

\begin{example}\label{ex:HO+false}
Condition (C3) of the last corollary makes it convenient to generate a counterexample to (\tu{HO}$^+_{\tu{disc}}$) by utilizing facts about affine Deligne--Lusztig varieties. 
We learned such an example from Rong Zhou in an email correspondence together with Pol van Hoften, via a Shimura datum $(G,X)$ of PEL type A such that $G^{\ad}$ is $\Q$-simple but (C3) is violated by some non-basic element $b\in B(G_{\Q_p},\mu_p^{-1})$.
It comes down to an explicit affine Deligne--Lusztig variety associated with $\GL_2$ which is a union of irreducible components isomorphic to projective lines. In this case, it can be shown that some component contains a closed point whose stabilizer is too small to satisfy (C3).
See \cite[\S6.3]{vanHoftenXiao} for details. 
\end{example}

\bibliographystyle{amsalpha}
 
\newcommand{\etalchar}[1]{$^{#1}$}
\providecommand{\bysame}{\leavevmode\hbox to3em{\hrulefill}\thinspace}
\providecommand{\MR}{\relax\ifhmode\unskip\space\fi MR }
 
\providecommand{\MRhref}[2]{ 
  \href{http://www.ams.org/mathscinet-getitem?mr=#1}{#2}
}
\providecommand{\href}[2]{#2}

\end{document}